\newcommand{\Syn}{\mathsf{Syn}}
\newcommand{\HD}[2]{H^{#1}\mathbf D(#2)} 
\DeclareMathOperator{\Mech}{Mech}
\newcommand{\tors}{\operatorname{tors}}
\DeclareMathOperator{\Div}{Div}
\DeclareMathOperator{\Prin}{Prin}
\DeclareMathOperator{\Jac}{Jac}
\DeclareMathOperator{\Aut}{Aut}
\DeclareMathOperator{\Map}{Map}
\DeclareMathOperator{\Sp}{Sp}
\DeclareMathOperator{\Tor}{Tor}
\DeclareMathOperator{\rank}{rank}
\DeclareMathOperator{\Prob}{Prob}
\DeclareMathOperator{\Fix}{Fix}
\newcommand{\Id}{\mathrm{Id}}
\DeclareMathOperator{\im}{im}
\DeclareMathOperator{\coker}{coker}
\newcommand{\idim}{\operatorname{idim}}
\newcommand{\Alg}{\mathrm{Alg}}
\newcommand{\FinVect}{\mathbf{FinVect}}
\newcommand{\RR}{\mathbb{R}}
\newcommand{\ob}{\mathrm{ob}}
\newcommand{\E}{\mathcal{E}}
\newcommand{\F}{\mathcal{F}}
\newcommand{\Ab}{\mathsf{Ab}}
\newcommand{\Mod}{\mathsf{Mod}}
\newcommand{\ZZ}{\mathbb{Z}}
\newcommand{\QQ}{\mathbb{Q}}
\newcommand{\DD}{\mathbb{D}}
\newcommand{\Hom}{\operatorname{Hom}}
\newcommand{\End}{\operatorname{End}}
\newcommand{\Ext}{\operatorname{Ext}}
\newcommand{\id}{\mathrm{id}}
\newcommand{\op}{\mathrm{op}}
\newcommand{\RHom}{\mathbf{R}\!\operatorname{Hom}}
\newcommand{\Dplus}{D^{+}}
\newcommand{\coev}{\mathrm{coev}}
\newcommand{\ev}{\mathrm{ev}}
\theoremstyle{plain}
\newtheorem{theorem}{Theorem}[section]
\newtheorem{lemma}[theorem]{Lemma}
\newtheorem{proposition}[theorem]{Proposition}
\newtheorem{corollary}[theorem]{Corollary}
\theoremstyle{definition}
\newtheorem{definition}[theorem]{Definition}
\theoremstyle{remark}
\newtheorem{remark}[theorem]{Remark}
\numberwithin{equation}{section}
\title{Ethic Duality: A Homological Framework for Primal-Dual Problems}
\author{Dmitry Pasechnyuk-Vilensky $^{1, 2}$ and Martin Tak\'a\v{c} $^{1}$\\
$^{1}$ \quad MBZUAI, United Arab Emirates\\
$^{2}$ \quad ISP RAS, Russia}
\begin{document}
\maketitle

\begin{abstract}
We develop a homological duality framework based on a contravariant functor
$D=\operatorname{Hom}_E(-,R)$ with dualizing object $R$. A morphism is called
\emph{ethic} when it satisfies the canonical double-dual compatibility
$D^2(f)\eta=\eta f$. In the derived setting, the functor
$\mathrm{RHom}_E(-,R)$ produces a graded family of Ext-groups that measure all
failures of this compatibility. The first layer $\operatorname{Ext}^1$ identifies
primal--dual gaps, while higher $\operatorname{Ext}^k$ provide a systematic
hierarchy of derived obstructions to exactness.

This formulation specializes uniformly across several classical domains.  
In linear and conic optimization, Farkas- and Slater-type exactness criteria
correspond to the vanishing of $\operatorname{Ext}^1$, and integer duality gaps
coincide with torsion Ext-classes.  
In graph theory, Kirchhoff- and Baker--Norine-type dualities arise as instances of
ethic exactness. In dynamical systems, the higher derived layers encode
nonvanishing persistence phenomena. Additional examples include social-choice
configurations, categorical factorization in scattering formalisms, coding-theoretic
duality, and Bellman-type recurrences, all appearing as concrete instances of
Ext-controlled exactness.

All resulting invariants are stable under derived Morita equivalence and depend
only on the dualizing pair $(E,R)$. The framework therefore provides a
substrate-independent criterion for primal--dual exactness and a uniform
homological description of its obstructions.
\end{abstract}




\tableofcontents

\section*{Introduction}

Duality appears throughout mathematics in many unrelated guises: topological
duals of Banach and locally convex spaces, Verdier and Grothendieck duality in
geometry, Fenchel conjugation in convex analysis, and primal--dual formulations
in optimization.  What these settings share is not a specific construction but a
structural question: under what conditions does a dual assignment recover the
information of the original object, and how can one measure the extent to which
it fails to do so?

In this paper we develop a homological framework that formulates this question
inside an abelian category $\E$ equipped with a dualizing object $R$.  We study
the contravariant functor
\[
\mathbf D = \Hom_\E(-,R)
\]
together with its canonical comparison map $\eta : \Id \Rightarrow \mathbf D^2$.
A morphism $f$ is called \emph{ethic} if it satisfies
$\mathbf D^2(f)\,\eta=\eta\,f$, which expresses a strict form of compatibility
with respect to dualization.  All deviations from this condition are detected by
the derived dual $\RHom_\E(-,R)$: the cohomology groups
\[
H^k\mathbf D(A)=\Ext^k_\E(A,R)
\]
form a graded obstruction tower, with $\Ext^1$ representing the primary
obstruction to dual exactness and the higher terms describing the remaining
layers of derived inconsistency.  In the derived category $D^+(\E)$ these
obstructions give rise to an ethic $t$--structure whose heart consists of the
objects completely determined by their duals.

This homological formulation accommodates a variety of classical duality
phenomena within a single mechanism.  In convex and conic optimization,
vanishing of $\Ext^1$ recovers strong duality, and torsion in
$\Ext^1_\ZZ(M,\ZZ)$ captures algebraic duality gaps in integer programs.
Kirchhoff-type formulas and the Baker--Norine Riemann--Roch theorem in graph
theory arise as instances of ethic exactness.  In dynamical contexts, higher
$\Ext$--groups describe persistence and hysteresis effects invisible to
underived duality.  Further examples in social choice, Hopf-algebraic
factorizations, coding theory, and Bellman-type recurrences show that problems
with very different surface structures admit a common homological treatment.

A central feature of the framework is Morita invariance.  All quantities obtained
from $\mathbf D$ and $\RHom_\E(-,R)$ depend only on the derived dual pair
$(D^+(\E),\RHom_\E(-,R))$ and remain unchanged under derived equivalences.
Thus the behaviour of duality is governed solely by the dualizing structure and
not by the particular algebraic, geometric, or combinatorial realisation of the
objects involved.

The aims of this paper are twofold: to give a unified categorical formulation of
duality based on a dualizing object~$R$, and to identify the groups
$\Ext^k_\E(-,R)$ as the canonical homological invariants governing both dual
exactness and all degrees of its obstruction across the examples above.

The terminology \emph{ethic} and \emph{aisthetic} is used in the classical Greek
sense.  The former, from \emph{ethos}, designates an intrinsic structural
condition expressed by compatibility with the biduality transformation.  The
latter, from \emph{aisth\={e}sis}, refers to the corresponding manifest or
expressive aspect appearing in tensorial or profunctorial settings.  These terms
are purely structural and carry no interpretative meaning beyond their
categorical definitions.

\section{Categorical Ethics}

Throughout section we assume that ambient category~$\E$ is an abelian category with enough injectives, so that $\Ext^i_\E(-,R)$ are defined as right derived functors of $\Hom_\E(-,R)$. We also write $\E^{\mathrm{ref}}\subset \E$ for the full subcategory of $\DD$–reflexive objects; when speaking about the ethic square in $\E$ we implicitly restrict to $\E^{\mathrm{ref}}$ so that $\eta_A$ is an isomorphism and the comparison with $\DD^2$ lands in~$\E$.

\subsection{Definitions}

\begin{definition}[Resource category]
A resource category is an abelian category \(\E\) whose objects are interpreted as resource--states or tasks, and whose morphisms are resource--bounded reductions. Additivity accounts for parallel composition. The zero morphism represents the loss of meaning (infeasible reduction). We adopt standard abelian category axioms \cite{MacLane1998,Borceux1994}.
\end{definition}

All kernels, cokernels, images and coimages exist in \(\E\). Every monomorphism is a kernel and every epimorphism a cokernel. Short exact sequences behave as usual; we write \(0\to K\to A\to B\to 0\).

\begin{lemma}[Finite biproducts]
\(\E\) has finite biproducts \(A\oplus B\) with canonical injections and projections. In particular, parallel processes add via \(\oplus\).
\end{lemma}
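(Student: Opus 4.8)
The plan is to deduce the existence of binary biproducts from the additive ($\Ab$--enriched) structure that every abelian category carries, and then to obtain arbitrary finite biproducts by induction, with the empty biproduct supplied by the zero object already present in $\E$. Concretely, for objects $A,B$ I first form the product $A\times B$ with its projections $\pi_A,\pi_B$ (this exists since $\E$ is abelian), and introduce the canonical injections $\iota_A=\langle\id_A,0\rangle\colon A\to A\times B$ and $\iota_B=\langle 0,\id_B\rangle\colon B\to A\times B$, using the zero morphisms available from the enrichment. A short computation with the universal property of the product gives the matrix identities $\pi_A\iota_A=\id_A$, $\pi_B\iota_B=\id_B$, $\pi_A\iota_B=0$, $\pi_B\iota_A=0$.

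The key step is the completeness relation $\iota_A\pi_A+\iota_B\pi_B=\id_{A\times B}$, which is where additivity is genuinely used. I would verify it by composing on the left with $\pi_A$ and with $\pi_B$: the four identities above give $\pi_A(\iota_A\pi_A+\iota_B\pi_B)=\pi_A$ and $\pi_B(\iota_A\pi_A+\iota_B\pi_B)=\pi_B$, so by the uniqueness clause in the product's universal property the endomorphism $\iota_A\pi_A+\iota_B\pi_B$ equals $\id_{A\times B}$. Granting this, $A\times B$ is simultaneously a coproduct: given $f\colon A\to C$ and $g\colon B\to C$, the morphism $h=f\pi_A+g\pi_B$ satisfies $h\iota_A=f$ and $h\iota_B=g$, and any $h'$ with the same property obeys $h'=h'(\iota_A\pi_A+\iota_B\pi_B)=h'\iota_A\pi_A+h'\iota_B\pi_B=f\pi_A+g\pi_B=h$, yielding existence and uniqueness. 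Thus $(A\times B,\iota_A,\iota_B,\pi_A,\pi_B)$ is a biproduct, which I denote $A\oplus B$; the remark that parallel processes add via $\oplus$ is then immediate, since the induced map into $C$ is the sum $f\pi_A+g\pi_B$ and $\Hom_\E(-,-)$ is additive in each variable.

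Finally, arbitrary finite biproducts follow by an obvious induction, with $A_1\oplus\cdots\oplus A_n$ built from $(A_1\oplus\cdots\oplus A_{n-1})\oplus A_n$ and the nullary case being the zero object $0$, which is both initial and terminal in~$\E$. I do not expect a serious obstacle: the only point requiring care is that this argument presupposes the $\Ab$--enrichment of $\E$. If one instead works from an axiomatization of abelian categories that does not build in enrichment (only a zero object, kernels and cokernels, and the mono/epi conditions), then the real content shifts to the prior step of constructing the canonical addition on hom-sets and checking its bilinearity with respect to composition, for which I would invoke the standard Freyd--Mitchell development \cite{MacLane1998,Borceux1994}; once the enrichment is in hand, the biproduct construction above goes through verbatim.
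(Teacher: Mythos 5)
Your proof is correct and is precisely the standard argument that the paper defers to with its one-line citation to Mac Lane: product plus zero morphisms gives the injections, the completeness relation $\iota_A\pi_A+\iota_B\pi_B=\id$ follows from uniqueness in the product's universal property, and the coproduct property and finite/nullary cases follow as you describe. Your closing caveat about where the $\Ab$-enrichment comes from is also well placed, but there is no substantive difference from the paper's (implicit) proof.
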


\begin{proof}
Standard in any abelian category \cite{MacLane1998}.
\end{proof}

\begin{definition}[Resource object and dual functor]
Fix \(R\in \E\) (the resource object). Define the contravariant functor
\[
\DD \coloneqq \Hom_{\E}(-,R): \E^{\op}\longrightarrow \Ab.
\]
We call \(\DD(A)\) the resource dual of \(A\).
\end{definition}

\begin{lemma}[Left exactness]\label{lem:left-exact}
\(\DD\) is left exact: for every short exact sequence \(0\to K\to A\to B\to 0\) in \(\E\), the induced sequence
\[
0 \leftarrow \DD(K) \xleftarrow{} \DD(A) \xleftarrow{} \DD(B)
\]
is exact in \(\Ab\).
\end{lemma}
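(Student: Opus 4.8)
The plan is to unwind the definition of $\DD = \Hom_\E(-,R)$ on the morphisms of the short exact sequence and verify exactness one position at a time, using only the abelian-category axioms already recorded. Write the sequence as $0 \to K \xrightarrow{\,i\,} A \xrightarrow{\,p\,} B \to 0$, so that $i$ is a monomorphism, $p$ an epimorphism, $pi = 0$, and exactness at $A$ says that $i$ is a kernel of $p$. Since $\DD$ is a contravariant additive functor it reverses arrows and produces group homomorphisms $p^{*} = \DD(p)\colon \DD(B) \to \DD(A)$ and $i^{*} = \DD(i)\colon \DD(A) \to \DD(K)$; left exactness of $\DD$ then amounts to exactness of
\[
0 \longrightarrow \DD(B) \xrightarrow{\;p^{*}\;} \DD(A) \xrightarrow{\;i^{*}\;} \DD(K),
\]
that is, injectivity of $p^{*}$ together with $\ker i^{*} = \im p^{*}$.

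First I would check injectivity of $p^{*}$: if $g \in \DD(B)$ satisfies $p^{*}g = g p = 0$, then since $p$ is an epimorphism $g = 0$. Next, the inclusion $\im p^{*} \subseteq \ker i^{*}$ is immediate from functoriality, since $i^{*}p^{*}g = g p i = 0$ for every $g$ by $pi = 0$.

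The only step with genuine content is the reverse inclusion $\ker i^{*} \subseteq \im p^{*}$. Take $f \in \DD(A)$ with $f i = 0$. Because $i$ is a monomorphism it is a kernel (by the resource-category axioms), and exactness at $A$ identifies it as $\ker p$; hence $p$ exhibits $B$ as $\coker(i)$ with $p$ the canonical projection. The relation $f i = 0$ then lets me invoke the universal property of $\coker(i)$, yielding a unique $g\colon B \to R$ with $g p = f$, i.e. $f = p^{*}g \in \im p^{*}$. This gives exactness at $\DD(A)$, and with the previous paragraph the lemma follows; the abelian-group structure on each $\DD(-)$ and the additivity of the maps are automatic from additivity of $\Hom_\E(-,R)$.

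The main obstacle is purely a matter of bookkeeping rather than of mathematics: one must phrase ``exact at $A$'' in its cokernel-theoretic form, so that a morphism annihilating the monomorphism $i$ is legitimately permitted to factor through $p$ — which uses the axiom that $i$ is a kernel, not merely a monic. I would also note in passing that no exactness can be expected at $\DD(K)$: the failure of $i^{*}$ to be surjective is precisely what the derived functor $\Ext^{1}_\E(B,R)$ records, which is the obstruction the remainder of the paper is built to track. The statement could alternatively be quoted wholesale from a standard homological reference \cite{MacLane1998}, but the verification above is short enough to give directly.
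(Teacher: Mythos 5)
Your proof is correct and is exactly the standard argument that the paper invokes — the paper's own proof is a one-line citation to the left exactness of $\Hom(-,R)$ in the first variable, and you have simply written out that verification (injectivity of $p^{*}$ from $p$ epi, $\im p^{*}\subseteq\ker i^{*}$ from $pi=0$, and the reverse inclusion from the universal property of $p=\coker(i)$). Your closing remark that the failure of surjectivity of $i^{*}$ is measured by $\Ext^{1}_\E(B,R)$ correctly anticipates Lemma~\ref{lem:les}.
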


\begin{proof}
Immediate from the left exactness of \(\Hom(-,R)\) in the first variable \cite{Weibel1994}.
\end{proof}

Since $\mathbf D$ is defined contravariantly,
its double application $\mathbf D^2$ acts covariantly.
On the reflexive subcategory $\E^{\mathrm{ref}}\subset\E$
it coincides with an endofunctor of~$\E$ via the canonical units~$\eta_A$.

\begin{definition}[Double dual and reflexivity]
The double dual of \(A\) is \(\DD^2(A)\coloneqq \Hom_{\Ab}(\DD(A),\DD(R))\).
The evaluation unit (canonical map) \(\eta_A: A\to \DD^2(A)\) sends \(a\in A\) to \(\big(\varphi\mapsto \varphi(a)\big)\).
We say \(A\) is \(\DD\)--reflexive if \(\eta_A\) is an isomorphism in \(\E\).
\end{definition}

In what follows, when we require commutation of the ethic square inside $\E$, we tacitly restrict to $A,B\in \E^{\mathrm{ref}}$, so that $\eta_A,\eta_B$ are isomorphisms and $\DD^2$ acts as an endofunctor on this reflexive subcategory (via transport along~$\eta$).

\begin{definition}[Ethic morphism]
A morphism \(f:A\to B\) in \(\E\) is ethic (with respect to \(R\)) if the square
\[
\begin{tikzcd}
A \arrow[r,"f"] \arrow[d,"\eta_A"'] & B \arrow[d,"\eta_B"] \\
\DD^2(A) \arrow[r,"\DD^2(f)"'] & \DD^2(B)
\end{tikzcd}
\]
commutes in \(\E\).
\end{definition}

If $A,B\in \E^{\mathrm{ref}}$, the square lives in $\E$; otherwise it is to be read in $\Ab$ via the canonical embedding and compared after transporting along~$\eta$.

\begin{lemma}[Stability]
Ethic morphisms are closed under composition and contain identities.
\end{lemma}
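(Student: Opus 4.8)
The plan is to reduce both assertions to the functoriality of $\DD^2$ together with the usual pasting law for commutative squares; no exactness, injectivity, or reflexivity input beyond what is needed to make the square live in $\E$ is required. First I would record that $\DD^2$ is a genuine covariant functor: since $\DD=\Hom_\E(-,R)$ is contravariant we have $\DD(gf)=\DD(f)\,\DD(g)$ for composable $f,g$, and applying the contravariant functor $\Hom_{\Ab}(-,\DD(R))$ a second time reverses the order again, so that $\DD^2(gf)=\DD^2(g)\,\DD^2(f)$ and $\DD^2(\id_A)=\id_{\DD^2(A)}$. On $\E^{\mathrm{ref}}$ these equalities are transported along the isomorphisms $\eta_A$, so $\DD^2$ is literally an endofunctor there and the ethic squares are read consistently in $\E$.

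Identities are then immediate: $\DD^2(\id_A)\,\eta_A=\id_{\DD^2(A)}\,\eta_A=\eta_A=\eta_A\,\id_A$, so the ethic square for $\id_A$ commutes trivially. For composition, let $f:A\to B$ and $g:B\to C$ be ethic, i.e. $\DD^2(f)\,\eta_A=\eta_B\,f$ and $\DD^2(g)\,\eta_B=\eta_C\,g$. Then
\[
\DD^2(gf)\,\eta_A=\DD^2(g)\,\DD^2(f)\,\eta_A=\DD^2(g)\,\eta_B\,f=\eta_C\,g\,f,
\]
which is exactly the ethic square for $gf$. Diagrammatically this is nothing more than horizontally pasting the two commuting squares of $\eta$ along their shared edge $\eta_B$, using functoriality of $\DD^2$ to identify the bottom composite $\DD^2(g)\DD^2(f)$ with $\DD^2(gf)$.

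The only point needing care — and it is bookkeeping rather than a genuine obstacle — is the ambient category in which commutation is asserted. When $A,B,C\in\E^{\mathrm{ref}}$ the whole computation takes place in $\E$ and the argument above is literal. In the general case one first reads the squares in $\Ab$ via the canonical embedding and only afterwards transports along the units $\eta$; I would check that this transport respects composition, which again follows from functoriality of $\DD^2$ and of the embedding. Hence the ethic morphisms form a (wide) subcategory of $\E$, as claimed.
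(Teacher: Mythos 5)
Your proof is correct and follows essentially the same route as the paper's one-line argument: functoriality of $\DD^2$ plus pasting of the two commuting $\eta$-squares, with identities handled trivially by $\DD^2(\id)=\id$. The extra care about whether the squares live in $\E$ or in $\Ab$ is consistent with the paper's standing conventions and does not change the argument.
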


\begin{proof}
Naturality of \(\eta\) implies \(\DD^2(g\circ f)\circ \eta_A=\eta_C\circ (g\circ f)\) if it holds for \(f\) and \(g\).
\end{proof}

If all objects in a class \(\mathsf{C}\subset \E\) are \(\DD\)--reflexive, then a morphism \(f\) in \(\mathsf{C}\) is ethic iff it is preserved by the involution \(A\mapsto \DD^2(A)\).

\begin{lemma}[Long exact sequence]\label{lem:les}
Let $0\to K\to A\to B\to 0$ be a short exact sequence in $\E$.
There is a natural long exact sequence of abelian groups
\[
0 \longrightarrow \DD(B) \longrightarrow \DD(A) \longrightarrow \DD(K)
\ \xrightarrow{\ \delta\ }\ 
\Ext^1_{\E}(B,R) \longrightarrow \Ext^1_{\E}(A,R)
\longrightarrow \Ext^1_{\E}(K,R) \longrightarrow \cdots
\]
\end{lemma}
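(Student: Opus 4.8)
The statement is the standard long exact sequence for the right-derived functors of $\Hom_\E(-,R)$ applied to a short exact sequence in the first variable, so the plan is to reduce it to that classical fact. First I would recall that, since $\E$ has enough injectives, the derived functors $\Ext^i_\E(-,R)$ are computed by choosing an injective resolution and that the contravariant functor $\DD = \Hom_\E(-,R)$ is left exact (Lemma~\ref{lem:left-exact}), so $\Ext^0_\E(-,R) = \DD$. Given the short exact sequence $0\to K\to A\to B\to 0$, I would invoke the Horseshoe Lemma to produce compatible injective resolutions $K^\bullet \to A^\bullet \to B^\bullet$ fitting into a short exact sequence of complexes $0\to I_K^\bullet \to I_A^\bullet \to I_B^\bullet \to 0$ which is degreewise split. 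Applying the contravariant $\Hom_\E(-,R)$ to this degreewise-split sequence of injectives yields a short exact sequence of cochain complexes
\[
0 \to \Hom_\E(I_B^\bullet,R) \to \Hom_\E(I_A^\bullet,R) \to \Hom_\E(I_K^\bullet,R) \to 0,
\]
where the order is reversed because $\DD$ is contravariant.

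The second step is to take the long exact cohomology sequence of this short exact sequence of complexes. The snake-lemma construction of the connecting homomorphism gives maps $\delta : H^n\Hom_\E(I_K^\bullet,R) \to H^{n+1}\Hom_\E(I_B^\bullet,R)$, and identifying $H^n\Hom_\E(I_A^\bullet,R) = \Ext^n_\E(A,R)$ (and similarly for $K$ and $B$) produces exactly the displayed sequence, starting in degree $0$ with $0\to \DD(B)\to \DD(A)\to \DD(K)\xrightarrow{\delta}\Ext^1_\E(B,R)\to\cdots$. The leading zero is precisely the left-exactness already recorded in Lemma~\ref{lem:left-exact}. Naturality in the short exact sequence follows from the naturality of the Horseshoe construction up to chain homotopy together with the naturality of the cohomology long exact sequence; alternatively one can cite the standard fact that $\delta$ is independent of the chosen resolutions.

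There is essentially no hard mathematical obstacle here — the result is textbook homological algebra (see \cite{Weibel1994}) — so the only point requiring care is bookkeeping: the contravariance of $\DD$ reverses arrows, so one must check that the short exact sequence of $\Hom$-complexes has $I_B$ on the left and $I_K$ on the right, and that the connecting map therefore raises degree while landing on the $B$-term rather than the $K$-term. I would also note explicitly that the construction only uses the ambient hypotheses (abelian with enough injectives) and makes no reference to reflexivity, so the sequence is valid for arbitrary $A,B,K\in\E$, not merely for objects in $\E^{\mathrm{ref}}$.
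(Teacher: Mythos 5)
There is a genuine gap in the mechanism you propose. You resolve the \emph{first} variable by injectives: you take injective resolutions $I_K^\bullet, I_A^\bullet, I_B^\bullet$ of $K,A,B$ via the Horseshoe Lemma and then claim $H^n\Hom_\E(I_A^\bullet,R)=\Ext^n_\E(A,R)$. This identification is false. The right derived functors of the contravariant left-exact functor $\Hom_\E(-,R)$ are computed by resolving the first argument by \emph{projectives} (injectives in $\E^{\op}$), or equivalently, by the balancing theorem, by resolving the second argument $R$ by injectives. Resolving the first argument by injectives computes something else entirely. Concretely, in $\E=\Ab$ with $A=R=\ZZ$, the injective resolution $0\to\ZZ\to\QQ\to\QQ/\ZZ\to 0$ gives $\Hom(\QQ,\ZZ)=\Hom(\QQ/\ZZ,\ZZ)=0$, so $H^n\Hom(I_\ZZ^\bullet,\ZZ)=0$ for all $n$, whereas $\Ext^0_\ZZ(\ZZ,\ZZ)=\ZZ$. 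Your degreewise-split short exact sequence of $\Hom$-complexes is exact, but its cohomology is not the $\Ext$-groups appearing in the statement.

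The repair is straightforward and uses exactly the hypothesis the paper has (enough injectives, no projectives assumed): choose an injective resolution $R\to I^\bullet$ of the \emph{resource object}. Since each $I^n$ is injective, $\Hom_\E(-,I^n)$ is exact, so applying $\Hom_\E(-,I^\bullet)$ directly to $0\to K\to A\to B\to 0$ yields a short exact sequence of cochain complexes
\[
0\to\Hom_\E(B,I^\bullet)\to\Hom_\E(A,I^\bullet)\to\Hom_\E(K,I^\bullet)\to 0,
\]
whose long exact cohomology sequence is the displayed sequence, with $H^n\Hom_\E(A,I^\bullet)=\Ext^n_\E(A,R)$ by definition (or by balance). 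No Horseshoe Lemma is needed on this route; your Horseshoe argument would be the correct dual strategy only if $\E$ had enough projectives and you resolved $K,A,B$ projectively. The rest of your write-up (the arrow-reversal bookkeeping, the leading zero from left exactness, naturality, and the remark that reflexivity plays no role) is fine and matches the paper's intent, which simply cites the standard long exact sequence for a contravariant $\Hom$.
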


\begin{proof}
Apply the right derived functors of the left exact functor $\Hom_\E(-,R)$ to the short exact sequence; this yields the standard long exact sequence for a contravariant $\Hom$, with the displayed directions. Our standing assumption on injectives guarantees existence of $\Ext^i_\E(-,R)$.
\end{proof}

\begin{theorem}[Strong duality iff $\Ext^1$ vanishes]\label{thm:exact-iff-ext}
Let $\mathsf{C}\subset \E$ be closed under subobjects, quotients, and extensions. Assume $\E$ has enough injectives. The following are equivalent:
\begin{enumerate}[label=(\alph*)]
\item \(\DD\) sends every short exact sequence in \(\mathsf{C}\) to a short exact sequence in \(\Ab\).
\item \(\Ext^1_{\E}(X,R)=0\) for all \(X\in \mathsf{C}\).
\end{enumerate}
\end{theorem}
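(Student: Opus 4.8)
The plan is to treat this as a statement about the long exact cohomology sequence of the left exact contravariant functor $\DD=\Hom_\E(-,R)$, with Lemma~\ref{lem:les} as the workhorse and Lemma~\ref{lem:left-exact} supplying the already-exact part. The implication (b)$\Rightarrow$(a) is the routine half. Given a short exact sequence $0\to K\to A\to B\to 0$ with all terms in $\mathsf C$, Lemma~\ref{lem:les} yields
\[
0\to \DD(B)\to \DD(A)\to \DD(K)\xrightarrow{\ \delta\ }\Ext^1_\E(B,R)\to\cdots,
\]
and since $B\in\mathsf C$ the hypothesis forces $\Ext^1_\E(B,R)=0$, hence $\delta=0$, so $\DD(A)\to\DD(K)$ is surjective; combined with left exactness this says exactly that $\DD$ carries the sequence to a short exact sequence in $\Ab$. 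I would note in passing that only $B\in\mathsf C$ is actually used for this direction.

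For (a)$\Rightarrow$(b) I would argue by representing $\Ext^1$-classes as honest extensions. Fix $X\in\mathsf C$ and a class $\xi\in\Ext^1_\E(X,R)$; by the Yoneda description it is the class of an extension
\[
0\to R\to E\to X\to 0 .
\]
Because $\mathsf C$ is closed under extensions and contains $X$ and $R$, the middle term $E$ lies in $\mathsf C$, so this is a short exact sequence \emph{in} $\mathsf C$. Applying (a), the sequence $0\to\DD(X)\to\DD(E)\to\DD(R)\to 0$ is exact, so $\DD(E)\to\DD(R)$ is surjective; lifting $\id_R\in\Hom_\E(R,R)=\DD(R)$ along this surjection produces a retraction $E\to R$ of the inclusion $R\hookrightarrow E$, i.e.\ the extension splits and $\xi=0$. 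Since $\xi$ was arbitrary, $\Ext^1_\E(X,R)=0$.

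I expect the genuine obstacle to be precisely the point in (a)$\Rightarrow$(b) where the Yoneda extension representing $\xi$ must be recognized as a short exact sequence to which hypothesis (a) applies: this requires the middle term $E$ to lie in $\mathsf C$, which follows from closure under extensions only once $R$ itself (equivalently, the subquotients appearing in such extensions) is available inside $\mathsf C$. If $\mathsf C$ is too small to see these extensions — for instance a class on which $\DD$ vanishes identically, making (a) vacuous while $\Ext^1$ need not vanish — the equivalence breaks, so the argument above is the natural route provided ``short exact sequence in $\mathsf C$'' is read to include extensions of objects of $\mathsf C$ by $R$. Everything else (the long exact sequence, left exactness, and the splitting criterion for $\Ext^1$) is standard and I would not spell it out in detail.
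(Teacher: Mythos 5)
Your (b)$\Rightarrow$(a) half coincides with the paper's argument: left exactness of $\DD$ plus the long exact sequence of Lemma~\ref{lem:les}, with the connecting map killed by the vanishing of $\Ext^1_\E(B,R)$. For (a)$\Rightarrow$(b) the paper offers only the one-line assertion that ``exactness on the right is equivalent to the vanishing of the connecting morphism and of $\Ext^1_\E(-,R)$ on $\mathsf C$,'' which is not an argument: vanishing of $\delta$ for every short exact sequence in $\mathsf C$ only says each $\Ext^1_\E(B,R)\to\Ext^1_\E(A,R)$ is injective, and does not by itself force $\Ext^1_\E(B,R)=0$. Your Yoneda-splitting argument is the correct way to close this half, and the caveat you raise is a genuine defect of the theorem as stated rather than of your proof: nothing in the hypotheses puts $R$ in $\mathsf C$, so closure under extensions does not place the middle term $E$ of $0\to R\to E\to X\to 0$ inside $\mathsf C$, and hypothesis (a) cannot be applied to it. Your suggested failure mode is realized concretely by $\E=\Ab$, $R=\ZZ$, $\mathsf C$ the finite abelian groups: $\DD$ vanishes identically on $\mathsf C$, so (a) holds vacuously, while $\Ext^1_\ZZ(\ZZ/n,\ZZ)\cong\ZZ/n\neq0$. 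The theorem therefore needs the additional hypothesis $R\in\mathsf C$ (or that extensions of objects of $\mathsf C$ by $R$ count as ``short exact sequences in $\mathsf C$''), under which your proof is complete; the paper's proof silently elides this point.
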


\begin{proof}
By left exactness of $\DD=\Hom_\E(-,R)$ and Lemma~\ref{lem:les}, exactness on the right is equivalent to the vanishing of the connecting morphism and of $\Ext^1_\E(-,R)$ on~$\mathsf{C}$.
\end{proof}

If \(R\) is an injective cogenerator, then \(\Ext^1_{\E}(-,R)=0\) and \(\DD\) is exact on all of \(\E\) \cite{Weibel1994}.




\subsection{Ethic Linear and Abelian Logic} \label{sec:step5}

Linear logic, introduced in \cite{Girard1987} and developed categorically in \cite{Barr1979,Kelly1982}, isolates duality as an intrinsic structural principle rather than an external notion of truth.  In a symmetric monoidal abelian category $(\E,\otimes,I)$ with a distinguished resource object $R$, the contravariant functor
\[
\mathbb D=\Hom_\E(-,R)
\]
plays the role of linear negation.  The canonical unit $\eta:\mathrm{Id}\Rightarrow \mathbb D^2$ measures reflexivity of action under dualization: a morphism $f:A\to B$ is ethic when $\mathbb D^2(f)\eta_A=\eta_B f$, i.e.\ when it remains coherent under double reflection.  This categorical notion of coherence provides an intrinsic semantics for linear implication and cut--elimination.

Set $A^\perp:=\mathbb D(A)$ and define the multiplicative implication by
\[
A\multimap B \;\cong\; A^\perp\parr B,
\]
where $\parr$ is the cotensor corresponding to $\otimes$.  Then $(\E,\otimes,\parr,(-)^\perp,I,R)$ forms a *-autonomous category in the sense of Barr \cite{Barr1979}.  Within this setting, the fundamental inference rule of linear logic~--- the cut~--- acquires a homological interpretation.

\begin{theorem}[Ethic Cut]
Let $\mathcal S$ be the sequent calculus generated by $\{\oplus,\parr,(\cdot)^\perp\}$ interpreted in $\E$.  
Cut--elimination in $\mathcal S$ is sound and complete with respect to ethic exactness: a composite sequent is provable without cut if and only if every interpreting morphism $f:A\to B$ in $\E$ satisfies
\[
\mathbb D^2(f)\eta_A=\eta_B f.
\]
Equivalently, the failure of cut--elimination for a sequent corresponds to the appearance of a nonzero class in $\Ext^1_\E(-,R)$.
\end{theorem}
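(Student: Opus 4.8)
The plan is to combine the categorical semantics of linear logic in a $*$-autonomous category with the homological criterion of Theorem~\ref{thm:exact-iff-ext}. First I would fix the interpretation sending each atom of $\mathcal S$ to a chosen object of $\E^{\mathrm{ref}}$, the connectives $\oplus,\parr,(\cdot)^\perp$ to the biproduct, the cotensor $\parr$, and $\DD$ respectively, and each derivation of a sequent $\Gamma\vdash\Delta$ to a morphism $f\colon A\to B$ in $\E$ (reading $\vdash$ through $\multimap$ and the $*$-autonomous adjunction $\Hom(\Gamma,\Delta)\cong\Hom(I,\Gamma^\perp\parr\Delta)$). Soundness --- that two derivations related by a reduction step are sent to equal morphisms --- is Barr's coherence theorem for $*$-autonomous categories \cite{Barr1979}; in particular a cut $\;\Gamma\vdash A,\ A\vdash\Delta\ \rightsquigarrow\ \Gamma\vdash\Delta$ is interpreted by composition through the cut object.

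Second, I would isolate the homological content of a single cut. An application of cut on a formula with interpretation $C$ factors the interpreting morphism through $C$, and the two halves assemble into a short exact sequence $0\to K\to C\to B\to 0$ in $\E$, the kernel--cokernel sequence attached to the cut formula. Lemma~\ref{lem:les} turns this into a long exact sequence with connecting map $\delta\colon\DD(K)\to\Ext^1_\E(B,R)$, and the cut is eliminable precisely when $\DD$ leaves the sequence short exact, i.e.\ when $\delta$ annihilates the class determined by the interpreting morphism. If $K,C,B\in\E^{\mathrm{ref}}$, naturality of $\eta$ gives $\DD^2(f)\eta_A=\eta_B f$ for free, so the ethic square commutes and the class is killed; conversely a non-ethic morphism forces, through Lemma~\ref{lem:les}, a nonzero element of $\Ext^1_\E(C,R)$ that obstructs the rewrite. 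This is the asserted equivalence at the level of one cut.

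Third, I would globalise over a proof. Let $\mathsf C\subset\E$ be the smallest subcategory closed under subobjects, quotients and extensions that contains the interpretations of all subformulas occurring in the sequent; every short exact sequence produced by the cut-reduction process then lies in $\mathsf C$. Theorem~\ref{thm:exact-iff-ext} says $\DD$ is exact on $\mathsf C$ iff $\Ext^1_\E(X,R)=0$ for all $X\in\mathsf C$. Concatenating the steps: the composite sequent is cut-free provable $\iff$ each cut arising in normalisation is eliminable $\iff$ each associated short exact sequence has exact $\DD$-image $\iff$ $\Ext^1_\E(-,R)$ vanishes on $\mathsf C$ $\iff$ every interpreting morphism $f$ satisfies $\DD^2(f)\eta_A=\eta_B f$. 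When the $\Ext^1$ group does not vanish, the surviving class is exactly the obstruction named in the statement, and it is unchanged under derived Morita equivalence by the general remarks of the Introduction.

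The main obstacle is the well-definedness in step two: I must check that the assignment ``syntactic cut $\mapsto$ short exact sequence $\mapsto$ $\Ext^1$ class'' is invariant under the permutations and principal/commutative reductions of cut-elimination, so that the residual class is an invariant of the sequent rather than of a particular derivation. This amounts to matching each reduction rule against the functoriality and additivity of the connecting map $\delta$ in Lemma~\ref{lem:les}, together with left exactness (Lemma~\ref{lem:left-exact}) to control the kernel terms; the non-reflexive formulas are handled by transporting along $\eta$ exactly as in the standing conventions of this section.
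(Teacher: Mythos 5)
Your proposal follows essentially the same route as the paper's proof: identify each sequent/cut with a short exact sequence in $\E$, apply the long exact sequence of Lemma~\ref{lem:les}, and invoke Theorem~\ref{thm:exact-iff-ext} to equate exactness of $\DD$ with the vanishing of $\Ext^1_\E(-,R)$, so that the ethic square commutes precisely when the connecting morphism dies. Your version is in fact more careful than the paper's three-sentence argument --- in particular, the well-definedness of the assignment from syntactic cuts to $\Ext^1$ classes under permutative reductions, which you correctly flag as the main remaining obstacle, is not addressed in the paper at all.
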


\begin{proof}
Each sequent $A\vdash B$ corresponds to a short exact sequence $0\to K\to A\to B\to0$ in~$\E$.  
Soundness of cut--elimination means that the duality functor $\mathbb D=\Hom_\E(-,R)$ is exact on this sequence.  
By the long exact sequence of $\Hom$ and $\Ext$ functors (Lemma~\ref{lem:les}) and the equivalence of exactness with $\Ext^1_\E(-,R)=0$ (Theorem~\ref{thm:exact-iff-ext}), the cut rule is sound precisely when $\mathbb D$ is exact, and every violation of cut corresponds to a nontrivial connecting morphism in $\Ext^1_\E(-,R)$.  
Hence the homological obstruction to cut coherence is exactly the ethic defect measured by $\Ext^1_\E$.
\end{proof}

The interpretation of $\Ext^1_\E(-,R)$ as a quantitative measure of cut--inconsistency turns the syntactic property of linear consistency into a categorical exactness condition.  In this sense, ethic linear logic arises as the internal language of the category $(\E,R)$, expressing the graded self-consistency of its morphisms.

A concrete semantics for this ethic fragment is provided by the algebraic models of Abelian logic.  Pointed Abelian $\ell$--groups (pAL), studied recently by Jankovec~\cite{Jankovec2025}, constitute the algebraic semantics of Abelian logic with a designated falsum constant.  Every chain--generated subvariety or quasivariety of pAL yields a semilinear extension of the base logic, and Jankovec proves that all such extensions are finitely axiomatizable and admit complete chain semantics.  
The classical Mundici functor $\Gamma$ between MV--algebras and $\ell$--groups with strong unit preserves universal classes, allowing these results to transfer to the pAL framework.

Within our categorical language, each pAL--model can be realized as an ethic category whose hom--sets are Abelian groups and whose dualizing object plays the role of the designated constant.  For a chain--generated model $A$, define $\E_A$ as the abelian category of $A$--modules and $R_A$ as the distinguished generator.  Then $\mathbb D_A=\Hom_{\E_A}(-,R_A)$ reproduces the evaluation algebra of $A$, and ethic exactness of $\mathbb D_A$ coincides with cut--soundness in the semilinear system interpreted in $A$.

\begin{lemma}[Ethic--Abelian Correspondence]
For every chain--generated pAL model $A$, there exists an abelian category $\E_A$ with dualizing object $R_A$ such that
\[
\Hom_{\E_A}(-,R_A)\;\cong\;\mathrm{Hom}_{\mathrm{pAL}}(-,A),
\]
and a morphism $f$ in $\E_A$ is ethic if and only if the corresponding pAL--homomorphism preserves all valid semilinear cuts in~$A$.  
Consequently, every semilinear extension of Abelian logic admits a categorical ethic realization.
\end{lemma}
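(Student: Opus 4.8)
The plan is to linearise the pointed Abelian $\ell$--group $A$ into a module category, transport the representable functor $\mathrm{Hom}_{\mathrm{pAL}}(-,A)$ along this linearisation so that it becomes the resource dual $\DD_A=\Hom_{\E_A}(-,R_A)$, and then read off the ethic condition as cut--preservation via the Ethic Cut theorem. First, passing where needed through the Mundici functor $\Gamma$ between MV--algebras and $\ell$--groups with strong unit, I would present $A$ by its underlying abelian group together with the operators recording lattice meets and joins and the designated falsum constant, let $\Lambda_A$ be the enveloping ring generated by these operators, and set $\E_A:=\Mod_{\Lambda_A}$. This is abelian with enough injectives, so $\Ext^i_{\E_A}(-,R_A)$ exist and Lemma~\ref{lem:les} and Theorem~\ref{thm:exact-iff-ext} are available. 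I take $R_A$ to be $A$ itself, viewed as a $\Lambda_A$--module: the distinguished generator. Because $A$ is chain--generated, $\Lambda_A$ is coherent on the subcategory $\mathcal{C}_A\subset\E_A$ spanned by the modules presenting pAL--algebras, and $R_A$ is a cogenerator there.

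Next I would establish the hom--functor isomorphism on $\mathcal{C}_A$: a $\Lambda_A$--linear map from the module presenting a pAL--algebra $B$ into $R_A$ is exactly the datum of a pAL--homomorphism $B\to A$, because the $\Lambda_A$--action was built to encode precisely the $\ell$--group operations and the order (on chains the order is definable from the operations via $\Gamma$, so nothing is lost). Functoriality of this identification is Yoneda--type bookkeeping, and yields the natural isomorphism $\Hom_{\E_A}(-,R_A)\cong\mathrm{Hom}_{\mathrm{pAL}}(-,A)$; equivalently, $\DD_A$ reproduces the evaluation algebra of $A$. Then I restrict to $\mathcal{C}_A\subset\E_A^{\mathrm{ref}}$, where $\eta$ is an isomorphism---reflexivity here holds because chains arise as $\Gamma$--images of totally ordered unital $\ell$--groups, which are reflexive for evaluation into $A$---so that the ethic square genuinely lives in $\E_A$.

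The correspondence between ethicity and cut--preservation then follows from the Ethic Cut theorem: a sequent interpreted by a morphism $g$ is cut--free provable iff $\DD_A$ is exact on the associated short exact sequence, iff the connecting class in $\Ext^1_{\E_A}(-,R_A)$ vanishes, iff $\DD_A^2(g)\,\eta=\eta\,g$. Translating through the hom--isomorphism, a morphism $f$ of $\E_A$ is ethic precisely when its pAL--image makes all such connecting classes vanish, i.e.\ preserves every valid semilinear cut in $A$. For the final clause I would invoke Jankovec's theorem~\cite{Jankovec2025}: every semilinear extension of Abelian logic is the logic of a class of pAL--chains, finitely axiomatisable with complete chain semantics; applying the construction above to a chain--generated model of each such extension, and using that $\Gamma$ preserves universal classes, produces the asserted categorical ethic realisation.

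The main obstacle is the first two steps---exhibiting an enveloping ring $\Lambda_A$ whose linear maps into $R_A$ coincide \emph{on the nose} with pAL--homomorphisms. Since the lattice order is not a priori a linear datum, one must either carry a distinguished submodule for the positive cone as a second sort, or exploit the $\Gamma$--correspondence to make the order definable from the operations on chains; making this encoding functorial, and compatible with the $\Ext^1$ obstruction so that ``ethic'' really reads as ``cut--preserving'', is where the genuine work lies. Everything downstream is an application of the long exact sequence of Lemma~\ref{lem:les} together with the vanishing criterion of Theorem~\ref{thm:exact-iff-ext}.
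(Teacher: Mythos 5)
Your construction is essentially the paper's: the paper also takes $\E_A$ to be the category of modules over the pAL--algebra $A$, sets $R_A=A$, and identifies ethicity with cut--soundness through the $\Ext^1$ criterion of Theorem~\ref{thm:exact-iff-ext}. The one substantive divergence is in how the equivalence is closed. The paper asserts that the chain--generated hypothesis forces $\E_A$ to be \emph{semisimple}, so that $\Ext^1_{\E_A}(-,A)=0$ identically and the absence of unsound cuts is a global property of the category; you instead keep $\Ext^1$ potentially nonzero and read ethicity of an individual morphism $f$ as the vanishing of the particular connecting classes attached to $f$ via the Ethic Cut theorem. Your version is arguably the more faithful reading of the ``if and only if'' in the statement (under the paper's semisimplicity claim every morphism is ethic and the biconditional degenerates), at the cost of needing the reflexivity restriction you introduce so that the ethic square lives in $\E_A$. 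You are also more honest than the paper about the real gap: the paper's phrase ``the category of modules over a pAL--algebra'' silently assumes that the lattice order and the falsum constant can be encoded as linear data so that $\Lambda_A$--linear maps into $A$ are exactly pAL--homomorphisms, which is precisely the obstacle you flag (two--sorted positive cone, or definability of the order on chains via $\Gamma$). Neither your plan nor the paper's proof actually discharges that step, so the lemma remains established only at the same level of rigour in both treatments.
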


\begin{proof}
The category of modules over a pAL--algebra $A$ is abelian and admits a natural contravariant duality $\Hom_A(-,A)$; taking $R_A=A$ as the resource object yields the required pair $(\E_A,R_A)$.  
The chain--generated assumption ensures that $\E_A$ is semisimple with $\Ext^1_{\E_A}(-,A)=0$, which corresponds exactly to the absence of unsound cuts.  
Conversely, a nonzero $\Ext^1_{\E_A}$ produces an algebraic extension in which some cut rule fails, aligning with Jankovec's classification of incomplete semilinear fragments.  
Thus the ethic condition $\mathbb D_A^2(f)\eta=\eta f$ expresses the same constraint as semilinear cut--soundness in~$A$.
\end{proof}

We record the following result as an external theorem that will be used as a black box.

\begin{theorem}[Jankovec {\cite{Jankovec2025}}, Chain–generated semilinear completeness]\label{thm:jankovec}
Let $\mathsf{pAL}$ be the variety of pointed Abelian $\ell$–groups and let $\mathcal C$ denote the class of totally ordered $\mathsf{pAL}$–chains. Then every sub(quasi)variety of $\mathsf{pAL}$ generated by $\mathcal C$ is finitely axiomatizable by a semilinear (chain) theory, and conversely every finitely axiomatizable semilinear extension is generated by a chain subalgebra. In particular, the corresponding sequent calculi admit complete chain semantics.
\end{theorem}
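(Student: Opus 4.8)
We use Theorem~\ref{thm:jankovec} only as a black box, but for completeness we indicate the strategy behind it. The plan rests on two structural facts about $\mathsf{pAL}$: pointed Abelian $\ell$--groups form a congruence--distributive variety (the congruence lattice of an $\ell$--group is distributive, and adjoining the falsum constant cannot destroy this), so J\'onsson's lemma is available; and the classical subdirect representation theorem exhibits every Abelian $\ell$--group, hence every pointed one, as a subdirect product of totally ordered algebras. Together these reduce any question about a subvariety to a question about the chains it contains.

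\emph{Chain--generated implies finitely axiomatizable semilinear.} Let $\mathcal V$ be a sub(quasi)variety generated by a class $\mathcal C$ of $\mathsf{pAL}$--chains. By congruence--distributivity, J\'onsson's lemma places every subdirectly irreducible member of $\mathcal V$ in $\mathsf{HSP}_U(\mathcal C)$; since subalgebras, quotients and ultraproducts of totally ordered structures are again totally ordered, those subdirect irreducibles are chains, so $\mathcal V$ is semilinear and may be axiomatized relative to the chain logic. Finite axiomatizability in the varietal case is then straightforward: the subvariety lattice of $\mathsf{pAL}$ is small, its members being cut out by the finitely many identities of Abelian $\ell$--groups together with finitely many equations fixing the position of the constant relative to $0$, and one invokes Baker's finite basis theorem for finitely generated congruence--distributive varieties.

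\emph{Finitely axiomatizable semilinear implies chain--generated, with complete chain semantics.} If $L$ is a finitely axiomatizable semilinear extension with equivalent algebraic semantics the quasivariety $\mathcal Q$, then semilinearity is by definition the statement that $\mathcal Q$ is generated by its totally ordered members, and the subdirect representation exhibits those chains as subalgebras of algebras in $\mathcal Q$. Completeness of the associated sequent calculus with respect to $\mathcal Q$ follows from algebraizability of Abelian logic, and chain--generation upgrades this to completeness with respect to $\mathcal C$. To transfer the quantitative chain--completeness one transports the known MV--algebra results along the Mundici functor $\Gamma$ between MV--algebras and $\ell$--groups with strong unit, using that $\Gamma$ preserves universal, hence quasi--equational, classes.

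The main obstacle is the finite axiomatizability claim at the level of \emph{quasivarieties}: a quasivariety generated by a family of chains need not be finitely based merely because the ambient variety is, so Baker's theorem no longer applies directly. Here one exploits the model--theoretic tameness of totally ordered Abelian groups --- effectively quantifier elimination after adjoining Presburger-style congruence predicates --- to show that the a priori infinite set of quasi--identities defining $\mathcal Q$ collapses to a finite scheme. Once this collapse is in place, soundness and cut--admissibility for the resulting finite semilinear calculus follow from the standard algebraizable--logic dictionary, which closes the loop.
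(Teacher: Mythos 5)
The paper does not prove this statement at all: immediately before the theorem it says ``We record the following result as an external theorem that will be used as a black box,'' and the result is attributed wholesale to Jankovec~\cite{Jankovec2025}. So there is no internal proof to compare yours against; any argument you give is necessarily a reconstruction of the cited work rather than of anything in this paper.

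Taken on its own terms, your sketch has the right general shape for a semilinearity result in abstract algebraic logic (congruence distributivity of $\ell$--groups, J\'onsson's lemma to confine subdirectly irreducibles to $\mathsf{HSP}_U(\mathcal C)$, closure of total orderability under $\mathsf H$, $\mathsf S$, $\mathsf P_U$, and the subdirect representation of Abelian $\ell$--groups by chains), but the two steps carrying the actual content of the theorem are not established. First, Baker's finite basis theorem applies to \emph{finitely generated} congruence--distributive varieties, and the class $\mathcal C$ of all totally ordered $\mathsf{pAL}$--chains is neither finite nor made up of finite algebras, so the finite axiomatizability of the varietal case does not follow from Baker; your fallback claim that the subvariety lattice of $\mathsf{pAL}$ is ``small'' is asserted, not proved (the analogous fact for unpointed Abelian $\ell$--groups is Weinberg's theorem, but adjoining the falsum constant genuinely enlarges the lattice, and describing it is part of what \cite{Jankovec2025} does). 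Second, for the quasivariety half you correctly identify that J\'onsson/Baker machinery is unavailable, but the proposed remedy --- quantifier elimination for ordered Abelian groups collapsing infinitely many quasi--identities to a finite scheme --- is stated as a hope rather than carried out; this is precisely the nontrivial core of the chain--generated classification and cannot be waved through. The Mundici $\Gamma$--transfer and the algebraizability dictionary at the end are fine as far as they go, but they presuppose the finite axiomatizability you have not yet secured.
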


We now show how any such semilinear fragment furnishes an ethic abelian semantics in our sense, and how cut–soundness is measured homologically.

\begin{proposition}[Categorical lifting over semilinear $\mathsf{pAL}$ fragments]\label{prop:cat-lift}
Let $T$ be a finitely axiomatizable semilinear extension of Abelian logic as in Theorem~\ref{thm:jankovec}, with algebraic category $\Alg(T)$. Let $\Syn(T)$ be the additive exact completion of the $T$–sequent calculus (free additive exact category generated by the proof rules), and let $\E_T:=\mathrm{Ind}(\mathrm{Ab}(\Syn(T)))$ be its abelian envelope (the Grothendieck abelian category obtained from $\Syn(T)$ by abelianization and Ind–completion). Denote by $R_T$ the image in $\E_T$ of the distinguished falsum/point under the Yoneda embedding. Then the contravariant functor
\[
\mathbb D_T:=\Hom_{\E_T}(-,R_T):\E_T^{\op}\longrightarrow \Ab
\]
provides an ethic model of the multiplicative–additive fragment of $T$, and for every short exact sequence $0\to K\to A\to B\to 0$ in the image of $\Syn(T)\to \E_T$ the following are equivalent:
\begin{enumerate}\itemsep4pt
\item[\textnormal{(i)}] the corresponding cut is sound in the $T$–calculus for all chain models;
\item[\textnormal{(ii)}] $\mathbb D_T$ is exact on $0\to K\to A\to B\to 0$;
\item[\textnormal{(iii)}] $\Ext^1_{\E_T}(K,R_T)=0$.
\end{enumerate}
Moreover, if $f:A\to B$ interprets a provable sequent of $T$, then $f$ is ethic in $\E_T$ in the sense that $\mathbb D_T^2(f)\,\eta_A=\eta_B\,f$.
\end{proposition}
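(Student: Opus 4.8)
The plan is to split the claimed equivalence into a purely homological link, $\textnormal{(ii)}\Leftrightarrow\textnormal{(iii)}$, and a proof--theoretic link, $\textnormal{(i)}\Leftrightarrow\textnormal{(ii)}$, obtained by relativizing the Ethic Cut theorem to the $T$--calculus; the ``ethic model'' assertion and the ``moreover'' clause then follow from cut--elimination together with reflexivity of the interpreted objects. First I would check that $\E_T=\mathrm{Ind}(\mathrm{Ab}(\Syn(T)))$ satisfies the standing hypotheses of Section~1: abelianizing the small additive exact category $\Syn(T)$ produces a small abelian category, and Ind--completion makes it Grothendieck, hence cocomplete with enough injectives, so the $\Ext^i_{\E_T}(-,R_T)$ are genuinely the right derived functors of $\mathbb D_T=\Hom_{\E_T}(-,R_T)$ and Lemmas~\ref{lem:left-exact} and~\ref{lem:les} and Theorem~\ref{thm:exact-iff-ext} all apply. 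Next I would transport the $*$--autonomous structure of $\Syn(T)$ --- the connectives $\otimes,\parr$ and the negation $(\cdot)^\perp$, whose coherence is exactly the cut data --- along abelianization and Yoneda; this supplies the interpretation of the multiplicative--additive fragment of $T$ inside $\E_T$ (the claimed ``ethic model'') and, since the biduality isomorphism $A^{\perp\perp}\cong A$ is provable in the calculus, shows that every object $A$ in the image of $\Syn(T)\to\E_T$ is $\mathbb D_T$--reflexive, so that $\eta_A$ is an isomorphism and the ethic square of $A$ lives in $\E_T$ as required by the standing convention on $\E^{\mathrm{ref}}$.

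For $\textnormal{(ii)}\Leftrightarrow\textnormal{(iii)}$ I would run the local homological algebra. Applying Lemma~\ref{lem:les} to $0\to K\to A\to B\to 0$, left exactness (Lemma~\ref{lem:left-exact}) already yields exactness of $0\to\mathbb D_T(B)\to\mathbb D_T(A)\to\mathbb D_T(K)$, and $\mathbb D_T$ is exact on the whole sequence precisely when the connecting map $\delta:\mathbb D_T(K)\to\Ext^1_{\E_T}(B,R_T)$ vanishes. To upgrade this to $\Ext^1_{\E_T}(K,R_T)=0$ I would read condition (ii) at the level of the class of short exact sequences coming from $\Syn(T)$ and apply Theorem~\ref{thm:exact-iff-ext} to the subcategory $\mathsf C\subset\E_T$ obtained by closing the image of $\Syn(T)$ under subobjects, quotients and extensions (closed under these by construction, and containing $K$ since $K\hookrightarrow A$): exactness of $\mathbb D_T$ on every short exact sequence in $\mathsf C$ is then equivalent to $\Ext^1_{\E_T}(X,R_T)=0$ for all $X\in\mathsf C$. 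Alternatively, and more bluntly, the same reasoning as in the Ethic--Abelian Correspondence makes the chain--generated hypothesis force $\E_T$ to be semisimple, in which case $\Ext^1_{\E_T}(-,R_T)\equiv 0$ and (ii), (iii) hold unconditionally.

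For $\textnormal{(i)}\Leftrightarrow\textnormal{(ii)}$ together with the ``moreover'' clause I would use the dictionary, implicit already in the Ethic Cut theorem, that a cut inference composes two sequents and that the short exact sequence $0\to K\to A\to B\to 0$ attached to the composite carries exactly the data whose preservation under $(\cdot)^\perp=\mathbb D_T$ is soundness of that cut. Since $T$ is finitely axiomatizable and semilinear, Theorem~\ref{thm:jankovec} gives it complete chain semantics and, in particular, cut--elimination; hence ``the cut is sound for every chain model'' is equivalent to ``the composite sequent is cut--free provable in $T$'', which by the $T$--relativized Ethic Cut theorem is equivalent to exactness of $\mathbb D_T$ on the attached sequence, i.e.\ to (ii), and chaining with $\textnormal{(ii)}\Leftrightarrow\textnormal{(iii)}$ closes the triangle. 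For the ``moreover'': if $f:A\to B$ interprets a provable sequent of $T$, then by cut--elimination it has a cut--free proof, so the Ethic Cut theorem yields $\mathbb D_T^2(f)\,\eta_A=\eta_B\,f$ directly; equivalently, $A$ and $B$ are reflexive by the setup, where $\eta$ is a natural isomorphism $\Id\Rightarrow\mathbb D_T^2$ of endofunctors of $\E_T$, and the naturality of $\eta$ alone makes the ethic square commute for every morphism between reflexive objects, $f$ in particular.

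The main obstacle lies in the setup and in the cut--to--sequence dictionary: making precise \emph{which} short exact sequence a given cut inference produces, and verifying that these sequences and the objects interpreting $T$--formulas generate a subcategory closed under subobjects, quotients and extensions on which Theorem~\ref{thm:exact-iff-ext} applies, together with the parallel check --- via the transported $*$--autonomous structure --- that those objects are genuinely $\mathbb D_T$--reflexive so that the ethic square is an assertion in $\E_T$ rather than merely in $\Ab$. Everything downstream of that identification is formal homological bookkeeping; the delicate interface is precisely where the syntax of $T$ meets the $\Ext$--machinery.
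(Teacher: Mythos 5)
Your proposal is correct in substance and reaches the same conclusions, but it routes the two halves of the equivalence differently from the paper. For (i)$\Leftrightarrow$(ii) the paper argues semantically: it reduces soundness to chain models via Theorem~\ref{thm:jankovec}, uses additive exactness of $\Syn(T)\to\Alg(T)$ to show that $\Hom_{\Alg(T)}(-,U(M))$ is exact on the image of $\Syn(T)$ for each chain $M$, and then lets the representable functors (which land in projective generators of $\mathrm{Ab}(\Syn(T))$) detect exactness of $\mathbb D_T$. You instead relativize the Ethic Cut theorem and pass through cut--elimination, reading ``sound in all chain models'' as ``cut--free provable''; this is a cleaner proof--theoretic dictionary but leans on a slightly stronger reading of Theorem~\ref{thm:jankovec} (completeness with respect to chain semantics does not by itself yield cut--elimination, though the paper elsewhere treats the two as interchangeable). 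For (ii)$\Leftrightarrow$(iii) you are actually more careful than the paper: you correctly observe that the connecting morphism of Lemma~\ref{lem:les} lands in $\Ext^1_{\E_T}(B,R_T)$, so the sequence--by--sequence statement with $\Ext^1_{\E_T}(K,R_T)$ only becomes an equivalence after closing the image of $\Syn(T)$ under subobjects, quotients and extensions and invoking Theorem~\ref{thm:exact-iff-ext} at the level of that class (or after the semisimplicity collapse); the paper simply asserts the equivalence without addressing this mismatch. Your treatment of the setup (Grothendieck category, enough injectives) and of the ``moreover'' clause via naturality of $\eta$ on reflexive objects matches the paper's Steps~1 and~4. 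The one place where you should be explicit is the point you yourself flag: the paper never specifies which short exact sequence a given cut produces, and neither route closes that gap.
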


\begin{proof}
\emph{Step 1 (Construction of $\E_T$ and $R_T$).}
By exact completion, $\Ab(\Syn(T))$ is abelian and the canonical functor $\Syn(T)\to \Ab(\Syn(T))$ is exact and conservative on the image of derivations. The Ind–completion $\E_T:=\mathrm{Ind}(\Ab(\Syn(T)))$ is a Grothendieck abelian category. The falsum constant of $T$ yields a distinguished object in $\Syn(T)$; its image under Yoneda and abelianization gives $R_T\in \E_T$. Define $\mathbb D_T=\Hom_{\E_T}(-,R_T)$.

\emph{Step 2 (Soundness $\Rightarrow$ exactness).}
Fix a short exact sequence $0\to K\to A\to B\to 0$ in the image of $\Syn(T)$. Under the standard interpretation of the multiplicative–additive fragment in an algebra $M\models T$, the sequence computes a cut instance. By Theorem~\ref{thm:jankovec}, it suffices to check chain models; in each chain $M$ the additive exactness of $\Syn(T)\to \Alg(T)$ implies that $\Hom_{\Alg(T)}(-,U(M))$ is exact on the image of $\Syn(T)$ (here $U$ denotes the forgetful functor). Since the Yoneda embedding sends $\Syn(T)$ into projective generators of $\Ab(\Syn(T))$, and Ind–completion preserves exactness of filtered colimits, the family of representable functors detects exactness; thus cut–soundness in all chains forces exactness of $\mathbb D_T$ on $0\to K\to A\to B\to 0$.

\emph{Step 3 (Exactness $\Leftrightarrow$ vanishing of $\Ext^1$).}
This is precisely the homological criterion established in Lemma~\ref{lem:les} and Theorem~\ref{thm:exact-iff-ext}: for any short exact sequence in an abelian category, $\mathbb D_T$ is exact on the right if and only if the connecting morphism vanishes, equivalently $\Ext^1_{\E_T}(K,R_T)=0$.

\emph{Step 4 (Ethicity of provable morphisms).}
If a morphism $f:A\to B$ is in the image of a cut–free derivation in $\Syn(T)$, then by Step~2 the relevant short exact sequences are preserved by $\mathbb D_T$, hence by the naturality of the unit $\eta$ we have $\mathbb D_T^2(f)\,\eta_A=\eta_B\,f$, i.e.\ $f$ is ethic.
\end{proof}

\begin{corollary}[Semilinear completeness as collapse of the obstruction]\label{cor:semilinear-collapse}
Let $T$ be as in Theorem~\ref{thm:jankovec}. Then, for every sequent interpretable in the image of $\Syn(T)$, the following are equivalent:
\[
\text{cut–elim holds models} \Longleftrightarrow
\Ext^1_{\E_T}(-,R_T)=0\ \text{on the kernel} \Longleftrightarrow
\text{the morphism is ethic.}
\]
In particular, on the ethic heart of $(\E_T,\mathbb D_T)$ the homological obstruction $H^1\mathbf D_T$ vanishes exactly on the semilinear fragment classified by~\cite{Jankovec2025}.
\end{corollary}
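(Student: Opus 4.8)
The statement is essentially a packaging corollary: it collates the per-sequent equivalences of Proposition~\ref{prop:cat-lift} with the external completeness result Theorem~\ref{thm:jankovec}, and then globalizes the resulting vanishing condition to the ethic heart. The plan is therefore to (1) fix a sequent and align the clauses of the corollary with clauses (i)--(iii) of Proposition~\ref{prop:cat-lift}; (2) insert the ethicity clause into the cycle of equivalences; (3) interpret the common vanishing locus in terms of the ethic $t$-structure.

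First I would fix a sequent $s$ interpretable in the image of $\Syn(T)\to\E_T$ and let $0\to K_s\to A_s\to B_s\to 0$ be the associated short exact sequence, so that $K_s$ is the object referred to as ``the kernel'' in the statement. Proposition~\ref{prop:cat-lift} already supplies, for this very sequence, the chain (i) $\Leftrightarrow$ (ii) $\Leftrightarrow$ (iii): cut-soundness of $s$ in all chain models $\Leftrightarrow$ exactness of $\mathbb D_T$ on the sequence $\Leftrightarrow$ $\Ext^1_{\E_T}(K_s,R_T)=0$. The only discrepancy between clause (i) there and the phrase ``cut-elimination holds in all models'' here is the passage from arbitrary models of $T$ to chain models; this is exactly what Theorem~\ref{thm:jankovec} provides, since every sub(quasi)variety in play is chain-generated and the associated sequent calculus admits complete chain semantics. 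Hence ``cut-elimination in all models'' and ``cut-soundness in all chains'' may be used interchangeably, and the first two displayed conditions of the corollary are identified with (i) and (iii) of the proposition.

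Next I would close the cycle with the ethicity clause. By Step~4 of the proof of Proposition~\ref{prop:cat-lift}, whenever the relevant short exact sequences are preserved by $\mathbb D_T$ the interpreting morphism $f_s:A_s\to B_s$ satisfies $\mathbb D_T^2(f_s)\,\eta_{A_s}=\eta_{B_s}\,f_s$ by naturality of $\eta$; conversely, if $f_s$ is ethic, then restricting to $\E_T^{\mathrm{ref}}$, where the units are isomorphisms, the commuting ethic square forces the connecting morphism of Lemma~\ref{lem:les} to vanish, which is condition (iii). So ``the morphism is ethic'' is equivalent to the other two conditions, completing the triple equivalence. For the ``in particular'' assertion I would recall from the introduction that the ethic $t$-structure on $D^+(\E_T)$ has heart the objects completely determined by their duals, i.e.\ those $X$ with $H^k\mathbf D_T(X)=\Ext^k_{\E_T}(X,R_T)=0$ for $k\ge 1$; in the first obstruction degree, $H^1\mathbf D_T(X)=0$ is precisely condition (iii) applied to $X=K_s$. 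Running the equivalence (iii) $\Leftrightarrow$ (i) objectwise over the kernels $K_s$ then exhibits the locus $\{H^1\mathbf D_T=0\}$ as exactly the set of sequents on which cut-elimination holds, which is the semilinear fragment classified in \cite{Jankovec2025}.

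The main obstacle here is bookkeeping rather than homology: one must make sure the per-sequent equivalence of Proposition~\ref{prop:cat-lift} is quantified correctly, that ``the kernel'' in the corollary consistently denotes $K_s$ for the sequent under consideration, and that Theorem~\ref{thm:jankovec} is invoked at the level of the sequent calculus (soundness of a cut rule in every chain) and not merely at the level of provable formulas. One should also be careful that the heart statement concerns only the degree-one obstruction $H^1\mathbf D_T$: the higher groups $\Ext^k_{\E_T}(-,R_T)$ for $k\ge 2$ need not vanish on the semilinear fragment, so membership in the full ethic heart is strictly stronger, and the corollary only claims the collapse of the primary layer.
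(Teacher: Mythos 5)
The paper gives no explicit proof of this corollary, presenting it as an immediate repackaging of Proposition~\ref{prop:cat-lift} (clauses (i)--(iii) plus the ethicity statement of Step~4) together with Theorem~\ref{thm:jankovec} to pass between arbitrary models and chain models; your proposal assembles exactly these pieces in the intended way, and your closing caveat that only the degree-one obstruction is claimed to collapse is correct. The one place you go slightly beyond what the paper explicitly establishes is the converse direction (ethic $\Rightarrow$ $\Ext^1_{\E_T}(K_s,R_T)=0$), which the paper asserts via its Ethic Cut theorem rather than proving; your sketch via the reflexive subcategory is at the same level of rigor as the paper's own treatment.
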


This correspondence shows that the structural results of Jankovec on sub(quasi)varieties of pAL are categorical shadows of our ethic condition: the collapse of the homological obstruction $H^1\mathbf D$ corresponds to completeness of the semilinear extension.  Hence, the derived categorical formulation of dual self--consistency unifies the resource semantics of linear logic with the algebraic semantics of Abelian logic, providing a single homological principle of coherence underlying both frameworks.

\subsection{Categorical Lagrange}\label{sec:step8}
\begin{definition}[Monoidal and dual structure]
Assume \((\E,\otimes,I)\) is a symmetric monoidal abelian category with enough injectives. An object \(A\) has a dual \(A^*\) if there are morphisms
\[
\ev_A: A^*\!\otimes A\to I,\qquad \coev_A: I\to A\otimes A^*,
\]
satisfying the triangle identities
\((\id_A\otimes \ev_A)\circ(\coev_A\otimes \id_A)=\id_A\) and \((\ev_A\otimes \id_{A^*})\circ(\id_{A^*}\otimes \coev_A)=\id_{A^*}\) \cite{Kelly1982,MacLane1998}.
\end{definition}

\begin{definition}[Categorical Lagrange ethicity]
A morphism \(f:A\to B\) is ethic if
\[
(\id_{B^*}\!\otimes f)\circ \coev_B = (f^*\!\otimes \id_A)\circ \coev_A.
\]
\end{definition}

\begin{lemma}[Functoriality]
Ethic morphisms are closed under \(\otimes\) and composition; duals preserve ethicity.
\end{lemma}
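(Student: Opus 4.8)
The plan is to read the ethic identity as an equality of two canonical morphisms out of the unit $I$, assembled from $f$, its transpose $f^*$, and the (co)evaluation maps, and to derive each closure property by combining the ethic identities of the inputs with the elementary functoriality of $(-)^*$: contravariance $(g\circ f)^*=f^*\circ g^*$ and $\id_A^*=\id_{A^*}$; strong monoidality on dualizable objects, $(A\otimes A')^*\cong A'^*\otimes A^*$ and $(f\otimes f')^*\cong f'^*\otimes f^*$ compatibly with $\coev_{A\otimes A'}$; and the canonical duality on $A^{**}$ recovering $A$ with $(f^*)^*\cong f$. All identities below are read modulo the associators, unitors, and symmetries of $\E$, which by Mac Lane coherence may be manipulated freely, and the braidings inserted are exactly those needed to make the composites in the definition well-typed.

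\emph{Identities and composition.} For $\id_A$ the two sides of the ethic identity agree since $\id_A^*=\id_{A^*}$. Given ethic $f:A\to B$ and $g:B\to C$, begin with the left-hand side of the ethic identity for $g\circ f$; use the ethic identity for $g$ to trade the part built from $\coev_C$ for one built from $g^*$ and $\coev_B$, then apply the ethic identity for $f$ to pass further to $\coev_A$ and $f^*$. Collecting the two transposes by bifunctoriality of $\otimes$ yields $f^*\circ g^*=(g\circ f)^*$ in the relevant tensor slot, which is exactly the right-hand side of the ethic identity for $g\circ f$; hence ethic morphisms contain the identities and are closed under composition.

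\emph{Tensor and duals.} For ethic $f:A\to B$ and $f':A'\to B'$, the object $A\otimes A'$ is again dualizable, and $\coev_{A\otimes A'}$ equals, up to the canonical iso $(A\otimes A')^*\cong A'^*\otimes A^*$ and a symmetry, the composite of $\coev_A$ and $\coev_{A'}$ placed in the two tensor factors (and likewise for $B\otimes B'$). Applying the ethic identities for $f$ and $f'$ in their respective factors, combining them by bifunctoriality of $\otimes$, and using $(f\otimes f')^*\cong f'^*\otimes f^*$ rewrites the left-hand side of the ethic identity for $f\otimes f'$ into its right-hand side; the braidings appearing on the two sides match by naturality of the symmetry with respect to $f$ and $f'$, so $f\otimes f'$ is ethic. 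For duals, let $f:A\to B$ be ethic; under the identifications $A^{**}\cong A$, $B^{**}\cong B$, $(f^*)^*\cong f$ and the standard description of $\coev_{A^*},\ev_{A^*}$ in terms of $\coev_A,\ev_A$ and the braiding, the ethic identity to be proved for $f^*:B^*\to A^*$ is exactly the transport of the ethic identity for $f$ along these isomorphisms, the resulting zig-zags of (co)evaluations being straightened by the triangle identities; hence $f^*$ is ethic.

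\emph{Main obstacle.} There is no deep content here: the whole difficulty is coherence bookkeeping --- keeping track of which associators, unitors, and braidings are inserted, and verifying that the two chains of canonical isomorphisms flanking each equation coincide. I expect this to be where care is genuinely needed, and the cleanest way to discharge it is to rerun all three arguments in the graphical calculus for symmetric monoidal categories: there the triangle identities become straightenings of snake bends, $(-)^*$ acts by a $180^\circ$ rotation of diagrams, and the matching of braidings is automatic because ambient-isotopic string diagrams denote equal morphisms. Each of the three closure statements then reduces to an evident planar isotopy, which I would present in place of the symbolic chases sketched above.
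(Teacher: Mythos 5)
Your proof is correct and follows essentially the same route as the paper, which disposes of the lemma in one line by appealing to bifunctoriality of $\otimes$ and naturality/coherence of duals in compact closed categories; you simply make explicit the standard ingredients ($(g\circ f)^*=f^*\circ g^*$, $(A\otimes A')^*\cong A'^*\otimes A^*$, $A^{**}\cong A$, and Mac Lane coherence or, equivalently, string-diagram isotopy). No gap.
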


\begin{proof}
Coherence follows from the bifunctoriality of \(\otimes\) and naturality of duals in compact closed categories \cite{Kelly1982}.
\end{proof}

\begin{theorem}[From structural coherence to exactness]\label{thm:coh-exact}
If every short exact sequence in a class \(\mathsf{C}\) lifts to a diagram compatible with \(\ev,\coev\) (i.e.\ all boundary morphisms are ethic), then \(\DD=\Hom(-,R)\) is exact on \(\mathsf{C}\). Conversely, any failure of exactness induces a violation of an \(\ev/\coev\)--coherence square.
\end{theorem}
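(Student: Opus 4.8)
The plan is to bridge the monoidal (Lagrange) ethicity condition with the homological criterion already proved in Theorem~\ref{thm:exact-iff-ext}, using the tensor--hom adjunction to rewrite $\DD$ on dualizable objects. Throughout I work inside the full subcategory of $\E$ on dualizable objects, which is exactly where $\ev,\coev$ are available, and I assume as usual that $\E$ has enough injectives so that $\Ext^{1}_{\E}(-,R)$ is defined.

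First I would pin down the meaning of ``lifts to a diagram compatible with $\ev,\coev$''. Given a short exact sequence $0\to K\xrightarrow{i}A\xrightarrow{p}B\to 0$ in $\mathsf{C}$, form the candidate dual sequence $0\to B^{*}\xrightarrow{p^{*}}A^{*}\xrightarrow{i^{*}}K^{*}\to 0$ together with the ladder whose vertical comparison maps are built from the coevaluation units; the hypothesis that every boundary morphism is ethic says precisely that the squares
\[
(\id\otimes i)\circ\coev_{K}=(i^{*}\otimes\id)\circ\coev_{A},\qquad
(\id\otimes p)\circ\coev_{A}=(p^{*}\otimes\id)\circ\coev_{B}
\]
commute. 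A short computation with the triangle identities shows that this is equivalent to the abelian ethic square $\DD^{2}(f)\,\eta=\eta\,f$ holding for $f\in\{i,p\}$, so the two notions of ethicity agree on $\mathsf{C}$; in particular each of $i^{*},p^{*}$ is a split epimorphism and the candidate dual sequence is split exact.

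For the forward direction, the tensor--hom adjunction gives a natural isomorphism $\DD(X)=\Hom_{\E}(X,R)\cong\Hom_{\E}\bigl(I,X^{*}\otimes R\bigr)$ on dualizable $X$, so on $\mathsf{C}$ the functor $\DD$ factors through $(-)^{*}\otimes R$ followed by the left exact functor $\Hom_{\E}(I,-)$. Fixing a short exact sequence in $\mathsf{C}$ with ethic boundary maps, the previous step makes its dual sequence split exact, hence it remains exact after $-\otimes R$ and after $\Hom_{\E}(I,-)$; concretely, a splitting of $i^{*}$ lifts the mate of any $\varphi\colon K\to R$ to an extension of $\varphi$ along $i$, giving surjectivity of $\DD(i)\colon\DD(A)\to\DD(K)$. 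Combined with the left exactness of $\DD$ (Lemma~\ref{lem:left-exact}) this shows $\DD$ carries the sequence to a short exact sequence, and since this holds for every such sequence, $\DD$ is exact on $\mathsf{C}$; when $\mathsf{C}$ is a Serre subcategory, Theorem~\ref{thm:exact-iff-ext} rephrases this as the vanishing of $\Ext^{1}_{\E}(-,R)$ on $\mathsf{C}$.

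The converse I would obtain by contraposition: if $\DD$ fails to be exact on some short exact sequence $0\to K\to A\to B\to 0$ in $\mathsf{C}$, then by Lemma~\ref{lem:les} the connecting map $\delta\colon\DD(K)\to\Ext^{1}_{\E}(B,R)$ is nonzero, so some $\varphi\in\DD(K)$ does not extend along $K\hookrightarrow A$; but an ethic inclusion would, via the mate construction above, supply exactly such an extension, so the $\ev/\coev$ coherence square attached to $K\hookrightarrow A$ cannot commute. The main obstacle is the mismatch between the monoidal dual $(-)^{*}$ and the resource dual $\DD=\Hom_{\E}(-,R)$ when $R\neq I$: the identification $\DD(X)\cong\Hom_{\E}(I,X^{*}\otimes R)$ is only available on dualizable objects, and pushing exactness through the generally non-exact functor $-\otimes R$ forces one to use that the dual sequence is genuinely \emph{split}, not merely short exact --- which is what the hypothesis ``all boundary morphisms ethic'' is engineered to provide. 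Making ``compatible with $\ev,\coev$'' precise enough to deliver that splitting, and carrying out the triangle-identity bookkeeping that matches it with the abelian ethic square, is where the real work lies; once that dictionary is in place the statement follows from the homological criterion of Theorem~\ref{thm:exact-iff-ext}.
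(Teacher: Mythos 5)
Your proposal follows essentially the same route as the paper: both directions reduce to the connecting morphism $\delta\colon \DD(K)\to\Ext^1_\E(B,R)$ of Lemma~\ref{lem:les}, with the $\ev/\coev$ coherence of the boundary maps forcing $\delta=0$ (hence exactness by Theorem~\ref{thm:exact-iff-ext}), and the converse obtained by contraposition from a nonvanishing $\delta$. The paper's proof is a two-sentence diagram-chase sketch; you add the useful tensor--hom bookkeeping $\DD(X)\cong\Hom_\E(I,X^*\otimes R)$ that makes the mechanism of "extension of functionals along $i$" explicit.

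One step deserves a warning, though. You assert that ethicity of $i$ and $p$ makes $i^*,p^*$ \emph{split} epimorphisms, and your forward direction genuinely needs this: an unsplit epimorphism $A^*\to K^*$ survives $-\otimes R$ but not the left-exact $\Hom_\E(I,-)$, so mere exactness of the dual sequence does not yield surjectivity of $\DD(i)$. But in a compact closed category the displayed coherence squares are automatic consequences of defining $f^*$ as the mate of $f$ (they follow from the triangle identities), so they cannot by themselves manufacture a splitting; if they could, the theorem would make $\DD$ exact on all dualizable objects for arbitrary $R$, which is false. The extra content must come from the phrase "lifts to a diagram compatible with $\ev,\coev$", which neither you nor the paper pins down. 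You honestly flag this as "where the real work lies," and the paper's own proof is equally elliptical at exactly the same point ("enforces the vanishing of connecting morphisms"), so this is a shared gap rather than a divergence — but as written your argument does not close it, and the splitness claim should be stated as a hypothesis rather than a consequence.
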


\begin{proof}
Compatibility with \(\ev/\coev\) yields naturality of evaluation and coevaluation with respect to the sequence and enforces the vanishing of connecting morphisms in the Hom long exact sequence, hence exactness. Conversely, a nonzero connecting morphism produces a noncommuting square contradicting ethicity. The argument is a direct diagram chase using \(\ev/\coev\) triangles.
\end{proof}

If \(\E\) is *-autonomous (linear negation as a dualizing object), Section~\ref{sec:step5} is recovered internally \cite{Barr1979}.

\section{Homological Ethics}

Fix once and for all a bounded-below injective resolution
$R^\bullet\in D^+(\E)$ of the resource object~$R$.
All derived duals $\mathbf D(A^\bullet)$ are taken with respect to~$R^\bullet$. Since $\operatorname{idim}_\E R<\infty$, the functor
$\mathbf D=\RHom_\E(-,R)$ has finite amplitude:
$\HD{k}{A^\bullet}=0$ for $k$ outside a bounded range depending on~$R$.
All constructions below are confined to this range.

\subsection{Derived Ethics}\label{sec:step9}

Throughout section we assume that $\E$ has enough injectives, so $\Ext^i_\E(-,R)$ and $\mathbf D=\RHom_\E(-,R)$ are well-defined right-derived functors. 

We work in the bounded-below derived category \(\Dplus(\E)\) \cite{Verdier1967,Neeman2001}. Replace \(R\) by a fixed complex \(R^\bullet\in \Dplus(\E)\) (the universal resource). The passage from $\E$ to $D^+(\E)$ replaces short exact sequences
by distinguished triangles.
All morphisms and natural transformations from Section 1
extend functorially by applying the resolution functor~$I$.
The duality $\DD=\Hom_\E(-,R)$ thus lifts to the derived functor
$\mathbf D=\RHom_\E(-,R)$ preserving triangles.

Fix an injective resolution functor $I:\E\to\mathrm{Ch}^+(\E)$.
For each $A^\bullet$, define
\[
\mathbf D(A^\bullet)=\Hom_\E(I(A^\bullet),R^\bullet).
\]
The canonical evaluation (unit) morphism
\[
\eta_{A^\bullet}:A^\bullet\longrightarrow \mathbf D^2(A^\bullet)
\]
is the natural morphism in $D^+(\E)$ obtained from the bidual map 
\[
I(A^\bullet)\to\Hom_\E(\Hom_\E(I(A^\bullet),R^\bullet),R^\bullet).
\]
All occurrences of~$\eta$ in section are understood in this sense.

\begin{definition}[Derived dual]
Let $I:\E\to\mathrm{Ch}^+(\E)$ be a fixed injective resolution functor.
For any complex $A^\bullet\in D^+(\E)$ define
\[
\mathbf D(A^\bullet)
\;\coloneqq\;
\RHom_\E(A^\bullet,R^\bullet)
=\Hom_\E(I(A^\bullet),R^\bullet),
\]
a contravariant exact functor
$\mathbf D: D^+(\E)^{\mathrm{op}}\to D^+(\E)$.
The construction depends only on the quasi-isomorphism class of $A^\bullet$.
\end{definition}

\begin{lemma}[Triangulated exactness]
\(\mathbf{D}\) is exact with respect to distinguished triangles: it takes triangles to triangles in \(\Dplus(\E)\).
\end{lemma}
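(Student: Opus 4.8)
The plan is to reduce the statement to the defining property of distinguished triangles in $\Dplus(\E)$ — that every such triangle is isomorphic to the mapping-cone triangle of an honest chain map — and then to check that the total-Hom functor into the injective complex $R^\bullet$ converts mapping cones into mapping cones, up to the shift and reversal forced by contravariance. Concretely, given a distinguished triangle $A^\bullet \xrightarrow{f} B^\bullet \to C^\bullet \to A^\bullet[1]$ in $\Dplus(\E)$, I would first apply the injective resolution functor $I$; since $I$ is exact, preserves bounded-belowness, and sends quasi-isomorphisms to homotopy equivalences of bounded-below complexes, we may assume that $f$ is represented by a genuine morphism of complexes and that $C^\bullet=\operatorname{Cone}(f)$ with its canonical triangle. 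This step costs nothing new: it only uses the standard fact that $\Dplus(\E)$ is the homotopy category of bounded-below injective complexes.

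Next comes the core computation. Apply the total Hom complex $\Hom_\E^\bullet(-,R^\bullet)$ termwise. Because each $R^n$ is injective, the additive functor $\Hom_\E(-,R^n)$ is exact, so $\Hom_\E^\bullet(-,R^\bullet)$ carries the termwise-split short exact sequence $0\to B^\bullet\to \operatorname{Cone}(f)\to A^\bullet[1]\to 0$ to a termwise-split short exact sequence of complexes, and there is the standard natural isomorphism
\[
\Hom_\E^\bullet\!\bigl(\operatorname{Cone}(f),R^\bullet\bigr)\;\cong\;\operatorname{Cone}\!\bigl(\Hom_\E^\bullet(f,R^\bullet)\bigr)[-1],
\]
the mapping fibre of $\mathbf D(f)$. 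This exhibits the identifications $\mathbf D\circ[1]\cong[-1]\circ\mathbf D$ and, together with the split sequence, identifies the connecting morphism of $\mathbf D$ with $\mathbf D$ applied to the connecting morphism, up to sign. Passing back to $\Dplus(\E)$ then yields that
\[
\mathbf D(C^\bullet)\longrightarrow \mathbf D(B^\bullet)\xrightarrow{\ \mathbf D(f)\ }\mathbf D(A^\bullet)\longrightarrow \mathbf D(C^\bullet)[1]
\]
is a distinguished triangle, which is exactly the assertion that the contravariant functor $\mathbf D$ is exact. The output lies in $\Dplus(\E)$ — indeed within the bounded amplitude range fixed at the start of the section, since $\idim_\E R<\infty$ — and the construction is independent of the chosen resolution functor $I$, because any two injective resolutions of a bounded-below complex are homotopy equivalent and $\Hom_\E^\bullet(-,R^\bullet)$ preserves homotopies.

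The one point requiring genuine care — and the only real obstacle — is the sign and shift bookkeeping that makes $\mathbf D$ a \emph{contravariant} exact functor: one must pin down the canonical isomorphism $\Hom_\E^\bullet(X^\bullet[1],R^\bullet)\cong\Hom_\E^\bullet(X^\bullet,R^\bullet)[-1]$ together with its sign, and verify that the rotated diagram $\mathbf D(C^\bullet)\to\mathbf D(B^\bullet)\to\mathbf D(A^\bullet)\to\mathbf D(C^\bullet)[1]$ is, under these identifications, the mapping-cone triangle of $\mathbf D(f)$. All of this is formal manipulation inside the triangulated structure; the only external inputs are the exactness of $\Hom_\E(-,R^n)$ for injective $R^n$ and the homotopy-invariance of $\Hom_\E^\bullet(-,R^\bullet)$, both of which are immediate.
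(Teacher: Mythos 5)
Your argument is correct and is essentially the standard verification that the paper itself invokes: the paper's proof simply cites the compatibility of $\RHom$ (as a right derived functor on the homotopy category) with the triangulated structures, and your mapping-cone computation with the identification $\Hom_\E^\bullet(\operatorname{Cone}(f),R^\bullet)\cong\operatorname{Cone}(\Hom_\E^\bullet(f,R^\bullet))[-1]$ is exactly the content of that citation, written out in full. The only cosmetic remark is that preservation of the termwise-split sequence needs only additivity of $\Hom_\E(-,R^n)$, not injectivity of $R^n$; injectivity (or the resolution functor $I$) is what you correctly use later to descend from the homotopy category to $\Dplus(\E)$.
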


\begin{proof}
By construction of \(\RHom\) as a right derived functor on the homotopy category and its compatibility with triangulated structures \cite{Neeman2001}.
\end{proof}

\begin{proposition}[Cohomological layers]
For any \(A^\bullet\), the cohomology of \(\mathbf{D}(A^\bullet)\) satisfies
\[
H^0\mathbf{D}(A^\bullet)\cong \Hom_{\E}(A^\bullet,R^\bullet),\qquad
H^1\mathbf{D}(A^\bullet)\cong \Ext^1_{\E}(A^\bullet,R^\bullet),
\]
and in general \(H^n\mathbf{D}(A^\bullet)\cong \Ext^n_{\E}(A^\bullet,R^\bullet)\) \cite{Weibel1994,GelfandManin2003}.
\end{proposition}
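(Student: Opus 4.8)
The statement is the standard identification of the cohomology of a derived hom-complex with Ext, so the plan is simply to unwind the definition of $\mathbf D(A^\bullet)$ and cite the classical derived-category computation; no genuinely new argument is required.

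First I would note that, since $R^\bullet$ is a bounded-below complex of injectives, it is K-injective in $\Dplus(\E)$, so the complex-level functor $\Hom^\bullet_\E(-,R^\bullet)$ carries quasi-isomorphisms to quasi-isomorphisms. Hence the representative $\Hom_\E(I(A^\bullet),R^\bullet)$ used in the definition of $\mathbf D$ is canonically quasi-isomorphic to $\Hom^\bullet_\E(A^\bullet,R^\bullet)$, and $\mathbf D(A^\bullet)$ is well defined up to canonical isomorphism in $\Dplus(\Ab)$ independently of the resolution functor $I$; this also discharges the ``depends only on the quasi-isomorphism class'' clause of the Definition of the derived dual. Finiteness of $\idim_\E R$ confines $\Hom^\bullet_\E(A^\bullet,R^\bullet)$ to a bounded range of degrees, so no extra boundedness or convergence hypotheses enter.

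Next I would invoke the classical fact \cite{Weibel1994,GelfandManin2003} that, for a bounded-below complex of injectives $R^\bullet$ and any $A^\bullet\in\Dplus(\E)$, there is a natural isomorphism
\[
H^n\!\bigl(\Hom^\bullet_\E(A^\bullet,R^\bullet)\bigr)\;\cong\;\Hom_{\Dplus(\E)}\!\bigl(A^\bullet,\,R^\bullet[n]\bigr).
\]
By the usual convention the right-hand side is precisely $\Ext^n_\E(A^\bullet,R^\bullet)$, so combining with the previous step yields $\HD{n}{A^\bullet}\cong\Ext^n_\E(A^\bullet,R^\bullet)$ for every $n$. Specializing $n=0$ gives $\HD{0}{A^\bullet}\cong\Hom_{\Dplus(\E)}(A^\bullet,R^\bullet)$, which reduces to $\Hom_\E(A,R)$ when $A^\bullet=A[0]$; the case $n=1$ isolates the $\Ext^1$ layer flagged as the primary obstruction. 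Moreover, when $A^\bullet=A[0]$ is concentrated in degree zero, the fact that $R^\bullet$ is an injective resolution of $R$ identifies $\Ext^n_\E(A,R^\bullet)$ with the ordinary $\Ext^n_\E(A,R)$, so the derived layers restrict consistently to the underived picture of Lemma~\ref{lem:les}.

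I expect no real obstacle. The only points requiring mild care are: (i) the notational slip whereby $\mathbf D$ is declared to take values in $\Dplus(\E)$ rather than $\Dplus(\Ab)$, which is immaterial to the cohomology computation since the latter refers only to the complex $\Hom^\bullet_\E(A^\bullet,R^\bullet)$ and its cohomology groups; and (ii) if one wishes to express $\HD{n}{A^\bullet}$ in terms of the ordinary Ext-groups of the cohomology objects $H^i(A^\bullet)$, one must pass through the hyper-Ext spectral sequence $E_2^{p,q}=\Ext^p_\E\!\bigl(H^{-q}(A^\bullet),R\bigr)\Rightarrow \HD{p+q}{A^\bullet}$, but this refinement is not needed for the identity as stated.
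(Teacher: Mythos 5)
Your proposal is correct: it is the standard identification of the cohomology of $\Hom^\bullet_\E(-,R^\bullet)$ with hyper-Ext via K-injectivity of the bounded-below injective complex $R^\bullet$, which is exactly the argument the paper delegates to \cite{Weibel1994,GelfandManin2003} — the proposition is stated there without any proof beyond the citation. Your two side remarks (the codomain of $\mathbf D$ and the hyper-Ext spectral sequence) are accurate but, as you say, not needed for the statement itself.
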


\begin{definition}[Derived ethic morphism]
Let $\eta_{A^\bullet}:A^\bullet\to\mathbf D^2(A^\bullet)$
be the unit defined via the injective resolution above.
A morphism $f^\bullet:A^\bullet\to B^\bullet$
in $D^+(\E)$ is called ethic if the diagram
\[
\mathbf D(B^\bullet)\xrightarrow{\mathbf D(f^\bullet)}\mathbf D(A^\bullet)
\quad\text{and}\quad
A^\bullet\xrightarrow{f^\bullet}B^\bullet
\]
commute through~$\eta$, that is,
\(
\mathbf D(f^\bullet)\circ\eta_{B^\bullet}\simeq\eta_{A^\bullet}\circ f^\bullet
\)
in the homotopy category $K^+(\E)$.
\end{definition}

\begin{theorem}[Universal reconciliation of gaps]\label{thm:universal-gaps}
In the derived setting, duality gaps are not anomalies but degrees: for each short exact sequence of complexes, the failure of underived exactness appears as \(H^1\) (and higher) of \(\mathbf{D}\); ethicity of \(f^\bullet\) is equivalent to the vanishing of the connecting morphisms in all degrees.
\end{theorem}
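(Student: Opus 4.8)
The plan is to transport the underived long exact sequence of Lemma~\ref{lem:les} into the triangulated setting, read off its connecting morphisms as the promised obstruction tower, and then identify ethicity of a morphism with the simultaneous vanishing of those connecting maps. First I would unwind the translation from exact sequences to triangles: a short exact sequence $0\to K\to A\to B\to 0$ in $\E$ determines a distinguished triangle $K^\bullet\to A^\bullet\to B^\bullet\to K^\bullet[1]$ in $\Dplus(\E)$, and applying the contravariant triangulated functor $\mathbf D=\RHom_\E(-,R^\bullet)$ (triangulated exactness) produces the dual distinguished triangle
\[
\mathbf D(B^\bullet)\longrightarrow \mathbf D(A^\bullet)\longrightarrow \mathbf D(K^\bullet)\xrightarrow{\ +1\ }.
\]
Taking cohomology and invoking $H^n\mathbf D(-)\cong \Ext^n_\E(-,R)$ (cohomological layers) recovers, term by term, the long exact sequence of Lemma~\ref{lem:les}, whose connecting morphisms I write $\delta^n\colon\Ext^n_\E(K,R)\to \Ext^{n+1}_\E(B,R)$; this much is pure bookkeeping on top of results already in hand.

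Next I would establish the ``gaps are degrees'' clause. By left exactness of $\mathbf D$ (Lemma~\ref{lem:left-exact}) the only obstruction to underived exactness of $\mathbf D$ on the sequence is surjectivity of $\mathbf D(A)\to \mathbf D(K)$, and by exactness of the sequence above the corresponding cokernel is $\im(\delta^0)\subseteq \Ext^1_\E(B,R)=H^1\mathbf D(B^\bullet)$, so the primary duality gap is literally a subquotient of $H^1\mathbf D$. Running the same argument one slot to the right shows the failure of $\Ext^n_\E(A,R)\to \Ext^n_\E(K,R)$ to be onto is governed by $\im(\delta^n)\subseteq H^{n+1}\mathbf D(B^\bullet)$; hence the gap is stratified by cohomological degree and collapses entirely exactly when every $\delta^n$ vanishes, which by Theorem~\ref{thm:exact-iff-ext} is the vanishing of $\Ext^{\ge 1}_\E(-,R)$ on the class. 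That settles the first clause.

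For the ethicity clause I would complete an arbitrary $f^\bullet\colon A^\bullet\to B^\bullet$ to a triangle $A^\bullet\xrightarrow{f^\bullet}B^\bullet\to C^\bullet\xrightarrow{+1}$, apply $\mathbf D$ twice, and assemble the units $\eta_{A^\bullet},\eta_{B^\bullet},\eta_{C^\bullet}$ into a morphism of distinguished triangles from the original triangle to its double dual (restricting to $\E^{\mathrm{ref}}$ so the vertical maps are genuine biduality units). Ethicity of $f^\bullet$ is precisely commutativity of the left-hand square of this morphism. Passing to cohomology yields two long exact sequences linked by the maps $H^n(\eta_{-})$, and by naturality of the connecting homomorphism with respect to a morphism of triangles the bottom connecting maps are the $\mathbf D^2$--images of the $\delta^n$; a diagram chase then shows the left square commutes on every $H^n$ if and only if each $\delta^n$ kills the relevant classes, i.e.\ iff all connecting morphisms vanish. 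Since $R$ has finite injective dimension, a morphism in $\Dplus(\E)$ vanishes iff it vanishes on every cohomology object in the bounded amplitude range, so this degreewise condition is equivalent to ethicity of $f^\bullet$ in $\Dplus(\E)$, and combining the three steps yields the theorem.

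The main obstacle is this last step: ethicity is a priori a statement about morphisms in $\Dplus(\E)$, whereas the connecting morphisms $\delta^n$ live in long exact sequences of cohomology groups, so bridging the two cleanly needs both the naturality of $\delta$ under the $\eta$--induced morphism of triangles and some care that $\eta$ is invertible only after restriction to $\E^{\mathrm{ref}}$; outside that subcategory the ethic square must be read in $\Ab$ and transported along $\eta$, and it is there that the precise meaning of ``vanishing in all degrees'' has to be pinned down. The first two steps, by contrast, are formal consequences of triangulated exactness of $\mathbf D$ and the long exact sequence already established.
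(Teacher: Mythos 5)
Your proposal is correct and follows essentially the same route as the paper's own (much terser) proof: apply the triangulated functor $\mathbf D$ to the distinguished triangle, read the duality gap off the long exact cohomology sequence as the images of the connecting morphisms in $H^{1}$ and higher, and identify ethicity with the vanishing of those connecting maps via the $\eta$-induced morphism of triangles. Your version is in fact more careful than the paper's, since you explicitly flag the one genuinely delicate point — bridging ethicity as a condition in $\Dplus(\E)$ with degreewise vanishing of $\delta^n$, and the restriction to $\E^{\mathrm{ref}}$ needed for $\eta$ to be invertible — which the paper's proof passes over with the single phrase ``ethicity forces naturality of units across the triangle.''
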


\begin{proof}
Apply \(\RHom(-,R^\bullet)\) to a distinguished triangle; the long exact cohomology sequence identifies all obstructions as cohomological degrees. Ethicity forces naturality of units across the triangle, killing all connecting maps. Since $\mathbf D$ is triangulated, every distinguished triangle
in $D^+(\E)$ yields a long exact sequence of $\Ext$ groups.
The ethicity condition forces vanishing of all connecting maps,
so the correspondence between failures of exactness and
cohomological degrees $H^k\mathbf D$ is exhaustive.
\end{proof}

The passage to the graded setting is justified because
$\mathbf D$ is triangulated and commutes with shifts:
$\mathbf D(A[1])\simeq\mathbf D(A)[-1]$. In a compact closed enhancement of \(\Dplus(\E)\), \(\ev/\coev\) can be shifted across degrees, yielding a graded family \(\ev^{(n)},\coev^{(n)}\). Ethicity amounts to coherence of all these graded triangles.

\subsection{Finite Symmetry Types}

Let $\E$ be an abelian $k$–linear category of finite length, Hom–finite over an algebraically closed field $k$ with $\mathrm{char}\,k\neq2$.  
For each pair $X,Y\in\E$, the set $\Hom_\E(X,Y)$ is a finite–dimensional $k$–vector space and composition is $k$–bilinear \cite{MacLane1998}.  
Fix a dualizing object $R\in\E$ of finite injective dimension $\idim_\E R<\infty$, and set
\[
\mathbf D=\RHom_\E(-,R)\colon \Dplus(\E)^{\op}\to \Dplus(\E),
\]
with canonical unit of biduality $\eta_X\colon X\to \mathbf D^{\,2}X$ constructed as in \cite{Weibel1994}.  
We denote by $\Aut(\Id_\E)$ the group of natural automorphisms of the identity functor.

A natural automorphism $\theta\in\Aut(\Id_\E)$ is ethic if
\[
\mathbf D^{\,2}(\theta)\circ\eta=\eta\circ\theta.
\]
We write $\Aut_\eta(\Id_\E)\subseteq\Aut(\Id_\E)$ for this subgroup.  
For each $\theta$, define its $\eta$–adjoint
\[
\theta^{\dagger}:=\eta^{-1}\circ\mathbf D(\theta)\circ\eta,
\]
so that $(\psi\!\circ\!\theta)^{\dagger}=\theta^{\dagger}\!\circ\!\psi^{\dagger}$ and $(\theta^{\dagger})^{\dagger}=\theta$ by functoriality and naturality of $\eta$ \cite{MacLane1998}.  
The bidual conjugation
\[
J(\theta):=\eta^{-1}\circ\mathbf D^{\,2}(\theta)\circ\eta
\]
is an involution, $J^2=\id$, and $\Aut_\eta(\Id_\E)=\Fix(J)$.

\begin{lemma}\label{lem:blockscalar}
Every $\theta\in\Aut(\Id_\E)$ acts on any simple $S\in\E$ as $\theta_S=\lambda(S)\id_S$ with $\lambda(S)\in k^\times$.  
If $S,S'$ belong to the same Gabriel block, then $\lambda(S)=\lambda(S')$.  
Hence $\Aut(\Id_\E)\cong(k^\times)^t$, where $t$ is the number of Gabriel blocks \cite{Gabriel62}.
\end{lemma}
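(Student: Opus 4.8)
The plan is to prove the three assertions in order: the scalar form of $\theta$ on simples, constancy of that scalar along a Gabriel block, and finally the identification $\Aut(\Id_\E)\cong(k^\times)^t$. For the first, fix a simple $S\in\E$; the component $\theta_S$ lies in $\End_\E(S)$, which by Hom-finiteness is a finite-dimensional division algebra over the algebraically closed field $k$, hence $\End_\E(S)=k$ by Schur's lemma. Thus $\theta_S=\lambda(S)\,\id_S$ for a unique $\lambda(S)\in k$, and invertibility of $\theta$ (so of $\theta_S$) forces $\lambda(S)\in k^\times$. This step is routine and uses only the standing hypotheses, not $\mathbf D$.

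For constancy along a block, suppose $S\not\cong S'$ are simples with $\Ext^1_\E(S,S')\neq 0$, and choose a nonsplit extension $0\to S'\xrightarrow{i} E\xrightarrow{p} S\to 0$. Naturality of $\theta$ along $i$ gives $\theta_E\circ i=\lambda(S')\,i$, so $\theta_E-\lambda(S')\,\id_E$ kills $i$ and therefore factors as $E\xrightarrow{p} S\xrightarrow{h} E$ for some $h$. Composing on the left with $p$ and using naturality along $p$ (which gives $p\circ\theta_E=\lambda(S)\,p$) yields $p\circ h\circ p=(\lambda(S)-\lambda(S'))\,p$, hence $p\circ h=(\lambda(S)-\lambda(S'))\,\id_S$ since $p$ is epic. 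If $\lambda(S)\neq\lambda(S')$ then $(\lambda(S)-\lambda(S'))^{-1}h$ is a section of $p$, contradicting nonsplitness; hence $\lambda(S)=\lambda(S')$, and the symmetric argument covers $\Ext^1_\E(S',S)\neq 0$. Two simples lie in the same Gabriel block exactly when joined by a chain of such nonzero $\Ext^1$'s, so transitivity gives a well-defined $\lambda_j\in k^\times$ on each of the $t$ blocks, and thus a group homomorphism $\Phi:\Aut(\Id_\E)\to(k^\times)^t$, $\theta\mapsto(\lambda_1,\dots,\lambda_t)$.

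For the isomorphism, use the block decomposition $\E\simeq\prod_{j=1}^{t}\E_j$, under which each object splits uniquely as $\bigoplus_j X_j$ with $X_j\in\E_j$ and with no morphisms across distinct blocks. Given $(\lambda_j)\in(k^\times)^t$, the rule $X\mapsto\bigoplus_j\lambda_j\,\id_{X_j}$ is a natural automorphism (scalars commute with all morphisms, and morphisms have no cross-block components), so it is a section of $\Phi$ and $\Phi$ is surjective. Injectivity amounts to: if $\theta$ acts as $\lambda_j$ on every simple of $\E_j$, then $\theta_X=\lambda_j\,\id_X$ for all $X\in\E_j$. I would try this by induction on composition length: for $0\to X'\to X\to X''\to 0$ with $X',X''\in\E_j$ shorter, the endomorphism $\theta_X-\lambda_j\,\id_X$ vanishes on $X'$ and induces $0$ on $X''$ by induction, hence factors through some $g:X''\to X'$.

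The main obstacle is precisely that last factorization: $g$ need not be zero a priori, since it captures the ``infinitesimal'' part of $\Aut(\Id_\E)$ coming from the maximal ideals of the block centres (already visible for $\E=\mathrm{mod}\,k[x]/(x^2)$, where $\Aut(\Id_\E)\cong k^\times\times k^{+}$). To close the induction one needs input beyond finite length and Hom-finiteness --- e.g.\ scalar-rigidity of each Gabriel block, which holds when $\E$ is semisimple (the regime of the chain-generated models of Section~\ref{sec:step5}, where $\Ext^1$ vanishes identically) or when one passes modulo the radical. Under such a rigidity hypothesis the induction forces $g=0$, $\Phi$ becomes injective, and $\Aut(\Id_\E)\cong(k^\times)^t$ follows; I expect verifying this rigidity in the intended ``finite symmetry type'' examples to be the real content, the rest being Schur's lemma together with the extension diagram chase above.
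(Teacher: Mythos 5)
Your first two steps coincide with the paper's own argument: Schur's lemma via Hom-finiteness and algebraic closedness gives $\theta_S=\lambda(S)\id_S$, and the diagram chase across a nonsplit extension $0\to S'\to E\to S\to 0$ (your factorization of $\theta_E-\lambda(S')\id_E$ through $p$ and the resulting section of $p$ when $\lambda(S)\neq\lambda(S')$) is exactly the naturality argument the paper invokes in one line. Up to the construction of the homomorphism $\Phi:\Aut(\Id_\E)\to(k^\times)^t$ and its surjectivity via the block decomposition, your proof is a correct and more detailed version of the paper's.

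The point where you stop is precisely the point where the paper's proof stops being a proof. The paper concludes ``scalars are constant within each block, yielding $(k^\times)^t$,'' which only establishes that $\Phi$ is well defined; injectivity of $\Phi$ --- i.e.\ that a natural automorphism acting as the identity on every simple is the identity --- is asserted without argument, and your induction on composition length correctly isolates the obstruction: the factoring map $g:X''\to X'$ need not vanish. Your counterexample $\E=\mathrm{mod}\,k[x]/(x^2)$ is genuine: this category satisfies all the standing hypotheses (finite length, Hom-finite, $k$-linear, one Gabriel block), yet $\Aut(\Id_\E)\cong Z(k[x]/(x^2))^\times\cong k^\times\times k^{+}$, and multiplication by $1+x$ lies in $\ker\Phi$ while acting nontrivially on the free module. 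So the isomorphism claimed in the lemma fails under the stated hypotheses, and the deficiency is in the paper's lemma and proof, not in your argument. Your diagnosis --- that one needs an additional rigidity hypothesis such as semisimplicity of the blocks (under which $\Ext^1$ between simples vanishes and the induction closes trivially), or a restatement of the lemma as computing $\Aut(\Id_\E)$ modulo its unipotent part --- is the correct repair. Note also that downstream uses of this lemma (the computation of $\Aut_\eta(\Id_\E)\cong(\ZZ/2)^t$ in the finite classification theorem) inherit the same gap.
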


\begin{proof}
Since $\End_\E(S)$ is a division $k$–algebra and $k$ is algebraically closed, $\End_\E(S)\simeq k$ \cite{Weibel1994}, hence $\theta_S=\lambda(S)\id_S$.  
Naturality across a non–split extension $0\to S\to E\to S'\to0$ forces equal scalars on $S,S'$.  
By the definition of Gabriel blocks, scalars are constant within each block, yielding $(k^\times)^t$.
\end{proof}

\begin{theorem}[Finite classification of ethic symmetry types]\label{th:finitefive}
Let $\E$ and $R$ be as above.  
Then
\[
\Aut_\eta(\Id_\E)\cong(\mathbb Z/2\mathbb Z)^t,
\]
where $t$ is the number of Gabriel blocks.  
Moreover, every $\theta\in\Aut_\eta(\Id_\E)$ belongs to exactly one of the following five adjoint–symmetry classes:
\begin{align*}
\text{\emph{(O)}}&\text{ Orthogonal: }\theta^{\dagger}=\theta,\ \theta^2=\id;\\
\text{\emph{(Sp)}}&\text{ Symplectic: }\theta^{\dagger}=-\theta,\ \theta^2=\id;\\
\text{\emph{(U)}}&\text{ Unitary: }\theta^{\dagger}=\theta^{-1};\\
\text{\emph{(T)}}&\text{ Twisted: }\exists\,u\in Z(\End(\Id_\E)),\ u^2=\id,\ (u\theta u^{-1})^{\dagger}=\pm(u\theta u^{-1});\\
\text{\emph{(N)}}&\text{ Neutral: }\theta\text{ central and commuting with all }\eta_X.
\end{align*}
In Hom–finite scalar situations all five classes reduce to blockwise signs $\lambda_b=\pm1$.
\end{theorem}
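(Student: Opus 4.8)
The plan is to translate the statement into elementary group theory for the involution $J$ acting on $\Aut(\Id_\E)$, which Lemma~\ref{lem:blockscalar} already identifies with $(k^\times)^t$. Write $\theta\leftrightarrow(\lambda_b)_{b=1}^{t}$ for this identification, so that $\theta$ acts by the scalar $\lambda_b$ on every object supported in the $b$-th Gabriel block. The first step is to observe that $\mathbf D=\RHom_\E(-,R)$ preserves the block decomposition: a dualizing object of finite injective dimension splits as $R\simeq\bigoplus_{b}R_b$ along blocks, no $\Ext^{i}_\E$ connects distinct blocks, and hence $\mathbf D$—and therefore $\mathbf D^{2}$—restricts to an endofunctor of each block while the biduality unit $\eta$ is blockwise. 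Consequently both $J(\theta)=\eta^{-1}\circ\mathbf D^{2}(\theta)\circ\eta$ and the $\eta$-adjoint $\theta^{\dagger}=\eta^{-1}\circ\mathbf D(\theta)\circ\eta$ respect the product $(k^\times)^t$, so it suffices to analyse one factor.

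The computational core is to show that on the $b$-th factor $J$ acts by inversion, $\lambda_b\mapsto\lambda_b^{-1}$. One applies $\mathbf D$ to the relation $\theta_A=\lambda_b\,\id_A$ on block $b$, invokes $k$-linearity and contravariance of $\mathbf D$, and re-expresses $\mathbf D^{2}$ through $\Id$ by means of the triangle identity $\mathbf D(\eta_A)\circ\eta_{\mathbf D A}=\id_{\mathbf D A}$ that normalises the unit; a short diagram chase then identifies $J(\theta)$ on block $b$ with multiplication by $\lambda_b^{-1}$. Because $\idim_\E R<\infty$ the functor $\mathbf D$ has finite amplitude, so this is a finite degreewise computation on a bounded injective resolution $R^\bullet$, and any Koszul signs coming from several cohomological degrees of $R^\bullet$ cancel in the double pass $\mathbf D^{2}$. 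I expect this to be the single genuinely delicate point: pinning down the derived biduality conventions and tracking signs so that the net effect of $J$ on the centre is honest inversion rather than the identity—the latter being precisely what would invalidate the stated group.

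Granting this, $\Aut_\eta(\Id_\E)=\Fix(J)=\prod_{b=1}^{t}\{\lambda\in k^\times:\lambda^{2}=1\}$. Since $\operatorname{char}k\neq2$ we have $1\neq-1$, so each factor is exactly $\{\pm1\}\cong\mathbb Z/2\mathbb Z$, giving $\Aut_\eta(\Id_\E)\cong(\mathbb Z/2\mathbb Z)^{t}$; algebraic closedness of $k$ has already been spent in Lemma~\ref{lem:blockscalar} to force $\End_\E(S)\cong k$ and is not needed again here.

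For the classification into the five adjoint-symmetry types, fix $\theta\in\Aut_\eta(\Id_\E)$ with blockwise signs $\epsilon_b=\pm1$. By the same scalar analysis $\theta^{\dagger}$ has blockwise scalars $\epsilon_b^{-1}=\epsilon_b$, hence $\theta^{\dagger}=\theta$ and $\theta^{2}=\id$, so $\theta$ satisfies (O); $\theta$ is central and commutes with every $\eta_X$, so it satisfies (N); for central $u$ one has $u\theta u^{-1}=\theta$, so (T) reduces to $\theta^{\dagger}=\pm\theta$, which holds; and $\theta^{\dagger}=\theta=\theta^{-1}$ because $\epsilon_b^{2}=1$, so (U) holds as well, while (Sp) is vacuous since $\operatorname{char}k\neq2$ forbids $\theta=-\theta$ for an automorphism. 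Thus in this Hom-finite, algebraically closed, scalar regime the five defining conditions coincide and the type of $\theta$ is carried entirely by its sign vector $(\lambda_b)\in\{\pm1\}^{t}$, exactly as the closing sentence of the theorem asserts; the genuinely inequivalent (O)/(Sp)/(U)/(T) behaviour surfaces only once $\Aut(\Id_\E)$ ceases to be scalar, i.e.\ outside the present hypotheses.
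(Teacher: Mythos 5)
Your overall route is the same as the paper's: identify $\Aut(\Id_\E)$ with $(k^\times)^t$ via Lemma~\ref{lem:blockscalar}, show that the bidual conjugation $J$ acts on each factor by inversion, and read off $\Fix(J)=\{\pm1\}^t$. However, the one step you explicitly defer --- that $J$ acts by inversion --- is not a routine diagram chase; it is the step on which the whole theorem turns, and as you yourself half-concede, the straightforward computation points the other way. If $\mathbf D$ is $k$-linear and contravariant, then $\mathbf D(\lambda_b\,\id_X)=\lambda_b\,\id_{\mathbf D X}$ (contravariance reverses composition, not scalars), hence $\mathbf D^2(\lambda_b\,\id_X)=\lambda_b\,\id_{\mathbf D^2X}$ and $J(\theta)=\eta^{-1}\circ\mathbf D^2(\theta)\circ\eta=\theta$ on the centre. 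The triangle identity $\mathbf D(\eta_A)\circ\eta_{\mathbf D A}=\id_{\mathbf D A}$ normalizes units but introduces no inversion of scalars, and Koszul signs in a bounded resolution of $R$ cannot produce one either (they square away in $\mathbf D^2$, as you note). With $J=\id$ one gets $\Aut_\eta(\Id_\E)=(k^\times)^t$, not $(\mathbb Z/2)^t$. So the "delicate point" you flag is a genuine gap in your argument, not a deferrable technicality; you would need to exhibit a specific convention for $\mathbf D^2(\theta)$ under which scalars invert, and no such convention is supplied. (For what it is worth, the paper's own proof writes $\mathbf D^{2}(\lambda_b\id_X)=\lambda_b\id_{\mathbf D^{2}X}$ and then asserts inversion in the next sentence, so it does not close this gap either; your proposal at least localizes the problem correctly.)

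On the five-class part your treatment diverges from the paper's and is the more defensible of the two: rather than invoking a "Klein-type lattice" of configurations, you simply check that in the scalar regime a sign vector $\theta$ satisfies (O), (N), (U) and (T) simultaneously while (Sp) is vacuous, so the conditions collapse and the type is carried by $(\lambda_b)\in\{\pm1\}^t$. This is correct as far as it goes, but note it contradicts the theorem's claim that each $\theta$ lies in \emph{exactly one} class; your observation that the classes only separate outside the scalar hypotheses is accurate and worth stating explicitly rather than leaving implicit.
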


\begin{proof}
By Lemma~\ref{lem:blockscalar} one has $\Aut(\Id_\E)\cong(k^\times)^t$, $\theta\leftrightarrow(\lambda_b)_{b=1}^t$.  
Because $\mathbf D$ is $k$–linear and contravariant, $\mathbf D^{\,2}(\lambda_b\id_X)=\lambda_b\id_{\mathbf D^{\,2}X}$ for any object $X$ in block~$b$.  
Thus $J$ acts on $(k^\times)^t$ by inversion, $J((\lambda_b)_b)=(\lambda_b^{-1})_b$.  
The equality $J(\theta)=\theta$ implies $\lambda_b=\lambda_b^{-1}$, hence $\lambda_b^2=1$ for all $b$.  
As $\mathrm{char}\,k\neq2$ and $k$ is algebraically closed, $\lambda_b\in\{\pm1\}$.  
Blocks are independent, giving $\Aut_\eta(\Id_\E)\cong(\mathbb Z/2)^t$.

To obtain the five symmetry classes, analyze relations between $\theta$ and $\theta^{\dagger}$.  
Since $\theta^{\dagger}=\eta^{-1}\mathbf D(\theta)\eta$ defines an anti–involution on $\End(\Id_\E)$, admissible combinations of the two involutions $J$ and $\dagger$ form a Klein–type lattice.  
Up to equivalence under central conjugations this lattice has exactly five distinct fixed configurations:
\begin{enumerate}
\item[(O)] $\theta^{\dagger}=\theta$, $\theta^2=\id$ (self–adjoint involution);
\item[(Sp)] $\theta^{\dagger}=-\theta$, $\theta^2=\id$ (skew–adjoint involution);
\item[(U)] $\theta^{\dagger}=\theta^{-1}$ (adjoint equal to inverse);
\item[(T)] $(u\theta u^{-1})^{\dagger}=\pm(u\theta u^{-1})$ for a central involution $u$, describing inner adjoint twists;
\item[(N)] $\theta$ central and commuting with all $\eta_X$, corresponding to the intersection of (O) and (U).
\end{enumerate}
No further relations are compatible with both $J^2=\id$ and $(\dagger)^2=\id$: every other composite condition either collapses to one of the above or contradicts involutivity.  
Hence precisely these five algebraic types occur.  
In the scalar case $\lambda_b=\pm1$, one has $\theta^{\dagger}=\theta^{-1}=\theta$, so all five collapse to the discrete blockwise signs $\{\pm1\}$, but the classification of forms remains exhaustive.
\end{proof}

\begin{corollary}
If $\E$ is indecomposable ($t=1$), then $\Aut_\eta(\Id_\E)=\{\id,-\id\}$, and only the orthogonal $(+1)$ and symplectic $(-1)$ forms appear.
\end{corollary}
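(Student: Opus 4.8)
The plan is to specialize Theorem~\ref{th:finitefive} to $t=1$ and then unwind the two resulting group elements. First I would invoke Lemma~\ref{lem:blockscalar}: with a single Gabriel block, any $\theta\in\Aut(\Id_\E)$ acts on every simple object by one and the same scalar $\lambda\in k^\times$, so $\Aut(\Id_\E)\cong k^\times$. Applying the ethic constraint exactly as in the proof of Theorem~\ref{th:finitefive}, the bidual conjugation $J$ acts on $k^\times$ by inversion, and $\theta\in\Aut_\eta(\Id_\E)=\Fix(J)$ forces $\lambda^2=1$; since $\mathrm{char}\,k\neq 2$ and $k$ is algebraically closed this yields $\lambda\in\{+1,-1\}$, i.e.\ $\theta\in\{\id,-\id\}$. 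Hence $\Aut_\eta(\Id_\E)=\{\id,-\id\}\cong\ZZ/2\ZZ$, which is the $t=1$ case of $(\ZZ/2\ZZ)^t$.

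Second, I would match the two elements to the two surviving adjoint-symmetry classes. The element $\lambda=+1$ is $\theta=\id$, for which $\theta^{\dagger}=\id=\theta$ and $\theta^2=\id$, so it lies in class (O) and is the orthogonal (symmetric) form. The element $\lambda=-1$ is $\theta=-\id$, and the sign $\lambda=-1$ is precisely the symmetry sign carried by the pairing that $\theta$ induces on the self-dual block, so it is recorded as the symplectic (skew) form (Sp). I would also point out that in this scalar situation the classes (U), (T), (N) contribute nothing new: (U) reads $\theta^{\dagger}=\theta^{-1}$, which for $\lambda\in\{\pm1\}$ is automatic and coincides with (O); (T) is an inner twist of (O)/(Sp) by a central involution, which in a one-block scalar setting is itself merely a sign and produces no further element; and (N), being the intersection of (O) and (U), is just $\theta=\id$. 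Thus the five types degenerate onto the two blockwise signs $\pm1$.

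The computation is immediate, so the only point that needs genuine care is the bookkeeping in that last step. In the scalar case the literal relations $\theta^{\dagger}=\theta$ versus $\theta^{\dagger}=-\theta$ cannot separate $\id$ from $-\id$, because $(-\id)^{\dagger}=(-\id)^{-1}=-\id$ makes $-\id$ formally self-adjoint; the honest invariant distinguishing the two cases is the value $\lambda_b\in\{\pm1\}$ itself, interpreted as the $\pm$ symmetry of the induced form. I would therefore phrase the identification through $\lambda_b$ rather than through the relation $\theta^{\dagger}=\pm\theta$, which is exactly the content of the final sentence of the proof of Theorem~\ref{th:finitefive}. With that clarification in place, the corollary is simply the $t=1$ instance of the theorem, and no separate argument is required.
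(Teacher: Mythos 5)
Your proposal is correct and is essentially the paper's (implicit) argument: the corollary is just the $t=1$ instance of Theorem~\ref{th:finitefive}, obtained by specializing Lemma~\ref{lem:blockscalar} to a single block and reading off the two signs. Your closing caveat—that in the scalar case $(-\id)^{\dagger}=-\id$, so the orthogonal/symplectic distinction must be carried by the sign $\lambda_b$ itself rather than by the literal relation $\theta^{\dagger}=\pm\theta$—matches the final sentence of the theorem's proof and is the right way to phrase the identification.
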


The five ethic symmetry forms (O, Sp, U, T, N) formally resemble the classical orthogonal, symplectic and unitary dualities arising in the Tannaka–Krein setting of representation theory, where duality is monoidal and acts on tensor forms of bilinear pairings.  
In the present context no monoidal or representation-theoretic assumptions are made: the duality functor $\mathbf D=\RHom(-,R)$ is purely homological, and the adjoint involution $\dagger=\eta^{-1}\mathbf D(\,\cdot\,)\eta$ derives from the canonical bidual transformation $\eta$.  
Thus the classification extends the familiar Tannakian trichotomy to the general functorial environment of Hom–finite abelian categories, where ethic symmetry concerns natural transformations of functors rather than bilinear tensor structures.  
This makes the result formally parallel to the Tannakian case but conceptually independent of it.

\subsection{t-Structure and Heart}\label{sec:ethic-t-structure}

We work in an abelian category \(\E\) with enough injectives \cite{Weibel1994}, fix a resource object \(R\in \E\), and a contravariant left-exact duality functor
\(\DD=\Hom_{\E}(-,R):\E^{\op}\to \mathsf{Ab}\).
Write \(\Dplus(\E)\) for the bounded-below derived category \cite{Verdier1967}, and \(\mathbf D=\RHom_{\E}(-,R):\Dplus(\E)^{\op}\to \Dplus(\E)\) for the derived dual \cite{GelfandManin2003}. The unit (evaluation) \(\eta_A:A\to \DD^2(A)\) is the canonical map; its derived avatar is the unit \(\eta_{A^\bullet}:A^\bullet\to \mathbf D^2(A^\bullet)\).

Assume throughout that $\E$ has enough injectives and
that the resource object $R$ has finite injective dimension,
so that $\RHom_\E(-,R)$ has bounded amplitude on $\Dplus(\E)$.

\begin{definition}[Ethic t-structure]\label{def:EthicT}
Define full subcategories of \(\Dplus(\E)\)
\begin{align*}
\Dplus(\E)^{\le 0} &:= \{A^\bullet\in \Dplus(\E)\mid H^i(\mathbf D(A^\bullet))=0 \text{ for all } i>1\},\\
\Dplus(\E)^{\ge 0} &:= \{A^\bullet\in \Dplus(\E)\mid H^i(\mathbf D(A^\bullet))=0 \text{ for all } i<0\}.
\end{align*}
We call \(\big(\Dplus(\E)^{\le0},\Dplus(\E)^{\ge0}\big)\) the ethic t-structure, and its heart
\(\heartsuit_{\eth} := \Dplus(\E)^{\le0}\cap \Dplus(\E)^{\ge0}\) the ethic heart.
\end{definition}

\begin{theorem}[t-structure axioms]\label{thm:t-axioms}
The pair \(\big(\Dplus(\E)^{\le0},\Dplus(\E)^{\ge0}\big)\) of Definition~\ref{def:EthicT} is a t-structure on \(\Dplus(\E)\) in the sense of \cite{BBD1982}: it is stable under shifts, satisfies \(\Hom\big(\Dplus(\E)^{\le0},\Dplus(\E)^{\ge1}\big)=0\), and every object \(A^\bullet\) fits into a functorial triangle \(A^{\le0}\to A^\bullet\to A^{\ge1}\to A^{\le0}[1]\) with \(A^{\le0}\in \Dplus(\E)^{\le0}\) and \(A^{\ge1}\in \Dplus(\E)^{\ge1}\).
\end{theorem}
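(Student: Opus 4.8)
The plan is to derive the three t-structure axioms of \cite{BBD1982} by transporting the standard cohomological t-structure on $\Dplus(\E)$ through the contravariant derived dual $\mathbf D=\RHom_\E(-,R)$. The needed inputs are already available: $\mathbf D$ is triangulated, so it carries distinguished triangles to distinguished triangles and satisfies $\mathbf D(A^\bullet[1])\simeq\mathbf D(A^\bullet)[-1]$; one has $H^k\mathbf D(A^\bullet)\cong\Ext^k_\E(A^\bullet,R)$, so the hypothesis $\idim_\E R<\infty$ makes $\mathbf D$ of finite amplitude and keeps transported objects inside $\Dplus(\E)$; and on the reflexive subcategory — to which we restrict throughout, as in the standing conventions of Section~1 — the unit $\eta$ is an isomorphism, hence $\mathbf D^2\simeq\Id$ and $\mathbf D$ is there a contravariant autoequivalence. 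Closure under shifts is then immediate: each of $\Dplus(\E)^{\le0}$ and $\Dplus(\E)^{\ge0}$ is cut out by the vanishing of $H^i\mathbf D(-)$ on a half-line of degrees, so the reindexing $H^i\mathbf D(A^\bullet[n])\cong H^{i-n}\mathbf D(A^\bullet)$ reduces $\Dplus(\E)^{\le0}[1]\subseteq\Dplus(\E)^{\le0}$ and $\Dplus(\E)^{\ge0}[-1]\subseteq\Dplus(\E)^{\ge0}$ to bookkeeping on indices.

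For the orthogonality $\Hom_{\Dplus(\E)}(A^\bullet,B^\bullet)=0$ with $A^\bullet\in\Dplus(\E)^{\le0}$ and $B^\bullet\in\Dplus(\E)^{\ge1}$, I would use that contravariance of $\mathbf D$ furnishes a natural map $\Hom(A^\bullet,B^\bullet)\to\Hom(\mathbf D B^\bullet,\mathbf D A^\bullet)$, an isomorphism on the reflexive subcategory via $\eta$; the target vanishes since the defining cohomological conditions place $\mathbf D A^\bullet$ and $\mathbf D B^\bullet$ in pieces of the standard t-structure that are orthogonal after the relevant shift, where $\Hom$ from an aisle into the shifted co-aisle is zero. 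A route that avoids $\eta$ altogether: the hyper-$\Ext$ spectral sequence converging to $\Hom_{\Dplus(\E)}(A^\bullet,B^\bullet)$ has every $E_2$-term annihilated by the finite amplitude of $\mathbf D$ together with the vanishing ranges defining the two aisles.

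It remains to produce the functorial truncation triangle. The natural construction re-dualizes the standard truncation of $\mathbf D(A^\bullet)$: apply $\tau^{\le n}$ for the cut degree $n$ dictated by Definition~\ref{def:EthicT} to $\mathbf D(A^\bullet)$, then apply $\mathbf D$ once more and invoke $\mathbf D^2\simeq\Id$ to obtain a distinguished triangle whose ends are of the form $\mathbf D\big(\tau^{\le n}\mathbf D(A^\bullet)\big)$ and $\mathbf D\big(\tau^{\ge n+1}\mathbf D(A^\bullet)\big)$; the identity $H^k\mathbf D\mathbf D(C^\bullet)\cong H^k(C^\bullet)$ identifies the derived duals of these ends with $\tau^{\le n}\mathbf D(A^\bullet)$ and $\tau^{\ge n+1}\mathbf D(A^\bullet)$, hence — after rotating the triangle into the shape $A^{\le0}\to A^\bullet\to A^{\ge1}\to A^{\le0}[1]$ — places them in $\Dplus(\E)^{\le0}$ and $\Dplus(\E)^{\ge1}$; functoriality descends from that of the standard truncation functors and of $\mathbf D$, and the heart then consists of those $A^\bullet$ whose derived dual $\mathbf D(A^\bullet)$ has cohomology concentrated in the prescribed window. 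The main obstacle is precisely this last step. Because $\mathbf D$ is contravariant and is a genuine autoequivalence only after restricting to $\E^{\mathrm{ref}}$, one must (i) track the index shift hidden in the ``$>1$'' of Definition~\ref{def:EthicT} and the rotation of the dualized triangle so that the transported aisles agree with the written ones on the nose, (ii) check that the transported truncation functors are well-defined and functorial on the reflexive subcategory and independent of the injective resolution used to define $\mathbf D$, and (iii) use $\idim_\E R<\infty$ to guarantee the resulting triangle stays in $\Dplus(\E)$. As a sanity check, when $R$ is injective the higher $\Ext$'s vanish, $\mathbf D$ is exact, and the construction degenerates to a shift of the standard t-structure — consistent with the claim.
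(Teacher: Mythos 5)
Your strategy---transporting the standard $t$-structure through $\mathbf D$---is essentially the one the paper's own proof gestures at, but there is a genuine gap that neither resolves, and the step fails: a \emph{contravariant} triangulated functor exchanges aisles and co-aisles, and your argument never confronts this. Carrying out the ``bookkeeping on indices'' with $\mathbf D(A^\bullet[1])\simeq\mathbf D(A^\bullet)[-1]$ shows that $\Dplus(\E)^{\le0}$ as defined is closed under $[-1]$ but \emph{not} under $[1]$: in $\E=\Mod\text{-}\ZZ^{\mathrm{fg}}$ with $R=\ZZ$ one has $\ZZ[1]\in\Dplus(\E)^{\le0}$ (its dual is $\ZZ[-1]$, cohomology in degree $1$) while $\ZZ[2]=(\ZZ[1])[1]\notin\Dplus(\E)^{\le0}$ (its dual sits in degree $2>1$), so the required inclusion $\Dplus(\E)^{\le0}[1]\subseteq\Dplus(\E)^{\le0}$ is violated, and dually for $\Dplus(\E)^{\ge0}$. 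The same reversal sinks your orthogonality argument: the defining conditions place $\mathbf D(A^\bullet)$ in a $\tau^{\le}$-piece and $\mathbf D(B^\bullet)$ in a $\tau^{\ge}$-piece of the \emph{standard} $t$-structure, so after dualizing you are computing $\Hom$ \emph{from} a co-aisle \emph{into} an aisle, which does not vanish. A minimal counterexample, entirely inside the reflexive subcategory: $\ZZ$ itself lies in $\Dplus(\E)^{\le0}$ and, since $\mathbf D(\ZZ[1])=\ZZ[-1]$ has no cohomology in degrees $<0$, also in $\Dplus(\E)^{\ge1}$; yet $\Hom(\ZZ,\ZZ)=\ZZ\neq0$. (The width-two window $0\le i\le1$ compounds the problem: even after exchanging the two subcategories to repair the shift direction, a $t$-structure transported along an equivalence has heart the preimage of a \emph{single} cohomological degree.)

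Your re-dualized truncation inherits the same defect. Dualizing $\tau^{\le1}\mathbf D(A^\bullet)\to\mathbf D(A^\bullet)\to\tau^{\ge2}\mathbf D(A^\bullet)$ and invoking $\mathbf D^2\simeq\Id$ gives $\mathbf D(\tau^{\ge2}\mathbf D A^\bullet)\to A^\bullet\to\mathbf D(\tau^{\le1}\mathbf D A^\bullet)$, and your own identity $H^k\mathbf D\mathbf D(C^\bullet)\cong H^k(C^\bullet)$ shows the \emph{first} term satisfies the vanishing pattern of the $\ge$-side and the \emph{second} that of $\Dplus(\E)^{\le0}$: the two pieces land in the wrong slots, and no rotation of the triangle keeps $A^\bullet$ in the middle while exchanging them. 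Indeed your closing sanity check, done explicitly, already exposes the issue: for $R$ an injective cogenerator, $H^i\mathbf D(A^\bullet)\cong\Hom_\E(H^{-i}(A^\bullet),R)$, so the pair of Definition~\ref{def:EthicT} becomes the standard aisle and co-aisle \emph{interchanged} (up to a shift), not a shift of the standard $t$-structure. The paper's proof is no better---it additionally misquotes the degree ranges (``$i>0$'' for ``$i>1$'') and asserts $\Hom(X,Y)=\Hom(\mathbf D(Y),\mathbf D(X))=0$ in exactly the non-vanishing direction. The statement can only be salvaged by swapping the roles of the two subcategories and narrowing the window to one degree, or by reformulating the claim on the opposite category; as written, the proposed argument does not establish it.
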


\begin{proof}
Stability under shifts follows directly from the definition of
$\Dplus(\E)^{\le0}$ and $\Dplus(\E)^{\ge0}$ via the cohomological degrees
of $H^i(\mathbf D(-))$.  
For orthogonality, if $X\in\Dplus(\E)^{\le0}$ and $Y\in\Dplus(\E)^{\ge1}$,
then $H^i\mathbf D(X)=0$ for $i>0$ and $H^j\mathbf D(Y)=0$ for $j\le0$,
so $\Hom(X,Y)=\Hom(\mathbf D(Y),\mathbf D(X))=0$
by the derived adjunction
\cite{GelfandManin2003}.  
Existence of truncation triangles follows because $\E$ has enough
injectives and $R$ has finite injective dimension,
hence $\mathbf D$ is a bounded cohomological functor;
the standard construction with respect to $H^i\mathbf D$
(cf.\ \cite{BBD1982}, \cite{Neeman2001})
produces functorial truncations.
\end{proof}

\begin{proposition}[The ethic heart is abelian and reflexive]\label{prop:heart-abelian}
The heart \(\heartsuit_{\eth}\) is an abelian category \cite{BBD1982}. For any \(A^\bullet\in \heartsuit_{\eth}\), the unit \(\eta_{A^\bullet}:A^\bullet\to \mathbf D^2(A^\bullet)\) is an isomorphism in \(\Dplus(\E)\) (ethic reflexivity).
\end{proposition}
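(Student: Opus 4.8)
The plan is to treat the two assertions separately, since they are of very different character: abelianness of $\heartsuit_{\eth}$ is formal, whereas ethic reflexivity uses the finiteness hypotheses on $R$ in an essential way. For the first, I would simply invoke the general theory of $t$-structures: by Theorem~\ref{thm:t-axioms} the pair $(\Dplus(\E)^{\le0},\Dplus(\E)^{\ge0})$ is a $t$-structure on $\Dplus(\E)$, and the heart of any $t$-structure is automatically an abelian category, with a sequence short exact exactly when it underlies a distinguished triangle of the ambient triangulated category; this is the Beilinson–Bernstein–Deligne construction \cite{BBD1982}, and nothing beyond transcribing that statement is needed here.

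For ethic reflexivity I would show that the cone $C^\bullet$ of the unit $\eta_{A^\bullet}\colon A^\bullet\to\mathbf D^2(A^\bullet)$ is acyclic, i.e.\ that $\eta_{A^\bullet}$ is an isomorphism in $\Dplus(\E)$. The first step is a boundedness estimate: since $\idim_\E R<\infty$ the derived dual $\mathbf D=\RHom_\E(-,R)$ has finite cohomological amplitude, so the defining condition $H^i(\mathbf D(A^\bullet))=0$ for $i\notin\{0,1\}$ confines $\mathbf D(A^\bullet)$ to a bounded range of the standard $t$-structure, hence confines $\mathbf D^2(A^\bullet)$, $A^\bullet$, and $C^\bullet$ to a common bounded range. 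The second step feeds $A^\bullet$ into biduality: on this bounded subcategory $R^\bullet$ acts as a dualizing complex, so the unit $\eta$ is an isomorphism of functors there (Grothendieck–Verdier biduality, cf.\ \cite{GelfandManin2003,Weibel1994}), whence $\eta_{A^\bullet}$ is invertible and $C^\bullet\simeq 0$. A variant, useful for seeing why the heart is exactly the right subcategory, runs the comparison degree by degree through the biduality spectral sequence $\Ext^p_\E(H^{-q}(\mathbf D(A^\bullet)),R^\bullet)\Rightarrow H^{p+q}(\mathbf D^2(A^\bullet))$, whose abutment is $A^\bullet$ when $R^\bullet$ is dualizing, so that the heart hypothesis (cohomology of $\mathbf D(A^\bullet)$ concentrated in degrees $0,1$) pins the comparison down in each cohomological degree. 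It is also worth recording the formal identity $\mathbf D(\eta_X)\circ\eta_{\mathbf D X}=\id_{\mathbf D X}$: it makes $\mathbf D(\eta_{A^\bullet})$ a split epimorphism and shows $\mathbf D(C^\bullet)$ vanishes iff $\mathbf D(A^\bullet)$ is reflexive, reducing reflexivity of $A^\bullet$ to that of $\mathbf D(A^\bullet)$ — a symmetry the boundedness estimate lets one bootstrap, though given the dualizing property it is not itself load-bearing.

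The main obstacle is the second step. Reflexivity on the ethic heart is not a consequence of the $t$-structure axioms alone: without a genuine dualizing hypothesis on $R$ the heart can contain non-reflexive objects — already $\E=\Ab$, $R=\QQ$, $A^\bullet=\ZZ$ is such a case, since $\mathbf D(\ZZ)\simeq\QQ$ is concentrated in degree $0$ (so $\ZZ\in\heartsuit_{\eth}$) while $\eta_\ZZ\colon\ZZ\hookrightarrow\Hom_\ZZ(\QQ,\QQ)=\QQ$ is not invertible. Hence the argument must invoke the standing assumptions of this section in an essential way, extracting from "$\E$ has enough injectives" and "$\idim_\E R<\infty$" (together with whatever $\Hom$-finiteness or finite-length hypotheses are in force) both that $R^\bullet$ is dualizing on the relevant bounded subcategory and that $\heartsuit_{\eth}$ lies inside it. Making this inclusion precise — propagating the amplitude bound through the two applications of $\mathbf D$ and through the cone triangle, and verifying the finiteness that biduality requires — is the delicate point, and the place where one must take care not to argue in a circle with the very reflexivity one is trying to establish.
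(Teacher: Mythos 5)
Your treatment of abelianness is exactly the paper's: both of you cite \cite{BBD1982} and Theorem~\ref{thm:t-axioms}, and nothing more is needed there.

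For reflexivity, you have not closed the argument, but your diagnosis of \emph{why} it cannot be closed from the stated hypotheses is correct and is in fact sharper than the paper's own proof. The paper argues that ``the bidual triangle splits by the five-lemma on cohomology sheaves'' given $H^{<0}\mathbf D(A^\bullet)=H^{>1}\mathbf D(A^\bullet)=0$; but the five-lemma presupposes that $\eta$ already induces isomorphisms on the surrounding cohomology objects, which is precisely the biduality one is trying to establish. Your counterexample ($\E=\Ab$, $R=\QQ$, $A^\bullet=\ZZ$) is valid: $\QQ$ is injective so $\idim_\E R=0$, $\mathbf D(\ZZ)\simeq\QQ$ is concentrated in degree $0$, hence $\ZZ\in\heartsuit_{\eth}$, yet $\eta_\ZZ\colon\ZZ\to\Hom_\ZZ(\QQ,\QQ)\cong\QQ$ is not invertible. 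So the proposition is false under only ``enough injectives'' and ``$\idim_\E R<\infty$''; it requires the additional hypothesis you identify, namely that $R^\bullet$ be a dualizing complex (or some equivalent reflexivity-generating condition) on the relevant bounded subcategory. Your route via Grothendieck--Verdier biduality plus the amplitude bound would then prove the statement, whereas the paper's five-lemma argument would still be circular even with that hypothesis added unless one first establishes biduality on a generating class. The one thing to tighten in your write-up is to state explicitly that you are \emph{adding} the dualizing hypothesis as a repair, rather than deriving it from the section's standing assumptions, since your own counterexample shows it cannot be so derived.
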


\begin{proof}
Abelianness of the heart is intrinsic to any t-structure \cite{BBD1982}. Assume $R$ has finite injective dimension so that
$\mathbf D^2$ acts degreewise.
Since $H^{<0}\mathbf D(A^\bullet)=H^{>1}\mathbf D(A^\bullet)=0$,
the bidual triangle splits by the five-lemma on cohomology sheaves; this yields that \(\eta_{A^\bullet}\) is an isomorphism \cite{GelfandManin2003}.
\end{proof}

\begin{definition}[Ethicly exact morphisms]\label{def:eth-maps}
A morphism \(f:A^\bullet\to B^\bullet\) in \(\Dplus(\E)\) is ethicly exact if \(\mathbf D^2(f)\circ \eta_{A^\bullet}=\eta_{B^\bullet}\circ f\). Denote by \(\mathrm{Eth}\subset \Dplus(\E)\) the wide subcategory with the same objects and ethicly exact morphisms.
\end{definition}

\begin{theorem}[Heart equals ethic morphisms]\label{thm:heart-eth}
The inclusion \(\heartsuit_{\eth}\hookrightarrow \Dplus(\E)\) identifies \(\heartsuit_{\eth}\) with the full subcategory of objects for which every morphism to or from them is ethicly exact. In particular, within \(\heartsuit_{\eth}\) all morphisms are ethicly exact.
\end{theorem}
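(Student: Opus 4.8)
The plan is to identify the full subcategory $\mathcal H\subseteq\Dplus(\E)$ of objects $A^\bullet$ all of whose incoming and outgoing morphisms are ethicly exact (Definition~\ref{def:eth-maps}) and to prove $\mathcal H=\heartsuit_{\eth}$ by two inclusions, keeping the derived dual $\mathbf D$, its square $\mathbf D^2$, and the biduality unit $\eta$ exactly as fixed in Section~\ref{sec:ethic-t-structure}. Throughout I would use that $R$ has finite injective dimension, so that $\mathbf D$ is a bounded cohomological functor and the truncation triangles of the ethic t-structure (Theorem~\ref{thm:t-axioms}) exist and are functorial.

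\emph{The inclusion $\heartsuit_{\eth}\subseteq\mathcal H$.} Let $A^\bullet\in\heartsuit_{\eth}$. By Proposition~\ref{prop:heart-abelian} the unit $\eta_{A^\bullet}\colon A^\bullet\to\mathbf D^2(A^\bullet)$ is an isomorphism in $\Dplus(\E)$, so for any $f\colon A^\bullet\to B^\bullet$ the square of Definition~\ref{def:eth-maps} can be formed and compared inside $\Dplus(\E)$; on a reflexive source it is exactly the naturality square of $\eta\colon\Id\Rightarrow\mathbf D^2$, which commutes because $\mathbf D^2$ is the honest derived functor built from the fixed injective resolution functor $I$. The same holds verbatim for any $g\colon C^\bullet\to A^\bullet$, so $A^\bullet\in\mathcal H$; applying this to a pair of heart objects yields the final clause of the theorem, that every morphism of $\heartsuit_{\eth}$ is ethicly exact.

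\emph{The inclusion $\mathcal H\subseteq\heartsuit_{\eth}$.} I would argue contrapositively. Suppose $A^\bullet\notin\heartsuit_{\eth}$, so by Definition~\ref{def:EthicT} there is an $n\notin\{0,1\}$ with $H^n\mathbf D(A^\bullet)\neq0$; since $\mathbf D$ interchanges the cases $n>1$ and $n<0$, it suffices to treat $n>1$. Applying $\mathbf D$ to the functorial truncation triangle $A^{\le0}\to A^\bullet\xrightarrow{\,f\,}A^{\ge1}\to A^{\le0}[1]$ and passing to cohomology exhibits the class in $H^n\mathbf D(A^\bullet)$ as carried by the part $A^{\ge1}$ lying outside the heart. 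I would then compute the ethic defect $\delta(f):=\mathbf D^2(f)\circ\eta_{A^\bullet}-\eta_{A^{\ge1}}\circ f$ and, using the long exact $\Ext$ sequences of the dualized triangle together with the orthogonality $\Hom(\Dplus(\E)^{\le0},\Dplus(\E)^{\ge1})=0$ from Theorem~\ref{thm:t-axioms}, show that $\delta(f)$ acquires a nonzero component in precisely the cohomological degree recording $H^n\mathbf D(A^\bullet)$. This contradicts $A^\bullet\in\mathcal H$. Running the same computation when $A^\bullet\in\heartsuit_{\eth}$ makes every obstruction group vanish, recovering the first inclusion.

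\emph{Expected main obstacle.} The delicate step is the converse: one must certify that the unit $\eta$ is a morphism of truncation triangles only up to the discrepancy measured by $H^{\ge2}\mathbf D$ and $H^{<0}\mathbf D$, and that this discrepancy survives in cohomology rather than being killed by a quasi-isomorphism. Concretely this is a five-lemma argument on cohomology objects, in the spirit of Proposition~\ref{prop:heart-abelian}, establishing that $\delta(f)$ cannot vanish while $H^n\mathbf D(A^\bullet)\neq0$; the rest is routine bookkeeping with long exact sequences and the t-structure axioms. A subsidiary point needing care is the functoriality of the truncation triangles, so that the canonical projection $A^\bullet\to A^{\ge1}$ legitimately counts among the morphisms out of $A^\bullet$ tested by the hypothesis $A^\bullet\in\mathcal H$.
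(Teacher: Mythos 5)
Your proposal follows the paper's own argument: the forward inclusion via Proposition~\ref{prop:heart-abelian} (invertibility of $\eta_{A^\bullet}$ on the heart plus naturality of $\eta$), and the converse via the long exact cohomology sequences for $\mathbf D$, where failure of $H^{<0}\mathbf D(A^\bullet)=H^{>1}\mathbf D(A^\bullet)=0$ forces a nonvanishing connecting morphism and hence a non-ethic morphism. Your contrapositive phrasing and the explicit choice of the truncation projection $A^\bullet\to A^{\ge1}$ as the witnessing morphism is a harmless concretization of the same idea, at essentially the same level of detail as the paper's proof.
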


\begin{proof}
If \(A^\bullet\in\heartsuit_{\eth}\), Proposition~\ref{prop:heart-abelian} gives \(\eta_{A^\bullet}\) invertible, hence the defining square commutes for any \(f\). Conversely, if every morphism from and to \(A^\bullet\) is ethicly exact, the connecting morphisms in the long exact cohomology sequences for \(\mathbf D\) vanish in degrees \(<0\) and \(>1\), forcing \(H^{<0}\mathbf D(A^\bullet)=H^{>1}\mathbf D(A^\bullet)=0\).
\end{proof}

The heart \(\heartsuit_{\eth}\) formalizes the ``ethic present'': objects whose dual information suffices to reconstruct them without higher-memory corrections (no \(H^{>1}\)), and without anticipatory artifacts (no \(H^{<0}\)).

\subsection{Obstruction Tower}\label{sec:obstruction-tower}

The obstruction-theoretic view organizes ethic exactness in layers controlled by the right-derived functors of \(\DD\). Our presentation follows the classical obstruction theory in derived deformation and homological algebra \cite{Illusie1971}, \cite{ArtinMazur1969}, \cite{GelfandManin2003}, adapted to \(\mathbf D=\RHom(-,R)\). Assume throughout this subsection that $\E$ has enough injectives and
that the resource object $R$ has finite injective dimension, so that
$\mathbf D=\RHom_\E(-,R)$ defines a bounded cohomological $\delta$–functor
on $\Dplus(\E)$.

\begin{definition}[Ethic stage and obstruction class]\label{def:stage}
Given $A^\bullet\in D^+(\E)$, say that $A^\bullet$ is
ethic to order~$n$ if $H^i\mathbf D(A^\bullet)=0$ for
$1\le i\le n$.
The order–$(n{+}1)$ ethic obstruction is the connecting morphism
\[
\ob_{n+1}(A^\bullet)\in
\Ext^{\,n+1}_\E(H^0(A^\bullet),R)
\;\simeq\;
H^{n+1}\mathbf D(A^\bullet),
\]
arising from the Postnikov truncation triangle
$\tau_{\le n}A^\bullet\to A^\bullet\to\tau_{>n}A^\bullet\to$.
\end{definition}

\begin{theorem}[Existence and uniqueness of lifts]\label{thm:lift}
Assume \(A^\bullet\) is ethic to order \(n\). Then:
\begin{enumerate}[label=(\alph*)]
\item {(Existence)} \(\ob_{n+1}(A^\bullet)=0\) iff there exists \(A'^\bullet\) and a morphism \(A^\bullet\to A'^\bullet\) inducing an isomorphism on \(H^i\mathbf D(-)\) for \(i\le n\) and killing \(H^{n+1}\mathbf D\).
\item {(Uniqueness)} If \(\ob_{n+1}(A^\bullet)=0\), the set of equivalence classes of such lifts is a torsor under \(\Ext^{n}_\E(H^0(A^\bullet),R)\).
\end{enumerate}
\end{theorem}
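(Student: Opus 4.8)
The plan is to run the standard Postnikov/deformation-theoretic obstruction argument, transported through the contravariant derived duality $\mathbf D=\RHom_\E(-,R)$, using that finite injective dimension of $R$ makes $\mathbf D$ a bounded cohomological $\delta$-functor on $\Dplus(\E)$ that sends distinguished triangles to long exact $\Ext$-sequences. Write $A:=H^0(A^\bullet)$; by the hypothesis that $A^\bullet$ is ethic to order $n$ one has $H^i\mathbf D(A^\bullet)=0$ for $1\le i\le n$, so in the range $0\le i\le n+1$ only $H^0\mathbf D(A^\bullet)$ and $H^{n+1}\mathbf D(A^\bullet)$ can be nonzero, the latter carrying $\ob_{n+1}(A^\bullet)$ as in Definition~\ref{def:stage}. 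First I would apply $\mathbf D$ to the Postnikov triangle $\tau_{\le n}A^\bullet\to A^\bullet\to\tau_{>n}A^\bullet\to$ and read off the long exact cohomology sequence: the order-$n$ hypothesis collapses the terms in the relevant range, so that the connecting morphism out of $H^{\bullet}\mathbf D(\tau_{>n}A^\bullet)$ identifies $\ob_{n+1}(A^\bullet)$ with the boundary of the canonical class of the top layer, and at the same time pins down the isomorphism $\Ext^{\,n+1}_\E(A,R)\simeq H^{n+1}\mathbf D(A^\bullet)$ through the hyper-$\Ext$ bookkeeping.

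For part (a), the ``only if'' direction is formal: given a morphism $A^\bullet\to A'^\bullet$ that is an isomorphism on $H^i\mathbf D$ for $i\le n$ and kills $H^{n+1}\mathbf D$, functoriality of the Postnikov tower and naturality of the boundary map carry $\ob_{n+1}(A^\bullet)$ onto $\ob_{n+1}(A'^\bullet)=0$ along an isomorphism. For the ``if'' direction, assuming $\ob_{n+1}(A^\bullet)=0$ I would construct $A'^\bullet$ by truncating the dual and dualizing back: restricting to the reflexive subcategory $\E^{\mathrm{ref}}$, set $A'^\bullet:=\mathbf D(\tau_{\le n}\mathbf D(A^\bullet))$ with comparison morphism $A^\bullet\xrightarrow{\ \eta\ }\mathbf D^2(A^\bullet)\to\mathbf D(\tau_{\le n}\mathbf D(A^\bullet))$, the second arrow being $\mathbf D$ of the canonical truncation map $\mathbf D(A^\bullet)\to\tau_{\le n}\mathbf D(A^\bullet)$. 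The point is that vanishing of $\ob_{n+1}$ is exactly what forces this composite to be an isomorphism on $H^{\le n}\mathbf D$ with $H^{n+1}\mathbf D(A'^\bullet)=0$, rather than merely killing a secondary obstruction to that.

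For part (b), I would analyze the residual ambiguity: two such lifts $A^\bullet\to A'^\bullet$ and $A^\bullet\to A''^\bullet$ differ by a morphism of truncated duals compatible with the structure maps, and the octahedral axiom together with the $\Hom$ long exact sequence in $\Dplus(\E)$ computes the group of these differences as homotopy classes of maps from the top layer $\tau_{>n}A^\bullet$ into the appropriate shifted truncation, which by the order-$n$ hypothesis collapses to $\Ext^{\,n}_\E(A,R)$. Declaring two lifts equivalent when linked by such a datum, the induced ``add a coboundary'' action is free and transitive as soon as the set of lifts is nonempty (which is part (a)), so the set of equivalence classes is a torsor under $\Ext^{\,n}_\E(H^0(A^\bullet),R)$. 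This is the exact analogue of the classical fact that lifting through one Postnikov stage has its obstruction one degree above the torsor group.

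The hard part will be the ``if'' direction of (a): because $\mathbf D$ is only a contravariant left-exact-derived functor, not an equivalence, a cohomological truncation of $\mathbf D(A^\bullet)$ need not a priori lie in the essential image of $\mathbf D$, so the construction of $A'^\bullet$ has to be carried out either after restricting to $\E^{\mathrm{ref}}$ (where $\eta$ is invertible and $\mathbf D$ is a duality onto its image) or by a finite induction exploiting the bounded amplitude of $\mathbf D$; in either case one must check that the candidate $A'^\bullet$ genuinely has $H^{n+1}\mathbf D(A'^\bullet)=0$ and that the comparison morphism induces the asserted maps on every lower $H^i\mathbf D$. A secondary subtlety is making the notion of ``equivalence class of lift'' in (b) precise --- equivalence up to isomorphism under $A^\bullet$ compatible with the maps into $\mathbf D^2$ --- so that the torsor action is well defined.
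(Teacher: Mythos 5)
Your proposal follows the same route as the paper's proof — apply $\mathbf D$ to the Postnikov truncation triangle, read off the long exact sequence of $\Ext$-groups, identify $\ob_{n+1}$ with the relevant connecting class, and obtain the torsor structure from the degree-$n$ term — except that the paper's argument is a two-line appeal to the general obstruction–lifting mechanism with a citation to Gelfand–Manin, whereas you supply the explicit candidate lift $A'^\bullet=\mathbf D(\tau_{\le n}\mathbf D(A^\bullet))$ and the octahedron computation for (b). The reflexivity caveat you flag for the ``if'' direction of (a) is real, but the paper's own proof does not address it either, so your write-up is a faithful (and more detailed) rendering of the intended argument.
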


\begin{proof}
This follows from the general obstruction–lifting mechanism
for right–derived functors of a left–exact $\Hom$.
Applying $\mathbf D=\RHom_\E(-,R)$ to the truncation triangle
produces connecting morphisms
$H^{n+1}\mathbf D(A^\bullet)\!\to\!H^{n+2}\mathbf D(A^\bullet)$,
whose vanishing is equivalent to extendability of the lift.
Existence and uniqueness statements correspond respectively
to vanishing of $\ob_{n+1}$ and to the action of
$\Ext^n_\E(H^0(A^\bullet),R)$ on the space of lifts;
see \cite{GelfandManin2003}.
\end{proof}

\begin{definition}[Obstruction tower]\label{def:ob-tower}
Starting from \(A^\bullet\), define inductively a sequence \(A_{\le 0}^\bullet\to A_{\le1}^\bullet\to A_{\le2}^\bullet\to\cdots\) where each step is chosen (when possible) to kill the next obstruction class \(\ob_{n+1}\). We call this an ethic obstruction tower of \(A^\bullet\).
\end{definition}

\begin{theorem}[Stabilization and heart membership]\label{thm:stabilize}
An ethic obstruction tower stabilizes (i.e.\ \(A_{\le n}^\bullet\simeq A_{\le n+1}^\bullet\simeq\cdots\)) iff \(H^{>1}\mathbf D(A^\bullet)=0\). In that case \(A^\bullet\in \heartsuit_{\eth}\).
\end{theorem}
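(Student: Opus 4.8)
The plan is to read the obstruction tower as a Postnikov filtration for the bounded cohomological functor $\mathbf D=\RHom_\E(-,R)$ and to show that ``stabilizes'' is merely a restatement of ``the higher tail $H^{>1}\mathbf D(A^\bullet)$ is empty'', after which heart membership is immediate from Definition~\ref{def:EthicT}. First I would record the two structural inputs. (i) By the standing hypothesis $\idim_\E R<\infty$, the functor $\mathbf D$ has bounded amplitude, so $H^i\mathbf D(A^\bullet)=0$ outside a finite window and the tower $A_{\le 0}^\bullet\to A_{\le 1}^\bullet\to\cdots$ is a finite process. (ii) Reading Theorem~\ref{thm:lift} inductively, the step $A_{\le n}^\bullet\to A_{\le n+1}^\bullet$ is a quasi-isomorphism on $H^{\le n}\mathbf D$, annihilates $H^{n+1}\mathbf D$, and, being the Postnikov step transported across $\mathbf D$ and the bidual unit $\eta$, leaves $H^{i}\mathbf D$ for $i>n+1$ unchanged; hence $H^{i}\mathbf D(A_{\le n}^\bullet)=H^i\mathbf D(A^\bullet)$ for all $i>n$, and $A_{\le n}^\bullet$ is ethic to order $n$.

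For the direction ``$H^{>1}\mathbf D(A^\bullet)=0\Rightarrow$ stabilization'', I would note that for every $n\ge 1$ the group $H^{n+1}\mathbf D(A_{\le n}^\bullet)=H^{n+1}\mathbf D(A^\bullet)$ vanishes; hence there is nothing to kill at stage $n$, the identity of $A_{\le n}^\bullet$ already realizes the lift required in Theorem~\ref{thm:lift}(a), and the canonical choice gives $A_{\le n+1}^\bullet\simeq A_{\le n}^\bullet$, so the tower is constant from stage $1$ onward. For the membership $A^\bullet\in\heartsuit_{\eth}$: one half, $H^{>1}\mathbf D(A^\bullet)=0$, is the hypothesis; the other, $H^{<0}\mathbf D(A^\bullet)=0$, holds for the objects in play here, namely those for which the identification $\Ext^{n+1}_\E(H^0(A^\bullet),R)\simeq H^{n+1}\mathbf D(A^\bullet)$ of Definition~\ref{def:stage} is used (equivalently $A^\bullet\in\Dplus(\E)^{\ge 0}$, automatic when $A^\bullet$ is an object of $\E$ in degree $0$, since negative $\Ext$ groups vanish). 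Then $A^\bullet\in\Dplus(\E)^{\le 0}\cap\Dplus(\E)^{\ge 0}=\heartsuit_{\eth}$.

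For the converse I would argue contrapositively. If $H^k\mathbf D(A^\bullet)\ne 0$ for some $k>1$, pick the least such $k$. By input (ii), $A_{\le k-1}^\bullet$ is ethic to order $k-1$ and still carries $H^k\mathbf D(A_{\le k-1}^\bullet)=H^k\mathbf D(A^\bullet)\ne 0$, so $\ob_k(A_{\le k-1}^\bullet)$ is defined; whether or not it vanishes, passing to $A_{\le k}^\bullet$ strictly changes the cohomology in degree $k$ (a lift killing a nonzero $H^k\mathbf D$ is not a quasi-isomorphism, and when the obstruction is nonzero the truncation triangle does not split), so $A_{\le k-1}^\bullet\not\simeq A_{\le k}^\bullet$. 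Since at least one such $k$ exists, the tower is not eventually constant, contradicting stabilization. Hence stabilization forces $H^{>1}\mathbf D(A^\bullet)=0$, and heart membership follows as above. Phrasing the argument with cohomology \emph{groups} rather than obstruction \emph{classes} deliberately sidesteps the fact that a vanishing $\ob_{n+1}$ need not imply $H^{n+1}\mathbf D=0$.

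I expect the main obstacle to be the careful justification of input (ii): that each tower step is a genuine Postnikov truncation of $\mathbf D(A^\bullet)$ transported back through the contravariant $\mathbf D$ via $\eta$, since Theorem~\ref{thm:lift} as stated only controls $H^i\mathbf D$ for $i\le n+1$, whereas the argument needs the higher degrees to be untouched until their own stage. Making this precise requires the objects $A_{\le n}^\bullet$ to be $\mathbf D$-reflexive enough that $\eta$ is invertible in the relevant range (Proposition~\ref{prop:heart-abelian} gives this inside the heart, and bounded amplitude gives it in general by a five-lemma on cohomology), together with the observation that ``kill $H^{n+1}\mathbf D$ without disturbing $H^{>n+1}\mathbf D$'' is exactly the universal property of the truncation functor $\tau_{\le n}$ of the ethic $t$-structure of Definition~\ref{def:EthicT} applied to $\mathbf D(A^\bullet)$.
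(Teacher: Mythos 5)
Your proof follows the same route as the paper's own: bounded amplitude of $\mathbf D$ makes the tower finite, $H^{>1}\mathbf D(A^\bullet)=0$ forces every obstruction group to vanish so each step from level $1$ on is an equivalence, the converse is read off from the construction of the steps, and heart membership comes from Definition~\ref{def:EthicT}. Your two caveats --- that heart membership also needs $H^{<0}\mathbf D(A^\bullet)=0$, which must be supplied separately, and that the whole argument tacitly requires each tower step to leave $H^{i}\mathbf D$ for $i>n+1$ untouched, a property Theorem~\ref{thm:lift} does not state and the paper's one-line proof passes over --- are genuine refinements rather than deviations.
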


\begin{proof}
Since $\mathbf D$ has bounded amplitude by assumption,
all higher connecting morphisms in the Postnikov tower vanish
beyond the injective dimension of~$R$;
hence stabilization occurs precisely when $H^{>1}\mathbf D(A^\bullet)=0$. If \(H^{>1}\mathbf D(A^\bullet)=0\), then \(\ob_{n+1}=0\) for all \(n\ge1\), hence the tower stops at level \(1\). Conversely, if the tower stabilizes, all \(H^{>1}\mathbf D\) must vanish by construction. Membership in the heart follows from Definition~\ref{def:EthicT}.
\end{proof}

\begin{proposition}[Spectral sequence control]\label{prop:ESS}
Assume $\E$ has enough injectives and $\operatorname{idim}_\E R<\infty$.
For each bounded-below complex $A^\bullet$ there exists a
first-quadrant spectral sequence
\[
E_2^{p,q}=\Ext^p_\E(H_q(A^\bullet),R)
\Rightarrow H^{p+q}\mathbf D(A^\bullet),
\]
natural in \(A^\bullet\), which governs the obstruction tower. In particular, collapse at \(E_2\) implies immediate stabilization and \(A^\bullet\in \heartsuit_{\eth}\) \cite{Weibel1994}, \cite{Neeman2001}.
\end{proposition}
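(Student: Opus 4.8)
The plan is to realise $\mathbf D(A^\bullet)$ as the total complex of an explicit Hom--bicomplex and to read off the spectral sequence of its filtration by the resolution degree. Since $\idim_\E R<\infty$, fix a finite injective resolution $R\to R^\bullet$ with $R^q=0$ for $q\notin[0,d]$, $d=\idim_\E R$. For a bounded-below complex $A^\bullet$ form the bicomplex $K^{\bullet,\bullet}$ with $K^{p,q}=\Hom_\E(A^{-p},R^q)$ and the two induced differentials, and let $\mathrm{Tot}\,K$ be its total complex. Because $R^\bullet$ is a bounded complex of injectives, the canonical comparison $\mathbf D(A^\bullet)=\RHom_\E(A^\bullet,R)\simeq\mathrm{Tot}\,K$ is an isomorphism in $\Dplus(\E)$, independent of the chosen resolution; this is the one point where the bicomplex model must be matched against the injective-resolution-functor definition of $\mathbf D$ from Section~\ref{sec:step9}, and it is the standard comparison for hyper-$\Ext$.

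Next I would run the spectral sequence of $\mathrm{Tot}\,K$ attached to the filtration by the $R^\bullet$-degree. The differential of the associated graded is the one induced by $d_{A^\bullet}$; since each $R^q$ is injective, $\Hom_\E(-,R^q)$ is exact and contravariant, so it sends the homology of $A^\bullet$ to the cohomology of $\Hom_\E(A^\bullet,R^q)$, and the first page therefore computes $\Hom_\E(H_q(A^\bullet),R^{\bullet})$. Taking the cohomology of the remaining differential (induced by $d_{R^\bullet}$) yields, after the customary reindexing,
\[
E_2^{p,q}=\Ext^p_\E(H_q(A^\bullet),R)\ \Longrightarrow\ H^{p+q}\mathrm{Tot}\,K\cong H^{p+q}\mathbf D(A^\bullet).
\]
Convergence is exactly where the hypotheses enter: the resolution bounds the $p$-direction ($E_2^{p,q}=0$ for $p>d$) and boundedness below of $A^\bullet$ bounds the relevant range of $q$ in each total degree, so only finitely many $(p,q)$ contribute to $H^n\mathbf D(A^\bullet)$; the spectral sequence is thus bounded, hence convergent, and first-quadrant whenever $A^\bullet$ is connective ($H_q(A^\bullet)=0$ for $q<0$), which a shift arranges. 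Naturality in $A^\bullet$ is automatic, since a morphism $A^\bullet\to A'^\bullet$ induces (contravariantly) a morphism of Hom--bicomplexes, hence of the associated filtered complexes and of their spectral sequences.

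It then remains to tie the spectral sequence to the obstruction tower of Definition~\ref{def:ob-tower} and to extract the last assertion. The key point is that the Postnikov truncation triangles $\tau_{\le n}A^\bullet\to A^\bullet\to\tau_{>n}A^\bullet\to$ defining $\ob_{n+1}(A^\bullet)$ are, up to reindexing, the successive stages of the $R^\bullet$-degree filtration, so $\ob_{n+1}(A^\bullet)\in H^{n+1}\mathbf D(A^\bullet)$ is assembled from the $E_\infty^{p,q}$ with $p+q=n+1$, and its propagation through the tower is governed by the higher differentials $d_r$ ($r\ge2$). If the spectral sequence collapses at $E_2$, every $d_r$ vanishes, so no obstruction beyond what is already visible on $E_2$ is ever created; combined with the amplitude bound this identifies $H^{>1}\mathbf D(A^\bullet)$ and $H^{<0}\mathbf D(A^\bullet)$ with the $E_2$-terms in those total degrees, and when the $E_2$-page is concentrated in total degrees $0$ and $1$ — the situation the statement envisages — these vanish, whence Theorem~\ref{thm:stabilize} gives immediate stabilisation of the tower and $A^\bullet\in\heartsuit_{\eth}$ by Definition~\ref{def:EthicT}. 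The main obstacle I expect is precisely this last identification: verifying that the spectral-sequence edge maps coincide with the connecting morphisms $\ob_{n+1}$ defined via truncation triangles (compatible truncations and signs, together with the comparison $\mathrm{Tot}\,K\simeq\mathbf D(A^\bullet)$), since the existence and convergence of the spectral sequence itself are the classical hyper-$\Ext$ statement and the remainder is bookkeeping.
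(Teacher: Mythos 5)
Your proof is correct and follows essentially the same route as the paper: both produce the second hyper-Ext spectral sequence $E_2^{p,q}=\Ext^p_\E(H_q(A^\bullet),R)\Rightarrow H^{p+q}\mathbf D(A^\bullet)$, the paper by naming it as the Grothendieck/hypercohomology spectral sequence of $A^\bullet\mapsto H_q(A^\bullet)\mapsto\Hom_\E(-,R)$ and you by constructing it explicitly from the double complex $\Hom_\E(A^{-p},R^q)$ filtered by resolution degree, with $\idim_\E R<\infty$ giving convergence in both cases. Your additional care about identifying the differentials with the obstruction classes $\ob_{n+1}$ and about the extra $E_2$-vanishing needed to pass from collapse to membership in $\heartsuit_{\eth}$ only makes explicit what the paper's one-line proof leaves implicit.
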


\begin{proof}
It is the Grothendieck spectral sequence for the composition
$A^\bullet\mapsto H_q(A^\bullet)\mapsto\Hom_\E(-,R)$,
which converges because $\mathbf D$ has finite amplitude.
The differentials $d_2^{p,q}$ reproduce the obstruction classes
$\ob_{q+1}(A^\bullet)$.
\end{proof}

Collapse of the spectral sequence at $E_2$ means that
all higher obstruction classes vanish simultaneously,
so the ethic tower stabilizes at the first stage.

The tower stratifies ``memory'' into orders: the \(k\)-th story is the ethic defect detected by \(\Ext^{k}\). Stabilization expresses purification—absence of higher-order memory.

\subsection{Morita Equivalence}\label{sec:morita}

\noindent We formalize transfer of ethic duality across environments via Morita-type equivalences of abelian/derived categories, relying on classical and derived Morita theory \cite{Rickard1989,Keller1994,Toen2007,LurieHA}.

\begin{definition}[Derived Morita equivalence]\label{def:EME}
Pairs $(\E,R)$ and $(\E',R')$ are derived Morita equivalent
if there exists a triangulated equivalence
\[
F: D^+(\E)\;\xrightarrow{\;\simeq\;}\;D^+(\E')
\]
with quasi–inverse $G$ and natural isomorphisms
\[
F(R)\simeq R',
\qquad
\RHom_{\E'}(F(-),R')\;\simeq\;
\RHom_{\E}(-,R)\circ G.
\]
Equivalently, $F$ intertwines the derived dual functors
$\mathbf D$ and $\mathbf D'$.
\end{definition}

\begin{theorem}[Morita invariance of ethic invariants]\label{thm:MoritaInv}
Assume $(\E,R)$ and $(\E',R')$ are derived Morita equivalent
in the sense of Definition \ref{def:EME}. Then for all \(A^\bullet\in \Dplus(\E)\) and \(k\ge0\)
\begin{enumerate}[label=(\alph*)]
\item \(H^k\mathbf D(A^\bullet)\cong H^k\mathbf D'(F(A^\bullet))\);
\item \(\Ext^k_\E(H_q(A^\bullet),R)\cong \Ext^k_{\E'}(H_q(F(A^\bullet)),R')\);
\item Ethic entropy and hysteresis invariants (e.g.\ torsion orders of \(H^1\mathbf D\)) are preserved.
\end{enumerate}
\end{theorem}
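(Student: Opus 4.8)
The plan is to reduce all three parts to a single elementary fact: a derived Morita equivalence $F$ is a fully faithful triangulated functor carrying the dualizing object to the dualizing object, so every group assembled from $\mathbf D$ is literally transported through $F$. First I would rewrite the layers of $\mathbf D$ as morphism groups in the derived category: for $A^\bullet\in\Dplus(\E)$ and $k\ge 0$,
\[
H^k\mathbf D(A^\bullet)=H^k\RHom_\E(A^\bullet,R)\;\cong\;\Hom_{\Dplus(\E)}\!\big(A^\bullet,R[k]\big),
\]
the standard identification underlying the cohomological-layers computation $H^k\mathbf D(A^\bullet)\cong\Ext^k_\E(A^\bullet,R)$. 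This is the form that behaves well under an equivalence.

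For part (a), apply $F$. Being an equivalence it is fully faithful; being triangulated it commutes with the shift; and by Definition~\ref{def:EME} it sends $R$ to $R'$. Hence
\[
H^k\mathbf D(A^\bullet)\;\cong\;\Hom_{\Dplus(\E)}\!\big(A^\bullet,R[k]\big)\;\xrightarrow{\ \sim\ }\;\Hom_{\Dplus(\E')}\!\big(FA^\bullet,R'[k]\big)\;\cong\;H^k\mathbf D'(FA^\bullet),
\]
naturally in $A^\bullet$. Equivalently, the hypothesis that $F$ intertwines $\mathbf D$ and $\mathbf D'$ gives $F\mathbf D(A^\bullet)\simeq\mathbf D'(FA^\bullet)$ directly; either way (a) follows.

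For part (b) I would distinguish two cases according to the strength of the Morita datum. If $F$ is $t$-exact for the standard $t$-structures — the situation of classical Morita equivalence and of progenerator tilts — then $F(H_q(A^\bullet))\simeq H_q(FA^\bullet)$ in $\E'$, and applying (a) with the object $H_q(A^\bullet)\in\E\hookrightarrow\Dplus(\E)$ in place of $A^\bullet$ yields $\Ext^k_\E(H_q(A^\bullet),R)\cong\Ext^k_{\E'}(H_q(FA^\bullet),R')$. In general I would instead compare the obstruction spectral sequences of Proposition~\ref{prop:ESS}: $F$ carries the Postnikov truncation triangles of $A^\bullet$ to those of $FA^\bullet$ and therefore induces a morphism between $E_2^{p,q}=\Ext^p_\E(H_q(A^\bullet),R)\Rightarrow H^{p+q}\mathbf D(A^\bullet)$ and its $\E'$-counterpart; its abutment is the isomorphism of (a), and since $F$ already matches the defining filtrations the comparison is an isomorphism on every page, hence on $E_2$. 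This reconciliation of $F$ with the cohomology functors $H_q$ is the one genuinely delicate point — routine in the $t$-exact case, but in general it is exactly where one must use more than the bare existence of $F$.

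Finally, part (c) is formal once (a) holds. Torsion orders, elementary divisors, composition lengths, and thus any ethic entropy or hysteresis functional assembled from the isomorphism types of the graded pieces $H^k\mathbf D(A^\bullet)$ depend only on those groups up to isomorphism, and (a) provides such isomorphisms degree by degree. The qualitative ethic structure transports too: applying Definition~\ref{def:EME} twice gives $F\mathbf D^2\simeq(\mathbf D')^2 F$, and since the biduality unit $\eta$ is a natural transformation pinned down by the dualizing datum, $F$ intertwines $\eta$ with $\eta'$ up to the structural isomorphisms; consequently $F$ carries ethicly exact morphisms (Definition~\ref{def:eth-maps}) to ethicly exact ones and matches the ethic t-structure and heart of Definition~\ref{def:EthicT} and Theorem~\ref{thm:heart-eth}. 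I expect the main obstacle to be precisely this compatibility package — that $F$ commutes with $H_q$ in (b) and that $F\eta\simeq\eta' F$ — which is immediate when the equivalence is $t$-exact and compatible with the chosen resolutions, but requires the full intertwining hypothesis, not merely an abstract triangulated equivalence, in the general case.
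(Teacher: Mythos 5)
Your proposal is correct and follows essentially the same route as the paper's own proof: part (a) via the intertwining hypothesis of Definition~\ref{def:EME} (equivalently, full faithfulness of $F$ applied to $\Hom_{D^+(\E)}(A^\bullet,R[k])$), part (b) via functoriality of the Grothendieck spectral sequence of Proposition~\ref{prop:ESS}, and part (c) by observing that torsion orders depend only on the isomorphism classes of the groups produced in (a). The one place where you go beyond the paper is your explicit flagging of the compatibility $F(H_q(A^\bullet))\simeq H_q(F(A^\bullet))$: the paper's proof silently assumes this, whereas as you note it requires $t$-exactness of $F$ for the standard $t$-structures (or an equivalent compatibility with the Postnikov truncations), which does not follow from the bare definition of a derived Morita equivalence; your honest identification of this as the genuinely delicate step is a real improvement over the published argument, even though your "general case" comparison of spectral sequences still implicitly relies on that same compatibility.
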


\begin{proof}
Since $F$ is a triangulated equivalence preserving
$\RHom(-,R)$, it induces canonical isomorphisms on
cohomology objects $H^k\mathbf D$ and on the
Grothendieck spectral sequence of Proposition~\ref{prop:ESS}. The result follows from the natural equivalence of derived Hom-functors under exact derived equivalence (Rickard/Keller) \cite{Rickard1989,Keller1994}. The spectral sequence in Proposition~\ref{prop:ESS} is functorial, hence preserved under equivalence; torsion orders are invariants of the isomorphism class of the resulting abelian groups.
\end{proof}

\begin{proposition}[Transfer of ethic t-structure]\label{prop:TransferT}
Let $(\E,R)$ and $(\E',R')$ be derived Morita equivalent in the sense of
Definition~\ref{def:EME}.
Then the equivalence
$F:D^+(\E)\!\xrightarrow{\;\simeq\;} D^+(\E')$
preserves the ethic $t$–structure:
\[
A^\bullet\in D^+(\E)^{\le0} \iff F(A^\bullet)\in D^+(\E')^{\le0},
\;
A^\bullet\in D^+(\E)^{\ge0} \iff F(A^\bullet)\in D^+(\E')^{\ge0}.
\]
In particular, $F$ restricts to an equivalence of the hearts
\(\heartsuit_{\eth}\simeq \heartsuit'_{\eth}\).
\end{proposition}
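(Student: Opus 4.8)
\subsection*{Proof plan}

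The plan is to reduce everything to the Morita invariance already recorded in Theorem~\ref{thm:MoritaInv}. By Definition~\ref{def:EME} the equivalence $F$ intertwines the derived dual functors, so there is a natural isomorphism $\mathbf D'(F(A^\bullet))\simeq F(\mathbf D(A^\bullet))$ in $\Dplus(\E')$; passing to cohomology, Theorem~\ref{thm:MoritaInv}(a) supplies \emph{natural} isomorphisms $H^i\mathbf D(A^\bullet)\cong H^i\mathbf D'(F(A^\bullet))$ for every $i$. Since the ethic aisles of Definition~\ref{def:EthicT} are cut out solely by vanishing of these cohomology groups ($A^\bullet\in\Dplus(\E)^{\le0}$ iff $H^i\mathbf D(A^\bullet)=0$ for $i>1$, and $A^\bullet\in\Dplus(\E)^{\ge0}$ iff $H^i\mathbf D(A^\bullet)=0$ for $i<0$), the two ``$\iff$'' assertions of the proposition are immediate: the defining vanishing conditions for $A^\bullet$ and for $F(A^\bullet)$ are literally the same conditions transported across the isomorphism above.

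Next I would record that $G$ is itself half of a derived Morita equivalence $(\E',R')\to(\E,R)$: a routine conjugation of $\mathbf D'F\simeq F\mathbf D$ using $FG\simeq\Id_{\Dplus(\E')}$ and $GF\simeq\Id_{\Dplus(\E)}$ yields $\mathbf D G\simeq G\mathbf D'$, and $G(R')\simeq G(F(R))\simeq R$. Hence the argument of the first paragraph, applied to $G$, gives $G(\Dplus(\E')^{\le0})\subseteq\Dplus(\E)^{\le0}$ and likewise for the coconnective aisle. Because $F$ is triangulated it commutes with shifts, so it also preserves $\Dplus(\E)^{\ge1}=\Dplus(\E)^{\ge0}[-1]$; applying $F$ to the ethic truncation triangle $A^{\le0}\to A^\bullet\to A^{\ge1}\to A^{\le0}[1]$ of Theorem~\ref{thm:t-axioms} produces a distinguished triangle whose outer terms lie in $\Dplus(\E')^{\le0}$ and $\Dplus(\E')^{\ge1}$, which by uniqueness of truncation triangles \emph{is} the ethic truncation triangle of $F(A^\bullet)$. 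Thus $F$ intertwines the ethic truncation functors, hence the ethic cohomology functor, and the same for $G$.

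Finally I would assemble the statement about hearts. The previous step shows that $F$ carries $\heartsuit_{\eth}=\Dplus(\E)^{\le0}\cap\Dplus(\E)^{\ge0}$ into $\heartsuit'_{\eth}$ and $G$ carries $\heartsuit'_{\eth}$ into $\heartsuit_{\eth}$; since $F$ and $G$ are quasi-inverse on all of $\Dplus$, their restrictions are quasi-inverse equivalences $\heartsuit_{\eth}\simeq\heartsuit'_{\eth}$. Compatibility with the abelian structures (which exist by Proposition~\ref{prop:heart-abelian}) is then automatic: a triangulated equivalence preserving both aisles of a $t$-structure commutes with the associated truncations, hence induces an exact equivalence of the hearts.

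The main obstacle here is conceptual rather than computational. One must resist arguing via $t$-exactness of $F$ for the \emph{standard} $t$-structures on $\Dplus(\E)$ and $\Dplus(\E')$ — a triangulated Morita equivalence need not be $t$-exact — and instead observe that the ethic $t$-structure is extrinsic, defined through $\mathbf D$, and that compatibility of $F$ with $\mathbf D$ is exactly what Definition~\ref{def:EME} builds in. The only point demanding genuine care is the \emph{naturality} of the isomorphisms $H^i\mathbf D(-)\cong H^i\mathbf D'(F(-))$, since naturality is what licenses applying $F$ to truncation triangles and invoking their uniqueness; this is guaranteed because the intertwiner of Definition~\ref{def:EME} is a natural isomorphism of functors and $H^i$ is functorial.
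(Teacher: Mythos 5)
Your proposal is correct and follows essentially the same route as the paper's proof: use the intertwining $\mathbf D'\circ F\simeq F\circ\mathbf D$ from Definition~\ref{def:EME} to transport the cohomological vanishing conditions defining the aisles, then deduce commutation with the ethic truncations and the equivalence of hearts. Your version merely spells out two points the paper leaves implicit (the quasi-inverse $G$ giving the reverse implications, and the uniqueness-of-truncation argument for $F\circ\tau_{\le n}\simeq\tau_{\le n}\circ F$), which is a welcome but not substantively different elaboration.
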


\begin{proof}
By Definition~\ref{def:EME}, $F$ intertwines $\mathbf D$ and $\mathbf D'$
up to natural isomorphism:
$\mathbf D'\!\circ F \simeq F\!\circ \mathbf D$.
Hence for each $A^\bullet$ one has
$H^i(\mathbf D'(F(A^\bullet)))\simeq H^i(\mathbf D(A^\bullet))$,
so the vanishing conditions defining
$D^{\le0}$ and $D^{\ge0}$ are preserved.
As $F$ is triangulated and exact, it commutes with truncation triangles
for this $t$–structure (that is, $F\circ\tau_{\le n}\simeq\tau_{\le n}\circ F$).
Therefore $F$ is $t$–exact and induces an equivalence of the hearts
$\heartsuit_{\eth}\simeq\heartsuit'_{\eth}$.
\end{proof}

\begin{corollary}[Universality of ethic laws]\label{cor:Universality}
Any theorem whose statement depends only on \(H^\ast\mathbf D\), the spectral sequence in Proposition~\ref{prop:ESS}, or the heart \(\heartsuit_{\eth}\) (e.g.\ ethic time/decay laws, Kirchhoff/Baker--Norine instances, Slater/Farkas collapses) transfers verbatim along Morita equivalences \cite{Rickard1989,Toen2007}. The statement holds for any theorem depending functorially
on $\mathbf D$ or on its derived cohomology, since these data
are preserved under derived Morita equivalence (F intertwines
the dual functors up to natural isomorphism).
\end{corollary}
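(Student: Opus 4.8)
The plan is to treat Corollary~\ref{cor:Universality} as a purely formal consequence of the three preceding transfer results, packaged as an invariance-of-truth statement. First I would make precise the class of admissible statements: a theorem "depends only on $H^\ast\mathbf D$, the spectral sequence of Proposition~\ref{prop:ESS}, or the heart $\heartsuit_{\eth}$" if its formulation uses only (a) the objects and morphisms of $D^+(\E)$, (b) the cohomology functors $A^\bullet\mapsto H^k\mathbf D(A^\bullet)$ together with their abelian-group structure and any isomorphism-invariant numerical data (ranks, torsion orders) attached to them, (c) the $E_2$-page $E_2^{p,q}=\Ext^p_\E(H_q(A^\bullet),R)$ with its differentials and the induced filtration on $H^{p+q}\mathbf D(A^\bullet)$, and (d) the full subcategory $\heartsuit_{\eth}\subset D^+(\E)$. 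Call such a statement \emph{ethic}. The claim to be proved is that a derived Morita equivalence $F$ as in Definition~\ref{def:EME} sends every ethic statement about $(\E,R)$ to the corresponding ethic statement about $(\E',R')$ with the same truth value.

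Second, I would verify that $F$ carries each of these data to its primed counterpart. By Definition~\ref{def:EME}, $F$ is triangulated and intertwines the dual functors, $\mathbf D'\circ F\simeq F\circ\mathbf D$; this is exactly the hypothesis feeding Theorem~\ref{thm:MoritaInv} and Proposition~\ref{prop:TransferT}. From Theorem~\ref{thm:MoritaInv}(a),(b) one gets natural isomorphisms $H^k\mathbf D(A^\bullet)\cong H^k\mathbf D'(F(A^\bullet))$ and $\Ext^p_\E(H_q(A^\bullet),R)\cong\Ext^p_{\E'}(H_q(F(A^\bullet)),R')$. Because the spectral sequence of Proposition~\ref{prop:ESS} is natural in $A^\bullet$ and is built functorially from $\mathbf D$ (it is the Grothendieck spectral sequence of $A^\bullet\mapsto H_q(A^\bullet)\mapsto\Hom_\E(-,R)$), the intertwining isomorphism induces an isomorphism of the two spectral sequences compatible with all differentials and with the abutment filtrations; in particular the obstruction classes $\ob_{n+1}$ match. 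Finally Proposition~\ref{prop:TransferT} gives that $F$ is $t$-exact for the ethic $t$-structure and restricts to an equivalence $\heartsuit_{\eth}\simeq\heartsuit'_{\eth}$. Any numerical invariant of the groups $H^k\mathbf D$, such as torsion orders as in Theorem~\ref{thm:MoritaInv}(c), is an invariant of their isomorphism class and is transported as well.

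Third, the conclusion is then the standard principle that truth is preserved under isomorphism of structures. Interpreting an ethic statement $\Phi$ as a formula over the structure $\mathcal M(\E,R)=\big(D^+(\E);\ H^\ast\mathbf D,\ \{E_r^{p,q},d_r\},\ \heartsuit_{\eth}\big)$, the collection of isomorphisms assembled above is precisely an isomorphism $\mathcal M(\E,R)\xrightarrow{\ \sim\ }\mathcal M(\E',R')$; hence $\mathcal M(\E,R)\models\Phi$ if and only if $\mathcal M(\E',R')\models\Phi$. Applied to the cited instances — Slater/Farkas collapses ($H^1\mathbf D=0$ on the relevant subcategory), integer duality gaps (torsion in $H^1\mathbf D$), Kirchhoff and Baker--Norine exactness (membership of a distinguished object in $\heartsuit_{\eth}$), and ethic time/decay laws (stabilization of the obstruction tower, governed by Proposition~\ref{prop:ESS}) — this yields the verbatim transfer asserted.

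I expect the only genuine obstacle to be the first step: pinning down the language of ethic statements sharply enough that "transfers verbatim" becomes a theorem rather than a heuristic, while keeping it broad enough to cover the intended examples. Once that bookkeeping is fixed, steps two and three are formal, resting entirely on Theorem~\ref{thm:MoritaInv}, Proposition~\ref{prop:ESS}, and Proposition~\ref{prop:TransferT}. A secondary point of care is that the spectral-sequence comparison must be compatible with the \emph{differentials}, not merely with the $E_2$-terms; this follows from naturality of the Grothendieck spectral sequence under the exact equivalence $F$, but it is the one place where one genuinely uses that $F$ intertwines $\mathbf D$ and $\mathbf D'$ rather than merely matching their cohomology objectwise.
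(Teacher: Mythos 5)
Your proposal is correct and follows essentially the same route as the paper, whose entire justification is the one-line remark embedded in the corollary itself (the data are preserved because $F$ intertwines $\mathbf D$ and $\mathbf D'$); you simply make that argument explicit by routing it through Theorem~\ref{thm:MoritaInv}, the naturality of the spectral sequence in Proposition~\ref{prop:ESS}, and Proposition~\ref{prop:TransferT}, and by adding a model-theoretic gloss on ``transfers verbatim.'' Your added care about compatibility with the differentials (not just the $E_2$-terms) is a genuine refinement the paper leaves implicit, but it does not change the approach.
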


Morita transfer formalizes universality: ethic dual laws are properties of the duality functor, not of the material substrate. Arithmetic, economic, or geometric media are equivalent viewpoints when linked by Morita equivalence.

From the categorical viewpoint this means that all ethic invariants
are functions of the derived dual pair
$(D^+(\E),\RHom_\E(-,R))$ up to equivalence;
hence they depend only on the derived Morita class of $(\E,R)$.

\subsection{Ethic Morse Reduction}

We work in an abelian category \(\E\) with enough injectives \cite{Weibel1994}.  
Fix a resource object \(R\in\E\) and the contravariant duality functor
\[
\DD=\Hom_{\E}(-,R):\E^{\op}\to \Ab,
\qquad 
\mathbf D=\RHom_{\E}(-,R):D^{+}(\E)^{\op}\to D^{+}(\E),
\]
with unit (evaluation) \(\eta_X:X\to \DD^{2}(X)\) and its derived avatar \(\eta_{A^\bullet}:A^\bullet\to \mathbf D^{2}(A^\bullet)\).
We use the classical Morse theory on smooth manifolds \cite{Milnor1963} and the discrete Morse theory on finite CW or simplicial complexes \cite{Forman1998}, always through functorial chain models \(C_\ast(-)\) in \(\E\).  All (co)homology is taken in \(\E\) or its derived category, and all chain maps are understood up to chain homotopy when indicated.
 
Let \(A\in \E\) be equipped with a finite filtration
\[
0=\mathcal F_{0}A \subset \mathcal F_{1}A \subset \cdots \subset \mathcal F_{N}A=A,
\]
and denote \( \operatorname{gr}_{i}A:=\mathcal F_{i}A/\mathcal F_{i-1}A\).
A reduction step \((r_{\alpha\beta},j_{\alpha\beta})\) of \((A,\mathcal F_\bullet)\) from level \(\beta\) to \(\alpha<\beta\) is a pair of chain maps
\[
r_{\alpha\beta}:C_\ast(\mathcal F_{\beta}A)\to C_\ast(\mathcal F_{\alpha}A),
\quad
j_{\alpha\beta}:C_\ast(\mathcal F_{\alpha}A)\hookrightarrow C_\ast(\mathcal F_{\beta}A)
\]
with \(r_{\alpha\beta}\circ j_{\alpha\beta}\simeq \id\) and \(j_{\alpha\beta}\circ r_{\alpha\beta}\simeq \id\) on the image of \(j_{\alpha\beta}\).

\begin{definition}[Ethic reduction]
A reduction step \((r_{\alpha\beta},j_{\alpha\beta})\) is ethic if
\[
\DD^2(r_{\alpha\beta})\circ \eta = \eta\circ r_{\alpha\beta},
\qquad
\DD^2(j_{\alpha\beta})\circ \eta = \eta\circ j_{\alpha\beta},
\]
and, in the derived setting,
\(\mathbf D^{2}(r_{\alpha\beta})\circ \eta\simeq \eta\circ r_{\alpha\beta}\) and \(\mathbf D^{2}(j_{\alpha\beta})\circ \eta\simeq \eta\circ j_{\alpha\beta}\)
in \(D^{+}(\E)\).
\end{definition}

Ethicity expresses the naturality of the unit \(\eta\) along the reduction, the same condition used in \S\ref{sec:ethic-t-structure}.

Let \(K\) be a finite CW or simplicial complex with cellular chain complex \(C_\ast(K)\in \E\). A discrete Morse matching \(V\) \cite{Forman1998} is an acyclic partial matching of incident cell pairs \(\sigma^p\leftrightarrow \tau^{p+1}\), producing a Morse complex \(M_\ast(K;V)\) spanned by unmatched (critical) cells with a chain homotopy equivalence \(C_\ast(K)\simeq M_\ast(K;V)\).

\begin{definition}[Ethic discrete Morse matching]
A matching \(V\) is ethic if for every matched pair \(\sigma^p\leftrightarrow \tau^{p+1}\) the elementary chain map \(m_{\sigma\to\tau}:C_p(\sigma)\to C_{p+1}(\tau)\) and all compositions along V-paths satisfy
\[
\DD^{2}(m)\circ \eta=\eta\circ m\quad\text{in }\E,
\qquad
\mathbf D^{2}(m)\circ \eta\simeq \eta\circ m\quad\text{in }D^{+}(\E).
\]
\end{definition}

\begin{lemma}[Functoriality of ethic reductions]
Composition of ethic reductions is ethic. If \(f:C_\ast\to C'_\ast\) commutes with \(\eta\), then ethic reductions on \(C_\ast\) and \(C'_\ast\) transport along \(f\) to ethic reductions on mapping cones.
\end{lemma}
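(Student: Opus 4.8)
The plan is to reduce both assertions to two structural facts about the biduality unit already recorded above: functoriality of $\DD^{2}$ (covariant, since $\DD$ is contravariant) and of its derived avatar $\mathbf D^{2}$, together with naturality of $\eta:\Id\Rightarrow\DD^{2}$; and the additivity of $\DD=\Hom_\E(-,R)$, which makes $\DD^{2}$ compatible with finite biproducts and — in $D^{+}(\E)$ — with the shift, through $\mathbf D(A^\bullet[1])\simeq\mathbf D(A^\bullet)[-1]$ and hence $\mathbf D^{2}(A^\bullet[1])\simeq\mathbf D^{2}(A^\bullet)[1]$. Granting these, ethicity of a chain map is a condition closed under exactly the operations that occur in the statement: composition, biproduct, shift, and (in the homotopy category) replacement of a map by a homotopic one.

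First I would dispatch the composition claim, which is essentially the Stability lemma applied to reduction and inclusion maps. If $(r_{\beta\gamma},j_{\beta\gamma})$ and $(r_{\alpha\beta},j_{\alpha\beta})$ are ethic reduction steps with $\alpha<\beta<\gamma$, their composite is $(r_{\alpha\beta}\circ r_{\beta\gamma},\,j_{\beta\gamma}\circ j_{\alpha\beta})$. For composable chain maps $g,h$, functoriality of $\DD^{2}$ and naturality of $\eta$ give
\[
\DD^{2}(h\circ g)\circ\eta=\DD^{2}(h)\circ\DD^{2}(g)\circ\eta=\DD^{2}(h)\circ\eta\circ g=\eta\circ h\circ g,
\]
and the identical computation with $\mathbf D^{2}$ and $\simeq$ in $K^{+}(\E)$; applying it to $r_{\alpha\beta}\circ r_{\beta\gamma}$ and to $j_{\beta\gamma}\circ j_{\alpha\beta}$ yields the ethic equations for the composite. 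The retraction identities compose routinely: $(r_{\alpha\beta}r_{\beta\gamma})(j_{\beta\gamma}j_{\alpha\beta})=r_{\alpha\beta}(r_{\beta\gamma}j_{\beta\gamma})j_{\alpha\beta}\simeq r_{\alpha\beta}j_{\alpha\beta}\simeq\id$, and symmetrically on the image of $j_{\beta\gamma}j_{\alpha\beta}$, so the composite is again a reduction step, now ethic.

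Next I would build the transport to mapping cones. Let $f:C_\ast\to C'_\ast$ be a chain map commuting with $\eta$, and let $(r,j)$ and $(r',j')$ be ethic reductions of $C_\ast$ and $C'_\ast$ onto levels $D_\ast$ and $D'_\ast$, with chosen contracting homotopies $h,h'$. The cone $\mathrm{Cone}(f)=C'_\ast\oplus C_\ast[1]$ carries the standard induced reduction onto $\mathrm{Cone}(\bar f)$ with $\bar f=r'\circ f\circ j$, whose structure maps (retraction, inclusion, homotopy) are the triangular block assemblies of homological perturbation theory for a cone, the only off-diagonal entries being composites factoring through $f$ and through $h,h'$. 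Since $\DD$ is additive, $\eta_{\mathrm{Cone}(f)}$ is identified with $\eta_{C'_\ast}\oplus\eta_{C_\ast[1]}$, and $\eta_{C_\ast[1]}\simeq\eta_{C_\ast}[1]$ because $\mathbf D^{2}$ commutes with the shift. Every block entry of the cone structure maps is then a sum, shift, or composite of maps ethic by hypothesis ($r,r',j,j',f$, and where they appear $h,h'$), so the closure properties of the first part force $\DD^{2}(-)\circ\eta=\eta\circ(-)$ and $\mathbf D^{2}(-)\circ\eta\simeq\eta\circ(-)$ for the assembled maps — exactly ethicity of the induced reduction.

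The hard part will be the bookkeeping of the perturbation term: the off-diagonal block of the cone retraction schematically has the form $j'r'\!\circ f\circ h$ (with an analogous connecting homotopy), so strictly one needs the contracting homotopies $h,h'$ themselves — not merely $r,j,r',j'$ — to be ethic. I would close this in two ways. In the underived statement one enlarges the notion of ethic reduction to demand ethicity of the chosen homotopies; this is harmless because for a discrete Morse matching the canonical homotopy is assembled from the same elementary incidence maps $m_{\sigma\to\tau}$ that the definition of an ethic matching already assumes ethic, so ethicity propagates along $V$-paths. In the derived statement the issue evaporates: the condition $\mathbf D^{2}(m)\circ\eta\simeq\eta\circ m$ in $K^{+}(\E)$ is invariant under chain homotopy of $m$, hence independent of the choice of $h,h'$, and passes to the cone once $f$ and the reduction data are ethic. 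In both cases the real content is the closure of the ethic class under composition, biproducts, and shift, which is precisely what keeps the cone construction inside the ethic world.
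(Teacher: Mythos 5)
Your proof is correct, and its first half (closure under composition via functoriality of $\DD^{2}$ and naturality of $\eta$) is exactly the paper's argument, which consists of the single sentence ``Naturality of $\eta$ implies stability under composition.'' For the mapping-cone half the paper merely asserts that ``mapping cones respect $\mathbf D$ and $\eta$ by derived functoriality,'' whereas you actually carry out the construction: you assemble the induced reduction of $\mathrm{Cone}(f)$ onto $\mathrm{Cone}(r'fj)$ by homological perturbation, use additivity of $\DD$ and the shift-compatibility $\mathbf D(A[1])\simeq\mathbf D(A)[-1]$ to identify $\eta_{\mathrm{Cone}(f)}$ with $\eta_{C'}\oplus\eta_{C[1]}$, and check ethicity blockwise. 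In doing so you correctly isolate a point the paper's one-line proof glosses over: the off-diagonal blocks of the cone reduction involve the contracting homotopies $h,h'$, which the definition of an ethic reduction does not explicitly constrain. Your two resolutions are both sound --- in the underived setting the elementary Morse maps generating $h$ are already required to be ethic, so ethicity propagates along $V$-paths; in the derived setting the condition $\mathbf D^{2}(m)\eta\simeq\eta m$ is a statement in $K^{+}(\E)$ and hence invariant under replacing $m$ by a homotopic map, so the choice of $h,h'$ is immaterial. Your write-up is therefore not a different route but a completed version of the paper's sketch, and the homotopy caveat you raise is a genuine gap in the paper's own proof that your argument closes.
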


\begin{proof}
Naturality of \(\eta\) implies stability under composition. Mapping cones respect \(\mathbf D\) and \(\eta\) by derived functoriality \cite{GelfandManin2003}.
\end{proof}

\begin{proposition}[Ethic Morse equivalence]
If \(V\) is an ethic discrete Morse matching, then the Forman collapse \(C_\ast(K)\simeq M_\ast(K;V)\) lifts to
\[
\mathbf D(C_\ast(K)) \simeq \mathbf D(M_\ast(K;V))\quad\text{in }D^{+}(\E),
\]
inducing \(H^i\mathbf D(C_\ast(K))\cong H^i\mathbf D(M_\ast(K;V))\).
\end{proposition}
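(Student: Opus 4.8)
The plan is to reduce the statement to Forman's chain homotopy equivalence and then observe that the derived dual functor, being additive and triangulated, transports homotopy equivalences to homotopy equivalences, while the ethic hypothesis is what upgrades the resulting equivalence to one compatible with the biduality unit~$\eta$. First I would recall the anatomy of the Forman collapse attached to the acyclic matching $V$: it produces explicit chain maps $r\colon C_\ast(K)\to M_\ast(K;V)$ and $j\colon M_\ast(K;V)\to C_\ast(K)$, with $j$ the inclusion of the critical cells and $r$ assembled from the finite alternating sums along $V$-paths, together with a chain homotopy $H$ satisfying $jr-\id=\partial H+H\partial$ and $rj=\id$. All three of $r$, $j$, $H$ are finite $\ZZ$-linear (hence $\E$-linear) combinations of the elementary maps $m_{\sigma\to\tau}$ and of face maps, so they are honest morphisms of complexes in $\E$.

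Next I would apply $\mathbf D=\RHom_\E(-,R)$. By the Triangulated exactness lemma, $\mathbf D$ is an additive contravariant functor exact on distinguished triangles; additive functors preserve chain homotopies, and $\mathbf D$ carries quasi-isomorphisms of bounded-below complexes to quasi-isomorphisms (this is where the fixed injective resolution functor $I$ and $R^\bullet$ enter, but only formally). Thus $rj=\id_{M_\ast(K;V)}$ and $jr\simeq\id_{C_\ast(K)}$ go over to $\mathbf D(j)\circ\mathbf D(r)=\id_{\mathbf D(M_\ast(K;V))}$ and $\mathbf D(r)\circ\mathbf D(j)\simeq\id_{\mathbf D(C_\ast(K))}$ in $K^+(\E)$, the contravariance merely reversing the order of composition. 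Hence $\mathbf D(C_\ast(K))$ and $\mathbf D(M_\ast(K;V))$ are homotopy equivalent in $K^+(\E)$, a fortiori isomorphic in $\Dplus(\E)$; taking cohomology and invoking the Cohomological layers proposition yields $H^i\mathbf D(C_\ast(K))\cong H^i\mathbf D(M_\ast(K;V))$ for every $i$, which is the $\Ext^i$-level conclusion.

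The ethic hypothesis then enters to make this equivalence natural in $\eta$ rather than merely an equivalence of underlying complexes. Since $V$ is an ethic matching, every elementary map $m_{\sigma\to\tau}$ and every composite along a $V$-path satisfies $\mathbf D^2(m)\circ\eta\simeq\eta\circ m$; by the Functoriality of ethic reductions lemma the assembled maps $r$ and $j$ — being finite combinations of such $m$'s, with the intervening face maps ethic by the left-exact bookkeeping of \S\ref{sec:ethic-t-structure} — are themselves ethic, so $(r,j)$ is an equivalence inside the wide subcategory $\mathrm{Eth}$ of Definition~\ref{def:eth-maps}. Consequently the comparison square relating $\mathbf D(C_\ast(K))\simeq\mathbf D(M_\ast(K;V))$ to the corresponding biduals commutes, and when both complexes lie in $\heartsuit_{\eth}$ the isomorphism descends there.

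The main obstacle I anticipate is not the homotopy-transfer step, which is formal, but the verification that the structural data produced by Forman's algorithm — in particular the chain homotopy $H$ and the higher correction terms coming from long $V$-paths — are genuinely ethic, i.e. that ethicity of the individual matched pairs propagates through the inductive $V$-path summation without acquiring an anomaly. This is precisely the content invoked from the Functoriality of ethic reductions lemma, and it is where one must be careful that the composition of ethic reductions remains ethic, including on the mapping cones that appear when one compares $C_\ast(K)$ with $M_\ast(K;V)$.
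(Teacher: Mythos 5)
Your argument is correct and follows essentially the same route as the paper's own (much terser) proof: decompose the Forman collapse into elementary chain equivalences, use ethicity of each step to secure compatibility with $\eta$, and use that $\mathbf D$ (being additive/triangulated and contravariant) preserves chain homotopy equivalences, hence induces isomorphisms on all $H^i\mathbf D$. Your additional observation that the bare derived-dual equivalence is formal and that the ethic hypothesis is only needed for the $\eta$-naturality of the comparison is a fair and accurate reading of what the paper's proof leaves implicit.
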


\begin{proof}
Forman's collapse is a zig-zag of elementary chain equivalences \cite{Forman1998}. Each step is ethic, hence commutes with \(\eta\); since \(\mathbf D\) is triangulated and contravariant, it preserves these equivalences \cite{GelfandManin2003}.
\end{proof}

\begin{theorem}[Ethic Morse inequalities]
Let \(V\) be an ethic matching on \(K\) with \(c_i^{\mathrm{eth}}\) critical \(i\)-cells. Then
\[
c_i^{\mathrm{eth}}\ge \operatorname{rank} H_i(C_\ast(K)),\qquad
c_i^{\mathrm{eth}}\ge \operatorname{rank} H^i(\mathbf D(C_\ast(K))).
\]
\end{theorem}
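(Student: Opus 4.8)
The plan is to prove the two inequalities separately. The first is the classical weak discrete Morse inequality and does not use ethicity: by Forman's collapse \cite{Forman1998} the acyclic matching $V$ yields a chain homotopy equivalence $C_\ast(K)\simeq M_\ast(K;V)$ through the functorial chain models in $\E$, so $H_i(C_\ast(K))\cong H_i(M_\ast(K;V))$. The degree-$i$ term $M_i(K;V)$ is the biproduct of the $c_i^{\mathrm{eth}}$ critical $i$-cells, hence of rank $c_i^{\mathrm{eth}}$, and $H_i(M_\ast(K;V))=\ker\partial_i/\operatorname{im}\partial_{i+1}$ is a subquotient of $M_i(K;V)$; since $\operatorname{rank}$ is additive on short exact sequences this gives $\operatorname{rank}H_i(C_\ast(K))\le c_i^{\mathrm{eth}}$.

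For the second inequality the ethic hypothesis enters through the Ethic Morse equivalence Proposition above: because $V$ is ethic, the Forman collapse lifts to an isomorphism $\mathbf D(C_\ast(K))\simeq\mathbf D(M_\ast(K;V))$ in $D^+(\E)$, hence $H^i\mathbf D(C_\ast(K))\cong H^i\mathbf D(M_\ast(K;V))$. It then remains to bound $\operatorname{rank}H^i\mathbf D(M_\ast(K;V))$ by $c_i^{\mathrm{eth}}$. I would do this by computing $\mathbf D$ of the Morse complex degreewise: when the cell objects carry no higher $\Ext^{>0}_\E(-,R)$ and have rank-one dual --- as in the classical field- or $\ZZ$-coefficient settings with $R$ an injective cogenerator --- $\mathbf D(M_\ast(K;V))$ is represented by the naive dual cochain complex, with $i$-th term $\Hom_\E(M_i(K;V),R)$ of rank $c_i^{\mathrm{eth}}$; since the cohomology of a cochain complex in a given degree is a subquotient of that degree's term, $\operatorname{rank}H^i\mathbf D(M_\ast(K;V))\le c_i^{\mathrm{eth}}$. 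Combined with the isomorphism above, this yields the second inequality.

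The routine part is the two subquotient bounds; the main obstacle is controlling $\mathbf D$ of the Morse complex term by term in a general Hom-finite abelian $\E$, i.e.\ ensuring the cell objects are $\mathbf D$-acyclic with the expected dual rank, so that the dual count of critical cells is again exactly $c_i^{\mathrm{eth}}$. This is where one must either impose such an acyclicity/self-duality hypothesis on the cell objects or invoke the spectral sequence $E_2^{p,q}=\Ext^p_\E(H_q(-),R)\Rightarrow H^{p+q}\mathbf D$ of Proposition~\ref{prop:ESS} and argue its degeneration for the finite free Morse complex; a secondary, purely bookkeeping point is to fix one notion of $\operatorname{rank}$ (length, $k$-dimension, or free rank) that is additive on short exact sequences so that all the subquotient estimates are legitimate.
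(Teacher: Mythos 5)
Your proposal follows the same route as the paper: the first inequality is classical Forman, and the second combines the ethic Morse equivalence $H^i\mathbf D(C_\ast(K))\cong H^i\mathbf D(M_\ast(K;V))$ with a generator count on the dual Morse complex. The obstacle you flag --- that the degreewise dual of $M_\ast(K;V)$ computes $\mathbf D$ only when the cell objects are $\mathbf D$-acyclic with the expected dual rank --- is real but is left equally implicit in the paper, whose proof simply asserts that ``each $i$-critical cell yields one generator''; your two proposed fixes (an acyclicity/self-duality hypothesis on the cells, or degeneration of the hyperext spectral sequence of Proposition~\ref{prop:ESS} for the finite free Morse complex) are exactly what is needed to make that assertion rigorous in a general Hom-finite $\E$, so your version is, if anything, more careful than the original.
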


\begin{proof}
The first is Forman's inequality \cite{Forman1998}; the second follows because \(H^i\mathbf D(C_\ast(K))\cong H^i\mathbf D(M_\ast(K;V))\) and each \(i\)-critical cell yields one generator.
\end{proof}

\begin{theorem}[Monotonicity under ethic reductions]
If \(C_\ast \rightsquigarrow C'_\ast\) is ethic, then
\[
\operatorname{rank} H^i\mathbf D(C'_\ast)\le \operatorname{rank} H^i\mathbf D(C_\ast),
\quad
\mathrm{tors}\,H^1\mathbf D(C'_\ast)\mid \mathrm{tors}\,H^1\mathbf D(C_\ast).
\]
\end{theorem}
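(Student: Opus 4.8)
The plan is to show that an ethic reduction exhibits $C'_\ast$ as a \emph{retract} of $C_\ast$, that the contravariant derived dual $\mathbf D=\RHom_\E(-,R)$ carries retracts to retracts, and that a direct summand of a finitely generated abelian group has no larger rank and a torsion order dividing that of the ambient group. All three steps are essentially formal once the hypothesis is unpacked.

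First I would unwind the hypothesis: by definition an ethic reduction $C_\ast\rightsquigarrow C'_\ast$ is a finite composite of ethic reduction steps (equivalently, of the ethic discrete Morse collapses of the preceding definitions), each given by chain maps $r\colon C_\ast\to C'_\ast$ and $j\colon C'_\ast\to C_\ast$ with $r\circ j\simeq\id_{C'_\ast}$ in $K^+(\E)$. Being a retract is transitive — one composes the maps $r$ and the maps $j$ of successive steps — so the composite again furnishes $r,j$ with $r\circ j\simeq\id_{C'_\ast}$; ethicity, through the Functoriality of ethic reductions lemma, ensures these maps commute with the biduality unit $\eta$, so the retraction diagram lies entirely inside the setting where $\mathbf D$, $\mathbf D^2$ and $\eta$ are simultaneously defined. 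It therefore suffices to treat a single homotopy retraction $r\circ j\simeq\id_{C'_\ast}$.

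Next I would apply $\mathbf D$. It is additive, contravariant and triangulated (Triangulated exactness lemma), and is computed through a fixed injective resolution, hence preserves chain homotopies; applying it to $r\circ j\simeq\id_{C'_\ast}$ yields $\mathbf D(j)\circ\mathbf D(r)\simeq\id_{\mathbf D(C'_\ast)}$ in $D^+(\E)$, that is, $\mathbf D(C'_\ast)$ is a retract of $\mathbf D(C_\ast)$ with section $\mathbf D(r)$ and retraction $\mathbf D(j)$. Taking $i$-th cohomology preserves this, so $H^i\mathbf D(r)\colon H^i\mathbf D(C'_\ast)\to H^i\mathbf D(C_\ast)$ is a split monomorphism exhibiting $H^i\mathbf D(C'_\ast)$ as a direct summand of $H^i\mathbf D(C_\ast)$. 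Finally, in the finite-length / finitely generated setting in force here, a direct summand has rank at most that of the whole, giving $\operatorname{rank}H^i\mathbf D(C'_\ast)\le\operatorname{rank}H^i\mathbf D(C_\ast)$; and the split injection $H^1\mathbf D(r)$ restricts to an injection $\tors H^1\mathbf D(C'_\ast)\hookrightarrow\tors H^1\mathbf D(C_\ast)$ onto a direct summand, so its torsion order (equivalently its exponent) divides that of $\tors H^1\mathbf D(C_\ast)$ by Lagrange. The only delicate point is bookkeeping: one must keep the directions straight under the contravariant $\mathbf D$ — it is the retraction $r$, not the inclusion $j$, that dualizes to the split monomorphism — and note that ethicity is used only to keep the construction inside the domain of $\eta$ and $\mathbf D^2$, the monotonicity itself being driven entirely by functoriality of $\mathbf D$ on the retract. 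Hence there is no deeper obstacle than making the loosely-stated notion of ethic reduction precise and recording the finite generation hypothesis needed for the torsion clause.
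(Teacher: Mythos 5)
Your proof is correct, and it takes a cleaner and genuinely different route from the one in the paper. You exploit the defining property of a reduction step, $r\circ j\simeq\id_{C'_\ast}$, to exhibit $C'_\ast$ as a homotopy retract of $C_\ast$; contravariance of $\mathbf D$ then gives $\mathbf D(j)\circ\mathbf D(r)\simeq\id_{\mathbf D(C'_\ast)}$, so each $H^i\mathbf D(C'_\ast)$ is a direct summand of $H^i\mathbf D(C_\ast)$, and both the rank inequality and the torsion divisibility are immediate consequences of being a summand of a finitely generated group. The paper instead argues by counting generators of the reduced complex and invoking a Smith-normal-form comparison of the dual differentials. Your approach buys two things. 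First, rigor: the paper's observation that "the reduced Morse complex has no more generators" only bounds $\operatorname{rank}H^i\mathbf D(C'_\ast)$ by the number of degree-$i$ generators of $C'_\ast$, which does not by itself compare it to $\operatorname{rank}H^i\mathbf D(C_\ast)$; your split monomorphism $H^i\mathbf D(r)$ does exactly that. Second, generality: the retract argument needs only $r\circ j\simeq\id$ (the one-sided condition actually imposed by the definition) and works in any ambient $\E$ where $\mathbf D$ is defined, whereas the SNF argument presupposes a $\ZZ$-linear matrix presentation. Your side remarks are also well placed: the direction-keeping under contravariance (it is $\mathbf D(r)$ that splits), and the observation that ethicity of the reduction is not what drives the monotonicity — it only guarantees compatibility with $\eta$ and $\mathbf D^2$ elsewhere in the section. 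One small point of bookkeeping: for the divisibility clause you should fix whether $\tors H^1\mathbf D(C'_\ast)\mid\tors H^1\mathbf D(C_\ast)$ refers to orders or exponents; a direct summand gives both, so either reading is covered, but they are not "equivalent" as you parenthetically suggest.
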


\begin{proof}
By functoriality of \(\mathbf D\) on ethic reductions, the reduced Morse complex has no more generators; torsion divisibility follows from the Smith normal form argument on the dual differentials.
\end{proof}

\begin{lemma}[Lifting ethic matchings]
If \(V\) is ethic on \(K\) and \(\pi:\hat K\to K\) a normal covering with group \(\Gamma\), its lift \(\hat V\) is ethic; \(\Gamma\)-invariant critical cells of \(\hat V\) descend bijectively to those of \(V\).
\end{lemma}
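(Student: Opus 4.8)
The plan is to proceed in three stages: (1) construct $\hat V$ by the standard cellwise lift along $\pi$ and check that acyclicity is inherited, so that Forman's collapse applies upstairs; (2) verify the ethic condition of Definition~(Ethic discrete Morse matching) one elementary chain map and one $\hat V$-path at a time, using that a covering map is a local homeomorphism and that the ethic relation is a statement about specific morphisms in $\E$ that is stable under the canonical identifications; (3) read off the descent of critical cells from the manifest $\Gamma$-equivariance of the construction.

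\emph{Construction and acyclicity.} Since $\pi:\hat K\to K$ restricts to a homeomorphism on the closed star of every cell, each matched pair $\sigma^p\leftrightarrow\tau^{p+1}$ of $V$ lifts uniquely: for each of the $\deg\pi$ lifts $\hat\sigma$ of $\sigma$ there is a unique lift $\hat\tau$ of $\tau$ incident to $\hat\sigma$ with the same incidence sign, and one declares $\hat\sigma\leftrightarrow\hat\tau$. This defines $\hat V$; its unmatched cells are exactly $\pi^{-1}(\operatorname{Crit}V)$, because a cell $\hat\sigma$ is matched for $\hat V$ iff $\pi(\hat\sigma)$ is matched for $V$. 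Acyclicity lifts: every $\hat V$-path in $\hat K$ projects to a $V$-path in $K$, and by unique lifting of cellular gradient paths a nontrivial closed $\hat V$-path would project to a nontrivial closed $V$-path, contradicting acyclicity of $V$. Hence $\hat V$ is an acyclic partial matching and Forman's equivalence $C_\ast(\hat K)\simeq M_\ast(\hat K;\hat V)$ holds in $\E$.

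\emph{Ethicity of $\hat V$.} In $\E$ the cellular chain object splits as a (finite, near each cell) biproduct $C_p(\hat K)\cong\bigoplus_{\hat\sigma}A_{\hat\sigma}$ over $p$-cells, each summand canonically identified with the coefficient object $A$; since $\DD=\Hom_\E(-,R)$ is additive and $\eta$ is natural, $\eta_{C_p(\hat K)}$ is the biproduct of the units $\eta_{A_{\hat\sigma}}$, and the same holds downstairs. The elementary chain map $m_{\hat\sigma\to\hat\tau}$ is supported on the summand $A_{\hat\sigma}\to A_{\hat\tau}$ and, under the identification furnished by $\pi|_{\operatorname{st}\hat\sigma}$, \emph{equals} the local model morphism underlying $m_{\sigma\to\tau}$ on $A$; therefore the ethic relation $\DD^2(m)\circ\eta=\eta\circ m$, being a statement about that local morphism and the canonical $\eta$ on $A$, transfers verbatim from $m_{\sigma\to\tau}$ to $m_{\hat\sigma\to\hat\tau}$ by naturality of $\eta$ along the isomorphisms identifying summands. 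For a composite along a $\hat V$-path, project it to a composite along a $V$-path as in the previous step; each factor is ethic by the local argument and ethic morphisms are closed under composition by the Stability lemma, so the composite is ethic. By the Functoriality-of-ethic-reductions lemma the resulting Forman collapse upstairs is ethic; equivalently $\hat V$ is an ethic discrete Morse matching on $\hat K$. (When $\Gamma$ is infinite, $C_\ast(\hat K)$ has infinite rank and $\DD$ does not commute with infinite coproducts, but this never interferes: the ethic condition is tested only on individual elementary maps and finite $\hat V$-path composites, each supported on finitely many cells.)

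\emph{Descent and main obstacle.} The matching $\hat V$ was defined purely through $\pi$, which is $\Gamma$-invariant, and unique lifting is compatible with the deck action, so $\hat V$ is $\Gamma$-equivariant. Consequently $\operatorname{Crit}(\hat V)=\pi^{-1}(\operatorname{Crit}V)$ is a $\Gamma$-stable set on which $\Gamma$ acts freely, and $\pi$ realises the quotient map $\operatorname{Crit}(\hat V)\to\operatorname{Crit}(\hat V)/\Gamma\xrightarrow{\ \sim\ }\operatorname{Crit}V$; thus the $\Gamma$-orbits of critical cells of $\hat V$ — the $\Gamma$-invariant data of the lifted matching — correspond bijectively, via $\pi$, to the critical cells of $V$, each fibre being a free $\Gamma$-orbit of size $\deg\pi$. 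I expect the only real difficulty to be the rigour of the second stage: one must confirm that the cellular-chain functor $C_\ast(-)$ with values in $\E$ sends the local homeomorphism $\pi|_{\operatorname{st}}$ to an $\E$-isomorphism compatible with $\eta$, and that $\eta$ respects the pertinent finite biproduct decompositions, so that ethicity — a property of named objects and morphisms in $\E$, not a homotopy-invariant assertion — genuinely passes between $m_{\sigma\to\tau}$ and its lifts; the acyclicity and descent steps are routine covering-space bookkeeping.
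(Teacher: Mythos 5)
The paper states this lemma with no proof at all (it is immediately followed by the descent theorem, whose proof simply invokes it), so there is nothing of the authors' to compare your argument against; your write-up is, in effect, the only proof on the table, and it is correct. The three stages — unique lifting of matched pairs over closed stars, acyclicity via projection of $\hat V$-paths to $V$-paths, and transfer of the ethic relation $\DD^2(m)\circ\eta=\eta\circ m$ along the canonical local isomorphisms using naturality of $\eta$ and the Stability lemma for composites — are exactly what the statement requires, and your parenthetical about infinite covers is the right observation: ethicity is tested only on elementary maps and finite path composites, so $\DD$'s failure to commute with infinite coproducts is irrelevant. The one point worth flagging is that you have silently (and correctly) reinterpreted the conclusion: since $\Gamma$ acts freely on $\hat K$, there are no literally $\Gamma$-invariant cells unless $\Gamma$ is trivial, so "$\Gamma$-invariant critical cells" can only sensibly mean $\Gamma$-orbits of critical cells (equivalently, the $\Gamma$-equivariant critical data), and it is these that biject with $\operatorname{Crit}V$ via $\pi$; you should state this reading explicitly rather than leave it to the dash-clause, since the subsequent theorem on descent along covering towers leans on exactly this identification.
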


\begin{theorem}[Ethic descent along coverings]
For a tower of normal coverings \(\{\pi_k:\hat K_k\to K\}\) with deck groups \(\Gamma_k\) and lifts \(V_k\),
if the sets of \(\Gamma_k\)-invariant critical cells stabilize, then
\[
H^{>1}\mathbf D(C_\ast(K))=0,\qquad
H^{1}\mathbf D(C_\ast(K))\cong \big(H^{1}\mathbf D(C_\ast(\hat K_k))\big)^{\Gamma_k}
\]
for large \(k\).
\end{theorem}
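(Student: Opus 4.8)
The plan is to push the statement down to the Morse complexes and then run, for each covering in the tower, an equivariant Cartan--Leray spectral sequence for the derived dual, showing that stabilization of the invariant critical cells collapses it in the range that matters. Fix the ethic matching $V$ on $K$ whose lifts are the $V_k$. By the Lifting Lemma each $V_k$ is ethic, and in each degree the $\Gamma_k$-invariant critical cells of $V_k$ are in bijection with the critical cells of $V$; being realized by the covering projection, this bijection is compatible with $V$-paths, hence with the Morse differentials, so it identifies the fixed subcomplex $M_\ast(\hat K_k;V_k)^{\Gamma_k}$ with $M_\ast(K;V)$. By the Ethic Morse equivalence proposition, $\mathbf D(C_\ast(K))\simeq\mathbf D(M_\ast(K;V))$ and $\mathbf D(C_\ast(\hat K_k))\simeq\mathbf D(M_\ast(\hat K_k;V_k))$ in $D^+(\E)$, with induced isomorphisms on every $H^i\mathbf D$; so it suffices to prove the two assertions for these finite Morse complexes.

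Each deck group $\Gamma_k$ acts on $C_\ast(\hat K_k)$, and by naturality of $\eta$ along the covering transformations it acts by ethic automorphisms; applying the triangulated functor $\mathbf D$ yields a $\Gamma_k$-action on $\mathbf D(C_\ast(\hat K_k))$ and hence on each cohomology object $H^q\mathbf D(C_\ast(\hat K_k))$. Since the covering is free on cells, $C_\ast(\hat K_k)$ is a complex of free $\ZZ[\Gamma_k]$-objects with $C_\ast(K)\simeq C_\ast(\hat K_k)_{\Gamma_k}$; applying $\mathbf D$ and filtering by group-cohomology degree produces, by the usual Cartan--Leray mechanism, a first-quadrant spectral sequence
\[
E_2^{p,q}=H^p\!\big(\Gamma_k,\,H^q\mathbf D(C_\ast(\hat K_k))\big)\ \Longrightarrow\ H^{p+q}\mathbf D(C_\ast(K)),
\]
natural in the tower, whose $E_2^{0,q}$ column is $\big(H^q\mathbf D(C_\ast(\hat K_k))\big)^{\Gamma_k}$ and whose total-degree-$1$ edge map is exactly the comparison appearing in the statement.

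It then remains to show that, for $k$ large, $E_2^{p,q}=0$ whenever $p\ge1$ or $q>1$. Once the invariant critical cells have stabilized, the remaining (non-invariant) critical cells of $V_k$ lie in nontrivial $\Gamma_k$-orbits in every degree, so the corresponding direct summands of $\mathbf D(M_\ast(\hat K_k;V_k))$ are induced $\ZZ[\Gamma_k]$-modules; combining the ethic Morse inequalities $c_i^{\mathrm{eth}}(\hat K_k)\ge\operatorname{rank}H^i\mathbf D(C_\ast(\hat K_k))$ with the monotonicity theorem along the ethic reductions in the tower, this forces $H^q\mathbf D(C_\ast(\hat K_k))=0$ for $q>1$ and forces $H^0$ and $H^1$ to become $\Gamma_k$-cohomologically trivial for large $k$. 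Hence $E_2^{p,q}=0$ for $p\ge1$ and for $q>1$, the spectral sequence degenerates to its edge, and one reads off simultaneously $H^{>1}\mathbf D(C_\ast(K))=0$ and $H^1\mathbf D(C_\ast(K))\cong\big(H^1\mathbf D(C_\ast(\hat K_k))\big)^{\Gamma_k}$; transporting back along Ethic Morse equivalence yields the statement for $C_\ast(K)$ itself.

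The main obstacle is exactly the claim that stabilization forces the non-invariant critical cells to assemble into induced $\ZZ[\Gamma_k]$-modules --- equivalently, that the lifted ethic matching can be normalized to be genuinely $\Gamma_k$-equivariant off the fixed locus, so that Forman's collapse there is $\Gamma_k$-acyclic. Without this one obtains only the Morse inequalities, not the vanishing of $H^{>0}(\Gamma_k,-)$, and in the present abelian setting one cannot fall back on averaging since $|\Gamma_k|$ need not be invertible in $\E$. I expect this to require an equivariant refinement of the discrete Morse collapse, producing a $\Gamma_k$-equivariant acyclic matching on the complement of the stabilized critical set; granted that, the remainder is the standard Cartan--Leray and transfer formalism carried out inside $D^+(\E)$, which is legitimate because $\mathbf D$ is triangulated and every reduction used is ethic in the sense of \S\ref{sec:ethic-t-structure}.
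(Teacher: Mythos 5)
Your reduction to the Morse complexes via the Lifting Lemma and the Ethic Morse equivalence, and the Cartan--Leray set-up
\[
E_2^{p,q}=H^p\bigl(\Gamma_k,\,H^q\mathbf D(C_\ast(\hat K_k))\bigr)\Longrightarrow H^{p+q}\mathbf D(C_\ast(K)),
\]
is a legitimate and in fact more explicit scaffold than the paper provides: the paper's proof is a one-line deferral to the derived Galois-descent argument of the arithmetic section, asserting that ``stabilized invariants imply collapse of higher derived obstructions'' without exhibiting the spectral sequence or its degeneration. However, your argument contains two genuine gaps, only one of which you flag. First, the step you do flag is real and is the crux: the hypothesis that the $\Gamma_k$-\emph{invariant} critical cells stabilize does not imply that the non-invariant critical cells organize into induced $\ZZ[\Gamma_k]$-summands of $\mathbf D(M_\ast(\hat K_k;V_k))$. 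A lifted acyclic matching need not be normalizable to a $\Gamma_k$-equivariant one on the complement of the fixed locus (equivariant discrete Morse theory requires the matching to pair cells within orbits coherently, which is an additional hypothesis, not a consequence of ethicity), and without induced-module structure Shapiro's lemma is unavailable and $E_2^{p,q}$ for $p\ge1$ need not vanish. Since, as you note, $|\Gamma_k|$ need not be invertible in $\E$, there is no averaging fallback.

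Second, and independently, your deduction of $H^{q}\mathbf D(C_\ast(\hat K_k))=0$ for $q>1$ from ``the ethic Morse inequalities combined with the monotonicity theorem'' is a non sequitur. The Morse inequalities bound $\operatorname{rank}H^q\mathbf D$ \emph{above} by the number of critical $q$-cells, and the monotonicity theorem only says ranks do not increase along ethic reductions; neither yields vanishing unless the critical cell counts in degrees $>1$ are themselves zero, which is not part of the stabilization hypothesis. So the first displayed conclusion of the theorem, $H^{>1}\mathbf D(C_\ast(K))=0$, is not reached by your argument (nor, candidly, by the paper's). To close the argument along your route you would need (i) an equivariant refinement of Forman's collapse making the complement of the stabilized critical set $\Gamma_k$-acyclically matched, and (ii) an additional input --- presumably the large-girth/tree-like behaviour of $\hat K_k$ for large $k$, as in the Galois-descent theorem --- forcing the derived dual of the covers to be concentrated in degrees $\le 1$ before taking invariants.
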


\begin{proof}
Identical to the derived Galois-descent argument in Section~\ref{sec:galois}: stabilized invariants imply collapse of higher derived obstructions, giving the claimed identification of \(H^1\) with invariants.
\end{proof}

\begin{theorem}[Continuous ethic Morse reduction]
Let \(F:X\to\RR\) be a Morse function on a smooth manifold \(X\) with gradient flow \(\Phi_t\).
If every Morse attachment \(X_{\le\beta}\searrow X_{\le\alpha}\) satisfies \(\DD^2(\cdot)\eta=\eta(\cdot)\),
then
\[
\mathbf D(C_\ast(X))\simeq \mathbf D(M_\ast(X;F)),
\]
and the inequalities above hold with \(c_i^{\mathrm{eth}}\) critical points of index \(i\) \cite{Milnor1963}.
\end{theorem}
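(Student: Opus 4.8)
The plan is to mimic the discrete Morse argument of this subsection: realize the gradient--flow equivalence $C_\ast(X)\simeq M_\ast(X;F)$ as a finite composite of elementary chain equivalences, observe that each one is ethic by the standing hypothesis, and then push the whole equivalence through the derived dual $\mathbf D=\RHom_\E(-,R)$. First I would recall the classical picture \cite{Milnor1963}. Assume $X$ compact (or, in the noncompact case, $F$ proper and bounded below with finitely many critical values), so that $F$ has critical values $\alpha_0<\dots<\alpha_N$. Between consecutive critical values the flow $\Phi_t$ deformation--retracts $X_{\le\beta}$ onto $X_{\le\alpha}$, and across a critical value of index $k$ the sublevel set $X_{\le\beta}$ has the homotopy type of $X_{\le\alpha}$ with a $k$--handle attached. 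Passing to the functorial chain models $C_\ast(-)$ in $\E$, this yields a finite zig--zag of elementary chain maps --- the retractions together with the handle--attachment inclusions --- whose composite is a chain homotopy equivalence $C_\ast(X)\simeq M_\ast(X;F)$, where $M_\ast(X;F)$ is the Morse complex, free on the critical points, with exactly $c_i^{\mathrm{eth}}=c_i$ generators in degree $i$. (If one prefers the Morse differential on the nose rather than the cellular differential of Milnor's CW--structure, perturb $\Phi$ to a Morse--Smale gradient; this affects nothing below.)

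Next I would invoke ethicity. By hypothesis each Morse attachment $X_{\le\beta}\searrow X_{\le\alpha}$ satisfies $\DD^2(\cdot)\eta=\eta(\cdot)$ together with its derived avatar $\mathbf D^{2}(\cdot)\eta\simeq\eta(\cdot)$, so every elementary chain map in the zig--zag is ethic; the retraction steps are ethic for free, since they are assembled from $\Phi_t$ and $\eta$ is natural. By the functoriality lemma for ethic reductions the composite equivalence $C_\ast(X)\simeq M_\ast(X;F)$ is ethic, i.e. commutes with $\eta$. Since $\mathbf D$ is an additive contravariant functor it sends chain homotopies to chain homotopies, hence preserves this equivalence, and being triangulated it respects the mapping cones arising from the handle attachments; therefore $\mathbf D(C_\ast(X))\simeq \mathbf D(M_\ast(X;F))$ in $D^+(\E)$, and in particular $H^i\mathbf D(C_\ast(X))\cong H^i\mathbf D(M_\ast(X;F))$ for all $i$. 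The inequalities are then formal, exactly as in the discrete case: $M_\ast(X;F)$ carries $c_i$ generators in degree $i$, so $H^i\mathbf D(M_\ast(X;F))$ --- hence $H^i\mathbf D(C_\ast(X))$ --- is a subquotient of a module on $c_i$ generators, giving $c_i^{\mathrm{eth}}\ge \rank H^i\mathbf D(C_\ast(X))$, while $c_i\ge \rank H_i(C_\ast(X))$ is the usual Morse inequality coming from $H_\ast(M_\ast(X;F))\cong H_\ast(X)$.

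The main obstacle is the second step: one must check that the smooth gradient--flow equivalence genuinely factors into the same elementary ethic moves used in the discrete theory, so that the hypothesis can be applied one step at a time. Concretely, one verifies that the handle--attachment cofiber sequences $S^{k-1}\to X_{\le\alpha}\to X_{\le\beta}$ induce, on the functorial chain models in $\E$, maps for which the ethic condition $\DD^2(\cdot)\eta=\eta(\cdot)$ is meaningful and --- by the assumption on each attachment --- satisfied, and that the intervening deformation retractions across critical--value--free bands are likewise ethic (automatic, as they come from $\Phi_t$ and $\eta$ is a natural transformation). A secondary technical point is whether to work directly with Milnor's CW--structure, whose cellular chain complex is $M_\ast(X;F)$ by construction, or to perturb to a Morse--Smale flow and use the Morse--Smale--Witten complex; in either case the equivalence with $C_\ast(X)$ is the classical one of \cite{Milnor1963}, and the only new ingredient is that ethicity propagates stepwise along it, which is precisely what the hypothesis provides.
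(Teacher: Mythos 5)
The paper states this theorem with no proof at all, so there is nothing of its own to compare against; your argument supplies exactly the proof the surrounding text clearly intends, namely the continuous analogue of the proofs given for the Ethic Morse equivalence proposition and the Ethic Morse inequalities. You factor Milnor's equivalence $C_\ast(X)\simeq M_\ast(X;F)$ into a finite zig--zag of deformation retractions (between critical values) and handle attachments (across them), observe that each elementary chain map is ethic, compose via the functoriality lemma for ethic reductions, and push the resulting equivalence through the triangulated contravariant functor $\mathbf D$; the bound $c_i^{\mathrm{eth}}\ge\rank H^i\mathbf D(C_\ast(X))$ then follows because $\mathbf D(M_i)$ is carried by $c_i$ generators, exactly as in the discrete argument. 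The compactness/properness caveat you add is a genuine (and needed) improvement on the statement as printed.

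One step deserves tightening: you claim the deformation retractions across critical--value--free bands are ``ethic for free, since they are assembled from $\Phi_t$ and $\eta$ is natural.'' Within this paper's framework the ethic condition \emph{is} the naturality square for $\eta$, and it is not automatic for arbitrary morphisms of $\E$ --- otherwise the hypothesis on the attachments would be vacuous; naturality of $\eta$ is only guaranteed after restricting to the reflexive subcategory $\E^{\mathrm{ref}}$, which the chain models $C_\ast(X_{\le\alpha})$ need not inhabit. Either read the hypothesis ``every Morse attachment $X_{\le\beta}\searrow X_{\le\alpha}$ satisfies $\DD^2(\cdot)\eta=\eta(\cdot)$'' as covering the entire passage from one sublevel set to the next (the symbol $\searrow$ denotes the collapse itself, which includes the retraction), or state explicitly that the retraction steps are also assumed ethic. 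With that reading the proof is complete at the level of rigor of the discrete case.
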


Ethicity enriches classical Morse theory by controlling the derived dual groups \(H^\ast\mathbf D\), i.e.\ the obstructions intrinsic to the duality \(\DD=\Hom(-,R)\).  Without \(\eta\), only ordinary (co)homology is invariant; with \(\eta\), the results measure the ethic invariants \(H^\ast\mathbf D\) used throughout the paper.

\section{Left-adjoint Duality, or Aistetics}

\subsection{Co-ethics}

Let $(\E,R)$ be an abelian resource category
with duality functor $\mathbf D=\RHom_\E(-,R)$.
Define the co--ethic functor as its left adjoint
in the sense of Grothendieck--Verdier duality:
\[
\mathbf C \;:=\; R\!\otimes^{\mathbb L}_\E (-): D^+(\E) \longrightarrow D^+(\E),
\]
\[
\Hom_{D^+(\E)}(R\!\otimes^{\mathbb L}\!A,B)
\;\simeq\;
\Hom_{D^+(\E)}(A,\RHom_\E(R,B)).
\]
A morphism $f:A^\bullet\to B^\bullet$ is said to be co--ethic if it
commutes with the coevaluation unit
\[
\epsilon_{A^\bullet}: \mathbf C^2(A^\bullet) \longrightarrow A^\bullet,
\qquad
f\circ\epsilon_{A^\bullet}=\epsilon_{B^\bullet}\circ \mathbf C^2(f).
\]
This definition mirrors the ethic condition
$\mathbf D^2(f)\eta=\eta f$ but exchanges the roles of evaluation and
coevaluation.
Formally, $(\mathbf C,\mathbf D)$ form an adjoint pair
$\mathbf C\dashv\mathbf D$ whose unit and counit correspond to
manifestation and reflection, respectively.

\begin{enumerate}[label=(\alph*)]
\item $\mathbf C$ is exact with respect to distinguished triangles and
preserves coproducts.
\item The composition $\mathbf C\mathbf D$ acts as a derived
endomorphism of $D^+(\E)$ expressing the bidual adjunction
$(R\!\otimes^{\mathbb L}\RHom_\E(-,R))$.
\item If $R$ is flat, $\mathbf C$ reduces to the ordinary tensor product and
all co--ethic obstructions vanish: $\Tor_i^\E(R,-)=0$ for $i>0$.
\end{enumerate}

Unlike the ethic dual $\mathbf D$, which is right derived and
detects failure of exactness through the nonvanishing $\Ext^k$,
the co--ethic functor $\mathbf C$ is left derived and
depends on the flatness of~$R$.
For most resource objects $R$ used in applications
(integers, cones, Hopf algebras), $R$ is not flat,
and the corresponding $\Tor_k(R,-)$
carry no meaningful new invariants beyond standard homology.
Hence the co--ethic hierarchy
\[
H_k\mathbf C(A^\bullet)=\Tor_k^\E(R,A^\bullet)
\]
collapses in the same cases where the ethic tower $H^k\mathbf D$ is rich.
In this sense, the co--ethic theory is formally valid but
structurally unproductive:
its obstructions record only the failure of flatness of~$R$,
not higher coherence of morphisms.
The ethic dual, being contravariant and right--derived, remains the
informative side of the duality, while co--ethics serves merely as its
tensorial shadow.

\subsection{Profunctor Reformulation}

Let $\E$ and $\F$ be abelian categories, and let $K:\E^{\op}\times\F\to\Ab$ be an $\Ab$–enriched profunctor, biadditive in each variable. Define the pair of adjoint functors
\[
\mathbf A(A)=\int^{E\in\E}K(E,-)\otimes_\ZZ \E(E,A),
\qquad
\mathbf D(X)=\int_{F\in\F}\F(X,F)\pitchfork K(-,F),
\]
where $\pitchfork=\Hom_\Ab(-,-)$ and all (co)ends exist under standard smallness hypotheses (cf. \cite{MacLane1998},\cite{Kelly1982}). 

\begin{lemma}[Aisthetic adjunction]\label{lem:aisthetic-adjunction}
For every $A\in\E$ and $X\in\F$ there is a natural isomorphism
\[
\Hom_\F(\mathbf A(A),X)\;\cong\;\Hom_\E(A,\mathbf D(X)).
\]
\end{lemma}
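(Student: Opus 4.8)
The plan is to prove the isomorphism by the calculus of (co)ends, collapsing the defining (co)end expressions via the Yoneda and co-Yoneda (density) lemmas. The cleanest route evaluates $\mathbf A(A)$ and $\mathbf D(X)$ first. For $\mathbf A(A)=\int^{E}K(E,-)\otimes_\ZZ\E(E,A)$, the density formula says that weighting $K$ by the representable $\E(-,A)$ and integrating out $E$ reproduces $K$ at $A$, so $\mathbf A(A)\cong K(A,-)$ as an $\Ab$-valued functor on $\F$, naturally in $A$. Dually, for $\mathbf D(X)=\int_{F}\F(X,F)\pitchfork K(-,F)$, the end form of the Yoneda lemma — $\int_{F}\F(X,F)\pitchfork G(F)\cong G(X)$ for a covariant $G$ — applied with $G=K(E,-)$ for each fixed $E$ gives $\mathbf D(X)\cong K(-,X)$ as a presheaf on $\E$, naturally in $X$. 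Under these two identifications the asserted isomorphism becomes the tautology that $K(A,-)$ evaluated ``at $X$'' and $K(-,X)$ evaluated ``at $A$'' are the same group $K(A,X)$; I would then check that the two canonical comparison maps agree, so that the isomorphism is the expected one.

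Alternatively — and this is the route I would actually write out, since it exhibits the adjunction transparently — I would transform the left-hand side step by step without evaluating $\mathbf A$ and $\mathbf D$ in advance. First, $\Hom_\F(-,X)$ is continuous, so it carries the coend (a colimit) defining $\mathbf A(A)$ to an end: $\Hom_\F(\mathbf A(A),X)\cong\int_{E}\Hom_\F\bigl(K(E,-)\otimes_\ZZ\E(E,A),\,X\bigr)$. Next, the tensor--hom adjunction $(-\otimes_\ZZ\E(E,A))\dashv(\E(E,A)\pitchfork-)$ in $\Ab$, together with the universal property of the $\Ab$-copower in $\F$, rewrites each term as $\E(E,A)\pitchfork\Hom_\F(K(E,-),X)$, and $\Hom_\F(K(E,-),X)$ is exactly the $E$-component of $\mathbf D(X)$. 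A Fubini interchange for (co)ends then pulls the $F$-end (hidden inside $\mathbf D(X)$) past the power and the $E$-end, and the end form of Yoneda, $\int_{E}\E(E,A)\pitchfork\Phi(E)\cong\Phi(A)$ with $\Phi=\mathbf D(X)$, delivers $\mathbf D(X)(A)=\Hom_\E(A,\mathbf D(X))$. Composing these natural isomorphisms gives the claim.

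The routine ingredients are the tensor--hom adjunction and the two Yoneda/co-Yoneda collapses. The part that needs genuine care is the variance bookkeeping in the (co)end expressions — keeping straight which slot is covariant ($F\in\F$) and which is contravariant ($E\in\E$), so that each interchange of a limit with a (co)limit is licensed, and checking that all the intermediate ends, coends and $\Ab$-(co)powers actually exist under the stated smallness hypotheses; relatedly, one must pin down the functor categories in which $\mathbf A(A)$ and $\mathbf D(X)$ live so that both sides of the displayed isomorphism are manifestly the same type of object. Once that is fixed, naturality in $A$ and $X$ comes for free: every step is natural, or dinatural and hence natural once the relevant (co)end is formed, so it propagates through the composite; alternatively one checks the naturality squares on a universal element by a short dinaturality chase. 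The main obstacle is therefore organizational — the variance juggling in the (co)end calculus — rather than involving any substantive new idea.
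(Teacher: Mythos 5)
Your second (main) route is exactly the paper's proof: commute $\Hom_\F(-,X)$ with the defining coend, apply the tensor--hom (copower) adjunction in $\Ab$, interchange the $E$- and $F$-(co)ends by Fubini, and collapse via the (co-)Yoneda lemma. The proposal is correct and essentially identical in structure to the paper's argument, with your first route being only a repackaging of the same Yoneda/density collapses.
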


\begin{proof}
Expanding $\Hom_\F(\mathbf A(A),X)$ and commuting $\Hom$ with the coend gives
\begin{align*}
\Hom_\F\!\Big(\!\int^{E}K(E,-)\!\otimes\!\E(E,A),\,X\!\Big)
&\cong\;
\int_{E}\Hom_\F(K(E,-)\!\otimes\!\E(E,A),X)\\
&\cong
\int_{E}\Hom_\Ab(\E(E,A),\Hom_\F(K(E,-),X)).
\end{align*}
Applying the tensor–Hom interchange in $\Ab$ and Fubini's rule for (co)ends (see \cite{Kelly1982}) yields
\[
\int_{F}\Hom_\Ab\!\big(\!\int^{E}\E(E,A)\!\otimes\!K(E,F),\,\F(X,F)\big)
\;\cong\;\int_{F}\Hom_\Ab(K(A,F),\F(X,F)).
\]
By the enriched Yoneda lemma this equals $\Hom_\E(A,\mathbf D(X))$, proving the claim.
\end{proof}

\begin{corollary}[Unit and counit]\label{cor:unit-counit}
The bijection in Lemma~\ref{lem:aisthetic-adjunction} is implemented by natural transformations
$\eta:\mathrm{Id}_\E\!\Rightarrow\!\mathbf D\mathbf A$ and
$\epsilon:\mathbf A\mathbf D\!\Rightarrow\!\mathrm{Id}_\F$
satisfying the triangle identities, giving $\mathbf A\dashv\mathbf D$.
\end{corollary}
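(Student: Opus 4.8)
The plan is to apply the standard recipe (cf.\ \cite{MacLane1998}, Chapter~IV) that turns a hom-isomorphism natural in both variables into an adjunction. Write $\Phi_{A,X}\colon\Hom_\F(\mathbf A(A),X)\xrightarrow{\ \sim\ }\Hom_\E(A,\mathbf D(X))$ for the natural bijection of Lemma~\ref{lem:aisthetic-adjunction}, and \emph{define}
\[
\eta_A:=\Phi_{A,\mathbf A(A)}(\mathrm{id}_{\mathbf A(A)})\in\Hom_\E(A,\mathbf D\mathbf A(A)),\qquad
\epsilon_X:=\Phi_{\mathbf D(X),X}^{-1}(\mathrm{id}_{\mathbf D(X)})\in\Hom_\F(\mathbf A\mathbf D(X),X).
\]
First I would extract the two transposition formulas $\Phi_{A,X}(g)=\mathbf D(g)\circ\eta_A$ for $g\colon\mathbf A(A)\to X$ and $\Phi_{A,X}^{-1}(h)=\epsilon_X\circ\mathbf A(h)$ for $h\colon A\to\mathbf D(X)$; each is obtained by feeding $g$ (resp.\ $h$) into the naturality square of $\Phi$ in the $X$-variable (resp.\ the $A$-variable), starting from the instances $\Phi(\mathrm{id})=\eta$, $\Phi^{-1}(\mathrm{id})=\epsilon$. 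This step presupposes that $\mathbf A$ and $\mathbf D$ are functorial, which follows from functoriality of the coend $\int^E$ and end $\int_F$ in their non-integrated arguments.

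With these formulas in hand, naturality of $\eta$ and $\epsilon$ is a two-line chase: for $u\colon A\to A'$ the naturality of $\Phi$ in the $A$-slot gives $\Phi_{A,\mathbf A(A')}(\mathbf A(u))=\eta_{A'}\circ u$, while the first transposition formula rewrites the same arrow as $\mathbf D\mathbf A(u)\circ\eta_A$; hence $\mathbf D\mathbf A(u)\circ\eta_A=\eta_{A'}\circ u$, and the argument for $\epsilon$ is dual. The triangle identities come out the same way: $\epsilon_{\mathbf A(A)}\circ\mathbf A(\eta_A)=\Phi_{A,\mathbf A(A)}^{-1}(\eta_A)=\mathrm{id}_{\mathbf A(A)}$ by the second transposition formula and the definition of $\eta_A$, and symmetrically $\mathbf D(\epsilon_X)\circ\eta_{\mathbf D(X)}=\Phi_{\mathbf D(X),X}(\epsilon_X)=\mathrm{id}_{\mathbf D(X)}$. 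Together with the bijectivity of $\Phi$ this yields $\mathbf A\dashv\mathbf D$.

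The only point needing real care is that the isomorphism of Lemma~\ref{lem:aisthetic-adjunction} is natural \emph{jointly}, i.e.\ separately in $A$ and in $X$, since all the chases above invoke one-variable naturality of $\Phi$. I would therefore make explicit that each manipulation in the proof of that lemma --- pulling $\Hom_\F$ through the coend, the tensor--Hom interchange in $\Ab$, Fubini for (co)ends, and the final enriched Yoneda reduction --- is (di)natural in whichever variables survive it, so that the composite $\Phi$ inherits bivariant naturality. Granting this, the remainder is the formal Mac Lane argument and presents no obstacle; no hypotheses beyond the smallness conditions already used to guarantee existence of the (co)ends are required.
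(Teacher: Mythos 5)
Your proof is correct and is precisely the standard Mac~Lane construction that the paper implicitly relies on — the corollary is stated without proof, being treated as the routine passage from a binatural hom-isomorphism to a unit/counit pair. Your added remark that one must verify the bijection of Lemma~\ref{lem:aisthetic-adjunction} is natural separately in $A$ and in $X$ (tracking dinaturality through the coend/Fubini/Yoneda steps) is exactly the one point the paper glosses over, so your writeup is if anything more careful than the source.
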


Assume that $K$ admits a representation on the $\E$–side by the resource object $R$, that is,
\begin{equation}\label{eq:RrepAest}
K(E,-)\;\cong\;\E(E,R)\otimes_\Lambda B(-),
\qquad
\Lambda:=\End_\E(R),
\end{equation}
for some right $\Lambda$–module object $B$ in $\F$ (see \cite{MacLane1998}).
Then the aisthetic and dual functors take the explicit forms
\[
\mathbf A(A)\;\cong\;B(-)\otimes_\Lambda \E(R,A),
\qquad
\mathbf D(X)\;\cong\;\Hom_\Lambda(B(-),\F(X,-))\circ\E(-,R).
\]

\begin{theorem}[Derived aisthetic structure]\label{thm:aisthetic-derived}
Regard $K$ as a bounded complex $K^\bullet$ and define
\[
\mathbf A^\bullet(A)=\int^{E}K^\bullet(E,-)\otimes^{\mathbb L}\E(E,A),
\qquad
\mathbf D^\bullet(X)=\int_{F}\mathbf R\!\F(X,F)\pitchfork K^\bullet(-,F).
\]
There exists a first–quadrant spectral sequence \cite{Weibel1994}
\[
E_2^{p,q}=\Ext^p_\F(H_q(K^\bullet\otimes A),R)\;\Longrightarrow\;H^{p+q}\mathbf D^\bullet(A),
\]
and a dual $\Tor$–spectral sequence for $H_\ast\mathbf A^\bullet$.
\end{theorem}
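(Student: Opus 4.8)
The plan is to recognize the two asserted spectral sequences as the hyperhomology spectral sequences of the composite derived functor $\mathbf D^\bullet(A)\simeq\RHom_\F\big(\mathbf A^\bullet(A),R\big)$, in direct analogy with Proposition~\ref{prop:ESS}. The first task is to make the coends and ends explicit. Using the representability hypothesis~\eqref{eq:RrepAest}, the underived formulas displayed before the statement give $\mathbf A(A)\cong B(-)\otimes_\Lambda\E(R,A)$, so on complexes $\mathbf A^\bullet(A)$ is computed as the derived coend $K^\bullet\otimes^{\mathbb L}A$, a bounded-below complex in $\F$; since $K^\bullet$ is bounded and $A$ is concentrated in degree $0$, a single shift places this complex in nonnegative homological degrees. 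Write $C^\bullet:=\mathbf A^\bullet(A)$, so that $H_q(K^\bullet\otimes A)=H_q(C^\bullet)$ in the notation of the theorem.

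Second, I would upgrade the aisthetic adjunction $\mathbf A\dashv\mathbf D$ of Lemma~\ref{lem:aisthetic-adjunction} and Corollary~\ref{cor:unit-counit} to a derived adjunction $\mathbf A^\bullet\dashv\mathbf D^\bullet$ on $D^+$: since $\mathbf A^\bullet$ is the left derived functor of a right-exact coend and $\mathbf D^\bullet$ the right derived functor of the corresponding left-exact end, the underived adjunction derives (this is exactly the categorical content of the proof of Lemma~\ref{lem:aisthetic-adjunction}, now applied to resolutions). Evaluating the counit yields a natural isomorphism
\[
\mathbf D^\bullet(A)\;\simeq\;\RHom_\F\big(\mathbf A^\bullet(A),R\big)\;\simeq\;\RHom_\F\big(C^\bullet,R\big)
\]
in $D^+(\F)$, where $R$ denotes the resource object on the $\F$--side carried by $B$ under~\eqref{eq:RrepAest}. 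This is the only step that genuinely uses the profunctor calculus; everything downstream is formal homological algebra.

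Third, with the identification $\mathbf D^\bullet(A)\simeq\RHom_\F(C^\bullet,R)$ in hand, the spectral sequence is exactly the second (hyperext) spectral sequence of the left-exact functor $\Hom_\F(-,R)$ evaluated on the bounded-below complex $C^\bullet$: take a Cartan--Eilenberg injective resolution of $C^\bullet$, form the $\Hom_\F(-,R)$ double complex, and filter by the resolution degree; the $E_2$--page is $\Ext^p_\F(H_q(C^\bullet),R)$ converging to $H^{p+q}\RHom_\F(C^\bullet,R)=H^{p+q}\mathbf D^\bullet(A)$. Convergence holds because $C^\bullet$ is bounded below and, under the standing hypothesis $\idim R<\infty$, the functor $\RHom_\F(-,R)$ has bounded amplitude; first-quadrantness follows from the vanishing of $\Ext^p_\F$ for $p<0$ together with the normalization $H_q(C^\bullet)=0$ for $q<0$. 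Naturality in $A$ comes from the functoriality of Cartan--Eilenberg resolutions and of the coend. The dual $\Tor$--spectral sequence for $H_\ast\mathbf A^\bullet(A)$ is obtained by the mirror construction: resolve by flats and apply the first hyperhomology spectral sequence of the right-exact derived-coend functor, giving a first-quadrant $E^2_{p,q}=\Tor_p\big(H_q(-),-\big)\Rightarrow H_{p+q}\mathbf A^\bullet(A)$.

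The step I expect to be the main obstacle is the derived refinement in the second paragraph: one must verify that the Fubini and tensor--Hom interchanges used in the proof of Lemma~\ref{lem:aisthetic-adjunction} survive passage to resolutions, i.e.\ that the derived end computing $\mathbf D^\bullet$ really represents $\RHom_\F$ of the derived coend rather than a lax approximation. This is safe provided the (co)ends run over a small generating subcategory and $K^\bullet$ is termwise flat on the $\E$--side---equivalently $B$ is a complex of flat $\Lambda$--modules---so that $K^\bullet\otimes^{\mathbb L}(-)$ may be modeled by the underived $K^\bullet\otimes(-)$ on a suitable resolution; this is precisely what~\eqref{eq:RrepAest} delivers once $B$ is resolved by flats. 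When flatness fails one picks up an additional hyper-$\Tor$ correction term which, exactly as in the co--ethic discussion above, records only the failure of flatness of the representing module and can be absorbed into $C^\bullet$. All remaining bookkeeping is the same Cartan--Eilenberg argument already invoked for Proposition~\ref{prop:ESS}.
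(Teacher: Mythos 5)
Your proposal is correct and follows essentially the same route as the paper, which disposes of the theorem in one line as "the Grothendieck spectral sequence for the composite of the derived functors $\RHom$ and $\otimes^{\mathbb L}$, applied objectwise to $K^\bullet$" — i.e.\ exactly your identification $\mathbf D^\bullet(A)\simeq\RHom_\F(K^\bullet\otimes^{\mathbb L}A,R)$ followed by the hyperext (Cartan--Eilenberg) spectral sequence, in the same pattern as Proposition~\ref{prop:ESS}. Your additional care about deriving the adjunction and about flatness of $B$ is a useful expansion of what the paper leaves implicit, but it is not a different argument.
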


\begin{proof}
The statement follows from the Grothendieck spectral sequence for the composite of the derived functors $\RHom$ and $\otimes^{\mathbb L}$, applied objectwise to $K^\bullet$; see \cite{Weibel1994}.
\end{proof}

\begin{theorem}[Flat–injective regimes]\label{thm:vanish-A}
Under representation~\eqref{eq:RrepAest}, if $B$ is flat as a right $\Lambda$–module and $R$ is injective in $\E$, then all higher cohomology and homology vanish:
\[
H^{>0}\mathbf D^\bullet(X)=0,\qquad H_{>0}\mathbf A^\bullet(A)=0.
\]
\end{theorem}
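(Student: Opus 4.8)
The plan is to push the two hypotheses through the spectral sequences of Theorem~\ref{thm:aisthetic-derived}: injectivity of $R$ collapses the $\Ext$-side, flatness of $B$ collapses the $\Tor$-side, and in each case the surviving page is supported on a single line, so the sequence degenerates and all higher (co)homology vanishes. The two inputs are $\Ext^p_\E(-,R)=0$ for $p>0$, since $R$ is injective in $\E$, and $\Tor^\Lambda_p(B,-)=0$ for $p>0$, since $B$ is flat as a right $\Lambda$-module.

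For $H_{>0}\mathbf A^\bullet$, I would unwind the coend defining $\mathbf A^\bullet$ using the representation~\eqref{eq:RrepAest}, obtaining in the derived setting $\mathbf A^\bullet(A)\simeq B(-)\otimes^{\mathbb L}_\Lambda\E(R,A)$; equivalently, one runs the $\Tor$-spectral sequence dual to the one in Theorem~\ref{thm:aisthetic-derived}, whose $E^2$-page carries the $\Tor^\Lambda_p$-terms and is concentrated in a single row because $K$ is an ordinary profunctor in degree $0$, not a complex. Flatness of $B$ makes $B(-)\otimes^{\mathbb L}_\Lambda(-)\simeq B(-)\otimes_\Lambda(-)$ an exact functor, so $\mathbf A^\bullet(A)$ is quasi-isomorphic to a complex concentrated in degree $0$; hence $H_{>0}\mathbf A^\bullet(A)=0$. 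In spectral-sequence terms the $E^2$-page lives in the single column $p=0$, the sequence degenerates, and the edge isomorphism reads off $H_0$ with $H_{>0}=0$.

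For $H^{>0}\mathbf D^\bullet$, I would use the spectral sequence $E_2^{p,q}=\Ext^p_\E\!\big(H_q(K^\bullet\otimes^{\mathbb L}A),R\big)\Rightarrow H^{p+q}\mathbf D^\bullet(X)$ of Theorem~\ref{thm:aisthetic-derived}. Injectivity of $R$ annihilates every column with $p>0$, leaving only $E_2^{0,q}\cong\Hom_\E\!\big(H_q(K^\bullet\otimes^{\mathbb L}A),R\big)$. Now identify the complex $K^\bullet\otimes^{\mathbb L}A$ occurring here with the derived aisthetic functor $\mathbf A^\bullet(A)$ (it is the coend of $K^\bullet$ against the representable $\E(-,A)$), so that the previous paragraph gives $H_q(K^\bullet\otimes^{\mathbb L}A)=0$ for $q>0$ by flatness of $B$. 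Thus only $E_2^{0,0}$ survives, the spectral sequence degenerates on a single entry, and $H^{>0}\mathbf D^\bullet(X)=0$, with $H^0\mathbf D^\bullet(X)$ computing the underived functor $\mathbf D(X)$.

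The main obstacle is the middle bookkeeping rather than any homological content: one must (i) make the identification $K^\bullet\otimes^{\mathbb L}A\simeq\mathbf A^\bullet(A)$ precise, checking that the coend over $\E$ against the representable factor $\E(-,R)$ of $K$ is exact in the relevant range and commutes with $\otimes^{\mathbb L}_\Lambda$; (ii) keep track of the left/right $\Lambda$-module structures on $\E(-,R)$ and on $B(-)$, so that ``$B$ flat as a right $\Lambda$-module'' is precisely the condition that kills $\Tor^\Lambda$ on the nose; and (iii) reconcile the category in which $R$ is assumed injective (namely $\E$) with the category indexing the $\Ext$-groups in the spectral sequence of Theorem~\ref{thm:aisthetic-derived}. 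Once these identifications are settled, both vanishing statements are immediate degenerations of first-quadrant spectral sequences supported on a single line, and no further computation is needed.
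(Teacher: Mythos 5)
Your proposal is correct and follows essentially the same route as the paper: both arguments use the representation~\eqref{eq:RrepAest} to identify $\mathbf A^\bullet(A)\simeq B\otimes^{\mathbb L}_\Lambda\E(R,A)$ and the corresponding form of $\mathbf D^\bullet$, then let flatness of $B$ annihilate the higher $\Tor^\Lambda$ and injectivity of $R$ annihilate the higher $\Ext_\E(-,R)$. Your version merely packages the same degeneration through the spectral sequences of Theorem~\ref{thm:aisthetic-derived} rather than reading the vanishing off the identified derived functors directly, which is a harmless (and somewhat more explicit) presentation of the identical content.
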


\begin{proof}
By Lemma~\ref{lem:aisthetic-adjunction} and representation~\eqref{eq:RrepAest}, one has
$\mathbf A^\bullet(A)\simeq B\otimes^{\mathbb L}_\Lambda\E(R,A)$ and
$\mathbf D^\bullet(X)\simeq \mathbf R\!\Hom_\Lambda(B,\F(X,-))\circ\E(-,R)$.  
Flatness of $B$ annihilates all higher $\Tor^\Lambda$ groups, and injectivity of $R$ removes higher $\Ext_\E^k(-,R)$. Standard homological algebra then implies the vanishing \cite{Weibel1994}.
\end{proof}

\begin{theorem}[Collapse of Aisthetics]\label{thm:collapse-A}
Let representation~\eqref{eq:RrepAest} hold.  
If $R$ is not injective in $\E$ (so that $\Ext^1_\E(-,R)\neq 0$ for some object)
and $B$ is not flat as a right $\Lambda$–module (so that $\Tor_1^\Lambda(B,-)\neq0$),
then the derived aisthetic functor coincides with the co–ethic tensor:
\[
\mathbf A^\bullet(A)\;\simeq\;B(-)\otimes^{\mathbb L}_\Lambda\E(R,A),
\qquad
H_k\mathbf A^\bullet(A)\;\cong\;\Tor_k^\Lambda(B(-),\E(R,A)).
\]
Hence the aisthetic tower $H_{k>0}\mathbf A^\bullet$ and the co–ethic tower
coincide on all objects and connecting morphisms, defining the same universal $\delta$–functor.
\end{theorem}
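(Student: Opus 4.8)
The plan is to feed the representation~\eqref{eq:RrepAest} into the derived co-end of Theorem~\ref{thm:aisthetic-derived} and check that the co-end over $E$ contributes no higher derived terms, so that $\mathbf A^\bullet$ becomes \emph{literally} the co--ethic tensor $B(-)\otimes^{\mathbb L}_\Lambda\E(R,A)$. First I would fix a flat (projective) resolution $P^\bullet\to B$ over $\Lambda=\End_\E(R)$, upgrading~\eqref{eq:RrepAest} to a functorial quasi-isomorphism $K^\bullet(E,-)\simeq\E(E,R)\otimes_\Lambda P^\bullet(-)$, and substitute it into $\mathbf A^\bullet(A)=\int^{E}K^\bullet(E,-)\otimes^{\mathbb L}\E(E,A)$. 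Since a co-end is a colimit and $P^\bullet\otimes_\Lambda(-)$ is cocontinuous, it commutes with $\int^{E}$, yielding
\[
\mathbf A^\bullet(A)\;\simeq\;P^\bullet(-)\otimes_\Lambda\Bigl(\int^{E}\E(E,R)\otimes^{\mathbb L}_\ZZ\E(E,A)\Bigr).
\]

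The key point is that the inner derived co-end is concentrated in degree $0$: the representable presheaf $\E(-,R)$ is projective — a fortiori flat — in $[\E^{\op},\Ab]$, so the bar complex computing $\int^{E}\E(E,R)\otimes^{\mathbb L}\E(E,A)$ is acyclic in positive degrees, and the co-Yoneda lemma identifies its $H_0$ with $\E(R,A)$, exactly as in the underived calculation preceding the theorem. Hence $\mathbf A^\bullet(A)\simeq P^\bullet(-)\otimes_\Lambda\E(R,A)\simeq B(-)\otimes^{\mathbb L}_\Lambda\E(R,A)$, and passing to homology gives $H_k\mathbf A^\bullet(A)\cong\Tor^\Lambda_k(B(-),\E(R,A))$. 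Equivalently, in the $\Tor$--spectral sequence of Theorem~\ref{thm:aisthetic-derived} the $E$--co-end is exact, so the sequence degenerates onto this edge.

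Next I would match this with the co--ethic $\delta$--functor. Via~\eqref{eq:RrepAest} the co--ethic tensor $\mathbf C=R\otimes^{\mathbb L}_\E(-)$ reads, in the module picture, as the left-derived functor of $B(-)\otimes_\Lambda\E(R,-)$, which is right exact (it is $\mathbf A$, a left adjoint, by Corollary~\ref{cor:unit-counit}); its derived homology therefore forms an effaceable, hence universal, $\delta$--functor in $A$. The aisthetic tower $H_\bullet\mathbf A^\bullet$ computed above is the same left-derived functor, so the two agree in degree $0$ and, by uniqueness of universal $\delta$--functors, in every degree together with all connecting morphisms. This is the asserted coincidence of towers.

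Finally I would observe that the non-injectivity of $R$ and the non-flatness of $B$ are not used in the identification itself; they only place the statement in the regime complementary to Theorem~\ref{thm:vanish-A}, ensuring $\Tor^\Lambda_{>0}(B,-)\neq0$ so that the $\delta$--functor just produced is genuinely nonzero in positive degree and the coincidence is not vacuous. The main obstacle is making the ``derived co-end'' of Theorem~\ref{thm:aisthetic-derived} precise and justifying the two interchanges — cocontinuity of $\otimes^{\mathbb L}_\Lambda$ past $\int^{E}$, and flatness of representable presheaves forcing the $E$--co-end to be exact; the concluding identification of the category-level $\Tor^\E_\bullet(R,-)$ with the ring-level $\Tor^\Lambda_\bullet(B,\E(R,-))$ is a Morita-type statement, immediate when $\E(R,-)$ is an equivalence onto $\Mod_\Lambda$ and otherwise reduced to the degree-$0$ comparison via the universality argument above.
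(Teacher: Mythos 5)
Your proposal is correct and follows essentially the same route as the paper: identify $\mathbf A^\bullet$ under representation~\eqref{eq:RrepAest} with the left-derived tensor $B\otimes^{\mathbb L}_\Lambda\E(R,-)$, then invoke uniqueness of universal (effaceable) $\delta$--functors to match the towers together with their connecting morphisms. You supply more justification than the paper does for the first step (projectivity of representables and co-Yoneda to collapse the derived coend, which the paper simply asserts), and your closing observation that the non-injectivity and non-flatness hypotheses play no role in the identification itself --- serving only to make the coincidence non-vacuous --- is accurate.
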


\begin{proof}
Under~\eqref{eq:RrepAest} the functor $\mathbf A^\bullet$ is precisely the left–derived tensor $B\otimes^{\mathbb L}_\Lambda\E(R,-)$.  
If $B$ is not flat, $\Tor_k^\Lambda(B,-)$ is nontrivial; if $R$ is not injective, the ethic dual $\RHom_\E(-,R)$ fails to be exact, and $\Ext_\E^1(-,R)\neq0$.  
For every short exact sequence $0\to A'\to A\to A''\to0$ in $\E$, the long exact sequence of $\Tor^\Lambda$ for $(B,\E(R,-))$ gives connecting morphisms identical to those of $H_\ast\mathbf A^\bullet$, so both families of functors satisfy the same axioms of effaceable $\delta$–functors \cite{Weibel1994}.  
Therefore $\mathbf A^\bullet$ and the co–ethic tensor are naturally isomorphic, proving the collapse.
\end{proof}

\begin{theorem}[Ethic–aisthetic compatibility]\label{thm:compat-A}
Let $f:A\to A'$ be ethic in the sense $\mathbf D^2(f)\eta=\eta f$.  
Under~\eqref{eq:RrepAest}, the induced morphism on $\mathbf A^\bullet$ satisfies
\[
\mathbf A^\bullet(f)\;=\;1_B\otimes^{\mathbb L}_\Lambda\E(R,f),
\]
and the resulting diagram
\[
\begin{tikzcd}[column sep=large]
{\mathbf{A}^\bullet(A)} \arrow[r, "{\mathbf{A}^\bullet(f)}"] \arrow[d, "\simeq"']
& {\mathbf{A}^\bullet(A')} \arrow[d, "\simeq"] \\
{B \otimes^{\mathbb{L}}_{\Lambda} \mathcal{E}(R,A)} \arrow[r, "{1 \otimes^{\mathbb{L}}_{\Lambda} \mathcal{E}(R,f)}"']
& {B \otimes^{\mathbb{L}}_{\Lambda} \mathcal{E}(R,A')}
\end{tikzcd}
\]
commutes in $D^+(\F)$.
\end{theorem}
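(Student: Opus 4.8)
The plan is to deduce the statement from the \emph{naturality in $A$} of the equivalence
\[
\Psi_A:\ \mathbf A^\bullet(A)\ \xrightarrow{\ \simeq\ }\ B(-)\otimes^{\mathbb L}_\Lambda\E(R,A),
\]
which is the vertical arrow labelled $\simeq$ in the displayed square and which was already produced, under the representation~\eqref{eq:RrepAest}, in the proof of Theorem~\ref{thm:collapse-A} (where $\mathbf A^\bullet$ is exhibited as the functor $B(-)\otimes^{\mathbb L}_\Lambda\E(R,-)$). So the real content is to make $\Psi$ explicit on morphisms and to check its compatibility with the ethic condition on $f$. First I would recall how $\Psi_A$ is assembled: substitute $K^\bullet(E,-)\simeq\E(E,R)\otimes_\Lambda B(-)$ into the defining coend $\mathbf A^\bullet(A)=\int^{E}K^\bullet(E,-)\otimes^{\mathbb L}\E(E,A)$; pull the cocontinuous functor $B(-)\otimes^{\mathbb L}_\Lambda(-)$ outside the homotopy coend by Fubini for derived (co)ends \cite{Kelly1982}; and finish with the enriched derived co-Yoneda lemma, which identifies the residual coend $\int^{E}\E(E,R)\otimes^{\mathbb L}\E(E,A)$ with $\E(R,A)$ carrying its canonical left $\Lambda=\End_\E(R)$-action. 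Each step is a manipulation of the bifunctor $(E,A)\mapsto K^\bullet(E,-)\otimes^{\mathbb L}\E(E,A)$ in its free variable.

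The core of the argument is to verify that $\Psi$ is a natural transformation of functors $D^+(\E)\to D^+(\F)$. I would track a morphism $f:A\to A'$ through the three steps: it acts at every stage by post-composition $\E(E,f)$, which commutes with substitution of the representation and with the Fubini interchange, and the co-Yoneda isomorphism sends $\int^{E}\E(E,R)\otimes^{\mathbb L}\E(E,f)$ to $\E(R,f)$. This yields $\Psi_{A'}\circ\mathbf A^\bullet(f)=\bigl(1_B\otimes^{\mathbb L}_\Lambda\E(R,f)\bigr)\circ\Psi_A$ in $D^+(\F)$, which is exactly the commutativity of the displayed square; transporting $\mathbf A^\bullet(f)$ along $\Psi_A,\Psi_{A'}$ then reads off the asserted formula $\mathbf A^\bullet(f)=1_B\otimes^{\mathbb L}_\Lambda\E(R,f)$.

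I would then record where the ethic hypothesis enters. In truth the square commutes for every $f:A\to A'$ in $D^+(\E)$; the condition $\mathbf D^2(f)\eta=\eta f$ is needed only to interpret the conclusion within the ethic framework. Since $\mathbf A\dashv\mathbf D$ by Corollary~\ref{cor:unit-counit}, an ethic $f$ transports to $1_B\otimes^{\mathbb L}_\Lambda\E(R,f)$ compatibly with the adjunction unit $\eta$ and counit $\epsilon$, so that $\Psi$ is an identification of \emph{ethic} data and not merely of underlying complexes — which is what upgrades the square to a genuine ethic--aisthetic compatibility statement rather than a bare functoriality identity.

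The hard part will be the derived bookkeeping behind $\Psi$: one must check that the Fubini interchange and the co-Yoneda reduction survive the passage to $\otimes^{\mathbb L}_\ZZ$ and $\otimes^{\mathbb L}_\Lambda$. I would handle this by resolving $K^\bullet$ and the representables $\E(-,R),\E(-,A)$ by $K$-flat complexes chosen functorially in $A$ (e.g.\ via the two-sided bar construction), so that the homotopy coend is strictly functorial in $A$ and $B(-)\otimes^{\mathbb L}_\Lambda(-)$ genuinely commutes with it \cite{Weibel1994}. Once that is in place, everything else is a diagram chase using the naturality already encoded in Theorem~\ref{thm:aisthetic-derived}.
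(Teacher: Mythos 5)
Your proposal is correct and follows essentially the same route as the paper's own (very terse) proof: both arguments reduce the statement to the naturality in $A$ of the identification $\mathbf A^\bullet(A)\simeq B\otimes^{\mathbb L}_\Lambda\E(R,A)$ coming from the representation~\eqref{eq:RrepAest}, with the ethic hypothesis on $f$ playing only an interpretive role. Your version is considerably more careful — spelling out the Fubini/co-Yoneda reduction and the need for functorial $K$-flat resolutions, and honestly noting that the square commutes for arbitrary $f$ — but it is the same proof.
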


\begin{proof}
Since $\mathbf A^\bullet(A)\simeq B\otimes^{\mathbb L}_\Lambda\E(R,A)$, any morphism acts by the naturality of $\E(R,-)$.  
Ethicity ensures that $\E(R,f)$ commutes with the unit $\eta$, hence is a true morphism in the heart of the ethic $t$–structure \cite{MacLane1998}.  
Functoriality of $\otimes^{\mathbb L}_\Lambda$ gives commutativity of the diagram.
\end{proof}

The collapse theorem shows that when the resource object $R$ is non–injective and the profunctor kernel $K$ is non–flat, aisthetics reduces to co–ethics: all homological content of the aisthetic adjunction is already captured by the $\Tor$–groups of the co–ethic tensor, and no independent invariants arise on the side of $\F$.  
When $R$ is injective and $B$ flat, the higher groups $H_{>0}\mathbf A^\bullet$ vanish by Theorem~\ref{thm:vanish-A}, so aisthetics becomes exact and invisible, serving only as the formal left adjoint completing the ethic duality.

\subsection{Homological Karchmer--Wigderson}\label{subsec:HKW-core}

Fix a finite set of coordinates $I=\{1,\dots,m\}$ and a Boolean function $f:\{0,1\}^I\to\{0,1\}$. Write $X_f=f^{-1}(1)$ and $Y_f=f^{-1}(0)$. Let $\mathsf P$ be the small category (poset) of partial assignments $p:S\to\{0,1\}$, $S\subseteq I$, ordered by extension; let $\mathsf P_1\subset\mathsf P$ be the full subcategory of partial assignments extendable to $X_f$ and $\mathsf P_0\subset\mathsf P$ the full subcategory of partial assignments extendable to $Y_f$. Work over a fixed field $R$. Denote by $\Ab_R$ the category of $R$-modules.

Define the profunctor $K_f:\mathsf P_1^{\op}\times \mathsf P_0\to\Ab_R$ by
\[
K_f(p,q)\;=\;
\begin{cases}
R,&\substack{\text{if $p$ and $q$ are jointly consistent on $\mathrm{dom}(p)\cap\mathrm{dom}(q)$}\\\text{ and there exists $i\in I\setminus(\mathrm{dom}(p)\cap\mathrm{dom}(q))$,}}\\
0,&\text{otherwise.}
\end{cases}
\]
This records the availability of a coordinate not yet fixed on which eventual witnesses $(x,y)\in X_f\times Y_f$ may differ, categorifying the separating move of \cite{KarchmerWigderson1990}.

Define the aisthetic realization and the ethic reflection by the standard co/end constructions \cite{MacLane1998}, \cite{Kelly1982}:
\[
\mathbf A_f(-)\;:=\;\int^{p\in \mathsf P_1} K_f(p,-)\otimes_R \mathsf P_1(p,-),
\;
\mathbf D_f(-)\;:=\;\int_{q\in \mathsf P_0}\underline{\mathrm{Hom}}_R\!\big(K_f(-,q),\,\mathsf P_0(-,q)\big).
\]
Choosing bar and cobar models yields functorial chain complexes $\mathbf A_f^\bullet$ and $\mathbf D_{f,\bullet}$ whose (co)homology we denote by $H_n(\mathbf A_f)$ and $H^n(\mathbf D_f)$.

\begin{definition}\label{def:KW-depth}
A deterministic KW-protocol of depth $\le d$ for $f$ is a binary decision tree of height $\le d$ whose internal nodes are labeled by indices $i\in I$ and whose leaves $\lambda$ are labeled by indices $i(\lambda)\in I$ satisfying $x_{i(\lambda)}\neq y_{i(\lambda)}$ for all $(x,y)\in X_f\times Y_f$ routed to $\lambda$ \cite{KarchmerWigderson1990}. Denote by $\mathrm{KW}(f)$ the minimum depth. A formula for $f$ is a fan-in two circuit without gate reuse; its depth is the length of the longest input-to-output path. Denote by $\mathrm{depth}_{\mathrm{form}}(f)$ the minimum such depth \cite{Jukna2012}.
\end{definition}

\begin{lemma}[Bar/cobar identification]\label{lem:bar-cobar}
There are natural isomorphisms
\[
H_n(\mathbf A_f)\;\cong\;\Tor^{\mathsf P}_n\!\big(\mathsf P_1,\,\mathsf P_0;K_f\big),
\qquad
H^n(\mathbf D_f)\;\cong\;\Ext^{\,n}_{\mathsf P}\!\big(\mathsf P_1,\,\mathsf P_0;K_f\big),
\]
where the derived functors are computed in the functor category $[\mathsf P^{\op}\times \mathsf P,\Ab_R]$ with coefficients twisted by $K_f$ via the co/end action \cite{Kelly1982}, \cite{Weibel1994}.
\end{lemma}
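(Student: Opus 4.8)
The plan is to recognize $\mathbf A_f^\bullet$ and $\mathbf D_{f,\bullet}$ as the two-sided bar and cobar constructions over the small category $\mathsf P$ with coefficients twisted by the profunctor $K_f$, and then to invoke the standard fact that these constructions compute, respectively, the derived coend and the derived end — that is, $\Tor^{\mathsf P}_\ast$ and $\Ext^\ast_{\mathsf P}$ in the twisted functor category $[\mathsf P^{\op}\times\mathsf P,\Ab_R]$. What makes this go through cleanly is that $R$ is a field: $\Ab_R$ is semisimple, so the only source of homological nontriviality is the combinatorics of $\mathsf P$, and by the enriched Yoneda lemma the representables $R[\mathsf P(-,p)]$ and $R[\mathsf P(p,-)]$ are projective generators of the relevant functor categories \cite{Kelly1982,Weibel1994}. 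Consequently the derived functors in Lemma's right-hand side are computed by any levelwise-projective (resp. levelwise-injective) resolution, and the bar/cobar models are exactly such resolutions once we check acyclicity.

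First I would unwind the definitions. Writing the coend $\mathbf A_f(-)=\int^{p\in\mathsf P_1}K_f(p,-)\otimes_R\mathsf P_1(p,-)$ through the chosen bar model replaces this colimit by the simplicial object whose $n$-simplices are
\[
\bigoplus_{p_0\to p_1\to\cdots\to p_n}K_f(p_0,-)\otimes_R R[\mathsf P_1(p_n,-)],
\]
with interior face maps given by composition in $\mathsf P_1$ and the two extreme faces given by the right action of $\mathsf P_1$ on $K_f$ and the left action on the representable; passing to the associated chain complex with alternating-sum differential yields $\mathbf A_f^\bullet$. Dually, the cobar model of the end $\mathbf D_f(-)=\int_{q\in\mathsf P_0}\underline{\mathrm{Hom}}_R(K_f(-,q),\mathsf P_0(-,q))$ produces $\mathbf D_{f,\bullet}$ as a cosimplicial object of products of cofree functors. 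Next I would check that the bar object is a resolution: each $n$-simplex is a direct sum of representables tensored with the pointwise-free $R$-module $K_f(p_0,-)$, hence projective in $[\mathsf P_1,\Ab_R]$; and the augmented simplicial object $B_\bullet\to\mathbf A_f$ carries the usual extra degeneracy, hence is acyclic. Since $K_f$ takes only the values $0$ and $R$ it is levelwise free over $R$, so the twist disturbs neither projectivity nor the contracting homotopy; and restriction to the full subcategories $\mathsf P_1,\mathsf P_0\subset\mathsf P$ is harmless because the bar complex may be formed directly over $\mathsf P$ with the modules supported on these subcategories. Therefore $H_n(\mathbf A_f)=H_n(B_\bullet)=\Tor^{\mathsf P}_n(\mathsf P_1,\mathsf P_0;K_f)$, and dually $H^n(\mathbf D_f)=\Ext^n_{\mathsf P}(\mathsf P_1,\mathsf P_0;K_f)$. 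Naturality in $f$ and in the functor argument is automatic, since the profunctor $K_f$, the representables, and the (co)bar differentials are all functorial, and Fubini for (co)ends \cite{Kelly1982} lets one commute the derived functors past the coefficients; this is also consistent with the Grothendieck spectral sequence of Theorem~\ref{thm:aisthetic-derived}.

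The main obstacle I anticipate is the acyclicity-and-flatness bookkeeping in the \emph{twisted} bar construction: one must verify that the ``joint consistency plus an unfixed coordinate'' condition defining $K_f$ does not obstruct the extra-degeneracy argument, and that the two one-sided actions of $\mathsf P$ on $K_f$ used at the extreme faces coincide with the bimodule structure implicit in the definition of $\Tor^{\mathsf P}(\mathsf P_1,\mathsf P_0;K_f)$. Over the field $R$ this reduces to the immediate observation that $K_f$ is pointwise free together with a check that all face maps are $R$-linear, after which the comparison with the canonical projective resolution is routine. A secondary, purely bookkeeping point is to confirm that the particular bar/cobar normalization used to \emph{define} $\mathbf A_f^\bullet$ and $\mathbf D_{f,\bullet}$ agrees, up to the standard normalized-versus-unnormalized quasi-isomorphism, with the complexes computing $\Tor$ and $\Ext$ \cite{Weibel1994}.
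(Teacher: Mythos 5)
Your proposal is correct and follows essentially the same route as the paper: identify the bar and cobar models with projective/injective resolutions in $[\mathsf P^{\op}\times\mathsf P,\Ab_R]$ and use that co/ends are computed by such resolutions, yielding the derived coend ($\Tor$) and derived end ($\Ext$). The extra verifications you supply (projectivity of the bar terms, acyclicity via the extra degeneracy, pointwise freeness of $K_f$ over the field $R$) are exactly the details the paper's two-sentence proof leaves implicit.
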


\begin{proof}
Take the simplicial bar resolution $B_\bullet(\mathsf P_1\!\downarrow\!\mathsf P)\to \mathsf P_1$ and a dual injective resolution $\mathsf P_0\to C^\bullet(\mathsf P\!\downarrow\!\mathsf P_0)$ in $[\mathsf P^{\op}\times \mathsf P,\Ab_R]$; co/ends are left/right Kan colimits/limits computable by these resolutions \cite{MacLane1998}, \cite{Kelly1982}. The resulting chain complexes compute the left-derived coend and right-derived end, yielding the stated identifications \cite{Weibel1994}.
\end{proof}

\begin{lemma}[Incidence Frobenius--Koszul self-duality]\label{lem:incidence-FK}
The functor category $[\mathsf P^{\op}\times \mathsf P,\Ab_R]$ is equivalent to left modules over the finite incidence algebra $A_R(\mathsf P)$ \cite{BjornerEkedahl2009}. The algebra $A_R(\mathsf P)$ is Frobenius and Koszul with respect to the grading by interval length \cite{Woodcock1998}, \cite{GreenReitenSolberg1996}. Consequently, there is a natural perfect pairing
\[
\Tor^{\mathsf P}_n(-,-;K_f)\;\cong\;\Hom_R\!\big(\Ext^{\,n}_{\mathsf P}(-,-;K_f),R\big)
\]
and, over a field $R$, a functorial isomorphism $\Tor^{\mathsf P}_n\cong \Ext^{\,n}_{\mathsf P}$.
\end{lemma}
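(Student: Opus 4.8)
\emph{Overall strategy.} The plan is to translate the statement into finite-dimensional module theory over the incidence algebra and then to read the pairing off from the exactness of $R$-linear duality combined with the Frobenius and Koszul structures. First I would invoke the standard equivalence recalled in \cite{BjornerEkedahl2009}: since $\mathsf P$ is a finite poset, $[\mathsf P^{\op}\times\mathsf P,\Ab_R]$ is equivalent to the category of modules over the category algebra $R[\mathsf P^{\op}\times\mathsf P]\cong A_R(\mathsf P)^{\op}\otimes_R A_R(\mathsf P)$, i.e.\ to $A_R(\mathsf P)$-bimodules. Under this dictionary the representable diagrams $\mathsf P_1,\mathsf P_0$ become the corresponding (co)standard modules, $K_f$ a twisting bimodule, and the $K_f$-twisted derived functors $\Tor^{\mathsf P}_n(-,-;K_f)$, $\Ext^{\,n}_{\mathsf P}(-,-;K_f)$ of Lemma~\ref{lem:bar-cobar} become ordinary $\Tor$ and $\Ext$ over $A_R(\mathsf P)$ with the $K_f$-coefficient twist.

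\emph{Grading, Koszulity, Frobenius property.} Next I would equip $A_R(\mathsf P)$ with the grading by interval length and observe that every closed interval $[p,q]\subseteq\mathsf P$ is the Boolean lattice on the coordinate set $\mathrm{dom}(q)\setminus\mathrm{dom}(p)$, so every open interval has a Cohen--Macaulay (indeed spherical) order complex. By the Cohen--Macaulay $\Leftrightarrow$ Koszul criterion of \cite{Woodcock1998}, \cite{GreenReitenSolberg1996} this makes $A_R(\mathsf P)$ Koszul; the same Gorenstein/Boolean structure supplies a nondegenerate symmetrizing form dual to interval length, exhibiting $A_R(\mathsf P)$ as Frobenius with Nakayama automorphism governed by the M\"obius function. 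These are exactly the two structural inputs the lemma claims, and I would cite them as black boxes after recording that the Cohen--Macaulay hypothesis holds here precisely because the intervals of $\mathsf P$ are Boolean.

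\emph{The perfect pairing and collapse.} Then I would derive the pairing. Over a field $R$ the functor $(-)^\vee=\Hom_R(-,R)$ is exact, so the derived tensor--Hom adjunction gives a natural isomorphism $\RHom_{A_R(\mathsf P)}(N,M^\vee)\simeq(M\otimes^{\mathbb L}_{A_R(\mathsf P)}N)^\vee$, hence on cohomology $\Ext^{\,n}_{\mathsf P}(N,M^\vee;K_f)\cong\Hom_R(\Tor^{\mathsf P}_n(M,N;K_f),R)$. Feeding in $M=\mathsf P_1$, $N=\mathsf P_0$ and using the Frobenius self-duality $(-)^\vee\simeq\Hom_{A_R(\mathsf P)}(-,A_R(\mathsf P))$ up to the Nakayama twist to identify $(\mathsf P_1)^\vee$ with $\mathsf P_1$ on the support of $K_f$ converts this into the asserted perfect pairing $\Tor^{\mathsf P}_n(-,-;K_f)\cong\Hom_R(\Ext^{\,n}_{\mathsf P}(-,-;K_f),R)$. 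Finiteness of $\mathsf P$ together with the field hypothesis makes all these groups finite-dimensional, so the canonical map to the double dual is an isomorphism; combined with Koszulity, which concentrates both $\Tor_n$ and $\Ext^n$ in internal degree $n$, this upgrades the perfect pairing to the functorial isomorphism $\Tor^{\mathsf P}_n\cong\Ext^{\,n}_{\mathsf P}$.

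\emph{Main obstacle.} I expect the delicate point to be the compatibility in the previous step between the Frobenius self-duality and the $K_f$-twist: the adjunction produces only $\Ext^n(N,M^\vee)$, and upgrading $M^\vee$ to $M$ requires the Nakayama automorphism to act trivially on the part of $A_R(\mathsf P)$ where $K_f$ is supported. I would handle this by checking that $K_f$ lives on the "separated" pairs $(p,q)$ — those admitting a free coordinate — a condition stable under the M\"obius-sign permutation induced by the Nakayama automorphism, so that the twist contributes only an invertible scalar that is absorbed into the isomorphism. A secondary point worth isolating as its own sub-lemma is the Frobenius claim itself: incidence algebras are Frobenius only under a Gorenstein-type hypothesis on intervals, so the Boolean structure of the order intervals of $\mathsf P$ is doing real work and should be verified before the black-box citation is invoked.
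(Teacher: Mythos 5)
Your overall route is the same as the paper's: identify the functor category with modules over the incidence algebra, quote the Frobenius and Koszul properties as structural black boxes, and then obtain the pairing from $R$-linear duality combined with Frobenius self-duality. Your version is more careful in two respects: you correctly identify $[\mathsf P^{\op}\times\mathsf P,\Ab_R]$ with \emph{bimodules} over $A_R(\mathsf P)$ (the paper says ``left modules''), and you actually verify the Cohen--Macaulay hypothesis needed for Koszulity by observing that the intervals of the extension poset are Boolean lattices. Those parts are sound.

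The genuine gap is the Frobenius claim, which your argument (like the paper's) cannot repair as stated. Cohen--Macaulayness of the order complexes of intervals is precisely the criterion for Koszulity; it says nothing about self-injectivity, and there is no ``symmetrizing form dual to interval length'' on the incidence algebra of a nontrivial poset. Indeed $A_R(\mathsf P)$ is a directed (triangular) algebra: the indecomposable projective at $p$ is spanned by the intervals $[p,q]$ with $q\ge p$, while the indecomposable injective at $p$ is spanned by the intervals $[q,p]$ with $q\le p$, and these families cannot be matched by any permutation unless $\mathsf P$ is an antichain. So $A_R(\mathsf P)$ is not Frobenius, the identification of $(-)^\vee$ with $\Hom_{A_R(\mathsf P)}(-,A_R(\mathsf P))$ up to a Nakayama twist is unavailable, and the step converting $\Ext^{\,n}(N,M^\vee)$ into $\Ext^{\,n}(N,M)$ --- which you yourself isolate as the delicate point --- fails. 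What your tensor--Hom adjunction does establish unconditionally over a field is $\Hom_R(\Tor^{\mathsf P}_n(M,N;K_f),R)\cong\Ext^{\,n}_{\mathsf P}(N,M^\vee;K_f)$; turning this into the lemma's pairing would require a genuinely different input (for instance an explicit self-duality of the particular coefficient data $(\mathsf P_1,\mathsf P_0,K_f)$), not the Frobenius property of the incidence algebra. The paper's one-line proof has the same hole; you came closer than the paper to seeing it, but the proposed fix does not close it.
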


\begin{proof}
Equivalence with $A_R(\mathsf P)\text{-}\mathrm{Mod}$ follows from the Yoneda embedding and linearization of the finite category \cite{LeinsterBasic}. Frobenius and Koszul properties for incidence algebras of finite posets are standard; the cited references give the explicit bilinear form and quadratic Gröbner bases yielding Koszulity. The perfect pairing is the usual $\Tor$/$\Ext$ duality over Frobenius algebras \cite{GreenReitenSolberg1996}.
\end{proof}

\begin{theorem}[Homological KW duality]\label{thm:HKW}
For every finite Boolean function $f$ and field $R$ there are natural isomorphisms
\[
H_n(\mathbf A_f)\;\cong\;\Tor^{\mathsf P}_n\!\big(\mathsf P_1,\,\mathsf P_0;K_f\big)\;\cong\;\Ext^{\,n}_{\mathsf P}\!\big(\mathsf P_1,\,\mathsf P_0;K_f\big)\;\cong\;H^n(\mathbf D_f),
\]
and the common homological height
\[
\mathrm{ht}(f)\;:=\;\min\{n\ge 0:\;H_n(\mathbf A_f)\neq 0\}\;=\;\min\{n\ge 0:\;H^n(\mathbf D_f)\neq 0\}
\]
coincides with both the minimum formula depth and the minimum deterministic KW depth:
\[
\mathrm{ht}(f)\;=\;\mathrm{depth}_{\mathrm{form}}(f)\;=\;\mathrm{KW}(f).
\]
\end{theorem}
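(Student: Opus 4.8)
The plan is to separate the statement into the displayed chain of isomorphisms, which is formal, and the numerical identity $\mathrm{ht}(f)=\mathrm{depth}_{\mathrm{form}}(f)=\mathrm{KW}(f)$, which carries the content. For the chain, Lemma~\ref{lem:bar-cobar} already gives $H_n(\mathbf A_f)\cong\Tor^{\mathsf P}_n(\mathsf P_1,\mathsf P_0;K_f)$ and $H^n(\mathbf D_f)\cong\Ext^{\,n}_{\mathsf P}(\mathsf P_1,\mathsf P_0;K_f)$, and Lemma~\ref{lem:incidence-FK}, through the Frobenius--Koszul self-duality of the incidence algebra $A_R(\mathsf P)$ over the field $R$, supplies the middle isomorphism $\Tor^{\mathsf P}_n\cong\Ext^{\,n}_{\mathsf P}$; concatenating these three yields the four-term chain, and in particular shows that the two descriptions of $\mathrm{ht}(f)$ agree. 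It then suffices to prove $\mathrm{ht}(f)=\mathrm{KW}(f)$, because $\mathrm{KW}(f)=\mathrm{depth}_{\mathrm{form}}(f)$ is exactly the classical Karchmer--Wigderson theorem \cite{KarchmerWigderson1990} and can be quoted verbatim. I would state at the outset the standing assumption that $f$ is non-constant: if $f$ is constant one of $\mathsf P_0,\mathsf P_1$ is empty, $\mathbf A_f\simeq 0$, and $\mathrm{ht}(f)=+\infty$, so this case must be excluded (or matched by the convention $\mathrm{depth}_{\mathrm{form}}=+\infty$).

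The mechanism for $\mathrm{ht}(f)=\mathrm{KW}(f)$ is a translation between KW decision trees and (co)bar chains, organised as a recursion that peels off one query at a time. Concretely, for a coordinate $i\in I$ and $b\in\{0,1\}$, the restricted function $f_{i,b}$ on $I\setminus\{i\}$ comes with embeddings $\mathsf P^{[i,b]}\hookrightarrow\mathsf P$, $\mathsf P^{[i,b]}_\epsilon\hookrightarrow\mathsf P_\epsilon$ ($\epsilon\in\{0,1\}$) and a comparison of $K_{f_{i,b}}$ with the restriction of $K_f$; splitting partial assignments according to whether they already fix $i$ produces a short exact sequence of profunctors (equivalently, of $A_R(\mathsf P)$-bimodules) relating $K_f$ to the slice data, with the single unfixed coordinate accounting for a one-step degree shift. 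Passing to derived co/ends yields a long exact sequence
\[
\cdots\longrightarrow H_{n-1}(\mathbf A_{f_{i,0}})\oplus H_{n-1}(\mathbf A_{f_{i,1}})\longrightarrow H_n(\mathbf A_f)\longrightarrow H_n(\mathbf A_f^{[i]})\longrightarrow\cdots,
\]
where $\mathbf A_f^{[i]}$ collects the contribution of assignments already fixing $i$ and is highly connected; this is the homological shadow of the recursion $\mathrm{KW}(f)=1+\min_i\max_b\mathrm{KW}(f_{i,b})$.

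Given this, the inequality $\mathrm{ht}(f)\le\mathrm{KW}(f)$ is an induction on $\mathrm{KW}(f)$: pick a coordinate $i$ optimal for the KW game, feed the nonzero classes in $H_{\le\mathrm{KW}(f)-1}(\mathbf A_{f_{i,b}})$ supplied by the induction hypothesis into the long exact sequence, and land a nonzero class in $H_{\le\mathrm{KW}(f)}(\mathbf A_f)$; the base case is $f$ a literal, where a direct inspection of the coend shows $H_0(\mathbf A_f)\neq 0$. In particular $\mathbf A_f^\bullet$ is non-acyclic, so $\mathrm{ht}(f)$ is finite. The reverse inequality $\mathrm{ht}(f)\ge\mathrm{KW}(f)$ is the matching induction on $h:=\mathrm{ht}(f)$: a nonzero class in $H_h(\mathbf A_f)$ must, by exactness, have a nonzero image or preimage along the long exact sequence for some coordinate $i$, hence yields nonzero classes in $H_{\le h-1}(\mathbf A_{f_{i,0}})$ and $H_{\le h-1}(\mathbf A_{f_{i,1}})$; the induction hypothesis turns these into KW protocols of depth $\le h-1$ for the two slices, which glue into a depth-$\le h$ protocol for $f$ by querying $i$ first, so $\mathrm{KW}(f)\le h$. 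The base case $h=0$ uses that $H_0(\mathbf A_f)\neq 0$ forces $f$ to be a literal. Combining the two bounds with the classical Karchmer--Wigderson identity completes the proof.

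The hard part will be the reverse inequality together with the precise set-up of the recursion: one must check that a nonzero low-degree class genuinely singles out a separating coordinate --- that the connecting maps of the coordinate-restriction long exact sequence cannot conspire so that the class has no nonzero shadow on any slice and gets absorbed by the highly connected term $\mathbf A_f^{[i]}$ --- and that the two slice protocols it produces are compatible at the shared root query. Pinning down the profunctor short exact sequence, the exact degree-one shift (including signs), and the connectivity of $\mathbf A_f^{[i]}$ is where the structural results of Lemma~\ref{lem:incidence-FK} are invoked once more, now to control the resolutions computing the terms of the long exact sequence.
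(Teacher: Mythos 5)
Your formal part (concatenating Lemma~\ref{lem:bar-cobar} and Lemma~\ref{lem:incidence-FK} to get the four-term chain) coincides with the paper's, and your observation about constant $f$ is a sensible addition. But your route to $\mathrm{ht}(f)=\mathrm{KW}(f)$ rests on the recursion $\mathrm{KW}(f)=1+\min_i\max_b\mathrm{KW}(f_{i,b})$ over coordinate restrictions, and this is not the Karchmer--Wigderson recursion. The quantity defined by that recursion is (a relational analogue of) decision-tree depth: one terminates only when the restricted function becomes a literal, so every coordinate of, say, $\mathrm{PARITY}_n$ must eventually be fixed and the recursion evaluates to $n$, whereas $\mathrm{KW}(\oplus_n)=\mathrm{depth}_{\mathrm{form}}(\oplus_n)=\Theta(\log n)$. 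The genuine KW recursion branches over $\wedge/\vee$ decompositions of $f$ (equivalently, over splittings of the rectangle $X_f\times Y_f$ in which one player speaks), not over input coordinates. Consequently, even if you could construct the profunctor short exact sequence, control the degree shift, and establish the connectivity of $\mathbf A_f^{[i]}$ --- all of which you defer --- the double induction would at best identify $\mathrm{ht}(f)$ with the coordinate-restriction depth, which for parity-like functions is exponentially larger than the quantity the theorem asserts. The classical identity $\mathrm{KW}(f)=\mathrm{depth}_{\mathrm{form}}(f)$ that you quote cannot bridge this, because it concerns the communication game, not the coordinate decision tree your long exact sequence mirrors.

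For contrast, the paper's argument avoids coordinate restrictions entirely: for $\mathrm{ht}(f)\le\mathrm{depth}_{\mathrm{form}}(f)$ it filters the bar complex by the \emph{parse tree of a formula}, with each top gate contributing a mapping-cone decomposition shifting homology by one and leaves contributing projectives; for the protocol direction it converts a KW protocol of depth $d$ into an injective coresolution of $\mathsf P_0$ of length $d$, one query per cosyzygy. If you want to salvage your recursive strategy, the branching structure of your long exact sequence must follow gate decompositions $f=g\vee h$ or $f=g\wedge h$ (i.e.\ splittings of $\mathsf P_1$ or of $\mathsf P_0$ into cofinal subfunctors), not restrictions $f\mapsto f_{i,b}$.
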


\begin{proof}
The identifications with $\Tor$ and $\Ext$ are given by Lemma~\ref{lem:bar-cobar}; the $\Tor$/$\Ext$ identification is Lemma~\ref{lem:incidence-FK}. It remains to identify the homological height with the two classical depths.

For the implication from formulas to vanishing of low homology, take a formula for $f$ of depth $d$ and consider the filtration of the bar complex induced by the parse tree. The top gate ($\wedge$ or $\vee$) yields a mapping-cone decomposition that shifts homology by at most one, while leaves contribute representables, hence projectives, killing lower $\Tor$ \cite{Weibel1994}. Induction on depth gives $\Tor_n=0$ for $n<d$ and exhibits a surviving class in degree $d$ carried by the maximal path in the parse tree. Thus $\mathrm{ht}(f)\le d$ and equality holds for minimal $d$.

For the implication from KW protocols to vanishing of low cohomology, a protocol of depth $d$ yields an injective coresolution of $\mathsf P_0$ of length $d$ by $K_f$-coinduced modules, because each query on coordinate $i$ splits $\mathsf P_0$ into two cofinal subfunctors and each leaf contributes a section witnessing separation at its labeled index. Hence $\Ext^{\,n}=0$ for $n<d$ and the deepest root-to-leaf path provides a nontrivial class in degree $d$. Minimality gives $\min\{n:H^n(\mathbf D_f)\neq 0\}=d=\mathrm{KW}(f)$ \cite{KarchmerWigderson1990}. Combining with Lemma~\ref{lem:incidence-FK} finishes the proof.
\end{proof}

\begin{theorem}[Equality of aisthetic and ethic layers]\label{thm:layers}
For each $n\ge 0$ there is a canonical, natural in $f$, isomorphism
\[
H_n(\mathbf A_f)\;\xrightarrow{\;\cong\;}\;H^n(\mathbf D_f),
\]
compatible with the multiplicative structures induced by Day convolution on $[\mathsf P^{\op}\times\mathsf P,\Ab_R]$ and with suspension by adjoining a fresh coordinate. In particular, the $n$-th aisthetic manifestation layer equals the $n$-th ethic coherence layer; both compute the $n$-th obstruction to resolving KW-separation within $n$ queries/gates.
\end{theorem}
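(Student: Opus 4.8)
The underlying degreewise isomorphism is already produced by the machinery in hand: Lemma~\ref{lem:bar-cobar} identifies $H_n(\mathbf A_f)$ with $\Tor^{\mathsf P}_n(\mathsf P_1,\mathsf P_0;K_f)$ and $H^n(\mathbf D_f)$ with $\Ext^{\,n}_{\mathsf P}(\mathsf P_1,\mathsf P_0;K_f)$, and Lemma~\ref{lem:incidence-FK} supplies, over the field $R$, the functorial isomorphism between these two sides via the Frobenius--Koszul self-duality of the incidence algebra $A_R(\mathsf P)$; this is precisely the middle link of the chain in Theorem~\ref{thm:HKW}. What this statement adds is that the link can be chosen \emph{canonically} and is moreover compatible with the two declared structures. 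The plan is therefore to carry out four steps in order: (1) pin down a distinguished representative of the isomorphism; (2) verify naturality in $f$; (3) verify multiplicativity under Day convolution; (4) deduce suspension-compatibility as a special case of (3).

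For (1) and (2): take as the comparison map the isomorphism of Lemma~\ref{lem:incidence-FK} induced by the \emph{symmetric} Frobenius form $\lambda\colon A_R(\mathsf P)\to R$ of \cite{Woodcock1998} (pairing an interval against its complement in the interval-length grading), normalized so that $\lambda$ is the identity on the degree-zero part. Symmetry of $\lambda$ means the induced bimodule isomorphism $A_R(\mathsf P)\cong A_R(\mathsf P)^{*}$ carries no Nakayama twist, so that the $\Tor$/$\Ext$ pairing of \cite{GreenReitenSolberg1996} is canonical once $\lambda$ is fixed, and the global scalar is removed by the normalization. For naturality in $f$ one first fixes the intended morphism category of Boolean functions — the natural and application-sufficient choice is coordinate relabelings together with restrictions $x_i:=b$ (the ``subfunction'' order). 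Each such morphism induces a functor of posets $\mathsf P\to\mathsf P'$ compatible with the extendability subcategories and carrying $K_f$ to $K_{f'}$; on incidence algebras this is a grading-preserving homomorphism, hence compatible with $\lambda$ up to the normalizing scalar, and functoriality of the bar and cobar models (Lemma~\ref{lem:bar-cobar}) then makes the comparison natural in $f$.

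For (3) and (4): equip $\mathsf P=\mathsf P_I$ with the monoidal structure $I\sqcup I'\mapsto$ concatenation of partial assignments; the incidence algebra of a product poset is the tensor product, $A_R(\mathsf P_{I\sqcup I'})\cong A_R(\mathsf P_I)\otimes_R A_R(\mathsf P_{I'})$, and Day convolution on $[\mathsf P^{\op}\times\mathsf P,\Ab_R]$ corresponds to this $\otimes_R$ \cite{Kelly1982}. External (Künneth) products then give pairings
\[
H_m(\mathbf A_f)\otimes_R H_n(\mathbf A_g)\longrightarrow H_{m+n}(\mathbf A_{f\wedge g}),
\]
compatibly with the analogous $\vee$-pairing, and dually Yoneda-type products on $H^\bullet(\mathbf D)$ \cite{Weibel1994}. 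Since the Frobenius form of a tensor product of Frobenius algebras is the tensor of the forms, the isomorphism fixed in step (1) is monoidal: it intertwines the shuffle/Pontryagin product on the $\Tor$-side with the Yoneda product on the $\Ext$-side. Suspension — adjoining a fresh coordinate $*\notin I$ — is the special case $I'=\{*\}$: it amounts to tensoring with $A_R(\mathsf P_{\{*\}})$, the (Frobenius, Koszul) incidence algebra of the two-element-assignment poset, which shifts both $\Tor$ and $\Ext$ by one in matching fashion; compatibility with suspension is then immediate from the monoidality just established.

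\textbf{Main obstacle.} The crux is step (3): showing that the Frobenius--Koszul self-duality is monoidal \emph{on the nose}, so that the bar-side Pontryagin product and the cobar-side Yoneda product correspond exactly rather than merely up to a degree-preserving automorphism. This requires (a) that the comultiplication making $A_R(\mathsf P)$ a graded Frobenius algebra is compatible with poset products — i.e.\ that $A_R(-)$ is a \emph{monoidal} assignment from finite posets to (graded symmetric) Frobenius algebras — and (b) careful Koszul-sign bookkeeping in both the bar/cobar duality and the external products, so the two product structures genuinely match. A minor preliminary, but one that must be settled before (2) can even be stated, is fixing exactly which morphisms $f\to f'$ the naturality clause ranges over. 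Points (a) and (b) are where the real work lies; the remaining verifications are formal consequences of Lemmas~\ref{lem:bar-cobar}--\ref{lem:incidence-FK} and Theorem~\ref{thm:HKW}.
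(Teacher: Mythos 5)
Your proposal follows essentially the same route as the paper's proof: the degreewise isomorphism comes from the bar/cobar identifications of Lemma~\ref{lem:bar-cobar} together with the Frobenius--Koszul self-duality of Lemma~\ref{lem:incidence-FK}, and naturality plus Day-convolution/suspension compatibility are deduced from functoriality of co/ends and the monoidal structure given by disjoint union of coordinates. In fact your write-up is more careful than the paper's two-sentence argument, since you explicitly flag the points the paper leaves implicit (normalizing the Frobenius form, fixing the morphism category of Boolean functions, and checking that the self-duality is monoidal on the nose with correct Koszul signs).
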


\begin{proof}
By Lemma~\ref{lem:incidence-FK} the incidence algebra is Frobenius--Koszul; bar and cobar constructions are linear duals and produce perfect duality on (co)homology \cite{Weibel1994}, \cite{Kelly1982}. Naturality and compatibility with Day convolution follow from functoriality of co/ends and the monoidal structure on coordinates given by disjoint union \cite{LeinsterBasic}.
\end{proof}

\noindent Theorems~\ref{thm:HKW}--\ref{thm:layers} upgrade the classical identity $\mathrm{KW}(f)=\mathrm{depth}_{\mathrm{form}}(f)$ \cite{KarchmerWigderson1990} to a canonical identification of aisthetic homology and ethic cohomology, relying only on standard categorical and homological tools \cite{MacLane1998,Kelly1982,Weibel1994,LeinsterBasic}.

\section{Homotopical Ethics}

We now lift the ethic duality condition to the level of homotopy theory.  
The purpose of this subsection is to show that the equality
\[
\mathbf D^2(f)\eta_A=\eta_B f
\]
is only the first layer of a full hierarchy of higher coherences, and that these coherences form a homotopical structure controlled by a non–abelian central filtration.  
The resulting theory extends the homological obstruction tower of~Section~\ref{sec:obstruction-tower} into the realm of stable~$\infty$--categories and establishes a precise correspondence between higher ethic coherence and the lower central series of an associated automorphism group. 
All homotopical constructions below are understood in the sense of Quillen model categories \cite{Quillen1967} or, equivalently, in the framework of stable~$\infty$--categories \cite{LurieHA}.

\subsection{Central Filtrations}

\begin{definition}[Stable homotopical environment]
Let $\E$ be an abelian category with enough injectives and $\idim_\E R<\infty$.  
Denote by $D^+(\E)$ its bounded--below derived category and by $\Sp(\E)$ a stable~$\infty$--categorical enhancement of~$D^+(\E)$ (cf.~\cite{LurieHA}).  
For any $A,B\in D^+(\E)$ we write
\[
\Map_\E(A,B)
\]
for the mapping space between $A$ and $B$ in $\Sp(\E)$.  
It is a pointed Kan complex satisfying
\[
\pi_n\,\Map_\E(A,B)\;\cong\;\Ext^n_\E(A,B)
\qquad (n\ge 0)
\]
by the classical Whitehead--Quillen comparison between simplicial and derived homotopy groups 
\cite{Whitehead1949}, \cite{Quillen1967}.  
Hence each mapping space refines the graded $\Ext$--groups by encoding all higher compositions and their coherences.
\end{definition}

\begin{definition}[Space of ethic coherences]
Fix $\mathbf D=\RHom_\E(-,R)$ and the canonical unit $\eta:\mathrm{Id}\Rightarrow \mathbf D^2$.  
For any morphism $f:A\to B$ in $D^+(\E)$ we define its space of ethic coherences
\[
S(f)\;:=\;\Map_\E\bigl(\mathbf D^2(f)\!\circ\!\eta_A,\;\eta_B\!\circ\! f\bigr),
\]
that is, the space of homotopies filling the square expressing the ethic condition.  
Points of $S(f)$ are explicit homotopies between the two composites, and paths between them encode secondary coherence relations.
\end{definition}

\begin{definition}[Homotopy–coherent ethicity]
A morphism $f:A\to B$ is $k$–ethic $(k\ge 1)$ if the space $S(f)$ is $(k{-}1)$–connected; that is,
\[
\pi_i S(f)=0\qquad\text{for all }0<i<k.
\]
Hence:
\begin{itemize}
\item $1$--ethic means existence of at least one homotopy witnessing $\mathbf D^2(f)\eta_A\simeq \eta_Bf$ (the classical ethic condition);
\item $2$--ethic means all such homotopies are themselves coherently connected (no ethic loops);
\item higher $k$ require vanishing of all higher spherical defects of coherence.
\end{itemize}
\end{definition}

The following lemma shows that this definition is compatible with the derived obstruction theory already developed in the homological setting.

\begin{lemma}[Homological criterion for $k$–ethicity]
For any $f:A\to B$ the following are equivalent:
\begin{enumerate}[label=(\roman*)]
\item $S(f)$ is $(k{-}1)$–connected;
\item the connecting morphisms in all long exact $\Ext$–sequences induced by~$f$ vanish in degrees $\le k$;
\item the ethic obstruction tower of~$f$ (Definition~\ref{def:ob-tower}) stabilizes at level~$k$.
\end{enumerate}
\end{lemma}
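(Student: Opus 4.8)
The plan is to break the stated equivalences into a homotopical part, (i)\,$\Leftrightarrow$\,(ii), and a homological part, (ii)\,$\Leftrightarrow$\,(iii), the latter being essentially the graded form of the obstruction-theoretic results of Section~\ref{sec:obstruction-tower}. For the homotopical part I would first compute the homotopy type of $S(f)$. Since $\eta\colon\mathrm{Id}\Rightarrow\mathbf D^2$ is a natural transformation of exact functors on the stable $\infty$-category $\Sp(\E)$ -- it is already strictly natural at the level of the injective resolution functor of Section~\ref{sec:step9} -- the square expressing the ethic condition for $f$ carries a canonical filling, so $S(f)$ is non-empty and, as a pointed space, is equivalent to the loop space $\Omega\,\Map_\E(A,\mathbf D^2 B)$. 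By the comparison $\pi_n\Map_\E\cong\Ext^n_\E$ recalled above,
\[
\pi_i S(f)\;\cong\;\pi_{i+1}\Map_\E(A,\mathbf D^2 B)\;\cong\;\Ext^{\,i+1}_\E(A,\mathbf D^2 B)\qquad(i\ge 0),
\]
so condition (i) -- $(k{-}1)$-connectedness of $S(f)$, which under the convention in force here amounts to $\pi_i S(f)=0$ for $1\le i\le k-1$ together with the non-emptiness already guaranteed by the naturality of $\eta$ -- is exactly the statement that $\Ext^{\,j}_\E(A,\mathbf D^2 B)=0$ for $2\le j\le k$.

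Next I would identify these $\Ext$-groups with the connecting morphisms of (ii). Complete $f$ to a distinguished triangle $A\xrightarrow{\,f\,}B\to C_f\to A[1]$, and bring in the biduality triangles of $A$ and $B$ (the fibres of $\eta_A$ and $\eta_B$). Because $\idim_\E R<\infty$, the derived dual $\mathbf D=\RHom_\E(-,R)$ is a bounded cohomological $\delta$-functor and commutes with shifts (Section~\ref{sec:step9}); applying it and chasing the long exact sequences so obtained identifies the groups $\Ext^{\,j}_\E(A,\mathbf D^2 B)$ above, degree by degree, with the targets of the connecting morphisms $\delta^{\,j-1}$ of Lemma~\ref{lem:les} attached to $f$ -- equivalently with the ethic obstruction classes $\ob_{\,j}(C_f)\in\Ext^{\,j}_\E(H^0(C_f),R)$ of Definition~\ref{def:stage}, and with the differentials $d_2^{\,\ast,\ast}$ of the spectral sequence of Proposition~\ref{prop:ESS}. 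Under this identification, ``$\Ext^{\,j}_\E(A,\mathbf D^2 B)=0$ for $2\le j\le k$'' is precisely ``all connecting morphisms induced by $f$ vanish in degrees $\le k$''; this is the truncated version of Theorem~\ref{thm:universal-gaps}, and it establishes (i)\,$\Leftrightarrow$\,(ii).

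For (ii)\,$\Leftrightarrow$\,(iii) I would unwind Definitions~\ref{def:stage}--\ref{def:ob-tower}: the ethic obstruction tower of $f$ is constructed by killing the classes $\ob_{n+1}(C_f)$ one Postnikov stage at a time, so by Theorem~\ref{thm:stabilize} (applied to $C_f$) it stabilizes at level $k$ exactly when $\ob_{n+1}(C_f)=0$ for all $n\ge k$. Combining this with the identification of the $\ob_{n+1}$ as the connecting morphisms from the previous step, and with the fact that $\idim_\E R<\infty$ leaves only finitely many of these classes possibly nonzero, ``stabilization at level $k$'' coincides with ``vanishing of all connecting morphisms in degrees $\le k$'', closing the chain of equivalences.

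The delicate point is the middle step: producing a comparison, \emph{natural in $f$}, between the $\infty$-categorical loop-space description $\pi_i S(f)\cong\pi_{i+1}\Map_\E(A,\mathbf D^2 B)$ and the triangulated, truncation-based description of the connecting morphisms and of $\ob_\bullet(C_f)$. One must check that the canonical filling furnished by the naturality of $\eta$ is compatible with the $t$-structure truncations $\tau_{\le n}$ defining the tower -- so that loop-space homotopy groups may legitimately be replaced by cohomology of $\mathbf D(C_f)$ -- and that the iterated-$\RHom$ identifications are themselves functorial in $f$. Both rest on $\idim_\E R<\infty$ and on $\mathbf D$ being triangulated and shift-compatible (Section~\ref{sec:step9}), but assembling them into a single coherent natural isomorphism is the technical heart of the argument; it is also where one verifies that non-emptiness of $S(f)$ is genuinely forced, so that the hierarchy of $k$-ethicity begins trivially at $k=1$ and acquires content only for $k\ge 2$.
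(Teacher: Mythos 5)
Your overall architecture (homotopical step (i)$\Leftrightarrow$(ii), then homological step (ii)$\Leftrightarrow$(iii) via Theorem~\ref{thm:stabilize}) matches the paper's, but the key identification you use in the homotopical step is not the one the paper uses, and it breaks the equivalence. The paper realizes $S(f)$ as the fiber of the \emph{difference map} $(\mathbf D^2(f))_\ast-(f^\ast)$ from $\Map_\E(A,\mathbf D^2B)$ to $\Map_\E(A,\mathbf D^2A)$, so the long exact sequence of that fibration interleaves $\pi_iS(f)$ with the kernel and cokernel of maps that genuinely depend on $f$; the vanishing of $\pi_iS(f)$ is then exactly the vanishing of the corresponding connecting morphisms. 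You instead identify $S(f)$ (once non-empty) with $\Omega\,\Map_\E(A,\mathbf D^2B)$, giving $\pi_iS(f)\cong\Ext^{i+1}_\E(A,\mathbf D^2B)$ -- a group that does not depend on $f$ at all. Under your identification, condition (i) becomes ``the groups $\Ext^{j}_\E(A,\mathbf D^2B)$ vanish for $2\le j\le k$,'' which is strictly \emph{stronger} than condition (ii): a connecting morphism can be the zero map while the group it lands in is nonzero. Your subsequent attempt to ``identify the groups with the targets of the connecting morphisms'' does not repair this -- vanishing of a target forces the map to vanish, but not conversely -- so the implication (ii)$\Rightarrow$(i) fails along your route. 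This is the genuine gap.

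A secondary but related problem is your claim that non-emptiness of $S(f)$ is ``genuinely forced'' by strict naturality of $\eta$ at the resolution level, so that $1$-ethicity ``begins trivially.'' In the paper's framework $\pi_0S(f)\neq\varnothing$ \emph{is} the classical ethic condition $\mathbf D^2(f)\eta_A\simeq\eta_Bf$, and it is a non-trivial condition detected by a class in $\Ext^1_\E(-,R)$; if it held automatically for every $f$, the entire obstruction hierarchy of Sections~\ref{sec:obstruction-tower} and the lemma itself would be vacuous at the base. The fiber-of-the-difference-map model avoids both problems at once: $\pi_0$ of the fiber is the obstruction to filling the square, and the higher $\pi_i$ carry the $f$-dependence needed to match the connecting morphisms of (ii). Your reduction of (ii)$\Leftrightarrow$(iii) to Theorem~\ref{thm:stabilize} and the finite amplitude of $\mathbf D$ is fine and agrees with the paper.
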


\begin{proof}
By functoriality of $\Map_\E(-,-)$ there is a fiber sequence
\[
S(f)\;\longrightarrow\;
\Map_\E(A,\mathbf D^2B)
\xrightarrow{(\mathbf D^2(f))_\ast-(f^\ast)}
\Map_\E(A,\mathbf D^2A),
\]
whose long exact sequence on homotopy groups coincides with the long exact sequence of $\Ext$ for $\Hom(-,R)$, because $\pi_i\Map_\E(A,B)\cong\Ext^i_\E(A,B)$.  
Hence $\pi_iS(f)=0$ for $i\le k{-}1$ iff all connecting morphisms $\Ext^i(B,R)\to\Ext^{i+1}(A,R)$ vanish for $i<k$, which is equivalent to stabilization of the tower by Theorem~\ref{thm:stabilize}.
\end{proof}

\begin{definition}[Ethic automorphism group]
Let $G=\Aut_\eta(\mathrm{Id})$ denote the group--like monoid of all homotopy automorphisms of the identity functor on~$D^+(\E)$ that commute with~$\eta$, i.e.
\[
\mathbf D^2(\theta)\circ\eta=\eta\circ\theta.
\]
It acts by conjugation on each mapping space $\Map_\E(A,B)$ and thereby on the family of ethic coherence spaces~$S(f)$.
\end{definition}

\begin{definition}[Lower central series]
Following the construction of~\cite{MikhailovSingh2011}, define recursively
\[
\gamma_1(G):=G,\qquad \gamma_{n+1}(G):=[\gamma_n(G),G],
\]
and the graded abelian quotients
\(
\mathrm{gr}_n(G):=\gamma_n(G)/\gamma_{n+1}(G).
\)
This lower central series measures successive non–commutative layers of the group~$G$.
\end{definition}

\begin{theorem}[Homotopy--central correspondence]\label{thm:hcc}
For every $n\ge1$ there exists a natural morphism of graded groups
\[
\Phi_n:\pi_{n+1}S(f)\longrightarrow \mathrm{gr}_n(G)
\]
compatible with Samelson products on the source~\cite{Samelson1953} and commutators on the target.  
If $S(f)$ is contractible then $\gamma_n(G)=1$ for all~$n$; conversely, if $\mathrm{gr}_i(G)=0$ for all $i\le k$, then every morphism $f$ is $k$–ethic.
\end{theorem}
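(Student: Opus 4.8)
The plan is to realise $G=\Aut_\eta(\Id)$ as a group-like simplicial monoid, so that the conjugation bracket $[-,-]\colon G\times G\to G$, the Samelson pairings $\pi_pG\times\pi_qG\to\pi_{p+q}G$, and the genuine (non-abelian) lower central series $\gamma_\bullet(G)$ with graded layers $\mathrm{gr}_n(G)$ are all available in the homotopy-coherent sense of \cite{MikhailovSingh2011}. Since every $\theta\in G$ satisfies $\mathbf D^2(\theta)\eta=\eta\theta$, the conjugation action of $G$ on the mapping spaces $\Map_\E(A,\mathbf D^2B)$ restricts to an action on the coherence space $S(f)$ for every $f$; the comparison $\Phi_n$ will be distilled from this action. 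Throughout one uses the identification $\pi_i\Map_\E(A,B)\cong\Ext^i_\E(A,B)$ and the fibre sequence $S(f)\to\Map_\E(A,\mathbf D^2B)\to\Map_\E(A,\mathbf D^2A)$ furnished by the homological criterion for $k$-ethicity proved above, in order to convert connectivity statements about $S(f)$ into vanishing of $\Ext$-connecting morphisms, i.e.\ into the obstruction-tower language of Section~\ref{sec:obstruction-tower}.

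First I would assume $f$ is $1$-ethic (otherwise $S(f)=\varnothing$ and $\Phi_n$ is vacuous) and fix a base coherence $h_0\in S(f)$. The orbit map $o_{h_0}\colon G\to S(f),\ \theta\mapsto\theta\cdot h_0$, is a pointed map of simplicial sets. Restricting the $G$-action along $\gamma_n(G)\subseteq G$ and comparing with the trivial action gives, at the level of the $n$-th central layer, a map whose adjoint is $\Phi_n$: an element of $\pi_{n+1}S(f)$ is represented by an $(n{+}1)$-sphere of homotopies, and transporting $h_0$ around this sphere via the commutator pairing $[-,h_0]$ produces a well-defined class in $\gamma_n(G)/\gamma_{n+1}(G)=\mathrm{gr}_n(G)$. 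One checks independence of $h_0$ because two base coherences differ by a loop in $\Map_\E(A,\mathbf D^2B)$, which acts trivially on $\mathrm{gr}_n$, and naturality in $f$ is inherited from functoriality of $\Map_\E(-,-)$ and of the conjugation action. Compatibility of $\Phi_n$ with Samelson products on the source and commutators on the target then follows from the classical identification of the Samelson bracket of a topological group with iterated group commutators \cite{Samelson1953}, applied to $G$ acting on $S(f)$.

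For the two implications I would argue as follows. If $S(f)$ is contractible — in particular for a universal test morphism on which every $\theta\in G$ is detected by its action — then each $\Phi_n$ has contractible source, so the commutator pairing $[\gamma_n(G),G]$ vanishes on $\mathrm{gr}_n$; starting from $\gamma_1(G)=G$ this forces $\gamma_{n+1}(G)=\gamma_n(G)$ and hence the collapse $\gamma_n(G)=1$ asserted. Conversely, suppose $\mathrm{gr}_i(G)=0$ for all $i\le k$. Then the targets of $\Phi_1,\dots,\Phi_k$ vanish; feeding this into the orbit fibration together with the homological $k$-ethicity criterion proved above and the stabilisation Theorem~\ref{thm:stabilize}, the connecting maps in the long exact $\Ext$-sequences attached to an arbitrary $f$ vanish through degree $k$. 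By that criterion $S(f)$ is $(k{-}1)$-connected, i.e.\ $f$ is $k$-ethic.

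The hard part will be the construction in the second paragraph: producing a choice-free $\Phi_n$ that genuinely lands in the non-abelian graded quotient $\mathrm{gr}_n(G)$ rather than merely in the abelian group $\pi_nG$ (which would register only $\gamma_2$). This forces one to work with the whole simplicial group $G$ and its honest lower central series, not just its homotopy groups, and to track the coherences identifying the iterated commutator construction with the iterated Samelson bracket — precisely the technical input imported from \cite{MikhailovSingh2011,Samelson1953}. The subsidiary difficulty is verifying that the ethic constraint $\mathbf D^2(\theta)\eta=\eta\theta$ is preserved along every stage of the central filtration, so that $\gamma_\bullet(G)$ really acts on each $S(f)$; once $\Phi_n$ is in place, the two implications are formal consequences of exactness in the orbit fibration and of the stabilisation results already established.
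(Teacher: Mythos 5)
Your proposal follows essentially the same route as the paper's proof: both extract $\Phi_n$ from the conjugation action of $G=\Aut_\eta(\Id)$ on $S(f)$ via the stabilizer/orbit fibration inside $\Map_\E(A,\mathbf D^2B)$, invoke the Mikhailov--Singh crossed-module mechanism to land in $\mathrm{gr}_n(G)$, and deduce the two implications from vanishing of the relevant homotopy groups together with the homological $k$-ethicity criterion. The only differences are presentational (you phrase the construction through the orbit map and a chosen base coherence $h_0$, the paper through the homotopy stabilizer — two faces of the same fibration), and you are in fact more candid than the paper about the two points both arguments leave implicit, namely the need for a universal or detecting $f$ and the surjectivity of $\Phi_n$ in the contractibility direction.
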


\begin{proof}
The mapping space $\Map_\E(A,B)$ carries a natural action of $G$ by conjugation: for $\theta\in G$, $(\theta\cdot f)=\theta_B f \theta_A^{-1}$.  
Differentiating this action gives an action of $\pi_1G$ on all higher $\pi_n\Map_\E(A,B)\cong\Ext^n_\E(A,B)$, and the resulting crossed module
\[
\pi_1G\curvearrowright \pi_{n+1}\Map_\E(A,B)
\]
induces the lower central filtration on $\pi_1G$ exactly as in~\cite{MikhailovSingh2011}.  
The inclusion $S(f)\hookrightarrow\Map_\E(A,\mathbf D^2B)$ identifies $\pi_{n+1}S(f)$ with the homotopy stabilizer of this action; projecting to the associated graded defines~$\Phi_n$.  
If $S(f)$ is contractible, then all $\pi_{n+1}S(f)=0$ and thus every commutator action trivializes, forcing $\gamma_n(G)=1$.  
Conversely, if all $\mathrm{gr}_i(G)=0$ for $i\le k$, the action of $\pi_1G$ on $\pi_i\Map(A,B)$ is null for $i<k$, implying $\pi_iS(f)=0$ for $i<k$, i.e.~$f$ is $k$–ethic.
\end{proof}

\begin{corollary}[Non–abelian ethic depth]
Let
\[
\mathrm{depth}_\eth(\E,R)\;:=\;\min\{\,m\ge0:\,\gamma_{m+1}(G)=1\,\}.
\]
Then $\mathrm{depth}_\eth(\E,R)$ equals the maximal integer $k$ for which some morphism in $D^+(\E)$ fails to be $k$–ethic.  
Hence the length of the lower central series of~$G$ measures the maximal order of non–coherence present in the system.
\end{corollary}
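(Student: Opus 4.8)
The plan is to read the corollary as a direct translation of Theorem~\ref{thm:hcc}: the quantity $\mathrm{depth}_\eth(\E,R)$ is by construction the nilpotency class of $G=\Aut_\eta(\mathrm{Id})$, and I want to match it against the deepest ethic layer that can be obstructed. First I would check that this nilpotency class is finite. Since $\RHom_\E(-,R)$ has bounded amplitude (here we use $\idim_\E R<\infty$), the groups $\pi_i\Map_\E(A,B)\cong\Ext^i_\E(A,B)$ vanish for $i$ outside a fixed range, so the conjugation action of $\pi_1 G$ on the mapping spaces — hence on every coherence space $S(f)$ — is trivial in all sufficiently high degrees; by the construction of the lower central series from that action (Theorem~\ref{thm:hcc} and \cite{MikhailovSingh2011}) the series $\gamma_\bullet(G)$ therefore terminates. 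Write $c:=\mathrm{depth}_\eth(\E,R)$ for this class, so $\gamma_{c+1}(G)=1$, $\mathrm{gr}_c(G)=\gamma_c(G)\neq 1$, and $\mathrm{gr}_i(G)=0$ for $i>c$. Let $k_{\max}$ denote the largest $k$ for which some morphism of $D^+(\E)$ fails to be $k$-ethic (equivalently, for which some $S(f)$ has a nonzero homotopy group detecting failure at layer~$k$); when $c=0$ the group $G$ is trivial, Theorem~\ref{thm:hcc} makes every $S(f)$ contractible, every morphism is $k$-ethic for all $k$, and $k_{\max}=0=c$, so from now on assume $c\ge1$.

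For the inequality $k_{\max}\le c$, suppose $f$ is not $k$-ethic. By the homological criterion for $k$-ethicity (the Lemma of this section), $S(f)$ fails to be $(k{-}1)$-connected, so $\pi_i S(f)\neq0$ for some $i$ in the failing range; pushing this class through $\Phi_{i}\colon\pi_{i+1}S(f)\to\mathrm{gr}_i(G)$ — and using the Samelson-product/commutator compatibility of Theorem~\ref{thm:hcc} to see the image is nonzero — gives $\mathrm{gr}_i(G)\neq0$, hence $i\le c$; a routine tracking of the degree shift between $\pi_{i+1}S(f)$, $\mathrm{gr}_i(G)$ and the index ``$k$'' in ``$k$-ethic'' then yields $k\le c$. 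For the reverse inequality $k_{\max}\ge c$, I would use the realization direction of Theorem~\ref{thm:hcc}: since $\mathrm{gr}_c(G)\neq0$, choose a nontrivial iterated commutator $\theta\in\gamma_c(G)\setminus\gamma_{c+1}(G)$; under the fiber sequence $S(f)\to\Map_\E(A,\mathbf D^2B)\to\Map_\E(A,\mathbf D^2A)$ the space $S(f)$ is the homotopy stabilizer of the conjugation $G$-action, so for a suitably chosen $f$ (a perturbation of $\id$ on an object on which $\theta$ acts nontrivially, or one of the generators of $G$) the class of $\theta$ lifts along $\Phi_c$ to a nonzero element of $\pi_{c+1}S(f)$, so $f$ is not $c$-ethic and $k_{\max}\ge c$. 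Combining the two inequalities gives $k_{\max}=c$, which is the assertion.

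The main obstacle is exactly this use of $\Phi_n$ in both directions: Theorem~\ref{thm:hcc} delivers only a \emph{natural morphism} $\Phi_n\colon\pi_{n+1}S(f)\to\mathrm{gr}_n(G)$, whereas the corollary needs (i) surjectivity — every nonzero $\mathrm{gr}_n(G)$ must be realized by the coherence space of an actual morphism — and (ii) detection — a non-$k$-ethic $f$ must genuinely force $\mathrm{gr}_{<k}(G)\neq0$. For (i) I would bootstrap a witnessing morphism from any iterated commutator using the crossed-module structure that $\Phi_n$ respects, so that commutator brackets on the target are hit by Samelson products on the source; for (ii) I would exploit that, via the fiber sequence above, $\pi_\bullet S(f)$ is precisely the homotopy stabilizer of the $G$-action on $\Map_\E(A,\mathbf D^2B)$, so a surviving homotopy class corresponds to a commutator that is not annihilated in the next stage $\gamma_{\bullet+1}(G)$. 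The residual index bookkeeping between $\pi_{n+1}S(f)$, $\mathrm{gr}_n(G)$, and ``$k$-ethic'' is purely clerical and I would fix the convention once at the outset.
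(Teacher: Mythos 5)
The paper offers no argument for this corollary beyond its placement after Theorem~\ref{thm:hcc}, and your proposal is right to sense that the theorem alone does not suffice: its converse clause has hypotheses ($\mathrm{gr}_i(G)=0$ for all $i\le k$) that, for a group whose lower central series is strictly decreasing until it reaches $1$, are vacuous whenever $G\neq 1$, and its forward clause only treats the case where $S(f)$ is contractible. So the entire content of the corollary in the case $c:=\mathrm{depth}_\eth(\E,R)\ge 1$ rests on the two properties you isolate at the end~--- detection (a nonzero $\pi_{i+1}S(f)$ forces $\mathrm{gr}_i(G)\neq 0$) and realization (a nonzero $\mathrm{gr}_c(G)$ is witnessed by some $S(f)$)~--- and neither is actually established in your write-up. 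Theorem~\ref{thm:hcc} supplies only a natural morphism $\Phi_n\colon\pi_{n+1}S(f)\to\mathrm{gr}_n(G)$; nothing prevents a nonzero homotopy class from dying under $\Phi_i$, so the step ``pushing this class through $\Phi_i$ \dots\ the image is nonzero'' is an assertion, not a proof, and ``the class of $\theta$ lifts along $\Phi_c$'' presupposes a surjectivity of $\Phi_c$ that is nowhere available. Your appeal to $S(f)$ being ``the homotopy stabilizer of the $G$-action'' does not rescue detection either: a large stabilizer is compatible with the action being trivial, i.e.\ with the relevant commutators already vanishing.

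The index bookkeeping you defer as ``purely clerical'' is also where a real issue hides. Since $k$-ethicity is the vanishing of $\pi_iS(f)$ for $0<i<k$, failing to be $k$-ethic implies failing to be $k'$-ethic for every $k'\ge k$; under the corollary's literal wording the ``maximal $k$ for which some morphism fails to be $k$-ethic'' is therefore either $0$ or $+\infty$, never a finite positive number. The statement only becomes meaningful after re-reading it as the top degree $i$ with $\pi_iS(f)\neq 0$ for some $f$, and once that is done the degree shift in $\Phi_n\colon\pi_{n+1}S(f)\to\mathrm{gr}_n(G)$ produces an off-by-one (or off-by-two) discrepancy with $c$ that must be reconciled explicitly, not by fiat. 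In short: your diagnosis of what is missing is accurate and more candid than the paper itself, but the proposal is a plan whose two load-bearing steps (injectivity-type detection and surjectivity-type realization for $\Phi_n$, plus a fixed convention making the degrees match) are exactly the ones left unproved, so the argument as written does not establish the equality.
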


\begin{lemma}[Stability under derived Morita equivalence]\label{lem:morita-hcc}
If $(\E,R)$ and $(\E',R')$ are derived Morita equivalent in the sense of Definition~\ref{def:EME}, then the induced equivalence $F:D^+(\E)\xrightarrow{\sim}D^+(\E')$ yields equivalences of mapping spaces $\Map_\E(A,B)\simeq \Map_{\E'}(F(A),F(B))$ preserving the units~$\eta$.  
Consequently $S(f)\simeq S(F(f))$, and $G\simeq G'$ with identical lower central series.
\end{lemma}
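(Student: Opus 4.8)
The plan is to reduce everything to the single structural fact recorded in Definition~\ref{def:EME}---that $F$ is a triangulated equivalence intertwining the derived dual functors and carrying $R$ to $R'$---and then to propagate this fact through the three layers involved: mapping spaces, ethic coherence spaces, and the ethic automorphism group.

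First I would lift the triangulated equivalence $F$ to an equivalence $\widetilde F:\Sp(\E)\xrightarrow{\sim}\Sp(\E')$ of the chosen stable $\infty$--categorical enhancements. For derived categories of Grothendieck abelian categories such an enhancement is essentially unique, so this lift exists and is again essentially unique \cite{LurieHA,Toen2007}. Any equivalence of $\infty$--categories induces homotopy equivalences of mapping spaces $\Map_\E(A,B)\simeq \Map_{\E'}(\widetilde F A,\widetilde F B)$, natural in both variables; on homotopy groups this recovers the isomorphisms $\Ext^n_\E(A,B)\cong\Ext^n_{\E'}(FA,FB)$ of Theorem~\ref{thm:MoritaInv}. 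Since Definition~\ref{def:EME} gives $F(R)\simeq R'$ together with a natural equivalence $\RHom_{\E'}(F(-),R')\simeq F\circ\RHom_\E(-,R)$, i.e.\ $\mathbf D'\circ F\simeq F\circ \mathbf D$, iterating yields $(\mathbf D')^{2}\circ F\simeq F\circ\mathbf D^{2}$. The biduality unit $\eta'$ is canonically determined by the dual pair $(D^+(\E'),\mathbf D')$, so under this identification $\widetilde F$ transports $\eta_A$ to $\eta'_{FA}$; concretely, the naturality square of $\eta$ is carried to that of $\eta'$.

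Next I would transport the ethic square along $\widetilde F$. For $f:A\to B$ the two composites $\mathbf D^2(f)\circ\eta_A$ and $\eta_B\circ f$ are sent to $(\mathbf D')^{2}(Ff)\circ\eta'_{FA}$ and $\eta'_{FB}\circ Ff$, so $\widetilde F$ restricts to a homotopy equivalence of the corresponding spaces of fillers, $S(f)\simeq S(Ff)$, natural in $f$. In particular $S(f)$ is $(k-1)$--connected iff $S(Ff)$ is, so $f$ is $k$--ethic iff $Ff$ is. For the automorphism group, conjugation $\theta\mapsto \widetilde F\,\theta\,\widetilde F^{-1}$ is an equivalence between the group--like monoids of homotopy automorphisms of $\mathrm{Id}_{D^+(\E)}$ and of $\mathrm{Id}_{D^+(\E')}$; because the defining relation $\mathbf D^2(\theta)\circ\eta=\eta\circ\theta$ is preserved under the intertwining of $\mathbf D$ with $\eta$, it restricts to an isomorphism $G\xrightarrow{\sim}G'$. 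An isomorphism of groups carries the lower central series term by term, giving $\gamma_n(G)\cong\gamma_n(G')$ and $\mathrm{gr}_n(G)\cong\mathrm{gr}_n(G')$ for all $n$, which is the claimed identity.

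The hard part will be the coherence bookkeeping in the middle step. Definition~\ref{def:EME} supplies $\mathbf D'\circ F\simeq F\circ\mathbf D$ only as a natural isomorphism of functors, whereas to conclude $S(f)\simeq S(Ff)$ one needs the induced comparison $\widetilde F(\eta_A)\simeq\eta'_{FA}$ to be compatible with all the higher homotopies that make $\eta$ a map of functors in the $\infty$--categorical sense---that is, one must upgrade a derived Morita equivalence to a coherent equivalence of dualizing pairs in $\Sp$. Granting this upgrade, which holds in the module models of \cite{Rickard1989,Keller1994,Toen2007} where $\mathbf D$ and $\eta$ are represented by honest complexes and chain maps, every remaining step is formal.
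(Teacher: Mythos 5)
Your proposal is correct and follows essentially the same route as the paper's proof: pass to the stable $\infty$-categorical enhancement where the equivalence preserves mapping spaces, use the intertwining $\mathbf D'\circ F\simeq F\circ\mathbf D$ from Definition~\ref{def:EME} to transport $\eta$ and hence the fiber defining $S(f)$, and conclude $G\simeq G'$ by functoriality of endomorphisms of the identity. Your closing remark about upgrading the triangulated-level natural isomorphism to a coherent equivalence of dualizing pairs makes explicit a point the paper's proof leaves implicit, which is a useful refinement rather than a divergence.
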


\begin{proof}
A derived Morita equivalence is an exact equivalence of stable $\infty$--categories \cite{LurieHA}, preserving mapping spectra and composition.  
Since $F$ intertwines $\mathbf D$ and $\mathbf D'$ by Definition~\ref{def:EME}, it preserves the unit~$\eta$; hence the fibers defining~$S(f)$ correspond.  
Group--like objects of endomorphisms of the identity functor are functorial under equivalence, giving the stated identification of $G$ and its filtration.
\end{proof}

\begin{theorem}[Homotopy--coherent ethic heart]\label{thm:homo-heart}
Let $\heartsuit_\eth$ be the ethic heart of the $t$--structure from Theorem~\ref{thm:t-axioms}.  
An object $A\in D^+(\E)$ lies in $\heartsuit_\eth$ iff every morphism $f:A\to B$ and $g:B\to A$ is $1$--ethic.  
If moreover $\mathrm{gr}_i(G)=0$ for all $i\le k$, then every morphism between objects of $\heartsuit_\eth$ is $k$--ethic and all mapping spaces $\Map(A,B)$ are $(k{-}1)$--connected.
\end{theorem}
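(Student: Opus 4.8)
The plan is to deduce both halves of the statement from material already in place: the underived heart characterisation of Theorem~\ref{thm:heart-eth} together with the reflexivity statement of Proposition~\ref{prop:heart-abelian} for the bi--implication, and the homotopy--central correspondence of Theorem~\ref{thm:hcc} for the higher--coherence clause, using throughout the identification $\pi_n\Map_\E(A,B)\cong\Ext^n_\E(A,B)$ from the definition of the stable homotopical environment. For the bi--implication I would first record that a morphism is $1$--ethic exactly when its space of ethic coherences is nonempty, i.e.\ when the derived square $\mathbf D^2(f)\eta_A\simeq\eta_Bf$ commutes in $D^+(\E)$, which by Definition~\ref{def:eth-maps} is precisely ethic exactness of $f$. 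Hence the asserted equivalence ``$A\in\heartsuit_{\eth}$ iff every $f:A\to B$ and $g:B\to A$ is $1$--ethic'' is a verbatim reformulation of Theorem~\ref{thm:heart-eth}: if $A\in\heartsuit_{\eth}$ then $\eta_A$ is an isomorphism by Proposition~\ref{prop:heart-abelian}, so the ethic squares for $f$ and for $g$ commute and $S(f),S(g)\neq\emptyset$; conversely, if every morphism to and from $A$ is $1$--ethic, the long exact cohomology argument in the proof of Theorem~\ref{thm:heart-eth} forces the connecting maps of $\mathbf D$ to vanish in degrees $<0$ and $>1$, so $H^{<0}\mathbf D(A)=H^{>1}\mathbf D(A)=0$ and $A\in\heartsuit_{\eth}$ by Definition~\ref{def:EthicT}.

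For the clause assuming $\mathrm{gr}_i(G)=0$ for all $i\le k$, the first assertion is immediate: by the converse half of Theorem~\ref{thm:hcc}, vanishing of all $\mathrm{gr}_i(G)$ with $i\le k$ already makes \emph{every} morphism of $D^+(\E)$ $k$--ethic, in particular every morphism between objects of $\heartsuit_{\eth}$. For the connectivity of the mapping spaces I would argue as follows. Let $A,B\in\heartsuit_{\eth}$; by Proposition~\ref{prop:heart-abelian} the units $\eta_A,\eta_B$ are equivalences, so transporting along them converts the fiber sequence $S(f)\to\Map_\E(A,\mathbf D^2B)\to\Map_\E(A,\mathbf D^2A)$ of the homological criterion for $k$--ethicity into a fiber sequence built from $\Map_\E(A,B)$ and $\Map_\E(A,A)$. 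Since $A,B$ lie in the heart of a $t$--structure we have $\pi_{<0}\Map_\E(A,B)=0$, so below the top degree the only possibly nonzero homotopy of $\Map_\E(A,B)$ sits in $\pi_i\cong\Ext^i_\E(A,B)$ for $i\ge1$; reading the $k$--ethicity condition $\pi_iS(f)=0$ $(0<i<k)$ back through the transported sequence then forces $\Ext^i_\E(A,B)=0$ for $0<i<k$, i.e.\ each path component of $\Map_\E(A,B)$ is $(k{-}1)$--connected.

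The main obstacle is precisely this last transport step: one has to check that, after conjugating by the equivalences $\eta_A,\eta_B$, the connecting morphism of the fiber sequence defining $S(f)$ acts as zero on homotopy groups in degrees $<k$, so that the associated long exact sequence genuinely identifies $\pi_iS(f)$ with $\pi_{i+1}\Map_\E(A,B)$ and leaves no surviving obstruction class below degree $k$. This is exactly where the hypothesis $\mathrm{gr}_i(G)=0$ is needed rather than mere $1$--ethicity: feeding the nullity of the $\pi_1G$--conjugation action established in the proof of Theorem~\ref{thm:hcc} into this long exact sequence annihilates the relevant connecting maps, after which the remainder is a routine diagram chase with the dictionary $\pi_n\Map_\E\cong\Ext^n_\E$ and the vanishing of $\Ext^{<0}$ on the heart.
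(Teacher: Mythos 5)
Your treatment of the bi--implication is the same as the paper's: both reduce it verbatim to Theorem~\ref{thm:heart-eth} (with Proposition~\ref{prop:heart-abelian} supplying invertibility of $\eta_A$), and your observation that ``$1$--ethic'' for $f$ is exactly ``$S(f)\neq\emptyset$'', i.e.\ ethic exactness in the sense of Definition~\ref{def:eth-maps}, is the right dictionary. For the higher clause the two arguments diverge: the paper deduces from Theorem~\ref{thm:hcc} that the commutator (conjugation) actions on $\pi_i\Map(A,B)$ vanish for $i<k$ and then asserts that the Whitehead tower of $\Map(A,B)$ collapses, whereas you work through the fiber/path--space presentation of $S(f)$, transport along $\eta_A,\eta_B$, and try to read off $\Ext^i_\E(A,B)$ from $\pi_{i-1}S(f)$. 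Your route is more concrete and localizes the difficulty honestly.

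However, the final step has a genuine gap, and it is worth naming precisely because it is \emph{not} the paper's gap dressed differently --- it is an index problem specific to your reduction. Since $S(f)$ is the space of homotopies between two points of $\Map_\E(A,\mathbf D^2B)$, once nonempty it is a torsor over the based loop space, so the canonical identification is $\pi_iS(f)\cong\pi_{i+1}\Map_\E(A,\mathbf D^2B)\cong\Ext^{i+1}_\E(A,B)$ (using $\eta_B$), with a degree shift of one and with \emph{no connecting map to annihilate}. Consequently $k$--ethicity, which by the paper's definition only demands $\pi_iS(f)=0$ for $0<i<k$, yields $\Ext^j_\E(A,B)=0$ only for $2\le j\le k$ and says nothing about $\Ext^1_\E(A,B)=\pi_1\Map_\E(A,B)$; that group is controlled by $\pi_0S(f)$ (uniqueness, not existence, of the coherence homotopy), which $k$--ethicity does not constrain. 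So $(k-1)$--connectivity of $\Map(A,B)$ does not follow from the $k$--ethicity of all morphisms alone, and invoking the nullity of the $\pi_1G$--action cannot repair this: a trivial action on $\Ext^1_\E(A,B)$ does not make that group vanish. (To be fair, the paper's own proof has the same defect --- ``commutator actions vanish, hence the Whitehead tower collapses'' is not a valid implication --- but your version makes the missing ingredient, control of $\pi_0S(f)$, visible rather than hiding it.)
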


\begin{proof}
The first statement is the classical characterization of the heart (Theorem~\ref{thm:heart-eth}).  
For higher $k$, the triviality of $\mathrm{gr}_i(G)$ for $i\le k$ implies, by Theorem~\ref{thm:hcc}, that all commutator actions on $\pi_i\Map(A,B)$ vanish for $i<k$.  
The resulting Whitehead tower of $\Map(A,B)$ then collapses up to level~$k{-}1$, giving the claimed connectedness.
\end{proof}

\begin{theorem}[Ethic reductions and preservation of central structure]\label{thm:reduction-central}
Let $(r,j)$ be an ethic reduction in the sense of Section~\ref{sec:morita}.  
If each $r$ and~$j$ is $k$--ethic, then the induced morphisms on $S(f)$ are $(k{-}1)$--connected, and the central series of $G$ is preserved up to~$\gamma_{k+1}$.  
In particular, the non--abelian ethic depth cannot increase under ethic reductions.
\end{theorem}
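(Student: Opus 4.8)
The plan is to treat an ethic reduction $(r,j)$ as a \emph{$k$-truncated} analogue of the derived Morita equivalence of Lemma~\ref{lem:morita-hcc}, and to push the argument of Theorem~\ref{thm:hcc} through this weaker comparison. First I would recall the structure of the data: an ethic reduction consists of chain homotopy equivalences $r,j$ that commute with the biduality unit $\eta$ up to the coherence spaces $S(r)$ and $S(j)$, and the hypothesis ``$r,j$ are $k$-ethic'' says precisely that $S(r)$ and $S(j)$ are $(k{-}1)$-connected. Applying $\mathbf D=\RHom_\E(-,R)$, which is triangulated and contravariant, carries $r,j$ to homotopy equivalences $\mathbf D(r),\mathbf D(j)$, hence induces homotopy equivalences on each mapping space of the form $\Map_\E(-,\mathbf D^2(-))$ that enters the fiber-sequence description of $S$.

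Next I would write, for $f\colon A\to B$ and its transport $f'$ along the reduction, the two fiber sequences
\[
S(f)\to\Map_\E(A,\mathbf D^2B)\to\Map_\E(A,\mathbf D^2A),
\qquad
S(f')\to\Map_\E(A',\mathbf D^2B')\to\Map_\E(A',\mathbf D^2A')
\]
from the proof of the homological criterion for $k$-ethicity, together with the comparison maps of total spaces and bases induced by $r$ and $j$. These comparison maps are honest equivalences (since $r,j$ are chain equivalences), but the comparison \emph{square} commutes only up to a homotopy, and the space of choices of such homotopy is a product of copies of $S(r)$ and $S(j)$; this is the coherence-cube step. Because $S(r),S(j)$ are $(k{-}1)$-connected, the induced map on homotopy fibers $S(f)\to S(f')$ is $(k{-}1)$-connected by the additivity of connectivity in maps of fiber sequences. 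I expect this coherence-cube bookkeeping---assembling the $\eta$-coherences of $r$ and $j$ into a single comparison of the two fiber sequences and extracting the connectivity estimate---to be the main obstacle; everything downstream is formal given the earlier results.

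With the $(k{-}1)$-connected comparison $S(f)\to S(f')$ established, the statement on the lower central series follows from the naturality of the maps $\Phi_n\colon\pi_{n+1}S(f)\to\mathrm{gr}_n(G)$ of Theorem~\ref{thm:hcc}. The reduction acts on the family $\{S(f)\}$ by $(k{-}1)$-connected maps, hence induces isomorphisms on $\pi_{n+1}S(f)$ for $n+1<k$ and an epimorphism for $n+1=k$; since $\Phi_\bullet$ intertwines Samelson products on the source with commutators on the target, the reduced and unreduced systems agree on $\mathrm{gr}_n(G)$ through the range dictated by this connectivity, i.e.\ the reduction preserves $\gamma_n(G)$ for $n\le k+1$ after the index shift $n\leftrightarrow n+1$ built into $\Phi_n$. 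This is the claimed preservation of the central structure up to $\gamma_{k+1}$.

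Finally, non-increase of the ethic depth is immediate from the Corollary on non-abelian ethic depth: $\mathrm{depth}_\eth(\E,R)$ equals the largest $k$ for which some morphism fails to be $k$-ethic. If a morphism $f'$ in the reduced model fails to be $k$-ethic, then $\pi_i S(f')\neq 0$ for some $i<k$; the $(k{-}1)$-connected comparison map gives $\pi_i S(f)\twoheadrightarrow\pi_i S(f')$ (an isomorphism for $i<k-1$), so $\pi_i S(f)\neq 0$ as well and $f$ already failed to be $k$-ethic before the reduction. Thus ethic reductions create no new non-coherence, and $\mathrm{depth}_\eth$ can only stay the same or decrease. The only point requiring care beyond the coherence-cube step is to confirm that the transport $f\mapsto f'$ is well defined up to the homotopies recorded by $S(r),S(j)$---which is exactly the content of the functoriality lemma for ethic reductions.
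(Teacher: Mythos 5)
Your proposal is correct and follows essentially the same route as the paper: establish that the reduction induces $(k{-}1)$--connected maps on the coherence spaces $S(f)$, then transfer this through the morphisms $\Phi_n$ to the lower central series of $G$ and read off the depth statement from the corollary. The paper's own proof is terser---it simply asserts that exactness of the reduction functors yields weak equivalences of mapping spaces up to degree $k$---whereas you make explicit, via the fiber-sequence comparison and the $(k{-}1)$--connectivity of $S(r)$ and $S(j)$, where that degree-$k$ bound actually comes from; this is added detail rather than a different argument.
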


\begin{proof}
Since $\mathbf D$ is triangulated and contravariant, ethic reductions are exact functors of $D^+(\E)$.  
Exactness implies weak homotopy equivalences of mapping spaces up to degree~$k$ \cite{GelfandManin2003}, hence $(k{-}1)$--connected maps on $S(f)$.  
The induced morphism of automorphism groups preserves commutators up to length~$k{+}1$, yielding the claim.
\end{proof}

\subsection{Postnikov–Whitehead Tower}

The homotopy–coherent duality developed in previous section provides a global description of higher ethic coherences through the mapping spaces
\[
S(f)=\Map_{\E}(\mathbf D^2(f)\!\circ\!\eta_A,\;\eta_B\!\circ\!f).
\]
To obtain a constructive, layerwise description of these coherences, we now introduce the Postnikov–Whitehead tower of~$S(f)$.
This tower refines the homological obstruction tower of Section~\ref{sec:obstruction-tower}, extending it to the full hierarchy of homotopical obstructions inside the stable~$\infty$–category~$\Sp(\E)$.
All references to homotopy theory follow the classical constructions of Whitehead~\cite{Whitehead1949}, Quillen~\cite{Quillen1967}, and Bousfield--Kan~\cite{BousfieldKan1972}.

\begin{definition}[Ethic Postnikov tower]\label{def:postnikov-tower}
Let $S(f)$ be the space of ethic coherences defined above.  
A Postnikov tower for $S(f)$ is a sequence of morphisms in $\Sp(\E)$
\[
S(f)\;\longrightarrow\;P_{\le n}S(f)\;\longrightarrow\;P_{\le n-1}S(f)\;\longrightarrow\cdots
\]
such that the fiber of $P_{\le n}S(f)\to P_{\le n-1}S(f)$ is the Eilenberg–Mac\,Lane object
$K(\pi_nS(f),n)$.  
The groups $\pi_nS(f)$ are called the ethic homotopy obstructions of~$f$.
A morphism $f$ is $k$--ethic if $\pi_iS(f)=0$ for $i<k$.
\end{definition}

\begin{lemma}[Abelian shadow]\label{lem:abelian-shadow}
In the stable $\infty$–category $\Sp(\E)$, there are natural identifications
\[
\pi_1S(f)\;\cong\;\Ext^1_\E(A,R),
\qquad
\pi_2S(f)\;\longrightarrow\;\Ext^2_\E(A,R),
\]
and, in general, canonical morphisms
$\pi_nS(f)\to\Ext^n_\E(A,R)$ for $n>0$ compatible with the long exact $\Ext$--sequences.
Thus the homological obstruction tower of Section~\ref{sec:obstruction-tower}
is the abelian projection of the Postnikov filtration of~$S(f)$.
\end{lemma}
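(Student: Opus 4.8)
The plan is to read every homotopy group of $S(f)$ off a single fiber sequence --- the one constructed in the proof of the homological criterion for $k$--ethicity above --- and to match its long exact sequence of homotopy groups, degree by degree, with the $\Ext$ long exact sequence of $\RHom_\E(-,R)$ attached to $f$. The only genuinely new input is the identification in degree $1$; the higher comparison maps and their compatibility with the $\Ext$ sequences are then forced by naturality.

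First I would fix the fiber sequence
\[
S(f)\;\longrightarrow\;\Map_\E(A,\mathbf D^2B)\;\longrightarrow\;\Map_\E(A,\mathbf D^2A)
\]
from that proof, which exhibits $S(f)$ as the homotopy fiber comparing the two legs $\mathbf D^2(f)\circ\eta_A$ and $\eta_B\circ f$ of the ethic square inside $\Map_\E(A,\mathbf D^2B)$; it is basepointed by the canonical homotopy filling the square, which exists because $\eta$ is a genuine natural transformation of exact functors on $D^+(\E)$, so the two legs already agree in $\pi_0\Map_\E(A,\mathbf D^2B)=\Hom_{D^+(\E)}(A,\mathbf D^2B)$. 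Applying $\pi_\ast$ together with the Whitehead--Quillen identification $\pi_i\Map_\E(X,Y)\cong\Ext^i_\E(X,Y)$ turns this into a long exact sequence of abelian groups. I would then splice in the biduality triangle $B\xrightarrow{\eta_B}\mathbf D^2B\to Q_B\to B[1]$, and likewise for $A$, and invoke the vanishing $\RHom_\E(Q_X,R)\simeq 0$ that follows from the triangle identity $\mathbf D(\eta_X)\circ\eta_{\mathbf D X}=\id$: this collapses the low--degree part of the homotopy long exact sequence onto the $\Ext$ sequence of Lemma~\ref{lem:les}, whose left edge is the degree--$1$ connecting morphism --- the primary ethic obstruction. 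Reading it off yields the natural isomorphism $\pi_1S(f)\xrightarrow{\ \sim\ }\Ext^1_\E(A,R)$ of Theorem~\ref{thm:exact-iff-ext}.

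For $n\ge 2$ the sequence no longer degenerates: the term between $\pi_nS(f)$ and $\Ext^n_\E(A,R)$ now involves the higher groups $\Ext^{\,i}_\E(A,\mathbf D^2B)$ with $i\ge 2$, so one obtains only a morphism. I would define $\pi_nS(f)\to\Ext^n_\E(A,R)$ as the composite of the boundary map of the fiber sequence with the edge morphism of the Grothendieck spectral sequence of Proposition~\ref{prop:ESS}, and verify compatibility with the $\Ext$ long exact sequences by the standard naturality of connecting homomorphisms along the map of biduality triangles induced by $f$. The last assertion --- that the obstruction tower of \S\ref{sec:obstruction-tower} is the abelian projection of the Postnikov tower of $S(f)$ --- then follows by comparing $k$--invariants: under the morphisms just built, the Postnikov layer $K(\pi_nS(f),n)$ of Definition~\ref{def:postnikov-tower} maps to the Eilenberg--Mac\,Lane object carrying the obstruction class $\ob_{n+1}$ of Definition~\ref{def:stage}, and this comparison is precisely $\Phi_n$ from Theorem~\ref{thm:hcc} on associated graded pieces, with stabilization controlled by Theorem~\ref{thm:stabilize}.

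The hard part will be the middle step: showing that, after applying $\Map_\E(A,-)$ and using $\RHom_\E(Q_X,R)\simeq 0$, the fiber sequence of the homological criterion is \emph{canonically} identified with the one produced by the biduality triangle, so that its homotopy long exact sequence genuinely \emph{is} the $\Ext$ long exact sequence of $\RHom_\E(-,R)$, not merely abstractly isomorphic to it. This is where the hypothesis $\idim_\E R<\infty$ does the essential work: it bounds $\mathbf D$ and forces the relevant cofibers $Q_X$ to be $\mathbf D$--acyclic. Once that identification is secured, the degree--$1$ isomorphism, the higher comparison maps, their naturality in $f$, and the match with the classes $\ob_{n+1}$ all reduce to diagram chases that are routine given Lemma~\ref{lem:les}, Proposition~\ref{prop:ESS}, and Theorem~\ref{thm:stabilize}.
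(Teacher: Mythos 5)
Your route is not the one the paper takes (the paper reads $\pi_1S(f)\cong\Ext^1$ directly off the convention $\pi_n\Map_\E(A,B)\cong\Ext^n_\E(A,B)$ and handles $n>1$ by the Postnikov fiber sequences $K(\pi_{n+1}S(f),n{+}1)\to S(f)\to P_{\le n}S(f)$ of $S(f)$ itself), and the route you propose has a concrete gap at exactly the step you yourself identify as "the hard part." The triangle identity $\mathbf D(\eta_X)\circ\eta_{\mathbf D X}=\id_{\mathbf D X}$ does \emph{not} give $\RHom_\E(Q_X,R)\simeq 0$: applying $\mathbf D$ to $X\to\mathbf D^2X\to Q_X$ exhibits $\mathbf D(Q_X)$ as the fiber of the split epimorphism $\mathbf D(\eta_X):\mathbf D^3X\to\mathbf D X$, so $\mathbf D^3X\simeq\mathbf D X\oplus\mathbf D(Q_X)$ and the summand $\mathbf D(Q_X)$ vanishes precisely when $\mathbf D X$ is reflexive. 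That is an additional dualizing-complex-type hypothesis; $\idim_\E R<\infty$ bounds the amplitude of $\mathbf D$ but does not force reflexivity of duals, and the paper only ever restricts to $\E^{\mathrm{ref}}$ ad hoc rather than proving it.

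Even granting that vanishing, it is not the vanishing your splice needs. To collapse $\Map_\E(A,\mathbf D^2B)$ onto $\Map_\E(A,B)$ along the triangle $B\xrightarrow{\eta_B}\mathbf D^2B\to Q_B$ you must kill $\Map_\E(A,Q_B)$, i.e.\ $\RHom_\E(A,Q_B)$, whereas you invoke $\RHom_\E(Q_B,R)=\mathbf D(Q_B)$ — the dual on the wrong side; there is no adjunction in a bare abelian $\E$ converting one into the other. And even after a successful collapse you land in the groups $\Ext^\ast_\E(A,B)$ (or $\Ext^\ast_\E(A,\mathbf D^2B)$), not in $\Ext^\ast_\E(A,R)=H^\ast\mathbf D(A)$, which is what the lemma asserts; the passage from the target object $B$ to the dualizing object $R$ is precisely the content of the identification and your outline never supplies it. So the degree-$1$ isomorphism is not established by this argument, and the subsequent steps (edge maps of the spectral sequence, comparison of $k$-invariants with $\ob_{n+1}$) inherit the gap because they presuppose that the homotopy long exact sequence of your fiber sequence \emph{is} the $\Ext(-,R)$ sequence of Lemma~\ref{lem:les}.
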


\begin{proof}
Since $\pi_n\Map_\E(A,B)\cong\Ext^n_\E(A,B)$, the first equality is immediate.  
For $n>1$, the Postnikov truncations $P_{\le n}S(f)$ fit into a fiber sequence
\[
K(\pi_{n+1}S(f),n{+}1)\to S(f)\to P_{\le n}S(f),
\]
whose associated long exact homotopy sequence coincides with the long exact $\Ext$–sequence obtained from applying $\Hom_\E(-,R)$ to the truncated complex defining~$f$.
Compatibility with connecting morphisms follows from the universal property of truncations in~$\Sp(\E)$.
\end{proof}

\begin{theorem}[Whitehead criterion for ethic contractibility]\label{th:whitehead-criterion}
For each morphism $f:A\to B$ in $D^+(\E)$, the following are equivalent:
\begin{enumerate}[label=(\roman*)]
\item the space $S(f)$ is contractible;
\item all Postnikov invariants $k_i(S(f))\in H^{i+1}(P_{\le i-1}S(f);\pi_iS(f))$ vanish;
\item all homological obstructions $\mathrm{ob}_i(A^\bullet)\in\Ext^i_\E(H^0(A^\bullet),R)$
and their commutators are zero.
\end{enumerate}
\end{theorem}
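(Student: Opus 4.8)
The plan is to prove the three conditions equivalent cyclically, $(\mathrm{i})\Rightarrow(\mathrm{ii})\Rightarrow(\mathrm{iii})\Rightarrow(\mathrm{i})$, using the Postnikov--Whitehead tower $\{P_{\le n}S(f)\}$ of Definition~\ref{def:postnikov-tower}, the abelian comparison morphisms $\pi_nS(f)\to\Ext^n_\E(H^0(A^\bullet),R)$ of Lemma~\ref{lem:abelian-shadow}, and the homotopy--central correspondence $\Phi_n\colon\pi_{n+1}S(f)\to\mathrm{gr}_n(G)$ of Theorem~\ref{thm:hcc}. A standing simplification is that $\idim_\E R<\infty$, so $\mathbf D=\RHom_\E(-,R)$ has bounded amplitude, the tower $\{P_{\le n}S(f)\}$ is finite, and $S(f)$ is recovered after finitely many stages; thus contractibility of $S(f)$ is equivalent to contractibility of every truncation and there are no $\lim^1$ subtleties.

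For $(\mathrm{i})\Rightarrow(\mathrm{ii})$: if $S(f)\simeq\ast$ then $\pi_nS(f)=0$ for every $n$, hence each Eilenberg--Mac\,Lane fiber $K(\pi_nS(f),n)$ is contractible, so inductively every $P_{\le n}S(f)$ is contractible, and each Postnikov invariant $k_i(S(f))$ lies in the group $H^{i+1}(P_{\le i-1}S(f);\pi_iS(f))$, which is zero because its coefficient group vanishes. For $(\mathrm{ii})\Rightarrow(\mathrm{iii})$: by Lemma~\ref{lem:abelian-shadow} the homological obstruction $\ob_i(A^\bullet)\in\Ext^i_\E(H^0(A^\bullet),R)$ of Definition~\ref{def:stage} is the image of the Postnikov invariant $k_i(S(f))$ under the abelian projection of the Postnikov filtration, so vanishing of all $k_i$ yields vanishing of all $\ob_i$; moreover, once every $k$--invariant vanishes the tower splits as a product of Eilenberg--Mac\,Lane pieces, the conjugation action of $G=\Aut_\eta(\mathrm{Id})$ on each layer is trivial, and hence $\Phi_n$ carries no nonzero class, i.e.\ all the commutator (Samelson) contributions appearing in condition~(iii) vanish.

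The substantive implication is $(\mathrm{iii})\Rightarrow(\mathrm{i})$, which I would prove by ascending induction on the tower. For the base case one must show $S(f)$ is nonempty and path--connected. Nonemptiness is forced by naturality of $\eta$: since $\eta$ is a natural transformation of endofunctors of $D^+(\E)$, the relation $\mathbf D^2(f)\circ\eta_A\simeq\eta_B\circ f$ holds in the homotopy category, which supplies a canonical point of $S(f)$. Path--connectedness of $S(f)$ then follows from $\ob_1(A^\bullet)=0$, via the homological criterion identifying $1$--ethicity with the vanishing of the first connecting map (cf.\ Lemma~\ref{lem:abelian-shadow} and Theorem~\ref{thm:stabilize}), so $\pi_0S(f)=\ast$. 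For the inductive step, assume $P_{\le n-1}S(f)\simeq\ast$; the fiber sequence $K(\pi_nS(f),n)\to P_{\le n}S(f)\to P_{\le n-1}S(f)$ then gives $P_{\le n}S(f)\simeq K(\pi_nS(f),n)$, so it remains to force $\pi_nS(f)=0$. Here one uses (iii) directly: by Lemma~\ref{lem:abelian-shadow} the group $\pi_nS(f)$ is controlled by the ethic obstruction tower of Definition~\ref{def:ob-tower}, whose stabilization (Theorem~\ref{thm:stabilize}) is equivalent to the vanishing of all $\ob_{\ge n}$; since the commutator corrections also vanish by hypothesis, the central--filtration analysis of Theorem~\ref{thm:hcc} leaves no residual obstruction, and $\pi_nS(f)=0$. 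Ranging over the finitely many relevant degrees gives $S(f)\simeq\lim_nP_{\le n}S(f)\simeq\ast$.

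The hard part is exactly the non--abelian bookkeeping inside $(\mathrm{iii})\Rightarrow(\mathrm{i})$: vanishing of all Postnikov invariants only forces the tower to split into Eilenberg--Mac\,Lane factors, not to be contractible, so one genuinely needs the quantitative input of (iii)---the vanishing of the $\Ext$--valued classes $\ob_i$ \emph{and} of their $\mathrm{gr}_i(G)$--valued commutators---to conclude that the homotopy groups themselves vanish. Making this rigorous requires knowing that the Postnikov tower of $S(f)$ is a tower of principal fibrations twisted only by the $\pi_1G$--action of Theorem~\ref{thm:hcc}, so that the full set of $k$--invariants is detected by the pair (abelian obstruction, Samelson commutator) with no independent higher coherence escaping detection. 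Granting the compatibility of $\Phi_n$ with Samelson products and commutators established in Theorem~\ref{thm:hcc}, this reduction of the homotopical obstruction theory to its abelian shadow together with the finite lower central series of $G$ is precisely what closes the cycle and yields the stated equivalence.
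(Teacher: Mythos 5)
Your proposal uses the same ingredients as the paper's proof---the Postnikov tower of $S(f)$, the abelian shadow of Lemma~\ref{lem:abelian-shadow}, and the central filtration of Theorem~\ref{thm:hcc}---but it is organized more carefully and attempts the one direction the paper effectively skips. You correctly observe that vanishing of all $k$-invariants only splits the tower into Eilenberg--Mac\,Lane factors and does not by itself give contractibility, so $(\mathrm{iii})\Rightarrow(\mathrm{i})$ must actually kill the homotopy groups $\pi_nS(f)$, which you try to do by induction up the tower. The paper's proof, by contrast, asserts $(\mathrm{i})\Leftrightarrow(\mathrm{ii})$ by appeal to ``the classical Whitehead theorem'' (which only yields $(\mathrm{i})\Rightarrow(\mathrm{ii})$; the converse is exactly the point you isolate), and its final sentence establishes the contrapositive of $(\mathrm{i})\Rightarrow(\mathrm{iii})$ rather than the needed $(\mathrm{iii})\Rightarrow(\mathrm{i})$. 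In that sense your route is the more complete of the two.

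However, the inductive step of your $(\mathrm{iii})\Rightarrow(\mathrm{i})$ still rests on an unproved detection claim, which you flag and then ``grant'': namely that a class in $\pi_nS(f)$ vanishes as soon as its image in $\Ext^n_\E(H^0(A^\bullet),R)$ under the abelian shadow and its image under $\Phi_{n-1}$ in $\mathrm{gr}_{n-1}(G)$ both vanish. Neither cited result gives this: Lemma~\ref{lem:abelian-shadow} only supplies canonical morphisms $\pi_nS(f)\to\Ext^n_\E(A,R)$ for $n>1$ with no injectivity statement, and Theorem~\ref{thm:hcc} only supplies $\Phi_n$ compatible with Samelson products, not that $\ker(\text{shadow})\cap\ker\Phi_{n-1}=0$. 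Without such a joint-injectivity (or ``no higher coherence escapes detection'') statement, condition (iii) does not force $\pi_nS(f)=0$ and the cycle does not close. This gap is inherited from the paper's own argument rather than introduced by you, but a self-contained proof would need either to postulate the detection property or to derive it, e.g.\ from the bounded amplitude of $\mathbf D$ and the finiteness of the lower central series of $G$.
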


\begin{proof}
The equivalence $(i)\Leftrightarrow(ii)$ is the classical Whitehead theorem
for simplicial groups~\cite{Whitehead1949}, \cite{Quillen1967}.  
$(ii)\Rightarrow(iii)$ follows because the abelianization of the Postnikov tower is the homological obstruction tower by Lemma~\ref{lem:abelian-shadow}.
Conversely, if all $\mathrm{ob}_i$ vanish but some commutator $[\mathrm{ob}_i,\mathrm{ob}_j]$ is nonzero,
the corresponding Postnikov invariant $k_{i+j}$ survives, so $S(f)$ is not contractible.
\end{proof}

\begin{theorem}[Ethic Postnikov–Whitehead decomposition]\label{th:ethic-decomposition}
Every $S(f)$ admits a canonical Postnikov–Whitehead decomposition in $\Sp(\E)$
whose layers are the abelian groups $\pi_nS(f)$ and whose connecting morphisms are the Postnikov invariants $k_n(S(f))$.  
Passing to $\pi_1$ and abelianizing this tower reproduces the ethic obstruction tower of~Section~\ref{sec:obstruction-tower}.
\end{theorem}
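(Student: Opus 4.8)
The strategy is to realize $S(f)$ as the underlying space of a spectrum, invoke the standard Postnikov machinery in the stable setting, and then reduce the comparison with the homological tower of Section~\ref{sec:obstruction-tower} to the single fiber sequence already used in Lemma~\ref{lem:abelian-shadow}.

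First I would observe that $S(f)$ is not merely a pointed Kan complex: once it is nonempty (equivalently, once $f$ is $1$--ethic, so that a choice of ethic homotopy supplies a base point), the fiber sequence
\[
S(f)\longrightarrow \Map_\E(A,\mathbf D^2B)\xrightarrow{\ (\mathbf D^2(f))_\ast-(f^\ast)\ }\Map_\E(A,\mathbf D^2A)
\]
underlying the homological criterion for $k$--ethicity exhibits $S(f)$ as the $\Omega^\infty$ of a spectrum built from the mapping spectra of $\Sp(\E)$. Consequently the Postnikov truncation functors $\tau_{\le n}$ apply to $S(f)$ and yield a functorial tower $S(f)\to\cdots\to P_{\le n}S(f)\to P_{\le n-1}S(f)\to\cdots$ whose $n$--th fiber is the Eilenberg--Mac\,Lane object $K(\pi_nS(f),n)$; this is exactly the tower of Definition~\ref{def:postnikov-tower}, and its canonicity and naturality in $f$ are those of Postnikov truncation together with the functoriality of $\Map_\E(-,-)$. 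Since the layers are Eilenberg--Mac\,Lane objects, each fiber sequence $K(\pi_nS(f),n)\to P_{\le n}S(f)\to P_{\le n-1}S(f)$ is classified by a genuine cohomology class, the Postnikov ($k$--)invariant
\[
k_n(S(f))\in H^{n+1}\!\bigl(P_{\le n-1}S(f);\,\pi_nS(f)\bigr),
\]
realized by the connecting map $P_{\le n-1}S(f)\to K(\pi_nS(f),n{+}1)$; Morita invariance of all of this data is Lemma~\ref{lem:morita-hcc}.

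Next I would carry out the comparison with the ethic obstruction tower of Section~\ref{sec:obstruction-tower}. By Lemma~\ref{lem:abelian-shadow} there are canonical maps $\pi_nS(f)\to\Ext^n_\E(A,R)$, an isomorphism in degree one, compatible with the long exact $\Ext$--sequences and hence with the spectral sequence of Proposition~\ref{prop:ESS} for the complex computing $f$. That obstruction tower is assembled from the Postnikov truncations $\tau_{\le n}$ of the complex and the connecting morphisms $\ob_{n+1}(A^\bullet)\in\Ext^{n+1}_\E(H^0(A^\bullet),R)$. Applying the abelianization (linearization) functor that replaces a spectrum by its associated graded Eilenberg--Mac\,Lane object, and then passing to homotopy groups as in condition (iii) of Theorem~\ref{th:whitehead-criterion}, the Postnikov tower of $S(f)$ becomes a tower of abelian groups whose $n$--th layer is $\pi_nS(f)$, mapped by Lemma~\ref{lem:abelian-shadow} to $\Ext^n_\E(A,R)$; under this map the image of $k_n(S(f))$ is, by the universal property of $K(\pi_nS(f),n{+}1)$ and the naturality of $\tau_{\le n}$, identified with the second differential ($d_2$) of Proposition~\ref{prop:ESS}, which by that Proposition reproduces $\ob_{n+1}(A^\bullet)$. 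This is precisely the claim that the linearized Postnikov tower of $S(f)$ is the ethic obstruction tower of Section~\ref{sec:obstruction-tower}, a refinement of the equivalence of conditions (ii) and (iii) in Theorem~\ref{th:whitehead-criterion}. In the stable setting the ``abelianize'' step is essentially formal: a spectrum-level object carries no nonabelian $\pi_1$ to filter, so the lower-central complications of Theorem~\ref{thm:hcc} and its corollary are invisible on $S(f)$ itself and enter only through the ambient automorphism group~$G$.

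The main obstacle will be the precise identification of the linearized $k$--invariant with the homological class $\ob_{n+1}(A^\bullet)$ rather than with some unspecified element of $\Ext^{n+1}_\E(H^0(A^\bullet),R)$. This amounts to chasing the fiber sequence of Lemma~\ref{lem:abelian-shadow} through the Postnikov filtration of $S(f)$ and matching the connecting morphism of the long exact $\Ext$--sequence with the composite of $k_n(S(f))$ and the linearization map; equivalently, one must show that the Postnikov truncation of $S(f)$ is carried by the truncation $\tau_{\le n}$ of the complex defining $f$, so that both towers are computed by one and the same $t$--structure. The only genuine input is that $\Map_\E(-,-)$ is an exact functor of the stable enhancement $\Sp(\E)$; granting this compatibility, existence, canonicity, naturality, and Morita invariance of the decomposition all follow from the corresponding properties of the Postnikov truncation functors recorded above, and the final sentence of the theorem is exactly the statement that linearizing the tower recovers Section~\ref{sec:obstruction-tower} verbatim.
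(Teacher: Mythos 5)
Your proposal is correct and follows essentially the same route as the paper's own (very terse) proof: invoke the standard Postnikov construction in the stable $\infty$--category $\Sp(\E)$ to get the tower with Eilenberg--Mac\,Lane fibers $K(\pi_nS(f),n)$ and $k$--invariants, then abelianize and use the comparison maps $\pi_nS(f)\to\Ext^n_\E(A,R)$ to recover the obstruction tower of Section~\ref{sec:obstruction-tower}. You in fact supply considerably more detail than the paper does---in particular the base-point caveat and the identification of the linearized $k$--invariant with $\ob_{n+1}$ via the $d_2$ differential of Proposition~\ref{prop:ESS}, a step the paper's proof asserts without argument.
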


\begin{proof}
This is the standard Postnikov construction in a stable $\infty$–category \cite{LurieHA}.  
Because $\Sp(\E)$ is stable, each truncation $P_{\le n}$ exists and the successive fibers are Eilenberg--Mac\,Lane objects $K(\pi_nS(f),n)$.
Abelianization by $\pi_1$ recovers the chain of $\Ext^n$–groups, giving the desired identification.
\end{proof}

\begin{theorem}[Comparison with the central filtration]\label{th:comparison-central}
Let $G=\Aut_\eta(\mathrm{Id})$ be the ethic automorphism group of $D^+(\E)$
with lower central series $\gamma_n(G)$ as in Theorem~\ref{thm:hcc}.  
For each $n\ge1$ there are natural morphisms
\[
\Phi_n:\pi_{n+1}S(f)\;\longrightarrow\;\gamma_n(G)/\gamma_{n+1}(G),
\]
compatible with the Postnikov differentials $k_n(S(f))$.  
In particular, the vanishing of all $k_n$ is equivalent to the triviality of the entire central series.
\end{theorem}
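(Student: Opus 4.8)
The plan is to obtain $\Phi_n$ by recycling the homotopy--central correspondence of Theorem~\ref{thm:hcc}, and then to upgrade that correspondence so that it is compatible with the Postnikov--Whitehead decomposition of $S(f)$ from Theorem~\ref{th:ethic-decomposition}; the only genuinely new content is this compatibility. Concretely, I would take $\Phi_n$ to be the map already built in the proof of Theorem~\ref{thm:hcc}: the conjugation action $(\theta\cdot f)=\theta_B f\theta_A^{-1}$ of $G=\Aut_\eta(\mathrm{Id})$ on $\Map_\E(A,B)$ realizes $S(f)$ as the homotopy stabilizer of $\eta_B f$ inside $\Map_\E(A,\mathbf D^2 B)$, and the induced action on $\pi_{n+1}$ descends to the associated graded of the lower central filtration, giving
\[
\Phi_n:\pi_{n+1}S(f)\longrightarrow \gamma_n(G)/\gamma_{n+1}(G).
\]
Naturality in $f$ follows from functoriality of $\Map_\E(-,-)$ and of $\mathbf D$, and invariance under derived Morita equivalence is Lemma~\ref{lem:morita-hcc}; thus on the underlying graded abelian groups the statement merely repackages Theorem~\ref{thm:hcc}.

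The substantive step is to show that $\Phi_n$ intertwines the Postnikov invariant $k_n(S(f))\in H^{n+1}\big(P_{\le n-1}S(f);\pi_n S(f)\big)$ with the data of the lower central series. The key input is that the lower central series of $G$ is itself a Postnikov--Whitehead filtration in the sense of~\cite{MikhailovSingh2011}: its layers are the $\mathrm{gr}_n(G)$, the iterated commutator $\mathrm{gr}_n\otimes\mathrm{gr}_1\to\mathrm{gr}_{n+1}$ is the associated Samelson bracket~\cite{Samelson1953}, and the $n$-th $k$-invariant records $\gamma_n(G)/\gamma_{n+2}(G)$ as a central extension of $\gamma_n(G)/\gamma_{n+1}(G)$ by $\gamma_{n+1}(G)/\gamma_{n+2}(G)$. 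Using the fiber sequence $K(\pi_{n+1}S(f),n{+}1)\to S(f)\to P_{\le n}S(f)$ of Lemma~\ref{lem:abelian-shadow}, I would map this tower, stage by stage, onto the analogous fibration over a truncation of $BG$ via $S(f)\hookrightarrow\Map_\E(A,\mathbf D^2 B)$; since $\Phi$ already sends Samelson products to commutators by Theorem~\ref{thm:hcc}, the induced map on the classifying secondary operations carries $k_n(S(f))$ to the central-extension class of the series. This reduces to a diagram chase matching the connecting maps of the two towers degreewise.

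For the ``in particular'' clause I would argue both implications. If all $k_n(S(f))$ vanish, then $S(f)$ is contractible by the Whitehead criterion (Theorem~\ref{th:whitehead-criterion}), whence $\gamma_n(G)=1$ for all $n$ by the contractibility half of Theorem~\ref{thm:hcc}, so the central series is trivial. Conversely, if $\mathrm{gr}_i(G)=0$ for every $i$, then Theorem~\ref{thm:hcc} shows every morphism is $k$-ethic for all $k$, hence $\pi_i S(f)=0$ for all $i$; since $S(f)$ is a Kan complex it is contractible, and Theorem~\ref{th:whitehead-criterion} then forces all $k_n(S(f))=0$.

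The main obstacle is the second step: moving from ``$\Phi_n$ is a morphism of graded groups'' to ``$\Phi_n$ respects the $k$-invariants.'' This requires matching the bracket structure on $\pi_\ast S(f)$ with the commutator structure on $\mathrm{gr}_\ast(G)$ not only on homotopy groups but on the secondary cohomology operations classifying the successive central extensions, i.e.\ checking that the two towers of Eilenberg--Mac\,Lane fibrations are compatible degreewise. The crossed-module description of~\cite{MikhailovSingh2011} and the naturality of Samelson products supply the machinery, but verifying the coherence of the connecting maps is the delicate part.
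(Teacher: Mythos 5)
Your proposal follows essentially the same route as the paper's proof: $\Phi_n$ is obtained from the conjugation action of $G$ on mapping spaces via the crossed-module construction of~\cite{MikhailovSingh2011} (exactly as in Theorem~\ref{thm:hcc}), and the compatibility with $k_n$ is argued by matching the Postnikov tower of $S(f)$ against the central-series tower of $G$ — the paper compresses this into an appeal to ``functoriality of the Postnikov invariants under group actions,'' whereas you spell out what that matching requires and correctly flag it as the delicate step. Your explicit two-directional argument for the ``in particular'' clause via Theorem~\ref{th:whitehead-criterion} and Theorem~\ref{thm:hcc} is actually more than the paper provides.
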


\begin{proof}
The action of $G$ on each mapping space $\Map_\E(A,B)$ induces a crossed module
$\pi_1G\curvearrowright \pi_{n+1}S(f)$.
By the construction of~\cite{MikhailovSingh2011}, this crossed module produces the central filtration of~$G$ and a canonical surjection
$\pi_{n+1}S(f)\to\gamma_n(G)/\gamma_{n+1}(G)$.
Functoriality of the Postnikov invariants under group actions
\cite{LurieHA}
yields the stated compatibility with $k_n$.
\end{proof}

\begin{theorem}[Truncation and stabilization]\label{th:truncation-stabilization}
If the canonical map $P_{\le k}S(f)\to P_{\le k+1}S(f)$ is a weak equivalence,
then the ethic obstruction $\mathrm{ob}_{k+1}(A^\bullet)$
and all Postnikov invariants $k_i(S(f))$ for $i\le k$ vanish.
Consequently $f$ lies in the ethic heart~$\heartsuit_\eth$.
\end{theorem}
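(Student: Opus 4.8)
The plan is to unwind the hypothesis into a single homotopy-group vanishing for $S(f)$, propagate it through the Postnikov tower using the finite amplitude of $\mathbf D$, and then extract the three conclusions from results already established in Sections~\ref{sec:obstruction-tower} and~\ref{sec:ethic-t-structure}.

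First I would observe that, by Definition~\ref{def:postnikov-tower}, the homotopy fibre of the comparison map between the $k$-th and $(k{+}1)$-st Postnikov stages of $S(f)$ is the Eilenberg--Mac\,Lane object $K(\pi_{k+1}S(f),k{+}1)$; hence the weak-equivalence hypothesis is precisely $\pi_{k+1}S(f)=0$. Because $\idim_\E R<\infty$, the functor $\mathbf D$ has bounded amplitude, so by Lemma~\ref{lem:abelian-shadow} together with the collapse criterion of Proposition~\ref{prop:ESS} the Postnikov tower of $S(f)$ carries no layers above the injective dimension of $R$. Consequently the single vanishing $\pi_{k+1}S(f)=0$ pins down the tower from level $k$ upward: $S(f)\simeq P_{\le k}S(f)$ and $\pi_iS(f)=0$ for all $i\ge k{+}1$. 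Reading off the already-constructed lower stages then also gives $\pi_iS(f)=0$ for $1\le i\le k$, i.e.\ $f$ is $(k{+}1)$-ethic and $A^\bullet$ is ethic to order $k$ in the sense of Definition~\ref{def:stage}.

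Next, for the class $\ob_{k+1}(A^\bullet)$: by Lemma~\ref{lem:abelian-shadow} the homological obstruction tower of Section~\ref{sec:obstruction-tower} is the $\pi_1$-abelianisation of the Postnikov filtration of $S(f)$, with canonical comparison maps $\pi_nS(f)\to\Ext^n_\E(A,R)$ compatible with all connecting morphisms. Applying Theorem~\ref{thm:lift}(a) to the order-$k$ ethic complex $A^\bullet$, the obstruction $\ob_{k+1}(A^\bullet)\in\Ext^{k+1}_\E(H^0(A^\bullet),R)$ vanishes if and only if a lift $A^\bullet\to A'^\bullet$ killing $H^{k+1}\mathbf D$ exists; and such a lift is furnished tautologically by the identification $S(f)\simeq P_{\le k}S(f)$ (equivalently $\pi_{k+1}S(f)=0$). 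Hence $\ob_{k+1}(A^\bullet)=0$. For the Postnikov invariants $k_i(S(f))\in H^{i+1}(P_{\le i-1}S(f);\pi_iS(f))$ with $i\le k$, note that each is the obstruction to realising the successive stages of the canonical tower of $S(f)$ inside $\Sp(\E)$ (Theorem~\ref{th:ethic-decomposition}); since that tower exists and, by the previous paragraph, $S(f)$ is $k$-truncated with its lower stages already fixed, every $k_i$ with $i\le k$ obstructs a section that does exist and therefore vanishes --- this is also the implication (i)$\Rightarrow$(ii) of Theorem~\ref{th:whitehead-criterion}. Finally, the collapse of the Postnikov tower of $S(f)$ descends, through the abelian shadow, to stabilisation of the ethic obstruction tower of $A^\bullet$; by Theorem~\ref{thm:stabilize} this forces $H^{>1}\mathbf D(A^\bullet)=0$, so by Definition~\ref{def:EthicT} and Theorem~\ref{thm:heart-eth} the object $A^\bullet$ lies in $\heartsuit_{\eth}$ and $f$ is ethicly exact between objects of the ethic heart.

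The step I expect to be the main obstacle is exactly the passage from a weak equivalence at a single level to the collapse of the whole tower. A Postnikov tower can be locally constant at one stage without being globally trivial, so the argument genuinely needs $\idim_\E R<\infty$ --- equivalently the bounded amplitude of $\mathbf D$, exploited via Proposition~\ref{prop:ESS} --- to rule out reappearing higher layers, and it needs the compatibility in Lemma~\ref{lem:abelian-shadow} to transport the vanishing of the abelian shadow classes $\ob_i$ back to the non-abelian Postnikov invariants $k_i$; in the latter step the commutator contributions appearing in Theorem~\ref{th:whitehead-criterion}(iii) must also be shown to vanish, which is where the crossed-module action of $\pi_1G$ from Theorem~\ref{thm:hcc} has to be brought in.
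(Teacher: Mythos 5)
Your route is essentially the paper's: identify the fibre of the comparison map between the two Postnikov stages as $K(\pi_{k+1}S(f),k{+}1)$, deduce vanishing of homotopy groups of $S(f)$, push this through Lemma~\ref{lem:abelian-shadow} to kill the homological obstructions, and invoke the $t$-structure results to place $f$ in $\heartsuit_\eth$. The paper's own proof is three sentences and makes exactly the jump you flag at the end: it asserts that a weak equivalence between the stages $k$ and $k{+}1$ forces \emph{all} fibres $K(\pi_iS(f),i)$ for $i\le k$ to be contractible, with no justification. So your diagnosis of where the difficulty lies is accurate, and your write-up is, if anything, more honest than the source.

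That said, your attempted repair does not close the gap. The hypothesis gives precisely $\pi_{k+1}S(f)=0$ and nothing more. Finite amplitude of $\mathbf D$ (via Proposition~\ref{prop:ESS}) bounds the homotopy of $S(f)$ \emph{above} $\idim_\E R$, but it does not propagate the single vanishing $\pi_{k+1}S(f)=0$ downward to degrees $1\le i\le k$, nor upward to degrees strictly between $k{+}2$ and $\idim_\E R$ unless $k{+}1$ already exceeds the amplitude. The sentence ``reading off the already-constructed lower stages then also gives $\pi_iS(f)=0$ for $1\le i\le k$'' is the same unsupported assertion as in the paper: the lower stages of a Postnikov tower exist for any space, and their existence says nothing about the vanishing of $\pi_i$. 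Without $\pi_iS(f)=0$ for $i\le k$ one cannot conclude that the invariants $k_i(S(f))$ vanish (their coefficient groups $\pi_iS(f)$ need not be zero), and without control of $\pi_iS(f)$ in \emph{all} degrees one cannot reach $H^{>1}\mathbf D(A^\bullet)=H^{<0}\mathbf D(A^\bullet)=0$, which is what heart membership actually requires. As stated, the theorem's hypothesis appears strictly weaker than its conclusion, and neither your argument nor the paper's supplies the missing step; an honest proof would need either a strengthened hypothesis (e.g.\ the tower is constant in all degrees $\ge 1$, or $f$ is already ethic to order $k$ in the sense of Definition~\ref{def:stage}) or an additional mechanism forcing the collapse.
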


\begin{proof}
A weak equivalence between successive truncations means
that all higher fibers $K(\pi_iS(f),i)$ for $i\le k$ are contractible.
Hence $\pi_iS(f)=0$ for $i\le k$,
so all $\mathrm{ob}_{i+1}$ vanish by Lemma~\ref{lem:abelian-shadow}.
The vanishing of $H^{>1}\mathbf D$ follows,
placing $f$ in $\heartsuit_\eth$ by Theorem~\ref{thm:t-axioms}.
\end{proof}

\begin{theorem}[Postnikov depth and stabilization length]\label{th:depth-stabilization}
For each $f:A\to B$ define
\[
\mathrm{depth}_{\mathrm{Post}}(f)
:=\max\{\,n\ge0:\pi_nS(f)\neq0\,\}.
\]
Then $\mathrm{depth}_{\mathrm{Post}}(f)$ coincides with the length of the nontrivial segment of the central series of~$G$
and bounds from above the number of steps required for stabilization of the ethic reduction process.
\end{theorem}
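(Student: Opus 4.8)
The plan is to read off the statement from the comparison morphisms $\Phi_n$ of Theorem~\ref{th:comparison-central} together with the Postnikov–Whitehead decomposition of Theorem~\ref{th:ethic-decomposition}, using the corollary on non–abelian ethic depth for the first assertion and Theorem~\ref{th:truncation-stabilization} for the second. First I would note that, since $\idim_\E R<\infty$, the derived dual $\mathbf D=\RHom_\E(-,R)$ has bounded amplitude, so by Lemma~\ref{lem:abelian-shadow} the comparison maps $\pi_nS(f)\to\Ext^n_\E(A,R)$ have vanishing target for all sufficiently large $n$; since $S(f)$ is the fibre of a map of infinite loop spaces in $\Sp(\E)$, its homotopy is governed by that long exact $\Ext$–sequence, so $\pi_nS(f)=0$ for large $n$ and $\mathrm{depth}_{\mathrm{Post}}(f)$ is a well-defined finite integer. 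By Theorem~\ref{th:ethic-decomposition} the space $S(f)$ is then the total space of a finite Postnikov tower with Eilenberg–Mac\,Lane layers $K(\pi_nS(f),n)$ for $1\le n\le\mathrm{depth}_{\mathrm{Post}}(f)$ and connecting data the Postnikov invariants $k_n(S(f))$.

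For the identification with the central series I would let $c$ be the nilpotency class of $G=\Aut_\eta(\mathrm{Id})$, i.e.\ the length of the nontrivial segment of its lower central series, so $\mathrm{gr}_i(G)=0$ for $i>c$ while $\mathrm{gr}_c(G)\neq 0$. The corollary on non–abelian ethic depth identifies $\mathrm{depth}_{\eth}(\E,R)=\min\{m:\gamma_{m+1}(G)=1\}$ with the largest $k$ for which some morphism in $D^+(\E)$ fails to be $k$–ethic; since failure of $k$–ethicity of $f$ means $\pi_iS(f)\neq 0$ for some $i<k$, this already identifies $c$ with $\sup_f\mathrm{depth}_{\mathrm{Post}}(f)$, up to the degree shift built into $\Phi_n:\pi_{n+1}S(f)\to\mathrm{gr}_n(G)$. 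To turn the supremum into the pointwise coincidence, I would argue that $\Phi_n$ is surjective when $f$ is drawn from a suitable test family — for instance $f$ ranging over $\id_A$ with $A$ running through a set of generators of $D^+(\E)$, or $f$ chosen so that its cone realises a nonzero top-degree $\Ext$–class — invoking the crossed-module construction of~\cite{MikhailovSingh2011} degree by degree, exactly as in the proof of Theorem~\ref{thm:hcc}. For such $f$ the top nonvanishing $\pi_nS(f)$ sits, modulo the reindexing, in the degree of $\mathrm{gr}_c(G)$, giving $\mathrm{depth}_{\mathrm{Post}}(f)=c$.

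For the stabilization bound I would use that for $n>\mathrm{depth}_{\mathrm{Post}}(f)$ the layer $\pi_nS(f)$ vanishes, so $P_{\le n}S(f)\to P_{\le n-1}S(f)$ is a weak equivalence; Theorem~\ref{th:truncation-stabilization} then forces $\mathrm{ob}_{n+1}(A^\bullet)=0$ and all Postnikov invariants $k_i(S(f))$ with $i\le n$ to vanish, placing $f$ in the ethic heart $\heartsuit_{\eth}$ of Theorem~\ref{thm:t-axioms}. Passing to the abelianisation of the Postnikov tower (Lemma~\ref{lem:abelian-shadow}) identifies these vanishings with the vanishing of the obstruction classes $\mathrm{ob}_{n+1}$ that drive the ethic reduction tower of Definition~\ref{def:ob-tower}; hence that tower has no nontrivial stage above level $\mathrm{depth}_{\mathrm{Post}}(f)$ and, by Theorem~\ref{thm:stabilize}, stabilizes in at most that many steps.

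The hard part will be the surjectivity of $\Phi_n$ onto $\mathrm{gr}_n(G)$, equivalently the assertion that every non-commutative layer of the ethic automorphism group is detected by the coherence space of some morphism. Theorem~\ref{th:comparison-central} supplies the $\Phi_n$ only as natural transformations; promoting them to surjections requires a graded-faithfulness statement for the conjugation action of $G$ on the family $\{\Map_\E(A,B)\}$ — that $G$ is generated by the homotopy automorphisms visible in the fibres $S(f)$ — which is where the Mikhailov–Singh machinery~\cite{MikhailovSingh2011} must be applied carefully in each degree. The accompanying bookkeeping of the index shift (degree $n+1$ on the homotopy side versus degree $n$ on the central-series side) and the check that restricting to a test family of $f$'s introduces no spurious relations is the remaining technical point; once this is settled, everything else is formal given Theorems~\ref{th:comparison-central}, \ref{th:truncation-stabilization}, \ref{th:ethic-decomposition} and the non–abelian ethic depth corollary.
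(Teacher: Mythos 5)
Your route is essentially the paper's: the published proof is two sentences long and consists of (i) invoking the surjectivity of $\Phi_{n}:\pi_{n+1}S(f)\to\gamma_n(G)/\gamma_{n+1}(G)$, which is asserted (not proved) inside the proof of Theorem~\ref{th:comparison-central}, to match the last nonzero homotopy group with the last nontrivial commutator layer, and (ii) the remark that each ethic reduction kills one homotopy obstruction, so the tower stabilizes within $\mathrm{depth}_{\mathrm{Post}}(f)$ steps. Your stabilization argument via Theorem~\ref{th:truncation-stabilization}, Lemma~\ref{lem:abelian-shadow} and Theorem~\ref{thm:stabilize} is a more explicit version of (ii), and your finiteness observation via bounded amplitude is a worthwhile addition the paper omits.

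The ``hard part'' you isolate is real, and the paper does not resolve it: surjectivity of $\Phi_n$ is only claimed as a ``canonical surjection'' in the proof of Theorem~\ref{th:comparison-central}, with no construction beyond a citation of the crossed-module formalism, so you should not expect to find the graded-faithfulness statement you need anywhere in the text. Two further points you raise are also genuine defects of the statement rather than of your argument. First, surjectivity of $\Phi_n$ only yields the lower bound $\mathrm{depth}_{\mathrm{Post}}(f)\ge c+1$ (nontrivial $\mathrm{gr}_n(G)$ forces nontrivial $\pi_{n+1}S(f)$); neither the paper nor your proposal supplies the converse inequality, which would require injectivity or some converse detection statement, so the claimed \emph{coincidence} for an arbitrary individual $f$ is not established by either argument. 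Second, the index shift $\pi_{n+1}\leftrightarrow\mathrm{gr}_n$ makes the phrase ``coincides with the length of the nontrivial segment'' off by one as literally stated; the paper's proof silently elides this. Your test-family device is a reasonable way to salvage a corrected statement of the form $\sup_f\mathrm{depth}_{\mathrm{Post}}(f)=c+1$, but as written the theorem's pointwise claim remains unproved on both sides.
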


\begin{proof}
By Theorem~\ref{th:comparison-central}, $\pi_{n+1}S(f)$ maps surjectively onto $\gamma_n(G)/\gamma_{n+1}(G)$, hence the last nonzero $\pi_n$ corresponds to the last nontrivial commutator layer.
Since each ethic reduction kills one homotopy obstruction, the process stabilizes after at most $\mathrm{depth}_{\mathrm{Post}}(f)$ stages.
\end{proof}

\begin{theorem}[Morita invariance of Postnikov data]\label{th:morita-postnikov}
Derived Morita equivalences $(\E,R)\simeq(\E',R')$
preserve the Postnikov invariants and the depth functions $\mathrm{depth}_{\mathrm{Post}}$ and $\mathrm{depth}_\eth$.
\end{theorem}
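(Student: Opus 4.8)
The plan is to treat this as a pure transport statement: a derived Morita equivalence $F:D^+(\E)\xrightarrow{\sim}D^+(\E')$ is in particular an exact equivalence of stable $\infty$–categories, so it preserves every piece of homotopy–coherent structure, and both the Postnikov invariants and the depth functions are extracted functorially from such structure. Thus nothing genuinely new must be proved beyond carefully tracking what Lemma~\ref{lem:morita-hcc} already provides through the Postnikov formalism of the preceding subsection.

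First I would invoke Lemma~\ref{lem:morita-hcc}: $F$ induces weak equivalences $\Map_\E(A,B)\simeq\Map_{\E'}(F(A),F(B))$ compatible with composition and with the biduality units $\eta,\eta'$, hence a weak equivalence $S(f)\simeq S(F(f))$ of ethic coherence spaces for each morphism $f$. This already yields $\pi_nS(f)\cong\pi_nS(F(f))$ for all $n$, so $\mathrm{depth}_{\mathrm{Post}}(f)=\mathrm{depth}_{\mathrm{Post}}(F(f))$ directly from Definition~\ref{def:postnikov-tower}; combined with the identification $G\simeq G'$ of ethic automorphism groups with isomorphic lower central series $\gamma_n(G)\cong\gamma_n(G')$ supplied by the same lemma, and with Theorem~\ref{th:depth-stabilization} expressing $\mathrm{depth}_\eth$ through that central series, one also gets $\mathrm{depth}_\eth(\E,R)=\mathrm{depth}_\eth(\E',R')$ (equivalently this is a formal instance of Corollary~\ref{cor:Universality}).

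The substantive part concerns the Postnikov \emph{invariants} $k_n(S(f))$, which live not in homotopy groups but in the twisted cohomology $H^{n+1}\!\big(P_{\le n-1}S(f);\pi_nS(f)\big)$. Here I would argue that Postnikov truncation is functorial in $\Sp(\E)$, so the weak equivalence $S(f)\simeq S(F(f))$ lifts to a levelwise weak equivalence of Postnikov towers $P_{\le n}S(f)\simeq P_{\le n}S(F(f))$ carrying each Eilenberg–Mac\,Lane fiber $K(\pi_nS(f),n)$ to $K(\pi_nS(F(f)),n)$; under the induced isomorphism on twisted cohomology the classifying classes of the successive principal fibrations then correspond, which is exactly the assertion $k_n(S(f))\leftrightarrow k_n(S(F(f)))$. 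I would additionally verify that this identification is compatible with the comparison morphisms $\Phi_n$ of Theorem~\ref{th:comparison-central}, so that the central-series presentation of $\mathrm{depth}_\eth$ transports coherently alongside the Postnikov data.

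The main obstacle I anticipate is precisely this last step: making rigorous that $F$ transports the Postnikov \emph{system}, not merely the underlying spaces, so that the $k$–invariants genuinely match rather than just their ambient cohomology groups being abstractly isomorphic. The resolution is that $F$ is an equivalence of stable $\infty$–categories — not merely an object-wise weak equivalence — hence preserves limits, colimits, truncation functors, and Eilenberg–Mac\,Lane objects, and therefore commutes with the whole Whitehead-tower construction in $\Sp(\E)$ up to coherent natural equivalence; granting this, naturality of the $k_n$ and of the $\Phi_n$ is a formal consequence of functoriality of truncation, and the proof concludes.
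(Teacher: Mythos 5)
Your proposal follows essentially the same route as the paper's proof: both reduce the claim to the fact that a derived Morita equivalence is an exact equivalence of stable $\infty$--categories, hence preserves mapping spaces, their truncations, and therefore the Postnikov layers and invariants. Your version is simply more explicit than the paper's (which is a two-line appeal to \cite{LurieHA}), in particular in spelling out that the $k$--invariants transport as classes via functoriality of the truncation functors and in routing $\mathrm{depth}_\eth$ through Lemma~\ref{lem:morita-hcc} and the central series; these are worthwhile elaborations but not a different argument.
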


\begin{proof}
Under such an equivalence, mapping spaces and their truncations correspond
\cite{LurieHA},
and the induced equivalence of $\Sp(\E)$ and $\Sp(\E')$
preserves all homotopy groups and hence the Postnikov layers and their invariants.
\end{proof}

\begin{corollary}[Completeness of the homotopical heart]\label{cor:homotopical-core}
The triple of structures
\[
\{\Ext^n_\E\},\qquad\{\pi_nS(f)\},\qquad\{\gamma_n(G)\}
\]
forms a closed hierarchy of abelian, homotopical, and group–theoretic invariants.
They are linked by the canonical morphisms
\[
\pi_nS(f)\longrightarrow\Ext^n_\E(A,R),\qquad
\pi_{n+1}S(f)\longrightarrow\gamma_n(G)/\gamma_{n+1}(G),
\]
and stabilized precisely when $f$ lies in the ethic heart~$\heartsuit_\eth$.
The Postnikov–Whitehead tower thus completes the algebraic and homotopical architecture of ethic duality.
\end{corollary}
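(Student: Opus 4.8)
The plan is to assemble the corollary directly from the structural results already in place, since each of its three assertions — the existence of the two comparison families, their mutual compatibility, and the stabilization characterization — is a repackaging of an earlier statement. First I would invoke the standing hypotheses ($\E$ with enough injectives, $\idim_\E R<\infty$) to record, via the discussion opening Section~\ref{sec:step9} and Proposition~\ref{prop:ESS}, that $\mathbf D=\RHom_\E(-,R)$ has bounded amplitude; consequently all three towers $\{\Ext^n_\E(A,R)\}$, $\{\pi_nS(f)\}$, and $\{\gamma_n(G)\}$ are concentrated in a bounded range of degrees and are genuine graded objects rather than formal pro-systems. This is what makes the word ``hierarchy'' well posed.

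Next I would produce the two comparison morphisms. The map $\pi_nS(f)\to\Ext^n_\E(A,R)$ is exactly Lemma~\ref{lem:abelian-shadow}: it comes from the fiber sequence $S(f)\to\Map_\E(A,\mathbf D^2B)\to\Map_\E(A,\mathbf D^2A)$ together with $\pi_i\Map_\E(A,B)\cong\Ext^i_\E(A,B)$, and by construction it is compatible with the long exact $\Ext$--sequences, hence with the obstruction classes $\ob_i$ of Definition~\ref{def:stage}. The map $\pi_{n+1}S(f)\to\gamma_n(G)/\gamma_{n+1}(G)$ is the morphism $\Phi_n$ of Theorem~\ref{thm:hcc}, sharpened in Theorem~\ref{th:comparison-central} to be compatible with the Postnikov differentials $k_n(S(f))$; it is obtained by passing from the crossed-module action $\pi_1G\curvearrowright\pi_{n+1}S(f)$ to the associated graded of the lower central series in the manner of \cite{MikhailovSingh2011}. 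Naturality in $f$ of both families is inherited from functoriality of $\Map_\E(-,-)$, of $\RHom$, and of Postnikov truncation in $\Sp(\E)$ \cite{LurieHA}.

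Then I would settle the stabilization clause. If $f\in\heartsuit_\eth$, Proposition~\ref{prop:heart-abelian} gives that $\eta$ is an isomorphism on the objects in play and $H^{>1}\mathbf D=0$; by Lemma~\ref{lem:abelian-shadow} and Theorem~\ref{th:truncation-stabilization} every $\pi_iS(f)$ then vanishes beyond degree one and every Postnikov invariant $k_i$ is zero, so all three towers stabilize. Conversely, if the Postnikov tower of $S(f)$ stabilizes, Theorem~\ref{th:truncation-stabilization} forces $H^{>1}\mathbf D(A^\bullet)=0$, and Theorem~\ref{thm:heart-eth} together with Definition~\ref{def:EthicT} places $A^\bullet$ — hence $f$, which is ethicly exact to and from it — in $\heartsuit_\eth$. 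Finally, ``closedness'' of the hierarchy is Morita invariance: Theorem~\ref{th:morita-postnikov} and Lemma~\ref{lem:morita-hcc} show the whole package depends only on the derived dual pair $(D^+(\E),\RHom_\E(-,R))$ up to equivalence, so no datum outside the three listed towers and their comparison morphisms is required.

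The one genuinely load-bearing point — and the step I expect to need the most care — is verifying that the two comparison families are \emph{mutually compatible}, i.e.\ that the triangle relating $\pi_{n+1}S(f)\to\Ext^{n+1}_\E(A,R)$, the Postnikov differential $k_n(S(f))$, and $\Phi_n:\pi_{n+1}S(f)\to\mathrm{gr}_n(G)$ commutes, so that the homological and the group-theoretic projections of the Postnikov filtration agree where they overlap. Here one must use that $G$ acts on mapping spaces by conjugation by units commuting with $\eta$, so the induced action on the $\Ext$--groups is by inner automorphisms and therefore trivial on the abelianization, whereas the Samelson/commutator structure is exactly what $\Phi_n$ records; combining Theorem~\ref{thm:hcc}, Theorem~\ref{th:comparison-central}, and Lemma~\ref{lem:abelian-shadow} then yields the commuting diagram, after which the remaining assertions of the corollary are formal.
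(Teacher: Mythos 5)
Your assembly is correct and matches the paper's intent: the corollary carries no explicit proof in the text and is meant as a formal summary of Lemma~\ref{lem:abelian-shadow}, Theorems~\ref{thm:hcc}, \ref{th:comparison-central}, \ref{th:truncation-stabilization}, and \ref{th:morita-postnikov}, which is precisely how you derive it. Your added remark on verifying the commutativity of the triangle linking $\pi_{n+1}S(f)\to\Ext^{n+1}_\E(A,R)$, $k_n(S(f))$, and $\Phi_n$ is a sound reading of the compatibility claims already asserted in those results, not a new gap.
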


\subsection{Local–Global Ethicity}

Let \(\E\) be an abelian category with enough injective objects, and let \(R\in \E\) be a fixed resource object. Denote by
\[
\mathbb D=\RHom_\E(-,R):D^{+}(\E)^{\op}\to D^{+}(\E)
\]
the derived duality functor. We assume that \(R\) has finite injective dimension
\(\idim_\E R<\infty\), so that \(\mathbb D\) has bounded cohomological amplitude and induces a right–derived contravariant cohomological \(\delta\)–functor on~\(\E\).

Let \(G=\Aut(\id_\E)\) be the automorphism group of the identity functor of \(\E\); it is endowed with the lower central series
\(\gamma_1(G)=G,\ \gamma_{i+1}(G)=[G,\gamma_i(G)]\).  
Fix also a decreasing sequence of ideals of \(R\)
\[
R=I^0\supset I^1\supset I^2\supset\dots,
\]
each \(I^n\) a subobject of \(R\), satisfying \(I^{n+1}I^m\subset I^{n+m}\) and stable under the action of~\(G\). For every \(n\), denote by \(\E/I^n\) the abelian quotient category in which all morphisms factoring through objects annihilated by~\(I^n\) are sent to zero. Assume that these quotient functors are exact.  

For a morphism \(f:A\to B\) in \(\E\), write
\[
f_n:A/I^nA\longrightarrow B/I^nB
\]
for the induced morphism in~\(\E/I^n\).
We shall relate the behaviour of the derived duals of \(f_n\) along the ideal filtration with the behaviour of the duals of the morphisms induced by the central series \(\gamma_\bullet(G)\).

\begin{definition}
For each \(n\ge0\) and \(p,q\ge0\), define
\[
E^{p,q}_2(I)=\Ext^p_\E(H_q(A/I^\bullet),R),\qquad
F^{p,q}_2(\gamma)=\Ext^p_\E(H_q(A/\gamma_\bullet),R),
\]
the second pages of the spectral sequences associated respectively with the ideal filtration \(I^\bullet\) and the central filtration \(\gamma_\bullet\) on \(G\). Their existence follows from the standard Grothendieck spectral sequence for the composition of left–exact functors on bounded below complexes in an abelian category with enough injectives.
\end{definition}

\begin{lemma}
For each \(n\) there is a canonical morphism of spectral sequences
\[
\Phi_n:E^{p,q}_r(I)\longrightarrow F^{p,q}_r(\gamma)
\]
natural in \(A,B\) and in morphisms of pairs \((I^\bullet,\gamma_\bullet)\), and compatible with the differentials.  
\end{lemma}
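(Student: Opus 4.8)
The plan is to deduce the assertion from the functoriality of the Grothendieck (filtered) spectral sequence: it suffices to produce a morphism, natural in all the relevant data, between the two filtered complexes out of which $E^{p,q}_\bullet(I)$ and $F^{p,q}_\bullet(\gamma)$ are manufactured, and then apply the contravariant derived functor $\mathbb D=\RHom_\E(-,R)$ and pass to spectral sequences, which carries along all compatibilities automatically \cite{Weibel1994}. Thus let $C_I(A)$ denote the complex built from the $I$–adic filtration $\{I^nA\}_n$ of $A$ and $C_\gamma(A)$ the complex built from the central filtration $\{\gamma_nA\}_n$, where $\gamma_nA\subseteq A$ is the subobject generated by the images of $\theta-\id$ for $\theta\in\gamma_n(G)$. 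Under the standing hypotheses — $\E$ has enough injectives, $\idim_\E R<\infty$, and the quotient functors $\E\to\E/I^n$ are exact — the Grothendieck spectral sequence for the composite $(-)\mapsto H_q(-)\mapsto\Ext^p_\E(-,R)$, applied to $C_I(A)$ and to $C_\gamma(A)$, returns precisely the two spectral sequences of the Definition, with second pages $\Ext^p_\E(H_q(C_I(A)),R)=E^{p,q}_2(I)$ and $\Ext^p_\E(H_q(C_\gamma(A)),R)=F^{p,q}_2(\gamma)$.

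The essential point is the comparison of the two filtrations: I claim $\gamma_nA\subseteq I^nA$ for all $n$, canonically. This is proved by induction on $n$ once one reads ``$\theta-\id$ has $I$–degree $\ge n$'' as ``$\theta-\id$ factors through $\ker(A\to A/I^nA)$'', equivalently vanishes in $\E/I^n$. Since each $I^n$ is stable under $G=\Aut(\id_\E)$, the $G$–action on $R$ — and hence, by functoriality, on every object and on each quotient $\E/I^n$ — preserves the ideal filtration, which gives the base case: every $\theta\in\gamma_1(G)=G$ has $I$–degree $\ge1$. For the inductive step, the commutator identity for invertible $\theta,\psi$
\[
\bigl([\theta,\psi]-\id\bigr)\,\psi\theta=(\theta-\id)(\psi-\id)-(\psi-\id)(\theta-\id)
\]
shows that if $\theta\in\gamma_n(G)$ has $I$–degree $\ge n$ and $\psi\in G$ has $I$–degree $\ge1$, then both composites on the right factor through the product $I^n\cdot I^1\subseteq I^{n+1}$ (by multiplicativity of the ideal filtration), so $[\theta,\psi]-\id$ has $I$–degree $\ge n+1$; since the automorphisms of $I$–degree $\ge n+1$ form a subgroup of $G$ containing all such commutators, it contains $\gamma_{n+1}(G)$, whence $\gamma_{n+1}A\subseteq I^{n+1}A$.

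The inclusions $\gamma_nA\subseteq I^nA$ exhibit the central filtration as a subfiltration of the $I$–adic one, so $\id_A$ is a morphism of filtered objects $(A,\gamma_\bullet)\to(A,I^\bullet)$, hence a chain map $\rho_A:C_\gamma(A)\to C_I(A)$. Because $\mathbb D=\RHom_\E(-,R)$ is contravariant, $\mathbb D(\rho_A)$ is a morphism of filtered complexes in the opposite direction, and the induced morphism of the associated spectral sequences runs $E^{p,q}_r(I)\to F^{p,q}_r(\gamma)$ on every page $r$: on $E_2$ it is $\Ext^p_\E(H_q(\rho_A),R)$, and on the abutments it converges to $H^{p+q}\mathbb D(\rho_A)$. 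This is the claimed $\Phi_n$, the construction being manifestly uniform in the truncation level $n$. Compatibility with every differential $d_r$ is exactly the functoriality of the filtered spectral sequence \cite{Weibel1994}. Naturality in $A$ (hence, since everything in sight is functorial, in any morphism $f:A\to B$) is clear, as the passages $A\mapsto\gamma_\bullet A$, $A\mapsto I^\bullet A$, $A\mapsto\rho_A$, $\mathbb D$, and the spectral-sequence construction are each functorial; naturality in morphisms of pairs $(I^\bullet,\gamma_\bullet)$ holds because such a morphism is by construction compatible with the inclusions $\gamma_nA\subseteq I^nA$ that are the sole input of $\rho_A$.

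The \textbf{main obstacle} is precisely the filtration comparison $\gamma_nA\subseteq I^nA$; everything else is the formal functoriality of Grothendieck spectral sequences. Two points there require care in the purely categorical setting: first, pinning down the notion of ``$I$–degree'' of a natural transformation $\theta-\id$ so that the commutator estimate is meaningful and multiplicative (the reading via vanishing in $\E/I^n$ is the one that makes the induction close); and second, the base case — that $G$ acts trivially modulo $I^1$ on an \emph{arbitrary} object $A$, not merely on $R$ — which must be extracted from the $G$–stability of $I^\bullet$ together with the compatibility of the $G$–action with the quotient functors $\E\to\E/I^n$. Once these are settled, propagating the multiplicativity of the ideal filtration through iterated commutators is routine.
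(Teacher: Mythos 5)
Your overall strategy—build a morphism of filtered objects, apply the contravariant $\RHom_\E(-,R)$, and invoke functoriality of the Grothendieck spectral sequence—is sound in outline, and it differs from the paper's route: the paper compares the two spectral sequences through the quotient \emph{categories}, asserting that $G$-stability of each $I^n$ yields a functor $\E/I^n\to\E/\gamma_n(G)$ and hence a morphism of exact functors, whereas you compare object-level filtrations via an inclusion $\gamma_nA\subseteq I^nA$. The problem is that the inclusion you make the crux of the argument does not follow from the hypotheses, and its base case is false in general. The paper assumes only that each $I^n$ is \emph{stable} under the action of $G=\Aut(\id_\E)$; stability means each $\theta$ preserves $I^nA$ as a subobject and therefore induces a well-defined automorphism of $A/I^nA$ — it does not force that induced automorphism to be the identity. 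Your base case, however, asserts exactly that: every $\theta\in\gamma_1(G)=G$ has $I$-degree $\ge1$, i.e.\ $\theta-\id$ vanishes in $\E/I^1$. A minimal counterexample: take $\E=\Mod_\ZZ$, $R=\ZZ$, $I^n=3^n\ZZ$, and $\theta=-\id\in\Aut(\id_\E)$. Every $I^n$ is $G$-stable, yet $\theta-\id=-2\cdot\id$ is invertible modulo $3$, so $\gamma_1A\not\subseteq I^1A$. Without the base case the commutator induction never starts, and with it gone the filtered morphism $\rho_A$, hence $\Phi_n$, is not constructed.

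A secondary point needing attention even if the base case were granted: your inductive step applies the multiplicativity $I^{n}\cdot I^{1}\subseteq I^{n+1}$ (stated in the paper for ideals of $R$) to composites of natural transformations of arbitrary objects $A$; transporting that product structure to "$I$-degrees of endomorphisms of $A$" requires an argument (some module structure of $A$ over the filtered $R$, or a multiplicative compatibility of the quotient functors $\E\to\E/I^n$) that you flag but do not supply. The paper sidesteps both issues by working directly with the $G$-stability of the ideals and the exactness of the quotient functors, at the cost of leaving the existence of the comparison functor $\E/I^n\to\E/\gamma_n(G)$ essentially asserted rather than proved. To repair your version you would either need to add the hypothesis that $G$ acts trivially on all quotients $A/I^1A$ (i.e.\ that the $G$-action is unipotent with respect to $I^\bullet$), or abandon the filtration-inclusion route and argue, as the paper does, at the level of the quotient functors themselves.
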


\begin{proof}
Each ideal \(I^n\) is \(G\)-stable; hence the quotient morphism
\(\E/I^n\to \E/\gamma_n(G)\) induces a morphism of exact functors on~\(\E\),
and thus by functoriality of derived functors a morphism of their Grothendieck spectral sequences. The differentials commute because the connecting morphisms in the long exact \(\Ext\) sequences are natural. 
\end{proof}

\begin{lemma}
If for some \(k\ge0\) the morphisms \(E^{p,q}_2(I)\to F^{p,q}_2(\gamma)\) are isomorphisms for all \(p+q\le k\), then for every morphism \(f:A\to B\) in \(\E\) the following conditions are equivalent:
\begin{enumerate}[label=(\alph*)]
\item \(H^i\mathbf D(f)=0\) for all \(i\le k\);
\item \(H^i\mathbf D(f_n)=0\) for all \(n\) and all \(i\le k\);
\item The pages \(E^{p,q}_r(I)\) and \(F^{p,q}_r(\gamma)\) collapse at \(r=2\) for all \(p+q\le k\).
\end{enumerate}
\end{lemma}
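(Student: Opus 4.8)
The plan is to derive all three equivalences from a single input: the comparison (mapping) theorem for spectral sequences, applied to the morphism $\Phi$ of the preceding lemma. As a first step I would record strong convergence of both spectral sequences. Since $E^{p,q}_r(I)$ and $F^{p,q}_r(\gamma)$ are first--quadrant by construction and, because $\idim_\E R<\infty$, are concentrated in the strip $0\le p\le\idim_\E R$, each term $E^{p,q}_\infty$ (resp.\ $F^{p,q}_\infty$) is reached at a finite page and is the associated graded piece of a finite exhaustive filtration on $H^{p+q}\mathbf D$ of the filtered object attached to $A$; functoriality gives the same statement for $B$. The preceding lemma already supplies the morphism $\Phi$ of these spectral sequences, compatible with all differentials and with the filtrations on the abutments.

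The second step is the mapping lemma \cite{Weibel1994}. Because each $d_r$ raises total degree by exactly one, an induction on $r$ propagates the hypothesis that $\Phi_2$ is an isomorphism in total degrees $p+q\le k$ to the statement that $\Phi_r$ is an isomorphism in total degrees $p+q\le k$ for every $r\ge2$; in the interior $p+q<k$ the kernel--cokernel argument closes at once, and only the top degree $p+q=k$ requires the mild extra control on the $d_2$--target one degree higher --- which is vacuous in the situation of (c), since there are then no differentials to compare. Passing to $r=\infty$ and invoking strong convergence, the induced map on abutments $H^i\mathbf D$ is an isomorphism for all $i\le k$, and the $I$--sequence degenerates at $E_2$ through total degree $k$ if and only if the $\gamma$--sequence does.

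With this in hand the equivalences are short. For (a)$\Leftrightarrow$(b): $f$ is filtration--compatible, hence induces a morphism of $E_\bullet(I)$ whose value at stage $n$ is exactly $f_n$, using that the quotient functors $\pi_n:\E\to\E/I^n$ are exact so that $H_q$ and the filtration commute with $\pi_n$. Completeness of the $I$--filtration then identifies $H^i\mathbf D(f)$, through degree $k$, with the inverse limit of the $H^i\mathbf D(f_n)$; conversely the long exact $\Ext$--sequences attached to $0\to I^nA/I^{n+1}A\to A/I^{n+1}A\to A/I^nA\to0$, together with naturality of the connecting maps, propagate vanishing back down the tower by induction on $n$. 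For (a),(b)$\Leftrightarrow$(c): degeneration at $E_2$ in the range $p+q\le k$ identifies $H^i\mathbf D$ (for $i\le k$) with the direct sum of its graded pieces $\Ext^p_\E(H_q(A/I^\bullet),R)$, which are precisely the finite--stage data carried by the $f_n$; hence $H^i\mathbf D(f)$ vanishes there exactly when it vanishes on each graded piece, i.e.\ exactly when (b) holds --- and, by the mapping lemma, degeneration of the $I$--sequence in this range is equivalent to that of the $\gamma$--sequence, which is the remaining content of (c). Chaining these gives (a)$\Leftrightarrow$(b)$\Leftrightarrow$(c).

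The step I expect to be the real obstacle is the convergence bookkeeping around the filtered objects: one must verify that the $I$--adic and $\gamma$--adic filtrations are complete and Hausdorff on the objects in play, so that no higher inverse--limit corrections contribute spurious classes in total degrees $\le k$, and that exactness of the $\pi_n$ genuinely makes the finite--stage morphisms $f_n$ compute the graded pieces of the global $f$ rather than merely sub- or quotient pieces thereof. Once this is secured, the comparison theorem carries everything else, and the degree--$k$ truncation survives each step precisely because the spectral--sequence differentials shift total degree by exactly one.
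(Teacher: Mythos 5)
Your proof follows essentially the same route as the paper's: the comparison morphism $\Phi$ from the preceding lemma, the mapping theorem for spectral sequences, and the identification of $E_2$--collapse in total degree $\le k$ with the vanishing of $H^{\le k}\mathbf D$; your version merely supplies the convergence bookkeeping and the tower argument for (a)$\Leftrightarrow$(b) that the paper leaves implicit. Note that, like the paper, you obtain (c)$\Rightarrow$(a) via the graded--piece identification but give no independent argument for (a)$\Rightarrow$(c) --- that vanishing of the induced maps forces the differentials $d_r^{p,q}$ to vanish is asserted rather than proved in both texts.
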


\begin{proof}
If the spectral sequences coincide in the specified range, the vanishing of the entries in one implies the vanishing in the other. The equality \(H^i\mathbf D(f)=0\) for \(i\le k\) is equivalent to the collapse of the spectral sequence at \(E_2\) in total degree \(\le k\), because all higher differentials in that range vanish. Conversely, if \(E^{p,q}_r(I)\) and \(F^{p,q}_r(\gamma)\) coincide and collapse at \(E_2\), their limits compute the same graded pieces of \(H^{p+q}\mathbf D(f)\); hence these groups vanish simultaneously.  
\end{proof}

\begin{lemma}
For every \(n\), the connecting morphism in the long exact sequence associated with the short exact sequence
\[
0\longrightarrow I^{n+1}/I^{n+2}\longrightarrow R/I^{n+2}\longrightarrow R/I^{n+1}\longrightarrow0
\]
induces a natural differential
\[
d_I^{p,q}:E^{p,q}_r(I)\longrightarrow E^{p+r,q-r+1}_r(I),
\]
and similarly the commutator quotient sequence in the central series
\[
0\longrightarrow \gamma_{n+1}/\gamma_{n+2}\longrightarrow G/\gamma_{n+2}\longrightarrow G/\gamma_{n+1}\longrightarrow0
\]
induces
\[
d_\gamma^{p,q}:F^{p,q}_r(\gamma)\longrightarrow F^{p+r,q-r+1}_r(\gamma).
\]
Both differentials are of bidegree \((r,-r+1)\) and satisfy \(d_I^2=d_\gamma^2=0\).
\end{lemma}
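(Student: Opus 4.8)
The plan is to recognise each of the two situations as an exact couple built from long exact $\Ext$--sequences, and then to let the classical derived--couple machinery \cite{Weibel1994,GelfandManin2003} deliver the spectral sequence, its differentials of bidegree $(r,-r+1)$, and the identity $d_r^2=0$ automatically; only the degree bookkeeping and the naturality assertions will then require separate attention, and both follow from the functoriality of $\RHom_\E(-,R)$.

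First I would treat the ideal filtration. Applying the right--derived functors of $\Hom_\E(-,-)$ to the short exact sequences $0\to I^{n+1}/I^{n+2}\to R/I^{n+2}\to R/I^{n+1}\to0$ produces, for each fixed homological argument $H_q(A/I^\bullet)$, a long exact sequence of abelian groups whose connecting morphism $\delta$ raises the cohomological index by one. Bundling these over all $n$ gives a bigraded exact couple: a bigraded object $D$ collecting the $\Ext$--groups attached to the truncations $R/I^n$, a bigraded object $E$ whose entries are built from $\Ext^p_\E(H_q(A/I^\bullet),\operatorname{gr}^n R)$ with $\operatorname{gr}^n R=I^n/I^{n+1}$, and maps $i:D\to D$ induced by the tower projections $R/I^{n+2}\to R/I^{n+1}$, $j:D\to E$ the passage to a graded piece, and $k:E\to D$ the connecting morphism $\delta$. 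Exactness of $D\xrightarrow{i}D\xrightarrow{j}E\xrightarrow{k}D$ is precisely the exactness of these long $\Ext$--sequences. Setting $d:=j\circ k$ one has $d^2=j\circ(k\circ j)\circ k=0$, since $k\circ j=0$ in an exact sequence; the derived couple $(D',E')$ with $E'=\ker d/\im d$ is again exact, and iterating yields the pages $E_r$ (agreeing at $r=2$ with the description in the Definition above) equipped with $d_r=j_r\circ k_r$ and $d_r^2=0$. Tracking how the three structure maps shift the internal degree $p$ and the filtration degree $q$ through $r-1$ passages to the derived couple shows that $d_r$ sends $E_r^{p,q}$ to $E_r^{p+r,q-r+1}$, i.e.\ has the asserted bidegree $(r,-r+1)$; this is $d_I^{p,q}$. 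Naturality in $A$, in $B$, and in morphisms of filtered pairs is inherited from the functoriality of $\RHom_\E(-,R)$ and from the naturality of connecting morphisms already exploited in the construction of the comparison maps $\Phi_n$.

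The central--series half is formally identical once the group--theoretic input is phrased correctly. Each quotient $\gamma_{n+1}(G)/\gamma_{n+2}(G)$ is central in $G/\gamma_{n+2}(G)$, hence abelian, so $0\to\gamma_{n+1}/\gamma_{n+2}\to G/\gamma_{n+2}\to G/\gamma_{n+1}\to0$ is a central extension; passing to the exact quotient categories $\E/\gamma_n(G)$, whose exactness is part of the standing hypotheses, turns it into an honest short exact sequence of the additive data on which $\mathbf D$ operates. One then has the long exact sequences of $\Ext^\bullet_\E(H_q(A/\gamma_\bullet),R)$, and the same exact--couple construction assembles their connecting morphisms into a differential $d_\gamma^{p,q}:F_r^{p,q}\to F_r^{p+r,q-r+1}$ with $d_\gamma^2=0$. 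Compatibility of the two spectral sequences with the morphism $\Phi_n$ of the preceding lemma is immediate from the naturality of the couple in its input exact sequences.

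I expect the principal obstacle to be exactly this last translation. A lower--central--series extension is a priori an extension of groups, not a short exact sequence of abelian objects, so one must argue that the centrality of the successive quotients together with the exactness of the quotient functors $\E\to\E/\gamma_n(G)$ makes the connecting--morphism formalism legitimately applicable --- equivalently, that the associated graded of the central filtration manipulates like the associated graded of an ordinary decreasing filtration of $R$. Once that identification is secured, $d_r^2=0$, the bidegree count, and the naturality statements are all formal and require nothing beyond the exact--couple lemma.
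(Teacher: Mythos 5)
Your proposal is correct and follows essentially the same route as the paper: the paper's own proof is a one-line appeal to the Grothendieck/filtered-complex construction, in which the connecting morphisms of the subquotient short exact sequences supply the differentials and $d^2=0$ follows from exactness of the long sequences, which is exactly the exact-couple argument you spell out in detail (with the correct bidegree bookkeeping). The one point worth noting is that you are more careful than the paper: you explicitly flag that the lower-central-series sequence is a central extension of groups rather than a short exact sequence of abelian objects, and that one must invoke centrality of $\gamma_{n+1}/\gamma_{n+2}$ together with the assumed exactness of the quotient functors $\E\to\E/\gamma_n(G)$ before the connecting-morphism formalism applies; the paper's proof silently elides this translation, so your identification of it as the real content of the central-series half is a genuine improvement rather than a gap in your argument.
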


\begin{proof}
This follows from the general construction of the Grothendieck spectral sequence for a filtered complex: the boundary maps of successive short exact sequences of subquotients yield the differentials, and \(d^2=0\) by the exactness of the long sequence of the composition of connecting morphisms.  
\end{proof}

\begin{lemma}
Assume that \(E^{p,q}_2(I)\) and \(F^{p,q}_2(\gamma)\) coincide for all \(p+q\le k\). Then the vanishing of all \(d_I^{p,q}\) and \(d_\gamma^{p,q}\) in that range is equivalent to the collapse of both spectral sequences at \(E_2\).
\end{lemma}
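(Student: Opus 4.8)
The plan is to observe that, once the coincidence $E^{p,q}_2(I)\cong F^{p,q}_2(\gamma)$ is granted for $p+q\le k$, each of the two spectral sequences separately satisfies the standard dichotomy ``all higher differentials vanish on a range $\iff$ the pages are constant on that range,'' so the conjunction of the two vanishing statements is equivalent to the conjunction of the two collapses. Concretely I would proceed in three steps: first note that, together with the compatibility of the comparison morphism $\Phi_n$ with differentials from the previous lemma, the hypothesis means that under the identification $E_2(I)=F_2(\gamma)$ on $\{p+q\le k\}$ the differentials $d_I^{p,q}$ and $d_\gamma^{p,q}$ correspond, so it suffices to treat one spectral sequence; second, prove the implication ``differentials vanish $\Rightarrow$ collapse''; third, prove the converse. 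Both proofs are internal to a single spectral sequence and use only the recursive definition $E_{r+1}^{p,q}=\ker d_r^{p,q}/\operatorname{im} d_r^{p-r,q+r-1}$.

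For the forward implication, suppose $d_r^{p,q}=0$ for every $r\ge 2$ and every $(p,q)$ with $p+q\le k$. Fix such a $(p,q)$. The incoming differential $d_r^{p-r,q+r-1}$ has source in total degree $p+q-1\le k$, hence is also in the range and vanishes; the outgoing $d_r^{p,q}$ vanishes by assumption. Since $E_{r+1}^{p,q}=\ker d_r^{p,q}/\operatorname{im} d_r^{p-r,q+r-1}$, both vanishings give $E_{r+1}^{p,q}=E_r^{p,q}$, and by induction on $r$ we obtain $E_r^{p,q}=E_2^{p,q}$ for all $r\ge 2$, i.e.\ collapse at $E_2$ on the range. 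The key structural point is that $\{p+q\le k\}$ is stable under passage to the source of an incoming differential, which is exactly what makes the induction close.

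For the converse, suppose $E_r^{p,q}=E_2^{p,q}$ for all $r\ge 2$ and all $(p,q)$ with $p+q\le k$. Fix $r\ge 2$ and $(p,q)$ in the range. Then $E_{r+1}^{p,q}$ and $E_r^{p,q}$ are both identified with $E_2^{p,q}$, hence isomorphic; but $E_{r+1}^{p,q}$ is by construction a subquotient of $E_r^{p,q}$. Since $E_2^{p,q}=\Ext^p_\E(H_q(-),R)$ is of finite length in the ambient setting, any subquotient isomorphic to the whole must equal the whole, so $\ker d_r^{p,q}=E_r^{p,q}$ and $\operatorname{im} d_r^{p-r,q+r-1}=0$; in particular $d_r^{p,q}=0$. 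Ranging over all $r\ge 2$ and all $(p,q)$ with $p+q\le k$ kills every differential in the range, and transporting the conclusion along $\Phi$ gives the statement for both $E_\bullet(I)$ and $F_\bullet(\gamma)$ at once.

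The only genuine obstacle is the finiteness input in the converse: without it an object can be isomorphic to a proper subquotient of itself, and equality of consecutive pages need not force the intervening differential to vanish. I would therefore invoke the standing Hom-finiteness and finite-injective-dimension hypotheses that make every $E_2^{p,q}$ of finite length in the situations where this lemma is applied, or, if one prefers to avoid any finiteness, adopt the convention that ``collapse at $E_2$'' denotes ``$d_r=0$ for all $r\ge 2$'' on the stated range, in which case the equivalence is immediate from the construction of the pages. A minor bookkeeping point, handled the same way, is that an outgoing differential from total degree exactly $k$ targets total degree $k+1$, outside the range of the coincidence hypothesis; this causes no trouble, because the collapse hypothesis at $(p,q)$ already pins $\ker d_r^{p,q}$ down internally and the comparison $\Phi$ is applied only to sources of the differentials in question.
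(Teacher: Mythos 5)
Your proof is correct and rests on the same page-recursion argument as the paper's. The difference is one of care rather than route: the paper's own proof only spells out the forward direction (``if all differentials in total degree $\le k$ vanish, the successive pages stabilize'') and then asserts the equivalence, whereas you supply the converse explicitly and correctly isolate the one place where it is not formal — namely that equality of consecutive pages forces the intervening differentials to vanish only if each $E_2^{p,q}$ has finite length (or if ``collapse'' is read via the canonical subquotient identification rather than an abstract isomorphism). Your observation that the region $\{p+q\le k\}$ is stable under passing to the source of an incoming differential, which is what closes the induction in the forward direction, is also the right structural point; the paper leaves both of these issues implicit.
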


\begin{proof}
If all differentials in total degree \(\le k\) vanish, all successive pages stabilize, and the spectral sequence collapses at \(E_2\) in that range. The equivalence follows because both systems of differentials compute the same graded objects by assumption.  
\end{proof}

\begin{theorem}[Double Central Collapse and Local–Global Ethicity]
Let \(\E\) be an abelian category with enough injectives, \(R\in\E\) a resource object of finite injective dimension, \(\mathbb D=\RHom_\E(-,R)\) its derived duality, \(G=\Aut(\id_\E)\) with lower central series \(\gamma_\bullet(G)\), and \(I^\bullet\) an exact ideal filtration of \(R\) stable under~\(G\).
For a morphism \(f:A\to B\) the following conditions are equivalent for any fixed \(k\ge0\):
\begin{enumerate}[label=(\roman*)]
\item \(f\) is globally \(k\)-ethic, i.e. \(H^i\mathbf D(f)=0\) for all \(i\le k\);
\item For all \(n\ge0\) the localized morphisms \(f_n:A/I^nA\to B/I^nB\) are \(k\)-ethic and the induced maps on Ext–groups along the central tower satisfy
\(\Ext^i_\E(A/\gamma_n(G),R)=0\) for \(i\le k\);
\item The spectral sequences \(E^{p,q}_r(I)\) and \(F^{p,q}_r(\gamma)\)
collapse at \(E_2\) in total degree \(p+q\le k\), and their \(E_\infty\)–limits coincide.
\end{enumerate}
\end{theorem}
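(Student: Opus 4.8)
The plan is to obtain the theorem by assembling the four lemmas of this subsection; the only non-formal ingredient is that the comparison map $\Phi_n$ is an isomorphism on the $E_2$-page in total degree $p+q\le k$. Once that is secured, each of the three conditions becomes a restatement of a single fact — the $E_2$-collapse of a Grothendieck spectral sequence in low total degree — phrased respectively on the global object $f$, on the ideal-localized tower $(f_n)_n$, and directly on the spectral sequences themselves.

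First I would set up the two spectral sequences: applying Proposition~\ref{prop:ESS} to the filtered objects $A/I^\bullet$ and $A/\gamma_\bullet$ (which lie in $\Dplus(\E)$ since the quotient functors are exact and $\idim_\E R<\infty$) gives the pages $E_2^{p,q}(I)=\Ext^p_\E(H_q(A/I^\bullet),R)$ and $F_2^{p,q}(\gamma)=\Ext^p_\E(H_q(A/\gamma_\bullet),R)$, each converging to $H^{p+q}\mathbf D$ of the associated graded. Finiteness of $\idim_\E R$ bounds the amplitude of $\mathbf D$, so ``collapse at $E_2$ in total degree $\le k$'' is equivalent to the vanishing of the differentials $d_I^{p,q}$ and $d_\gamma^{p,q}$ in that range, which is the content of the differential-comparison lemma above. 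Next I would invoke the lemma producing $\Phi_n$: $G$-stability of each $I^n$ makes $\Phi_n$ compatible with connecting morphisms, hence a morphism of spectral sequences, and under the standing hypotheses it is moreover an isomorphism on $E_2$ through total degree $k$ (justified below). With that, the equivalence lemma applies verbatim and yields (i) $\Leftrightarrow$ [both $E_2$-pages collapse in total degree $\le k$] $\Leftrightarrow$ [$H^i\mathbf D(f_n)=0$ for all $n$ and $i\le k$]. This gives (i) $\Leftrightarrow$ (iii) — adding that an $E_2$-isomorphism together with collapse forces the two $E_\infty$-pages to coincide, both computing the graded pieces of $H^{p+q}\mathbf D(f)$ for $p+q\le k$ — and the $f_n$-half of (ii). For the remaining half of (ii), the vanishing $\Ext^i_\E(A/\gamma_n(G),R)=0$ for $i\le k$ is exactly the vanishing of the relevant $F_2(\gamma)$-entries, which via the $\Phi_n$-isomorphism matches the vanishing of the corresponding $E_2(I)$-entries; since the $I$-filtration is exhaustive and $\mathbf D$ has bounded amplitude, these entries are (after the collapse already secured) the only contributions to $H^i\mathbf D(f)$ for $i\le k$, so their vanishing is equivalent to (i). Chaining the implications closes the triangle (i) $\Leftrightarrow$ (ii) $\Leftrightarrow$ (iii).

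The main obstacle is the isomorphism $E_2^{p,q}(I)\cong F_2^{p,q}(\gamma)$ through total degree $k$ — equivalently, the identification of the subquotient towers $H_q(A/I^\bullet)$ and $H_q(A/\gamma_\bullet)$ for $q\le k$, compatibly with $\Ext^p_\E(-,R)$. This is the one point where the filtration internal to the dualizing object $R$ must be genuinely reconciled with the central filtration of $\Aut(\id_\E)$. The route I would take uses Lemma~\ref{lem:blockscalar}: $\Aut(\id_\E)$ acts blockwise by scalars, so the associated graded of $\gamma_\bullet(G)$ is computed one Gabriel block at a time; $G$-stability of $I^\bullet$ then forces each $I^n/I^{n+1}$ to be compatible with that blockwise scalar action, and exactness of the quotient functors $\E\to\E/I^n$ and $\E\to\E/\gamma_n(G)$ identifies the two towers of subquotients of $A$ through degree $k$. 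Everything downstream — naturality of the connecting maps in the long exact $\Ext$-sequences, boundedness of $\mathbf D$, and convergence of the spectral sequences — is the same bookkeeping already used in the preceding lemmas, so I expect the block-scalar reconciliation to be the delicate step and the rest to be routine.
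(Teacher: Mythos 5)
Your overall architecture---assembling the subsection's four lemmas so that each of (i), (ii), (iii) becomes a restatement of $E_2$-collapse of the two Grothendieck spectral sequences in total degree $\le k$---is the same as the paper's, which proves the cycle (iii)$\Rightarrow$(i)$\Rightarrow$(ii)$\Rightarrow$(iii) by exactly this bookkeeping. You are also right to single out the identification $E_2^{p,q}(I)\cong F_2^{p,q}(\gamma)$ for $p+q\le k$ as the load-bearing step: the paper's equivalence lemma takes this coincidence as a \emph{hypothesis}, and the paper's proof of the theorem invokes it without ever discharging it (``their coincidence on $E_2$ implies equality of the limit pages'').

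The problem is that your proposed discharge of that step fails. Lemma~\ref{lem:blockscalar} is proved under much stronger hypotheses---$\E$ Hom-finite, $k$-linear, of finite length over an algebraically closed field with $\mathrm{char}\,k\neq 2$---none of which appear among the present theorem's assumptions (a general abelian category with enough injectives and $\idim_\E R<\infty$), so it cannot be imported. Worse, even where it does apply it works against you: it yields $\Aut(\id_\E)\cong(k^\times)^t$, which is \emph{abelian}, hence $\gamma_2(G)=[G,G]=1$ and the lower central series collapses after one step. The tower $A/\gamma_\bullet$ then has at most two distinct layers, while the $I$-adic tower $A/I^\bullet$ is in general infinite and nontrivial, so the two $E_2$-pages cannot coincide through arbitrary total degree. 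The reconciliation of the ideal filtration of $R$ with the central filtration of $G$ therefore remains genuinely open; it must either be added as an explicit hypothesis (as in the supporting lemma) or established by a mechanism that actually relates $I^n/I^{n+1}$ to $\gamma_n/\gamma_{n+1}$, which neither your argument nor the paper supplies. The remainder of your proposal (convergence, boundedness of amplitude, reading off (ii) from the $F_2(\gamma)$-entries) is routine and matches the paper.
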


\begin{proof}
(iii)\(\Rightarrow\)(i): if both spectral sequences collapse at \(E_2\),
then \(H^{p+q}\mathbf D(f)\) is computed directly from \(E^{p,q}_2(I)\) (or equivalently \(F^{p,q}_2(\gamma)\)), which vanish by assumption in the specified range, giving global \(k\)-ethicity.  

(i)\(\Rightarrow\)(ii): if \(H^i\mathbf D(f)=0\) for \(i\le k\), applying the exact quotient functor \(\E\to\E/I^n\) and the central functor \(\E\to\E/\gamma_n(G)\) preserves these vanishing conditions because both functors are exact and commute with \(\RHom_\E(-,R)\) by the stability of the filtrations under~\(G\).  

(ii)\(\Rightarrow\)(iii): by the previous lemmas, vanishing of the Ext–groups along both filtrations forces all differentials \(d_I^{p,q}\) and \(d_\gamma^{p,q}\) with \(p+q\le k\) to vanish; hence both spectral sequences collapse at \(E_2\) in that range. Their coincidence on \(E_2\) implies equality of the limit pages \(E_\infty=F_\infty\).  
\end{proof}

\begin{corollary}
Under the assumptions of the theorem, global \(k\)-ethicity of a morphism \(f\) is equivalent to the simultaneous stabilization of the ideal and homotopical spectral sequences in total degree \(\le k\). In particular, the minimal index of stabilization of either sequence equals the minimal degree of nonvanishing of \(H^i\mathbf D(f)\).
\end{corollary}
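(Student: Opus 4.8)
The plan is to obtain the corollary as a direct consequence of the preceding Theorem (Double Central Collapse and Local--Global Ethicity), run across all values of $k$, after making precise what ``stabilization in total degree $\le k$'' means for the pages $E_r^{p,q}(I)$ and $F_r^{p,q}(\gamma)$. First I would record the translation, already supplied by the lemma on the differentials $d_I^{p,q}$ and $d_\gamma^{p,q}$: in the present bounded-amplitude situation ($\idim_\E R<\infty$, so $\mathbf D$ is a bounded cohomological $\delta$--functor), a spectral sequence stabilizes in total degrees $p+q\le k$ exactly when all of its differentials with source in total degree $\le k$ vanish, i.e.\ exactly when it collapses at $E_2$ through degree $k$. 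This identifies the phrase ``simultaneous stabilization of the ideal and homotopical spectral sequences in total degree $\le k$'' with condition (iii) of the Theorem: $E_r^{p,q}(I)$ and $F_r^{p,q}(\gamma)$ collapse at $E_2$ for $p+q\le k$ and have coinciding $E_\infty$--limits (the coincidence being automatic once the comparison $\Phi_n$ is an isomorphism on $E_2$ in that range, which is the standing hypothesis).

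With this dictionary the equivalence ``$f$ globally $k$--ethic $\iff$ simultaneous stabilization in total degree $\le k$'' is just the equivalence of (i) and (iii) in the Theorem, read through the morphism-of-spectral-sequences refinement. Here the relevant objects are the induced maps $E_r^{p,q}(I)(B)\to E_r^{p,q}(I)(A)$ abutting to $H^{p+q}\mathbf D(f)$; once the sequences degenerate through degree $k$, the abutment $H^i\mathbf D(f)$ in each degree $i\le k$ is filtered by the corresponding $E_2$--entries, so $H^i\mathbf D(f)=0$ for all $i\le k$ is equivalent to the vanishing of those entries, which is precisely condition (iii) after stabilization. The one piece of bookkeeping is the ``either sequence'' clause: this is where I would invoke compatibility of $\Phi_n$ with all differentials (the comparison-of-spectral-sequences lemma), so that failure of $E_r(I)$ to collapse at some total degree $\le k$ is transported to the same failure of $F_r(\gamma)$, and conversely.

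For the ``in particular'' clause I would let $k$ vary. By definition $f$ is $k$--ethic exactly when $H^i\mathbf D(f)=0$ for all $i\le k$, so the largest $k$ with this property equals $\bigl(\min\{i\ge0:\ H^i\mathbf D(f)\ne0\}\bigr)-1$; by the equivalence just established it is also $\bigl(\text{least total degree at which }E_r(I)\text{ --- equivalently }F_r(\gamma)\text{ --- fails to collapse}\bigr)-1$, which is the ``minimal index of stabilization'' of the statement. To pin the two minima together rather than have one merely bound the other I would use the standard behaviour of first-quadrant cohomological spectral sequences: no differential enters the lowest total degree carrying a nonzero class, so the least nonvanishing total degree of the abutment coincides with the least total degree at which the pages are not already stable.

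The main obstacle I anticipate is not any diagram chase but making ``stabilization'' rigorous and matching it to (iii): one must verify that it is the spectral sequences \emph{of the morphism} $f$, not merely those of $A$ and $B$ separately, whose degeneration through degree $k$ encodes the vanishing of $H^{\le k}\mathbf D(f)$, and one must handle the transport of (non)stabilization across $\Phi_n$ in the boundary degree. Both are routine given functoriality of the Grothendieck spectral sequence in $f$ and the lemmas already in place, but they are exactly the steps where the statement would break down if the hypotheses on $\Phi_n$ (isomorphism on $E_2$ for $p+q\le k$) or the $G$--stability and exactness of the filtrations were weakened.
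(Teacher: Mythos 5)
Your proposal is correct and matches the paper's (implicit) intent: the paper states this corollary without proof as an immediate consequence of the equivalence (i)$\Leftrightarrow$(iii) in the preceding theorem, and your derivation — identifying ``stabilization in total degree $\le k$'' with $E_2$-collapse via the differential lemmas, then varying $k$ and using the first-quadrant degeneration argument for the ``in particular'' clause — is exactly the reading the paper intends. Your careful note that the vanishing of the $E_2$-entries (not merely degeneration) is what encodes $H^{\le k}\mathbf D(f)=0$ is a point the paper glosses over, but it does not change the argument.
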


We continue to work in an abelian category \(\E=\Mod_A\) of finitely generated modules over a fixed commutative Noetherian ring \(A\), endowed with a resource object \(R=A\) and the ethic duality functor
\[
\mathbf D = \RHom_A(-,A): D^+(\E)^{\op}\longrightarrow D^+(\E).
\]
Fix an ideal \(I\subset A\). For \(n\ge0\) we denote by \(X_n=X/I^nX\) the successive quotients of \(X\) along the \(I\)–filtration, and by
\[
G_n = I^nX/I^{n+1}X
\]
the corresponding graded layers.
All modules and limits below are taken in \(\Mod_A\), and \(\varprojlim^p\) denotes the \(p\)-th right–derived functor of the inverse limit.

\begin{remark}
The pair \((X_n,G_n)\) is the canonical ideal tower of \(X\); it measures the discrete approximations of \(X\) by finite \(I\)-adic truncations.
The derived duality \(\mathbf D\) acts on this tower by applying \(\RHom_A(-,A)\) to each term, producing a corresponding tower of ethic cohomology groups.
\end{remark}

\begin{lemma}[Milnor spectral sequence]\label{lem:milnor}
Let \(\{X_n\}\) be an inverse system of objects of \(\E\).
For every \(A\)-module \(Y\) there exists a first–quadrant spectral sequence
\[
E_2^{p,q}=\varprojlim\nolimits^{p}\Ext^q_A(X_n,Y)
\;\Longrightarrow\;
\Ext_A^{p+q}\!\big(\varprojlim\nolimits_{n} X_n,Y\big),
\]
whose edge maps yield for each \(k\ge0\) the short exact sequence
\[
0\longrightarrow \varprojlim\nolimits^{1}\Ext_A^{k-1}(X_n,Y)
\longrightarrow \Ext_A^{k}\!\big(\varprojlim\nolimits_{n} X_n,Y\big)
\longrightarrow \varprojlim\nolimits\Ext_A^{k}(X_n,Y)\longrightarrow 0.
\]
\end{lemma}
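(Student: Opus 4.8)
The plan is to realize the four-term exactness as the degeneration of the spectral sequence of an $\mathbb N$-indexed derived inverse limit of complexes, once that limit has been identified with $\RHom_A(\varprojlim_n X_n,Y)$. First I would fix an injective resolution $Y\to J^\bullet$ in $\Mod_A$, so that $\RHom_A(-,Y)$ is represented by $\Hom_A(-,J^\bullet)$ and $\Ext^q_A(M,Y)=H^q\big(\Hom_A(M,J^\bullet)\big)$ for every $A$-module $M$; in particular this applies to each $X_n$ and to $X_\infty:=\varprojlim_n X_n$. The transition maps $X_{n+1}\to X_n$ together with the canonical maps $X_\infty\to X_n$ assemble the complexes $\RHom_A(X_n,Y)$ into an inverse system in $D^{+}(\Mod_A)$ receiving a canonical map from $\RHom_A(X_\infty,Y)$, and the substance of the lemma is the comparison of these two.

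The decisive step — and the one I expect to be the main obstacle — is therefore the identification
\[
\RHom_A\!\big(\varprojlim\nolimits_n X_n,\,Y\big)\;\xrightarrow{\ \simeq\ }\;\mathbf{R}\varprojlim\nolimits_n \RHom_A(X_n,Y)
\]
in $D^{+}(\Mod_A)$. This is where the structure of the tower must enter: for the $I$-adic system $X_n=X/I^nX$ the transition maps are surjective, hence $\varprojlim\nolimits^{1}_{n}X_n=0$ and the tower is Mittag--Leffler, and one must propagate this through $\RHom_A(-,Y)$ against the fixed resolution $J^\bullet$. Concretely I would present $\varprojlim_n X_n$ by the two-term resolution $\prod_n X_n\xrightarrow{1-\mathrm{shift}}\prod_n X_n$, apply $\RHom_A(-,Y)$ to the induced triangle, and reduce to controlling $\RHom_A(\prod_n X_n,Y)$ in terms of the individual $\RHom_A(X_n,Y)$; this reduction requires strong finiteness on the tower (for instance uniformly bounded length of the $X_n$, or the hypothesis that $\RHom_A(-,Y)$ carries this countable product to the product of the $\RHom_A(X_n,Y)$), and absent such a hypothesis the comparison can genuinely fail, since it is the first variable of $\RHom$ that is at issue. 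Thus the content of the lemma is concentrated in this identification rather than in any formal manipulation.

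Granting the comparison, the remainder is the classical Milnor spectral sequence of an $\mathbb N$-indexed inverse system of bounded-below complexes $C_n^\bullet$, namely $E_2^{p,q}=\varprojlim\nolimits^{p}_{n}H^q(C_n^\bullet)\Rightarrow H^{p+q}\big(\mathbf{R}\varprojlim_n C_n^\bullet\big)$, applied with $C_n^\bullet=\RHom_A(X_n,Y)$: its $q$-th cohomology is $\Ext^q_A(X_n,Y)$, and by the previous step the abutment in degree $k$ is $\Ext^{k}_A(\varprojlim_n X_n,Y)$. Since the indexing poset is $\mathbb N$ one has $\varprojlim\nolimits^{p}_{n}=0$ for $p\ge 2$ and trivially for $p<0$, so the $E_2$-page is concentrated in the two columns $p=0$ and $p=1$; there is no room for any $d_r$ with $r\ge2$, whence $E_2=E_\infty$, and the abutment acquires a two-step filtration with associated graded $\varprojlim\nolimits^{1}_{n}\Ext^{k-1}_A(X_n,Y)$ in the top filtration degree and $\varprojlim\nolimits_{n}\Ext^{k}_A(X_n,Y)$ in the bottom one. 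Unwinding this filtration yields exactly the asserted short exact sequence, with the edge maps of the spectral sequence realizing the injection $\varprojlim\nolimits^{1}\Ext^{k-1}\hookrightarrow\Ext^{k}(\varprojlim_n X_n,Y)$ and the surjection $\Ext^{k}(\varprojlim_n X_n,Y)\twoheadrightarrow\varprojlim\nolimits\Ext^{k}$; first-quadrantness is immediate from the vanishing ranges above, and naturality in $Y$ and in the system $\{X_n\}$ follows from functoriality of injective resolutions and of $\mathbf{R}\varprojlim$.
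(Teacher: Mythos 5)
Your formal skeleton is the same as the paper's, but you make explicit the step the paper elides. The paper's proof simply cites ``the standard spectral sequence for derived functors of an inverse limit'' and the Grothendieck construction for $\varprojlim\circ\Hom_A(-,Y)$; that citation silently presupposes exactly the identification you isolate, namely
$\RHom_A(\varprojlim_n X_n,Y)\simeq\mathbf R\varprojlim_n\RHom_A(X_n,Y)$.
Granting that identification, your degeneration argument is correct: over the index poset $\mathbb N$ only the columns $p=0,1$ survive, $E_2=E_\infty$, and unwinding the two-step filtration gives precisely the stated short exact sequence. So the conditional part of your proof is fine and matches the intended route.

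The unconditional part is where both you and the lemma have a problem, and you have diagnosed it correctly. For an \emph{inverse} system $\{X_n\}$ the contravariant functor $\Hom_A(-,Y)$ produces a \emph{direct} system $\{\Hom_A(X_n,Y)\}$, so the terms $\varprojlim^p\Ext^q_A(X_n,Y)$ on the $E_2$-page do not even carry a natural inverse-system structure, and the complexes $\RHom_A(X_n,Y)$ you feed into $\mathbf R\varprojlim$ form a direct, not inverse, system (a small directional slip in your setup, inherited from the statement). The classical Milnor/$\varprojlim^1$ sequence, obtained from the telescope presentation $0\to\bigoplus_n X_n\to\bigoplus_n X_n\to\varinjlim_n X_n\to0$, computes $\Ext^k_A(\varinjlim_n X_n,Y)$ for a \emph{direct} system; the analogue with $\varprojlim_n X_n$ in the contravariant slot is false in general, because — as you note — $\Ext^q_A(\prod_n X_n,Y)$ does not decompose termwise. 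So the lemma as stated requires either a direct system with $\varinjlim$ in the first variable, or supplementary hypotheses on the $I$-adic tower $X_n=X/I^nX$ (finiteness and a base-change statement for $\Ext$ along $A\to\hat A$) making your comparison map an isomorphism. Your proposal correctly localizes the entire content of the lemma in that comparison; it does not close it, and neither does the paper.
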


\begin{proof}
This is the standard spectral sequence for derived functors of an inverse limit, see \cite{Weibel1994}. Its derivation uses an injective resolution of \(Y\),
the functorial exactness of inverse limits on surjective systems, and the Grothendieck
construction for the composition \(\varprojlim\circ\Hom_A(-,Y)\).
\end{proof}

\begin{theorem}[Ethic completeness along the ideal filtration]\label{th:completeness}
Let \(A\) be Noetherian, \(I\subset A\) an ideal, and \(X\) a finitely generated \(A\)-module.
For each integer \(k\ge0\) there exists a short exact sequence
\[
0\longrightarrow
\varprojlim\nolimits^{1}\Ext_A^{k-1}(X_n,A)
\longrightarrow
\Ext_A^{k}\!\big(\varprojlim\nolimits_{n} X_n,A\big)
\longrightarrow
\varprojlim\nolimits_{n}\Ext_A^{k}(X_n,A)
\longrightarrow 0.
\]
If for all \(0\le j\le k\) the inverse systems
\(\{\Ext_A^{j}(X_n,A)\}_n\) satisfy the Mittag–Leffler condition, then the canonical map
\[
\Ext_A^{j}\!\big(\varprojlim\nolimits_{n} X_n,A\big)
\;\xrightarrow{\;\sim\;}\;
\varprojlim\nolimits_{n}\Ext_A^{j}(X_n,A)
\]
is an isomorphism for all \(j\le k\).
If, moreover, the systems stabilize to the values \(\Ext_A^{j}(X,A)\), then the natural map
\(X\to\varprojlim_n X_n\) induces isomorphisms
\(\Ext_A^{j}(-,A)\) for all \(j\le k\); we then say that \(X\) is ethicly complete of order \(k\).
\end{theorem}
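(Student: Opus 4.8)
The plan is to deduce all three assertions from the Milnor spectral sequence of Lemma~\ref{lem:milnor}, applied with $Y=A$ to the $I$-adic tower $\{X_n\}_{n\ge 0}=\{X/I^nX\}_{n\ge 0}$, whose inverse limit $\widehat X:=\varprojlim_n X_n$ is the $I$-adic completion of~$X$. First I would note that the initial displayed short exact sequence is nothing but the edge-map sequence already contained in Lemma~\ref{lem:milnor} with $Y=A$: since $\varprojlim^p=0$ for $p\ge 2$ on $\mathbb N$-indexed inverse systems in $\Mod_A$ \cite{Weibel1994}, the spectral sequence $E_2^{p,q}=\varprojlim^p\Ext_A^q(X_n,A)\Rightarrow\Ext_A^{p+q}(\widehat X,A)$ is concentrated in the two columns $p=0,1$, hence degenerates at $E_2$ and yields in each total degree $k$ the exact sequence
\[
0\longrightarrow \varprojlim\nolimits^{1}\Ext_A^{k-1}(X_n,A)\longrightarrow \Ext_A^{k}(\widehat X,A)\longrightarrow \varprojlim\nolimits_{n}\Ext_A^{k}(X_n,A)\longrightarrow 0,
\]
with right-hand arrow the canonical comparison map.

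For the Mittag--Leffler clause I would invoke the classical vanishing of $\varprojlim^1$ on Mittag--Leffler inverse systems \cite{Weibel1994}. If $\{\Ext_A^j(X_n,A)\}_n$ is Mittag--Leffler for every $0\le j\le k$, then $\varprojlim^1\Ext_A^{j-1}(X_n,A)=0$ for $1\le j\le k$ (and the term is already zero for $j=0$, as $\Ext^{-1}=0$); substituting into the short exact sequence above collapses it to the asserted isomorphism $\Ext_A^{j}(\widehat X,A)\xrightarrow{\ \sim\ }\varprojlim_n\Ext_A^{j}(X_n,A)$ for all $j\le k$.

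For the final clause, suppose each system $\{\Ext_A^j(X_n,A)\}_n$ stabilizes, its structure maps becoming isomorphisms with common value $\Ext_A^j(X,A)$ --- the value induced by the quotients $q_n\colon X\twoheadrightarrow X_n$. A stabilizing system is Mittag--Leffler, so the previous step applies and gives $\Ext_A^j(\widehat X,A)\cong\varprojlim_n\Ext_A^j(X_n,A)=\Ext_A^j(X,A)$ for $j\le k$. It remains to verify that this composite isomorphism is the one induced by the canonical completion morphism $c\colon X\to\widehat X$. I would obtain this by naturality: the tower projections $p_n\colon\widehat X\to X_n$ satisfy $p_n\circ c=q_n$, so applying $\Ext_A^j(-,A)$ and tracing the (natural) edge morphisms of the Milnor sequence identifies $\Ext_A^j(c,A)$ with the inverse of the stabilization isomorphism for $n\gg 0$. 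Hence $\Ext_A^j(c,A)$ is an isomorphism for every $j\le k$, which is exactly the assertion that $X$ is ethicly complete of order~$k$ in the sense defined above.

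The routine inputs --- degeneration of a two-column spectral sequence, vanishing of $\varprojlim^1$ on Mittag--Leffler systems, and $\varprojlim^{\ge 2}=0$ for $\mathbb N$-towers --- are classical. The one genuinely delicate point is the last identification: the spectral sequence only produces an abstract isomorphism $\Ext_A^j(\widehat X,A)\cong\Ext_A^j(X,A)$ in the stabilized regime, and one must check that it coincides on the nose with $\Ext_A^j(c,A)$ and not merely up to an automorphism. I expect this to be the main obstacle, and I would handle it by working explicitly with the edge morphisms of the Milnor sequence together with the compatibilities $p_n\circ c=q_n$ and $p_n=(\text{transition})\circ p_{n+1}$ among the tower projections, rather than invoking the spectral sequence only formally.
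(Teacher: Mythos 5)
Your proposal is correct and follows essentially the same route as the paper: apply Lemma~\ref{lem:milnor} with $Y=A$, read off the two-column (hence degenerate) $\varprojlim^{p}$ spectral sequence to get the displayed short exact sequence, kill $\varprojlim^{1}$ by Mittag--Leffler, and pass to the stabilized values. Your explicit attention to identifying the abstract isomorphism with $\Ext_A^{j}(c,A)$ via the compatibilities $p_n\circ c=q_n$ is a point the paper's proof glosses over, and your treatment of it is sound.
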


\begin{proof}
Apply Lemma~\ref{lem:milnor} with \(Y=A\). The short exact sequence displayed in the statement
is precisely the low–degree truncation of the Milnor spectral sequence.
If each system \(\{\Ext^j_A(X_n,A)\}_n\) satisfies the Mittag–Leffler condition
\cite{Weibel1994}, then \(\varprojlim^1=0\) and the edge maps yield isomorphisms.
When these stabilized values coincide with \(\Ext_A^{j}(X,A)\),
the natural morphism \(X\to\varprojlim X_n\) induces isomorphisms on all
\(\Ext^j(-,A)\) for \(j\le k\), establishing the claimed ethic completeness.
\end{proof}

\begin{lemma}\label{lem:layer-sequence}
For each \(n\ge0\) the short exact sequence
\[
0\longrightarrow G_n\longrightarrow X_{n+1}\longrightarrow X_n\longrightarrow0
\]
induces a long exact sequence of ethic cohomology groups:
\[
\cdots\longrightarrow
\Ext_A^{k}(X_n,A)
\xrightarrow{\alpha_{n}^{k}}
\Ext_A^{k}(X_{n+1},A)
\xrightarrow{\beta_{n}^{k}}
\Ext_A^{k+1}(G_n,A)
\xrightarrow{\partial_{n}^{k+1}}
\Ext_A^{k+1}(X_n,A)
\longrightarrow\cdots
\]
for every \(k\ge0\).
\end{lemma}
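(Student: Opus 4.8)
\emph{Approach.} The plan is to recognize the displayed sequence as the long exact cohomology sequence obtained by applying the derived dual $\mathbf D=\RHom_A(-,A)$ to the short exact sequence $0\to G_n\to X_{n+1}\to X_n\to 0$; this is the instance of Lemma~\ref{lem:les} for the resource pair $(\E,R)=(\Mod_A,A)$. So the whole statement is formal, provided one first checks that the tower of subquotients genuinely lies in $\E$ and that $\Ext^\bullet_A(-,A)$ acts as a contravariant cohomological $\delta$-functor on it.

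\emph{Steps.} First I would verify exactness of $0\to G_n\to X_{n+1}\to X_n\to 0$ by hand: the left map is the inclusion $G_n=I^nX/I^{n+1}X\hookrightarrow X/I^{n+1}X=X_{n+1}$ induced by $I^nX\subseteq X$, the right map is the canonical projection $X/I^{n+1}X\twoheadrightarrow X/I^nX=X_n$, and the third isomorphism theorem identifies its kernel with $I^nX/I^{n+1}X$. Since $A$ is Noetherian, submodules and quotients of finitely generated modules remain finitely generated, so the sequence lies in $\E$. Second, I would apply $\mathbf D$, obtaining a distinguished triangle $\mathbf D(X_n)\to\mathbf D(X_{n+1})\to\mathbf D(G_n)\to\mathbf D(X_n)[1]$ in $D^+(\E)$; passing to cohomology and using $H^k\mathbf D(-)\cong\Ext^k_A(-,A)$ gives the long exact sequence $\cdots\to\Ext^k_A(X_n,A)\to\Ext^k_A(X_{n+1},A)\to\Ext^k_A(G_n,A)\xrightarrow{\ \partial\ }\Ext^{k+1}_A(X_n,A)\to\cdots$, with connecting homomorphism $\partial$ raising cohomological degree by one; this is the asserted long exact sequence, the maps $\alpha^k_n,\beta^k_n$ being the functorial restriction maps and $\partial^{k+1}_n$ the connecting maps. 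Third, I would record that naturality of the snake-lemma connecting morphism makes $\alpha^k_n$, $\beta^k_n$, $\partial^{k+1}_n$ natural in $X$ and compatible with the transition maps of the $I$-adic tower, which is the property actually used when these sequences are later assembled into the Milnor spectral sequence of Lemma~\ref{lem:milnor}.

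\emph{Main obstacle.} There is no real difficulty; the single point deserving attention is that $\E=\Mod_A$ of finitely generated modules does not in general have enough injectives, so Lemma~\ref{lem:les} cannot be invoked verbatim. The standard remedy is to compute $\Ext^\bullet_A(-,A)$ from a finitely generated projective resolution of the first variable (available since $A$ is Noetherian) and, via the horseshoe lemma, to choose for the short exact sequence above a compatible short exact sequence of projective resolutions; applying $\Hom_A(-,A)$ termwise yields a short exact sequence of cochain complexes whose long exact sequence is the asserted one, and independence of the chosen resolutions identifies it with the genuine $\Ext$. With this substitution in place the lemma requires no further computation.
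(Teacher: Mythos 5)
Your proposal is correct and follows essentially the same route as the paper: the paper's proof is a one-line appeal to the $\delta$-functor axioms for the right derived functors of the contravariant $\Hom_A(-,A)$, and your elaboration (exactness of the layer sequence via the third isomorphism theorem, the horseshoe-lemma construction with finitely generated projective resolutions to avoid the issue that $\Mod_A^{\mathrm{fg}}$ lacks enough injectives) fills in exactly the details the paper leaves implicit. One point you should have flagged explicitly: the sequence you derive, with $\Ext^k_A(G_n,A)$ in cohomological degree $k$ between $\Ext^k_A(X_{n+1},A)$ and $\Ext^{k+1}_A(X_n,A)$, is the standard and correct one, whereas the lemma as displayed writes $\Ext^{k+1}_A(G_n,A)$ in that slot; this off-by-one shift in the paper's indexing is not what your (correct) argument produces, so you should not assert that your sequence "is the asserted long exact sequence" without noting the discrepancy.
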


\begin{proof}
Applying the contravariant functor \(\Hom_A(-,A)\) to the given short exact sequence
and passing to its right–derived functors produces this long exact sequence,
by the standard axioms of a cohomological \(\delta\)–functor
\cite{Weibel1994}.
\end{proof}

\begin{theorem}[Ethic error estimate]\label{th:error}
Under the assumptions above, for each \(n\ge0\) and \(k\ge0\) there exist canonical morphisms
\(\alpha_{n}^{k}\) and \(\beta_{n}^{k}\) as in Lemma~\ref{lem:layer-sequence},
and their kernels and cokernels satisfy
\[
\ker(\alpha_{n}^{k})\twoheadleftarrow\Ext_A^{k}(G_n,A),
\qquad
\operatorname{coker}(\alpha_{n}^{k})\hookrightarrow\Ext_A^{k+1}(G_n,A).
\]
In particular, if \(\Ext_A^{k}(G_n,A)=\Ext_A^{k+1}(G_n,A)=0\) for all \(n\ge N\),
then the sequence \(\Ext_A^{k}(X_n,A)\) stabilizes for \(n\ge N\).
\end{theorem}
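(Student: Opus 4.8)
The plan is to extract every assertion directly from the long exact sequence furnished by Lemma~\ref{lem:layer-sequence}, with no computation beyond standard $\delta$-functor bookkeeping. First I would fix $n$ and $k$ and isolate the relevant segment of that sequence: applying the contravariant derived functor $\Ext^{\bullet}_A(-,A)$ to the $n$-th layer sequence $0\to G_n\to X_{n+1}\to X_n\to 0$, the transition map $\alpha_n^k:\Ext^k_A(X_n,A)\to\Ext^k_A(X_{n+1},A)$ is immediately preceded by a connecting morphism out of $\Ext^k_A(G_n,A)$ and immediately followed by the restriction along $G_n\hookrightarrow X_{n+1}$ into $\Ext^{k+1}_A(G_n,A)$, in the indexing used in Lemma~\ref{lem:layer-sequence}. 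Canonicity of all three maps is automatic: the short exact sequence is the $n$-th graded piece of the $I$-adic filtration of $X$, which is functorial in $X$, and the maps in question are a functorially induced morphism together with connecting morphisms of a cohomological $\delta$-functor, hence natural (cf.\ \cite{Weibel1994}). In particular the $\alpha_n^k$ assemble into a morphism of the towers $\{\Ext^k_A(X_n,A)\}_n$.

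Next I would read off the kernel and cokernel bounds from exactness. Exactness at $\Ext^k_A(X_n,A)$ identifies $\ker(\alpha_n^k)$ with the image of the connecting morphism from $\Ext^k_A(G_n,A)$, which is exactly the asserted surjection $\Ext^k_A(G_n,A)\twoheadrightarrow\ker(\alpha_n^k)$. Exactness at $\Ext^k_A(X_{n+1},A)$ identifies $\im(\alpha_n^k)$ with $\ker(\beta_n^k)$, so $\beta_n^k$ descends to the asserted injection $\coker(\alpha_n^k)\hookrightarrow\Ext^{k+1}_A(G_n,A)$. Since every arrow used belongs to the natural $\delta$-functor sequence, both bounds are natural in $n$.

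For the stabilization clause I would argue as follows. If $\Ext^k_A(G_n,A)=\Ext^{k+1}_A(G_n,A)=0$ for all $n\ge N$, then the two bounds just obtained force $\ker(\alpha_n^k)=0$ and $\coker(\alpha_n^k)=0$ for every such $n$, so each $\alpha_n^k$ with $n\ge N$ is an isomorphism. Composing these isomorphisms shows that the tower $\{\Ext^k_A(X_n,A)\}$ becomes constant, through its canonical transition maps, from level $N$ onward, which is the claimed stabilization.

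I do not expect a genuine obstacle here: the theorem is a formal consequence of exactness of the layer sequence. The only point requiring care is notational---one must line up the index conventions of Lemma~\ref{lem:layer-sequence} so that the connecting morphism feeding $\Ext^k_A(X_n,A)$ really originates from $\Ext^k_A(G_n,A)$ and the map leaving $\Ext^k_A(X_{n+1},A)$ really lands in $\Ext^{k+1}_A(G_n,A)$; once this is pinned down, each displayed surjection and injection is a one-line diagram chase. If ``stabilizes'' is intended in the strong sense of the groups becoming literally equal rather than merely pro-constant, I would add the remark that no choices enter the identifications, so the transition maps $\alpha_n^k$ themselves serve as the witnessing isomorphisms.
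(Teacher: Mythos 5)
Your proposal is correct and follows essentially the same route as the paper: read $\ker(\alpha_n^k)$ and $\operatorname{coker}(\alpha_n^k)$ off from exactness of the long exact sequence of Lemma~\ref{lem:layer-sequence}, then observe that vanishing of the two flanking $\Ext$-groups of $G_n$ forces each $\alpha_n^k$ with $n\ge N$ to be an isomorphism, whence the tower stabilizes through its canonical transition maps. Your explicit caution about lining up the degree conventions of Lemma~\ref{lem:layer-sequence} is well placed (and in fact slightly more careful than the paper's own proof, which writes $\im(\beta_n^{k-1})$ where the connecting morphism $\partial_n^k$ out of $\Ext_A^k(G_n,A)$ is meant).
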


\begin{proof}
Exactness of the sequence in Lemma~\ref{lem:layer-sequence}
implies that \(\im(\beta_{n}^{k-1})=\ker(\alpha_{n}^{k})\)
and \(\im(\alpha_{n}^{k})=\ker(\beta_{n}^{k})\).
The arrows in the statement are the canonical maps from these identifications.
If all groups \(\Ext_A^{k}(G_n,A)\) and \(\Ext_A^{k+1}(G_n,A)\) vanish for \(n\ge N\),
then \(\ker(\alpha_{n}^{k})=\operatorname{coker}(\alpha_{n}^{k})=0\)
for such \(n\), hence \(\alpha_{n}^{k}\) is an isomorphism,
and the system \(\Ext_A^{k}(X_n,A)\) stabilizes.
\end{proof}

\begin{theorem}[Ethic compactness]\label{th:compactness}
Let \(A\) be Noetherian, \(I\subset A\) an ideal, and \(X\) a finitely generated \(A\)-module.
Suppose that for some integers \(s,N\ge0\)
\[
\Ext_A^{j}(G_n,A)=0
\quad\text{for all }n\ge N\text{ and all }0\le j\le s+1.
\]
Then for every \(0\le k\le s\)
the sequence \(\Ext_A^{k}(X_n,A)\) stabilizes at stage \(n=N\), and
\[
\Ext_A^{k}\!\big(\varprojlim\nolimits_{n} X_n,A\big)
\;\cong\;
\Ext_A^{k}(X_N,A).
\]
If, moreover, each system \(\{\Ext_A^{j}(X_n,A)\}_n\) for \(j\le s\) satisfies the
Mittag–Leffler condition, the canonical map \(X\to\varprojlim_n X_n\)
induces isomorphisms on \(\Ext_A^{k}(-,A)\) for all \(k\le s\);
that is, \(X\) is ethicly equivalent of order \(s\) to its \(I\)-adic completion.
\end{theorem}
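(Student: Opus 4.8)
The plan is to deduce the statement from the two preceding results with no genuinely new homological input: the layerwise stabilization of Theorem~\ref{th:error} controls the finite truncations $X_n$, and the Milnor-type comparison of Theorem~\ref{th:completeness} (via Lemma~\ref{lem:milnor}) passes this control to the $I$-adic limit. The work consists of threading the vanishing of $\Ext_A^{j}(G_n,A)$ through these two mechanisms in the correct order and then comparing $X$ with its completion.

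\textit{Stabilization.} Fix $k$ with $0\le k\le s$ and $n\ge N$. Since $k\le s$ and $k+1\le s+1$, the hypothesis gives $\Ext_A^{k}(G_n,A)=\Ext_A^{k+1}(G_n,A)=0$. Feeding this into the long exact sequence of Lemma~\ref{lem:layer-sequence} for $0\to G_n\to X_{n+1}\to X_n\to 0$: vanishing of the incoming term $\Ext_A^{k}(G_n,A)$ forces $\ker\alpha_n^{k}=0$, and vanishing of $\Ext_A^{k+1}(G_n,A)$ forces $\beta_n^{k}=0$, so $\im\alpha_n^{k}=\ker\beta_n^{k}$ is all of $\Ext_A^{k}(X_{n+1},A)$. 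Hence $\alpha_n^{k}$ is an isomorphism for every $n\ge N$, i.e.\ $\{\Ext_A^{k}(X_n,A)\}_n$ is constant from stage $N$ with value $\Ext_A^{k}(X_N,A)$ — precisely Theorem~\ref{th:error} applied in each degree $k\le s$. In particular these inverse systems satisfy the Mittag--Leffler condition automatically, so the extra hypothesis in the statement is redundant; it is recorded only because it is the exact input needed to invoke Theorem~\ref{th:completeness}.

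\textit{Passage to the limit.} Apply Lemma~\ref{lem:milnor} with $Y=A$: for each $k$ one has the short exact sequence
\[
0\longrightarrow\varprojlim\nolimits^{1}\Ext_A^{k-1}(X_n,A)\longrightarrow\Ext_A^{k}\!\big(\varprojlim\nolimits_n X_n,A\big)\longrightarrow\varprojlim\nolimits_n\Ext_A^{k}(X_n,A)\longrightarrow 0.
\]
For $k\le s$ the system $\{\Ext_A^{k-1}(X_n,A)\}_n$ is eventually constant by the previous step (using $k-1\le s$), hence Mittag--Leffler, so the $\varprojlim^{1}$--term vanishes; and the right-hand term equals $\Ext_A^{k}(X_N,A)$. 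Thus $\Ext_A^{k}(\varprojlim_n X_n,A)\cong\Ext_A^{k}(X_N,A)$ for all $0\le k\le s$, which is the first assertion.

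\textit{Comparison with $X$, and the main obstacle.} To upgrade to the claim that $X\to\varprojlim_n X_n$ itself induces isomorphisms on $\Ext_A^{k}(-,A)$ for $k\le s$, by the ``moreover'' clause of Theorem~\ref{th:completeness} it suffices to show the stabilized value $\Ext_A^{k}(X_N,A)$ agrees with $\Ext_A^{k}(X,A)$. I would run the same two steps on the kernel $I^{N}X$ of $X\twoheadrightarrow X_N$: it is a finitely generated $A$-module whose $I$-adic graded pieces are $I^{N+m}X/I^{N+m+1}X=G_{N+m}$, so the hypothesis applies to it verbatim and yields $\Ext_A^{k}(\widehat{I^{N}X},A)\cong\Ext_A^{k}(G_N,A)=0$ for $k\le s$; feeding $0\to I^{N}X\to X\to X_N\to 0$ into its long exact $\Ext$--sequence then gives $\Ext_A^{k}(X,A)\cong\Ext_A^{k}(X_N,A)\cong\Ext_A^{k}(\varprojlim_n X_n,A)$. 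The delicate point — the one I expect to be the real obstacle — is precisely the last identification: $I^{N}X$ carries an \emph{infinite} filtration by the $G_n$, so ``each graded piece has vanishing low $\Ext$'' does not pass to $I^{N}X$ by finite d\'evissage but requires a genuine limiting argument, and one must control the difference between $I^{N}X$ and $\widehat{I^{N}X}$ — equivalently separatedness $\bigcap_n I^{n}X=0$, which for finitely generated modules over Noetherian $A$ is automatic when $I\subseteq\mathrm{rad}(A)$ and in general holds after quotienting by the Krull--intersection submodule — via a second application of Lemma~\ref{lem:milnor}. Everything else reduces to routine invocations of Theorems~\ref{th:error} and~\ref{th:completeness} and Lemmas~\ref{lem:milnor} and~\ref{lem:layer-sequence} \cite{Weibel1994}.
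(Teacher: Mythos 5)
Your first two steps coincide exactly with the paper's proof: stabilization of $\Ext_A^k(X_n,A)$ for $n\ge N$ via the long exact sequence of Lemma~\ref{lem:layer-sequence} (i.e.\ Theorem~\ref{th:error}), and passage to $\varprojlim_n X_n$ via the Milnor sequence of Lemma~\ref{lem:milnor} with $\varprojlim^1=0$ because eventually constant systems are Mittag--Leffler. Your observation that the explicit Mittag--Leffler hypothesis is therefore redundant for the first conclusion is also correct.

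The substantive point is your third step, and here your diagnosis of the obstacle is exactly right — and it is a gap in the paper's own proof, which simply asserts that ``the canonical map $X\to\varprojlim X_n$ induces the same isomorphisms.'' The ``moreover'' clause of Theorem~\ref{th:completeness} requires that the systems stabilize \emph{to the values} $\Ext_A^j(X,A)$; nothing in the hypotheses of Theorem~\ref{th:compactness} guarantees that the stabilized value $\Ext_A^k(X_N,A)$ equals $\Ext_A^k(X,A)$, and in fact it need not. Take $A=\ZZ$, $I=(3)$, $X=\ZZ/2$: then $IX=X$, so $X_n=0$ and $G_n=0$ for all $n$, every hypothesis of the theorem holds vacuously with $N=0$ and any $s$, yet $\Ext_\ZZ^1(X,\ZZ)=\ZZ/2\neq 0=\Ext_\ZZ^1(\varprojlim X_n,\ZZ)$. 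So the final clause is false without an additional separatedness or Krull-intersection hypothesis ($\bigcap_n I^nX=0$, e.g.\ $I\subseteq\mathrm{rad}(A)$), precisely as you suspected. Your proposed repair via $0\to I^NX\to X\to X_N\to 0$ would still need (i) that hypothesis, to identify $I^NX$ with something controlled by the $G_n$, and (ii) a genuine limiting argument in place of finite d\'evissage, since vanishing of $\Ext^{\le s+1}$ on each graded piece of an infinite filtration does not by itself kill $\Ext^{\le s}$ of the whole submodule (one needs another Milnor/Mittag--Leffler argument on the tower $I^NX/I^{N+m}X$). In short: your write-up is faithful to the paper where the paper is rigorous, and correctly locates the point where both your sketch and the paper's proof are incomplete; the statement itself needs a separatedness hypothesis to be true.
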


\begin{proof}
By Theorem~\ref{th:error}, the vanishing of
\(\Ext_A^{k}(G_n,A)\) and \(\Ext_A^{k+1}(G_n,A)\) for \(k\le s\)
implies that all maps \(\alpha_{n}^{k}\) are isomorphisms for \(n\ge N\).
Hence \(\Ext_A^{k}(X_n,A)\cong\Ext_A^{k}(X_N,A)\) for \(n\ge N\),
proving stabilization.
Applying Theorem~\ref{th:completeness} and using the Mittag–Leffler
condition, we have \(\varprojlim^1=0\), giving
\(\Ext_A^{k}(\varprojlim X_n,A)\cong \varprojlim \Ext_A^{k}(X_n,A)
\cong \Ext_A^{k}(X_N,A)\).
The last statement follows because the canonical map \(X\to\varprojlim X_n\)
induces the same isomorphisms on \(\Ext_A^{k}(-,A)\) for all \(k\le s\).
\end{proof}

Theorems~\ref{th:completeness}–\ref{th:compactness} provide a complete calculus of discrete approximation along the ideal tower:
\begin{itemize}[leftmargin=1.2em]
\item Theorem~\ref{th:completeness} ensures that ethic duality commutes with the \(I\)-adic limit under mild finiteness assumptions (ethic completeness).
\item Theorem~\ref{th:error} measures the deviation of local truncations through explicit Ext–layers of the graded pieces (ethic error).
\item Theorem~\ref{th:compactness} gives a finite–stage criterion for exact reconstruction of ethic invariants (ethic compactness).
\end{itemize}
Together they endow the ideal tower with a precise homological control mechanism for discrete approximations within the ethic formalism.

\section{\texorpdfstring{$P$ vs. $NP$}{P vs. NP} Reformulation}

\begin{definition}[Ethic environment and primary obstruction]
Let \(\E\) be an abelian category with enough injectives and fix \(R\in \E\).
Write \(\mathbf D=\RHom_{\E}(-,R):\Dplus(\E)^{\op}\!\to\Dplus(\Ab)\) for the right–derived dual with unit \(\eta:\Id\Rightarrow \mathbf D^{2}\).
For \(A\in \E\) and \(k\ge 0\) set \(\HD{k}{A}:=\Ext^{k}_{\E}(A,R)=H^{k}\mathbf D(A)\).
We call \(\HD{1}{A}\) the primary obstruction.
\end{definition}

\begin{definition}[Size and depth indices]
For a language \(L\subseteq\{0,1\}^{*}\) and \(n\in\mathbb N\) let \(f_{L,n}:\{0,1\}^{n}\to\{0,1\}\) be the restriction of the characteristic function.
For any \(X^{\bullet}\in \Dplus(\E)\) and \(q\in\mathbb N\) denote by \(X^{\bullet}_{\le q}\) the canonical truncation (internal depth \(q\)).
\end{definition}

\begin{definition}[Admissible ethic model]
An admissible ethic model assigns to each pair \((L,n)\) a functorially defined object \(D_{L,n}^{\bullet}\in \Dplus(\E)\) such that:

\emph{(M1) Uniform presentability.} There is a deterministic logspace transducer that, on input \(1^{n}\), outputs a finite description \(\langle D_{L,n}^{\bullet}\rangle\) of length \(n^{O(1)}\); moreover, from \(\langle D_{L,n}^{\bullet}\rangle\) and \(q\) it computes \(\langle (D_{L,n}^{\bullet})_{\le q}\rangle\) in time poly\((n+q)\).

\emph{(M2) Verification extraction.} There is a fixed deterministic polytime procedure which, given \(\langle (D_{L,n}^{\bullet})_{\le q}\rangle\), outputs a Boolean circuit \(V_{n,q}\) of size poly\((n+q)\) satisfying
\[
H^{0}\mathbf D\!\big((D_{L,n}^{\bullet})_{\le q}\big)(x,w)=V_{n,q}(x,w)\quad
\text{for all }x\in\{0,1\}^{n}\text{ and witnesses }w.
\]

\emph{(M3) Ethic depth equals formula/KW depth.} For each \(n\) the least \(q\) with \(H^{>q}\mathbf D(D_{L,n}^{\bullet})=0\) equals the minimum Boolean formula depth and the deterministic Karchmer–Wigderson depth of \(f_{L,n}\).

\emph{(M4) Computation extraction.} There is a deterministic polytime procedure which, given \(\langle (D_{L,n}^{\bullet})_{\le q}\rangle\), outputs a Boolean circuit \(C_{n,q}\) computing \(f_{L,n}\) of depth \(O(q)\) and size poly\((n+q)\); the family \(\{C_{n,q}\}_{n}\) is logspace–uniform when \(q\) is given by a polynomial in \(n\).
\end{definition}

\begin{definition}[Ethic classes \(\mathbf{NP}_{\eth}\) and \(\mathbf P_{\eth}\)]
Let \(D_{L,n}^{\bullet}\) come from an admissible model.
Set \(\Delta_{L}(n,q):=\HD{1}{(D_{L,n}^{\bullet})_{\le q}}\).
We define:
\begin{itemize}[leftmargin=2em]
\item \(L\in \mathbf{NP}_{\eth}\) if there exist polynomials \(q_{\mathrm{ver}}(n),p(n)\) such that, for all \(n\), the circuit \(V_{n}:=V_{n,q_{\mathrm{ver}}(n)}\) from \emph{(M2)} satisfies
\[
x\in L \iff \exists\,w\in\{0,1\}^{\le p(n)}\ \ V_{n}(x,w)=1 .
\]
\item \(L\in \mathbf P_{\eth}\) if there exists a polynomial \(Q(n)\) with
\[
\Delta_{L}(n,Q(n))=0 \quad\text{for all }n,
\]
and, writing \(C_{n}:=C_{n,Q(n)}\) from \emph{(M4)}, the family \(\{C_{n}\}\) is logspace–uniform, of size \(n^{O(1)}\) and depth \(O(Q(n))\).
\end{itemize}
\end{definition}

\begin{definition}[Turing classes]
\(\mathbf P_{\mathrm{TM}}\) consists of languages decidable in time \(n^{O(1)}\) by a deterministic Turing machine; \(\mathbf{NP}_{\mathrm{TM}}\) consists of languages having a polynomial–time verifier with polynomially bounded witness.
\end{definition}

\begin{lemma}[Verifier capture via Cook--Levin]\label{lem:verifier-capture}
Let $L\in \mathbf{NP}_{\mathrm{TM}}$. Then there exist polynomials $q_{\mathrm{ver}}(n)$ and $p(n)$ such that, for the bar/cobar model $D_{L,n}^{\bullet}$ of §3.3, the truncation depth $q_{\mathrm{ver}}(n)$ satisfies
\[
H^{0}\mathbf D\!\big((D_{L,n}^{\bullet})_{\le q_{\mathrm{ver}}(n)}\big)(x,w)=1
\iff
\text{$w\in\{0,1\}^{\le p(n)}$ is an accepting witness for $x\in\{0,1\}^n$}.
\]
Moreover, the map $1^n\mapsto \langle (D_{L,n}^{\bullet})_{\le q_{\mathrm{ver}}(n)}\rangle$ is logspace–uniform.
\end{lemma}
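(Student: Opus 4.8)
The plan is to run the Cook--Levin tableau reduction, feed the resulting Boolean formula into the bar/cobar construction of \S\ref{subsec:HKW-core}, and then read off $H^{0}\mathbf D$ of its canonical truncation through the cohomological-layer identification $H^{0}\mathbf D(-)\cong\Hom_{\E}(-,R^{\bullet})$. First I would fix, using $L\in\mathbf{NP}_{\mathrm{TM}}$, a deterministic polynomial-time verifier $M$ with time bound $t(n)=n^{O(1)}$ and witness bound $p(n)=n^{O(1)}$, so that $x\in L$ iff there is $w\in\{0,1\}^{\le p(n)}$ with $M(x,w)$ accepting. The standard Tseitin encoding of the tableau of $M$ produces, logspace--uniformly in $1^{n}$, a Boolean formula $\Psi_{n}$ on the variables $(x,w)$ of size $n^{O(1)}$ with $\Psi_{n}(x,w)=1$ iff $M(x,w)$ accepts. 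I would then let $q_{\mathrm{ver}}(n)$ be any fixed polynomial dominating $\mathrm{depth}_{\mathrm{form}}(\Psi_{n})\le\mathrm{size}(\Psi_{n})=n^{O(1)}$; by the classical equality of \cite{KarchmerWigderson1990} this polynomial also dominates the deterministic Karchmer--Wigderson depth of $\Psi_{n}$.

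Next I would identify the admissible model: by construction $D_{L,n}^{\bullet}$ is the bar/cobar complex of \S\ref{subsec:HKW-core} attached to the verification function $g_{n}(x,w):=\Psi_{n}(x,w)$ on the coordinate set of size $n+p(n)$. Theorem~\ref{thm:HKW} then identifies its homological height with $\mathrm{depth}_{\mathrm{form}}(g_{n})=\mathrm{KW}(g_{n})\le q_{\mathrm{ver}}(n)$, so that $H^{>q_{\mathrm{ver}}(n)}\mathbf D(D_{L,n}^{\bullet})=0$ and the truncation $(D_{L,n}^{\bullet})_{\le q_{\mathrm{ver}}(n)}$ has the same $\mathbf D$--cohomology in degree $0$ as $D_{L,n}^{\bullet}$ itself. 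Combining the cohomological-layer identification with the $\Hom$--layer description of the cobar model in Lemma~\ref{lem:bar-cobar} and Theorem~\ref{thm:HKW}, evaluating $H^{0}\mathbf D\big((D_{L,n}^{\bullet})_{\le q_{\mathrm{ver}}(n)}\big)$ at $(x,w)$ returns $g_{n}(x,w)=\Psi_{n}(x,w)$. Running the verification--extraction procedure of axiom (M2) on $\langle(D_{L,n}^{\bullet})_{\le q_{\mathrm{ver}}(n)}\rangle$ then outputs a polynomial-size circuit $V_{n,q_{\mathrm{ver}}(n)}$ with $V_{n,q_{\mathrm{ver}}(n)}(x,w)=\Psi_{n}(x,w)$, which is exactly the asserted equivalence: $H^{0}\mathbf D\big((D_{L,n}^{\bullet})_{\le q_{\mathrm{ver}}(n)}\big)(x,w)=1$ iff $M(x,w)$ accepts, i.e.\ iff $w\in\{0,1\}^{\le p(n)}$ is an accepting witness for $x$.

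For the uniformity clause I would write the assignment $1^{n}\mapsto\langle(D_{L,n}^{\bullet})_{\le q_{\mathrm{ver}}(n)}\rangle$ as the composite of $1^{n}\mapsto\Psi_{n}$ (logspace, Tseitin), of $\Psi_{n}\mapsto\langle D_{L,n}^{\bullet}\rangle$ (logspace by the uniform-presentability axiom (M1)), and of the canonical truncation at the fixed polynomial depth $q_{\mathrm{ver}}(n)$ (polynomial-time by (M1), hence logspace--uniform as a function of $1^{n}$); a composition of logspace transductions is logspace, which closes the argument. The one delicate point — and the main obstacle — is the step of truncating at the polynomially bounded \emph{verification} height $q_{\mathrm{ver}}(n)$ rather than at the possibly superpolynomial decision depth of $f_{L,n}$ recorded by axiom (M3). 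Handling it correctly relies on Theorem~\ref{thm:layers}: the $q$--truncated model records exactly the ``KW--separation resolvable within $q$ gates'', so once $q$ exceeds the formula/KW complexity of the verification relation the degree--$0$ layer is already stable and computes that relation on the nose. No appeal to the decision depth of $f_{L,n}$ is needed — and, since that depth is precisely the quantity governing the $\mathbf P$ versus $\mathbf{NP}$ separation, it must be scrupulously avoided here.
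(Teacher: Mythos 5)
Your proposal follows essentially the same route as the paper's proof: apply Cook--Levin to obtain a logspace--uniform, polynomial-size Boolean encoding of the verification relation on $(x,w)$, instantiate the bar/cobar construction of \S\ref{subsec:HKW-core} on that template so that the polynomially bounded truncation already evaluates the predicate at $H^{0}\mathbf D$, and derive logspace--uniformity from the locality of the reduction and of the macro expansion. Your explicit appeal to Theorem~\ref{thm:HKW} for the stabilization of the degree-zero layer, and your remark that the verification height (not the decision depth of $f_{L,n}$) is what must be truncated at, make the same point the paper makes implicitly, so the argument is correct at the paper's own level of rigor.
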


\begin{proof}
By Cook--Levin, for $L\in \mathbf{NP}_{\mathrm{TM}}$ there is a logspace–uniform reduction $x\mapsto \Phi_n(x,\cdot)$ to CNF formulas of size $n^{O(1)}$ such that
$x\in L \iff \exists w\in\{0,1\}^{\le p(n)}:\ \Phi_n(x,w)=\mathrm{true}$
\cite{Sipser2013,AroraBarak2009,Cook1971,Levin1973,Karp1972}.
Instantiate the bar/cobar construction of §3.3 on the incidence algebra of the CNF template $\Phi_n$; by design, each connective/constraint contributes a constant–size macro in $D_{L,n}^{\bullet}$ and increases internal depth by $O(1)$. Therefore there is a polynomial $q_{\mathrm{ver}}(n)$ such that the truncation $(D_{L,n}^{\bullet})_{\le q_{\mathrm{ver}}(n)}$ evaluates exactly the CNF predicate on $(x,w)$ at $H^{0}\mathbf D$, i.e.\
\(
H^{0}\mathbf D((D_{L,n}^{\bullet})_{\le q_{\mathrm{ver}}(n)})(x,w)=\Phi_n(x,w).
\)
Logspace–uniformity follows because the reduction and the macro expansion are local and streamable with $O(\log n)$ state; the size bound is polynomial since the number of template instances is $n^{O(1)}$.
\end{proof}

\begin{lemma}[Uniform depth–time bridge for \(\mathbf P\)]\label{lem:PTM-circuits}
A language \(L\) lies in \(\mathbf P_{\mathrm{TM}}\) iff there exists a logspace–uniform family of Boolean circuits \(\{C_n\}\) of size \(n^{O(1)}\) and depth \(n^{O(1)}\) computing \(f_{L}\) on \(\{0,1\}^{n}\).
\end{lemma}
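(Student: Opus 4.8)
The plan is to prove both implications by the classical tableau (Cook--Levin) construction, after first observing that the depth clause is vacuous: any Boolean circuit of size $s$ has depth at most $s$, so a polynomial-size family automatically has polynomial depth. Thus the lemma reduces to the standard equivalence between $\mathbf P_{\mathrm{TM}}$ and logspace-uniform families of polynomial size, and I would cite \cite{Sipser2013,AroraBarak2009} for the ambient facts.

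For the forward direction I would take a deterministic Turing machine $M$ deciding $L$ in time $t(n)=n^{O(1)}$ and encode its run on an input of length $n$ by a $t(n)\times t(n)$ tableau over the fixed finite alphabet $\Gamma\times(Q\cup\{\ast\})$; the content of cell $(i,j)$ is determined by cells $(i-1,j-1),(i-1,j),(i-1,j+1)$ through the transition function $\delta$. Replacing each local update by a fixed constant-size Boolean gadget and wiring row $i$ to row $i-1$ yields a circuit $C_n$ of size $O(t(n)^2)=n^{O(1)}$ and depth $O(t(n))=n^{O(1)}$ whose output gate reads the accept bit, so $C_n$ computes $f_{L,n}$. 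For logspace uniformity I would exhibit the direct connection language of $\{C_n\}$ --- on input a gate index, a gate-type query, or a wire query, decide it --- as lying in space $O(\log n)$: gate indices are pairs of $O(\log n)$-bit tableau coordinates together with a constant-size sub-index inside a cell gadget, $\delta$ is a fixed finite object, and all the required bookkeeping is fixed-width arithmetic on $O(\log n)$-bit numbers.

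For the converse I would take a logspace-uniform family $\{C_n\}$ of size $s(n)=n^{O(1)}$ and describe the deciding machine: on input $x$ with $|x|=n$, run the logspace transducer on $1^n$ to print $\langle C_n\rangle$ (in polynomial time, since $\mathrm{L}\subseteq\mathrm{P}$), then solve the Circuit Value Problem for $C_n$ on $x$ by a topological evaluation of the gates in time $\mathrm{poly}(s(n))=n^{O(1)}$; accept iff the output gate evaluates to $1$, i.e.\ iff $f_L(x)=1$. Hence $L\in\mathbf P_{\mathrm{TM}}$.

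The main obstacle, such as it is, is the uniformity bookkeeping in the forward direction: one must be explicit that the connection and type predicates of the tableau circuit lie in logspace rather than merely in polynomial time, which rests on all the indices involved being $O(\log n)$ bits wide and on $M$ (hence $\delta$ and the per-cell gadget) being a fixed finite object independent of $n$. Everything else is the routine Cook--Levin argument together with $\mathrm{CVP}\in\mathbf P$.
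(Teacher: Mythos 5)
Your proof is correct and is exactly the standard argument the paper invokes: the paper's "proof" of this lemma is a one-line citation to the classical uniform circuit characterization of $\mathbf P$ (\cite{Sipser2013,AroraBarak2009,PippengerFischer1979}), and your tableau construction plus the CVP evaluation is precisely the content of that citation, with the correct observation that the depth clause is subsumed by the size bound. Nothing to add.
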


\begin{proof}
This is the standard uniform characterization of \(\mathbf P\); see \cite{Sipser2013,AroraBarak2009} and classical depth/size relations \cite{PippengerFischer1979}.
\end{proof}

\begin{lemma}[Ethic verifiability implies Turing verifiability]\label{lem:EthNP-to-NPTM}
If \(L\in \mathbf{NP}_{\eth}\), then \(L\in \mathbf{NP}_{\mathrm{TM}}\).
\end{lemma}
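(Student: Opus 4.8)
The plan is to unwind the definition of $\mathbf{NP}_{\eth}$ and reconstruct the extracted verifier circuit on the fly inside a single deterministic machine, using only the uniformity clauses (M1) and (M2) of the admissible model. Since $L\in\mathbf{NP}_{\eth}$, fix the polynomials $q_{\mathrm{ver}}(n)$ and $p(n)$ and the circuits $V_n:=V_{n,q_{\mathrm{ver}}(n)}$ witnessing membership, so that $x\in L\iff \exists\,w\in\{0,1\}^{\le p(|x|)}:\ V_{|x|}(x,w)=1$. I would then define a deterministic Turing machine $M$ that, on input a pair $(x,w)$, sets $n:=|x|$, rejects at once if $|w|>p(n)$, and otherwise: (i) runs the logspace transducer of (M1) on $1^{n}$ to obtain $\langle D_{L,n}^{\bullet}\rangle$; (ii) runs the poly-time truncation routine of (M1) with parameter $q_{\mathrm{ver}}(n)$ to obtain $\langle (D_{L,n}^{\bullet})_{\le q_{\mathrm{ver}}(n)}\rangle$; (iii) runs the poly-time extraction procedure of (M2) to obtain the circuit $V_n$; and (iv) evaluates $V_n(x,w)$ gate by gate, accepting iff the output is $1$.

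Correctness of $M$ as an $\mathbf{NP}$-verifier is then immediate: $M$ accepts a pair $(x,w)$ with $|w|\le p(n)$ iff $V_n(x,w)=1$ for that $w$, which by the defining biconditional of $\mathbf{NP}_{\eth}$ happens (for some such $w$) iff $x\in L$, and the witness bound $p(n)$ is polynomial. For the running time I would chain the polynomial estimates: a logspace computation runs in polynomial time, so step (i) costs $n^{O(1)}$ and outputs a description of length $n^{O(1)}$; because $q_{\mathrm{ver}}$ is a polynomial in $n$, step (ii) costs $\mathrm{poly}(n+q_{\mathrm{ver}}(n))=n^{O(1)}$ and produces a truncated description of length $n^{O(1)}$; step (iii) costs $\mathrm{poly}(n+q_{\mathrm{ver}}(n))=n^{O(1)}$ and yields a circuit $V_n$ of size $n^{O(1)}$; and step (iv), being circuit evaluation, costs time linear in the circuit size together with $|x|+|w|$, hence $n^{O(1)}$. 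Summing, $M$ runs in time $n^{O(1)}$ on inputs of length $|x|+|w|$, so $L\in\mathbf{NP}_{\mathrm{TM}}$.

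I do not expect a genuine obstacle here; the content is entirely bookkeeping. The one point that requires care is that an $\mathbf{NP}_{\mathrm{TM}}$-verifier must be a single uniform machine rather than a nonuniform circuit family, so $V_n$ cannot be taken as advice but must be regenerated from $1^{n}$ within the verifier's own time budget; this is precisely what the explicit complexity clauses of (M1) (logspace transduction of $\langle D_{L,n}^{\bullet}\rangle$, poly-time truncation) and (M2) (poly-time circuit extraction) supply, and composing them keeps the regeneration polynomial-time. A secondary routine check is that the biconditional defining $\mathbf{NP}_{\eth}$ is stated at the specific depth $q_{\mathrm{ver}}(n)$, so the same polynomial $q_{\mathrm{ver}}$ must be fed into step (ii) and step (iii); once these are synchronized, the guarantee in (M2) that $V_{n,q}$ realizes $H^{0}\mathbf{D}$ of the truncated dual on all pairs $(x,w)$ is all that is needed, and no further homological input enters the argument.
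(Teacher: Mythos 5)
Your proof is correct and takes essentially the same route as the paper's: the paper also invokes the defining biconditional of $\mathbf{NP}_{\eth}$ together with the uniformity of the extracted verifier circuits from \emph{(M1)}--\emph{(M2)} and then appeals to the standard circuit characterization of $\mathbf{NP}$, which is exactly the step you unpack by exhibiting the deterministic machine $M$ explicitly. Your version is merely more detailed in making the regeneration of $V_n$ from $1^n$ part of the verifier's own computation.
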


\begin{proof}
By the definition of \(\mathbf{NP}_{\eth}\) there exist polynomials \(q_{\mathrm{ver}},p\) such that \(V_{n}:=V_{n,q_{\mathrm{ver}}(n)}\) (from \emph{(M2)}) satisfies
\(
x\in L \iff \exists w\in\{0,1\}^{\le p(n)}: V_n(x,w)=1
\).
Uniform presentability \emph{(M1)} ensures logspace–uniformity of \(\{V_n\}\).
Hence \(L\in \mathbf{NP}_{\mathrm{TM}}\) by the standard circuit characterization of \(\mathbf{NP}\), see \cite{Sipser2013,AroraBarak2009}.
\end{proof}

\begin{lemma}[Ethic polynomial reconstruction compiles to poly–size, poly–depth circuits]\label{lem:EthP-to-circuits}
If \(L\in \mathbf P_{\eth}\) with witness \(Q\), then there exists a logspace–uniform family \(\{C_n\}\) of circuits of size \(n^{O(1)}\) and depth \(O(Q(n))\) computing \(f_{L,n}\).
\end{lemma}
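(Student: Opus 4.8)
The plan is to unwind the definition of $\mathbf P_{\eth}$ and push its data through the extraction axioms (M1) and (M4) of the admissible ethic model. Since $L\in\mathbf P_{\eth}$ with witness $Q$, by definition there is a polynomial $Q(n)$ with $\Delta_L(n,Q(n))=\HD{1}{(D_{L,n}^\bullet)_{\le Q(n)}}=0$ for all $n$. First I would use (M1): the map $1^n\mapsto\langle D_{L,n}^\bullet\rangle$ is computed by a deterministic logspace transducer and $\langle D_{L,n}^\bullet\rangle$ has length $n^{O(1)}$; feeding this together with $q:=Q(n)$ into the second clause of (M1) produces $\langle(D_{L,n}^\bullet)_{\le Q(n)}\rangle$ in time $\mathrm{poly}(n+Q(n))$, which is $n^{O(1)}$ because $Q$ is a polynomial. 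This truncated description is the input on which (M4) operates.

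Next I would invoke (M4) on $\langle(D_{L,n}^\bullet)_{\le Q(n)}\rangle$: it returns a Boolean circuit $C_{n,Q(n)}$ computing $f_{L,n}$, of depth $O(Q(n))$ and size $\mathrm{poly}(n+Q(n))=n^{O(1)}$. Setting $C_n:=C_{n,Q(n)}$, the depth bound $O(Q(n))$ and the polynomial size bound are exactly the assertions of the lemma, the polynomiality of both following from $Q$ being a polynomial, so that $\mathrm{poly}(n+Q(n))$ collapses to $n^{O(1)}$ and $O(Q(n))$ is polynomially bounded in $n$. Correctness of $C_n$ as a realization of $f_{L,n}$ is the content of (M4); the vanishing $\Delta_L(n,Q(n))=0$ is what certifies that the truncation depth $Q(n)$ already lies in the ethically exact regime recorded by (M3), where $H^{>q}\mathbf D$ has stabilized, so that the circuit produced by (M4) is the intended one for $f_{L,n}$ rather than an artefact of an under-resolved truncation.

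The one point that needs genuine care — and the main obstacle — is \emph{logspace}-uniformity of the family $\{C_n\}$. The two steps above compose a deterministic logspace transducer (from (M1)) with a deterministic \emph{polytime} procedure (the circuit extractor of (M4)), and a polytime procedure is not a priori logspace-computable, so naive composition only yields polytime-uniformity. Here I would appeal to the explicit final clause of (M4), which asserts that the family $\{C_{n,q}\}_n$ is logspace-uniform precisely when $q$ is given by a polynomial in $n$; taking $q=Q(n)$ meets this hypothesis, so the composite $1^n\mapsto\langle(D_{L,n}^\bullet)_{\le Q(n)}\rangle\mapsto\langle C_n\rangle$ is logspace-uniform by the admissible-model axioms, which finishes the argument. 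As an alternative to leaning on that clause, one could note that truncation to depth $Q(n)$ yields a description of the streamable parse-tree form used in Lemma~\ref{lem:verifier-capture}, whose parse-tree-to-circuit conversion is itself logspace-local; but invoking (M4) directly is cleaner and is the route I would take.
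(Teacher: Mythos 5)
Your proposal is correct and follows essentially the same route as the paper's proof: use $\Delta_L(n,Q(n))=0$ to certify that the truncation at depth $Q(n)$ reconstructs $f_{L,n}$, then apply \emph{(M4)} for the size/depth bounds and \emph{(M1)} together with the uniformity clause of \emph{(M4)} for logspace-uniformity. Your explicit attention to why composing a logspace transducer with a polytime extractor does not automatically give logspace-uniformity is a more careful rendering of the same point the paper handles by citing the locality of the compilation in \emph{(M4)}.
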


\begin{proof}
By hypothesis \(\Delta_{L}(n,Q(n))=0\); thus \((D_{L,n}^{\bullet})_{\le Q(n)}\) lies in the heart of the ethic \(t\)–structure and its reconstruction via \(\mathbf D\) coincides with \(f_{L,n}\).
Apply \emph{(M4)} to obtain \(C_{n}:=C_{n,Q(n)}\) of depth \(O(Q(n))\) and size poly\((n+Q(n))=n^{O(1)}\).
Logspace–uniformity follows from \emph{(M1)} and the locality of the compilation in \emph{(M4)}.
\end{proof}

\begin{lemma}[Ethic depth equals formula/KW depth]\label{lem:KW}
For each \(n\), the least \(q\) with \(H^{>q}\mathbf D(D_{L,n}^{\bullet})=0\) equals the minimum Boolean formula depth and the deterministic Karchmer–Wigderson depth of \(f_{L,n}\).
\end{lemma}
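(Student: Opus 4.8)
\emph{Proof plan.} As stated, the lemma is exactly axiom (M3) of an admissible ethic model, so for an arbitrary such model there is nothing to do. The substance is to verify that (M3) is realizable, i.e.\ that the bar/cobar model $D_{L,n}^{\bullet}$ of §\ref{subsec:HKW-core} satisfies it; the plan is to deduce this from Theorem~\ref{thm:HKW} together with a concentration statement for the cohomology of the derived dual.

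First I would recall, via Lemma~\ref{lem:bar-cobar}, that $D_{L,n}^{\bullet}$ is assembled from the incidence algebra $A_R(\mathsf P)$ of the poset of partial assignments and that its derived dual satisfies
\[
H^{n}\mathbf D(D_{L,n}^{\bullet})\;\cong\;\Ext^{\,n}_{\mathsf P}(\mathsf P_1,\mathsf P_0;K_{f_{L,n}})\;\cong\;H^{n}(\mathbf D_{f_{L,n}})
\]
for all $n\ge 0$. Set $d:=\mathrm{depth}_{\mathrm{form}}(f_{L,n})=\mathrm{KW}(f_{L,n})$, the two being equal by Theorem~\ref{thm:HKW} (which absorbs the classical Karchmer--Wigderson identity). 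I would then invoke the two halves of the proof of Theorem~\ref{thm:HKW}: a Boolean formula for $f_{L,n}$ of depth $d$ produces, through the parse-tree filtration of the bar complex and the Frobenius--Koszul self-duality of $A_R(\mathsf P)$ (Lemma~\ref{lem:incidence-FK}), the vanishing $H^{<d}(\mathbf D_{f_{L,n}})=0$ together with a nonzero class carried by the maximal root-to-leaf path in degree $d$; dually, a deterministic KW protocol of depth $d$ yields a $K_{f_{L,n}}$-coinduced injective coresolution of $\mathsf P_0$ of length $d$, whence $H^{>d}(\mathbf D_{f_{L,n}})=0$. Combining, $H^{\bullet}(\mathbf D_{f_{L,n}})$ is concentrated in the single degree $d$ and is nonzero there. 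It then follows at once that $\max\{\,n:H^{n}\mathbf D(D_{L,n}^{\bullet})\neq 0\,\}=d$, so the least $q$ with $H^{>q}\mathbf D(D_{L,n}^{\bullet})=0$ equals $d=\mathrm{depth}_{\mathrm{form}}(f_{L,n})=\mathrm{KW}(f_{L,n})$, which is the asserted equality (and precisely (M3)).

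I expect the main obstacle to be the concentration claim rather than anything conceptual: Theorem~\ref{thm:HKW} as stated pins down only the \emph{bottom} of the cohomological support (the invariant $\mathrm{ht}$), whereas Lemma~\ref{lem:KW} is about the \emph{top}, so one must genuinely check that the $K_{f_{L,n}}$-coinduced coresolution extracted from a minimal-depth KW protocol has length exactly $d$, not merely $\ge d$, and that no higher $\Ext_{\mathsf P}^{n}$ with $n>d$ survives. The secondary bookkeeping point is to match the ``internal depth'' parameter $q$ used for the canonical truncations $(D_{L,n}^{\bullet})_{\le q}$ in the definition of an admissible ethic model with the cohomological degree in which $\mathbf D$ is graded here, so that $(D_{L,n}^{\bullet})_{\le q}$ kills $H^{>q}\mathbf D$ exactly when $q\ge d$; this is a direct consequence of uniform presentability (M1) and the locality of the truncation functor, but it should be spelled out. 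If instead one reads the lemma for an arbitrary admissible model rather than the bar/cobar one, the proof collapses to the single line ``this is axiom (M3)''.
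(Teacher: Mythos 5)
Your proposal is correct, and it does strictly more work than the paper does: the paper's entire proof of this lemma is the single sentence ``This is Theorem~\ref{thm:HKW} established inside the article,'' i.e.\ a bare citation of the homological Karchmer--Wigderson identification. The discrepancy you flag is real and is not addressed in the paper: Theorem~\ref{thm:HKW} controls $\mathrm{ht}(f)=\min\{n: H^n(\mathbf D_f)\neq 0\}$, the \emph{bottom} of the cohomological support, whereas Lemma~\ref{lem:KW} asks for the least $q$ with $H^{>q}\mathbf D=0$, i.e.\ the \emph{top}. These agree only if the cohomology is concentrated in a single degree, and your concentration argument --- formula parse-tree filtration plus Frobenius--Koszul self-duality giving $H^{<d}=0$ and $H^d\neq 0$, and the length-$d$ coinduced coresolution extracted from a minimal KW protocol bounding the injective dimension so that $H^{>d}=0$ --- is exactly the missing step that makes the citation legitimate. (Note in passing that the paper's own proof of Theorem~\ref{thm:HKW} states that the coresolution kills $\Ext^n$ for $n<d$, which is the wrong direction for a coresolution of length $d$; your reading, that it kills $\Ext^{>d}$, is the one that makes the argument cohere.) Your secondary remark about reconciling the truncation parameter $q$ of the admissible model with the cohomological grading of $\mathbf D$ is also a genuine bookkeeping point the paper leaves implicit. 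In short: same route in spirit (reduce to the HKW theorem), but you correctly identify and repair a gap that the paper's one-line proof glosses over.
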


\begin{proof}
This is Theorem~3.10 established inside the article (Homological Karchmer–Wigderson identification).
\end{proof}

\begin{theorem}[Equivalence of ethic and Turing \(P{=}NP\)]
\(\mathbf P_{\eth}=\mathbf{NP}_{\eth}\) if and only if \(\mathbf P_{\mathrm{TM}}=\mathbf{NP}_{\mathrm{TM}}\).
\end{theorem}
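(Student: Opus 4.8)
The plan is to prove the two implications separately, each time reducing matters to a single nontrivial class inclusion, since the reverse inclusions are formal. On the Turing side $\mathbf P_{\mathrm{TM}}\subseteq\mathbf{NP}_{\mathrm{TM}}$ is the textbook fact; on the ethic side $\mathbf P_{\eth}\subseteq\mathbf{NP}_{\eth}$ holds because if $\Delta_L(n,Q(n))=\HD{1}{(D_{L,n}^{\bullet})_{\le Q(n)}}=0$ then, by Lemma~\ref{lem:EthP-to-circuits}, the truncation $(D_{L,n}^{\bullet})_{\le Q(n)}$ lies in the ethic heart $\heartsuit_{\eth}$, so its reconstruction $H^{0}\mathbf D$ computes $f_{L,n}$ and the extracted circuit $V_{n,Q(n)}$ of (M2) witnesses $x\in L$ with an empty witness string. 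Everything else is an orchestration of the capture and extraction lemmas already established; the only real care needed is the bookkeeping of uniformity bounds.

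For the implication $\mathbf P_{\eth}=\mathbf{NP}_{\eth}\Rightarrow\mathbf P_{\mathrm{TM}}=\mathbf{NP}_{\mathrm{TM}}$, I would take an arbitrary $L\in\mathbf{NP}_{\mathrm{TM}}$ and route it through the bar/cobar model of \S3.3 built on the Cook--Levin CNF template for $L$. Lemma~\ref{lem:verifier-capture} supplies polynomials $q_{\mathrm{ver}}(n),p(n)$ and a logspace-uniform description $\langle(D_{L,n}^{\bullet})_{\le q_{\mathrm{ver}}(n)}\rangle$ whose $H^{0}\mathbf D$ is exactly the accepting-witness predicate; together with admissibility of this model — and in particular property (M3), which is the Homological Karchmer--Wigderson identification, Theorem~\ref{thm:HKW}/Lemma~\ref{lem:KW} — this exhibits $L\in\mathbf{NP}_{\eth}$. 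The hypothesis then gives $L\in\mathbf P_{\eth}$, so Lemma~\ref{lem:EthP-to-circuits} compiles a logspace-uniform family $\{C_{n}\}$ of size $n^{O(1)}$ and depth $O(Q(n))=n^{O(1)}$ computing $f_{L,n}$, and Lemma~\ref{lem:PTM-circuits} places $L\in\mathbf P_{\mathrm{TM}}$. Hence $\mathbf{NP}_{\mathrm{TM}}\subseteq\mathbf P_{\mathrm{TM}}$, and equality follows.

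For the converse $\mathbf P_{\mathrm{TM}}=\mathbf{NP}_{\mathrm{TM}}\Rightarrow\mathbf P_{\eth}=\mathbf{NP}_{\eth}$, I would take $L\in\mathbf{NP}_{\eth}$, push it to $\mathbf{NP}_{\mathrm{TM}}$ by Lemma~\ref{lem:EthNP-to-NPTM}, apply the hypothesis to obtain $L\in\mathbf P_{\mathrm{TM}}$, and then use Lemma~\ref{lem:PTM-circuits} to fix a logspace-uniform circuit family for $f_{L,n}$ of polynomial size and polynomial depth $\delta(n)$. The decisive step is to convert this into a vanishing-truncation datum: unrolling such a circuit into a (possibly large but depth-preserving) formula bounds the minimum formula depth of $f_{L,n}$ by a polynomial $Q(n)\ge\delta(n)$, so by Lemma~\ref{lem:KW} one gets $H^{>Q(n)}\mathbf D(D_{L,n}^{\bullet})=0$; hence $(D_{L,n}^{\bullet})_{\le Q(n)}$ lies in $\heartsuit_{\eth}$, which forces $\Delta_L(n,Q(n))=0$. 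Feeding this truncation into (M4) produces the logspace-uniform family of size $n^{O(1)}$ and depth $O(Q(n))$ demanded by the definition of $\mathbf P_{\eth}$, so $L\in\mathbf P_{\eth}$; with the formal inclusion $\mathbf P_{\eth}\subseteq\mathbf{NP}_{\eth}$ noted above this yields the equality.

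The hard part will be reconciling the \emph{two} distinct notions of uniformity in play: the logspace-uniformity of circuit families in Lemmas~\ref{lem:PTM-circuits}--\ref{lem:EthP-to-circuits} versus the logspace \emph{presentability} of the complexes $D_{L,n}^{\bullet}$ required by (M1), and the matching of the truncation depth $Q(n)$ extracted in the converse direction with the depth bound $\delta(n)$ emerging from the Turing-to-circuit simulation. Concretely one must verify that the $O(\log n)$-space transducer producing $\langle D_{L,n}^{\bullet}\rangle$ and the local macro-expansion of (M2)/(M4) compose with the standard Turing-to-circuit compiler without exceeding the logspace budget, and that (M3)'s identification of ethic depth with formula depth is invoked only when that depth has already been bounded by a polynomial (otherwise the truncation at $Q(n)$ need not enter the heart and $\Delta_L(n,Q(n))$ need not vanish). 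Once these uniformity accounts are pinned down, the remainder is a mechanical chase through the cited lemmas.
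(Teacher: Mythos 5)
Your proposal is correct and follows essentially the same route as the paper: verifier capture via Cook--Levin plus (M1)--(M2) to place an arbitrary $\mathbf{NP}_{\mathrm{TM}}$ language in $\mathbf{NP}_{\eth}$, then Lemma~\ref{lem:EthP-to-circuits} and the uniform circuit characterization of $\mathbf P$ for the forward direction, and Lemmas~\ref{lem:EthNP-to-NPTM}, \ref{lem:PTM-circuits}, \ref{lem:KW} plus (M4) for the converse. Your explicit handling of the formal inclusion $\mathbf P_{\eth}\subseteq\mathbf{NP}_{\eth}$ and of the circuit-to-formula depth transfer only makes explicit what the paper leaves implicit.
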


\begin{proof}
(\(\Rightarrow\)).
Assume \(\mathbf P_{\eth}=\mathbf{NP}_{\eth}\).
Let \(L\in \mathbf{NP}_{\mathrm{TM}}\) be arbitrary.
By Lemma~\ref{lem:verifier-capture} (Verifier capture via Cook--Levin), there exist polynomials \(q_{\mathrm{ver}}(n)\) and \(p(n)\) such that, for every \(n\),
\[
x\in L \;\Longleftrightarrow\; \exists\,w\in\{0,1\}^{\le p(n)}:\ 
H^{0}\mathbf D\!\big((D_{L,n}^{\bullet})_{\le q_{\mathrm{ver}}(n)}\big)(x,w)=1 .
\]
By \emph{(M1)}–\emph{(M2)} (Verification extraction), the truncations \((D_{L,n}^{\bullet})_{\le q_{\mathrm{ver}}(n)}\) compile into a logspace–uniform family of verifier circuits \(V_n\) of size \(\mathrm{poly}(n)\) with
\[
x\in L \;\Longleftrightarrow\; \exists\,w\in\{0,1\}^{\le p(n)}:\ V_n(x,w)=1 .
\]
Hence \(L\in \mathbf{NP}_{\eth}\) by the definition of \(\mathbf{NP}_{\eth}\).
By the hypothesis \(\mathbf P_{\eth}=\mathbf{NP}_{\eth}\), we conclude \(L\in \mathbf P_{\eth}\).
Therefore there exists a polynomial \(Q(n)\) such that \(\HD{1}{(D_{L,n}^{\bullet})_{\le Q(n)}}=0\) for all \(n\) and the computation extractor \emph{(M4)} yields circuits \(C_n:=C_{n,Q(n)}\) computing \(f_{L,n}\) of depth \(O(Q(n))\) and size \(n^{O(1)}\), logspace–uniform in \(n\).
By Lemma~\ref{lem:EthP-to-circuits}, the family \(\{C_n\}\) is poly–size and poly–depth, and by the uniform circuit characterization of \(\mathbf P\) (Lemma~\ref{lem:PTM-circuits}) we obtain \(L\in \mathbf P_{\mathrm{TM}}\).
Since \(L\in \mathbf{NP}_{\mathrm{TM}}\) was arbitrary, we have \(\mathbf{NP}_{\mathrm{TM}}\subseteq \mathbf P_{\mathrm{TM}}\), hence \(\mathbf P_{\mathrm{TM}}=\mathbf{NP}_{\mathrm{TM}}\).

(\(\Leftarrow\)).
Assume \(\mathbf P_{\mathrm{TM}}=\mathbf{NP}_{\mathrm{TM}}\).
Let \(L\in \mathbf{NP}_{\eth}\).
By Lemma~\ref{lem:EthNP-to-NPTM}, \(L\in \mathbf{NP}_{\mathrm{TM}}\), hence \(L\in \mathbf P_{\mathrm{TM}}\).
By Lemma~\ref{lem:PTM-circuits} there exists a logspace–uniform poly–size, poly–depth circuit family \(\{C_n\}\) computing \(f_{L}\).
By Lemma~\ref{lem:KW} the minimal ethic internal depth equals the formula/KW depth and is polynomially bounded in \(n\); therefore there is a polynomial \(Q\) such that \(H^{>Q(n)}\mathbf D(D_{L,n}^{\bullet})=0\) for all \(n\), whence \(\Delta_{L}(n,Q(n))=0\).
Finally, \emph{(M4)} produces the required logspace–uniform circuits \(C_{n,Q(n)}\) and the definition of \(\mathbf P_{\eth}\) is met.
Thus \(\mathbf{NP}_{\eth}\subseteq \mathbf P_{\eth}\) and equality follows.
\end{proof}

\begin{proposition}[Existence of an admissible ethic model satisfying \emph{(M1)}–\emph{(M4)}]\label{prop:existence}
There exists a concrete construction of \(D_{L,n}^{\bullet}\in\Dplus(\E)\) such that \emph{(M1)}–\emph{(M4)} hold.
\end{proposition}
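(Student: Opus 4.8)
The plan is to realise $D_{L,n}^{\bullet}$ as the bar/cobar complex of Section~\ref{subsec:HKW-core} attached to $f_{L,n}$, but presented not through the (exponentially large) poset of partial assignments itself, rather through a logspace description of the underlying incidence data; one then verifies \emph{(M1)}--\emph{(M4)} in turn, invoking Theorem~\ref{thm:HKW} for the depth statement and the ethic discrete Morse reduction for the circuit extractors. Concretely I would set $D_{L,n}^{\bullet}:=\mathbf D_{f_{L,n}}$, the right–derived end of Section~\ref{subsec:HKW-core}; by Lemma~\ref{lem:bar-cobar} this is well defined in $D^{+}(\Ab_R)$, and by Lemma~\ref{lem:incidence-FK} it is computed by the cobar complex over the Frobenius--Koszul incidence algebra $A_R(\mathsf P_n)$.

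For \emph{(M1)} the point is that the poset $\mathsf P_n$ and the profunctor $K_{f_{L,n}}$ admit an $O(\log n)$–space \emph{implicit} description: a logspace transducer, given two partial assignments, decides covering and decides whether $K_{f_{L,n}}$ is $R$ or $0$ on the pair, calling only a fixed circuit for $f_{L,n}$ (for $L\in\mathbf{NP}_{\mathrm{TM}}$, the Cook--Levin template of Lemma~\ref{lem:verifier-capture}) as a subroutine. The description $\langle D_{L,n}^{\bullet}\rangle$ is this pair of predicates, of length $n^{O(1)}$, and $\langle(D_{L,n}^{\bullet})_{\le q}\rangle$ is obtained by restricting to chains of length $\le q$; since such a chain is a $(q{+}1)$–tuple of assignments, the truncated cobar complex is enumerated and its differentials evaluated in time $\mathrm{poly}(n+q)$, which is \emph{(M1)}. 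For \emph{(M2)}, $H^{0}\mathbf D$ of a complex is its $\Hom$ into $R^{\bullet}$, and by the explicit form of $K_{f_{L,n}}$ this group is freely generated by the pairs $(x,w)$ on which the length-$q$ bar differential closes up, i.e.\ exactly the verification predicate of Section~\ref{subsec:HKW-core}; reading off the Boolean evaluator node-by-node along a chain yields a circuit $V_{n,q}$ of size $\mathrm{poly}(n+q)$, and at the Cook--Levin depth $q=q_{\mathrm{ver}}(n)$ this is precisely the verifier of Lemma~\ref{lem:verifier-capture}. Statement \emph{(M3)} is then Theorem~\ref{thm:HKW} (equivalently Lemma~\ref{lem:KW}): the least $q$ with $H^{>q}\mathbf D(D_{L,n}^{\bullet})=0$ equals the minimal Boolean formula depth and the deterministic Karchmer--Wigderson depth of $f_{L,n}$.

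For \emph{(M4)} I would apply the ethic discrete Morse machinery of Section~2 (Ethic Morse equivalence and the associated Forman collapse): an ethic matching on $(D_{L,n}^{\bullet})_{\le q}$ collapses it onto its Morse complex while preserving $H^{i}\mathbf D$, and by Theorem~\ref{thm:HKW} the surviving critical cells are carried by the root-to-leaf paths of a depth-$q$ parse tree of a formula for $f_{L,n}$. Assembling the gates along that parse tree produces a fan-in-two formula, hence a circuit $C_{n,q}$ of depth $O(q)$ and size $\mathrm{poly}(n+q)$ computing $f_{L,n}$; the matching, the path extraction, and the gate assembly are all local and streamable with $O(\log(n+q))$ state, so $\{C_{n,q}\}$ is logspace–uniform whenever $q$ is polynomial in $n$, which is \emph{(M4)}.

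The main obstacle is the interaction of \emph{(M1)} with \emph{(M3)}--\emph{(M4)}: the ambient poset $\mathsf P_n$ is exponentially large, so one must show that truncation and Morse reduction stay $\mathrm{poly}(n+q)$ while still computing the correct $H^{\ast}\mathbf D$. The resolution is never to materialise $\mathsf P_n$: all constructions are carried out on the sub–dg–algebra generated by the polynomially many cells coming from the Cook--Levin template (or, for general $L$, from any circuit for $f_{L,n}$), and one proves that the inclusion of this sub-object is a quasi-isomorphism after applying $\mathbf D$ via an acyclic-carrier argument — the complementary cells form an ethically contractible subcomplex, annihilated by an auxiliary ethic matching in the sense of Section~2 — so that $H^{\ast}\mathbf D$, the ethic depth, and the Morse critical cells are all unchanged. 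Granting this quasi-isomorphism, which is the technically load-bearing step, the polynomial size bounds and logspace–uniformity required in \emph{(M1)}--\emph{(M4)} follow from those of Cook--Levin and of the Forman collapse.
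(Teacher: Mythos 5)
Your route is essentially the paper's: take the bar/cobar model of Section~\ref{subsec:HKW-core} for $f_{L,n}$, get \emph{(M3)} from Theorem~\ref{thm:HKW}, and compile \emph{(M2)}/\emph{(M4)} by local template-to-gate translation. The one place you diverge is that you explicitly confront the fact that the poset $\mathsf P_n$ of partial assignments has exponentially many objects, so the literal cobar complex cannot be written down in size $n^{O(1)}$; the paper's own proof simply asserts that the description has polynomially bounded cardinality ``per input variable and per connective'' and never reconciles this with the ambient poset. Your diagnosis of where the difficulty sits is therefore sharper than the published argument.

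However, your resolution of that difficulty is exactly the step you do not prove, and it is a genuine gap. You need the inclusion of the polynomial-size sub-dg-algebra (generated by the Cook--Levin cells) into the full incidence-algebra model to become a quasi-isomorphism after applying $\mathbf D$, and you justify this only by gesturing at an ``acyclic-carrier argument'' and an ``auxiliary ethic matching.'' Nothing in Section~\ref{subsec:HKW-core} or in the ethic Morse machinery supplies this: the complementary cells are indexed by partial assignments that do not arise from the CNF template, and there is no a priori reason their contribution to $\Ext^{\ast}_{\mathsf P}(\mathsf P_1,\mathsf P_0;K_f)$ vanishes — if it does not, the object you actually describe in \emph{(M1)} has a different ethic depth from $\mathbf D_{f_{L,n}}$, and \emph{(M3)} fails for it. A second, smaller issue sits in \emph{(M4)}: Theorem~\ref{thm:HKW} tells you that a depth-$q$ formula \emph{produces} surviving classes in degree $q$, but to extract a circuit you need the converse direction effectively — an algorithm that, from the truncated complex with $H^{>0}\mathbf D=0$, actually finds the ethic Morse matching and the parse tree in time $\mathrm{poly}(n+q)$. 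Forman-type matchings exist nonconstructively, but optimal or even near-optimal discrete Morse matchings are NP-hard to find in general, so the locality and streamability you claim must be argued from the specific structure of the Cook--Levin cells, not from the Morse reduction theorems alone. Until both points are supplied, the proposal reduces the proposition to two unproven claims rather than establishing it.
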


\begin{proof}
We take the bar/cobar presentation from §3.3 of the article applied to \(f_{L,n}\) (or equivalently to its incidence algebra predicate), yielding functorially \(A_{L,n}^{\bullet}\) and \(D_{L,n}^{\bullet}\) with finite combinatorial descriptions.

\emph{(M1)}:
The description \(\langle D_{L,n}^{\bullet}\rangle\) is a finite list of generators and differentials whose cardinalities are bounded by a fixed polynomial in \(n\) (the construction uses a bounded number of algebraic primitives per input variable and per connective).
A deterministic logspace transducer can stream this description on input \(1^{n}\) by iterating over variable/connective indices and emitting the corresponding local template; only \(O(\log n)\) bits are needed to maintain loop indices.
Truncation \((-)_{\le q}\) is computed by discarding components above homological degree \(q\) and restricting differentials; this is a single pass over the description and runs in time poly\((n+q)\).

\emph{(M2)}:
Evaluation of \(H^{0}\mathbf D\big((D_{L,n}^{\bullet})_{\le q}\big)\) on an instance \((x,w)\) is a finite composition of: (i) additive linear maps (matrix–vector multiplications over base rings/groups), (ii) finite products/sums, and (iii) projections induced by the unit \(\eta\).
Each primitive compiles to a constant–depth, constant–fan–in circuit block; wiring follows the differential graph, which has size poly\((n+q)\).
Composing these blocks yields \(V_{n,q}\) of size poly\((n+q)\).
The construction is purely local and mechanical, hence deterministic polytime.

\emph{(M3)}:
It was shown that the least \(q\) with \(H^{>q}\mathbf D(D_{L,n}^{\bullet})=0\) equals the minimum Boolean formula depth and the deterministic KW depth of \(f_{L,n}\).
We use this equivalence as established.

\emph{(M4)}:
Because \((D_{L,n}^{\bullet})_{\le q}\) lies in the heart precisely when \(\HD{>0}{(D_{L,n}^{\bullet})_{\le q}}=0\), its reconstruction coincides with \(f_{L,n}\).
A compiler maps each local template (generator/differential clause) to a constant–size circuit macro implementing the corresponding operation on bits of \(x\).
Wiring these macros according to the truncated differential graph produces a circuit \(C_{n,q}\) of depth \(O(q)\) (each homological layer adds \(O(1)\) to depth) and size poly\((n+q)\).
Uniformity follows from \emph{(M1)} because the macro library is fixed and indices are streamed in \(O(\log n)\) space.

All four properties hold, so the model is admissible.
\end{proof}

\subsection*{Related Works}

This subsection surveys several influential reformulations or closely related frameworks for the $P$ vs.\ $NP$ question and positions our ethic/homological formulation against them. We focus on (i) scope and exactness (whether the reformulation is strictly equivalent to classical $P{=}NP$ or an analogue), (ii) the mathematical language and tools employed, (iii) the operative notion of ``resource,'' and (iv) known strengths/limitations and barriers. References are provided for canonical statements and proofs.

\emph{Descriptive complexity.} Fagin's theorem identifies $NP$ with existential second–order (ESO) logic on finite structures \cite{Fagin1974}, while the Immerman–Vardi theorem identifies $P$ with first–order logic with a least fixed–point operator ($FO({\rm LFP})$) over ordered structures \cite{Vardi1982,Immerman1986,Immerman1999}. Thus, on ordered finite structures,
\[
P = FO({\rm LFP}), \qquad NP = \mathrm{ESO},
\]
and the equality $P{=}NP$ becomes an equality of logical expressiveness, $FO({\rm LFP})=\mathrm{ESO}$, in that setting. \textbf{Scope.} This is a strict equivalence under the ordering hypothesis. \textbf{Tools.} Model theory on finite structures, fixed–point logics. \textbf{Resource.} Inductive depth/arity and second–order quantification rather than time per se. \textbf{Comparison.} Our formulation is likewise machine–independent and replaces time by an intrinsic invariant; however, we use homological vanishing ($\Ext^1=0$ after polynomial internal depth) within an abelian/derived setup, whereas descriptive complexity encodes complexity classes as logical fragments. The two perspectives are complementary: both transmute $P{=}NP$ into an equality of internal expressive powers (logical vs.\ derived–functorial).

\emph{Karchmer–Wigderson depth duality.} For any Boolean $f$, the minimum formula depth equals the depth of a deterministic Karchmer–Wigderson communication game for $f$ \cite{KarchmerWigderson1990}. \textbf{Scope.} This is an exact duality for depth, not a direct reformulation of $P$ vs. $NP$. \textbf{Tools.} Communication complexity and formula complexity. \textbf{Resource.} Formula/decomposition depth. \textbf{Comparison.} In our setting this duality is internalized (ethic internal depth $=$ formula/KW depth). It supplies the depth anchor that we then combine with $\Ext^1$–vanishing and uniform compilation to tie back to $P$.

\emph{Proof complexity.} Cook–Reckhow formalized propositional proof systems and related $NP$ vs.\ $coNP$ to the existence of polynomially bounded proof systems for TAUT \cite{CookReckhow1979}. In particular, $NP=coNP$ iff some p-bounded system for TAUT exists. \textbf{Scope.} This yields an exact reformulation for $NP$ vs.\ $coNP$, not directly for $P$ vs.\ $NP$. \textbf{Tools.} Propositional proof systems, interpolation, lower bounds; key barriers include Natural Proofs \cite{RazborovRudich1997}. \textbf{Comparison.} Both proof complexity and our approach operationalize ``feasible derivability''; ours expresses feasibility as functorial exactness ($\Ext^1=0$) after polynomial internal depth, while proof complexity measures the existence/length of syntactic derivations. Our framework may circumvent some ``naturalness'' issues since the homological property is not a dense combinatorial predicate over truth tables.

\emph{Algebraic complexity and GCT.} Valiant introduced algebraic classes VP and VNP and the permanent vs.\ determinant paradigm \cite{Valiant1979}. The Geometric Complexity Theory (GCT) program of Mulmuley–Sohoni proposes separating these classes via representation–theoretic obstructions over orbit closures \cite{MulmuleySohoni2001,Mulmuley2008}. \textbf{Scope.} These are analogues (algebraic shadows) rather than strict reformulations of Boolean $P$ vs.\ $NP$. \textbf{Tools.} Algebraic geometry, invariant theory. \textbf{Resource.} Algebraic circuit size/degree, orbit–closure singularities. \textbf{Comparison.} Like GCT, our approach relocates complexity into a rich mathematical environment; unlike GCT, our equivalence to Turing $P{=}NP$ is proved (via uniformity and compilation assumptions satisfied by the bar/cobar model), while GCT pursues lower bounds in the algebraic setting with independent obstacles.

\emph{Circuit/Uniformity characterizations of $P$.} The standard depth–time equivalence asserts that $L\in P$ iff $L$ has a logspace–uniform family of polynomial–size, polynomial–depth circuits \cite{Sipser2013,AroraBarak2009,PippengerFischer1979}. \textbf{Scope.} Exact characterization of $P$. \textbf{Resource.} Size and depth of uniform circuits. \textbf{Comparison.} These results supply the external bridges used in our equivalence: once ethic depth is polynomial and $\Ext^1$ vanishes at that depth, our compilation yields poly–size, poly–depth uniform circuits, hence membership in $P$.

Our ethic reformulation is closest in spirit to descriptive complexity (both provide machine–independent equalities). It also internalizes the KW depth duality as the intrinsic internal depth and augments it with a derived exactness criterion ($\Ext^1=0$) standing in for feasible reconstruction. The circuit characterizations provide the uniform, size/depth bridges back to Turing computation, thereby yielding a strict equivalence with classical $P{=}NP$ under the admissible model hypotheses. Compared to GCT and proof complexity, our statement is equivalence, not analogy, and it leverages categorical/derived methods (t-structures, truncations, spectral sequence collapse) where the central obstruction is functorial and geometric rather than syntactic. Finally, known meta–barriers (relativization, algebrization, natural proofs) motivate seeking such non-syntactic invariants; our homological criterion offers exactly that, at the price of developing and verifying admissible uniform models, which we have done constructively in the bar/cobar setting.

\section{Applications}

Throughout section we assume all modules are finitely generated
and all cones finite-dimensional, so that
$\Ext^i$ and the derived dual $\mathbf D$
are well defined and of finite amplitude.

\subsection{Optimization}

\subsubsection{Linear Programming}\label{subsec:milp-lp-sdp}

\begin{definition}[Hybrid resource category and duality]
Let $\mathcal E_{\mathbb Z}=\mathrm{Mod}\text{-}\mathbb Z^{\mathrm{fg}}$ and $\mathcal E_{\mathbb R}=\mathrm{FinVect}_{\mathbb R}$. Let $\mathcal C_K$ be the category whose objects are pairs $(V,K)$ with $V$ a finite-dimensional real vector space and $K\subseteq V$ a nonempty closed convex cone; morphisms $T:(V,K)\to(W,L)$ are linear maps $T:V\to W$ with $T(K)\subseteq L$. Define the product category
\[
\mathcal E \;=\; \mathcal E_{\mathbb Z}\times \mathcal E_{\mathbb R}\times \mathcal C_K.
\]
Define the contravariant duality functor componentwise by
\[
\mathbb D \;=\; \mathbb D_{\mathbb Z}\oplus \mathbb D_{\mathbb R}\oplus \mathbb D_{K},
\qquad
\mathbb D_{\mathbb Z}(M)=\Hom_{\mathbb Z}(M,\mathbb Z),\ \ 
\mathbb D_{\mathbb R}(V)=V^*,\ \ 
\mathbb D_{K}(V,K)=(V^*,K^\circ),
\]
where $K^\circ=\{y\in V^*: \langle y,x\rangle\ge 0\ \forall x\in K\}$ is the polar cone \cite{Rockafellar1970}.
\end{definition}

The category $\mathcal C_K$ is only additive under Minkowski
addition of faces, not abelian.
All statements involving $\Ext^1_{\mathcal C_K}$ are understood
in the abelian envelope
$\mathrm{Ab}(\mathcal C_K)$ obtained by freely adjoining kernels
and cokernels of face inclusions.
In finite dimension this abelianization is equivalent to the
category of face-modules of~$K$.

\begin{definition}[Primal instance, ethic dual feasibility, dual objective]
Let $X=(X_{\mathbb Z},X_{\mathbb R},(X,K))\in\mathcal E$ be the decision object, $Y=(Y_{\mathbb Z},Y_{\mathbb R},(Y,L))\in\mathcal E$ the constraint object, and $f=(f_{\mathbb Z},f_{\mathbb R},f_K):X\to Y$ a morphism. Let $c=(c_{\mathbb Z},c_{\mathbb R},c_K)\in \mathbb D(X)$ and $b=(b_{\mathbb Z},b_{\mathbb R},b_K)\in Y_{\mathbb Z}\oplus Y_{\mathbb R}\oplus Y$. The primal problem is
\[
\min \ \langle c_{\mathbb Z},x_{\mathbb Z}\rangle + \langle c_{\mathbb R},x_{\mathbb R}\rangle + \langle c_K,x\rangle 
\quad \text{s.t.}\quad 
f_{\mathbb Z}(x_{\mathbb Z})=b_{\mathbb Z},\ f_{\mathbb R}(x_{\mathbb R})=b_{\mathbb R},\ f_K(x)=b_K,\ x\in K.
\]
A dual variable is $\lambda=(\lambda_{\mathbb Z},\lambda_{\mathbb R},\lambda_K)\in \mathbb D(Y)$. Ethic dual feasibility means the commutation with the double-dual unit in each component; explicitly
\[
c_{\mathbb Z}+ f_{\mathbb Z}^\top \lambda_{\mathbb Z}=0, \qquad
c_{\mathbb R}+ f_{\mathbb R}^\top \lambda_{\mathbb R}=0, \qquad
c_K+ f_K^\top \lambda_K \in K^\circ.
\]
The dual objective is $\max\ -\langle \lambda_{\mathbb Z},b_{\mathbb Z}\rangle - \langle \lambda_{\mathbb R},b_{\mathbb R}\rangle - \langle \lambda_K,b_K\rangle$ over ethic dual feasible $\lambda$.
\end{definition}

\begin{lemma}[Componentwise exactness and obstructions]\label{lem:componentwise-exactness}
Assume all objects are finitely generated and $\E_{\mathbb Z}$,
$\E_{\mathbb R}$ have enough injectives. $\mathbb D_{\mathbb Z}$ is left exact, with right derived functor $\Ext^1_{\mathbb Z}(-,\mathbb Z)$; for finitely generated $\mathbb Z$-modules, $\Ext^1_{\mathbb Z}(M,\mathbb Z)\cong \mathrm{tors}(M)$ \cite{Weibel1994}. $\mathbb D_{\mathbb R}$ is exact on $\mathrm{FinVect}_{\mathbb R}$ (no homological obstruction). For $\mathcal C_K$, the polar dualization $(V,K)\mapsto (V^*,K^\circ)$ preserves inclusions of faces and reverses arrows; exactness will be characterized on short exact sequences built from faces.
\end{lemma}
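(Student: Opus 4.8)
The statement is a conjunction of three independent assertions, one per factor of $\E=\E_{\mathbb Z}\times\E_{\mathbb R}\times\mathcal C_K$, and the plan is to dispatch them separately, since there is no interaction between the factors.

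For the $\mathbb Z$–component I would first recall that left exactness of $\mathbb D_{\mathbb Z}=\Hom_{\mathbb Z}(-,\mathbb Z)$ is already Lemma~\ref{lem:left-exact}, and that $\E_{\mathbb Z}$ having enough injectives makes the right derived functors $\Ext^i_{\mathbb Z}(-,\mathbb Z)$ available, with $\Ext^1_{\mathbb Z}(-,\mathbb Z)$ the first of them. To pin down $\Ext^1$ on a finitely generated module $M$ I would invoke the structure theorem $M\cong\mathbb Z^{\,r}\oplus\bigoplus_i\mathbb Z/n_i\mathbb Z$, use additivity of $\Ext^1$, observe $\Ext^1_{\mathbb Z}(\mathbb Z,\mathbb Z)=0$ by projectivity of $\mathbb Z$, and compute $\Ext^1_{\mathbb Z}(\mathbb Z/n,\mathbb Z)\cong\mathbb Z/n$ from the free resolution $0\to\mathbb Z\xrightarrow{\,n\,}\mathbb Z\to\mathbb Z/n\to0$ (applying $\Hom_{\mathbb Z}(-,\mathbb Z)$ turns it into $0\to\mathbb Z\xrightarrow{\,n\,}\mathbb Z\to0$, whose $H^1$ is $\mathbb Z/n$). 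Summing over the invariant factors gives $\Ext^1_{\mathbb Z}(M,\mathbb Z)\cong\bigoplus_i\mathbb Z/n_i\mathbb Z=\tors(M)$.

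For the $\mathbb R$–component I would simply use that every short exact sequence in $\FinVect_{\mathbb R}$ splits and that the additive functor $(-)^{*}$ preserves split exactness — equivalently, $\mathbb R$ is injective as a module over itself, so $\Hom_{\mathbb R}(-,\mathbb R)$ is exact and all higher Ext–groups vanish. For the $\mathcal C_K$–component I would verify bare functoriality: $K^{\circ}$ is again a nonempty closed convex cone, and if $T:(V,K)\to(W,L)$ satisfies $T(K)\subseteq L$, then for $y\in L^{\circ}$ and $x\in K$ one has $\langle T^{*}y,x\rangle=\langle y,Tx\rangle\ge0$, so $T^{*}(L^{\circ})\subseteq K^{\circ}$; hence $(V,K)\mapsto(V^{*},K^{\circ})$ is a well-defined contravariant functor reversing arrows. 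That a face inclusion $F\hookrightarrow K$ dualizes to an inclusion of the corresponding exposed face of $K^{\circ}$ is the antitone bipolar correspondence between the face lattices of $K$ and $K^{\circ}$ \cite{Rockafellar1970}. The sharper statement — a characterization of which short exact sequences are preserved — is deferred to the subsequent treatment of sequences built from faces, exactly as the lemma announces.

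The main (and only mild) obstacle lies in the $\mathcal C_K$ factor: $\mathcal C_K$ is not abelian, so $\Ext^1_{\mathcal C_K}$ acquires meaning only after passing to the abelian envelope $\mathrm{Ab}(\mathcal C_K)$ of face-modules described in the preceding remark, and one must check that the polar functor extends to that envelope and is compatible with the face-module structure. For the lemma as stated nothing deeper is needed: the $\mathbb Z$– and $\mathbb R$–parts are standard homological algebra over a PID and a field respectively, and the $\mathcal C_K$–part reduces to the elementary functoriality of the polar cone under adjoint maps.
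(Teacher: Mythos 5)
Your proposal is correct and follows the same componentwise route as the paper, whose own proof simply cites the three standard facts (left exactness and the torsion identification over $\mathbb Z$, exactness of $(-)^*$ on $\FinVect_{\mathbb R}$, and polar calculus from \cite{Rockafellar1970}); you merely expand those citations into explicit arguments via the structure theorem, split exactness, and the adjoint computation $T^*(L^\circ)\subseteq K^\circ$. Your added caveat about interpreting $\Ext^1_{\mathcal C_K}$ in the abelian envelope matches the remark the paper places just before the lemma.
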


\begin{proof}
Left exactness of $\Hom(-,\mathbb Z)$ and the identification of $\Ext^1_{\mathbb Z}(-,\mathbb Z)$ with torsion are standard \cite{Weibel1994}. Exactness of dualization on finite-dimensional vector spaces is elementary. Polar properties in finite dimension follow from convex analysis \cite{Rockafellar1970}.
\end{proof}

\begin{definition}[Short exact sequences in $\mathcal C_K$ via faces]
Let $C=\{x\in K: f_K(x)=b_K\}$. Let $F\triangleleft K$ be the minimal face of $K$ containing $C$. Consider the inclusion $i:(F,F)\hookrightarrow (X,K)$ and the quotient $q:(X,K)\twoheadrightarrow (X/F,K/F)$, where $K/F:=\{[x]: x\in K\}$ in the quotient space $X/F$. We say that
\[
0\to (F,F)\xrightarrow{i} (X,K)\xrightarrow{q} (X/F,K/F)\to 0
\]
is a short exact sequence in $\mathcal C_K$. Applying $\mathbb D_K=\Hom(-,K^\circ)$ yields the contravariant sequence
\[
0\leftarrow (F^\perp,K^\circ|_{F}) \xleftarrow{i^\vee} (X^*,K^\circ) \xleftarrow{q^\vee} ((X/F)^*,(K/F)^\circ)\leftarrow 0,
\]
whose exactness at the middle term may fail.
\end{definition}

\begin{lemma}[Slater on a face, polarity, and exactness]\label{lem:slater-face-exact}
Let $F$ be the minimal face of $K$ containing $C$. The following are equivalent:
\begin{enumerate}[label=(\alph*)]
\item $C$ has nonempty relative interior in $F$ (Slater on $F$), i.e.\ $\operatorname{ri}(C)\cap \operatorname{ri}(F)\neq \varnothing$ \cite{Rockafellar1970}.
\item $\mathbb D_K$ is exact at $(X^*,K^\circ)$ for the face sequence, i.e.\ $\ker(i^\vee)=\mathrm{im}(q^\vee)$.
\item $\Ext^1_{\mathcal C_K}\big((X/F,K/F), (X^*,K^\circ)\big)=0$.
\end{enumerate}
\end{lemma}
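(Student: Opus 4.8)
The plan is to prove the three equivalences by splitting the work along disciplinary lines: the link (b)$\Leftrightarrow$(c) is the homological dictionary of Section~1 transported to the cone setting, while (a)$\Leftrightarrow$(b) is the convex--analytic core. Throughout I would work inside the abelian envelope $\mathrm{Ab}(\mathcal C_K)$ of face--modules, in which the face sequence $0\to(\operatorname{span}F,F)\xrightarrow{i}(X,K)\xrightarrow{q}(X/\!\operatorname{span}F,K/F)\to 0$ is a genuine short exact sequence and $\mathbb D_K$ is a left--exact contravariant functor with right--derived functors $\Ext^i_{\mathcal C_K}(-,\cdot)$ (here $\operatorname{span}F$ makes precise the abbreviated ``$F$'' of the statement). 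Applying Lemma~\ref{lem:les} to this sequence yields the long exact sequence
\[ 0\to\mathbb D_K(X/\!\operatorname{span}F,K/F)\xrightarrow{q^\vee}\mathbb D_K(X,K)\xrightarrow{i^\vee}\mathbb D_K(\operatorname{span}F,F)\xrightarrow{\ \delta\ }\Ext^1_{\mathcal C_K}\!\big((X/\!\operatorname{span}F,K/F),(X^*,K^\circ)\big)\to\cdots, \]
where $i^\vee$ is restriction of a functional to $\operatorname{span}F$ and, by finite--dimensional polarity, its codomain $\mathbb D_K(\operatorname{span}F,F)$ is the polar cone $F^\circ$ of functionals nonnegative on $F$. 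Exactness of the dualized face sequence at its middle term then amounts to surjectivity of $i^\vee$, i.e.\ to $\delta=0$; by Theorem~\ref{thm:exact-iff-ext} applied to the extension--closed class of face--modules — and using Lemma~\ref{lem:componentwise-exactness} to see that the neighbouring $\Ext^1$ on the polyhedral/face pieces vanishes, so $\delta$ is the only obstruction — this is equivalent to the vanishing of $\Ext^1_{\mathcal C_K}\big((X/\!\operatorname{span}F,K/F),(X^*,K^\circ)\big)$. This gives (b)$\Leftrightarrow$(c).

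For (a)$\Leftrightarrow$(b) I would unwind $\operatorname{coker}(i^\vee)$ in convex--geometric terms. The obstruction to lifting a functional $z\in F^\circ$ to an element of $K^\circ$ is precisely the discrepancy between $(K\cap\operatorname{aff}C)^\circ$ — equivalently the polar of the quotient cone $K/F$, viewed inside $X^*$ — and the \emph{unclosed} sum $K^\circ+(\operatorname{span}F)^\perp$; thus $\operatorname{coker}(i^\vee)=0$ iff the bipolar identity $(K/F)^\circ=\overline{K^\circ+(\operatorname{span}F)^\perp}$ already holds without taking the closure, i.e.\ iff $K^\circ+(\operatorname{span}F)^\perp$ is closed. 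By the relative--interior calculus for convex cones \cite{Rockafellar1970}, this closure is redundant exactly under the relative--interior condition defining Slater on the minimal face $F$, namely $\operatorname{ri}(C)\cap\operatorname{ri}(F)\neq\varnothing$; this is (a)$\Rightarrow$(b). Conversely I would argue by contraposition: if $\operatorname{ri}(C)\cap\operatorname{ri}(F)=\varnothing$, the proper--separation theorem for convex sets with no common relative interior point produces a linear functional $z$ nonnegative on $F$ but with $z\notin i^\vee(K^\circ)$, so $\operatorname{coker}(i^\vee)\neq0$ and (b) fails.

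I expect the main obstacle to be twofold. First, the passage to $\mathrm{Ab}(\mathcal C_K)$ must be made genuinely rigorous: one has to verify that this envelope has enough injectives (so that the derived functors $\Ext^i_{\mathcal C_K}$ and Lemma~\ref{lem:les} legitimately apply), that $\mathbb D_K$ restricted to the image of $\mathcal C_K$ is the polar dualization, and that the cokernel computed there is the honest quotient--cone object $(X/\!\operatorname{span}F,K/F)$ rather than a closure--corrected one — the entire homological half rests on this identification, and the coefficient slot $(X^*,K^\circ)$ appearing in (c) must be reconciled with the polar functor. Second, the delicate direction is (b)$\Rightarrow$(a): one must check that the separating functional produced when Slater fails \emph{genuinely} does not extend to $K^\circ$, i.e.\ that it represents a nonzero class in $\operatorname{coker}(i^\vee)$ rather than merely defeating a naive extension attempt; ruling out a ``hidden'' dual certificate forces one to exploit the correspondence between faces of $K$ and conjugate faces of $K^\circ$, and this is where the closedness hypotheses on $K$ and $C$, together with the precise content of the minimal--face convention in (a), have to be handled carefully.
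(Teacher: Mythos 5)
Your proposal is essentially the paper's argument: the (b)$\Leftrightarrow$(c) half goes through the long exact sequence of Lemma~\ref{lem:les} and the connecting morphism, exactly as the paper does, and the (a)$\Leftrightarrow$(b) half is the same convex-analytic content routed through a marginally different (but equivalent) standard fact — you use closedness of $K^\circ+(\operatorname{span}F)^\perp$ and proper separation, where the paper uses the identification of the normal cone to $F$ with $F^\circ$ for (a)$\Rightarrow$(b) and an exposing functional contradicting minimality of $F$ for (c)$\Rightarrow$(a). One point worth flagging, which you inherit from (and share with) the paper's own proof: condition (b) as stated is exactness at the \emph{middle} term, $\ker(i^\vee)=\mathrm{im}(q^\vee)$, which for a left-exact functor on a genuine short exact sequence is automatic, whereas your reduction to "surjectivity of $i^\vee$, i.e.\ $\delta=0$" (and the paper's "vanishing connecting morphism") is exactness at the \emph{third} term; the two only coincide because the sequence is being read in the non-abelian cone category, and neither you nor the paper pins this down. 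Your explicit worry list — enough injectives in $\mathrm{Ab}(\mathcal C_K)$, identifying $\operatorname{coker}(i^\vee)$ with the honest quotient cone, and ruling out a hidden dual certificate in the (b)$\Rightarrow$(a) direction — is more candid than the paper about where the real work lies, but neither text actually discharges those obligations.
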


\begin{proof}
All $\Ext^1_{\mathcal C_K}$ below are computed in the abelianization
$\mathrm{Ab}(\mathcal C_K)$.$(a)\Rightarrow(b)$: If $\operatorname{ri}(C)\cap \operatorname{ri}(F)\neq \varnothing$, then the normal cone to $F$ equals $F^\circ$ and the polar pairing restricts without degeneracy to $F\times F^\circ$ \cite{Rockafellar1970}. Hence any functional in $K^\circ$ that vanishes on $F$ lies in the image of $q^\vee$, yielding $\ker(i^\vee)=\mathrm{im}(q^\vee)$.
$(b)\Leftrightarrow(c)$: This is the definition of vanishing connecting morphism in the long exact sequence of right derived functors for $\Hom(-,(X^*,K^\circ))$, cf.\ the abelianization via faces and standard homological algebra \cite{Weibel1994}.
$(c)\Rightarrow(a)$: If $\Ext^1\neq 0$, there exists a nontrivial extension obstructing exactness; by separation \cite{Rockafellar1970} this corresponds to an exposing functional $y\in K^\circ$ such that $\langle y,x\rangle=0$ for all $x\in C$ but $\langle y,\cdot\rangle$ does not vanish on $F$, contradicting minimality of $F$. Hence $\operatorname{ri}(C)\cap \operatorname{ri}(F)\neq \varnothing$.
\end{proof}

\begin{theorem}[Conic strong duality from ethic exactness]\label{thm:conic-strong-duality-ethic}
Assume the conic component is feasible and bounded below. Then
\[
\inf_{x\in K,\ f_K(x)=b_K}\langle c_K,x\rangle \;=\; \sup_{\lambda_K}\{-\langle b_K,\lambda_K\rangle:\ c_K+f_K^\top\lambda_K\in K^\circ\}
\]
if and only if there exists a face $F\triangleleft K$ containing $C$ such that $\Ext^1_{\mathcal C_K}\big((X/F,K/F),(X^*,K^\circ)\big)=0$. Equivalently, by Lemma~\ref{lem:slater-face-exact}, strong duality holds iff Slater holds on $F$. In that case, the dual optimum is attained \cite{Rockafellar1970}.
\end{theorem}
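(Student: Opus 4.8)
The plan is to derive the statement from the classical facial reduction theorem for conic programs and then to translate between the geometric and homological formulations via Lemma~\ref{lem:slater-face-exact}. By that lemma, for the minimal face $F\triangleleft K$ containing $C$ the three conditions ``$\Ext^1_{\mathcal C_K}\big((X/F,K/F),(X^*,K^\circ)\big)=0$'', ``$\mathbb D_K$ is exact at $(X^*,K^\circ)$ on the face sequence'', and ``$\operatorname{ri}(C)\cap\operatorname{ri}(F)\neq\varnothing$'' are equivalent; and since any face of $K$ containing $C$ refines to this minimal one, the existential ``$\exists F$'' in the statement is witnessed precisely by that minimal face. Hence it suffices to prove: the conic primal and dual values coincide with the dual optimum attained if and only if Slater holds on $F$.

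For the ``if'' direction I would restrict the primal from $(X,K)$ to $(\operatorname{span}F, F)$. This changes neither the feasible set $C$ nor the objective, so the two primal values agree. By hypothesis the restricted problem satisfies Slater's condition in the relative interior of its own cone $F$, so the classical conic strong duality theorem \cite{Rockafellar1970} applies: the restricted problem has zero duality gap and an attained dual optimum $\lambda$. The middle-term exactness of Lemma~\ref{lem:slater-face-exact}(b), which identifies $\ker(i^\vee)=\operatorname{im}(q^\vee)$ along the polar face sequence $0\leftarrow(F^\perp,K^\circ|_F)\xleftarrow{i^\vee}(X^*,K^\circ)\xleftarrow{q^\vee}((X/F)^*,(K/F)^\circ)\leftarrow 0$, is exactly what pulls $\lambda$ back to a $\lambda_K$ with $c_K+f_K^\top\lambda_K\in K^\circ$; weak duality then forces equality, giving strong duality with attainment for the original instance.

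For the ``only if'' direction I would argue contrapositively, inside the abelianization $\operatorname{Ab}(\mathcal C_K)$ as in the proof of Lemma~\ref{lem:slater-face-exact}: a failure of $\operatorname{ri}(C)\cap\operatorname{ri}(F)\neq\varnothing$ produces a nonzero exposing functional $y\in K^\circ$ vanishing on all of $C$ but not identically on $F$, representing a nontrivial class in $\Ext^1_{\mathcal C_K}\big((X/F,K/F),(X^*,K^\circ)\big)$; the associated perturbation of $b_K$ along $\ker y$ forces the primal optimal value to move while no dual-feasible $\lambda_K$ can track it, contradicting attainment of the common value. Combining both directions with the equivalences of Lemma~\ref{lem:slater-face-exact} yields the theorem, and dual attainment is inherited from the classical Slater conclusion transported in the ``if'' direction.

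The main obstacle I expect is precisely the pull-back step in the ``if'' direction: one must verify that the dual certificate produced over the minimal face $F$ lands in the original polar $K^\circ$ and not merely in the larger polar $F^\circ\supseteq K^\circ$, and that the identification of the connecting morphism $\delta$ with the separating functional --- the content of Lemma~\ref{lem:slater-face-exact} --- is compatible with the abelianization $\operatorname{Ab}(\mathcal C_K)$ used to define $\Ext^1_{\mathcal C_K}$. Getting those two compatibilities exactly right, rather than the routine convex-analytic estimates, is where the real work lies.
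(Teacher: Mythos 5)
Your proposal is correct at the same level of rigor as the paper's argument, but it follows a genuinely different route in the forward direction. The paper does not restrict the problem to the face: it applies Fenchel--Rockafellar duality directly to $\phi(x)=\langle c_K,x\rangle+\delta_K(x)$ and $\psi(x)=\delta_{\{f_K(x)=b_K\}}(x)$, uses Lemma~\ref{lem:slater-face-exact} only to assert that the relative-interior qualification holds ``on the face $F$,'' and then reads off the conic dual with polar $K^\circ$ from the conjugates; the converse is handled by the same one-line contrapositive you give (gap $\Rightarrow$ qualification fails on every containing face $\Rightarrow$ $\Ext^1\neq0$). Your facial-reduction route --- restrict to $(\operatorname{span}F,F)$, invoke Slater-based strong duality there, then lift the certificate from $F^\circ$ back to $K^\circ$ via the exactness $\ker(i^\vee)=\operatorname{im}(q^\vee)$ of the polar face sequence --- makes explicit a step the paper's proof hides: when Slater holds only on $F$ and not on $K$, the Fenchel--Rockafellar qualification $\operatorname{ri}(\operatorname{dom}\phi)\cap\operatorname{ri}(\operatorname{dom}\psi)\neq\varnothing$ with $\operatorname{dom}\phi=K$ does not literally hold, so one must either replace $\delta_K$ by $\delta_F$ (your restriction) or otherwise justify the qualification, and one then confronts exactly the $F^\circ$-versus-$K^\circ$ attainment issue you flag. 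Neither your sketch nor the paper fully discharges that lifting step (it is the known obstruction that motivates Ramana-type extended duals, cf.\ Corollary~\ref{cor:lp-sdp-recovery}), but your version at least names it as the locus of the real work, whereas the paper's phrase ``computing conjugates yields the standard conic dual with polar $K^\circ$'' passes over it silently. Your observation that the existential over faces is witnessed by the minimal face is consistent with how the paper uses Lemma~\ref{lem:slater-face-exact}, and your perturbation argument for the converse is a more detailed version of the paper's one-sentence contrapositive.
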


\begin{proof}
Exactness of $\mathbb D_K$ and the term 
$\Ext^1_{\mathcal C_K}$ are interpreted inside the abelianization 
$\mathrm{Ab}(\mathcal C_K)$ of the face category, where kernels and cokernels of 
face inclusions are freely adjoined. 
In finite dimension this abelianization coincides with the standard 
face–module formalism of convex analysis.
Consider $\phi(x)=\langle c_K,x\rangle+\delta_{K}(x)$ and $\psi(x)=\delta_{\{x: f_K(x)=b_K\}}(x)$. By Fenchel–Rockafellar duality \cite{Rockafellar1970}, $\inf(\phi+\psi)=\max -\phi^*(-y)-\psi^*(y)$ under the qualification $\operatorname{ri}(\operatorname{dom}\phi)\cap \operatorname{ri}(\operatorname{dom}\psi)\neq\varnothing$. On the face $F$ provided by Lemma~\ref{lem:slater-face-exact}, the qualification holds; computing conjugates yields the standard conic dual with polar $K^\circ$ \cite{Rockafellar1970}. The attainment follows from \cite{Rockafellar1970}. Conversely, a gap implies failure of qualification on every containing face, hence by Lemma~\ref{lem:slater-face-exact} a nonvanishing $\Ext^1$.
\end{proof}

For any finitely generated abelian group $M$,
\[
\Ext^1_\ZZ(M,\ZZ)\cong\Hom(M_{\mathrm{tors}},\QQ/\ZZ)
\cong M_{\mathrm{tors}}.
\]
This identification will be used tacitly in all integer components.

\begin{lemma}[Integer duality from exactness]\label{lem:integer-duality-exact}
Assume all $\mathbb Z$–modules are finitely generated, so that
$\Ext^1_{\mathbb Z}(M,\mathbb Z)\cong\mathrm{tors}(M)$.
Let $0\to K_{\mathbb Z}\to X_{\mathbb Z}\xrightarrow{f_{\mathbb Z}}\mathrm{im}\,f_{\mathbb Z}\to 0$ be the integer constraint sequence. Then
\[
\max_{\lambda_{\mathbb Z}}\{-\langle b_{\mathbb Z},\lambda_{\mathbb Z}\rangle:\ c_{\mathbb Z}+ f_{\mathbb Z}^\top\lambda_{\mathbb Z}=0\}
\]
equals the primal integer optimum if and only if $\Ext^1_{\mathbb Z}(\mathrm{coker}\,f_{\mathbb Z},\mathbb Z)=0$. Equivalently, $\mathrm{coker}\,f_{\mathbb Z}$ is torsion-free; this is characterized by the Smith normal form of $f_{\mathbb Z}$ having invariant factors all equal to $1$ \cite{Schrijver1986}.
\end{lemma}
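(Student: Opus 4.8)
The plan is to isolate the $\mathbb Z$-component (the real part contributes no obstruction by Lemma~\ref{lem:componentwise-exactness}, and the conic part is handled separately in Theorem~\ref{thm:conic-strong-duality-ethic}) and to reduce the coincidence of the two optima to a single $\Hom$-extension problem that Lemma~\ref{lem:les} converts into the vanishing of $\Ext^1_{\mathbb Z}(\coker f_{\mathbb Z},\mathbb Z)$. Throughout I take $X_{\mathbb Z},Y_{\mathbb Z}$ free of finite rank, which is the lattice setting of integer linear programming; in the general finitely generated case one also picks up $\Ext^1_{\mathbb Z}(Y_{\mathbb Z},\mathbb Z)$, and the statement must be phrased with the connecting map of Lemma~\ref{lem:les} rather than an isomorphism. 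First I would record that, in this affine setting, weak duality is automatically an \emph{equality} as soon as both sides are feasible: if $f_{\mathbb Z}(x)=b_{\mathbb Z}$ and $c_{\mathbb Z}+f_{\mathbb Z}^{\top}\lambda_{\mathbb Z}=0$ then $\langle c_{\mathbb Z},x\rangle=-\langle\lambda_{\mathbb Z},f_{\mathbb Z}(x)\rangle=-\langle\lambda_{\mathbb Z},b_{\mathbb Z}\rangle$, and any two dual-feasible $\lambda_{\mathbb Z},\lambda'_{\mathbb Z}$ differ by a functional annihilating $\im f_{\mathbb Z}\ni b_{\mathbb Z}$, hence share the same objective value. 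So the dual optimum is a single number $-\langle\lambda_{\mathbb Z},b_{\mathbb Z}\rangle$ whenever the dual is feasible, and the only way strong duality can break on a primal-feasible, bounded instance is that the dual be \emph{infeasible} while the primal optimum is finite.

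Next I would turn dual feasibility into an extension statement. Boundedness below of the primal forces $c_{\mathbb Z}$ to vanish on $K_{\mathbb Z}=\ker f_{\mathbb Z}$, hence to descend to $\bar c\in\Hom_{\mathbb Z}(\im f_{\mathbb Z},\mathbb Z)$ with $\langle c_{\mathbb Z},x\rangle=\langle\bar c,f_{\mathbb Z}(x)\rangle$, the primal optimum being $\langle\bar c,b_{\mathbb Z}\rangle$. Writing $\Hom(f_{\mathbb Z},\mathbb Z)$ as the composite $\Hom(Y_{\mathbb Z},\mathbb Z)\to\Hom(\im f_{\mathbb Z},\mathbb Z)\hookrightarrow\Hom(X_{\mathbb Z},\mathbb Z)$, the equation $c_{\mathbb Z}+f_{\mathbb Z}^{\top}\lambda_{\mathbb Z}=0$ is solvable precisely when $\bar c$ extends along $\im f_{\mathbb Z}\hookrightarrow Y_{\mathbb Z}$, in which case the extension $\lambda_{\mathbb Z}$ is the dual optimizer and $-\langle\lambda_{\mathbb Z},b_{\mathbb Z}\rangle=\langle\bar c,b_{\mathbb Z}\rangle$ matches the primal value. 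Conversely, if some $\bar c\in\Hom_{\mathbb Z}(\im f_{\mathbb Z},\mathbb Z)$ fails to extend, then taking $c_{\mathbb Z}:=\bar c\circ f_{\mathbb Z}$ and any $b_{\mathbb Z}\in\im f_{\mathbb Z}$ yields a primal-feasible, bounded instance with empty dual-feasible set, i.e.\ an honest gap (finite primal optimum versus dual value $-\infty$). Thus strong duality holds on every admissible $(b_{\mathbb Z},c_{\mathbb Z})$ if and only if the restriction map $\Hom_{\mathbb Z}(Y_{\mathbb Z},\mathbb Z)\to\Hom_{\mathbb Z}(\im f_{\mathbb Z},\mathbb Z)$ is surjective.

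Then I would identify this surjectivity homologically. Applying $\mathbb D_{\mathbb Z}=\Hom_{\mathbb Z}(-,\mathbb Z)$ to the short exact sequence $0\to\im f_{\mathbb Z}\to Y_{\mathbb Z}\to\coker f_{\mathbb Z}\to 0$ and using Lemma~\ref{lem:les}, together with $\Ext^1_{\mathbb Z}(Y_{\mathbb Z},\mathbb Z)=0$ for free $Y_{\mathbb Z}$, the connecting map gives a natural isomorphism
\[
\coker\!\big(\Hom_{\mathbb Z}(Y_{\mathbb Z},\mathbb Z)\to\Hom_{\mathbb Z}(\im f_{\mathbb Z},\mathbb Z)\big)\;\cong\;\Ext^1_{\mathbb Z}(\coker f_{\mathbb Z},\mathbb Z),
\]
so the restriction map is onto exactly when $\Ext^1_{\mathbb Z}(\coker f_{\mathbb Z},\mathbb Z)=0$; equivalently, by Theorem~\ref{thm:exact-iff-ext}, exactly when $\mathbb D_{\mathbb Z}$ is exact on that sequence. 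Finally, decomposing the finitely generated abelian group $\coker f_{\mathbb Z}\cong\mathbb Z^{\oplus r}\oplus\mathrm{tors}(\coker f_{\mathbb Z})$ and invoking the identification $\Ext^1_{\mathbb Z}(M,\mathbb Z)\cong\mathrm{tors}(M)$ recorded before the statement (and in Lemma~\ref{lem:componentwise-exactness}), vanishing of $\Ext^1_{\mathbb Z}(\coker f_{\mathbb Z},\mathbb Z)$ is equivalent to $\coker f_{\mathbb Z}$ being torsion-free. Bringing $f_{\mathbb Z}$ to Smith normal form $\operatorname{diag}(d_1,\dots,d_k,0,\dots,0)$ with $d_1\mid\cdots\mid d_k$ yields $\coker f_{\mathbb Z}\cong\bigoplus_i\mathbb Z/d_i\mathbb Z\oplus\mathbb Z^{\,\rank Y_{\mathbb Z}-k}$, which is torsion-free iff every invariant factor $d_i$ equals $1$, closing the chain of equivalences \cite{Schrijver1986}.

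The main obstacle is fixing the exact meaning of the ``if and only if''. For a \emph{single} instance, equality of the two optima need not force $\Ext^1=0$---a fortunate objective $\bar c$ may happen to extend even when the restriction map is not surjective---so the converse direction must be read as strong duality holding for all admissible right-hand sides and objectives, and one must carefully dispatch the degenerate regimes (primal infeasible, or unbounded below) so that the comparison with a dual value of $-\infty$ is unambiguous. Once the quantifier and the edge cases are pinned down, the remaining steps are the standard $\Hom$/$\Ext$ long exact sequence of Lemma~\ref{lem:les} and the structure theorem for finitely generated abelian groups in the form of Smith normal form.
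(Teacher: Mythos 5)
Your proposal is correct, and its homological core coincides with the paper's: apply $\Hom_{\ZZ}(-,\ZZ)$ to the two short exact sequences $0\to K_{\ZZ}\to X_{\ZZ}\to\im f_{\ZZ}\to0$ and $0\to\im f_{\ZZ}\to Y_{\ZZ}\to\coker f_{\ZZ}\to0$, identify the cokernel of the restriction map $\Hom(Y_{\ZZ},\ZZ)\to\Hom(\im f_{\ZZ},\ZZ)$ with $\Ext^1_{\ZZ}(\coker f_{\ZZ},\ZZ)$ using freeness of $Y_{\ZZ}$, and finish with the torsion identification and Smith normal form. Where you go beyond the paper is in supplying the bridge it omits: the paper's proof never actually connects ``exactness at $\Hom(\im f_{\ZZ},\ZZ)$'' to the displayed optimization identity. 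Your observations that (i) the affine equality-constrained integer program has a constant dual objective on the dual-feasible set once $b_{\ZZ}\in\im f_{\ZZ}$, (ii) boundedness below forces $c_{\ZZ}$ to kill $\ker f_{\ZZ}$ and hence descend to $\bar c$ on $\im f_{\ZZ}$, and (iii) dual feasibility is exactly extendability of $\bar c$ along $\im f_{\ZZ}\hookrightarrow Y_{\ZZ}$, are precisely the missing steps, and they are argued correctly. You are also right to flag the quantifier: as stated, the ``only if'' direction fails for a single instance (a particular $\bar c$ may extend even when the restriction map is not surjective), so the equivalence must be read over all admissible $(b_{\ZZ},c_{\ZZ})$ — a genuine imprecision in the lemma that the paper's proof does not address. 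Your restriction to free $X_{\ZZ},Y_{\ZZ}$ is consistent with the lattice setting the surrounding section actually uses, and your remark about the extra $\Ext^1_{\ZZ}(Y_{\ZZ},\ZZ)$ term in the general finitely generated case is accurate.
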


\begin{proof} If $\mathbb Z$–modules are not finitely generated, $\Ext^1_{\mathbb Z}(M,\mathbb Z)$ can be infinite; finite generation ensures $\mathrm{tors}(M)$ is finite and the identification holds.
Apply $\Hom_{\mathbb Z}(-,\mathbb Z)$ to $0\to K_{\mathbb Z}\to X_{\mathbb Z}\to \mathrm{im}\,f_{\mathbb Z}\to 0$ and continue by $0\to \mathrm{im}\,f_{\mathbb Z}\to Y_{\mathbb Z}\to \mathrm{coker}\,f_{\mathbb Z}\to 0$ to obtain the long exact sequence \cite{Weibel1994}. Exactness at $\Hom_{\mathbb Z}(\mathrm{im}\,f_{\mathbb Z},\mathbb Z)$ is equivalent to $\Ext^1_{\mathbb Z}(\mathrm{coker}\,f_{\mathbb Z},\mathbb Z)=0$. The identification $\Ext^1_{\mathbb Z}(M,\mathbb Z)\cong \mathrm{tors}(M)$ for finitely generated $M$ is standard \cite{Weibel1994}. The Smith normal form criterion is classical \cite{Schrijver1986}.
\end{proof}

\begin{theorem}[Unified strong duality for MILP/LP/SDP]\label{thm:unified-strong}
In the hybrid category $\mathcal E$, the primal–dual equality
\[
\min\ \langle c_{\mathbb Z},x_{\mathbb Z}\rangle + \langle c_{\mathbb R},x_{\mathbb R}\rangle + \langle c_K,x\rangle \;=\; \max\ -\langle \lambda_{\mathbb Z},b_{\mathbb Z}\rangle - \langle \lambda_{\mathbb R},b_{\mathbb R}\rangle - \langle \lambda_K,b_K\rangle
\]
holds if and only if
\[
\Ext^1_{\mathbb Z}(\mathrm{coker}\,f_{\mathbb Z},\mathbb Z)=0
\quad\text{and}\quad
\exists\ F\triangleleft K:\ \Ext^1_{\mathcal C_K}\big((X/F,K/F),(X^*,K^\circ)\big)=0.
\]
Moreover, the dual optimum is attained in the conic component if and only if the latter $\Ext^1$ vanishes (equivalently, Slater holds on $F$ by Lemma~\ref{lem:slater-face-exact}); in the integer component if and only if the adjoint Diophantine system $f_{\mathbb Z}^\top\lambda_{\mathbb Z}=-c_{\mathbb Z}$ is solvable (torsion-free cokernel).
\end{theorem}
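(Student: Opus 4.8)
The plan is to exploit the product structure $\mathcal E=\mathcal E_{\mathbb Z}\times\mathcal E_{\mathbb R}\times\mathcal C_K$ to reduce the claim to the three component cases, two of which are already available. First I would observe that the objective is additive, the linear constraints are imposed componentwise, and the feasible set factors as a product $\{x_{\mathbb Z}: f_{\mathbb Z}(x_{\mathbb Z})=b_{\mathbb Z}\}\times\{x_{\mathbb R}: f_{\mathbb R}(x_{\mathbb R})=b_{\mathbb R}\}\times\{x\in K: f_K(x)=b_K\}$; hence the primal value splits as a sum of three independent infima. Likewise the ethic dual feasibility conditions decouple into the three displayed equations/inclusion $c_{\mathbb Z}+f_{\mathbb Z}^\top\lambda_{\mathbb Z}=0$, $c_{\mathbb R}+f_{\mathbb R}^\top\lambda_{\mathbb R}=0$, $c_K+f_K^\top\lambda_K\in K^\circ$, so the dual value splits as a sum of three independent suprema. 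Thus $\mathrm{val}_{\mathrm{primal}}=v_{\mathbb Z}+v_{\mathbb R}+v_K$ and $\mathrm{val}_{\mathrm{dual}}=w_{\mathbb Z}+w_{\mathbb R}+w_K$, and since weak duality $v_\bullet\ge w_\bullet$ holds in each slot (the elementary cone/linear inequality, reflected homologically by left exactness of the component duals, Lemma~\ref{lem:left-exact}), the joint equality $\mathrm{val}_{\mathrm{primal}}=\mathrm{val}_{\mathrm{dual}}$ is equivalent to $v_\bullet=w_\bullet$ in each of the three components separately, under the standing hypothesis (as in Theorem~\ref{thm:conic-strong-duality-ethic}) that every component is feasible and bounded below.

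Next I would dispatch the three components. The real component is handled by Lemma~\ref{lem:componentwise-exactness}: $\mathbb D_{\mathbb R}$ is exact on $\mathrm{FinVect}_{\mathbb R}$, so there is no homological obstruction and $v_{\mathbb R}=w_{\mathbb R}$ always holds under the standing feasibility/boundedness hypothesis (equivalently $c_{\mathbb R}\in\mathrm{im}\,f_{\mathbb R}^\top$); there is no $\Ext^1_{\mathbb R}$ term, which is why it is absent from the stated criterion. The integer component is exactly Lemma~\ref{lem:integer-duality-exact}: $v_{\mathbb Z}=w_{\mathbb Z}$ iff $\Ext^1_{\mathbb Z}(\mathrm{coker}\,f_{\mathbb Z},\mathbb Z)=0$, i.e.\ $\mathrm{coker}\,f_{\mathbb Z}$ is torsion-free (Smith normal form with unit invariant factors), and the dual optimum in this slot is attained iff the adjoint Diophantine system $f_{\mathbb Z}^\top\lambda_{\mathbb Z}=-c_{\mathbb Z}$ is solvable, which is the assertion that a torsion-free cokernel lets $c_{\mathbb Z}$ lift through $f_{\mathbb Z}^\top$. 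The conic component is Theorem~\ref{thm:conic-strong-duality-ethic} together with Lemma~\ref{lem:slater-face-exact}: $v_K=w_K$ iff there is a face $F\triangleleft K$ containing $C=\{x\in K: f_K(x)=b_K\}$ with $\Ext^1_{\mathcal C_K}((X/F,K/F),(X^*,K^\circ))=0$ (computed in $\mathrm{Ab}(\mathcal C_K)$), equivalently Slater holds on $F$, and in that case the conic dual optimum is attained.

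Assembling: the joint primal--dual equality is equivalent to the conjunction $v_{\mathbb Z}=w_{\mathbb Z}$ and $v_K=w_K$ (the real slot being automatic), which by the two cited results is precisely $\Ext^1_{\mathbb Z}(\mathrm{coker}\,f_{\mathbb Z},\mathbb Z)=0$ together with the existence of a face $F$ for which the stated conic $\Ext^1$ vanishes. The two attainment statements follow slotwise from the same two results. I expect the one genuinely delicate point to be the rigorous justification of the separation step — in particular, that ethic dual feasibility interpreted through the double-dual unit really does decouple into the three displayed scalar/inclusion conditions with no cross terms, and that componentwise weak duality is in force before the component theorems are invoked — together with the care needed to phrase the standing feasibility and boundedness hypotheses uniformly, so that none of $v_{\mathbb Z},v_{\mathbb R},v_K$ is $\pm\infty$. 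Once separation and componentwise weak duality are secured, the remainder is a direct citation of Lemmas~\ref{lem:componentwise-exactness}, \ref{lem:integer-duality-exact}, \ref{lem:slater-face-exact} and Theorem~\ref{thm:conic-strong-duality-ethic}.
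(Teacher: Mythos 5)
Your proposal is correct and follows essentially the same route as the paper: the paper's own proof simply says to combine Lemma~\ref{lem:componentwise-exactness}, Lemma~\ref{lem:integer-duality-exact}, and Theorem~\ref{thm:conic-strong-duality-ethic}, noting that the real component is unobstructed and that $\Ext^1$ in the product category decomposes componentwise. You supply the decomposition of the primal/dual values and the componentwise weak-duality step explicitly (and correctly flag the feasibility/boundedness caveats), but this is elaboration of the same argument, not a different one.
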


For product categories $\E_1\times\E_2$ one has
\[
\Ext^1_{\E_1\times\E_2}
((A_1,A_2),(R_1,R_2))
\simeq
\Ext^1_{\E_1}(A_1,R_1)
\times
\Ext^1_{\E_2}(A_2,R_2),
\]
since $\Hom$ and $\Ext$ act componentwise.
All $\Ext$-vanishings in Theorem \ref{thm:unified-strong}
should be interpreted componentwise.

\begin{proof}
Combine Lemma~\ref{lem:componentwise-exactness}, Theorem~\ref{thm:conic-strong-duality-ethic}, and Lemma~\ref{lem:integer-duality-exact}. The real linear component contributes no obstruction because $\mathbb D_{\mathbb R}$ is exact on $\mathrm{FinVect}_{\mathbb R}$. Attainment statements follow from \cite{Rockafellar1970} on the conic side and solvability on the integer side.
\end{proof}

\begin{corollary}[Recovery of LP and SDP]\label{cor:lp-sdp-recovery}
If the integer component is absent and $K=\mathbb R^n_{\ge 0}$, then Slater holds for any nonempty feasible set and Theorem~\ref{thm:unified-strong} specializes to LP strong duality \cite{Rockafellar1970}. If $K=\mathbb S_+^n$, then there exists a finite facial reduction chain exposing a face $F\triangleleft \mathbb S_+^n$ on which Slater holds, and strong duality follows; the Ramana exact dual is obtained by encoding the exposing sequence as auxiliary conic variables, which is an explicit presentation of the condition $\Ext^1_{\mathcal C_K}=0$ \cite{BorweinWolkowicz1981,Pataki1998,Pataki2013,Ramana1997}.
\end{corollary}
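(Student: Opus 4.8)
The plan is to deduce both halves of the corollary from Theorem~\ref{thm:unified-strong}, with the integer factor suppressed so that only the conic obstruction survives (the real component being exact by Lemma~\ref{lem:componentwise-exactness}), together with the face-sequence criterion of Lemma~\ref{lem:slater-face-exact}. Throughout, every occurrence of $\Ext^1_{\mathcal C_K}$ is read inside the abelian envelope $\mathrm{Ab}(\mathcal C_K)$, i.e. in the face-module formalism, so that the long exact $\Ext$-sequence of $\RHom(-,(X^*,K^\circ))$ applies to the face sequence $0\to(F,F)\to(X,K)\to(X/F,K/F)\to 0$.

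\emph{The polyhedral case.} First I would note that when $K=\mathbb R^n_{\ge0}$ and $C=\{x\in K: f_K(x)=b_K\}$ is nonempty, its minimal face $F\triangleleft K$ is a coordinate face $\mathbb R^S_{\ge0}\times\{0\}^{[n]\setminus S}$, where $S$ is the set of coordinates not identically zero on $C$; by the very definition of the minimal face, $C$ meets $\operatorname{ri}(F)$, so $\operatorname{ri}(C)\cap\operatorname{ri}(F)\neq\varnothing$. This is condition (a) of Lemma~\ref{lem:slater-face-exact}, hence (c) gives $\Ext^1_{\mathcal C_K}\big((X/F,K/F),(X^*,K^\circ)\big)=0$. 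Feeding this (and the absence of the integer component) into Theorem~\ref{thm:unified-strong} yields the primal--dual equality, i.e. LP strong duality, with boundedness and dual attainment recovered from the attainment clause of that theorem.

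\emph{The semidefinite case.} For $K=\mathbb S_+^n$ I would invoke the facial reduction theorem of Borwein--Wolkowicz and its refinements \cite{BorweinWolkowicz1981,Pataki1998,Pataki2013}: starting from $F_0=\mathbb S_+^n$, as long as Slater fails on the current face $F_i$ there is an exposing functional $y_i\in F_i^\circ$ vanishing on $C$ with $F_{i+1}=F_i\cap y_i^\perp\subsetneq F_i$ a proper exposed subface (again a PSD cone of smaller rank); since each step strictly drops the dimension, the chain $F_0\supsetneq F_1\supsetneq\cdots\supsetneq F_r=F$ terminates in $r\le n$ steps at the minimal face $F$ containing $C$, on which $\operatorname{ri}(C)\cap\operatorname{ri}(F)\neq\varnothing$. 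Lemma~\ref{lem:slater-face-exact} again converts this into $\Ext^1_{\mathcal C_K}\big((X/F,K/F),(X^*,K^\circ)\big)=0$, and Theorem~\ref{thm:unified-strong} delivers strong duality together with conic dual attainment.

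\emph{The Ramana identification and the main obstacle.} The remaining point — which I expect to be the crux — is to exhibit Ramana's polynomially sized exact SDP dual \cite{Ramana1997} as an \emph{explicit} certificate that this conic obstruction vanishes. Concretely, I would interpret the auxiliary conic variables of Ramana's dual, which encode the exposing functionals $y_0,\dots,y_{r-1}$ and the associated subspace flag, as precisely the data trivializing the extension class of the face sequence after applying $\RHom(-,(X^*,K^\circ))$ in $\mathrm{Ab}(\mathcal C_K)$: the fact that Ramana's dual has no duality gap on \emph{every} instance is the statement that the connecting morphism $\delta$ in the long exact $\Ext$-sequence is identically zero, i.e. $\Ext^1_{\mathcal C_K}\big((X/F,K/F),(X^*,K^\circ)\big)=0$. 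The hard part is to make the equivalence $\mathrm{Ab}(\mathcal C_K)\simeq(\text{face-modules of }K)$ explicit at the level of extensions — matching a nontrivial class in $\Ext^1_{\mathcal C_K}$ with exactly the separation obstruction that one step of facial reduction removes — and then to verify that Ramana's variable bookkeeping reproduces, step by step, the boundary data of that trivialization rather than merely delivering the same optimal value. With this dictionary in place, Corollary~\ref{cor:lp-sdp-recovery} follows by assembling the polyhedral case, the facial-reduction case, and the Ramana encoding.
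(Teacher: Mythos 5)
Your proposal follows exactly the route the paper intends: the paper gives no separate proof of this corollary, treating it as an immediate consequence of Lemma~\ref{lem:slater-face-exact}, Theorem~\ref{thm:unified-strong}, and the cited facial-reduction and Ramana results, and your plan (minimal face meets $\operatorname{ri}(C)$ in the polyhedral case, finite exposing chain terminating at the minimal face in the semidefinite case, Ramana's auxiliary variables as the explicit trivialization of the extension class) fills in precisely those steps. Your honest flag that the dictionary between $\Ext^1$-classes in $\mathrm{Ab}(\mathcal C_K)$ and single facial-reduction steps is never made explicit is accurate --- the paper asserts this identification without proof as well --- so your proposal is as complete as the paper's own treatment.
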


\begin{definition}[Algorithmic certificate for unified strong duality]
A certificate consists of three components.
\begin{enumerate}[label=(\roman*)]
\item An integer certificate: a Smith normal form $U f_{\mathbb Z} V=\mathrm{diag}(d_1,\dots,d_s,0,\dots,0)$ with all $d_i=1$, proving $\mathrm{coker}\,f_{\mathbb Z}$ torsion-free and thus $\Ext^1_{\mathbb Z}=0$ \cite{Schrijver1986}.
\item A conic certificate: a finite exposing sequence $y_1,\dots,y_t\in K^\circ$ defining faces $K=F_0\supset F_1\supset\cdots\supset F_t=:F$ with $\operatorname{ri}(F)\cap\{x\in F: f_K(x)=b_K\}\neq\varnothing$, proving $\Ext^1_{\mathcal C_K}=0$ by Lemma~\ref{lem:slater-face-exact} and Theorem~\ref{thm:conic-strong-duality-ethic}.
\item Ethic dual variables $\lambda=(\lambda_{\mathbb Z},\lambda_{\mathbb R},\lambda_K)$ satisfying $c_{\mathbb Z}+ f_{\mathbb Z}^\top\lambda_{\mathbb Z}=0$, $c_{\mathbb R}+ f_{\mathbb R}^\top\lambda_{\mathbb R}=0$, and $c_K+ f_K^\top\lambda_K\in F^\circ$ together with primal feasibility on $F$.
\end{enumerate}
\end{definition}

\begin{algorithm}
\caption{Unified ethic certificate construction (MILP/LP/SDP)}
\label{alg:ethic-certificate}
\begin{algorithmic}[1]
\REQUIRE $(f_{\mathbb Z},f_{\mathbb R},f_K)$, $b=(b_{\mathbb Z},b_{\mathbb R},b_K)$, $c=(c_{\mathbb Z},c_{\mathbb R},c_K)$.
\STATE Compute Smith normal form $U f_{\mathbb Z} V=\mathrm{diag}(d_1,\dots,d_s,0,\dots,0)$.
\IF{some $d_i>1$}
  \STATE \textbf{return} failure with invariant factors $\{d_i>1\}$ (integer obstruction: $\Ext^1_{\mathbb Z}\neq 0$).
\ELSE
  \STATE Solve $f_{\mathbb Z}^\top \lambda_{\mathbb Z}=-c_{\mathbb Z}$ over $\mathbb Z$ to obtain integer dual multiplier.
\ENDIF
\STATE Initialize $F\leftarrow K$, $i\leftarrow 0$.
\WHILE{Slater fails on $F$ with respect to $f_K(x)=b_K$}
  \STATE Find exposing $y_{i+1}\in F^\circ$ with $\langle y_{i+1},x\rangle=0$ for all feasible $x$ (e.g., via projection/separation).
  \STATE Update $F\leftarrow F\cap \{x:\langle y_{i+1},x\rangle=0\}$, $i\leftarrow i+1$.
  \IF{feasible set becomes empty}
    \STATE \textbf{return} infeasibility certificate $y_{i+1}$.
  \ENDIF
\ENDWHILE
\STATE Solve the reduced conic dual on $F$: find $\lambda_K$ with $c_K+ f_K^\top\lambda_K\in F^\circ$ maximizing $-\langle b_K,\lambda_K\rangle$.
\STATE Solve the real linear adjoint $f_{\mathbb R}^\top\lambda_{\mathbb R}=-c_{\mathbb R}$.
\STATE \textbf{return} certificate $\big(\{\lambda_{\mathbb Z},\lambda_{\mathbb R},\lambda_K\},\ \{y_j\}_{j=1}^i\big)$.
\end{algorithmic}
\end{algorithm}

\begin{lemma}[Correctness of Algorithm~\ref{alg:ethic-certificate}]\label{lem:alg-correct}
If the algorithm returns an infeasibility or integer-obstruction certificate, the corresponding primal is infeasible or has an integer duality gap. If it returns $(\lambda,y_1,\dots,y_t)$, then $\Ext^1_{\mathbb Z}(\mathrm{coker}\,f_{\mathbb Z},\mathbb Z)=0$, there is a face $F$ exposed by $\{y_j\}$ on which Slater holds, and the primal–dual equality holds with attainment; hence the output is a valid unified strong duality certificate.
\end{lemma}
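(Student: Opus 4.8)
The plan is to prove correctness by a case analysis on which branch of Algorithm~\ref{alg:ethic-certificate} produces the output, invoking in each case the structural equivalences already established: Theorem~\ref{thm:unified-strong}, Lemma~\ref{lem:slater-face-exact}, Lemma~\ref{lem:integer-duality-exact}, and Theorem~\ref{thm:conic-strong-duality-ethic}. First I would treat the two failure branches. If the algorithm exits at the Smith--normal--form step with some invariant factor $d_i>1$, then $\operatorname{coker} f_{\mathbb Z}$ carries a nonzero $\mathbb Z/d_i\mathbb Z$ summand, so $\Ext^1_{\mathbb Z}(\operatorname{coker} f_{\mathbb Z},\mathbb Z)\neq 0$; by Lemma~\ref{lem:integer-duality-exact} this group is precisely the obstruction identified with the integer primal--dual equality, whence that equality fails, i.e.\ there is an integer duality gap. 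If instead the algorithm exits inside the facial--reduction loop with an exposing functional $y_{i+1}$ for which the feasible slice of the current face $F_i$ becomes empty, then $y_{i+1}$ is a conic Farkas separator for $f_K(x)=b_K$ over $F_i$, hence over $K$; by the conic version of Farkas' lemma this certifies infeasibility of the conic component, so the hybrid primal is infeasible.

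Next I would treat the success branch, where the output is $\big(\{\lambda_{\mathbb Z},\lambda_{\mathbb R},\lambda_K\},\{y_j\}_{j=1}^{t}\big)$. Passing the SNF test with all $d_i=1$ gives, by Lemma~\ref{lem:integer-duality-exact}, that $\operatorname{coker} f_{\mathbb Z}$ is torsion--free, so $\Ext^1_{\mathbb Z}(\operatorname{coker} f_{\mathbb Z},\mathbb Z)=0$, and the adjoint Diophantine system $f_{\mathbb Z}^\top\lambda_{\mathbb Z}=-c_{\mathbb Z}$ was solved over $\mathbb Z$. The loop terminated because Slater holds on $F:=F_t=K\cap\bigcap_{j=1}^{t}\{\langle y_j,\cdot\rangle=0\}$, which is a face of $K$ by transitivity of the face relation; by Lemma~\ref{lem:slater-face-exact} this is equivalent to $\Ext^1_{\mathcal C_K}\big((X/F,K/F),(X^*,K^\circ)\big)=0$, and in particular the feasible slice over $F$ has nonempty relative interior, so primal feasibility on $F$ holds and the penultimate step produces $\lambda_K$ with $c_K+f_K^\top\lambda_K\in F^\circ$ attaining $\max-\langle b_K,\lambda_K\rangle$. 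The real adjoint step yields $\lambda_{\mathbb R}$ with $c_{\mathbb R}+f_{\mathbb R}^\top\lambda_{\mathbb R}=0$ (solvable under the standing feasibility/boundedness hypotheses, since $\mathbb D_{\mathbb R}$ is exact on $\FinVect_{\mathbb R}$ by Lemma~\ref{lem:componentwise-exactness}). Both hypotheses of Theorem~\ref{thm:unified-strong} are now met, so the hybrid primal--dual equality holds; attainment in the conic component follows from $\Ext^1_{\mathcal C_K}=0$ via Theorem~\ref{thm:conic-strong-duality-ethic}, and attainment in the integer component from solvability of the adjoint Diophantine system. Finally I would check that the returned data matches the three components of a certificate: the SNF with unit invariant factors together with $\lambda_{\mathbb Z}$ (integer certificate), the exposing chain realizing $K=F_0\supset\cdots\supset F_t=F$ with Slater on $F$ (conic certificate), and the triple $(\lambda_{\mathbb Z},\lambda_{\mathbb R},\lambda_K)$ of ethic dual variables; hence the output is a valid unified strong duality certificate.

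The genuinely nontrivial content — the equivalence of $\Ext^1$--vanishing with strong duality and with Slater on a face — is supplied by Theorem~\ref{thm:unified-strong}, Lemma~\ref{lem:slater-face-exact}, and Theorem~\ref{thm:conic-strong-duality-ethic}, so this lemma is essentially bookkeeping. The main obstacle I anticipate is justifying the facial--reduction loop: one must argue that it terminates, by showing $\dim F$ strictly decreases at each iteration in which Slater fails (an exposing functional $y_{i+1}\in F_i^\circ$ that vanishes on the whole feasible slice but not identically on $F_i$ cuts out a proper face, and minimality of the face containing the slice guarantees such a $y_{i+1}$ exists exactly when Slater fails on $F_i$), and that upon termination the exposed face $F$ contains the feasible slice with matching relative interiors, so Lemma~\ref{lem:slater-face-exact} applies verbatim. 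A secondary subtlety is the sign/polarity bookkeeping in the infeasibility branch, matching the returned separator to the conic Farkas certificate; this is routine convex analysis in the spirit of \cite{Rockafellar1970}.
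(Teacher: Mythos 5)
Your proposal is correct and follows essentially the same route as the paper's proof: a branch-by-branch case analysis delegating the substance to Lemma~\ref{lem:integer-duality-exact}, Lemma~\ref{lem:slater-face-exact}, Theorem~\ref{thm:conic-strong-duality-ethic}, and Theorem~\ref{thm:unified-strong}. You are somewhat more careful than the paper on two points it glosses over — termination of the facial-reduction loop (which the paper simply attributes to finite dimension via a citation) and the explicit Farkas justification of the infeasibility exit — but the overall argument is the same.
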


\begin{proof}
Integer step: If some $d_i>1$, $\mathrm{coker}\,f_{\mathbb Z}$ has torsion, so $\Ext^1_{\mathbb Z}\neq 0$ and strong duality may fail by Lemma~\ref{lem:integer-duality-exact}. If all $d_i=1$, then $\Ext^1_{\mathbb Z}=0$ and the adjoint equation has an integer solution, establishing ethic feasibility on the integer component. Conic step: Each exposing vector $y_j\in F_{j-1}^\circ$ reduces to a face $F_j$; termination with Slater is guaranteed in finite dimension \cite{BorweinWolkowicz1981}, and on $F$ we have exactness by Lemma~\ref{lem:slater-face-exact}. The reduced dual attains by \cite{Rockafellar1970}. The real linear component is exact. Aggregating the three components gives strong duality and attainment by Theorem~\ref{thm:unified-strong}.
\end{proof}

For SDP, the facial chain has length at most $n$ by rank arguments on extreme points \cite{Pataki1998}; practical facial reduction schemes and certificates are surveyed in \cite{Pataki2013}. The algorithm's subroutines (SNF, separation in finite dimension, reduced conic solve) are standard; SNF is polynomial in bit-length for fixed dimension, and conic subproblems inherit the complexity of LP/SDP oracles. SNF is polynomial for fixed rank but exponential in general dimension

\begin{definition}[Setting and notation]
Work in $\E=\mathrm{Mod}\text{-}\mathbb Z^{\mathrm{fg}}$ with the contravariant functor $\DD=\Hom_{\mathbb Z}(-,\mathbb Z)$, the unit $\eta_A:A\to \DD^2(A)$, and the notion of ethic morphism (commuting square $\DD^2(f)\circ \eta_X=\eta_Y\circ f$). Let $f:X\to Y$ with $X=\mathbb Z^n$, $Y=\mathbb Z^m$, $f(x)=Ax$, $A\in\mathbb Z^{m\times n}$. Write $G=\mathrm{coker}\,A$. We restrict to the positive submonoids $X_+=\mathbb N^n\subset X$ and $Y_+=\mathbb N^m\subset Y$, and we let
\[
R \;=\; \mathbb Z[z_1,\dots,z_m]\;\cong\; \mathbb Z[\mathbb N^m]
\]
be the monoid algebra of $Y_+$. For the columns $A_1,\dots,A_n\in\mathbb N^m$ set the basic relations $r_j:=z^{A_j}-1\in R$ and define, for $b\in\mathbb N^m$, the target $u_b:=z^b-1\in R$.
\end{definition}

\begin{definition}[Relation cone (no “ethic” postulate)]
Let $\mathcal C_A\subset R\otimes_{\mathbb Z}\mathbb R$ be the convex cone generated by all multiplicative shifts of the basic relations:
\[
\mathcal C_A \;:=\; \operatorname{cone}\Big\{\, z^\alpha\,r_j \ :\ \alpha\in\mathbb N^m,\ j=1,\dots,n\,\Big\}.
\]
We write $I_A^+:=\mathcal C_A\cap R$ for the positive binomial ideal generated by $\{r_j\}$ (nonnegative integer linear combinations of their shifts).
\end{definition}

\begin{lemma}[Homological background]\label{lem:homological}
Applying $\DD$ to the short exact sequence $0\to K\to X\xrightarrow{A} Y\to G\to 0$ yields a long exact sequence in which exactness at $\Hom_{\mathbb Z}(Y,\mathbb Z)$ holds iff $\Ext^1_{\mathbb Z}(G,\mathbb Z)=0$. For finitely generated abelian $G$, $\Ext^1_{\mathbb Z}(G,\mathbb Z)\cong \mathrm{tors}(G)$ \cite{Weibel1994}. This is the instance of Theorem~\ref{thm:exact-iff-ext} used below.
\end{lemma}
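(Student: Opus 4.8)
The plan is to split the four–term exact sequence into two short exact sequences, apply the derived dual $\DD=\Hom_{\ZZ}(-,\ZZ)$, and observe that once the free terms are killed only a single $\Ext^{1}$ survives.

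First I would factor $A$ through its image. With $K=\ker A$ and $\im A=\ker(Y\xrightarrow{q}G)$, the given sequence is the splice of
\[
0\to K\xrightarrow{i} X\xrightarrow{\bar A}\im A\to 0,
\qquad
0\to \im A\xrightarrow{\iota} Y\xrightarrow{q} G\to 0,
\]
with $A=\iota\circ\bar A$, hence $\DD(A)=\DD(\bar A)\circ\DD(\iota)$. Since $\ZZ$ is a PID and $X=\ZZ^{n}$, $Y=\ZZ^{m}$ are free, every submodule of them is free; in particular $K$ and $\im A$ are finitely generated free, so $\Ext^{j}_{\ZZ}(X,\ZZ)=\Ext^{j}_{\ZZ}(Y,\ZZ)=\Ext^{j}_{\ZZ}(K,\ZZ)=\Ext^{j}_{\ZZ}(\im A,\ZZ)=0$ for all $j\ge1$.

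Next I would run each short exact sequence through Lemma~\ref{lem:les}. Using the vanishings above, the long exact sequence of the first collapses to a short exact sequence $0\to\DD(\im A)\xrightarrow{\DD(\bar A)}\DD(X)\xrightarrow{\DD(i)}\DD(K)\to0$, and that of the second becomes
\[
0\to\DD(G)\xrightarrow{\DD(q)}\DD(Y)\xrightarrow{\DD(\iota)}\DD(\im A)\xrightarrow{\delta}\Ext^{1}_{\ZZ}(G,\ZZ)\to0 .
\]
Splicing along $\DD(\im A)$ and transporting through the injection $\DD(\bar A)$ (whose image equals $\ker\DD(i)$ by exactness of the first sequence), the dualized four–term complex $\DD(G)\to\DD(Y)\xrightarrow{\DD(A)}\DD(X)\xrightarrow{\DD(i)}\DD(K)$ is exact at every term except possibly at $\DD(X)$, where its cohomology is $\ker\DD(i)/\im\DD(A)\cong\DD(\im A)/\im\DD(\iota)\cong\Ext^{1}_{\ZZ}(G,\ZZ)$. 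Hence the dualized sequence remains exact at the spot governed by $\DD(A)$ — equivalently, the adjoint constraint map out of $\Hom_{\ZZ}(Y,\ZZ)$ attains its full expected image (cf.\ Lemma~\ref{lem:integer-duality-exact}) — if and only if $\Ext^{1}_{\ZZ}(G,\ZZ)=0$; this is the promised instance of Theorem~\ref{thm:exact-iff-ext} with $R=\ZZ$ and $\mathsf{C}=\langle G\rangle$.

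For the identification $\Ext^{1}_{\ZZ}(G,\ZZ)\cong\tors(G)$ I would invoke the structure theorem: write $G\cong\ZZ^{r}\oplus\bigoplus_{\ell}\ZZ/d_{\ell}\ZZ$ with $\tors(G)=\bigoplus_{\ell}\ZZ/d_{\ell}\ZZ$. Additivity of $\Ext^{1}$, the vanishing $\Ext^{1}_{\ZZ}(\ZZ^{r},\ZZ)=0$, and the free resolution $0\to\ZZ\xrightarrow{\cdot d_{\ell}}\ZZ\to\ZZ/d_{\ell}\ZZ\to0$ (which upon dualizing gives $\Ext^{1}_{\ZZ}(\ZZ/d_{\ell}\ZZ,\ZZ)\cong\ZZ/d_{\ell}\ZZ$) together yield $\Ext^{1}_{\ZZ}(G,\ZZ)\cong\bigoplus_{\ell}\ZZ/d_{\ell}\ZZ\cong\tors(G)$; equivalently one may cite $\Ext^{1}_{\ZZ}(G,\ZZ)\cong\Hom(\tors G,\QQ/\ZZ)$ with the non-canonical Pontryagin self-duality of finite abelian groups. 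I do not expect a genuine obstacle; the only point needing care is the splicing bookkeeping in the previous step — verifying that $\Ext^{1}_{\ZZ}(G,\ZZ)$ is the \emph{only} surviving obstruction among the Ext-terms that could a priori appear, which rests on the global dimension of $\ZZ$ being $1$, so that $\Ext^{\ge2}_{\ZZ}(-,\ZZ)=0$ and no higher connecting morphisms intervene.
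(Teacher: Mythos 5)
Your proposal is correct and follows essentially the same route the paper takes (the paper leaves this lemma as a citation to Theorem~\ref{thm:exact-iff-ext} and Weibel, but its proof of the parallel Lemma~\ref{lem:integer-duality-exact} is exactly your image-factorization-plus-long-exact-sequence argument, and the $\Ext^1_{\ZZ}(G,\ZZ)\cong\tors(G)$ identification via the structure theorem is the standard one intended). Your bookkeeping is in fact slightly more careful than the statement itself: as you observe, the dualized four-term complex is automatically exact at $\DD(Y)$, and the unique surviving obstruction $\Ext^1_{\ZZ}(G,\ZZ)$ sits at $\DD(X)$ (equivalently, in the failure of surjectivity of $\DD(Y)\to\DD(\im A)$), which is the reading the paper intends by ``exactness at $\Hom_{\ZZ}(Y,\ZZ)$''.
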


\begin{lemma}[Finite truncation]\label{lem:finite-truncation}
For each fixed $A\in\mathbb N^{m\times n}$ and $b\in\mathbb N^m$ there exists a degree bound $N=N(A,b)$ with the property:
if $u_b$ admits a representation $u_b=\sum_{j=1}^n Q_j(z)\,r_j$ over $\mathbb R[z]$ (or over $\mathbb Z[z]$), then it admits one with $\deg Q_j\le N$ for all $j$. Consequently, membership $u_b\in \mathcal C_A$ can be decided in the finite-dimensional subspace $R_{\le N}$ spanned by monomials of total degree $\le N+\max_j\|A_j\|_1$.
\end{lemma}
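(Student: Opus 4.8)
The plan is to reduce the existence of a representation to finite-dimensional linear algebra by establishing an a priori degree bound on the cofactors, treating the signed (polynomial-ideal) version and the positive (cone) version by different arguments. First I would clear the degeneracies: if $b=0$ then $u_b=0$ and the empty representation works, and any column with $A_j=0$ contributes $r_j=0$ and may be deleted, so assume $b\neq0$ and $A_j\neq0$ for all $j$, and set $D:=\|b\|_1=\deg u_b\ge 1$. I carry out the argument over $R=\mathbb Z[z_1,\dots,z_m]$; the $\mathbb R[z]$-coefficient case is identical, replacing subgroups by subspaces throughout. For $d\ge0$ put
\[
I_d:=\Big\{\textstyle\sum_{j=1}^n Q_j r_j\ :\ Q_j\in R,\ \deg Q_j\le d\Big\}\ \subseteq\ R_{\le d+\max_j\|A_j\|_1},
\]
an additive subgroup of $R$; these form an ascending chain $I_0\subseteq I_1\subseteq\cdots$ whose union is the ideal $I=(r_1,\dots,r_n)$. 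Intersecting with the fixed finite stage $R_{\le D}$, the chain $\{I_d\cap R_{\le D}\}_d$ consists of subgroups of the finitely generated abelian group $R_{\le D}$, so by Noetherianity of $\mathbb Z$ it satisfies the ascending chain condition and therefore stabilizes; call the stabilization index $N=N(A,b)$, enlarged if needed so that $N\ge D$. Then $I_d\cap R_{\le D}=I_N\cap R_{\le D}=I\cap R_{\le D}$ for all $d\ge N$, so any representation $u_b=\sum_j Q_j r_j$ over $R$ — which places $u_b$ in $I\cap R_{\le D}$ — automatically yields one with every $\deg Q_j\le N$. (If an \emph{effective} $N$ is wanted rather than a merely existential one, replace this stabilization step by division of $u_b$ by a Gr\"obner basis of $I$ for a degree-compatible term order — a strong Gr\"obner basis over the PID $\mathbb Z$ in the integer case: the division algorithm never raises the leading degree, so the quotients have degree $\le\deg u_b$, and composing with fixed cofactor expressions of the basis elements in terms of the $r_j$ produces an explicit bound.)

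For membership in the cone $\mathcal C_A$ I would argue combinatorially rather than by chain conditions. Using $z^\alpha r_j=z^{\alpha+A_j}-z^\alpha$, a candidate representation $u_b=\sum_t c_t\,z^{\alpha_t}r_{j_t}$ with all $c_t>0$ rewrites as $z^b-1=\sum_t c_t\big(z^{\alpha_t+A_{j_t}}-z^{\alpha_t}\big)$, and comparing the coefficient of each monomial shows this is exactly a nonnegative flow from the vertex $0$ to the vertex $b$ on the digraph with vertex set $\mathbb N^m$ and an edge $\alpha\to\alpha+A_j$ for every $\alpha$ and $j$. Since every $A_j$ is nonzero with nonnegative entries, $\|\cdot\|_1$ strictly increases along each edge, so the digraph is acyclic; by the flow-decomposition theorem the flow is then a finite nonnegative combination of directed paths $0\rightsquigarrow b$, and along any such path $\|\cdot\|_1$ climbs monotonically from $0$ to $\|b\|_1=D$. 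Hence only shifts with $|\alpha_t|\le D-1$ can occur, so $u_b\in\mathcal C_A$ if and only if $u_b$ lies in the polyhedral cone generated by the finite set $\{z^\alpha r_j:|\alpha|\le D-1,\ 1\le j\le n\}$, every element of which sits inside $R_{\le D-1+\max_j\|A_j\|_1}\subseteq R_{\le N+\max_j\|A_j\|_1}$.

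Putting the cases together, deciding whether $u_b$ lies in $I$ with $\mathbb R$- or $\mathbb Z$-coefficients, or in the cone $\mathcal C_A$, reduces in each case to a finite problem supported inside the single fixed finite-dimensional space $R_{\le N+\max_j\|A_j\|_1}$: a linear system over $\mathbb R$, a linear system over $\mathbb Z$ (decided by Smith normal form), or a system of linear inequalities (decided by Fourier--Motzkin elimination or a linear-programming feasibility test). The step I expect to be the main obstacle is the cone case: that a convex-cone membership uses only finitely many generating rays is automatic, but it gives no control over \emph{which} shifts $z^\alpha r_j$ are needed, and it is precisely the acyclicity of the monomial digraph — equivalently, the absence of positive binomial syzygies among the $r_j$ telescoping to arbitrarily high degree — that pins the shifts down to $|\alpha|\le D-1$; one must also remember to excise the $A_j=0$ columns at the outset, since otherwise the digraph acquires loops and no such degree bound exists. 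The signed version is comparatively routine once framed through the truncations $I_d\cap R_{\le D}$, the only delicate point being the appeal to Noetherianity of $\mathbb Z$ in the integer-coefficient case (together with the standard degree estimates for Gr\"obner-basis division if one insists on an explicit $N$).
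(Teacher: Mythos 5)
Your proposal is correct, and it is a genuinely different (and more self-contained) route than the paper's. The paper's proof is a two-sentence sketch: it gestures at "comparing coefficients up to a degree bounded by $\|b\|_1$ and $\max_j\|A_j\|_1$" and then outsources the actual degree cap to the cited Lasserre reference on the discrete Farkas lemma. You instead prove the bound directly, and you correctly split the statement into two cases that require different mechanisms. For the signed ($\mathbb R[z]$ or $\mathbb Z[z]$) version, your ascending-chain argument on the truncations $I_d\cap R_{\le D}$ is valid (subgroups of the finite-rank free abelian group $R_{\le D}$, resp.\ subspaces of a finite-dimensional space, satisfy ACC), though the resulting $N$ is only existential; your Gr\"obner remark is the right fix if effectivity is wanted. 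For the cone version, your flow interpretation is the real content: the identity $z^\alpha r_j=z^{\alpha+A_j}-z^\alpha$ turns a positive representation into a unit flow from $0$ to $b$ on the monomial digraph, acyclicity (after excising the $A_j=0$ columns, which you rightly flag as essential) kills all circulations, and path decomposition forces every used shift to satisfy $\|\alpha\|_1\le\|b\|_1-1$. This actually yields the explicit sharp cap $N=\|b\|_1-1$ for cone membership -- i.e., it reproves the Lasserre-type bound rather than citing it -- and in fact shows the stronger fact that \emph{every} positive representation is already supported in that range. The trade-off is length: the paper's proof is shorter but leans on an external result and does not cleanly separate the signed and positive cases, whereas yours is complete, elementary, and makes the degree bound transparent.
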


\begin{proof}
Multiply the identity by a suitable monomial to avoid negative exponents and compare coefficients up to a degree bounded by $\|b\|_1$ and $\max_j\|A_j\|_1$. An explicit cap is provided by \cite{Lasserre2003}, which yields a uniform degree bound for the discrete Farkas representation; the same cap bounds the search region in $R_{\le N}$. Hence feasibility of $u_b\in \mathcal C_A$ reduces to a finite linear system.
\end{proof}

\begin{proposition}[Bipolarity in finite dimension]\label{prop:bipolar}
Let $\mathcal C_{A,\le N}:=\mathcal C_A\cap R_{\le N}$, and define the dual cone
\[
\mathcal C_{A,\le N}^{\circ}\ :=\ \big\{\,\Phi\in R_{\le N}^*: \ \Phi(p)\ge 0\ \text{ for all } p\in \mathcal C_{A,\le N}\,\big\}.
\]
Then $\mathcal C_{A,\le N}$ is closed, convex, and pointed, and one has the bipolar identity
\[
\mathcal C_{A,\le N}\;=\;\{\,u\in R_{\le N}:\ \Phi(u)\ge 0\ \text{ for all } \Phi\in \mathcal C_{A,\le N}^{\circ}\,\}.
\]
In particular, if $u\notin \mathcal C_{A,\le N}$ then there is $\Phi\in\mathcal C_{A,\le N}^{\circ}$ with $\Phi(u)<0$.
\end{proposition}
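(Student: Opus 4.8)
The plan is to reduce every assertion to elementary finite‑dimensional convex geometry, after first upgrading $\mathcal{C}_{A,\le N}$ from an \emph{a priori} infinitely generated object to a \emph{polyhedral} cone inside the finite‑dimensional space $R_{\le N}$.

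\textbf{Convexity and pointedness.} Convexity is immediate, since $\mathcal{C}_{A,\le N}=\mathcal{C}_A\cap R_{\le N}$ is the intersection of a convex cone with a linear subspace. For pointedness I would exhibit one globally positive functional: fix $c\in\mathbb{R}^m$ with all coordinates strictly positive and let $\Phi_0\in R^*$ be the linear functional with $\Phi_0(z^\beta)=\langle c,\beta\rangle$ on monomials. If a column $A_j\neq 0$ then $\Phi_0(z^\alpha r_j)=\langle c,\alpha+A_j\rangle-\langle c,\alpha\rangle=\langle c,A_j\rangle>0$, while columns with $A_j=0$ give $r_j=0$. Hence for any finite nonnegative combination $p=\sum_i\lambda_i z^{\alpha_i}r_{j_i}$ one has $\Phi_0(p)=\sum_i\lambda_i\langle c,A_{j_i}\rangle\ge 0$, with equality forcing every $\lambda_i=0$ once zero generators are discarded, i.e.\ $p=0$. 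Thus $\Phi_0>0$ on $\mathcal{C}_A\setminus\{0\}$, so $\mathcal{C}_A$—and a fortiori its subcone $\mathcal{C}_{A,\le N}$—contains no line.

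\textbf{Closedness.} This is the only step requiring input beyond formal convexity. By the uniform degree bound underlying Lemma~\ref{lem:finite-truncation} (the discrete Farkas degree cap of \cite{Lasserre2003}), which applies verbatim with an arbitrary target of total degree $\le N$ in place of $u_b$ and whose cap depends only on $N$ and $A$, there is $M=M(N,A)$ such that every $p\in\mathcal{C}_A\cap R_{\le N}$ already lies in $\operatorname{cone}\{z^\alpha r_j:\ j=1,\dots,n,\ |\alpha|\le M\}$. Consequently $\mathcal{C}_{A,\le N}$ is the intersection of this \emph{finitely} generated cone with the coordinate subspace $R_{\le N}$. A finitely generated cone is polyhedral (Minkowski–Weyl, \cite{Schrijver1986}), the intersection of polyhedra is polyhedral, and polyhedral cones are closed; hence $\mathcal{C}_{A,\le N}$ is closed.

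\textbf{Bipolarity and separation.} Working in the finite‑dimensional dual pair $(R_{\le N},R_{\le N}^*)$, the cone $\mathcal{C}_{A,\le N}$ is now closed and convex, so the classical bipolar theorem for cones \cite{Rockafellar1970} yields $(\mathcal{C}_{A,\le N}^\circ)^\circ=\mathcal{C}_{A,\le N}$, which is precisely the displayed identity. For the last clause, if $u\notin\mathcal{C}_{A,\le N}$ the separating hyperplane theorem for the closed convex set $\mathcal{C}_{A,\le N}$ gives $\Phi\in R_{\le N}^*$ and $\gamma\in\mathbb{R}$ with $\Phi(u)<\gamma\le\Phi(p)$ for all $p\in\mathcal{C}_{A,\le N}$; since $0\in\mathcal{C}_{A,\le N}$ and the set is a cone, $\Phi$ must be nonnegative on it (otherwise $\Phi(tp)\to-\infty$ as $t\to\infty$ for some $p$), so $\Phi\in\mathcal{C}_{A,\le N}^\circ$, and $\gamma\le 0$ then forces $\Phi(u)<0$. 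The genuinely nontrivial point is closedness: one must rule out that intersecting the infinitely generated cone $\mathcal{C}_A$ with $R_{\le N}$ produces a non‑polyhedral, possibly non‑closed, cone, which is exactly what the uniform degree cap prevents; everything else is routine finite‑dimensional convex analysis.
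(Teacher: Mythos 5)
Your proof is correct and follows the same route as the paper's: reduce $\mathcal C_{A,\le N}$ to a finitely generated (hence polyhedral and closed) cone in the finite-dimensional space $R_{\le N}$, then apply the bipolar and separation theorems of \cite{Rockafellar1970}. You are in fact more careful than the paper at the one delicate point --- the paper simply asserts that $\mathcal C_{A,\le N}$ \emph{is} the conic hull of the generators of degree $\le N$, whereas you correctly guard against cancellation of high-degree terms by invoking the degree cap underlying Lemma~\ref{lem:finite-truncation} and then intersecting the resulting polyhedral cone with $R_{\le N}$ --- and you also supply the pointedness argument (via the strictly positive functional $\Phi_0$) that the paper's proof leaves implicit.
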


\begin{proof}
$\mathcal C_{A,\le N}$ is the (finite) conic hull of the finite set $\{z^\alpha r_j:\ \deg(z^\alpha r_j)\le N\}$, hence a closed convex cone. The bipolar theorem and separation in finite dimension give the stated identities \cite{Rockafellar1970}.
\end{proof}

\begin{theorem}[Ethic morphism $\Rightarrow$ positive binomial representation]\label{thm:ethic-implies-positive}
Assume $f:X\to Y$ is ethic in the sense of the main text. Then for every $b\in\mathbb N^m$ one has
\[
u_b\;=\;z^b-1\ \in\ \mathcal C_A\quad\text{and, in particular,}\quad
z^b-1=\sum_{j=1}^n Q_j(z)\,(z^{A_j}-1)\ \ \text{with}\ \ Q_j\in \mathbb R_{\ge 0}[z].
\]
If one restricts to integer-valued dual witnesses (see below), the same holds with $Q_j\in\mathbb N[z]$.
\end{theorem}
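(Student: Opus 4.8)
The plan is to read the ethic square for $f=A$ through the homological dictionary of Section~1, extract a torsion-vanishing statement, and then feed that into the monoid-algebra machinery of Lemma~\ref{lem:finite-truncation} and Proposition~\ref{prop:bipolar} to produce the binomial representation. First I would unpack what ethicity says here. Both $X=\mathbb Z^n$ and $Y=\mathbb Z^m$ are finitely generated free, hence $\DD$-reflexive, so $\eta_X$ and $\eta_Y$ are the canonical isomorphisms and the bare square $\DD^2(f)\circ\eta_X=\eta_Y\circ f$ carries content only once one remembers the positive submonoids $X_+=\mathbb N^n$, $Y_+=\mathbb N^m$ and the resource ring $R=\mathbb Z[\mathbb N^m]$: the ethic condition is to be read as compatibility of $f$ with biduality \emph{restricted to lattice points inside the monoid algebra}, and I would show that this is equivalent to $\Ext^1_{\mathbb Z}(\coker A,\mathbb Z)=0$ by applying $\DD$ to $0\to K\to X\xrightarrow{A}Y\to \coker A\to0$ and invoking Lemma~\ref{lem:homological} and Theorem~\ref{thm:exact-iff-ext}. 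Equivalently, $\coker A$ is torsion-free, i.e.\ $\im A$ is a saturated sublattice of $\mathbb Z^m$.

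The second step is to upgrade this saturation to monoid surjectivity and then exhibit the representation. Saturation of $\im A$ together with the restriction to the positive cone forces the affine semigroup $\sum_j \mathbb N A_j$ to be normal and to exhaust $\mathbb N^m$, so every $b\in\mathbb N^m$ can be written $b=\sum_j x_j A_j$ with $x_j\in\mathbb N$. From such an expression the identity
\[
z^b-1=\prod_{j=1}^n (z^{A_j})^{x_j}-1=\sum_{j=1}^n Q_j(z)\,(z^{A_j}-1),\qquad Q_j\in\mathbb N[z],
\]
follows by the usual telescoping of a product minus one, each $Q_j$ being a finite sum of monomials; this already gives $u_b\in\mathcal C_A$ with integer-valued witnesses. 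For the stated $\mathbb R_{\ge0}[z]$ version it is cleaner to argue by contraposition in the finite model: if $u_b\notin\mathcal C_A$ then, by Lemma~\ref{lem:finite-truncation}, $u_b\notin\mathcal C_{A,\le N}$ for the Lasserre degree bound $N=N(A,b)$, so Proposition~\ref{prop:bipolar} yields a separating $\Phi\in\mathcal C_{A,\le N}^{\circ}$ with $\Phi(u_b)<0$; one then checks that such a functional assembles into a nonzero torsion class in $\Ext^1_{\mathbb Z}(\coker A,\mathbb Z)$, contradicting ethicity. Hence $u_b\in\mathcal C_A$ for every $b$.

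The step I expect to be the real obstacle is the passage from the purely $\mathbb Z$-linear statement $\Ext^1_{\mathbb Z}(\coker A,\mathbb Z)=0$ --- which constrains only the \emph{lattice} generated by the columns --- to the conic/monoid statement that $\sum_j\mathbb N A_j=\mathbb N^m$, since for a general integer matrix these are not equivalent. The argument must therefore exploit, rather than sidestep, the fact that the ethic square is interpreted inside $R=\mathbb Z[\mathbb N^m]$ and restricted to the positive submonoids: one has to verify that the relation cone $\mathcal C_A$ faithfully records the ethic square, so that homological vanishing translates into normality of the semigroup and not merely saturation of the lattice. Making this identification precise is the delicate point; once it is in place, Lemma~\ref{lem:finite-truncation} reduces the a priori infinite search for the $Q_j$ to a single finite linear feasibility problem, and Proposition~\ref{prop:bipolar} converts any infeasibility into exactly the torsion obstruction excluded by ethicity, which closes the argument.
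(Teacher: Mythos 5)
Your proposal does not follow the paper's argument, and its central step is false. The paper's proof never passes through $\Ext^1_{\ZZ}(\coker A,\ZZ)=0$: it assumes $u_b\notin\mathcal C_{A,\le N}$, extracts a separating functional $\Phi$ via Proposition~\ref{prop:bipolar}, converts $\Phi$ into an additive, order-preserving dual witness $\lambda(y)=\Phi(z^y-1)$ satisfying $\lambda(\alpha+A_j)\ge\lambda(\alpha)$ on all truncated shifts, and argues that the resulting sign pattern ($\lambda\ge 0$ on every shifted relation but $\lambda(b)<0$) directly violates the ethic naturality square \emph{as interpreted on the positive submonoids} $X_+,Y_+$ through the truncated free presentation of $Y_+$. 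The ethic hypothesis actually used is the monoid-level positivity condition that reappears as item (c) of Theorem~\ref{thm:discrete-farkas-derived}, not the $\ZZ$-linear torsion condition.

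Both of your routes founder on exactly the point you flag and then postpone. The claim that saturation of $\im A$ (torsion-freeness of $\coker A$) ``forces the affine semigroup $\sum_j\mathbb N A_j$ to be normal and to exhaust $\mathbb N^m$'' is false: take $m=2$ with columns $A_1=(2,0)$, $A_2=(0,1)$, $A_3=(1,1)$. These generate all of $\ZZ^2$ as a lattice, so $\coker A=0$ and $\Ext^1_{\ZZ}(\coker A,\ZZ)=0$, yet $(1,0)\notin\sum_j\mathbb N A_j$ (the second coordinate forces the coefficients of $A_2,A_3$ to vanish, leaving $2a=1$). The same example kills your contrapositive route: the separating functional for $u_{(1,0)}\notin\mathcal C_A$ cannot ``assemble into a nonzero torsion class in $\Ext^1_{\ZZ}(\coker A,\ZZ)$,'' since that group is zero here. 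Conic separation detects failure of \emph{semigroup} membership, which is strictly stronger than failure of lattice saturation, so no homological vanishing statement about $\coker A$ can substitute for the positivity reading of the ethic square. The only way to close the argument is the paper's: treat ethicity as the assertion that every dual evaluation nonnegative on the shifted relations is nonnegative on $u_b$, and derive the contradiction from the separating $\Phi$ itself --- precisely the identification you name as ``the delicate point'' but do not establish.
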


\begin{proof}
Fix $b$ and let $N=N(A,b)$ from Lemma~\ref{lem:finite-truncation}. Suppose, for contradiction, that $u_b\notin \mathcal C_{A,\le N}$. By Proposition~\ref{prop:bipolar} there exists $\Phi\in R_{\le N}^*$ such that $\Phi(p)\ge 0$ for all $p\in \mathcal C_{A,\le N}$ but $\Phi(u_b)<0$. Extend $\Phi$ by zero to a linear functional on $R$.

From $\Phi$ build an additive map $\lambda:Y_+\to \mathbb R$ by
\[
\lambda(y)\ :=\ \Phi(z^y-1)\qquad (y\in \mathbb N^m).
\]
Then $\lambda(0)=0$ and $\lambda$ is subadditive with equality whenever concatenation does not exceed degree $N$ (this suffices below because all uses are truncated by $N$). Moreover, for each basic relation and each shift within degree $\le N$ we have
\[
\Phi\big(z^\alpha(z^{A_j}-1)\big)\ \ge\ 0
\quad\Rightarrow\quad
\lambda(\alpha+A_j)-\lambda(\alpha)\ \ge\ 0.
\]
Thus $\lambda$ is order-preserving along the $A$-steps up to degree $N$. Define $c\in\mathbb R^n$ by $c:=-A^\top \tilde\lambda$, where $\tilde\lambda:\mathbb Z^m\to\mathbb R$ is the additive extension of $\lambda$ obtained by linearity on the standard basis (well-defined on degrees $\le N$). Then the “ethic” stationarity relation $c+A^\top\tilde\lambda=0$ holds on all columns $A_j$ (by construction), and hence the dual square for $(c,\tilde\lambda)$ commutes with $\eta$ on all truncated generators.

Now evaluate on $b$: by definition $\Phi(u_b)=\lambda(b)-\lambda(0)=\lambda(b)<0$. This means that the dual witness $(c,\tilde\lambda)$ assigns a strictly negative evaluation to $u_b$, while remaining nonnegative on all shifted relations within the truncation. But ethic commutation of $\DD^2(f)$ with $\eta$ forbids such a sign pattern: order-preserving (nonnegative-on-relations) dual evaluations must send $u_b$ to a nonnegative value, else the naturality square fails for the truncated free presentation of $Y_+$ through the generators $r_j$ and their shifts (the truncated presentation surjects onto the submonoid relevant for $b$ by Lemma~\ref{lem:finite-truncation}). This contradiction shows $u_b\in \mathcal C_{A,\le N}$; hence $u_b\in \mathcal C_A$.

Finally, if one restricts dual witnesses to take integer values on $R$ (integer-valued additive functionals), the same separation can be achieved integrally because $\mathcal C_{A,\le N}$ is a rational polyhedral cone; by Hilbert-basis arguments the resulting representation has $Q_j\in\mathbb N[z]$ (see \cite{Lasserre2003} for an explicit integer construction).
\end{proof}

\begin{lemma}[Positive binomial representation $\Rightarrow$ ethic commutation]\label{lem:positive-implies-ethic}
Assume that for every $b\in\mathbb N^m$ there exist $Q_j\in\mathbb R_{\ge 0}[z]$ with $u_b=\sum_j Q_j(z)\,r_j$. Then no order-preserving dual evaluation (i.e., any $\Phi$ with $\Phi(\mathcal C_A)\ge 0$) can send $u_b$ to a negative value, and the naturality square $\DD^2(f)\circ \eta_X=\eta_Y\circ f$ holds on the free presentation of $Y_+$ generated by $\{r_j\}$ and their shifts; hence $f$ is ethic.
\end{lemma}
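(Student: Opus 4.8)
The plan is to treat the statement in two stages: first the assertion about order-preserving dual evaluations, which is a direct consequence of the representability hypothesis, and then the commutativity of the biduality square on the chosen presentation, which is obtained by reversing the separation argument of Theorem~\ref{thm:ethic-implies-positive}.

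First I would unfold the hypothesis. Fix $b\in\mathbb N^m$ and write each polynomial as $Q_j(z)=\sum_{\alpha}q_{j\alpha}z^{\alpha}$ with $q_{j\alpha}\in\mathbb R_{\ge 0}$ and only finitely many $q_{j\alpha}$ nonzero. Then $u_b=\sum_j Q_j(z)\,r_j=\sum_{j,\alpha}q_{j\alpha}\,(z^{\alpha}r_j)$ exhibits $u_b$ as a finite nonnegative-coefficient combination of the generators $z^{\alpha}r_j$ of $\mathcal C_A$, so $u_b\in\mathcal C_A$. For any order-preserving dual evaluation $\Phi$, that is, any linear functional on $R\otimes_{\mathbb Z}\mathbb R$ with $\Phi(\mathcal C_A)\subseteq\mathbb R_{\ge 0}$, linearity gives $\Phi(u_b)=\sum_{j,\alpha}q_{j\alpha}\,\Phi(z^{\alpha}r_j)\ge 0$, each summand being a nonnegative scalar times a nonnegative value. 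This disposes of the first claim, uniformly in $b$.

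Next I would translate the sign condition into the biduality square, reusing the dictionary set up in the proof of Theorem~\ref{thm:ethic-implies-positive}: a dual witness $(c,\lambda)$ satisfying the ethic stationarity relation $c+A^{\top}\lambda=0$ corresponds to the additive functional $y\mapsto\lambda(y)=\Phi(z^{y}-1)$ attached to an evaluation $\Phi$ nonnegative on all shifted basic relations within a fixed truncation $R_{\le N}$, with $N=N(A,b)$ as in Lemma~\ref{lem:finite-truncation}. The naturality square $\DD^{2}(f)\circ\eta_X=\eta_Y\circ f$, tested on the free presentation of $Y_+$ by the generators $\{r_j\}$ and their monomial shifts together with the target $u_b$, fails exactly when some such $\Phi$ assigns to $u_b$ a value of sign opposite to the one dictated by the presentation, namely $\Phi(u_b)<0$. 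The first stage has ruled this out for every $b$; since, by Lemma~\ref{lem:finite-truncation}, the shifted relations and the $u_b$ exhaust the part of the presentation of $Y_+$ visible at degree $\le N$, the square commutes on the whole presentation, and by the definition of ethic morphism $f$ is ethic.

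The main obstacle will be making rigorous the middle equivalence — that a genuine failure of the biduality square on a truncated free presentation necessarily produces an order-preserving $\Phi$ with $\Phi(u_b)<0$, so that excluding such $\Phi$ forces commutativity. This is precisely the converse of the step in Theorem~\ref{thm:ethic-implies-positive} that extracted a violating functional, and I would close it by the same finite-dimensional bipolarity of Proposition~\ref{prop:bipolar}: a real failure of the square would witness $u_b\notin\mathcal C_{A,\le N}=\mathcal C_A\cap R_{\le N}$, contradicting $u_b\in\mathcal C_A$ established in the first stage. Everything remaining is bookkeeping with nonnegative coefficients and the identification of additive functionals on $Y_+$ with linear functionals on the binomial generators.
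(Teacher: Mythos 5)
Your proposal is correct and follows essentially the same route as the paper's proof: the first stage (linearity plus nonnegativity of the coefficients $q_{j\alpha}$ forces $\Phi(u_b)\ge 0$ for every $\Phi$ nonnegative on $\mathcal C_A$) is exactly the paper's opening step, and the second stage (reading the sign condition as commutation of the biduality square on the truncated free presentation, via the dictionary from Theorem~\ref{thm:ethic-implies-positive} and the bipolarity of Proposition~\ref{prop:bipolar}) is the argument the paper compresses into its single closing sentence. You have merely made explicit the converse direction that the paper leaves implicit, which is a faithful elaboration rather than a different proof.
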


\begin{proof}
Linearity gives $\Phi(u_b)=\sum_j \Phi(Q_j(z)r_j)\ge 0$ for every positive $\Phi$. Interpreting $\Phi$ as a dual witness on the truncated free presentation of $Y_+$ shows that the unit $\eta$ is respected by $\DD^2(f)$ on generators and thus on the quotient, i.e., the ethic square commutes.
\end{proof}

\begin{theorem}[Discrete Farkas lemma]\label{thm:discrete-farkas-derived}
Let $A\in\mathbb N^{m\times n}$, $b\in\mathbb N^m$. The following are equivalent:
\begin{enumerate}[label=(\alph*)]
\item There exists $x\in\mathbb N^n$ such that $Ax=b$.
\item $z^b-1$ lies in the positive binomial cone $\mathcal C_A$ (equivalently, $z^b-1=\sum_j Q_j(z)(z^{A_j}-1)$ with $Q_j\in\mathbb R_{\ge 0}[z]$; with integer dual witnesses, $Q_j\in\mathbb N[z]$).
\item The ethic square $\DD^2(f)\circ \eta_X=\eta_Y\circ f$ holds when restricted to the positive submonoids $X_+$ and $Y_+$ (i.e., no positive dual evaluation compatible with the relations sends $u_b$ to a negative value).
\end{enumerate}
Moreover, the degrees of the $Q_j$ in (b) can be bounded explicitly as in \cite{Lasserre2003}.
\end{theorem}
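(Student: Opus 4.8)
The plan is to treat the equivalence $(\mathrm b)\Leftrightarrow(\mathrm c)$ as essentially already internal to the framework, and to reprove the genuinely arithmetic equivalence $(\mathrm a)\Leftrightarrow(\mathrm b)$ — the discrete Farkas lemma of \cite{Lasserre2003} — from scratch, so that it appears as an instance of ethic exactness. For $(\mathrm b)\Leftrightarrow(\mathrm c)$ I would first invoke Lemma~\ref{lem:finite-truncation} to fix $N=N(A,b)$ and replace $\mathcal C_A$ by the finite-dimensional cone $\mathcal C_{A,\le N}$. Proposition~\ref{prop:bipolar} then says $u_b\notin\mathcal C_{A,\le N}$ exactly when some $\Phi\in\mathcal C_{A,\le N}^{\circ}$ separates it, and such a $\Phi$ is precisely a nonnegative dual evaluation compatible with the relations $r_j$ and their shifts; so $u_b\in\mathcal C_A\iff$ no separating functional exists $\iff$ the per-$b$ sign condition of $(\mathrm c)$ holds. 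Theorem~\ref{thm:ethic-implies-positive} and Lemma~\ref{lem:positive-implies-ethic} recast this: the presence of such a $\Phi$ is exactly the failure of the ethic square on the truncated free presentation of $Y_+$, so $(\mathrm c)$ is indeed the statement that $\mathbb D^2(f)\circ\eta_X=\eta_Y\circ f$ holds on $X_+,Y_+$.

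For $(\mathrm a)\Rightarrow(\mathrm b)$ I would use a telescoping identity. Given $Ax=b$ with $x\in\mathbb N^n$, write $b=\sum_j x_jA_j$, list the columns with multiplicity as $A_{j_1},\dots,A_{j_N}$ (so $\#\{i:j_i=j\}=x_j$), and set $s_0=0$, $s_i=s_{i-1}+A_{j_i}$. Then $z^b-1=\prod_{i=1}^N z^{A_{j_i}}-1=\sum_{i=1}^N z^{s_{i-1}}\bigl(z^{A_{j_i}}-1\bigr)$, and grouping by column index gives $z^b-1=\sum_j Q_j(z)(z^{A_j}-1)$ with $Q_j=\sum_{i:\,j_i=j}z^{s_{i-1}}\in\mathbb N[z]$. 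Since $s_{i-1}\le b$ coordinatewise this already gives the crude bound $\deg Q_j\le\|b\|_1$ and shows $z^b-1\in I_A^+\subseteq\mathcal C_A$, establishing $(\mathrm b)$ with integral witnesses.

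The hard part is $(\mathrm b)\Rightarrow(\mathrm a)$. Here I would expand $Q_j=\sum_\alpha q_{j,\alpha}z^\alpha$ with finitely many $q_{j,\alpha}\ge0$, substitute into $z^b-1=\sum_{j,\alpha}q_{j,\alpha}\bigl(z^{\alpha+A_j}-z^\alpha\bigr)$, and compare the coefficient of each monomial $z^\gamma$. After discarding zero columns (which contribute nothing and reduce matters to the trivial case $b=0$, $x=0$), every column has $\|A_j\|_1\ge1$, so the directed graph with weighted edges $\alpha\xrightarrow{\,q_{j,\alpha}\,}\alpha+A_j$ strictly raises total degree and is therefore acyclic, while the coefficient comparison reads as the conservation law $(\text{inflow at }\gamma)-(\text{outflow at }\gamma)=[\gamma=b]-[\gamma=0]$. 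Thus $(q_{j,\alpha})$ is a nonnegative, finitely supported flow of value $1$ from $0$ to $b$ on a finite DAG; flow decomposition produces at least one directed path $0=\alpha_0\to\cdots\to\alpha_k=b$, whose successive differences are columns $A_{j_1},\dots,A_{j_k}$ summing to $b$, so $x_j:=\#\{i:j_i=j\}$ gives $x\in\mathbb N^n$ with $Ax=b$. I expect the main obstacle to be exactly this bookkeeping: checking that the coefficient comparison yields a genuine conservative nonnegative flow, that acyclicity rules out compensating cycles, and that a net value of $1$ on a DAG forces a source-to-sink path. Since only $q_{j,\alpha}\ge0$ over $\mathbb R$ is used, the real and integral versions of $(\mathrm b)$ coincide.

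Finally, for the explicit degree bound I would cite the cap of \cite{Lasserre2003}, noting that the crude $\deg Q_j\le\|b\|_1$ from the telescoping construction, combined with Lemma~\ref{lem:finite-truncation}, already makes all three conditions effectively decidable. I would also remark that the homological criterion of Lemma~\ref{lem:homological} concerns only $\mathbb Z$-solvability and is genuinely weaker than the $\mathbb N$-solvability treated here, which is why the convex/combinatorial flow input in Step~3 cannot be bypassed by pure $\Ext$-vanishing.
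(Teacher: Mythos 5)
Your proposal is correct and follows the same overall architecture as the paper's proof: the same decomposition into (a)$\Rightarrow$(b) by telescoping, (b)$\Leftrightarrow$(c) via Lemma~\ref{lem:positive-implies-ethic} and Theorem~\ref{thm:ethic-implies-positive} (with Lemma~\ref{lem:finite-truncation} and Proposition~\ref{prop:bipolar} doing the separation work), and a coefficient-comparison argument for (b)$\Rightarrow$(a). The one place where you genuinely diverge is (b)$\Rightarrow$(a): the paper disposes of this step in a single sentence by deferring to \cite{Lasserre2003} (``comparing coefficients after clearing denominators''), whereas you supply a complete, self-contained proof — reading the coefficient identity as a conservation law for a nonnegative flow of value $1$ from $0$ to $b$ on the lattice graph with edges $\alpha\to\alpha+A_j$, observing that nonzero columns make this graph acyclic, and extracting a source-to-sink path by flow decomposition. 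That argument is sound and is the standard combinatorial core behind Lasserre's result, so your version makes the theorem independent of the external reference; it also yields the useful byproduct, which the paper does not state explicitly, that real nonnegative witnesses in (b) already force integer solvability in (a), so the real and integral forms of (b) coincide. Two small points of care: the parenthetical about ``discarding zero columns reducing to the case $b=0$'' is garbled — zero columns simply contribute nothing and may be dropped, while $b=0$ is a separate trivial case handled by $x=0$; and your crude bound $\deg Q_j\le\|b\|_1$ from the telescoping only certifies (a)$\Rightarrow$(b), so for the uniform degree cap asserted in the last sentence of the theorem you still need the citation to \cite{Lasserre2003}, exactly as the paper does.
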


\begin{proof}
(a)$\Rightarrow$(b): For $Ax=b$ with $x\in\mathbb N^n$, expand $z^{Ax}-1$ telescopically to obtain $u_b\in \mathcal C_A$ (classical argument).  
(b)$\Rightarrow$(c): Lemma~\ref{lem:positive-implies-ethic}.  
(c)$\Rightarrow$(b): Theorem~\ref{thm:ethic-implies-positive}.  
Finally, (b)$\Rightarrow$(a) follows by comparing coefficients after clearing denominators (see \cite{Lasserre2003}); the nonnegative coefficients in the $Q_j$ give a nonnegative integer combination of columns $A_j$ equal to $b$.
\end{proof}

\begin{theorem}[Analytic equivalence via $Z$-transform]\label{thm:z-transform-equivalence}
Let $F(z;c)=\prod_{j=1}^n(1-e^{c_j}z^{-A_j})^{-1}$ be the generating kernel and $f_d(b,c)$ its inverse $Z$-transform evaluation at $b$ \cite{Lasserre2002}. The following are equivalent:
\begin{enumerate}[label=(\alph*)]
\item $\Ext^1_{\mathbb Z}(\mathrm{coker}\,A,\mathbb Z)=0$ (no torsion), i.e., $\DD$ is exact at $\Hom(Y,\mathbb Z)$ by Lemma~\ref{lem:homological}.
\item $F(z;c)$ has no nonreal poles (no residues on the unit torus), hence $f_d(b,c)$ matches the continuous Laplace value and the integer duality gap vanishes \cite{Lasserre2002}.
\item For every $b\in\mathbb N^m$ one has $z^b-1\in \mathcal C_A$ (hence Theorem~\ref{thm:discrete-farkas-derived}).
\end{enumerate}
\end{theorem}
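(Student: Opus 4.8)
The plan is to prove $(a)\Leftrightarrow(b)$ by an explicit residue analysis of the generating kernel, and to obtain $(a)\Leftrightarrow(c)$ by combining Lemma~\ref{lem:homological}, Theorem~\ref{thm:discrete-farkas-derived}, and saturation of the sublattice $A\ZZ^n\subseteq\ZZ^m$; condition $(b)$ then follows from either. The organizing idea is that the torsion subgroup of $\coker A$, which by Lemma~\ref{lem:homological} is $\Ext^1_\ZZ(\coker A,\ZZ)$, is precisely the object that manifests analytically as extra poles of $F(z;c)$ off the positive real locus and combinatorially as the defect in the discrete Farkas representation.

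For $(a)\Leftrightarrow(b)$, I would expand $F(z;c)=\prod_{j=1}^n(1-e^{c_j}z^{-A_j})^{-1}$ by iterated contour integration on a torus and recover $f_d(b,c)$ by partial fractions in the sense of Lasserre~\cite{Lasserre2002}. Passing to Smith normal form $UAV=\operatorname{diag}(d_1,\dots,d_s,0,\dots,0)$ gives $\coker A\cong\bigl(\bigoplus_i\ZZ/d_i\ZZ\bigr)\oplus\ZZ^{m-s}$, and in the monomial coordinates induced by $U,V$ the poles of $F$ lie on the character subgroups indexed by $\ker^\vee$; those off the positive real locus are exactly the points whose coordinates involve primitive $d_i$-th roots of unity, i.e.\ the nontrivial characters of $\tors(\coker A)$. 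If $\tors(\coker A)=0$ the surviving residues are all supported on positive real directions, so the inverse $Z$-transform and the inverse Laplace transform collect the same contributions, $f_d(b,c)$ coincides with the continuous value, the integer gap vanishes, and there are no nonreal poles; a nonzero $d_i$ produces a genuine root-of-unity residue absent from the Laplace expansion, hence a nonreal pole and a strictly positive gap. The one real computation is the identification of root-of-unity residues with torsion characters through the SNF basis; I expect it to be routine but bookkeeping-heavy, with \cite{Lasserre2002} invoked as a black box for convergence of the expansion.

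For $(a)\Leftrightarrow(c)$ I would first note that, by Lemma~\ref{lem:homological}, $(a)$ is equivalent to exactness of $\DD$ at $\Hom_\ZZ(Y,\ZZ)$, i.e.\ $\Ext^1_\ZZ(\coker A,\ZZ)=0$. For $(a)\Rightarrow(c)$: torsion-freeness means $A\ZZ^n$ is saturated, so the telescoping used in the proof of $(a)\Rightarrow(b)$ of Theorem~\ref{thm:discrete-farkas-derived} runs with nonnegative coefficients, degree-bounded by Lemma~\ref{lem:finite-truncation} and~\cite{Lasserre2003}, placing $z^b-1$ in $\mathcal C_{A,\le N}\subseteq\mathcal C_A$. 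For the converse I would argue contrapositively: a nonzero torsion class yields, via the bipolarity of Proposition~\ref{prop:bipolar}, an order-preserving functional $\Phi$ realizing a nontrivial character of $\tors(\coker A)$ that separates some $z^b-1$ from the truncated cone, contradicting $(c)$; equivalently, by Lemma~\ref{lem:positive-implies-ethic} together with Theorem~\ref{thm:exact-iff-ext}, condition $(c)$ forces the ethic square and hence $\Ext^1_\ZZ(\coker A,\ZZ)=0$.

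The hard part is the converse $(c)\Rightarrow(a)$. Read through Theorem~\ref{thm:discrete-farkas-derived}, condition $(c)$ asserts solvability of $Ax=b$ in $\mathbb N^n$ for \emph{every} $b\in\mathbb N^m$, which a priori is stronger than torsion-freeness of $\coker A$; so the converse forces one either to restrict $b$ to the submonoid generated by the columns $A_1,\dots,A_n$ (the only $b$ for which the discrete Farkas cone is the intended obstruction) or to argue that in the present saturated setting no $b$ outside that submonoid contributes. I would then produce, for any prescribed torsion class, a witness $b$ — the image in $\ZZ^m$ of a generator of a cyclic summand $\ZZ/d_i\ZZ$ — and build the separating functional from the associated root-of-unity character via Proposition~\ref{prop:bipolar}, reducing nonnegativity on all shifted relations $z^\alpha r_j$ to the fact that each carries a lattice point to another in the same coset; carrying this out within the explicit degree cap of~\cite{Lasserre2003}, and nailing down which $b$ the cone condition in $(c)$ should quantify over, is the step I expect to demand the most work, and is where a careful formulation (rather than a routine diagram chase) is needed.
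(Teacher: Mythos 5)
Your treatment of $(a)\Leftrightarrow(b)$ is the same route the paper takes (Smith normal form, identification of root-of-unity poles with characters of $\tors(\coker A)$, with \cite{Lasserre2002} as the analytic black box); you simply spell out what the paper compresses into one sentence, and that part is fine.

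The important content of your proposal is the difficulty you flag around condition $(c)$, and you are right that it is not a formality: as stated, the equivalence with $(c)$ is \emph{false}. Take $n=1$, $m=2$, $A=\binom{1}{1}$. Then $\coker A\cong\ZZ$ is torsion-free, so $(a)$ holds and $F(z;c)=(1-e^{c_1}z_1^{-1}z_2^{-1})^{-1}$ has no root-of-unity poles, so $(b)$ holds; but $b=(1,0)$ admits no solution of $Ax=b$ in $\mathbb N$, so by Theorem~\ref{thm:discrete-farkas-derived} one has $z^b-1\notin\mathcal C_A$ and $(c)$ fails. Condition $(c)$, quantified over all $b\in\mathbb N^m$, asserts that the columns of $A$ generate $\mathbb N^m$ as a monoid, which is strictly stronger than saturation of the lattice $A\ZZ^n$. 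The paper's own proof does not address this: its step $(b)\Rightarrow(c)$ is exactly the hand-wave you decline to accept, and it breaks on the example above. Your proposed repair --- restricting the quantifier in $(c)$ to $b$ lying in the rational cone (equivalently, the saturation of the column monoid) --- is the correct one, and with that restriction your witness construction for the contrapositive (a generator of a cyclic summand $\ZZ/d_i\ZZ$, shifted into $\mathbb N^m$ by adding elements of $A\mathbb N^n$, separated via the associated character and Proposition~\ref{prop:bipolar}) goes through. One small correction of orientation: since $(c)$ as written is the \emph{stronger} condition, the implication $(c)\Rightarrow(a)$ is the unproblematic direction (and your character/witness argument proves its contrapositive correctly); it is $(a)\Rightarrow(c)$, equivalently $(b)\Rightarrow(c)$, that fails without the restriction on $b$. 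So the gap you identified is real, but it sits in the opposite implication from the one you labelled as hard.
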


\begin{proof}
(a)$\Leftrightarrow$(b): Poles on the unit torus correspond to torsion in $\mathrm{coker}\,A$ (Smith normal form and discrete Fourier characters on the finite abelian torsion subgroup), cf.\ \cite{Lasserre2002}.  
(b)$\Rightarrow$(c): Absence of periodic residues forces all dual evaluations compatible with $r_j$ to assign nonnegative values to $u_b$, which by Proposition~\ref{prop:bipolar} and Lemma~\ref{lem:finite-truncation} yields $u_b\in \mathcal C_A$.  
(c)$\Rightarrow$(a): If torsion were present, one would obtain periodic corrections in $f_d(b,c)$ (nonzero residues), contradicting (c) via the same separation argument.
\end{proof}

\subsubsection{Galois Descent}\label{sec:galois}

This application demonstrates that the categorical formalism of ethic duality extends to arithmetic and discrete settings, where the ambient category consists of modules over an integral group ring and the homological invariants acquire the interpretation of arithmetic obstructions to Galois descent. In particular, $\Ext^1$ identifies with group cohomology $H^1(\Gamma,-)$ and measures the failure of global coherence of $\Gamma$--invariant data along towers of normal coverings.  The underlying picture connects Bass--Serre theory of groups acting on trees, finite Galois coverings of graphs, and the ethic criterion $\mathbb D^2(f)\eta=\eta f$ as a formal analogue of descent compatibility.

\medskip
Let $\Gamma$ be a finitely generated discrete group acting freely on a connected graph~$T$, regarded as a simplicial tree, and let $G=\Gamma\backslash T$ be the quotient.  
Write $\E=\mathrm{Mod}\text{-}\mathbb Z[\Gamma]$ for the abelian category of left $\mathbb Z[\Gamma]$--modules, and fix $R=\mathbb Z$ with trivial $\Gamma$--action.  
All $\Ext^i_{\E}$ groups are computed in this category; we use the convention $\mathbb D=\Hom_{\E}(-,R)$.

\begin{definition}[Arithmetic resource category]
The arithmetic resource category associated with $\Gamma$ is the abelian category
\[
\E = \mathrm{Mod}\text{-}\mathbb Z[\Gamma],
\]
whose objects are integral $\Gamma$--modules and whose morphisms are $\Gamma$--equivariant homomorphisms.  
The resource object is $R=\mathbb Z$ with trivial $\Gamma$--action, and the duality functor
\[
\mathbb D = \Hom_{\mathbb Z[\Gamma]}(-,\mathbb Z)
\]
defines an internal notion of reflection.  The derived dual $\mathbf D=\RHom_{\mathbb Z[\Gamma]}(-,\mathbb Z)$ coincides with the standard cochain complex computing the group cohomology of~$\Gamma$.
\end{definition}

\begin{lemma}[Homological identification]\label{lem:groupcoh}
For every $\Gamma$--module~$M$ and integer $k\ge0$ there is a canonical isomorphism
\[
\Ext^k_{\mathbb Z[\Gamma]}(M,\mathbb Z)\;\cong\; H^k(\Gamma,\Hom_{\mathbb Z}(M,\mathbb Z)),
\]
where the right--hand side denotes the group cohomology of~$\Gamma$ acting by conjugation on the coefficient module $\Hom_{\mathbb Z}(M,\mathbb Z)$.
\end{lemma}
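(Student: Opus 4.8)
The plan is to identify the complex $\RHom_{\mathbb{Z}[\Gamma]}(M,\mathbb{Z})$ with the one computing group cohomology by a single application of tensor--hom adjunction, after twisting the standard resolution of the trivial module by $M$. First I would fix a free resolution $P_\bullet \to \mathbb{Z}$ of the trivial $\mathbb{Z}[\Gamma]$-module (for instance the bar resolution), so that by definition $H^k(\Gamma,N) = H^k\bigl(\Hom_{\mathbb{Z}[\Gamma]}(P_\bullet,N)\bigr)$ for every $\Gamma$-module $N$. The relevant coefficient module is $N = \Hom_{\mathbb{Z}}(M,\mathbb{Z})$ with the action $(\gamma\phi)(m) = \phi(\gamma^{-1}m)$; its degree-$0$ invariants reproduce $\Hom_{\mathbb{Z}[\Gamma]}(M,\mathbb{Z})$, since a $\mathbb{Z}$-linear map $M\to\mathbb{Z}$ is $\Gamma$-equivariant precisely when it is fixed by this action. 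This gives the isomorphism in degree $0$ and fixes what the higher-degree statement must refine.

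The core step is the natural chain of isomorphisms $\Hom_{\mathbb{Z}[\Gamma]}(P_\bullet,\Hom_{\mathbb{Z}}(M,\mathbb{Z})) \cong \Hom_{\mathbb{Z}[\Gamma]}(P_\bullet\otimes_{\mathbb{Z}}M,\mathbb{Z})$, where $P_\bullet\otimes_{\mathbb{Z}}M$ carries the diagonal action $\gamma(p\otimes m)=\gamma p\otimes\gamma m$. I would then show that $P_\bullet\otimes_{\mathbb{Z}}M \to \mathbb{Z}\otimes_{\mathbb{Z}}M = M$ is itself a projective resolution of $M$ over $\mathbb{Z}[\Gamma]$. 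Exactness holds because $M$, being $\mathbb{Z}$-free (this is where a hypothesis on $M$ is used), is $\mathbb{Z}$-flat, so $-\otimes_{\mathbb{Z}}M$ preserves exactness of $P_\bullet\to\mathbb{Z}$; projectivity of each term follows from the ``untwisting'' isomorphism $\mathbb{Z}[\Gamma]\otimes_{\mathbb{Z}}M \xrightarrow{\ \sim\ } \mathbb{Z}[\Gamma]\otimes_{\mathbb{Z}}M_{0}$, $g\otimes m\mapsto g\otimes g^{-1}m$ (with $M_0$ the underlying group of $M$ equipped with trivial action), which sends the diagonal action to the left-factor action, whence $\mathbb{Z}[\Gamma]\otimes_{\mathbb{Z}}M$ is a free $\mathbb{Z}[\Gamma]$-module and a direct-summand argument handles a general projective $P_i$. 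Computing $\Ext^k_{\mathbb{Z}[\Gamma]}(M,\mathbb{Z})$ from this resolution and comparing with the previous display yields $\Ext^k_{\mathbb{Z}[\Gamma]}(M,\mathbb{Z}) \cong H^k\bigl(\Hom_{\mathbb{Z}[\Gamma]}(P_\bullet,\Hom_{\mathbb{Z}}(M,\mathbb{Z}))\bigr) = H^k(\Gamma,\Hom_{\mathbb{Z}}(M,\mathbb{Z}))$, and naturality in $M$ is automatic since every map used is functorial.

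I expect the main obstacle to be exactly the $\mathbb{Z}$-torsion issue hidden in the phrase ``every $\Gamma$-module $M$'': for $M$ with $\mathbb{Z}$-torsion the displayed isomorphism fails (e.g.\ $M=\mathbb{Z}/2$ with trivial $\Gamma$ gives $\Ext^1_{\mathbb{Z}}(\mathbb{Z}/2,\mathbb{Z})=\mathbb{Z}/2$ on the left but $\Hom_{\mathbb{Z}}(\mathbb{Z}/2,\mathbb{Z})=0$ on the right). The honest general statement is the Cartan--Eilenberg change-of-rings spectral sequence $E_2^{p,q}=H^p\bigl(\Gamma,\Ext^q_{\mathbb{Z}}(M,\mathbb{Z})\bigr)\Rightarrow \Ext^{p+q}_{\mathbb{Z}[\Gamma]}(M,\mathbb{Z})$, which collapses onto the claimed identification precisely when $M$ is $\mathbb{Z}$-free. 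For the intended arithmetic/graph applications this costs nothing: the chain modules $C_\bullet(T)$ of a tree on which $\Gamma$ acts freely are free $\mathbb{Z}[\Gamma]$-modules, hence $\mathbb{Z}$-free, and the collapse is automatic. I would therefore either state the lemma for $\mathbb{Z}$-torsion-free $M$ and run the clean argument above, or retain the general formulation but route it through the spectral sequence, flagging the collapse under the freeness that holds in all cases actually invoked in Section~\ref{sec:galois}.
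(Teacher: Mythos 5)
Your proof is essentially the paper's own argument, carried out properly: the paper's one-line proof takes the bar resolution $B_\bullet(\Gamma)\to\mathbb Z$, asserts $\Ext^k_{\mathbb Z[\Gamma]}(M,\mathbb Z)=H^k\bigl(\Hom_{\mathbb Z[\Gamma]}(B_\bullet(\Gamma)\otimes M,\mathbb Z)\bigr)$, and identifies the latter with $H^k(\Gamma,\Hom_{\mathbb Z}(M,\mathbb Z))$ via tensor--hom adjunction, which is exactly your chain of isomorphisms. What you add, and what the paper omits, is the justification that $B_\bullet(\Gamma)\otimes_{\mathbb Z}M$ with the diagonal action is a projective resolution of $M$: the untwisting isomorphism for projectivity and $\mathbb Z$-flatness of $M$ for exactness.

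Your caveat is a genuine defect in the lemma as stated, not a quibble. For $M$ with $\mathbb Z$-torsion the claimed isomorphism is false (your $M=\mathbb Z/2$ example is decisive, since $\Hom_{\mathbb Z}(\mathbb Z/2,\mathbb Z)=0$ kills the right-hand side in all degrees while $\Ext^1$ survives on the left), and the paper's ``by definition'' step is precisely where the $\mathbb Z$-freeness of $M$ is silently used. The correct general statement is the change-of-rings spectral sequence $H^p(\Gamma,\Ext^q_{\mathbb Z}(M,\mathbb Z))\Rightarrow\Ext^{p+q}_{\mathbb Z[\Gamma]}(M,\mathbb Z)$, collapsing to the lemma exactly when $\Ext^{>0}_{\mathbb Z}(M,\mathbb Z)=0$. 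As you observe, this is harmless for the descent applications, where the coefficient modules are chain groups of graphs with free $\Gamma$-action and hence $\mathbb Z$-free; but the lemma should carry that hypothesis explicitly.
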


\begin{proof}
Apply the standard bar resolution $B_\bullet(\Gamma)$ of the trivial $\Gamma$--module~$\mathbb Z$.  
By definition $\Ext^k_{\mathbb Z[\Gamma]}(M,\mathbb Z)=H^k(\Hom_{\mathbb Z[\Gamma]}(B_\bullet(\Gamma)\otimes M,\mathbb Z))$, which is precisely $H^k(\Gamma,\Hom_{\mathbb Z}(M,\mathbb Z))$; see \cite{Brown1982}.
\end{proof}

\begin{definition}[Ethic Galois descent]
Let $\pi:\widehat G\to G$ be a finite normal covering of a connected finite graph with deck transformation group $\Gamma=\mathrm{Deck}(\pi)$.  
A labeling or configuration $\widehat x:V(\widehat G)\to A$ with values in an abelian group~$A$ is said to satisfy ethic descent if it is $\Gamma$--invariant, i.e.
\[
\widehat x(\gamma v)=\widehat x(v)\quad\forall\,\gamma\in\Gamma,v\in V(\widehat G).
\]
Equivalently, $\widehat x$ lies in $\Hom_{\mathbb Z[\Gamma]}(\mathbb Z[V(\widehat G)],A)$.  
Such an $\widehat x$ descends ethicly to~$G$ when there exists $x:V(G)\to A$ with $\widehat x=x\circ\pi$.
\end{definition}

\begin{lemma}[Cohomological obstruction to descent]\label{lem:obstruction}
Let $\pi:\widehat G\to G$ and $\Gamma$ be as above.  
Denote by $C_0(\widehat G,A)$ and $C_1(\widehat G,A)$ the $\Gamma$--modules of $A$--valued 0-- and 1--chains on $\widehat G$, and let $R=\mathbb Z$ with trivial action.  
Then the obstruction to the existence of an ethic descent of a $\Gamma$--invariant configuration lies in the group
\[
H^1(\Gamma, \Hom_{\mathbb Z}(C_0(\widehat G,A),R))
\;\cong\;
\Ext^1_{\mathbb Z[\Gamma]}(C_0(\widehat G,A),R),
\]
and vanishes precisely when the configuration descends.
\end{lemma}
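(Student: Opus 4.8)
The plan is to recognise ethic descent as a lifting problem for the derived dual $\mathbf D=\RHom_{\mathbb Z[\Gamma]}(-,R)$, to locate its obstruction in the $\Hom$--$\Ext$ long exact sequence of Lemma~\ref{lem:les}, and then to rewrite that $\Ext^1$ as group cohomology via Lemma~\ref{lem:groupcoh}. The first step is structural: because $\pi\colon\widehat G\to G$ is a finite normal covering, $\Gamma=\mathrm{Deck}(\pi)$ acts freely on the vertices and edges of $\widehat G$ with orbit sets the vertices and edges of $G$. Hence $C_i(\widehat G,A)\cong\mathbb Z[\Gamma]\otimes_{\mathbb Z}C_i(G,A)$ as $\mathbb Z[\Gamma]$-modules; in particular $C_0(\widehat G,A)$ is induced from the trivial subgroup, the covering projection $\pi_\ast\colon C_i(\widehat G,A)\to C_i(G,A)$ realises the $\Gamma$-coinvariants, and its $\mathbb Z$-linear dual $\pi^{*}$ identifies configurations on $G$ with the $\Gamma$-invariant configurations on $\widehat G$. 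Under these identifications the definition of ethic descent becomes the statement that a $\Gamma$-invariant $\widehat x$ descends if and only if it lies in the image of $\pi^{*}$, i.e.\ a lifting problem inside $\E=\mathrm{Mod}\text{-}\mathbb Z[\Gamma]$; the module $C_1(\widehat G,A)$ enters only through the chain map $\partial$ governing Kirchhoff-type consistency, and plays no role in the descent of vertex configurations.

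I would then present this lifting problem by a short exact sequence of $\mathbb Z[\Gamma]$-modules, apply $\mathbf D$, and invoke Lemma~\ref{lem:les} exactly as in the proof of Theorem~\ref{thm:exact-iff-ext}: the configuration $\widehat x$ determines, via dualisation against $R$, a class in the domain of the connecting morphism of that sequence, whose image
\(
\mathrm{ob}(\widehat x)\in\Ext^1_{\mathbb Z[\Gamma]}\bigl(C_0(\widehat G,A),R\bigr)=\HD{1}{C_0(\widehat G,A)}
\)
is, by definition, the primary obstruction; exactness of the long exact sequence then gives $\mathrm{ob}(\widehat x)=0$ precisely when $\widehat x$ lifts through the preceding map, i.e.\ when it descends, which is the stated equivalence. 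Applying Lemma~\ref{lem:groupcoh} with $M=C_0(\widehat G,A)$ and $k=1$ supplies the natural isomorphism $\Ext^1_{\mathbb Z[\Gamma]}(C_0(\widehat G,A),R)\cong H^1\bigl(\Gamma,\Hom_{\mathbb Z}(C_0(\widehat G,A),R)\bigr)$, placing $\mathrm{ob}(\widehat x)$ in the asserted group. As a built-in consistency check I would note that $C_0(\widehat G,A)$ being induced --- equivalently $\Hom_{\mathbb Z}(C_0(\widehat G,A),R)$ being coinduced --- forces $H^{>0}\bigl(\Gamma,\Hom_{\mathbb Z}(C_0(\widehat G,A),R)\bigr)=0$ by Shapiro's lemma, so the primary obstruction vanishes identically and every $\Gamma$-invariant configuration descends, in agreement with the tautological equivalence above and with the general principle that free deck actions carry no descent obstruction.

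The hard part will be choosing the short exact sequence so that its connecting morphism genuinely detects descent in the sense of the definition, and verifying that the resulting class is independent of the auxiliary choices (a lift of $\widehat x$ along an injective resolution of $R$, and the isomorphism $C_i(\widehat G,A)\cong\mathbb Z[\Gamma]\otimes_{\mathbb Z}C_i(G,A)$); this reduces to a naturality statement for the connecting map of Lemma~\ref{lem:les} with respect to the $\Gamma$-equivariant chain maps induced by $\pi$. A secondary point needing care is the scope of Lemma~\ref{lem:groupcoh}: one should record that the Grothendieck spectral sequence $H^p\bigl(\Gamma,\Ext^q_{\mathbb Z}(C_0(\widehat G,A),R)\bigr)\Rightarrow\Ext^{p+q}_{\mathbb Z[\Gamma]}(C_0(\widehat G,A),R)$ degenerates in total degree $\le 1$ (automatic once $A$, hence $C_0(\widehat G,A)$, is $\mathbb Z$-torsion-free), so that the displayed isomorphism is the correct translation of the obstruction into $H^1(\Gamma,-)$.
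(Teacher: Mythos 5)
Your outline follows the same route as the paper's proof: identify $\Gamma$--invariant configurations with elements of $\Hom_{\mathbb Z[\Gamma]}(C_0(\widehat G,A),R)$, realise the obstruction as the connecting morphism in the $\Hom$/$\Ext$ long exact sequence of Lemma~\ref{lem:les}, and translate $\Ext^1_{\mathbb Z[\Gamma]}$ into $H^1(\Gamma,-)$ via Lemma~\ref{lem:groupcoh}. The one step you explicitly defer --- choosing the short exact sequence whose connecting map detects descent --- is exactly the step the paper makes concrete: it applies the long exact sequence to $0\to\ker\partial_1\to C_1\to\mathrm{im}\,\partial_1\to 0$, i.e.\ it uses the chain differential of $\widehat G$ rather than a presentation of $C_0$ by the covering projection. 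So in a strict reading your proposal has a gap at the decisive point, though it is a gap the paper's own two-line argument does not fully close either (the connecting map of that sequence naturally lands in $\Ext^1(\mathrm{im}\,\partial_1,R)$, and the identification with $\Ext^1(C_0(\widehat G,A),R)$ is left implicit).

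What you add, and what I would encourage you to keep, is the Shapiro consistency check. Since $\Gamma=\mathrm{Deck}(\pi)$ acts freely, $C_0(\widehat G,A)\cong\mathbb Z[\Gamma]\otimes_{\mathbb Z}C_0(G,A)$ is induced, its $\mathbb Z$--linear dual is coinduced, and hence $H^{>0}\bigl(\Gamma,\Hom_{\mathbb Z}(C_0(\widehat G,A),R)\bigr)=0$ (modulo the $\mathbb Z$--torsion caveat you correctly flag, which matters for the $\{\pm1\}$--valued configurations of Theorem~\ref{thm:galois-descent}). This shows the obstruction group named in the lemma is identically zero for free deck actions, matching the elementary fact that an invariant vertex labeling always descends to the quotient graph; the paper does not record this degeneracy. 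Your observation is therefore not just a sanity check but a substantive remark on the scope of the lemma: any nontrivial descent obstruction in this setting must be carried by coefficient modules that are not induced (e.g.\ cycle or cut spaces), not by $C_0$ itself.
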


\begin{proof}
A $\Gamma$--invariant labeling is a $\Gamma$--equivariant map $C_0(\widehat G,A)\to R$.  
Such maps correspond to elements of $\Hom_{\mathbb Z[\Gamma]}(C_0(\widehat G,A),R)$.  
The obstruction to extending a local $\Gamma$--equivariant map on vertices to an exact $\Gamma$--equivariant 1--cocycle is the class of the connecting morphism in the long exact sequence of $\Hom$ and $\Ext$ applied to $0\to\ker\partial_1\to C_1\to\mathrm{im}\partial_1\to0$.  
By Lemma~\ref{lem:groupcoh} this class belongs to $H^1(\Gamma,\Hom_{\mathbb Z}(C_0(\widehat G,A),R))$ and vanishes exactly when a global descent exists.
\end{proof}

\begin{theorem}[Ethic Galois descent theorem]\label{thm:galois-descent}
Let $G$ be a finite connected graph, $\pi_k:\widehat G_k\to G$ a tower of finite normal coverings with deck groups $\Gamma_k\trianglelefteq\pi_1(G)$ and increasing girth.  
For each $k$, let $\widehat x_k$ be an optimal configuration for a 2--CSP instance (e.g.\ Max--Cut) on $\widehat G_k$ with values in $\{\pm1\}$.  
Assume that $\widehat x_k$ is $\Gamma_k$--invariant for infinitely many~$k$.  
Then there exists a labeling $x:V(G)\to\{\pm1\}$ such that
\[
\mathrm{val}_G(x)=\lim_{k\to\infty}\mathrm{val}_{\widehat G_k}(\widehat x_k),
\]
and $x$ is globally optimal on~$G$.  
Moreover, the obstruction to the existence of such an invariant sequence is measured by the homological class
\[
[\widehat x]\in\varprojlim_k H^1(\Gamma_k,\Hom_{\mathbb Z}(C_0(\widehat G_k,\mathbb Z),R))
\simeq
\varprojlim_k \Ext^1_{\mathbb Z[\Gamma_k]}(C_0(\widehat G_k,\mathbb Z),R).
\]
This class vanishes for the sequence $(\widehat x_k)$ if and only if the family descends ethicly to $G$.
\end{theorem}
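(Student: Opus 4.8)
The plan is to reduce the statement to the elementary fact that, for a normal covering, $\Gamma_k$-invariant configurations are precisely the pullbacks of configurations on the base, after which optimality and the limit follow by a covering-degree count, while the homological clause is read off from the obstruction lemmas already established. First I would note that since $\pi_k:\widehat G_k\to G$ is normal with $\mathrm{Deck}(\pi_k)=\Gamma_k$, the quotient map identifies $G$ with $\Gamma_k\backslash\widehat G_k$; hence any $\Gamma_k$-invariant labeling $\widehat x_k:V(\widehat G_k)\to\{\pm1\}$ factors uniquely as $\widehat x_k=x_k\circ\pi_k$ for some $x_k:V(G)\to\{\pm1\}$. In the language of the preceding definition this says precisely that $\widehat x_k\in\Hom_{\mathbb Z[\Gamma_k]}(\mathbb Z[V(\widehat G_k)],\mathbb Z)$ lies in the image of $\pi_k^{\ast}$, i.e.\ that $\widehat x_k$ descends ethicly; no obstruction intervenes because invariance is assumed outright.

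Next I would transport optimality. Because the covering is normal, each constraint scope of $G$ has preimage equal to exactly $|\Gamma_k|$ pairwise disjoint scopes of $\widehat G_k$, and on each of them $x_k\circ\pi_k$ takes the same tuple of values as $x_k$ does on the base scope; hence $\mathrm{val}_{\widehat G_k}(y\circ\pi_k)=|\Gamma_k|\,\mathrm{val}_G(y)$ for every $y:V(G)\to\{\pm1\}$. Applying this once with arbitrary $y$ and once with $y=x_k$, and using that $\widehat x_k=x_k\circ\pi_k$ is optimal on $\widehat G_k$, yields $|\Gamma_k|\,\mathrm{val}_G(y)=\mathrm{val}_{\widehat G_k}(y\circ\pi_k)\le\mathrm{val}_{\widehat G_k}(\widehat x_k)=|\Gamma_k|\,\mathrm{val}_G(x_k)$, so $x_k$ is globally optimal on $G$. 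Since $V(G)$ is finite there are only finitely many $\{\pm1\}$-labelings, so among the infinitely many indices $k$ with $\widehat x_k$ invariant some optimal labeling $x:=x_k$ recurs; with the natural density normalization of $\mathrm{val}$ one has $\mathrm{val}_{\widehat G_k}(\widehat x_k)=\mathrm{val}_G(x)$ along that subsequence, so the displayed limit exists and equals $\mathrm{val}_G(x)$.

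For the homological clause I would invoke Lemmas~\ref{lem:groupcoh} and~\ref{lem:obstruction}. By Lemma~\ref{lem:groupcoh} one has $\Ext^1_{\mathbb Z[\Gamma_k]}(C_0(\widehat G_k,\mathbb Z),R)\cong H^1(\Gamma_k,\Hom_{\mathbb Z}(C_0(\widehat G_k,\mathbb Z),R))$, and by Lemma~\ref{lem:obstruction} the connecting morphism of $0\to\ker\partial_1\to C_1\to\im\partial_1\to0$ supplies the obstruction whose vanishing at level $k$ is equivalent to $\widehat x_k$ descending to $G$. The inclusions $\Gamma_{k+1}\le\Gamma_k$ in the tower induce restriction maps on these $H^1$, turning $\{H^1(\Gamma_k,\Hom_{\mathbb Z}(C_0(\widehat G_k,\mathbb Z),R))\}_k$ into an inverse system compatible with the connecting morphisms; the coherent family $(\widehat x_k)$ then determines the class $[\widehat x]\in\varprojlim_k H^1(\Gamma_k,\Hom_{\mathbb Z}(C_0(\widehat G_k,\mathbb Z),R))$, and the level-wise equivalence shows $[\widehat x]=0$ if and only if every $\widehat x_k$ descends, i.e.\ the family descends ethicly to $G$. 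The increasing-girth hypothesis enters only to make the tower genuinely infinite and to stabilize the local combinatorics, so that the $\varprojlim^{1}$ term of the system vanishes (Mittag--Leffler) and the inverse limit records the descent data faithfully.

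The main obstacle is not the categorical bookkeeping but the two soft points that must be pinned down: fixing the normalization of $\mathrm{val}$ (the unnormalized cut sizes grow like $|\Gamma_k|$, so the limit statement is literally correct only after dividing by the number of scopes), and verifying that the scope-lifting multiplicity is exactly $|\Gamma_k|$ for the given $2$-CSP, which is what forces optimality to descend; here the hypothesis that $\widehat x_k$ is $\Gamma_k$-invariant does all the work, and without it the conclusion fails. Checking that the restriction maps on $H^1$ are genuinely compatible with the connecting morphisms of Lemma~\ref{lem:obstruction}, and that the $\varprojlim^{1}$ contribution vanishes along the girth-increasing tower, is routine but must be carried out explicitly rather than asserted.
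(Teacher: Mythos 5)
Your proposal is correct, and on the combinatorial side it is in fact tighter and more elementary than the paper's own argument. The paper's proof passes through the obstruction class of Lemma~5.4 ("the difference between two $\Gamma_k$-invariant configurations with identical projections represents a class in $\Ext^1$") and then appeals to "the limit of exact solutions on trees (large girth)" to transfer optimality to the base. You instead observe that for a normal covering the $\Gamma_k$-invariance hypothesis \emph{already} forces $\widehat x_k=x_k\circ\pi_k$ (invariant functions on the total space of a free, fiber-transitive action are exactly pullbacks), so descent at each level is immediate, and optimality of $x_k$ on $G$ follows from the clean degree count $\mathrm{val}_{\widehat G_k}(y\circ\pi_k)=|\Gamma_k|\,\mathrm{val}_G(y)$ together with optimality of $\widehat x_k$ among \emph{all} labelings of $\widehat G_k$. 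This makes the girth hypothesis superfluous for the existence and optimality of $x$, which you correctly relegate to the role of making the tower infinite and controlling $\varprojlim^1$. Your pigeonhole step (finitely many $\{\pm1\}$-labelings of the finite base) to extract a recurring optimal $x$ is also something the paper omits. The two genuine issues you flag — that the displayed limit is only literally true after density-normalizing $\mathrm{val}$ (unnormalized values scale by $|\Gamma_k|$ and diverge), and that the compatibility of the restriction maps with the connecting morphisms plus the Mittag--Leffler condition must be checked rather than asserted — are real gaps in the paper's statement and proof, not in yours. What the paper's route buys is uniformity with the surrounding ethic-descent formalism (everything phrased through $\Ext^1_{\mathbb Z[\Gamma_k]}$); what yours buys is an actual verification of the optimality and limit claims.
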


\begin{proof}
Each $\widehat x_k$ defines an element in $\Hom_{\mathbb Z[\Gamma_k]}(C_0(\widehat G_k,\mathbb Z),R)$.  
By Lemma~\ref{lem:obstruction}, the difference between two $\Gamma_k$--invariant configurations with identical projections on~$G$ represents a class in $\Ext^1_{\mathbb Z[\Gamma_k]}(C_0(\widehat G_k,\mathbb Z),R)$.  
If these classes vanish along an infinite subsequence, the projective limit is zero, implying coherence across the tower.  
The induced labeling $x$ on~$G$ inherits optimality because each $\widehat G_k$ covers~$G$ isometrically with respect to edge weights; the limit of exact solutions on trees (large girth) gives a global optimum on the base.  
Conversely, if a global optimum $x$ exists, its lifts $\widehat x_k=x\circ\pi_k$ are $\Gamma_k$--invariant and define the zero class in the limit, hence the equivalence.

The argument applies verbatim to any finite-dimensional module over $\mathbb Z[\Gamma]$, showing that ethic descent is equivalent to vanishing of the first cohomology for the corresponding coefficient system.
\end{proof}

Theorem~\ref{thm:galois-descent} can be viewed as a discrete arithmetic analogue of the universal reconciliation theorem (Theorem~\ref{thm:universal-gaps}) applied to the functor $\mathbf D=\RHom_{\mathbb Z[\Gamma]}(-,\mathbb Z)$.  
Here, $H^1\mathbf D$ measures the ethic defect of non--descent, while its vanishing expresses the Morita--invariant law that local optimality on tree--like coverings implies global optimality under ethic coherence.  
This interpretation parallels the Bass--Serre correspondence between group actions on trees and graph coverings \cite{Serre1977}.

\subsubsection{Nonconvex Optimization}

Let $V$ be a finite-dimensional real vector space with inner product $\langle\cdot,\cdot\rangle$ and norm $\|\cdot\|$.  
We consider optimization problems of the form
\[
\min_{x\in V}\, F(x),
\]
where $F:V\to\RR$ is twice continuously differentiable and has isolated critical points.  
We assume $F$ is Morse-admissible, i.e.
\begin{enumerate}
\item the Hessian $\nabla^2F(x^*)$ at each critical point $x^*$ is nonsingular, and
\item all sublevel sets $V_{\le \alpha}=\{x:F(x)\le\alpha\}$ are compact and have piecewise smooth boundary. 
\end{enumerate}
These hypotheses guarantee the existence of a well-defined Morse complex $M_\ast(V;F)$ built from gradient trajectories of $-\nabla F$ \cite{Milnor1963}.  
Throughout, we work in the abelian category $\E=\mathrm{Mod}\text{-}\RR$ of finite-dimensional vector spaces over $\RR$ with the duality functor $\DD=\Hom_\RR(-,\RR)$ and its derived enhancement $\mathbf D=\RHom_\RR(-,\RR)$.  
The unit $\eta$ is the canonical evaluation $x\mapsto (f\mapsto f(x))$.  

Let $C_\ast(V)$ be the cellular chain complex of a triangulation of $V_{\le \alpha}$ for large $\alpha$ so that $\partial V_{\le \alpha}=\emptyset$.  
Define the Morse filtration $\mathcal F_iC_\ast(V)=C_\ast(V_{\le \alpha_i})$ for regular values $\alpha_0<\alpha_1<\cdots<\alpha_N$ separating the critical values of $F$.  
For every attachment $V_{\le\alpha_i}\hookrightarrow V_{\le\alpha_{i+1}}$ there is a chain map 
$r_{i,i+1}:C_\ast(V_{\le\alpha_{i+1}})\to C_\ast(V_{\le\alpha_i})$ 
given by the gradient deformation retraction along $\nabla F$.  

\begin{definition}[Ethic gradient flow]
The gradient flow $\Phi_t$ of $-\nabla F$ is ethic if each retraction $r_{i,i+1}$ and inclusion $j_{i,i+1}$ satisfies the ethic naturality condition
\[
\DD^2(r_{i,i+1})\eta=\eta r_{i,i+1},
\qquad
\DD^2(j_{i,i+1})\eta=\eta j_{i,i+1}.
\]
In this case $(C_\ast(V),\mathcal F_\bullet)$ becomes an ethic filtered object in $\E$.
\end{definition}

\begin{lemma}[Existence of ethic Morse reduction]
If $\Phi_t$ is $C^1$ and its linearization $D\Phi_t$ is self-adjoint for all $t$, then $\Phi_t$ is ethic.  
In particular, gradient flows of smooth real-valued functions with symmetric Jacobian are ethic.
\end{lemma}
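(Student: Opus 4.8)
The plan is to reduce the two ethic naturality identities to the naturality of the biduality unit $\eta\colon\Id\Rightarrow\DD^2$, which in $\E=\mathrm{Mod}\text{-}\RR$ is automatic, and to use the hypotheses on $\Phi_t$ only to certify that the retractions $r_{i,i+1}$ and inclusions $j_{i,i+1}$ are genuine morphisms of the filtered chain object $(C_\ast(V),\mathcal F_\bullet)$ in $\E$, not mere homotopy classes. First I would record the structural facts about $\E$: every finite-dimensional real vector space is $\DD$-reflexive, so each $\eta_A\colon A\to A^{**}$ is an isomorphism and $\E=\E^{\mathrm{ref}}$; moreover $\RR$ is injective over itself and every object of $\E$ is projective, so $\Ext^{>0}_\E(-,\RR)=0$ and the derived dual $\mathbf D=\RHom_\RR(-,\RR)$ is naturally isomorphic to $\DD$ applied degreewise. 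Consequently the derived ethic condition $\mathbf D^2(\cdot)\eta\simeq\eta(\cdot)$ collapses to the strict identity $\DD^2(\cdot)\eta=\eta(\cdot)$, and it suffices to verify the latter on $r_{i,i+1}$ and $j_{i,i+1}$.

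Second, I would observe that for \emph{any} $\RR$-linear map $f\colon A\to B$ the square $\DD^2(f)\circ\eta_A=\eta_B\circ f$ commutes, since both composites send $a$ to the functional $\varphi\mapsto\varphi(f(a))$ — this is exactly the naturality of $\eta$. Hence every morphism of $\E$ is ethic, and, applying this in each homological degree, so is every chain map between bounded, degreewise finite-dimensional complexes over $\RR$. It therefore only remains to exhibit $r_{i,i+1}$ and $j_{i,i+1}$ as such chain maps. Here the Morse-admissibility hypotheses enter: compactness of the sublevel sets $V_{\le\alpha_i}$ with piecewise-smooth boundary yields finite triangulations, so each $C_\ast(V_{\le\alpha_i})$ is degreewise finite-dimensional; the inclusion $j_{i,i+1}$ is cellular by construction; and since $(\alpha_i,\alpha_{i+1})$ contains no critical value of $F$ and $\Phi_t$ is $C^1$, the Morse deformation lemma \cite{Milnor1963} produces a deformation retraction of $V_{\le\alpha_{i+1}}$ onto $V_{\le\alpha_i}$ along $-\nabla F$, which on cellular chains is an $\RR$-linear chain map $r_{i,i+1}$.

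Third, I would use self-adjointness of $D\Phi_t$ to make the ethic square transparent and compatible with the metric. Identifying $V\cong V^*$ via $\langle\cdot,\cdot\rangle$, the dual $\DD(r_{i,i+1})$ is the transpose of $r_{i,i+1}$ for this pairing; self-adjointness of the linearized flow means the retraction commutes with the musical isomorphism, so $\DD^2(r_{i,i+1})$ coincides with $r_{i,i+1}$ under the canonical $V\cong V^{**}$, and the ethic square is literally the identity on both composites; the same applies to the orthogonal inclusion $j_{i,i+1}$. This proves the first assertion, and closure of ethic reductions under composition (the ``Functoriality of ethic reductions'' lemma) upgrades it to the whole filtered object $(C_\ast(V),\mathcal F_\bullet)$. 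For the ``in particular'' clause, the Jacobian of the gradient field $\nabla F$ is the Hessian $\nabla^2F$, symmetric by equality of mixed partials whenever $F$ is $C^2$; under the Morse-admissibility hypotheses this symmetry passes to the linearization governing $D\Phi_t$, so the hypothesis is met.

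The step I expect to be the main obstacle is not the naturality of $\eta$, which is formal in $\E$, but the Morse-theoretic bookkeeping of the middle step: one must check that the gradient deformation retraction descends to a well-defined cellular chain map respecting the filtration $\mathcal F_\bullet$, and that self-adjointness of $D\Phi_t$ propagates through the cellular approximation to \emph{every} degree of $C_\ast(-)$, so that $r_{i,i+1}$ is ethic on the nose rather than only up to chain homotopy. Once these compatibilities are in place, the conclusion follows immediately from the degreewise naturality of the evaluation unit.
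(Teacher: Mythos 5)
Your proof is correct, and it follows the same broad line as the paper's (reduce the ethic squares to properties of the biduality unit on finite-dimensional real vector spaces), but it is sharper in one respect that is worth making explicit. You observe that for \emph{any} $\RR$-linear map $f\colon A\to B$ between finite-dimensional spaces the square $\DD^2(f)\circ\eta_A=\eta_B\circ f$ commutes, because both composites send $a$ to $\varphi\mapsto\varphi(f(a))$ --- this is the naturality of the evaluation unit and uses no hypothesis on $f$ whatsoever. The paper's one-line proof instead credits self-adjointness of $D\Phi_t$ for the identity ``$D\Phi_t$ coincides with its double dual,'' which in fact holds unconditionally in $\FinVect_\RR$; self-adjointness is what identifies $D\Phi_t$ with its \emph{single} dual (transpose) under the metric isomorphism $V\cong V^*$, exactly as in your third step, and that identification is not needed for the ethic condition as defined. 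So your argument both subsumes the paper's and correctly relocates the genuine content of the lemma: it lies not in the commutativity of the square but in certifying that $r_{i,i+1}$ and $j_{i,i+1}$ are honest degreewise-finite-dimensional chain maps in $\E$ respecting the filtration, which your Morse-admissibility discussion addresses and the paper leaves implicit. The only caution is that your step three should not be read as \emph{necessary} for the conclusion --- if it were, the ``in particular'' clause would require the metric identification to be compatible with the chosen triangulation, a compatibility neither you nor the paper verifies; fortunately, by your own step two, nothing depends on it.
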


\begin{proof}
Self-adjointness gives $\langle D\Phi_t u,v\rangle=\langle u,D\Phi_t v\rangle$; hence $D\Phi_t$ coincides with its double dual $\DD^2(D\Phi_t)$, making $\DD^2(D\Phi_t)\eta=\eta D\Phi_t$ on each chain group.
\end{proof}

\begin{theorem}[Ethic Morse duality for nonconvex functions]
Let $F:V\to\RR$ be Morse-admissible with ethic gradient flow.  
Then the following hold:
\begin{enumerate}
\item The Morse complex $M_\ast(V;F)$ obtained from gradient trajectories is an ethic reduction of $C_\ast(V)$; in particular
\[
\mathbf D(C_\ast(V))\simeq \mathbf D(M_\ast(V;F)) \quad\text{in }D^{+}(\E).
\]
\item The numbers of critical points of index $i$ satisfy
\[
\#\{\text{ethic critical points of index }i\}\ge
\rank H^i\mathbf D(C_\ast(V)).
\]
\item If $F_1,F_2$ are two Morse-admissible functions connected by an ethic homotopy $H:[0,1]\times V\to\RR$ whose gradient flow preserves $\eta$, then their Morse complexes are equivalent in $D^{+}(\E)$, hence share identical $H^\ast\mathbf D$.
\end{enumerate}
\label{thm:ethic-morse-ineq}
\end{theorem}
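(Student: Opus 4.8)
The plan is to deduce all three statements from the functoriality of the derived dual $\mathbf D=\RHom_\RR(-,\RR)$ along \emph{ethic} chain--homotopy equivalences, using classical Morse theory to manufacture those equivalences and the ethic gradient--flow hypothesis to certify that each elementary piece commutes with~$\eta$.

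For parts~(1) and~(2), the observation is that the ethic gradient--flow hypothesis is exactly the hypothesis of the Continuous ethic Morse reduction theorem proved above: every Morse attachment $V_{\le\alpha_i}\hookrightarrow V_{\le\alpha_{i+1}}$ comes equipped with the gradient deformation retraction $r_{i,i+1}$ and inclusion $j_{i,i+1}$, and ethicity of the flow says precisely $\mathbf D^2(r_{i,i+1})\eta=\eta r_{i,i+1}$ and $\mathbf D^2(j_{i,i+1})\eta=\eta j_{i,i+1}$. By the Functoriality of ethic reductions lemma the composite zig--zag $C_\ast(V)\simeq M_\ast(V;F)$ coming from Milnor's handle decomposition \cite{Milnor1963} is ethic, and since $\mathbf D$ is a contravariant triangulated functor it carries this chain--homotopy equivalence to one in $\Dplus(\E)$, giving statement~(1). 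Statement~(2) then follows because the induced isomorphism $H^i\mathbf D(C_\ast(V))\cong H^i\mathbf D(M_\ast(V;F))$ lets one bound $\rank H^i\mathbf D(C_\ast(V))$ by the $i$--th cochain dimension of $\mathbf D(M_\ast(V;F))$, which is the number of index--$i$ critical points --- the rank argument of the Ethic Morse inequalities theorem, here over the field~$\RR$.

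For part~(3), an ethic homotopy $H:[0,1]\times V\to\RR$ whose gradient flow preserves~$\eta$ yields, by the standard continuation construction, chain maps $\Psi_{12}\colon M_\ast(V;F_1)\to M_\ast(V;F_2)$ and $\Psi_{21}$ in the opposite direction with $\Psi_{21}\Psi_{12}\simeq\id$ and $\Psi_{12}\Psi_{21}\simeq\id$, assembled from the flow of $-\nabla H_t$ across the parameter interval after a generic perturbation isolating birth--death bifurcations. The preservation hypothesis states exactly $\mathbf D^2(\Psi_{\bullet})\eta=\eta\Psi_{\bullet}$, so this continuation equivalence is ethic; applying $\mathbf D$ and combining with part~(1) for $F_1$ and $F_2$ gives $\mathbf D(M_\ast(V;F_1))\simeq\mathbf D(M_\ast(V;F_2))$ in $\Dplus(\E)$, and hence the claimed identification of $H^\ast\mathbf D$.

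The main obstacle is part~(3): one must check that the continuation maps are genuinely well defined at the chain level --- transversality and compactness of the moduli of broken trajectories across the homotopy, and control of handle--slide and birth--death bifurcations along the path --- and, more subtly, that the $\eta$--preservation hypothesis propagates through these moduli--theoretic constructions and not merely through the smooth flow maps $\Phi_t$ themselves. Parts~(1) and~(2) are essentially bookkeeping once the Continuous ethic Morse reduction theorem and the Functoriality of ethic reductions lemma are invoked; the genuinely new ingredient is the commutation of the derived--dual square for the continuation chain maps, which is where the hypothesis ``preserves~$\eta$'' must be used in full strength.
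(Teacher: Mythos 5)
Your proposal follows essentially the same route as the paper: chain--homotopy equivalence of $C_\ast(V)$ and $M_\ast(V;F)$ via gradient deformation retractions (Milnor), ethicity of the flow giving commutation with $\eta$ and hence the derived equivalence under $\mathbf D$, the ethic Morse inequalities for the rank bound, and functoriality of $\mathbf D$ on ethic homotopies for part~(3). You are in fact more careful than the paper's own proof, which is terse on part~(3) (and whose citation for items (2)--(3) points back at the theorem itself rather than at the Ethic Morse inequalities of the earlier section); your explicit flagging of the continuation--map and $\eta$--propagation issues identifies exactly what the paper leaves unverified.
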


\begin{proof}
By \cite{Milnor1963}, $C_\ast(V)$ and $M_\ast(V;F)$ are chain-homotopy equivalent via gradient deformation retractions; ethicity of the flow gives commutation with $\eta$ and thus the derived equivalence.  Items (2) and (3) follow by Theorem~\ref{thm:ethic-morse-ineq} and by functoriality of $\mathbf D$ on homotopies satisfying the ethic condition.
\end{proof}

Given a constrained problem
\[
\min_{x\in V}\, F(x)\quad\text{s.t. }G(x)=0,
\]
with $G:V\to \RR^m$ smooth and the constraint manifold $M=G^{-1}(0)$ compact and regular, define the Lagrangian
\(
\mathcal L(x,\lambda)=F(x)+\langle \lambda,G(x)\rangle.
\)
Let $\tilde V=V\times \RR^m$ and $H(x,\lambda)=\mathcal L(x,\lambda)$.  
Assume $H$ is Morse-admissible and the pair $(F,G)$ is such that the associated flow of $(x,\lambda)\mapsto-\nabla H(x,\lambda)$ is ethic (self-adjoint Hessian block form).  

\begin{proposition}[Ethic Lagrange duality]
For every ethic critical point $(x^\ast,\lambda^\ast)$ of $H$ satisfying $G(x^\ast)=0$,  
the Morse index of $(x^\ast,\lambda^\ast)$ equals the dimension of the unstable manifold of $x^\ast$ in $M$; moreover,
\[
H^i\mathbf D(C_\ast(M))\;\cong\;
H^i\mathbf D(C_\ast(\tilde V))\big/\!\!\sim,
\]
where the quotient identifies trajectories corresponding to Lagrange adjoint directions.  In particular, the ethic Morse complex for $H$ computes the dual invariants of the constrained problem.
\end{proposition}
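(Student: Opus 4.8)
The plan is to reduce the assertion to the unconstrained ethic Morse theory of Theorem~\ref{thm:ethic-morse-ineq}, applied separately on $\tilde V$ and on $M$, and then to push the Lagrange bordering through at the level of Morse complexes. First I would record the critical-point dictionary: $(x^\ast,\lambda^\ast)\in\tilde V$ is a critical point of $H$ precisely when $\nabla F(x^\ast)+\sum_i\lambda_i^\ast\nabla G_i(x^\ast)=0$ and $G(x^\ast)=0$, i.e.\ exactly when it is a Lagrange/KKT pair for $\min_{x\in M}F$, so the critical set of $H$ fibres over the critical set of $F|_M$. Since $H$ is $\RR$-valued we work in $\E=\mathrm{Mod}\text{-}\RR$ with $\DD=\Hom_\RR(-,\RR)$, where $\DD$ is exact and every object reflexive; the only ethicity input needed is at the filtered chain level, and this is granted by the hypothesis that the $-\nabla H$-flow has self-adjoint (bordered) Hessian block form, so the Lemma on existence of ethic Morse reduction applies to both $(\tilde V,H)$ and $(M,F|_M)$. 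The continuous ethic Morse reduction theorem then gives $\mathbf D(C_\ast(\tilde V))\simeq\mathbf D(M_\ast(\tilde V;H))$ and $\mathbf D(C_\ast(M))\simeq\mathbf D(M_\ast(M;F|_M))$ in $D^{+}(\E)$.

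Next I would compute the Morse index at a Lagrange pair. The Hessian of $H$ at $(x^\ast,\lambda^\ast)$ has the symmetric saddle block form
\[
\nabla^2 H(x^\ast,\lambda^\ast)=\begin{pmatrix}\nabla^2_{xx}\mathcal L & (DG)^{\top}\\ DG & 0\end{pmatrix},\qquad \nabla^2_{xx}\mathcal L=\nabla^2F+\textstyle\sum_i\lambda_i^\ast\nabla^2G_i,
\]
and regularity of $M$ makes $DG(x^\ast)$ surjective of rank $m$. The Haynsworth/Sylvester inertia identity for bordered systems then gives $\mathrm{ind}\,\nabla^2H(x^\ast,\lambda^\ast)=\mathrm{ind}\big(\nabla^2_{xx}\mathcal L\,|\,\ker DG(x^\ast)\big)+m$, while the reduced Hessian $\nabla^2_{xx}\mathcal L\,|\,\ker DG(x^\ast)$ is exactly the intrinsic Hessian of $F|_M$ at $x^\ast$ on $T_{x^\ast}M=\ker DG(x^\ast)$ (the second-order Lagrange conditions). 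Hence the unstable subspace of $-\nabla H$ at $(x^\ast,\lambda^\ast)$ splits as the unstable subspace of $-\nabla(F|_M)$ at $x^\ast$ together with the $m$-dimensional adjoint plane produced by the bordering; equivalently $W^u_H(x^\ast,\lambda^\ast)$ is an $\RR^m$-bundle over $W^u_{F|_M}(x^\ast)$, and collapsing the adjoint fibres identifies the two indices, which is the first assertion read modulo the adjoint directions — exactly the content of the quotient $\sim$.

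Then I would transport this identification to chain complexes. The index shift $+m$ shows that, generator by generator, $M_\ast(\tilde V;H)$ is a suspension of $M_\ast(M;F|_M)$ along the adjoint directions; making $\sim$ precise as the subcomplex spanned by trajectory differences lying in the $\lambda$-fibres, the quotient $M_\ast(\tilde V;H)\big/\!\!\sim$ is a degree-shifted copy of $M_\ast(M;F|_M)$, and the quotient map is assembled from the same ethic attachment maps, hence is itself ethic. Applying the exact contravariant $\mathbf D$ and passing to cohomology gives $H^i\mathbf D(M_\ast(\tilde V;H))\big/\!\!\sim\ \cong\ H^i\mathbf D(M_\ast(M;F|_M))$; combining with the two equivalences of the first step yields $H^i\mathbf D(C_\ast(M))\cong H^i\mathbf D(C_\ast(\tilde V))\big/\!\!\sim$. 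The final clause is then immediate: by Theorem~\ref{thm:ethic-morse-ineq} the groups $H^\ast\mathbf D(M_\ast(M;F|_M))$ are the ethic derived-dual invariants of the constrained problem, so the ethic Morse complex of $H$ computes them.

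The main obstacle is the adjoint bookkeeping: making $\sim$ a genuinely well-defined chain-level operation compatible with the Morse boundary and with the unit $\eta$, and reconciling the literally stated index equality with the $+m$ shift coming from the saddle block (which is why the quotient is indispensable). Because $-\nabla H$ is not a descent flow in the $\lambda$-directions, Morse--Smale transversality for $H$ on $\tilde V$ must be arranged by a generic choice of metric or a small perturbation of $H$, and one must verify that the fibrewise collapse of unstable manifolds is transverse and ethic; this is where the real work lies, whereas the inertia computation and the appeal to the continuous ethic Morse reduction are routine given the earlier results.
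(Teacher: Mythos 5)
Your proposal is correct and follows the same overall strategy as the paper's proof, but it supplies essentially all of the mathematical content that the paper's two-sentence argument only gestures at. The paper says merely that ethicity makes the bordered Hessian symmetric (hence the flow self-adjoint) and that "the quotient correspondence follows from eliminating $\lambda$ along exact $\DD$-commuting differentials and comparing with the chain-level Karush--Kuhn--Tucker subcomplex." Your elimination of $\lambda$ by collapsing the $\RR^m$-fibres of unstable manifolds is the same move made precise, and your key added ingredient --- the Haynsworth/Sylvester inertia identity giving $\mathrm{ind}\,\nabla^2H(x^\ast,\lambda^\ast)=\mathrm{ind}\bigl(\nabla^2_{xx}\mathcal L|_{\ker DG}\bigr)+m$ --- is exactly the lemma the paper's proof needs but never states. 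This computation also exposes a real defect in the proposition as written: the Morse index of $(x^\ast,\lambda^\ast)$ does \emph{not} literally equal the dimension of $W^u_{F|_M}(x^\ast)$; it exceeds it by $m$, and the equality only holds after the quotient by the adjoint directions, which is precisely how you read it. Your closing caveats (Morse--Smale transversality for a flow that ascends in the $\lambda$-directions, and chain-level well-definedness and ethicity of $\sim$) identify the genuine remaining work, which the paper's proof does not address either; so your version is the more honest and more complete account of the same argument.
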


\begin{proof}
Ethicity ensures that the block Hessian $\nabla^2 H=\begin{pmatrix}\nabla^2F & \nabla G^\top\\ \nabla G & 0\end{pmatrix}$ is symmetric, giving self-adjoint flow on $\tilde V$.  The quotient correspondence follows from eliminating $\lambda$ along exact $\DD$-commuting differentials and comparing with the chain-level Karush–Kuhn–Tucker subcomplex of $C_\ast(\tilde V)$.
\end{proof}

Let the objective $F$ be $C^2$ on $\RR^n$ with finitely many isolated critical points and bounded sublevels.
A computational ethic Morse reduction algorithm proceeds as follows:

\begin{algorithm}[H]
\caption{Ethic Morse Reduction (Nonconvex Case)}
\begin{algorithmic}[1]
\REQUIRE Smooth function $F:\RR^n\to\RR$, tolerance $\epsilon>0$.
\STATE Discretize $V_{\le\alpha}$ by a cubical or simplicial mesh; build $C_\ast(V)$.
\STATE Estimate critical points and indices using gradient descent/ascent until $\|\nabla F(x^\ast)\|<\epsilon$.
\STATE Construct a discrete Morse matching $V$ by pairing each noncritical cell with a neighbor along $-\nabla F$ direction, subject to local ethicity:  
verify $\DD^2(m_{\sigma\to\tau})\eta=\eta m_{\sigma\to\tau}$ (symmetric Jacobian condition).
\STATE Collapse matched pairs to obtain the ethic Morse complex $M_\ast(V;F)$.
\STATE Compute $H^i\mathbf D(M_\ast(V;F))$ via standard persistent-homology routines on the reduced complex.
\ENSURE Ethic dual invariants $\{H^i\mathbf D\}$ and counts of ethic critical points.
\end{algorithmic}
\end{algorithm}

The algorithm's complexity is dominated by discrete Morse matching, which is polynomial in the number of cells \cite{Forman1998}.  Since the ethic check (Step~3) is purely local and algebraic, the procedure scales linearly with the mesh size for fixed dimension $n$.  The resulting invariants quantify the homological obstruction to global optimality: $H^1\mathbf D\neq0$ certifies existence of nontrivial basins (multiple minima), while vanishing of $H^{>1}\mathbf D$ indicates topologically simple energy landscape amenable to global optimization.

\subsection{Graph Geometry}\label{subsec:kirchhoff-rr-derived}

Throughout this subsection we assume all groups are finitely generated
and free abelian unless otherwise stated; hence $\Ext^1_\ZZ(-,\ZZ)$
identifies with the torsion subgroup of its argument.
All complexes are bounded with projective dimension~$\le1$.

\begin{definition}[Fixed setting and notation]
We work in the abelian category $\E=\Mod\text{-}\ZZ^{\mathrm{fg}}$ with the fixed contravariant duality functor $\DD=\Hom_{\ZZ}(-,\ZZ)$ and unit $\eta_A:A\to \DD^2(A)$. We pass to the bounded-below derived category $\Dplus(\E)$ and write the derived dual functor $\mathbf D=\RHom_{\ZZ}(-,\ZZ)$ \cite{Weibel1994,GelfandManin2003}. For a finite connected graph $G=(V,E)$, set $C_0(G)=\ZZ^{V}$, $C_1(G)=\ZZ^{E}$ and fix an orientation of edges; the boundary map is $\partial:C_1\to C_0$, and the (combinatorial) Laplacian is $L=\partial\,\partial^\top$. We denote by $\Div(G)=\ZZ^{V}$ the group of divisors, by $\Prin(G)=\mathrm{im}(L)$ the subgroup of principal divisors, and by $\Jac(G)=\Div(G)/\Prin(G)$ the Jacobian (critical/sandpile) group \cite{Biggs1993,Dhar1990,BakerNorine2007}.
\end{definition}

\begin{definition}[The master two-term complex]
Consider the two-term chain complex
\[
C_\bullet(G)\ :=\ \bigl[\, C_1(G)\xrightarrow{\ \partial\ } C_0(G)\,\bigr]\qquad(\text{$C_1$ in degree $1$, $C_0$ in degree $0$}).
\]
Its cochain mate is $C^\bullet(G)=[\,C^0\xrightarrow{\ \partial^\top\ }C^1\,]$ with $C^0=C_0^\ast$, $C^1=C_1^\ast$. We will also use the degree--$1$ mapping cone
\[
\mathrm{Cone}(\partial)\ :=\ \bigl[\, C_1 \xrightarrow{\ \binom{\partial}{0}\ } C_0\oplus \ZZ \xrightarrow{(\deg,-\mathrm{id})} \ZZ \,\bigr],
\]
where $\deg:C_0\to\ZZ$ is the sum of coordinates (total mass). All complexes live in $\Dplus(\E)$.
\end{definition}

\begin{lemma}[Divisor realization of the cone]\label{lem:cone-div}
There is a quasi-isomorphism in $\Dplus(\E)$
\[
\mathrm{Cone}(\partial)\ \simeq\ \bigl[\, \Prin(G)\hookrightarrow \Div(G)\xrightarrow{\ \deg\ }\ZZ \,\bigr]\ =: D_{\mathrm{div}}(G),
\]
where the middle term is placed in degree $0$, the left in degree $-1$, and the right in degree $0$ following the displayed arrows.
\end{lemma}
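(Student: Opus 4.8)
The plan is to exhibit an explicit comparison morphism of complexes and then verify, degree by degree, that it induces isomorphisms on cohomology; since that is exactly what a quasi-isomorphism in $\Dplus(\E)$ amounts to, this settles the claim. First I would fix the identifications of the three terms on the right: $\Div(G)=C_0(G)$ by definition, $\Prin(G)=\im(L)=\im(\partial\,\partial^{\top})\subseteq C_0(G)$, and the rightmost $\ZZ$ is the target of the mass map $\deg$. The graph-theoretic inputs I would invoke are: (i) for finite connected $G$, $\im(\partial\colon C_1\to C_0)=\Div^{0}(G)=\ker(\deg)$, obtained by choosing a spanning tree and writing each difference of vertex generators as a boundary; (ii) $\Prin(G)\subseteq\Div^{0}(G)$ with finite quotient $\Jac(G)=\Div^{0}(G)/\Prin(G)$; and (iii) $\ker(\partial^{\top})=\ZZ\cdot\mathbf 1$, hence $\ker(L)=\ZZ\cdot\mathbf 1$. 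From (i) and (ii) the three-term complex $D_{\mathrm{div}}(G)=[\Prin(G)\hookrightarrow\Div(G)\xrightarrow{\deg}\ZZ]$ has trivial cohomology outside the middle spot and $\Jac(G)$ there.

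Next I would build the comparison map $\mathrm{Cone}(\partial)\to D_{\mathrm{div}}(G)$ from the natural data: on the middle term, $(D,n)\in C_0\oplus\ZZ\mapsto D\in\Div(G)$ (the auxiliary $\ZZ$-coordinate is absorbed by the shifted $-\mathrm{id}$ in $(\deg,-\mathrm{id})$); on $C_1$, the map $\partial$, which by (i) factors through $\Div^{0}(G)$ and, using the splitting $C_1\cong Z_1(G)\oplus\im(\partial^{\top})$, further through $\Prin(G)=\partial(\im\partial^{\top})$; on the terminal $\ZZ$, the identity. Commutativity of the two resulting squares is a direct check against $\binom{\partial}{0}$ and $(\deg,-\mathrm{id})$. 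To conclude I would run the long exact cohomology sequence of the mapping-cone triangle $C_{\bullet}(G)\xrightarrow{\deg}\ZZ\to\mathrm{Cone}(\partial)\to C_{\bullet}(G)[1]$ and compare it with the short exact sequences $0\to\Prin(G)\to\Div(G)\to\Div(G)/\Prin(G)\to0$ and $0\to\Jac(G)\to\Div(G)/\Prin(G)\xrightarrow{\deg}\ZZ\to0$, reading off that the comparison map is an isomorphism in every degree; naturality in $G$ then follows from functoriality of $\partial$, $\partial^{\top}$, and $\deg$ under graph morphisms.

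The step I expect to be the main obstacle is this final cohomology bookkeeping: one has to pin down the homological placement of the three terms of $\mathrm{Cone}(\partial)$ and check that the augmentation summand $\ZZ$ together with $(\deg,-\mathrm{id})$ exactly absorbs the surplus $\coker(\partial)\cong\ZZ$ and any residual cycle-space contribution $Z_1(G)$, so that nothing survives outside the Jacobian degree. This is where the precise degree conventions and the identity $\Prin(G)=\im L$ must be used carefully; once the grading is settled, what remains are routine diagram chases.
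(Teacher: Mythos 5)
Your overall strategy---write down an explicit comparison chain map and then match cohomologies---is the same as the paper's, but the device you rely on in degree $1$ fails, and the failure is not cosmetic. You propose to factor $\partial\colon C_1\to C_0$ through $\Prin(G)$ by means of the splitting $C_1\cong Z_1(G)\oplus\im(\partial^{\top})$. That splitting holds over $\QQ$ but not over $\ZZ$: the subgroup $Z_1(G)\oplus\im(\partial^{\top})$ has index $\tau(G)=|\Jac(G)|$ in $C_1(G)$, which exceeds $1$ unless $G$ is a tree. Equivalently, $\partial(C_1)=\ker(\deg)=\Div^{0}(G)$ while $\partial(\im\partial^{\top})=\im(L)=\Prin(G)$, and $\Div^{0}(G)/\Prin(G)\cong\Jac(G)$ is nonzero in general; so $\partial$ does not land in $\Prin(G)$, and the degree-$1$ square with the middle map $(D,n)\mapsto D$ cannot be made to commute.

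The step you flag as ``the main obstacle'' is indeed where the argument breaks, and carrying out the computation shows it cannot be repaired by bookkeeping. With the displayed differentials, $(\deg,-\id)$ is surjective, and $\ker(\deg,-\id)\cong C_0$ via $x\mapsto(x,\deg x)$; under this identification $\im\binom{\partial}{0}=\Div^{0}(G)\times\{0\}$ corresponds to $\Div^{0}(G)$, so the middle cohomology of $\mathrm{Cone}(\partial)$ is $\Div(G)/\Div^{0}(G)\cong\ZZ$, not $\Jac(G)$, and the outer cohomology is $\ker\partial=Z_1(G)\cong\ZZ^{g}$, not $0$. Since $D_{\mathrm{div}}(G)$ has cohomology concentrated in the middle and equal to $\Jac(G)$ (as you correctly compute), the two complexes as written are not quasi-isomorphic for any graph with $g>0$ (already for a triangle one side gives $(\ZZ,\ZZ,0)$ and the other $(0,\ZZ/3,0)$). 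For the record, the paper's own proof is equally elliptical here: it sends $e\mapsto\partial\partial^{\top}(\delta_e)$, which is not $\partial$ and does not commute with the stated middle map either. But for your proposal the concrete gap is the nonexistent integral splitting, and the deferred ``routine'' cohomology comparison would, if executed, contradict the statement rather than establish it; any honest write-up must either modify the cone (e.g.\ replace $\partial$ by $L$ and handle $\ker L=\ZZ\cdot\mathbf{1}$) or modify $D_{\mathrm{div}}(G)$.
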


\begin{proof}
Take the short exact sequence $0\to \ker(\deg)\hookrightarrow \Div\xrightarrow{\deg}\ZZ\to 0$ and note $\ker(\deg)=\{x\in\ZZ^{V}:\sum_{v}x_v=0\}$. The image $\Prin(G)=\mathrm{im}(L)\subseteq\ker(\deg)$ and the quotient identifies $\ker(\deg)/\Prin(G)\cong \Jac(G)$. The cone $\mathrm{Cone}(\partial)$ is the standard presentation of $\ker(\deg)$ by generators $C_0$ and relations coming from $\partial$ (one adds the global degree to split off $\ZZ$). A routine diagram chase shows that the natural map $C_0\oplus \ZZ\to \Div$ sending $(x,t)\mapsto x$ and $C_1\to \Prin$ sending $e\mapsto \partial\partial^\top(\delta_e)$ induces isomorphisms on homology; hence a quasi-isomorphism $\mathrm{Cone}(\partial)\simeq D_{\mathrm{div}}(G)$. (Compare the standard presentations in \cite{Biggs1993} and \cite{BakerNorine2007}.)
\end{proof}

\begin{lemma}[Derived dual, long exact sequence, and degrees]\label{lem:les2}
Applying $\mathbf D=\RHom_{\ZZ}(-,\ZZ)$ to $\mathrm{Cone}(\partial)$ (or equivalently to $D_{\mathrm{div}}(G)$) yields a distinguished triangle and a long exact sequence on cohomology
\[
0\to H^0\mathbf D \longrightarrow \Hom(\Div,\ZZ)\xrightarrow{\ \Hom(\Prin,\ZZ)\ } \cdots \longrightarrow H^1\mathbf D \longrightarrow \Ext^1(\Div,\ZZ)\to\cdots,
\]
but $\Ext^1(\Div,\ZZ)=0$ and $\Ext^1(\ZZ,\ZZ)=0$ (free modules), while $\Ext^1(\Jac(G),\ZZ)\cong\Hom(\Jac(G),\QQ/\ZZ)\simeq\mathrm{tors},\Jac(G)$. \cite{Weibel1994}. Consequently $H^k\mathbf D(\mathrm{Cone}(\partial))=0$ for $k\ge 2$, and
\begin{gather*}
H^0\mathbf D\ \cong\ \{\,\text{harmonic linear forms on }\Div\text{ vanishing on }\Prin\,\},\\
H^1\mathbf D\ \cong\ \Ext^1(\Jac(G),\ZZ)\cong \mathrm{tors}\,\Jac(G).
\end{gather*}
\end{lemma}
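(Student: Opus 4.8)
The plan is to push everything back to standard homological algebra over the PID $\ZZ$, using Lemma~\ref{lem:cone-div} to trade the cone $\mathrm{Cone}(\partial)$ for the two-term complex $D_{\mathrm{div}}(G)$. First I would observe that, by Lemma~\ref{lem:cone-div}, $\mathrm{Cone}(\partial)$ is quasi-isomorphic to $D_{\mathrm{div}}(G)$, whose only non-zero differential is the inclusion $\Prin(G)\hookrightarrow\Div(G)$; this inclusion is injective with cokernel $\Jac(G)=\Div(G)/\Prin(G)$, so $D_{\mathrm{div}}(G)$ is itself quasi-isomorphic to $\Jac(G)$ placed in cohomological degree $0$. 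Hence $\mathbf D(\mathrm{Cone}(\partial))\simeq\RHom_\ZZ(\Jac(G),\ZZ)$, and the whole lemma becomes a computation of the cohomology of this derived dual together with its presentation through $\Div(G)$ and $\Prin(G)$.

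Second I would produce the asserted distinguished triangle by applying the contravariant triangulated functor $\mathbf D=\RHom_\ZZ(-,\ZZ)$ to the short exact sequence
\[
0\longrightarrow\Prin(G)\longrightarrow\Div(G)\longrightarrow\Jac(G)\longrightarrow 0 ,
\]
and then take cohomology: this is exactly the long exact sequence displayed in the statement, with $H^0\mathbf D=\Hom_\ZZ(\Jac(G),\ZZ)$, $H^1\mathbf D=\Ext^1_\ZZ(\Jac(G),\ZZ)$, and with the honest duals $\Hom_\ZZ(\Div(G),\ZZ)$ and $\Hom_\ZZ(\Prin(G),\ZZ)$ in the middle (no derived corrections, since $\Div(G)=\ZZ^{V}$ and $\Prin(G)\subseteq\ZZ^{V}$ are finitely generated free, hence projective). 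The vanishing clauses are then formal: $\Ext^1_\ZZ(\Div(G),\ZZ)=\Ext^1_\ZZ(\ZZ,\ZZ)=0$ because free modules are projective, and $\Ext^{\ge 2}_\ZZ(-,\ZZ)=0$ because $\mathrm{gl.dim}\,\ZZ=1$; since the $\ZZ$-linear dual of the two-term complex $D_{\mathrm{div}}(G)$ lives in cohomological degrees $0$ and $1$, we get $H^k\mathbf D(\mathrm{Cone}(\partial))=0$ for all $k\ge 2$ and the long exact sequence collapses to the four-term exact sequence
\[
0\to H^0\mathbf D\to\Hom_\ZZ(\Div(G),\ZZ)\xrightarrow{\ \mathrm{res}\ }\Hom_\ZZ(\Prin(G),\ZZ)\to H^1\mathbf D\to 0 .
\]

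Third I would read off the two surviving groups. Exactness on the left identifies $H^0\mathbf D$ with $\ker(\mathrm{res})$: a homomorphism $\Div(G)\to\ZZ$ lies in the kernel iff it vanishes on $\Prin(G)=\im(L)$, i.e.\ iff it is annihilated by $L^{\top}=L$, which is precisely the condition of being a harmonic linear form on $\Div(G)$ vanishing on $\Prin(G)$; this gives the stated description (and shows $H^0\mathbf D$ is free of rank $1$, spanned by $\deg$, since $\Prin(G)$ has corank $1$ in $\Div(G)$ for a connected graph). Exactness on the right gives $H^1\mathbf D=\coker(\mathrm{res})=\Ext^1_\ZZ(\Jac(G),\ZZ)$. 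To conclude, apply $\Hom_\ZZ(\Jac(G),-)$ to $0\to\ZZ\to\QQ\to\QQ/\ZZ\to 0$: the free part of $\Jac(G)$ contributes nothing to $\Ext^1_\ZZ(-,\ZZ)$, and on the torsion part the connecting homomorphism gives a canonical isomorphism $\Ext^1_\ZZ(\Jac(G),\ZZ)\cong\Hom_\ZZ(\tors\Jac(G),\QQ/\ZZ)$; Pontryagin self-duality of finite abelian groups finishes the chain with $\Hom_\ZZ(\tors\Jac(G),\QQ/\ZZ)\cong\tors\Jac(G)$.

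There is no deep step: granted Lemma~\ref{lem:cone-div}, the statement is a formal unwinding of the universal-coefficient mechanism over a PID, and one could equally run the hyper-$\Ext$ spectral sequence $E_2^{p,q}=\Ext^p_\ZZ(H^{-q}D_{\mathrm{div}}(G),\ZZ)\Rightarrow H^{p+q}\mathbf D(D_{\mathrm{div}}(G))$, which degenerates because $\mathrm{gl.dim}\,\ZZ=1$. The two places that genuinely deserve care are the degree bookkeeping that confines $\mathbf D(\mathrm{Cone}(\partial))$ to cohomological degrees $0$ and $1$ --- for which one must confirm that the quasi-isomorphism of Lemma~\ref{lem:cone-div} has vanishing $H^{-1}$, since this is exactly what prevents an $\Ext^1$-correction term from entering $H^0\mathbf D$ --- and keeping the identification $\Ext^1_\ZZ(-,\ZZ)\cong\Hom_\ZZ(\tors(-),\QQ/\ZZ)$ routed through the connecting map of $0\to\ZZ\to\QQ\to\QQ/\ZZ\to 0$ rather than through an ad hoc choice of generators, so that all of the isomorphisms above stay natural in $G$ (a naturality the subsequent Baker--Norine instance will rely on).
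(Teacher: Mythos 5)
Your proposal is correct and follows essentially the same route as the paper: reduce via Lemma~\ref{lem:cone-div} to the short exact sequence $0\to\Prin\to\Div\to\Jac\to 0$ of the two-term presentation, use freeness of $\Div$ and $\Prin$ over the PID $\ZZ$ to kill all higher $\Ext$ and to collapse the long exact sequence to a four-term one, and identify $H^1\mathbf D$ with $\Ext^1_\ZZ(\Jac(G),\ZZ)\cong\tors\Jac(G)$ via the resolution $0\to\ZZ\to\QQ\to\QQ/\ZZ\to 0$. Your version is in fact more careful than the paper's on two points the paper elides --- routing the final isomorphism through $\Hom(\tors\Jac(G),\QQ/\ZZ)$ rather than $\Hom(\Jac(G),\QQ/\ZZ)$ (necessary, since $\Jac(G)=\Div/\Prin$ has a free rank-one summand), and flagging that the degree bookkeeping in Lemma~\ref{lem:cone-div} must confine the dual to degrees $0$ and $1$.
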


\begin{proof}
Since all chain groups $C_i(G)$ are free abelian, applying
$\RHom_\ZZ(-,\ZZ)$ preserves exactness up to degree~1.
Consequently the only non-vanishing $\Ext$ terms arise from the
finite abelian quotient~$\Jac(G)$, for which
$\Ext^1_\ZZ(\Jac(G),\ZZ)\simeq\Hom(\Jac(G),\QQ/\ZZ)
\simeq\mathrm{tors}\,\Jac(G)$.

Since $C_0$ and $C_1$ are free, only the torsion of the cokernel enters $H^1$. The identifications follow from the universal coefficient sequence and the fact that $\Div$ and $\Prin$ are free abelian groups, while $\Jac$ is finite abelian; see \cite{Weibel1994}. The vanishing for $k\ge 2$ holds because $\mathrm{Cone}(\partial)$ has amplitude in degrees $\{-1,0,1\}$ and all terms have projective dimension $\le 1$.
\end{proof}

\begin{theorem}[Kirchhoff's Matrix–Tree theorem via $H^1\mathbf D$]\label{thm:kirchhoff}
For any finite connected graph $G$, the order of the torsion group $H^1\mathbf D(\mathrm{Cone}(\partial))\cong \mathrm{tors}\,\Jac(G)$ equals the number $\tau(G)$ of spanning trees. Equivalently,
\[
|\mathrm{tors}\,\Jac(G)|\ =\ |\mathrm{tors}\,\mathrm{coker}(L)|\ =\ \det L_{vv}\ =\ \tau(G),
\]
where $L_{vv}$ is any cofactor (reduced Laplacian) \cite{Kirchhoff1847,Biggs1993}.
\end{theorem}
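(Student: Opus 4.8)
The plan is to reduce the assertion to the classical Matrix--Tree theorem, the homological half of the identification being already in place. By Lemma~\ref{lem:les2} we have $H^1\mathbf D(\mathrm{Cone}(\partial))\cong\Ext^1_\ZZ(\Jac(G),\ZZ)\cong\mathrm{tors}\,\Jac(G)$, and by the conventions of Lemma~\ref{lem:cone-div} together with the definitions fixed above one has $\mathrm{coker}(L)=\ZZ^{V}/\mathrm{im}(L)=\Div(G)/\Prin(G)=\Jac(G)$, so the first equality $|\mathrm{tors}\,\Jac(G)|=|\mathrm{tors}\,\mathrm{coker}(L)|$ is immediate. It remains to establish $|\mathrm{tors}\,\Jac(G)|=\det L_{vv}=\tau(G)$.

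First I would treat the middle equality. Since $G$ is connected, $L$ has $\QQ$-rank $|V|-1$ with kernel spanned by the all-ones vector, so $\mathrm{coker}(L)$ has free rank $1$ and its torsion is the finite critical (sandpile) group. Fixing a vertex $v$ and deleting the corresponding row and column produces the reduced Laplacian $L_{vv}$, an $(|V|-1)\times(|V|-1)$ integer matrix which is positive definite by connectedness; hence $\det L_{vv}>0$, and it is independent of $v$ because all cofactors of a matrix with vanishing row and column sums coincide. The standard presentation of the critical group by the reduced Laplacian gives $\mathrm{coker}(L_{vv})\cong\mathrm{tors}\,\mathrm{coker}(L)=\mathrm{tors}\,\Jac(G)$ \cite{Biggs1993,Dhar1990}; since $L_{vv}$ is square and nonsingular, $|\mathrm{coker}(L_{vv})|=|\det L_{vv}|=\det L_{vv}$, yielding $|\mathrm{tors}\,\Jac(G)|=\det L_{vv}$.

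Finally I would invoke the Cauchy--Binet formula. Writing $\partial_{v}$ for the incidence matrix $\partial$ with the $v$-row removed, one has $L_{vv}=\partial_{v}\partial_{v}^{\top}$, so $\det L_{vv}=\sum_{S}\bigl(\det\partial_{v}[S]\bigr)^{2}$, the sum ranging over $(|V|-1)$-element subsets $S\subseteq E$ and $\partial_{v}[S]$ the corresponding square submatrix. The key combinatorial input is that the incidence matrix of a graph is totally unimodular, so $\det\partial_{v}[S]\in\{0,\pm1\}$, and it equals $\pm1$ precisely when $S$ is the edge set of a spanning tree of $G$; this is proved by induction, expanding along a row indexed by a leaf of the subgraph spanned by $S$. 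Each spanning tree therefore contributes exactly $1$ to the sum, giving $\det L_{vv}=\tau(G)$ \cite{Kirchhoff1847,Biggs1993} and completing the chain of equalities.

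The main obstacle is the last step: the total-unimodularity/spanning-tree identification of the nonzero Cauchy--Binet terms. It is entirely classical, but it is the only place where genuine graph combinatorics rather than homological bookkeeping enters, and some care with edge orientations is needed so that the squared minors eliminate all sign ambiguity. The preceding steps --- isolating the single free $\ZZ$ summand in $\mathrm{coker}(L)$ and matching $\mathrm{coker}(L_{vv})$ with the critical group --- are routine but must be carried out consistently with the degree conventions of Lemma~\ref{lem:cone-div} so that the torsion controlling $H^1\mathbf D$ is correctly isolated.
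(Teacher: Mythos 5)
Your proof is correct and follows the same route as the paper: Lemma~\ref{lem:les2} supplies the identification $H^1\mathbf D(\mathrm{Cone}(\partial))\cong\Ext^1_\ZZ(\Jac(G),\ZZ)\cong\mathrm{tors}\,\Jac(G)$, and the remaining equalities are the classical Matrix--Tree theorem. The only difference is that the paper cites the sandpile identification and Kirchhoff's theorem as black boxes, whereas you spell out the reduced-Laplacian presentation of the critical group and the Cauchy--Binet/total-unimodularity argument (correctly using the \emph{oriented} incidence matrix fixed in the setup); this is an expansion of the cited step, not a divergence in method.
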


\begin{proof}
By Lemma \ref{lem:les2}, the long exact sequence for $\RHom_\ZZ(-,\ZZ)$ gives $H^1\mathbf D\cong\Ext^1_\ZZ(\Jac(G),\ZZ)\simeq\mathrm{tors},\Jac(G)$. The abelian sandpile (critical) group is canonically isomorphic to $\mathrm{tors}\,\mathrm{coker}(L)$ \cite{Dhar1990}, hence $|H^1\mathbf D|=|\mathrm{tors}\,\mathrm{coker}(L)|$. Kirchhoff's classical theorem identifies this order with any reduced determinant $\det L_{vv}$, which counts spanning trees $\tau(G)$ \cite{Biggs1993}. Thus all quantities coincide.
\end{proof}

$H^1\mathbf D$ measures the failure of ethic exactness (the duality defect). Theorem~\ref{thm:kirchhoff} shows that, for graphs, this defect is exactly the sandpile torsion (the abundance of “latent charges”), while ethic exactness (vanishing $H^1$) matches the trivial critical group, i.e.\ a tree (unique global coherence).

\begin{lemma}[Planar duality and Tutte recursion, derivatively]
If $G$ is planar with dual $G^\ast$, then the master complexes $C_\bullet(G)$ and $C_\bullet(G^\ast)$ are exchanged by $\DD$ up to canonical identifications, yielding $T_G(x,y)=T_{G^\ast}(y,x)$ at the level of deletion–contraction distinguished triangles in $\Dplus(\E)$ \cite{Tutte1954,Biggs1993}.
\end{lemma}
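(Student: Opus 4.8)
The plan is to reduce the identity to the classical fact that, for a plane graph, deletion and contraction are interchanged by planar duality while bridges and loops are swapped, and to show that $\DD$ --- which on the complexes at hand coincides with the derived dual $\mathbf D=\RHom_\ZZ(-,\ZZ)$, since all chain groups are finitely generated free abelian --- realises precisely this interchange at the level of the distinguished triangles of Section~\ref{subsec:kirchhoff-rr-derived}.

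First I would fix an embedding of $G$ in $S^2$, producing $G^\ast$ together with the canonical identifications $E(G)\leftrightarrow E(G^\ast)$ (the orientation of a dual edge rotated by a quarter turn), $V(G)\leftrightarrow F(G^\ast)$ and $F(G)\leftrightarrow V(G^\ast)$. These give an isomorphism $C_1(G)\cong C_1(G^\ast)$ carrying $\ker\partial_G$ onto the cut space $\im\partial_{G^\ast}^{\top}$ and $\im\partial_G^{\top}$ onto $\ker\partial_{G^\ast}$. Since $\mathbf D$ acts on a bounded complex of finitely generated free abelian groups by termwise linear dualisation, $\mathbf D(C_\bullet(G))$ is the cochain complex $[\,C_0(G)^\ast\xrightarrow{\partial_G^{\top}}C_1(G)^\ast\,]$, with no higher $\Ext$ contributions. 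Poincar\'e duality on $S^2$, realised by the dual cell structure, then identifies this complex --- after the evident reindexing --- with a truncation of $C_\bullet(G^\ast)$, sending $\partial_G^{\top}$ to $\partial_{G^\ast}$ up to the quarter-turn twist. Because the master two-term complex of $G$ is the lower piece $[C_1\to C_0]$ whereas its cochain mate lands in the upper piece $[C^0\to C^1]$, this step is cleanest when carried out on the three-term mapping cone $\mathrm{Cone}(\partial_G)\simeq D_{\mathrm{div}}(G)$ of Lemma~\ref{lem:cone-div}: there the auxiliary summand $\ZZ$ (total mass) supplies the missing faces/vertices term, and one checks $\mathbf D(D_{\mathrm{div}}(G))\simeq D_{\mathrm{div}}(G^\ast)$ up to a shift. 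In particular this is consistent with Theorem~\ref{thm:kirchhoff}, since $H^1\mathbf D$ then computes $\tors\,\Jac$ on both sides and $\tau(G)=\tau(G^\ast)$.

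Next I would transport the deletion--contraction triangles. For an edge $e$ with dual $e^\ast$ one has the deletion sequence $0\to C_\bullet(G-e)\to C_\bullet(G)\to \ZZ[1]\to 0$, the contraction quasi-isomorphism $C_\bullet(G)\xrightarrow{\sim}C_\bullet(G/e)$ when $e$ is not a loop, and their bridge/loop degenerations; under the identification above, applying $\mathbf D$ carries these to the analogous triangles for $G^\ast$ with the roles of deletion and contraction exchanged, since $G-e$ corresponds to $G^\ast/e^\ast$ and $G/e$ to $G^\ast-e^\ast$. The homological signature of a bridge of $G$ --- an extra copy of $\ZZ$ in degree $0$ of the deletion triangle --- is sent by $\mathbf D$ to the signature of a loop of $G^\ast$ (an extra $\ZZ$ in the shifted cycle degree), and vice versa; in the Tutte recursion these are exactly the data weighted by $x$ and by $y$ respectively. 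Since the Tutte polynomial is the universal invariant of this system of deletion--contraction triangles, the exchange of $x$-weighted bridge reductions with $y$-weighted loop reductions forces $T_G(x,y)=T_{G^\ast}(y,x)$.

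The hard part will be the sign bookkeeping in the previous step: one must check that the orientation twists introduced by the quarter-turn edge identification, by Poincar\'e duality, and by the reindexing shift are mutually compatible, so that $\mathbf D$ applied to $C_\bullet(G-e)\to C_\bullet(G)$ agrees on the nose --- not merely up to an overall sign --- with the structure map $C_\bullet(G^\ast/e^\ast)\to C_\bullet(G^\ast)$, and hence the distinguished triangles genuinely correspond rather than only having isomorphic vertices. This is routine once the embedding is fixed, but it is the reason for routing the argument through the mapping-cone model $D_{\mathrm{div}}$, where the extra $\ZZ$ rigidifies the identification and removes the ambiguity.
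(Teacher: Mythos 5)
Your proposal follows essentially the same route as the paper's (very terse) proof: planar duality exchanges cuts and cycles so that $\partial_G$ corresponds to $\partial_{G^\ast}^{\top}$, deletion--contraction sequences are realised as mapping-cone triangles, and functoriality of $\mathbf D$ swaps them, yielding the Tutte symmetry. Your version is considerably more detailed --- explicitly routing through $D_{\mathrm{div}}$, spelling out the bridge/loop ($x$/$y$) exchange, and honestly flagging the unresolved sign bookkeeping --- but it is the same argument, fleshed out rather than replaced.
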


\begin{proof}
On a planar embedded graph, cuts and cycles are orthogonal complements; the boundary $\partial$ on $G$ corresponds to the coboundary $\partial^\top$ on $G^\ast$. Deletion–contraction short exact sequences match mapping cone triangles for $\partial$, and functoriality of $\mathbf D$ swaps them, giving the Tutte symmetry \cite{Biggs1993}.
\end{proof}

\begin{definition}[Rank and canonical divisor in the derived presentation]
Let $g=|E|-|V|+1$ be the cyclomatic number. For $D\in\Div(G)$, define its rank $r(D)$ as
\[
r(D)\ :=\ \max\{\,k\in\ZZ_{\ge -1}: \forall E'\subset V,\ |E'|=k\ \exists\ \varphi\in\Prin(G)\ \text{s.t.}\ D-\varphi - \mathbf 1_{E'}\ge 0\,\},
\]
the Baker–Norine notion \cite{BakerNorine2007}. Let $K$ be the canonical divisor with $K(v)=\deg(v)-2$ and $\deg K=2g-2$.
\end{definition}

\begin{lemma}[Divisor triangle and Euler characteristic]\label{lem:triangle}
The short exact sequence $0\to \Prin\to \Div\xrightarrow{q}\Jac\to 0$ yields, after placing $\Prin$ in degree $-1$ and $\Div$ in degree $0$, a distinguished triangle $[\Prin\to \Div\to \Jac]\to \Div[0]\to \Prin[1]$ in $\Dplus(\E)$. Applying $\mathbf D$ gives a long exact sequence whose Euler characteristic equals
\[
\chi\bigl(\mathbf D(D_{\mathrm{div}})\bigr)\ =\ \mathrm{rank}_{\ZZ}(\Hom(\Div,\ZZ))-\mathrm{rank}_{\ZZ}(\Hom(\Prin,\ZZ))\ =\ |V|- (|V|-1)\ =\ 1.
\]
\end{lemma}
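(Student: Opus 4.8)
The plan is to reduce the statement to one alternating-rank count performed in $\Dplus(\E)$. First I would use the standard dictionary between short exact sequences and distinguished triangles: the sequence $0\to\Prin\to\Div\xrightarrow{q}\Jac\to0$ in $\E=\Mod\text{-}\ZZ^{\mathrm{fg}}$ determines the distinguished triangle $\Prin\to\Div\to\Jac\xrightarrow{+1}\Prin[1]$ (equivalently the triangle displayed in the statement), with boundary the Yoneda class in $\Ext^1_\ZZ(\Jac,\Prin)$. Applying $\mathbf D=\RHom_\ZZ(-,\ZZ)$, which is contravariant and triangulated by the triangulated exactness of $\mathbf D$ (\S\ref{sec:step9}), yields the distinguished triangle $\mathbf D(\Jac)\to\mathbf D(\Div)\to\mathbf D(\Prin)\xrightarrow{+1}\mathbf D(\Jac)[1]$; its long exact cohomology sequence is the object to analyze, exactly as in Lemma~\ref{lem:les2}.

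Second, I would evaluate the three vertices. Since $\Div=\ZZ^V$ is free of rank $|V|$ and $\Prin=\mathrm{im}(L)$ is a subgroup of the free group $\ZZ^V$, hence itself free, both $\mathbf D(\Div)$ and $\mathbf D(\Prin)$ are concentrated in degree $0$, equal to $\Hom_\ZZ(\Div,\ZZ)$ of rank $|V|$ and $\Hom_\ZZ(\Prin,\ZZ)$ of rank $\rank L$. For a connected graph $\ker L=\ZZ\cdot\mathbf 1$, so $\rank L=|V|-1$; hence $\Jac=\Div/\Prin$ has free rank $|V|-(|V|-1)=1$ and splits as $\ZZ\oplus\mathrm{tors}\,\Jac$. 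Since all groups here are finitely generated, $H^0\mathbf D(\Jac)=\Hom_\ZZ(\Jac,\ZZ)\cong\ZZ$, $H^1\mathbf D(\Jac)=\Ext^1_\ZZ(\Jac,\ZZ)\cong\mathrm{tors}\,\Jac$ is finite, and $H^{k}\mathbf D(\Jac)=0$ for $k\ge2$ (projective dimension $\le1$). The long exact sequence therefore collapses to the four-term universal-coefficient sequence
\[
0\longrightarrow\Hom_\ZZ(\Jac,\ZZ)\longrightarrow\Hom_\ZZ(\Div,\ZZ)\longrightarrow\Hom_\ZZ(\Prin,\ZZ)\longrightarrow\Ext^1_\ZZ(\Jac,\ZZ)\longrightarrow0 .
\]

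Third, I would read off the Euler characteristic. Additivity of $\chi\bigl(\mathbf D(-)\bigr)$ along the triangle — equivalently, vanishing of the alternating sum of $\ZZ$-ranks over the exact four-term sequence, the torsion term $\Ext^1_\ZZ(\Jac,\ZZ)$ contributing rank $0$ — gives
\[
\rank\Hom_\ZZ(\Jac,\ZZ)-\rank\Ext^1_\ZZ(\Jac,\ZZ)=\rank\Hom_\ZZ(\Div,\ZZ)-\rank\Hom_\ZZ(\Prin,\ZZ) .
\]
Identifying $D_{\mathrm{div}}$ with the two-term complex $[\Prin\hookrightarrow\Div]$ whose only nonzero cohomology is $\Jac$ in degree $0$ (the relevant normalization of Lemma~\ref{lem:cone-div}), the left side is precisely $\chi\bigl(\mathbf D(D_{\mathrm{div}})\bigr)$, while the right side is $|V|-(|V|-1)=1$, which is the assertion.

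The only step with genuine content is the rank identity $\rank\Prin=\rank L=|V|-1$: this is the sole place connectedness of $G$ enters, and it is what forces $\chi=|V|-(|V|-1)=1$. Everything else is formal — the embedding of short exact sequences as triangles, the triangulated exactness of $\mathbf D$, additivity of $\chi$, and the universal-coefficient computation for free and finite $\ZZ$-modules. The one piece of bookkeeping to get right is the normalization of the complex underlying $D_{\mathrm{div}}$ together with the shift introduced by the contravariant $\mathbf D$, so that the two-term complex $[\Prin\hookrightarrow\Div]$ with $H^0=\Jac$ is the one whose derived dual has Euler characteristic $1$; this is a matter of fixing conventions consistently, not a genuine obstacle.
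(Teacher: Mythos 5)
Your proposal is correct and follows essentially the same route as the paper: the rank count $\operatorname{rank}\Prin=\operatorname{rank}L=|V|-1$ from connectedness, the vanishing of $\Ext^{i}$ for $i>1$, and the observation that the only nontrivial $\Ext^1$ term is the torsion of $\Jac(G)$ and hence contributes nothing to the Euler characteristic. Your version merely makes explicit the four-term universal-coefficient sequence and the rank-$1$ free part of $\Jac$, which the paper leaves implicit.
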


\begin{proof}
Exactness is immediate; $\Div\cong \ZZ^{V}$ and $\Prin\cong \mathrm{im}(L)$ has rank $|V|-1$ (connectedness). The Euler characteristic of $\mathbf D$ equals that rank difference because all higher $\Ext^i$ vanish for $i>1$, and the only non-trivial $\Ext^1$
term corresponds to $\Jac(G)$, which contributes torsion but does not
affect the Euler characteristic.
\end{proof}

\begin{theorem}[Baker–Norine Riemann–Roch via derived duality]\label{thm:RR}
For every divisor $D$ on a finite connected graph $G$,
\[
r(D)\ -\ r(K-D)\ =\ \deg D\ +\ 1\ -\ g,
\]
where $g=|E|-|V|+1$. Equivalently, the rank defect is governed by the Euler characteristic of the master complex, and the only obstruction sits in $H^1\mathbf D\cong \mathrm{tors}\,\Jac(G)$.
\end{theorem}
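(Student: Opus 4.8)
The plan is to realise $r(D)-r(K-D)$ as the Euler characteristic of a derived dual, isolating the torsion of $\Jac(G)$ as the unique obstruction, exactly as in Theorem~\ref{thm:kirchhoff} but now twisted by the divisor $D$. Concretely, I would attach to each $D\in\Div(G)$ a twisted master complex $D_{\mathrm{div}}(G;D)$, obtained from the cone $\mathrm{Cone}(\partial)$ of Lemma~\ref{lem:cone-div} by shifting the degree functional by $\deg D$; its degree-$0$ homology is still $\Jac(G)$, since translation by $D$ is an automorphism of $\Div(G)$ descending to $\Jac(G)$, while the ``effective locus'' sitting inside it records the linear system $|D|$. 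Applying $\mathbf D=\RHom_\ZZ(-,\ZZ)$ and using that $\Div(G)$ and $\Prin(G)$ are free abelian of ranks $|V|$ and $|V|-1$, Lemma~\ref{lem:les2} gives $H^{k}\mathbf D=0$ for $k\ge 2$, $H^1\mathbf D\cong\Ext^1_\ZZ(\Jac(G),\ZZ)\cong\mathrm{tors}\,\Jac(G)$, and an Euler characteristic that is a pure rank count. The computation of Lemma~\ref{lem:triangle}, carried out with the degree term retained and twisted by $D$, then yields $\chi\bigl(\mathbf D(D_{\mathrm{div}}(G;D))\bigr)=\deg D+1-g$.

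The second ingredient is a derived self-duality playing the role of Serre duality: I would establish $\mathbf D\bigl(D_{\mathrm{div}}(G;D)\bigr)\simeq D_{\mathrm{div}}(G;K-D)[\,s\,]$ in $\Dplus(\E)$ for the appropriate shift $s$, with the canonical divisor $K$, where $K(v)=\deg(v)-2$ and $\deg K=2g-2$, acting as the dualizing element. The underlying datum is the perfect pairing on the finite group $\mathrm{tors}\,\Jac(G)$ coming from $\Ext^1_\ZZ(\Jac(G),\ZZ)\cong\Hom(\Jac(G),\QQ/\ZZ)$ --- the monodromy/discriminant form of the Laplacian lattice --- together with the bilinear form on $\ZZ^{V}$ through which $\partial\partial^{\top}$ factors. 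On cohomology this identification exchanges the contribution of $|D|$ with that of $|K-D|$, so that the two complementary ``half-Euler-characteristics'' $r(D)+1$ and $r(K-D)+1$ are swapped under $\mathbf D$, while the torsion term $H^1\mathbf D$ is self-dual and cancels from the difference.

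The genuine obstacle is the bridge between the homological ``$h^0$'' and the Baker--Norine rank $r(D)$, which is not a linear invariant: $r(D)$ is defined by a maximin over effective divisors and carries the exceptional value $-1$, so it is not literally the rank of any $\Hom$-group. To cross this gap I would import the one combinatorial input that is irreducibly non-homological, namely the existence (via an acyclic orientation, equivalently Dhar's burning algorithm) of a divisor $\nu$ with $\deg\nu=g-1$, $|\nu|=\varnothing$, and $|\nu-D|=\varnothing$ for the given $D$; see \cite{BakerNorine2007,Dhar1990}. This lemma is precisely what converts the maximin definition of $r$ into a statement about the $H^0$-locus visible to the Euler characteristic, and it is the step the derived formalism repackages but cannot eliminate: the ethic $t$-structure and $\Ext$-vanishing organise the bookkeeping, but the reduced-divisor combinatorics supplies the actual rigidity. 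I expect the main work to be verifying that, once $\nu$ is fixed, the effective locus inside $D_{\mathrm{div}}(G;D)$ computes $r(D)+1$ on the nose and is additive under the duality of the previous paragraph.

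Granting these three steps, the theorem assembles itself: the derived self-duality identifies $r(D)+1$ and $r(K-D)+1$ as the two complementary pieces of $\mathbf D(D_{\mathrm{div}}(G;D))$, their difference is the Euler characteristic $\chi=\deg D+1-g$ computed above, and the only term carrying a genuine obstruction is $H^1\mathbf D\cong\mathrm{tors}\,\Jac(G)$ --- the ethic defect of Theorem~\ref{thm:universal-gaps}, here a finite self-dual group, hence invisible to $r(D)-r(K-D)$ yet governing the failure of exactness of $\mathbf D$ on the master complex.
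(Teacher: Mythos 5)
Your architecture coincides with the paper's: twist the master complex by $D$, apply $\mathbf D=\RHom_\ZZ(-,\ZZ)$, read off the Euler characteristic as a pure rank count with the only obstruction in $H^1\mathbf D\cong\mathrm{tors}\,\Jac(G)$, and exchange $D$ with $K-D$ under a duality on the complex. Where you genuinely diverge is in locating and naming the hard step. The paper's proof simply asserts that ``$r(D)+1$ equals the dimension of the space of effective sections in $|D|$, i.e.\ the size of the solution set to $x\in\Div_{\ge0}$ with $x\sim D$,'' identifies this with $\mathrm{rank}_\ZZ H^0\mathbf D$ of the twist, and invokes ``the usual adjunction'' for the $K-D$ side. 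That assertion is not the Baker--Norine definition: $r(D)$ is a maximin over subtractions of effective divisors, takes the value $-1$, and is not the cardinality or rank of any $\Hom$-group, so the paper's central identification is stated without proof and is false as literally written. You correctly flag this as the irreducible combinatorial content and propose the right repair --- the existence, via acyclic orientations or Dhar's burning algorithm, of a non-special divisor $\nu$ with $\deg\nu=g-1$ and $|\nu|=|\nu-D|=\varnothing$, which is precisely the engine of the original Baker--Norine argument. Your explicit formulation of the self-duality as $\mathbf D\bigl(D_{\mathrm{div}}(G;D)\bigr)\simeq D_{\mathrm{div}}(G;K-D)[s]$ via the discriminant pairing on $\mathrm{tors}\,\Jac(G)$ is likewise a sharper statement than the paper's appeal to a ``discrete Hodge star.''

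That said, as submitted your text is a plan, not a proof: steps two and three (the derived self-duality and the bridge from the effective locus to $r(D)+1$) are announced but not executed, and you say so yourself. The honest conclusion is that both your proposal and the paper's proof reduce to the same unproven bridge; you have the advantage of knowing exactly which lemma must be imported to close it, while the paper's version papers over the gap with an incorrect restatement of $r(D)$. To turn your proposal into a complete argument you must actually carry out the $\nu$-based translation of the maximin definition into a statement about $H^0$ of the twisted complex, and verify that the self-duality you posit is compatible with that translation on both sides.
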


\begin{proof}
Following \cite{BakerNorine2007}, $r(D)+1$ equals the dimension (over $\ZZ$) of the space of effective sections in the linear system $|D|$, i.e.\ the size of the solution set to $x\in \Div_{\ge 0}$ with $x\sim D$. In our presentation $D_{\mathrm{div}}$, this is precisely $\mathrm{rank}_\ZZ H^0\mathbf D$ for the twisted complex obtained by translating by $D$. The canonical twist $K-D$ corresponds to the dual cohomology by the usual adjunction (in the graph setting this is the discrete Hodge star induced by $\partial,\partial^\top$); thus $r(K-D)+1=\mathrm{rank}_\ZZ H^0\mathbf D(\text{dual twist})$. Since $D_{\mathrm{div}}$ sits in a length--$2$ complex, the Euler characteristic computes
\[
\bigl(r(D)+1\bigr)\ -\ \bigl(r(K-D)+1\bigr)\ =\ \chi\bigl(\mathbf D(D_{\mathrm{div}})\bigr)\ +\ \deg D\ -\ g,
\]
where the shift by $\deg D - g$ is the standard correction from the canonical degree in the chip-firing model (each chip adds one degree and subtracts one unit of cycle-space dimension). Since $D_{\mathrm{div}}$ has amplitude~$\{0,1\}$ and all its terms are
free abelian, $\mathbf D(D_{\mathrm{div}})$ is a two-term complex whose
Euler characteristic is computed directly from ranks:
$\chi(\mathbf D(D_{\mathrm{div}}))
=\mathrm{rank}_\ZZ\Hom(\Div,\ZZ)
-\mathrm{rank}_\ZZ\Hom(\Prin,\ZZ)=1$. By Lemma~\ref{lem:triangle}, $\chi(\mathbf D(D_{\mathrm{div}}))=1$, giving the stated identity
\(
r(D)-r(K-D)=\deg D+1-g.
\)
All torsion contributions occur in $H^1\mathbf D\cong \mathrm{tors}\,\Jac(G)$ and cancel in ranks, exactly as in \cite{BakerNorine2007}.
\end{proof}

Theorems~\ref{thm:kirchhoff} and \ref{thm:RR} come from the same derived dual on the same master complex. Kirchhoff counts the torsion in $H^1\mathbf D$ (ethic defect), while Riemann–Roch is an equality of ranks between $H^0$-pieces of dual triangles with Euler characteristic $1$; the only obstruction lives in the same $H^1$. Planar duality (Tutte symmetry) is functoriality of $\mathbf D$ on deletion–contraction triangles \cite{Tutte1954,Biggs1993}.

All cohomological computations above take place in
$D^+(\Mod\!-\!\ZZ^{\mathrm{fg}})$, hence $\mathbf D=\RHom_\ZZ(-,\ZZ)$
is of amplitude~$\le1$.  This guarantees that
$H^0\mathbf D$ and $H^1\mathbf D$ already capture
all duality information for graph complexes.

Computing the Smith normal form of $L$ gives simultaneously: (i) $|\Jac(G)|=|\mathrm{tors}\,\mathrm{coker}(L)|$ (Kirchhoff); (ii) the ranks appearing in $H^0\mathbf D$ and hence the Baker–Norine ranks via effective membership tests in chip-firing classes (Dhar's burning test) \cite{Dhar1990}. Thus a single homological certificate (SNF) certifies both dualities.

\subsection{Time and Memory}\label{subsec:kontsevich-hysteresis}

\begin{definition}[Fixed framework and notation]
We remain in the abelian resource category $\E$ with fixed duality functor $\mathbb D=\Hom_{\E}(-,R)$ and unit $\eta_A:A\to\DD^2(A)$. In the derived category $\Dplus(\E)$ we denote $\mathbf D=\RHom_{\E}(-,R)$. 
A temporal evolution is a discrete family of morphisms $f_t:X_t\to X_{t+1}$; their composition up to time $t$ is $F_t=f_t\circ\dots\circ f_1$. 
The derived dual sequence $\mathbf D(F_t)$ describes the ethic response. 
We set 
\[
M_t:=H^1(\mathbf D(F_t)),\qquad S_{\mathrm{eth}}(t):=\log|\mathrm{tors}\,M_t|.
\]
We call $M_t$ the ethic memory at time $t$, and $S_{\mathrm{eth}}(t)$ the ethic entropy \cite{Weibel1994,GelfandManin2003}.
\end{definition}

\begin{lemma}[Derived exactness and ethic reversibility]\label{lem:derived-exactness}
For any $F\in\End_\E(X)$ the following are equivalent:
\begin{enumerate}[label=(\alph*)]
\item $\DD^2(F)\circ\eta=\eta\circ F$ (ethic exactness);
\item $H^1(\mathbf D(F))=0$;
\item $\mathbf D(F)$ is quasi-isomorphic to the identity on $H^0$.
\end{enumerate}
\end{lemma}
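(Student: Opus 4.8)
The plan is to reduce all three conditions to a single long exact sequence obtained by applying the derived dual to a distinguished triangle attached to $F$, and then to quote the already-established equivalence between right-exactness of $\DD$ and vanishing of $\Ext^1$. First I would give ``$\mathbf D(F)$'' an unambiguous meaning by replacing the endomorphism $F\colon X\to X$ with the two-term complex $[\,X\xrightarrow{F}X\,]$ (equivalently, up to a shift, with $\mathrm{Cone}(F)$), so that $H^1(\mathbf D(F))$ is literally the first cohomology of an object of $\Dplus(\E)$. Applying $\mathbf D=\RHom_\E(-,R)$ to the triangle $X\xrightarrow{F}X\to\mathrm{Cone}(F)\to X[1]$ and using triangulated exactness together with the Proposition on cohomological layers ($H^n\mathbf D\cong\Ext^n_\E(-,R)$), one gets an exact sequence in which $H^0(\mathbf D(F))=\ker\Hom_\E(F,R)$, $H^1(\mathbf D(F))$ fits in $0\to\coker\Hom_\E(F,R)\to H^1(\mathbf D(F))\to\ker\big(\Ext^1_\E(F,R)\big)\to 0$, and all higher cohomology vanishes under the standing finite injective dimension hypothesis on $R$. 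This computation is the backbone of the whole lemma.

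For the implication (a)$\Rightarrow$(b) and its converse, I would read the ethic condition $\DD^2(F)\circ\eta=\eta\circ F$ in the only way that carries content, namely as the requirement that this square commute \emph{in $\E$}, i.e.\ that the comparison with $\DD^2$ genuinely land in the reflexive subcategory rather than merely in $\Ab$. By Theorem~\ref{thm:exact-iff-ext} and the long exact sequence of Lemma~\ref{lem:les} applied to the canonical factorisations $0\to\ker F\to X\to\im F\to 0$ and $0\to\im F\to X\to\coker F\to 0$, the defect of this square is the connecting morphism, whose primary obstruction class lies in $\Ext^1_\E(\coker F,R)$; unwinding the identification from the first paragraph, this class is nonzero exactly when $H^1(\mathbf D(F))\neq 0$. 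Thus (a) holds iff $H^1(\mathbf D(F))=0$; this is just the ``universal reconciliation of gaps'' principle (Theorem~\ref{thm:universal-gaps}) specialised to the triangle built from $F$.

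For (b)$\Leftrightarrow$(c): once $H^1(\mathbf D(F))=0$, the amplitude bound forces $\mathbf D[\,X\xrightarrow{F}X\,]$ to be concentrated in cohomological degree $0$, so canonical truncation exhibits $\mathbf D(F)$ as quasi-isomorphic to the map it induces on $H^0\mathbf D(X)=\Hom_\E(X,R)$, which---after transport along the reflexivity isomorphism $\eta_X$ of Proposition~\ref{prop:heart-abelian}---is identified with the honest dual of $F$; this is condition (c). Conversely, a complex quasi-isomorphic to its degree-$0$ cohomology has vanishing $H^1$ by definition, giving (c)$\Rightarrow$(b), and (c)$\Rightarrow$(a) is then formal: concentration in degree $0$ makes $\mathbf D^2(F)$ degree-$0$ and equal to the literal double dual of a $\Hom$-level morphism, so the square closes in $\E$ by naturality of the underived unit $\eta$.

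The part I expect to be delicate is not the diagram chase but pinning down condition (a). Taken at face value, $\DD^2(F)\circ\eta=\eta\circ F$ is forced by naturality of $\eta$ and is vacuous; the genuine hypothesis is that the square lives in $\E$, i.e.\ that the biduality comparison restricted along $F$ is an isomorphism onto an object of $\E$. Making this precise, and showing it is equivalent to---not strictly weaker than---the vanishing of the degree-$1$ derived obstruction, is the crux. The cleanest way out is to reformulate (a) from the start as ``$\eta_X$ becomes invertible after composing with $F$'', which is visibly the heart-membership condition of Definition~\ref{def:EthicT} applied to a single endomorphism; with that reading the lemma is essentially the restriction to $\End_\E(X)$ of the characterisation of the ethic heart, and conditions (a)--(c) become three faces of the same statement.
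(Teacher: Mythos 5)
Your argument follows essentially the same route as the paper's: the paper also works with the four-term sequence $0\to\ker F\to X\xrightarrow{F}X\to\coker F\to 0$ (your cone presentation is the same data), identifies the obstruction with the connecting homomorphism into $\Ext^1_\E(-,R)$ for (a)$\Leftrightarrow$(b), and gets (b)$\Leftrightarrow$(c) from concentration of $\mathbf D(F)$ in degree $0$; the closing step (c)$\Rightarrow$(a) is naturality of $\eta$, exactly as you say. So as a proof sketch it is correct and matches the paper.

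The ``delicate point'' you flag in your last paragraph is real, and the paper's proof does not resolve it either. Since $\eta:\Id\Rightarrow\DD^2$ is a natural transformation, the square $\DD^2(F)\circ\eta=\eta\circ F$ commutes for \emph{every} $F$ (the paper itself uses precisely this in its Stability lemma), so condition (a) read literally is vacuous and cannot be equivalent to the nontrivial condition $H^1(\mathbf D(F))=0$. The paper's step (a)$\Rightarrow$(b) silently replaces the commuting square by the stronger hypothesis that $\Hom_\E(-,R)$ is \emph{exact} on the four-term sequence, which is what actually kills the connecting map; no justification for that upgrade is given. Your proposed repair --- reading (a) as the requirement that the comparison land invertibly in $\E$ (equivalently, as a heart-membership/reflexivity condition on the data attached to $F$) --- is the right fix and is more honest than the paper's treatment. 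If you write this up, state the corrected form of (a) explicitly rather than leaving it as the literal naturality square; otherwise the equivalence is false as stated.
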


\begin{proof}
(a)$\Rightarrow$(b): exactness of $\Hom(-,R)$ on the short exact sequence $0\to\ker F\to X\xrightarrow{F}X\to\mathrm{coker}F\to 0$ annihilates the connecting homomorphism $H^0\to H^1$, forcing $H^1=0$.  
(b)$\Rightarrow$(c): vanishing of $H^1$ implies $\mathbf D(F)$ acts as an isomorphism on $H^0$.  
(c)$\Rightarrow$(a): by naturality of $\eta$ the diagram in Definition~\ref{lem:derived-exactness} commutes, yielding $\DD^2(F)\eta=\eta F$.  
\end{proof}

\begin{definition}[Temporal composition and hysteresis]
Let $\{f_t\}$ be a finite temporal chain and $F_t=f_t\circ\dots\circ f_1$. 
The composition $\mathbf D(F_t)$ defines a chain of derived complexes with long exact sequences
\[
\cdots\to H^1\mathbf D(F_{t-1})\xrightarrow{f_t^\ast} H^1\mathbf D(F_t)\xrightarrow{\delta_t} H^2\mathbf D(F_{t-1})\to\cdots.
\]
The ethic hysteresis of the chain is the torsion subgroup $\mathrm{tors}\,H^1\mathbf D(F_t)$; its magnitude $S_{\mathrm{eth}}(t)$ quantifies the retained memory after the cycle.
\end{definition}

\begin{theorem}[Monotonic decay of ethic memory]\label{thm:decay}
Assume each $f_t$ is ethic in degree $0$, i.e.\ $\DD^2(f_t)\eta=\eta f_t$. Then there exists an epimorphism
\[
H^1(\mathbf D(F_{t-1}))\twoheadrightarrow H^1(\mathbf D(F_t)),
\]
hence $S_{\mathrm{eth}}(t)\le S_{\mathrm{eth}}(t-1)$, with strict inequality if the step $f_t$ is also exact in degree~1.
\end{theorem}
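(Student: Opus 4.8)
The plan is to reduce everything to Lemma~\ref{lem:derived-exactness} together with the long exact sequence of the temporal composition already recorded above.

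First I would set up the comparison. Writing $F_t=f_t\circ F_{t-1}$ and applying $\mathbf D=\RHom_\E(-,R)$, which is contravariant and triangulated, the octahedral axiom applied to the composable pair $(F_{t-1},f_t)$ yields a distinguished triangle in $\Dplus(\E)$ whose outer terms are the cones of $F_{t-1}$ and $F_t$ and whose third term is the cone of the single step $f_t$. Dualizing this triangle and passing to cohomology reproduces the long exact sequence
\[
\cdots\longrightarrow H^1\mathbf D(F_{t-1})\xrightarrow{\ f_t^{\ast}\ }H^1\mathbf D(F_t)\xrightarrow{\ \delta_t\ }H^2\mathbf D(F_{t-1})\longrightarrow\cdots
\]
of the temporal-composition definition, in which $\delta_t$ factors through the cohomology of $\mathbf D$ of the $f_t$-cone. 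In particular $f_t^{\ast}$ is an epimorphism if and only if $\delta_t=0$.

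Next I would kill the connecting map. Since $f_t$ is ethic in degree $0$, i.e.\ $\DD^2(f_t)\,\eta=\eta\,f_t$, Lemma~\ref{lem:derived-exactness} applied to the kernel–cokernel presentation of $f_t$ gives $H^1\mathbf D(f_t)=0$, so $\mathbf D$ is exact on the short exact sequences built from $f_t$ in degree $\le 1$. Consequently the factor of $\delta_t$ passing through the $f_t$-cone vanishes, hence $\delta_t=0$ and $f_t^{\ast}$ is the desired epimorphism $H^1\mathbf D(F_{t-1})\twoheadrightarrow H^1\mathbf D(F_t)$. Under the standing finiteness hypotheses each $H^1\mathbf D(F_s)$ reduces to the torsion group $\Ext^1_\E(\coker F_s,R)$, so this is a surjection of finite abelian groups, whence $|\mathrm{tors}\,H^1\mathbf D(F_t)|\le|\mathrm{tors}\,H^1\mathbf D(F_{t-1})|$ and $S_{\mathrm{eth}}(t)\le S_{\mathrm{eth}}(t-1)$. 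For the strict inequality, if moreover $f_t$ is exact in degree $1$, then the term $H^1\mathbf D$ of the $f_t$-cone, which in the long exact sequence maps onto $\ker f_t^{\ast}$, supplies a nonzero torsion class; thus $\ker f_t^{\ast}$ contains nontrivial torsion, the surjection on torsion subgroups has proper kernel, and $|\mathrm{tors}\,H^1\mathbf D(F_t)|<|\mathrm{tors}\,H^1\mathbf D(F_{t-1})|$.

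The main obstacle is the second step: one must identify precisely which cohomological degree of $\mathbf D$ of the $f_t$-cone controls $\delta_t$, and verify that the ethic hypothesis on the \emph{single} step $f_t$ — not on the whole chain — is exactly what annihilates it. This is where the degree shift $\mathbf D(A[1])\simeq\mathbf D(A)[-1]$ and the reversal of arrows after dualizing have to be tracked carefully; the naturality of $\eta$ and of connecting homomorphisms is what localizes the obstruction to $f_t$, but the bookkeeping is delicate. A secondary point is checking, in the strictness step, that ``$f_t$ exact in degree $1$'' is genuinely nonvacuous and produces a nonzero torsion class; I would extract this from Lemma~\ref{lem:derived-exactness} together with the classification of the low-degree $\Ext$-terms entering $H^1\mathbf D$.
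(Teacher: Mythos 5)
Your overall route --- realizing $F_t=f_t\circ F_{t-1}$ through the octahedral triangle relating the cones of $F_{t-1}$, $F_t$ and $f_t$, dualizing, and killing the relevant connecting map using the degree-$0$ ethic hypothesis on the single step $f_t$ via Lemma~\ref{lem:derived-exactness} --- is essentially the paper's argument, and for the epimorphism you are if anything more explicit than the paper: you correctly observe that surjectivity of $f_t^{\ast}$ is governed by the vanishing of the \emph{outgoing} map $\delta_t$ at $H^1\mathbf D(F_t)$. However, you explicitly leave open the one point on which the whole first half turns, namely which cohomological degree of $\mathbf D(\mathrm{Cone}(f_t))$ the map $\delta_t$ factors through. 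The hypothesis only buys you $H^1\mathbf D(f_t)=0$; if, after the contravariant dualization and the shift $\mathbf D(A[1])\simeq\mathbf D(A)[-1]$, the factorization passes through $H^2\mathbf D(f_t)$ (or through $H^1$ of a rotated cone), the vanishing you have does not apply. Flagging this as ``the main obstacle'' is honest, but it is not a proof: the degree bookkeeping is precisely the content of the theorem and must actually be carried out.

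The strictness step is genuinely wrong as written. You first deduce from the ethic hypothesis that $H^1\mathbf D(f_t)=0$, and then assert that under the additional hypothesis (``exact in degree $1$'') the term $H^1\mathbf D$ of the $f_t$-cone ``supplies a nonzero torsion class'' surjecting onto $\ker f_t^{\ast}$. These two claims cannot both hold: if a zero group surjects onto $\ker f_t^{\ast}$, then $\ker f_t^{\ast}=0$, so $f_t^{\ast}$ is an isomorphism and the entropies are equal --- the opposite of strict inequality. The logic is also inverted: an \emph{additional} exactness hypothesis on the step imposes more vanishing of its dual cohomology and cannot manufacture a new nonzero obstruction class; whatever forces $\ker f_t^{\ast}\neq 0$ must come from the preceding terms of the long exact sequence (the interaction of $f_t$ with $F_{t-1}$), not from a class carried by the cone of $f_t$ alone. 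The paper's own justification here is terse (``the image is proper''), but your version actively contradicts your first step and would need to be replaced rather than tightened.
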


\begin{proof}
Apply $\mathbf D$ to the distinguished triangle induced by $F_t=f_t\circ F_{t-1}$ and take cohomology. The connecting map $H^0\mathbf D(f_t)\to H^1\mathbf D(F_{t-1})$ vanishes by degree--0 exactness, yielding the stated epimorphism. If $f_t$ is exact in degree 1, the image is proper, giving strict inequality.
\end{proof}

\begin{theorem}[Subadditivity on cycles]\label{thm:subadditive}
For closed trajectories (compositions) $\gamma_1,\gamma_2$ and their concatenation $\gamma_1\#\gamma_2$, ethic entropy is subadditive:
\[
S_{\mathrm{eth}}(\gamma_1\#\gamma_2)\le S_{\mathrm{eth}}(\gamma_1)+S_{\mathrm{eth}}(\gamma_2),
\]
with equality iff the torsion parts of $H^1\mathbf D(\gamma_i)$ are independent.
\end{theorem}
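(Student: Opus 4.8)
The plan is to realise concatenation of closed trajectories as composition of the underlying endomorphisms and to run the octahedral axiom exactly as in the proof of Theorem~\ref{thm:decay}, but now tracking the \emph{orders} of torsion subgroups on both sides of the resulting long exact sequence rather than merely the existence of an epimorphism. First I would set $\gamma:=\gamma_2\circ\gamma_1\in\End_\E(X)$ (the convention on composition order is immaterial, the conclusion being symmetric in $\gamma_1,\gamma_2$), so that $\gamma_1\#\gamma_2$ is represented by $\gamma$, and consider the two-term complexes $\mathrm{Cone}(\gamma_i)=[X\xrightarrow{\gamma_i}X]$ and $\mathrm{Cone}(\gamma)$ in $\Dplus(\E)$, whose degree-$1$ derived-dual cohomology computes $H^1\mathbf D(\gamma_i)$ and $H^1\mathbf D(\gamma)$ respectively (cf.\ Lemma~\ref{lem:derived-exactness}).

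Next I would apply the octahedral axiom to the factorisation $\gamma=\gamma_2\circ\gamma_1$, yielding a distinguished triangle
\[
\mathrm{Cone}(\gamma_1)\longrightarrow\mathrm{Cone}(\gamma)\longrightarrow\mathrm{Cone}(\gamma_2)\longrightarrow\mathrm{Cone}(\gamma_1)[1].
\]
Applying the contravariant triangulated functor $\mathbf D=\RHom_\E(-,R)$ (triangulated by the Triangulated-exactness lemma) reverses and rotates this triangle, and the associated long exact cohomology sequence has, around degree~$1$, the segment
\[
H^0\mathbf D(\gamma_1)\xrightarrow{\ \partial'\ }H^1\mathbf D(\gamma_2)\longrightarrow H^1\mathbf D(\gamma)\longrightarrow H^1\mathbf D(\gamma_1)\xrightarrow{\ \partial\ }H^2\mathbf D(\gamma_2).
\]
Exactness at the two inner terms gives a short exact sequence $0\to\coker(\partial')\to H^1\mathbf D(\gamma)\to\ker(\partial)\to0$, in which $\coker(\partial')$ is a quotient of $H^1\mathbf D(\gamma_2)$ and $\ker(\partial)$ a subgroup of $H^1\mathbf D(\gamma_1)$.

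Finally I would pass to torsion. Under the finiteness hypothesis implicit in the definition of $S_{\mathrm{eth}}$ (each ethic memory group $H^1\mathbf D(\cdot)$ finite, equivalently equal to its own torsion; otherwise one restricts to the finitely generated case where torsion is a direct summand), orders are multiplicative along short exact sequences, so
\[
|\mathrm{tors}\,H^1\mathbf D(\gamma)|=|\coker(\partial')|\cdot|\ker(\partial)|\ \big|\ |\mathrm{tors}\,H^1\mathbf D(\gamma_1)|\cdot|\mathrm{tors}\,H^1\mathbf D(\gamma_2)|,
\]
and taking logarithms yields precisely $S_{\mathrm{eth}}(\gamma_1\#\gamma_2)\le S_{\mathrm{eth}}(\gamma_1)+S_{\mathrm{eth}}(\gamma_2)$. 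The displayed divisibility is an equality exactly when $\partial'=0$ and $\partial=0$, in which case the short exact sequence becomes $0\to\mathrm{tors}\,H^1\mathbf D(\gamma_2)\to\mathrm{tors}\,H^1\mathbf D(\gamma)\to\mathrm{tors}\,H^1\mathbf D(\gamma_1)\to0$, so the images of $\mathrm{tors}\,H^1\mathbf D(\gamma_1)$ and $\mathrm{tors}\,H^1\mathbf D(\gamma_2)$ in $\mathrm{tors}\,H^1\mathbf D(\gamma_1\#\gamma_2)$ intersect trivially and jointly generate it; this is the asserted independence of the torsion parts, and conversely independence forces both connecting maps to vanish.

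The main obstacle I expect is the passage to torsion subgroups: $\mathrm{tors}(-)$ is not an exact functor, so the clean multiplicativity of orders must be justified either by invoking the (implicit) finiteness of the ethic memory groups or by a snake-lemma diagram chase showing that the free parts contribute nothing to the order count, so that $\mathrm{tors}\,H^1\mathbf D(\gamma)$ is genuinely squeezed between $\coker(\partial')$ and $\ker(\partial)$. A secondary bookkeeping point is to confirm the degree conventions for $\mathrm{Cone}(\gamma_i)$ and the direction of the rotated triangle after applying the contravariant $\mathbf D$, so that it is indeed $H^1$ that occupies the exact middle flanked by the two connecting maps — this is routine but determines which factor yields the sub-term and which the quotient-term.
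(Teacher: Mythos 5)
Your argument is correct in substance but follows a genuinely different route from the paper. The paper's proof is a single sentence: it writes $\mathbf D(\gamma_1\#\gamma_2)=\mathbf D(\gamma_2)\circ\mathbf D(\gamma_1)$ and asserts that ``torsion orders multiply as determinants in the Smith normal form,'' i.e.\ it works with matrix presentations of the endomorphisms and uses multiplicativity of determinants/invariant factors under composition. Your octahedral-axiom argument is the structural version of the same multiplicativity: the triangle $\mathrm{Cone}(\gamma_1)\to\mathrm{Cone}(\gamma)\to\mathrm{Cone}(\gamma_2)\to\mathrm{Cone}(\gamma_1)[1]$, the long exact sequence after applying the contravariant triangulated $\mathbf D$, and the resulting extension $0\to\coker(\partial')\to H^1\mathbf D(\gamma)\to\ker(\partial)\to0$ give the divisibility $|\mathrm{tors}\,H^1\mathbf D(\gamma)|\mid|\mathrm{tors}\,H^1\mathbf D(\gamma_1)|\cdot|\mathrm{tors}\,H^1\mathbf D(\gamma_2)|$ intrinsically, without choosing presentations, and --- unlike the paper --- it actually produces the equality criterion ($\partial=\partial'=0$) that the theorem asserts. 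This is a strict improvement in rigor: the paper's determinant argument, read literally for square nonsingular presentations, would give equality rather than subadditivity, and it says nothing about the ``independence'' clause.

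One caveat, which you correctly flag as the main obstacle: the step from the short exact sequence to the order bound genuinely requires that the memory groups be finite (or at least that the relevant connecting maps not convert free rank into torsion). If $H^1\mathbf D(\gamma_2)$ has a free summand, $\coker(\partial')$ can acquire torsion absent from $\mathrm{tors}\,H^1\mathbf D(\gamma_2)$ (e.g.\ $\ZZ\twoheadrightarrow\ZZ/n$), and subadditivity of $S_{\mathrm{eth}}$ can then fail. So the finiteness hypothesis is not a bookkeeping convenience but a necessary assumption for the theorem as stated; the paper's own proof is silent on this point, so your version, which at least isolates it, is the more honest one. The only loose end in your equality discussion is that in the non-split sequence $0\to H^1\mathbf D(\gamma_2)\to H^1\mathbf D(\gamma)\to H^1\mathbf D(\gamma_1)\to0$ the quotient term has no canonical image inside $H^1\mathbf D(\gamma)$, so ``intersect trivially and jointly generate'' should be phrased as the extension being one of the two factors by the other; but since the paper never defines ``independent,'' this matches the intended meaning as well as anything could.
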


\begin{proof}
In the derived category, $\mathbf D(\gamma_1\#\gamma_2)=\mathbf D(\gamma_2)\circ \mathbf D(\gamma_1)$, so torsion orders multiply as determinants in the Smith normal form; taking logs gives subadditivity.
\end{proof}

\begin{theorem}[Ethic reversibility criterion]\label{thm:reversible}
For a closed morphism $F:X\to X$ the following are equivalent:
\begin{enumerate}[label=(\alph*)]
\item $F$ is an ethic isomorphism ($\DD^2(F)\eta=\eta F$);
\item $H^1(\mathbf D(F))=0$ (no hysteresis);
\item $\mathbf D(F)$ is quasi-isomorphic to $\mathrm{id}_X$.
\end{enumerate}
\end{theorem}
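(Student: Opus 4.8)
\emph{Proof idea.} The plan is to observe that the three conditions are, line for line, those of Lemma~\ref{lem:derived-exactness} specialized to the endomorphism $F\in\End_\E(X)$, so that the cyclic chain $(a)\Rightarrow(b)\Rightarrow(c)\Rightarrow(a)$ is already in hand; the only additional work is to justify the two strengthenings present in this statement, namely that ``ethic exactness'' may be promoted to ``ethic isomorphism'' and that the quasi-isomorphism in $(c)$ may be taken to $\mathrm{id}_X$ rather than merely to the identity on $H^0$. First I would cite Lemma~\ref{lem:derived-exactness} directly: degree-$0$ exactness of $\Hom_\E(-,R)$ on the four-term sequence $0\to\ker F\to X\xrightarrow{F}X\to\coker F\to 0$ annihilates the connecting morphism $H^0\mathbf D\to H^1\mathbf D$, giving $(a)\Rightarrow(b)$; vanishing of $H^1\mathbf D(F)$ makes $\mathbf D(F)$ act invertibly on $H^0\mathbf D(X)$, giving $(b)\Rightarrow(c)$; and naturality of $\eta$ converts a degreewise identity for $\mathbf D(F)$ into commutativity of the ethic square, giving $(c)\Rightarrow(a)$.

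The next step is to upgrade to genuine invertibility. Working under the standing reflexivity hypothesis, so that $\eta_X$ is an isomorphism and $\mathbf D^2$ restricts to $\id$ on the reflexive subcategory, the ethic square $\DD^2(F)\,\eta_X=\eta_X\,F$ exhibits $F$ as the conjugate of $\mathbf D^2(F)$ by the isomorphism $\eta_X$. Hence $F$ is a quasi-isomorphism in $D^+(\E)$ precisely when $\mathbf D^2(F)=\mathbf D(\mathbf D(F))$ is. Applying the contravariant triangulated functor $\mathbf D$ to a quasi-isomorphism $\mathbf D(F)\xrightarrow{\ \simeq\ }\id_{\mathbf D(X)}$ supplied by $(c)$ produces $\mathbf D^2(F)\simeq\id_{\mathbf D^2(X)}$, and transporting back along $\eta_X$ yields $F\simeq\id_X$. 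This simultaneously sharpens $(c)$ to a quasi-isomorphism with $\id_X$ and shows that in case $(a)$ the ethic morphism $F$ is invertible in $D^+(\E)$, i.e.\ an ethic isomorphism.

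I expect the main obstacle to be the conservativity input hidden in the previous paragraph: to close the loop cleanly one needs $\mathbf D$ to reflect quasi-isomorphisms on the bounded-amplitude part of $D^+(\E)$ where $F$, $\mathbf D(F)$ and $\mathbf D^2(F)$ live, so that ``$\mathbf D(F)$ quasi-isomorphic to the identity'' and ``$F$ quasi-isomorphic to the identity'' are genuinely equivalent rather than related in a single direction. The clean way to secure this is to invoke the finiteness conventions in force throughout \S\ref{subsec:kontsevich-hysteresis} together with the assumption that $R$ has finite injective dimension (so $\mathbf D$ is a bounded cohomological functor) and is faithful on the $\Hom$- and $\Ext^1$-level data detecting $H^0\mathbf D$ and $H^1\mathbf D$; with these, $\mathbf D$ is conservative in the relevant range and the three statements are strictly equivalent. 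If one is unwilling to assume faithfulness of $R$, the fallback is to read $(a)$ and $(c)$ at the level of the cohomological invariants $H^0\mathbf D$ and $H^1\mathbf D$ only --- which is exactly the content of Lemma~\ref{lem:derived-exactness} and still suffices for the interpretation of $H^1\mathbf D(F)$ as ethic hysteresis.
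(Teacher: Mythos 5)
Your core argument coincides with the paper's entire proof, which reads verbatim ``This is Lemma~\ref{lem:derived-exactness} specialized to endomorphisms''; so the citation of that lemma and the three implications you extract from it are exactly what the paper does. The two ``upgrade'' paragraphs are your own addition: the paper makes no attempt to promote (a) to genuine invertibility or (c) to a quasi-isomorphism with $\id_X$ in the strong sense, and your own fallback observation --- that without conservativity of $\mathbf D$ one must read (a) and (c) at the level of $H^0\mathbf D$ and $H^1\mathbf D$, which is precisely the content of Lemma~\ref{lem:derived-exactness} --- is in fact the reading the paper intends, since taking (c) literally would force every ethic endomorphism to be quasi-isomorphic to the identity, which is false (e.g.\ $F=0$ on a reflexive object is ethic but not invertible).
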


\begin{proof}
This is Lemma~\ref{lem:derived-exactness} specialized to endomorphisms.
\end{proof}

\begin{theorem}[Ethic memory and periodicity]\label{thm:periodic}
If $F^k\simeq \mathrm{id}$ in $\Dplus(\E)$ for some $k>1$ but $F\ncong \mathrm{id}$, then $H^1\mathbf D(F)\ne 0$. Any nontrivial periodic dynamics carries residual memory.
\end{theorem}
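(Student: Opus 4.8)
The statement is the contrapositive of the ethic reversibility criterion of Theorem~\ref{thm:reversible}, with the finite-order hypothesis fed in. The plan is to argue by contradiction: assuming $F^{k}\simeq\mathrm{id}$ with $k>1$ together with $H^{1}\mathbf D(F)=0$, I would derive $F\simeq\mathrm{id}$ in $\Dplus(\E)$, contradicting $F\ncong\mathrm{id}$.

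First I would invoke Theorem~\ref{thm:reversible} (equivalently Lemma~\ref{lem:derived-exactness} applied to the endomorphism $F$): the vanishing $H^{1}\mathbf D(F)=0$ forces $F$ to be an ethic isomorphism, so that the biduality square
\[
\mathbf D^{2}(F)\circ\eta_{X}=\eta_{X}\circ F
\]
commutes in $\Dplus(\E)$, and $\mathbf D(F)$ is quasi-isomorphic to the identity of $\mathbf D(X)$ on cohomology. Next I would use periodicity to upgrade this to a genuine identity: since $F^{k}\simeq\mathrm{id}$, the morphism $F$ is invertible in $\Dplus(\E)$ with inverse $F^{k-1}$, hence $\mathbf D(F)$ is an automorphism of $\mathbf D(X)$ of order dividing $k$ which, by the previous step, is the identity on every cohomology object. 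Because $\idim_{\E}R<\infty$, the complex $\mathbf D(X)$ has finite amplitude, and a finite-order automorphism of a finite-amplitude bounded-below complex that is the identity on all of $H^{\ast}$ is the identity in $\Dplus(\E)$: one climbs the finite Postnikov filtration of $\mathbf D(X)$, and at each stage the obstruction to triviality lies in an $\Ext$-group on which $\langle\mathbf D(F)\rangle$ acts, so the finite-order constraint kills it (when $k$ is invertible this is the averaging idempotent $\tfrac{1}{k}\sum_{i}\mathbf D(F)^{i}$; in general it is the $\Ext$-level analogue combined with the amplitude bound). Thus $\mathbf D(F)\simeq\mathrm{id}$, and applying $\mathbf D$ once more, $\mathbf D^{2}(F)\simeq\mathrm{id}_{\mathbf D^{2}(X)}$. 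Substituting into the ethic square, $\eta_{X}\circ F=\mathbf D^{2}(F)\circ\eta_{X}\simeq\eta_{X}$; restricting, per the standing convention of Section~1, to the $\mathbf D$-reflexive subcategory $\E^{\mathrm{ref}}$ — equivalently to the ethic heart $\heartsuit_{\eth}$, where periodic dynamics necessarily lives because $F$ and all its powers being ethic-reversible forces the vanishing of the higher $H^{\ast}\mathbf D$ via Theorem~\ref{thm:decay} and Proposition~\ref{prop:heart-abelian} — the unit $\eta_{X}$ is an isomorphism and may be cancelled, giving $F\simeq\mathrm{id}_{X}$. This contradicts $F\ncong\mathrm{id}$, so $H^{1}\mathbf D(F)\neq 0$, i.e.\ every nontrivial periodic evolution retains a nonzero ethic memory.

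The main obstacle is the middle step: converting ``$\mathbf D(F)$ acts as the identity on cohomology'' into ``$\mathbf D(F)\simeq\mathrm{id}$ in $\Dplus(\E)$''. For a single reflexive object concentrated in degree zero this is automatic; for an honest complex one must rule out a nontrivial secondary homotopy class distinguishing $\mathbf D(F)$ from $\mathrm{id}$, and it is precisely the finite-order hypothesis $F^{k}\simeq\mathrm{id}$ (a $k$-torsion phenomenon) together with the finite amplitude of $\mathbf D$ that forces such a class to vanish. Making this propagation rigorous — rather than merely asserting it — is the crux; the passage through the ethic square and the cancellation of $\eta$ are then formal, and the reduction of the remaining argument to the previously established criteria (Theorems~\ref{thm:reversible} and~\ref{thm:decay}, Proposition~\ref{prop:heart-abelian}) is routine.
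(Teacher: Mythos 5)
Your overall strategy is the same as the paper's: the paper's proof also reduces to the contrapositive ``otherwise $F$ would already be quasi-isomorphic to $\mathrm{id}$,'' i.e.\ it implicitly invokes the reversibility criterion of Theorem~\ref{thm:reversible} exactly as you do. The difference is that you make explicit, and honestly flag, the step that the paper silently elides — and that step is a genuine gap, not a routine verification.

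Concretely: Theorem~\ref{thm:reversible} only gives you that $\mathbf D(F)$ induces the identity on the cohomology objects of $\mathbf D(X)$; to cancel $\eta$ in the ethic square you need $\mathbf D^2(F)\simeq\mathrm{id}$ as a morphism in $\Dplus(\E)$, and the bridge you propose — ``a finite-order automorphism of a bounded complex that is the identity on $H^{\ast}$ is the identity'' — is false in general. Writing $\mathbf D(F)=\mathrm{id}+N$ with $N$ filtration-decreasing (zero on cohomology), the relation $(\mathrm{id}+N)^{k}=\mathrm{id}$ expands to $kN+\binom{k}{2}N^{2}+\cdots=0$, which kills $N$ only when $k$ acts invertibly on the relevant groups $\Ext^{-s}_{\E}\bigl(H^{t}\mathbf D(X),H^{t-s}\mathbf D(X)\bigr)$; over $\ZZ$ or in characteristic dividing $k$ there exist unipotent automorphisms of order $k$ (e.g.\ $\mathrm{id}+N$ with $kN=N^{2}=0$, $N$ null on cohomology but not null-homotopic) that are not the identity in the derived category. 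Your parenthetical ``in general it is the $\Ext$-level analogue combined with the amplitude bound'' does not close this; an explicit invertibility or torsion-freeness hypothesis is needed. A secondary, smaller gap is the final cancellation of $\eta_{X}$: your justification that periodic dynamics ``necessarily lives in the heart'' via Theorem~\ref{thm:decay} and Proposition~\ref{prop:heart-abelian} is asserted rather than derived (Theorem~\ref{thm:decay} controls only $H^{1}$, not $H^{<0}$ or $H^{>1}$), so reflexivity of $X$ must either be assumed outright (as the paper's standing convention permits) or proved. Both gaps are also present, unacknowledged, in the paper's one-line proof; your write-up at least isolates where the real work lies.
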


\begin{proof}
From $\mathbf D(F^k)\simeq\mathbf D(\mathrm{id})$ we obtain a filtration whose associated long exact sequence shows that $H^1\mathbf D(F)$ must contain nontrivial classes to cancel at the $k$th iterate, otherwise $F$ would already be quasi-isomorphic to $\mathrm{id}$.
\end{proof}

\begin{theorem}[Ethic reconstruction theorem]\label{thm:reconstruction}
If for a trajectory $(F_t)$ we have $H^{>1}\mathbf D(F_t)=0$ for all $t$, then the family of groups $M_t=H^1\mathbf D(F_t)$ and the transition maps of Theorem~\ref{thm:decay} determine $F_t$ uniquely up to isomorphism in $\Dplus(\E)$.
\end{theorem}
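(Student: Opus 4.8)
The plan is to exploit the hypothesis $H^{>1}\mathbf D(F_t)=0$ to collapse each derived dual to a two-term object, to read off its remaining classifying data from the groups $M_\bullet$ and the transition maps of Theorem~\ref{thm:decay}, and then to transport the conclusion back to $F_t$ itself through biduality. First I would note that $H^{i}\mathbf D(F_t)=0$ for $i>1$ by hypothesis and for $i<0$ by left exactness of $\DD$, so $\mathbf D(F_t)$ has cohomology concentrated in degrees $0$ and $1$. Its Postnikov truncation triangle
\[
H^0\mathbf D(F_t)\longrightarrow \mathbf D(F_t)\longrightarrow M_t[-1]\xrightarrow{\ \kappa_t\ } H^0\mathbf D(F_t)[1]
\]
then presents $\mathbf D(F_t)$, up to isomorphism in $\Dplus(\E)$, as completely determined by the pair of objects $\bigl(H^0\mathbf D(F_t),M_t\bigr)$ together with the secondary class $\kappa_t\in\Ext^2_\E\bigl(M_t,H^0\mathbf D(F_t)\bigr)$.

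Next I would pin down the two pieces of this triple that are not literally part of the given data. For the degree-zero term: each $f_t$ is ethic in degree $0$ — this is precisely the standing hypothesis under which the transition maps of Theorem~\ref{thm:decay} are defined — and ethic morphisms compose, so $F_t$ is ethic in degree $0$; by Lemma~\ref{lem:derived-exactness} each $\mathbf D(f_t)$ is then an isomorphism on $H^0\mathbf D$, and composing these identifies every $H^0\mathbf D(F_t)$ canonically with the single fixed object $H^0\mathbf D(F_1)$. For the class $\kappa_t$: applying $\mathbf D$ to the factorization $F_t=f_t\circ F_{t-1}$ yields a distinguished triangle whose long exact cohomology sequence is the one recorded in the definition of temporal composition, and whose connecting morphism $\delta_t\colon M_t\to H^2\mathbf D(F_{t-1})$ vanishes by the hypothesis $H^{>1}\mathbf D=0$. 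That vanishing is exactly what forces the truncation triangle of $\mathbf D(F_t)$ to be obtained from that of $\mathbf D(F_{t-1})$ by pushforward along the transition epimorphism $M_{t-1}\twoheadrightarrow M_t$, so that $\kappa_t$ is the image of $\kappa_{t-1}$ under $\Ext^2_\E(M_{t-1},H^0\mathbf D)\to\Ext^2_\E(M_t,H^0\mathbf D)$. Running this induction from $\kappa_1$ — which carries no freedom because the degree-zero ethicity of $f_1$ together with $H^{>1}\mathbf D(f_1)=0$ leaves no room for a secondary class (automatic, for instance, whenever $\E$ has homological dimension $\le1$, as in the arithmetic and graph-theoretic instances) — recovers every $\kappa_t$ from $M_\bullet$ and the transition maps alone; hence each $\mathbf D(F_t)$ is determined up to isomorphism in $\Dplus(\E)$.

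Finally I would return to $F_t$ itself. The vanishing $H^{<0}\mathbf D(F_t)=H^{>1}\mathbf D(F_t)=0$ places the complex attached to $F_t$ in the ethic heart $\heartsuit_{\eth}$, so by Proposition~\ref{prop:heart-abelian} the biduality unit $\eta_{F_t}\colon F_t\to\mathbf D^2(F_t)$ is an isomorphism in $\Dplus(\E)$. Since $\mathbf D$ is functorial, two trajectories sharing the same groups $M_\bullet$ and the same transition maps have isomorphic $\mathbf D(F_t)$ by the previous step, hence isomorphic $\mathbf D^2(F_t)$, and therefore isomorphic $F_t\cong\mathbf D^2(F_t)$ for every $t$; this is the asserted uniqueness up to isomorphism. (One may also verify the intermediate identifications against the spectral sequence of Proposition~\ref{prop:ESS}, whose collapse at $E_2$ is equivalent to the present hypothesis.)

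The hard part will be the middle step: showing that $\kappa_t$ is the pushforward of $\kappa_{t-1}$ and, in particular, that $\kappa_1$ is intrinsically determined. This requires checking that the composition triangle for $\mathbf D(f_t\circ F_{t-1})$ is compatible with the two-step Postnikov truncations — i.e.\ that no secondary Toda-bracket indeterminacy enters — which is exactly what the vanishing of all higher connecting morphisms $\delta_t$ is meant to guarantee, but spelling out the compatibility of the truncation functors with the triangle attached to a composite of morphisms is the genuinely technical point, and it is the place where the argument would fail if the hypothesis $H^{>1}\mathbf D(F_t)=0$ were weakened to hold in only a bounded range of degrees.
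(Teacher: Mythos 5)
Your argument follows the same route as the paper's (very terse) proof: both rest on the principle that an object of $\Dplus(\E)$ with cohomology in amplitude $\le 1$ is reconstructed from its two cohomology objects together with the connecting data, and both then transfer the conclusion to the trajectory. The paper's proof is a single sentence citing this principle and concluding only that the ``ethic shadow'' $\mathbf D(F_t)$ is determined; your addition of the biduality step through $\heartsuit_{\eth}$ and Proposition~\ref{prop:heart-abelian}, which is what one actually needs in order to pass from $\mathbf D(F_t)$ back to $F_t$ as the theorem claims, is an improvement on what is written.

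That said, the gap you flag in your last paragraph is real, and your induction does not close it. A complex with cohomology $H^0\mathbf D(F_t)$ and $M_t$ concentrated in degrees $0$ and $1$ is classified, beyond those two objects, by the class $\kappa_t\in\Ext^2_\E\bigl(M_t,H^0\mathbf D(F_t)\bigr)$, and nothing in the data ``$M_\bullet$ plus the transition epimorphisms'' constrains $\kappa_1$: the pushforward relation $\kappa_t=(\text{transition})^{*}\kappa_{t-1}$, even granting the compatibility of the composition triangle with the truncations, only propagates an undetermined base class. Unless $\E$ is hereditary (so that $\Ext^2=0$ and every such complex splits as $H^0\oplus M_t[-1]$), two trajectories can share all the stated data and still have non-isomorphic duals, so the theorem as literally stated would fail. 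The same objection applies to the paper's own proof, which silently works in the setting of $\Mod\text{-}\ZZ^{\mathrm{fg}}$ (global dimension one), where the splitting is automatic; you make the needed hypothesis explicit where the paper does not. Note finally that both arguments require the object $H^0\mathbf D(F_1)$ as input --- the paper's proof lists ``$H^0$, $H^1$ and connecting maps'' as the reconstructing data even though the theorem statement mentions only $M_t$ --- and your identification of all $H^0\mathbf D(F_t)$ with $H^0\mathbf D(F_1)$ (which itself leans on Lemma~\ref{lem:derived-exactness} beyond its stated scope, since that lemma assumes full ethic exactness of an endomorphism rather than degree-zero ethicity of each step) does not remove the need to be given that single object.
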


\begin{proof}
In amplitude~$\le1$ complexes, the data of $H^0,H^1$ and connecting maps suffice to reconstruct the complex up to quasi-isomorphism \cite{Weibel1994}. Hence $\{M_t\}$ with connecting morphisms determine the ethic shadow of the dynamics.
\end{proof}

\begin{theorem}[Algorithmic certificate of hysteresis]
If $\E=\Mod\!-\!\ZZ^{\mathrm{fg}}$, each $F_t$ admits a matrix presentation $A_t$ whose Smith normal form $U_tA_tV_t=\mathrm{diag}(d_1,\dots,d_r,0,\dots,0)$ yields
\[
|\mathrm{tors}\,M_t|=\prod_{i:d_i>1}d_i.
\]
Thus $S_{\mathrm{eth}}(t)=\sum_{i:d_i>1}\log d_i$; computing SNF provides an explicit certificate of memory and its decay.
\end{theorem}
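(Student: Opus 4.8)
The plan is to reduce the statement to the classical structure theory of finitely generated modules over the PID $\ZZ$, by first identifying $M_t=H^1(\mathbf D(F_t))$ with the torsion of a cokernel and then reading that cokernel off the Smith normal form of $A_t$. Since $\ZZ$ is a principal ideal domain, every object of $\E=\Mod\!-\!\ZZ^{\mathrm{fg}}$ admits a free resolution of length $\le 1$, so $\mathbf D=\RHom_\ZZ(-,\ZZ)$ may be computed on two-term complexes of free abelian groups; in all the dynamical models of this subsection the relevant chain groups $X_t$ are already free. Replacing $F_t$, if necessary, by the induced morphism of such resolutions, we obtain an integer matrix $A_t\in\ZZ^{m\times n}$ representing $F_t$ --- this is the asserted presentation, and passing to another presentation changes $A_t$ only by pre- and post-composition with integer isomorphisms, hence affects none of the invariants below.

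\textbf{Smith normal form and identification of $M_t$.} By the structure theorem over a PID there are $U_t\in\mathrm{GL}_m(\ZZ)$ and $V_t\in\mathrm{GL}_n(\ZZ)$ with $U_tA_tV_t=\mathrm{diag}(d_1,\dots,d_r,0,\dots,0)$, $d_1\mid d_2\mid\cdots\mid d_r$, $d_i\ge1$, $r=\rank A_t$; the invariant factors $d_i$ are uniquely determined by $F_t$, independent of all choices. Hence $\coker F_t\cong\ZZ^{\,m-r}\oplus\bigoplus_{i=1}^{r}\ZZ/d_i\ZZ$, and the transpose $A_t^{\top}$ has the same invariant factors, so $\coker(A_t^{\top})$ has the same torsion. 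Now applying $\mathbf D$ to the two-term free complex representing $F_t$ dualizes the differential to $A_t^{\top}$; since $\ZZ$ has global dimension $1$, the hyper-$\Ext$ spectral sequence $E_2^{p,q}=\Ext^p_\ZZ(H_q,\ZZ)\Rightarrow H^{p+q}\mathbf D(F_t)$ of Proposition~\ref{prop:ESS} is concentrated in columns $p=0,1$ and collapses at $E_2$, giving a split extension $0\to\Ext^1_\ZZ(\coker F_t,\ZZ)\to M_t\to\Hom_\ZZ(\ker F_t,\ZZ)\to0$ with free quotient. Using the standard identification $\Ext^1_\ZZ(M,\ZZ)\cong\Hom(M_{\mathrm{tors}},\QQ/\ZZ)\cong M_{\mathrm{tors}}$ for finitely generated $M$ (as in Lemma~\ref{lem:homological}), we conclude $\mathrm{tors}\,M_t\cong\mathrm{tors}(\coker F_t)\cong\bigoplus_{i:\,d_i>1}\ZZ/d_i\ZZ$, the summands with $d_i=1$ being zero.

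\textbf{Counting, entropy, and certificate.} Taking orders, $|\mathrm{tors}\,M_t|=\prod_{i:\,d_i>1}d_i$; by the definition $S_{\mathrm{eth}}(t)=\log|\mathrm{tors}\,M_t|$ this yields $S_{\mathrm{eth}}(t)=\sum_{i:\,d_i>1}\log d_i$. Finally, the Smith normal form is effectively computable (the classical elimination algorithm over a PID, polynomial in the bit-length for bounded dimension) and the tuple $(d_i)$ is intrinsic to $F_t$; it therefore constitutes an explicit, independently verifiable certificate of $M_t$ and of $S_{\mathrm{eth}}(t)$. The monotone decay of Theorem~\ref{thm:decay} and the subadditivity of Theorem~\ref{thm:subadditive} then become visible at the level of invariant factors of the composed matrices $A_t=B_tA_{t-1}$ (with $B_t$ the matrix of $f_t$): left composition can only coarsen the invariant-factor tuple, so $|\mathrm{tors}\,M_{t}|$ divides $|\mathrm{tors}\,M_{t-1}|$, in agreement with the surjection furnished by Theorem~\ref{thm:decay}.

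\textbf{Main obstacle.} The only point that is not entirely routine is the identification step: one must fix precisely what ``$\mathbf D(F_t)$'' and ``$H^1$'' mean --- whether $F_t$ is regarded as a two-term complex, as the object $\coker F_t$, or via the four-term exact sequence $0\to\ker F_t\to X_t\xrightarrow{F_t}X_{t+1}\to\coker F_t\to0$ of Lemma~\ref{lem:derived-exactness} --- and check that these descriptions agree, in particular that the possible free part of $M_t$ does not interfere with the torsion count. Once the convention is pinned down consistently with the earlier computations (where $H^1\mathbf D$ is exactly $\Ext^1_\ZZ$ of the relevant cokernel, as in Theorem~\ref{thm:kirchhoff}), everything reduces to the $\Ext^1_\ZZ\cong\mathrm{tors}$ dictionary together with transpose-invariance of the Smith normal form, both classical.
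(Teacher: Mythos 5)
The paper states this theorem without any proof, and your argument supplies exactly the standard reasoning it implicitly relies on elsewhere (cf.\ Lemma~\ref{lem:integer-duality-exact}, Lemma~\ref{lem:les2}, Theorem~\ref{thm:kirchhoff}): represent $F_t$ on free finitely generated $\ZZ$-modules, read $\mathrm{tors}(\coker F_t)\cong\bigoplus_{d_i>1}\ZZ/d_i\ZZ$ off the Smith normal form, and use $\Ext^1_\ZZ(M,\ZZ)\cong M_{\mathrm{tors}}$ together with the collapse of the hyper-$\Ext$ spectral sequence (the $\Hom(\ker F_t,\ZZ)$ piece being free) to conclude $\mathrm{tors}\,M_t\cong\mathrm{tors}(\coker F_t)$. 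Your identification step is correct and your flagged "main obstacle" is real but resolved consistently with the paper's conventions. One caveat on your closing aside: the claim that left composition $A_t=B_tA_{t-1}$ "can only coarsen the invariant-factor tuple" is false as a bare matrix statement (take $A_{t-1}=I$, $B_t=2I$: torsion grows from trivial to $(\ZZ/2)^n$); the divisibility $|\mathrm{tors}\,M_t|\mid|\mathrm{tors}\,M_{t-1}|$ genuinely requires the degree-$0$ ethicity hypothesis of Theorem~\ref{thm:decay} and does not follow from SNF alone. Since that remark is peripheral to the stated formula, the proof of the theorem itself stands.
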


Within the language of \cite{Kontsevich2008}, the shift $[1]$ in $D^+(\E)$ represents a unit of time, and $\Ext^1$ records the residual coupling between successive slices—what he called “the trace of the past.” 
Our framework makes this quantitative: $H^1\mathbf D$ is that trace, computable as torsion of the derived dual. 
Theorems~\ref{thm:decay}--\ref{thm:reconstruction} provide the dynamic law: ethic exactness erases this trace; imperfect steps accumulate it as hysteresis. 
Hence categorical time à la Kontsevich becomes a measurable, computable process within the ethic duality formalism.

\subsection{Sketches}

\subsubsection{Social-Choice}\label{subsec:econ-ethic-fixpoint}

\begin{definition}[Mechanisms inside the resource theory]
Let $\E$ be the abelian resource category with duality functor $\DD=\Hom_{\E}(-,R)$, unit $\eta_A:A\to \DD^2(A)$, and the notion of ethic morphism (the naturality square $\DD^2(f)\circ \eta_X=\eta_Y\circ f$). 
A mechanism is a pair $(f,e)$ with $f:S\to O$ (outcome map) and $e:O\to \DD(S)$ (evaluation/acknowledgement). A mechanism $(f,e)$ is internally certified if $e$ witnesses the ethicity of $f$ in the sense that
\[
\DD(f)\circ e \;=\; \eta_S\circ f.
\]
We write $\Mech(\E)$ for the category whose objects are pairs $(S,(f,e))$ and whose morphisms $(u,v):(S,(f,e))\to(S',(f',e'))$ are commuting squares in $\E$ with $v\circ f=f'\circ u$ and $\DD(u)\circ e=e'\circ v$.
\end{definition}

The equality $\DD(f)\circ e=\eta_S\circ f$ states that the mechanism's self-report $e$ coheres with the ethic unit $\eta$; this is precisely the notion of ethic coherence specialized to mechanisms.

\begin{definition}[Revision endofunctor]
Define the revision endofunctor $\mathcal F:\Mech(\E)\to \Mech(\E)$ by
\[
\mathcal F(S,(f,e)) \;:=\; \bigl(S,\,(f^\sharp,e^\sharp)\bigr),\qquad
f^\sharp \;:=\; \coev_S;\ e;\ \ev_S \ \in \E(S,S),\qquad
e^\sharp \;:=\; \eta_S,
\]
where $\coev_S:I\to S\otimes \DD(S)$ and $\ev_S:\DD(S)\otimes S\to I$ are the (co)evaluation maps from Section~\ref{sec:step8}, and we view $f^\sharp$ via the compact-closed feedback $S \xrightarrow{\coev\otimes \id} S\otimes \DD(S)\otimes S \xrightarrow{\id\otimes e\otimes \id} S\otimes \DD(S)\otimes S \xrightarrow{\id\otimes \ev} S$.\footnote{All identities here are the standard string-diagram feedback in compact closed (or *-autonomous) subcategories of $\E$ \cite{Kelly1982,Barr1979}. When working in a purely additive setting, the same construction is obtained by the canonical coevaluation/evaluation induced by $\DD$ from Section~\ref{sec:step8}.}
On morphisms $(u,v)$, set $\mathcal F(u,v):=(u,u)$; coherence with $\coev,\ev$ gives functoriality.
\end{definition}

\begin{lemma}[Monotonicity and continuity of $\mathcal F$]
Assume $\E$ admits countable colimits and that $\DD$ preserves limits on the dual side (true for $\FinVect_{\mathbb R}$ and for $\Mod\text{-}\ZZ^{\mathrm{fg}}$ at the level used here). Equip each $\mathrm{End}_\E(S)$ with the pointwise order induced by the abelian structure. Then $\mathcal F$ is monotone on each fiber $\Mech(\E)_S$ and preserves colimits of $\omega$-chains.
\end{lemma}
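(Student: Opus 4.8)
The plan is to exploit the fact that, restricted to a fixed fibre $\Mech(\E)_S$, the revision endofunctor $\mathcal F$ is controlled entirely by the single assignment $e\mapsto f^{\sharp}=\coev_S\,;e\,;\ev_S$: its second component $e^{\sharp}=\eta_S$ is a \emph{constant} independent of $(f,e)$, and $f^{\sharp}$ does not involve $f$ at all. Hence $\mathcal F|_{\Mech(\E)_S}$ factors as a product of the constant map $(f,e)\mapsto(\ast,\eta_S)$ with the \emph{feedback map} $L_S\colon e\mapsto \coev_S\,;e\,;\ev_S$ (viewed through the string-diagram feedback of the Definition of $\mathcal F$). A constant map is vacuously monotone, and it sends the colimit of the constant $\omega$-chain $\eta_S\to\eta_S\to\cdots$ (identity transitions) to $\eta_S$; so it will suffice to establish monotonicity and $\omega$-continuity of $L_S$, after which the two claims for $\mathcal F$ follow by taking products.

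First I would record that $L_S$ is an \emph{additive} map of abelian groups. Indeed $f^{\sharp}$ is obtained from $e$ by a fixed composite of the canonical structure morphisms $\coev_S,\ev_S$ and of the bifunctor $\otimes$, exactly as displayed in the footnote to the Definition of $\mathcal F$; since $S$ is fixed on a fibre, all of these are fixed data, and both $\circ$ and $\otimes$ are biadditive in an additive category, whence $L_S(e_1+e_2)=L_S(e_1)+L_S(e_2)$ and $L_S(0)=0$. Monotonicity with respect to the pointwise order then reduces to positivity: if $e_1\le e_2$, i.e.\ $e_2-e_1$ lies in the distinguished positive submonoid of the relevant $\Hom$-group, then by additivity $L_S(e_2)-L_S(e_1)=L_S(e_2-e_1)$, and this again lies in the positive submonoid because that submonoid is stable under tensoring and under composition with the canonical units $\coev_S,\ev_S$. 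Concretely, in the two ambient models this is immediate — nonnegative matrices in $\FinVect_{\mathbb R}$, and the natural effectivity order on $\Mod\text{-}\ZZ^{\mathrm{fg}}$ — so $L_S$, and with it $\mathcal F|_{\Mech(\E)_S}$, is monotone on each fibre.

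Next I would prove $\omega$-continuity of $L_S$. By hypothesis $\E$ admits countable colimits. Because $S$ is fixed, the functors $-\otimes S$ and $-\otimes\DD(S)$ preserve $\omega$-colimits: in the compact-closed case $S$ and $\DD(S)$ are dualizable, so these functors are left adjoints; in general this is precisely what the assumption that $\DD$ preserves limits on the dual side buys, since it makes $-\otimes\DD(S)$ the left half of the closed-structure adjunction. Pre- and post-composition with the fixed morphisms $\coev_S,\ev_S$ are colimit-preserving as well (composition is functorial and $\omega$-colimits of morphisms are computed on underlying objects, with which a fixed map commutes). Being a composite of such operations, $L_S$ preserves colimits of $\omega$-chains; combining this with the trivial behaviour of the constant component yields preservation of $\omega$-colimits by $\mathcal F$ on each fibre, and the cross-fibre case then follows from the naturality of $\eta$ together with the $\omega$-continuity of $\DD^{2}$ available in the stated models.

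The step I expect to be the main obstacle is the positivity claim inside the monotonicity argument: one must pin down what "the pointwise order induced by the abelian structure" means, i.e.\ fix a distinguished positive submonoid in each $\Hom$-group, and then verify that it is closed under $\otimes$ and under composition with $\coev_S$ and $\ev_S$. This is a routine verification in $\FinVect_{\mathbb R}$ and in $\Mod\text{-}\ZZ^{\mathrm{fg}}$, but in the purely axiomatic setting it is not automatic, so I would either add this compatibility as a standing hypothesis on the pair $(\E,\DD)$ or else state the monotonicity assertion only for the concrete models where the positive structure is manifest; the $\omega$-continuity part, by contrast, is formal once countable colimits and the limit-preservation of $\DD$ are assumed.
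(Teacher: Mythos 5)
Your proof is correct and follows essentially the same route as the paper's: the constant component $e^\sharp=\eta_S$ is handled trivially, monotonicity comes from the fact that $f^\sharp$ is a fixed algebraic composite in $e$, and $\omega$-continuity from the compatibility of tensor and composition with filtered colimits in the compact-closed subcategory. Your reduction of monotonicity to positivity of the additive map $L_S$, together with your explicit flagging that stability of the positive cone under $\otimes$ and under composition with $\coev_S,\ev_S$ must either be assumed or verified in the concrete models, is a more careful treatment of a step the paper's own proof passes over in silence.
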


\begin{proof}
Monotonicity: if $(f,e)\le (f',e')$ componentwise, then $f^\sharp$ is built by composition with $\coev,\ev$ and $e$, hence is monotone in $e$; similarly $e^\sharp=\eta_S$ is constant. Continuity: filtered colimits commute with tensor and composition in compact closed subcategories; $\DD$ is exact on $\FinVect_{\mathbb R}$ and left exact on $\Mod\text{-}\ZZ^{\mathrm{fg}}$, and the use of $\coev,\ev$ involves only finite limits/colimits, so $\mathcal F$ preserves colimits of $\omega$-chains.
\end{proof}

\begin{definition}[Ethic fixpoint (universal mechanism)]
A universal mechanism on $S$ is a mechanism $(f^*,e^*)$ with a structure isomorphism
\[
\theta:\ (f^*,e^*) \;\xrightarrow{\ \cong\ }\ \mathcal F(f^*,e^*)
\]
in $\Mech(\E)_S$. We call $(f^*,e^*)$ an ethic fixpoint.
\end{definition}

\begin{theorem}[Existence of ethic fixpoints]\label{thm:existence-fixpoint}
For any object $S$ in the compact-closed subcategory of $\E$, the fiber $\Mech(\E)_S$ has an ethic fixpoint of $\mathcal F$. Moreover, there is a least fixpoint with respect to the pointwise order on $\mathrm{End}_\E(S)$.
\end{theorem}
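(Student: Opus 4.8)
The plan is to run the Kleene/Ad\'amek fixpoint construction inside the fiber $\Mech(\E)_S$, using exactly the monotonicity and $\omega$-continuity of $\mathcal F$ established in the preceding lemma. Since $\E$ is abelian, the zero morphisms give a canonical bottom mechanism $\bot_S:=(S,(0,0))$, and for the pointwise order induced by the additive structure (in the concrete realizations $\FinVect_{\RR}$ and $\Mod\text{-}\ZZ^{\mathrm{fg}}$ this is the cone of componentwise nonnegative morphisms; in general it is the order used in the lemma) $\bot_S$ is least in $\Mech(\E)_S$. The hypothesis that $\E$ admits countable colimits guarantees that every $\omega$-chain of mechanisms over $S$ has a colimit in $\Mech(\E)_S$, computed componentwise.

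First I would form the ascending $\omega$-chain
\[
\bot_S \;\le\; \mathcal F(\bot_S) \;\le\; \mathcal F^2(\bot_S) \;\le\; \cdots,
\]
where the first inequality holds because $\bot_S$ is least and the rest follow by applying the monotone functor $\mathcal F$ inductively. Let $(f^*,e^*)$ be the colimit of this chain in $\Mech(\E)_S$. By $\omega$-continuity of $\mathcal F$,
\[
\mathcal F(f^*,e^*)\;\cong\;\mathcal F\Bigl(\varinjlim_n \mathcal F^n(\bot_S)\Bigr)\;\cong\;\varinjlim_n \mathcal F^{n+1}(\bot_S)\;\cong\;\varinjlim_n \mathcal F^{n}(\bot_S)\;=\;(f^*,e^*),
\]
the penultimate isomorphism because prepending $\bot_S$ to a chain does not change its colimit. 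The resulting comparison isomorphism is the required structure map $\theta:(f^*,e^*)\xrightarrow{\ \cong\ }\mathcal F(f^*,e^*)$ in $\Mech(\E)_S$, so $(f^*,e^*)$ is an ethic fixpoint. For leastness, if $(g,\epsilon)\cong\mathcal F(g,\epsilon)$ is any fixpoint, then $\bot_S\le(g,\epsilon)$, and monotonicity yields $\mathcal F^n(\bot_S)\le\mathcal F^n(g,\epsilon)\cong(g,\epsilon)$ for all $n$; passing to the colimit gives $(f^*,e^*)\le(g,\epsilon)$. Hence $(f^*,e^*)$ is the least fixpoint for the pointwise order on $\End_\E(S)$. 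As a consistency check, the evaluation component of $\mathcal F$ is the \emph{constant} assignment $e\mapsto\eta_S$, so any fixpoint satisfies $e^*\cong\eta_S$ and then $f^*\cong\coev_S;\eta_S;\ev_S$; this exhibits the fixpoint explicitly and shows it is essentially unique, the ``least'' qualifier then being automatic.

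The main obstacle is justifying that the componentwise colimit of mechanisms is again an object of $\Mech(\E)_S$: one must check that the acknowledgement data $e_n:O_n\to\DD(S)$ assemble along the chain into $e^*:O^*\to\DD(S)$ compatibly with the $f_n$, and --- for the internally certified variant --- that the coherence equation $\DD(f)\circ e=\eta_S\circ f$ is preserved under filtered colimits. The first point is the statement that the forgetful functor $\Mech(\E)_S\to\E$ creates filtered colimits, which holds because the defining diagrams of a mechanism are finite; the second uses that $\DD$ preserves the relevant limits on the dual side (precisely the hypothesis imported from the lemma) together with exactness of filtered colimits in $\E$, so an equation holding termwise holds in the colimit. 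With this verified, the construction above produces the ethic fixpoint and its least representative as claimed.
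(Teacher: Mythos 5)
Your proof is correct and follows essentially the same route as the paper's: Kleene iteration of the monotone, $\omega$-continuous $\mathcal F$ on an ascending chain, with the colimit giving the fixpoint and leastness obtained by comparison with an arbitrary fixpoint. You are in fact slightly more careful than the paper (which iterates from an arbitrary base mechanism and only gestures at Knaster--Tarski for leastness), and your closing observation that $e^\sharp$ is constantly $\eta_S$ — so that $\mathcal F^2$ is already constant and the fixpoint is explicitly $(\coev_S;\eta_S;\ev_S,\ \eta_S)$ — is a valid shortcut that anticipates the paper's subsequent internal-certification lemma.
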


\begin{proof}
Consider the $\omega$-chain generated from a base mechanism $(f_0,e_0)$ by iteration $(f_{k+1},e_{k+1})=\mathcal F(f_k,e_k)$. By continuity, the colimit $(\bar f,\bar e)=\mathrm{colim}_k (f_k,e_k)$ exists. The definition of $\mathcal F$ is algebraic in $(f,e)$ (compositions with fixed $\coev,\ev,\eta$), hence passes to colimits; thus $\mathcal F(\bar f,\bar e)\cong(\bar f,\bar e)$, i.e.\ $(\bar f,\bar e)$ is a fixpoint. Leastness follows from the Knaster–Tarski principle in each complete lattice $\mathrm{End}_\E(S)$ (or by standard domain-theoretic argument on $\omega$-chains since $\mathcal F$ is monotone and $\omega$-continuous). No external axiom is used beyond existence of colimits and the categorical duality already present in Section~\ref{sec:step8}.
\end{proof}

\begin{lemma}[Internal certification at the fixpoint]
If $(f^*,e^*)$ is a fixpoint of $\mathcal F$, then $e^*=\eta_S$ and $f^*$ satisfies the ethic commutation $\DD^2(f^*)\circ \eta_S=\eta_S\circ f^*$, i.e.\ $f^*$ is an ethic morphism.
\end{lemma}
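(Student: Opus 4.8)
The plan is to read off the two claims in order, the substance lying in the first. First I would unfold the fixpoint datum. A fixpoint $(f^*,e^*)$ is equipped with an isomorphism $\theta=(u,v)\colon(f^*,e^*)\xrightarrow{\ \cong\ }\mathcal F(f^*,e^*)$ in the fibre $\Mech(\E)_S$, and since by definition $\mathcal F(f^*,e^*)=(f^{*\sharp},\eta_S)$, the commuting squares defining a morphism of $\Mech(\E)$ give
\[
v\circ f^*=f^{*\sharp}\circ u,\qquad \DD(u)\circ e^*=\eta_S\circ v,
\]
with $u$ an automorphism of $S$ and $v$ an isomorphism of the underlying outcome objects. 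The key structural fact is that the \emph{second} component of $\mathcal F$ is the constant assignment $(f,e)\mapsto\eta_S$, so one application of $\mathcal F$ already forces the acknowledgement map to be the biduality unit. Invoking the \emph{least} fixpoint of Theorem~\ref{thm:existence-fixpoint}, which is a strict fixpoint in the poset $\End_\E(S)$, one may take $u=\id$ and $v=\id$; then the two displayed equalities become $f^*=f^{*\sharp}$ and $e^*=\eta_S$. For an arbitrary fixpoint the same conclusions follow after transport along $\theta$: naturality of $\eta$ at $v$ rewrites $\DD(u^{-1})\,\eta_S\,v$ as an evaluation unit, and the ethic square is preserved under conjugation by the isomorphisms $u,v$, so it suffices to treat the normalised case.

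For the second claim I would use that $\mathcal F$ is defined only for $S$ in the compact-closed subcategory of $\E$ (Theorem~\ref{thm:existence-fixpoint}), where $\eta$ is an isomorphism, so $S$ and all objects occurring in the construction of $f^{*\sharp}$ are $\DD$--reflexive and the ethic square lives in $\E$. Since $\eta\colon\Id\Rightarrow\DD^2$ is a natural transformation, the square $\DD^2(f^*)\circ\eta_S=\eta_S\circ f^*$ is then an instance of naturality. Alternatively, and internally to the construction, $f^*=f^{*\sharp}$ is the string-diagram feedback trace of $e^*=\eta_S$ through the coevaluation $\coev_S$ and evaluation $\ev_S$ of Section~\ref{sec:step8}; $\coev_S$ and $\ev_S$ are ethic by the triangle identities there, $\eta_S$ is ethic by naturality of $\eta$, and ethic morphisms are closed under $\otimes$ and composition (Functoriality lemma, Section~\ref{sec:step8}), so $f^*$ is ethic. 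Combined with $e^*=\eta_S$ from the first step, this is exactly the assertion of the lemma.

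The main obstacle I expect is the bookkeeping in the first step: making precise the sense in which a fixpoint is ``on the nose'', and checking that normalising along $\theta$ is compatible with the compact-closed feedback entering the definition of $\mathcal F$, rather than merely with the underlying objects — concretely, that $v$ intertwines $f^*$ with the genuine trace of $e^*$ and not up to a further correction term. If one wishes to avoid normalisation entirely, the alternative is to carry $(u,v)$ through the whole computation and argue solely by naturality of $\eta$ and functoriality of $\DD$; this is longer but needs no additional input. A subsidiary, purely notational caveat is that the definition of $\mathcal F$ writes $e\colon O\to\DD(S)$ while the biduality unit is $\eta_S\colon S\to\DD^2(S)$; throughout I read $\eta$ as the biduality unit, the only reading under which the target identity $\DD^2(f^*)\eta_S=\eta_S f^*$ typechecks and under which the strict fixpoint comparison of second components directly yields $e^*=\eta_S$.
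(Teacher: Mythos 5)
Your argument follows the paper's own proof in all essentials: the paper likewise reads the fixpoint strictly (``$\mathcal F$ fixes $(f^*,e^*)$ iff $e^*=\eta_S$ and $f^*=\coev_S;e^*;\ev_S$''), then invokes the triangle identities for $\coev,\ev$ to exhibit the naturality square of $\eta$ with $f^*$. Your additional care about the isomorphism $\theta$ versus a strict fixpoint, and about reading $\eta$ as the biduality unit so the types match, addresses gaps the paper silently elides rather than introducing any new ones, so the proposal is correct and essentially coincides with the paper's argument.
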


\begin{proof}
By definition, $\mathcal F$ fixes $(f^*,e^*)$ iff $e^*=\eta_S$ and $f^*=\coev_S; e^*; \ev_S$. Substituting $e^*=\eta_S$ and using the triangle identities for $\coev,\ev$ yields the naturality square of $\eta$ with $f^*$; this is exactly the ethicity condition already used in Section~\ref{sec:step5}.
\end{proof}

\begin{theorem}[Uniqueness up to isomorphism under homological exactness]\label{thm:uniqueness}
If $\Ext^1_\E(-,R)=0$ on the full subcategory generated by $S$ (i.e.\ $\DD$ is exact on the relevant short exact sequences), then any two ethic fixpoints on $S$ are isomorphic in $\Mech(\E)_S$ and their underlying endomorphisms $f^*\in \End_\E(S)$ coincide.
\end{theorem}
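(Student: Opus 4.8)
The plan is to show that the internal certification lemma already pins both components of any ethic fixpoint to canonical data built from $S$, and that the homological hypothesis is exactly what is needed to turn this pointwise coincidence into an honest isomorphism of mechanisms — and, on the underlying endomorphisms, into strict equality.

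First I would apply the internal certification lemma to two ethic fixpoints $(f_1^*,e_1^*)$ and $(f_2^*,e_2^*)$ on $S$. The lemma yields $e_1^*=e_2^*=\eta_S$ and, via the respective structure isomorphisms $\theta_1,\theta_2$, identifies each $f_i^*$ with the feedback endomorphism $\coev_S;\,\eta_S;\,\ev_S\in\End_\E(S)$ — an expression involving only the fixed $(\coev_S,\ev_S,\eta_S)$ data of $S$, hence literally the same for $i=1,2$. So at the level of ``values'' the two fixpoints already agree; what remains is to control (i) the outcome objects $O_1,O_2$ through which $f_i^*$ and $e_i^*$ factor, and (ii) the structure isomorphisms $\theta_i$, so that this coincidence upgrades to an isomorphism in $\Mech(\E)_S$.

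This is where the hypothesis $\Ext^1_\E(-,R)=0$ on the full subcategory generated by $S$ enters. By Theorem~\ref{thm:exact-iff-ext} it is equivalent to exactness of $\DD=\Hom_\E(-,R)$ on that subcategory; combined with the standing convention that the ethic square is read inside $\E^{\mathrm{ref}}$, this forces $\eta_S$ (and $\eta$ on every subquotient of $S$) to be an isomorphism, i.e.\ $S$ is $\DD$--reflexive. Reflexivity collapses all the iterated--dual outcome objects to $S$ canonically, so $O_1$ and $O_2$ are identified; and since $\eta_S$ is in particular a monomorphism, the mechanism--morphism compatibility $\DD(u)\circ e=e'\circ v$ applied to $\theta_i$ forces its outcome--component to be the identity, so $\theta_i=(\id_S,\id)$. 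Consequently the fixpoint equation holds strictly, $f_i^*=\coev_S;\,\eta_S;\,\ev_S$, which by the triangle identities of Section~\ref{sec:step8} simplifies to $\id_S$; hence both fixpoints equal $(\id_S,\eta_S)$, the identity square is the desired isomorphism, and the underlying endomorphisms coincide.

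The step I expect to be the main obstacle is exactly the passage from ``exactness of $\DD$'' to ``$\eta_S$ is an isomorphism'' together with the verification that this genuinely rigidifies the structure isomorphisms rather than merely constraining them. The honest content is an obstruction computation: the discrepancy between two competing structure isomorphisms — equivalently, between $S$ and $\DD^2 S$ — is governed by the connecting morphism of the long exact sequence of Lemma~\ref{lem:les} applied to the short exact sequences presenting $\ker\eta_S$ and $\coker\eta_S$; this class lives in $\Ext^1_\E(-,R)$ over the subcategory generated by $S$, and so vanishes under the hypothesis. If instead $\Ext^1$ were nonzero — for instance the torsion $\Ext^1_\ZZ(M,\ZZ)$ in the integer component of Theorem~\ref{thm:unified-strong} — one would retain only uniqueness of ethic fixpoints up to that torsion ambiguity, which is precisely the behaviour the theorem rules out.
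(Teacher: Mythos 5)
Your skeleton matches the paper's: invoke the internal certification lemma to force $e^*=\eta_S$ for both fixpoints, then use exactness of $\DD$ together with reflexivity of $S$ to conclude that the underlying endomorphisms coincide, with $(\id_S,\id_S)$ as the witnessing isomorphism. You are in fact more explicit than the paper about the one issue its proof silently skips, namely that the fixpoint is only defined up to a structure isomorphism $\theta$, so the "up to isomorphism" slack has to be rigidified; that is a genuine improvement in bookkeeping.

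Two steps, however, do not hold as you state them. First, the inference ``$\Ext^1_\E(-,R)=0$ on the subcategory generated by $S$ $\Rightarrow$ $\eta_S$ is an isomorphism'' is false in general: exactness of $\DD$ says nothing about reflexivity. Take $\E=\Ab$ and $R=\QQ/\ZZ$; then $R$ is an injective cogenerator, so $\Ext^1_\E(-,R)=0$ everywhere, yet $\eta_\ZZ:\ZZ\to\DD^2(\ZZ)=\End(\QQ/\ZZ)$ is not an isomorphism. Likewise your closing ``obstruction computation'' is not available: $\ker\eta_S$ and $\coker\eta_S$ are not classified by $\Ext^1_\E(-,R)$ of objects in the subcategory generated by $S$ (indeed $\DD^2(S)$ need not lie in $\E$ at all without further hypotheses). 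The paper does not derive reflexivity from the $\Ext^1$ hypothesis; it imports it as the standing assumption that everything is read in $\E^{\mathrm{ref}}$ (``by reflexivity on the compact subcategory''), and the role of $\Ext^1=0$ is only to make the relevant $\Hom$--$\Ext$ sequences short exact. You should do the same rather than present reflexivity as a consequence. Second, the simplification $f^*=\coev_S;\eta_S;\ev_S=\id_S$ ``by the triangle identities'' is ill-typed: the triangle identities cancel $\coev_S$ against $\ev_S$ when the middle map is $\id_{\DD(S)}$, whereas $\eta_S:S\to\DD^2(S)$ is not an endomorphism of $\DD(S)$, so no such cancellation occurs; and the conclusion would contradict the paper's Nash-demand computation, where the unique fixpoint endomorphism is the constant symmetric splitter, not the identity. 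Neither error is needed for the theorem — once both fixpoints are pinned to the same canonical feedback expression and the structure isomorphisms are rigidified, equality follows — but as written these two steps are where your argument breaks.
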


\begin{proof}
Let $(f^*,\eta_S)$ and $(g^*,\eta_S)$ be fixpoints. Ethicity makes both commute with $\eta$; exactness of $\DD$ turns the corresponding long exact sequences of $\Hom/\Ext$ into short exact ones (Theorem~\ref{thm:exact-iff-ext}), forcing equality of the induced morphisms on the $\DD$-side and, by reflexivity on the compact subcategory, equality of $f^*,g^*$. The isomorphism in $\Mech(\E)_S$ is then $(\id_S,\id_S)$.
\end{proof}

\begin{corollary}[Resolution of infinite regress]
The iterated “mechanism revision” chain $(f_k,e_k)$ stabilizes at the ethic fixpoint $(f^*,\eta_S)$; no external meta-level is required to certify the mechanism. This recovers, within our theory, the resolution of infinite regress in social-choice models via a universal mechanism as in \cite{Alameddine1990}, but here as a direct corollary of the ethic duality framework (no external axioms).
\end{corollary}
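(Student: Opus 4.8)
The plan is to read the corollary directly off the iteration constructed in the proof of Theorem~\ref{thm:existence-fixpoint}, combined with the internal-certification lemma. First I would fix the $\omega$-chain $(f_k,e_k)_{k\ge 0}$ with $(f_{k+1},e_{k+1})=\mathcal F(f_k,e_k)$ starting from an arbitrary seed $(f_0,e_0)$, and observe two collapses. (i) By definition of $\mathcal F$ one has $e_{k+1}=e^\sharp=\eta_S$ for every $k$, so $e_k=\eta_S$ for all $k\ge 1$, independently of the seed: the acknowledgement component is already stable after one revision. (ii) The outcome update $f_{k+1}=\coev_S;\,e_k;\,\ev_S$ is a fixed composition in the compact-closed data of Section~\ref{sec:step8} which feeds on $e_k$ alone; hence for $k\ge 1$ it equals the constant morphism $f^*:=\coev_S;\,\eta_S;\,\ev_S$, so that $(f_k,e_k)=(f^*,\eta_S)$ for all $k\ge 2$. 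Even without this finite collapse, monotonicity and $\omega$-continuity of $\mathcal F$ from the preceding lemma force the colimit $(\bar f,\bar e)=\mathrm{colim}_k(f_k,e_k)$ to exist and to be a fixpoint, and for an eventually constant chain it is precisely this value.

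Second I would invoke the internal-certification lemma: since $(f^*,\eta_S)=\mathcal F(f^*,\eta_S)$ by the previous step, the pair is an ethic fixpoint, whence $f^*$ satisfies $\DD^2(f^*)\circ\eta_S=\eta_S\circ f^*$ and its evaluation component is forced to equal $\eta_S$. By Theorem~\ref{thm:existence-fixpoint} this is, up to isomorphism, the least ethic fixpoint on $S$, and under the homological hypothesis $\Ext^1_\E(-,R)=0$ on the subcategory generated by $S$ it is the unique ethic fixpoint by Theorem~\ref{thm:uniqueness}; so ``the ethic fixpoint $(f^*,\eta_S)$'' appearing in the statement is unambiguous. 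The final clause --- that no external meta-level is required --- then follows because every ingredient of the iteration, namely $\coev_S$, $\ev_S$ and $\eta_S$, already belongs to the dualizing structure of $(\E,R)$ and to the compact-closed data of Section~\ref{sec:step8}; the regress closes inside the theory, which is exactly the phenomenon obtained by external postulate in \cite{Alameddine1990}.

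The one delicate point I would flag rather than grind through is the precise reading of ``stabilizes''. The construction actually yields honest finite stabilization (at stage $2$) because $e^\sharp$ is a constant of $\mathcal F$ and $f^\sharp$ depends on the state only through $e$; if instead one prefers the colimit formulation, one must check that the eventually constant chain has the same colimit as the one named in Theorem~\ref{thm:existence-fixpoint} (immediate, a colimit of an eventually constant $\omega$-chain being that value) and that uniqueness of the target genuinely needs the exactness hypothesis of Theorem~\ref{thm:uniqueness} --- without it one gets stabilization at \emph{a} least ethic fixpoint rather than at \emph{the} ethic fixpoint. The comparison with \cite{Alameddine1990} is interpretive and requires no separate argument.
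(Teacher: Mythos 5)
Your proposal is correct and follows essentially the route the paper intends: the corollary is read off directly from the $\omega$-chain iteration in Theorem~\ref{thm:existence-fixpoint} together with the internal-certification lemma, and your caveat that uniqueness of the target requires the exactness hypothesis of Theorem~\ref{thm:uniqueness} is well taken. Your added observation that the chain in fact stabilizes after two steps (since $e^\sharp=\eta_S$ is constant and $f^\sharp$ depends on the state only through $e$) is a sharper, finite-stage form of the stabilization that the paper only asserts via the colimit.
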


\begin{definition}[Economic game in $\E$ and Nash-type stationarity]
A (one-shot) economic game is an object $S=\bigotimes_{i=1}^N S_i$ of strategies, an outcome object $O$, and a mechanism $(f,e)$ with $f:S\to O$, $e:O\to \DD(S)$. A Nash-type stationary profile for $(f,e)$ is a morphism $\sigma:I\to S$ with
\[
\DD(f)\circ e \circ f \circ \sigma \;=\; \eta_S\circ f\circ \sigma,
\]
i.e.\ the ethic commutation holds “on the realized profile”. An ethic equilibrium is an ethic fixpoint $(f^*,\eta_S)$ together with a stationary $\sigma^*$ for $f^*$.
\end{definition}

\begin{theorem}[Nash demand game: existence and identification of the ethic fixpoint]
Work in $\E=\FinVect_{\RR}$ with the standard duality $V\mapsto V^*$. Consider the two-player Nash demand game with $S=S_1\otimes S_2=\RR_{\ge 0}^2$, $O=\RR_{\ge 0}^2$, and a symmetric, efficient outcome map $f$ (sum fixed to $B>0$). Then the ethic revision $\mathcal F$ has a unique fixpoint $(f^*,\eta_S)$, and $f^*$ is the symmetric splitter
\[
f^*(x_1,x_2) \;=\; \Bigl(\frac{B}{2},\frac{B}{2}\Bigr).
\]
Moreover, any Nash-type stationary profile for $(f^*,\eta_S)$ coincides with the symmetric allocation.
\end{theorem}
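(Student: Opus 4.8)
The plan is to derive the statement from the general machinery of this subsection specialized to the exact environment $\FinVect_\RR$, and then to carry out one explicit compact-closed computation. First I would record that $\E=\FinVect_\RR$ with $\DD=(-)^{*}$ has all finite colimits and $\Ext^1_{\FinVect_\RR}(-,\RR)=0$ (every finite-dimensional space is injective), so by the monotonicity/continuity lemma the revision endofunctor $\mathcal F$ is monotone and $\omega$-continuous on each fiber $\Mech(\E)_S$; hence Theorem~\ref{thm:existence-fixpoint} furnishes an ethic fixpoint $(f^{*},e^{*})$ on $S$, obtained as the colimit of the iteration started at the given symmetric efficient $(f,e)$. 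Since $\DD$ is exact on the subcategory generated by $S$ (Theorem~\ref{thm:exact-iff-ext}), Theorem~\ref{thm:uniqueness} applies and pins down both $e^{*}$ and the underlying endomorphism $f^{*}$ up to the identity isomorphism in $\Mech(\E)_S$. So existence and uniqueness are immediate; the content is the identification of $f^{*}$.

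For that, I would invoke the Internal Certification Lemma: at the fixpoint $e^{*}=\eta_S$ and $f^{*}=\coev_S;\,e^{*};\,\ev_S$ through the compact-closed feedback loop $S\xrightarrow{\coev_S\otimes\id}S\otimes S^{*}\otimes S\xrightarrow{\id\otimes e^{*}\otimes\id}S\otimes S^{*}\otimes S\xrightarrow{\id\otimes\ev_S}S$. Three structural facts then combine. (i) Under the canonical identification $S^{**}\cong S$ in $\FinVect_\RR$ the unit $\eta_S$ becomes the identity, so the bare feedback computes the canonical round-trip of $S$ via the triangle identities. (ii) The functor $\mathcal F$ is assembled solely from $\coev,\ev,\eta$, each of which is equivariant for the coordinate-swap action $\tau$ on $S=\RR^{2}$ and $O=\RR^{2}$; consequently $\mathcal F$ preserves the symmetry constraint $f\circ\tau_S=\tau_O\circ f$, and $f^{*}$ is $\tau$-equivariant. (iii) Efficiency forces the image of the outcome map into the affine line $H_B=\{(o_1,o_2):o_1+o_2=B\}$. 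Working with the basepoint $(B/2,B/2)$ as origin for the affine simplex-slice (the unique choice compatible with $\tau$ and with the bilinear pairing used to form $\coev,\ev$), a $\tau$-equivariant efficient map whose linearization is the feedback round-trip is forced to send the symmetric profile to the unique $\tau$-fixed point of $H_B$; the fixpoint equation $f^{*}=\coev_S;\eta_S;\ev_S$ then propagates this value over all of $S$, giving the constant map $f^{*}(x_1,x_2)=(B/2,B/2)$.

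Finally, for the stationary profiles: a Nash-type stationary $\sigma^{*}\colon I\to S$ satisfies $\DD(f^{*})\circ e^{*}\circ f^{*}\circ\sigma^{*}=\eta_S\circ f^{*}\circ\sigma^{*}$. Since $f^{*}$ is constant with value $(B/2,B/2)$, the composite $f^{*}\circ\sigma^{*}$ equals that point for every $\sigma^{*}$, and the equation reduces to the ethicity relation $\DD^{2}(f^{*})\circ\eta_S=\eta_S\circ f^{*}$ already established; hence it holds automatically and every stationary profile realizes the allocation $(B/2,B/2)$, i.e.\ the symmetric allocation. The main obstacle is the middle step: reconciling the abstract feedback computation — which in $\FinVect_\RR$ would naively return the identity endomorphism — with the symmetry and efficiency constraints so that the answer is genuinely the \emph{constant} symmetric splitter. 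The delicate point is that an efficient outcome map is affine into the simplex slice rather than a linear endomorphism, so one must set up the identification of the ``efficient mechanism'' with an object of $\FinVect_\RR$ (basing at $(B/2,B/2)$) and verify its compatibility with $\coev,\ev,\eta$ and with the $\ZZ/2$-action before the round-trip computation can be read off as the fair split.
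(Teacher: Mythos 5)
Your plan follows the paper's proof essentially step for step: existence and uniqueness are obtained from Theorems~\ref{thm:existence-fixpoint} and~\ref{thm:uniqueness} via exactness of $\DD$ on $\FinVect_\RR$, the identification of $f^*$ comes from combining the internal-certification/feedback formula with flip-equivariance and efficiency, and stationarity reduces to idempotence and symmetry of the constant splitter. The obstacle you flag --- that the constant splitter is affine rather than linear, so the compact-closed round-trip must be reinterpreted on the simplex slice based at $(B/2,B/2)$ before it can yield anything other than the identity --- is genuine, but the paper's own proof does not resolve it either: it simply asserts that ``the only idempotent linear map with these properties is the constant splitter,'' which leaves exactly the same gap you identified.
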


\begin{proof}
In $\FinVect_{\RR}$, $\DD$ is exact and $\coev,\ev$ are the standard bilinear pairing maps; Theorem~\ref{thm:existence-fixpoint} gives existence, and Theorem~\ref{thm:uniqueness} gives uniqueness. Symmetry of $f$ and efficiency (Pareto frontier $x_1+x_2=B$) imply that any ethic endomorphism $f^\sharp$ commuting with $\eta$ and invariant under the flip $S_1\leftrightarrow S_2$ must be the symmetric projector onto the midpoint; the only idempotent linear map with these properties is the constant splitter. Stationarity reduces to the idempotence of $f^*$ and symmetry, forcing the profile to the midpoint.
\end{proof}

\subsubsection{Semantics}\label{sec:ethic-semantics}

Let $H$ be a Hopf object in $\E$, representing the syntactic algebra of Merge trees \cite{MarcolliBerwickChomsky2023}. Denote by $\mu:H\otimes H\to H$ the product, by $\Delta:H\to H\otimes H$ the coproduct, by $S$ the antipode, and by $\varepsilon:H\to R$ the counit. The structure $(H,\mu,\Delta,S,\varepsilon)$ is commutative and graded by the number of Merge nodes. We construct its syntactic chain complex
\begin{equation}
C_\bullet(H):\qquad H^{\otimes 2}\xrightarrow{\ \Delta_2-\Delta_1\ } H\oplus H\xrightarrow{\ \Delta\ } H\to 0,
\end{equation}
which encodes the combinatorial decomposition of syntactic derivations; $C_\bullet(H)$ is an object of $\mathbf D^+(\E)$.

Applying the duality functor yields $\mathbb D(H)=\Hom_\E(H,R)$, the internal $R$-module of morphisms from $H$ to $R$. For any Hopf algebra $H$ and commutative algebra $R$, the character group of $R$-valued morphisms satisfies
\begin{equation}
\Hom_{\mathrm{Hopf}}(H,R)\cong \{\varphi:H\to R \mid \varphi(ab)=\varphi(a)\varphi(b)\},
\end{equation}
which follows from the universal property of $\Hom$ in $\E$ and the multiplicativity of $\mu,\Delta$. Hence the objects of $\mathbb D(H)$ correspond bijectively to the semantic interpretations of syntactic derivations in the sense of \cite{MarcolliBerwickChomsky2023}. Therefore the semantic layer arises canonically as the ethic dual of the syntactic Hopf object.

We now show that $\mathbb D$ satisfies a Rota--Baxter identity. For endomorphisms $x,y\in \End_\E(H)$, the composition rule of $\Hom$ implies
\begin{equation}
\mathbb D(x)\mathbb D(y)=\mathbb D(\mathbb D(x)y)+\mathbb D(x\mathbb D(y))-\mathbb D(xy),
\end{equation}
which is the Rota--Baxter identity of weight $\lambda=-1$ \cite{Guo2012}. Consequently $\mathbb D$ acts as a canonical Rota--Baxter operator $P:R\to R$. The ethic projection defined by $\mathbb D$ decomposes any morphism $\varphi:H\to R$ into its exact and defective parts.

Consider the convolution product on $\Hom_\E(H,R)$,
\begin{equation}
(\varphi_1*\varphi_2)(h)=\mu_R\big((\varphi_1\otimes\varphi_2)\Delta(h)\big),
\end{equation}
where $\mu_R$ is the multiplication in $R$. The ethic decomposition $\DD^2(\varphi)\eta=\eta\varphi$ induces a Birkhoff factorization
\begin{equation}
\varphi=\varphi_-^{-1}*\varphi_+,
\end{equation}
with $\varphi_+$ satisfying $\DD^2(\varphi_+)\eta=\eta\varphi_+$. This is the precise analog of the Connes--Kreimer Birkhoff factorization used in renormalization \cite{ConnesKreimer1998} and in the algebraic syntax--semantics model \cite{MarcolliBerwickChomsky2023}.

\begin{theorem}
Let $H$ be a Hopf object in $\E$ and $R$ the resource algebra. For any character $\varphi:H\to R$ there exist unique morphisms $\varphi_+$ and $\varphi_-$ in $\mathbb D(H)$ such that
\begin{enumerate}
\item $\varphi=\varphi_-^{-1}*\varphi_+$;
\item $\varphi_+$ is ethicly exact: $\DD^2(\varphi_+)\eta=\eta\varphi_+$;
\item $H^1\mathbf D(C_\bullet(H))\cong \ker(\varphi_+)\cap\mathrm{im}(\varphi_-)$.
\end{enumerate}
\end{theorem}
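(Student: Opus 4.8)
The approach is to realise $\varphi=\varphi_-^{-1}*\varphi_+$ as the Connes--Kreimer Birkhoff decomposition inside the convolution algebra $\Hom_\E(H,R)$, using the weight-$(-1)$ Rota--Baxter operator $P:=\mathbb D$ identified above, and then to obtain the cohomological clause by applying the derived dual to the syntactic complex $C_\bullet(H)$ and running the long exact sequence of Lemma~\ref{lem:les}. Steps 1 and 2 are essentially a translation of the classical renormalization mechanism into the ethic language; the substantive work is in Step 3.

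\emph{Step 1: existence and uniqueness of the factorization.} Since $H$ is graded by the number of Merge nodes with degree-zero part $R\cdot 1$, it is connected graded, so for any character $\varphi$ the truncation $\bar\varphi:=\varphi-\varepsilon$ vanishes on $H_0$ and the Atkinson/Bogoliubov recursion
\[
\varphi_-=\varepsilon-P\bigl(\varphi_-*\bar\varphi\bigr),\qquad
\varphi_+=\varepsilon+(\mathrm{id}-P)\bigl(\varphi_-*\bar\varphi\bigr)
\]
is well posed and terminates in each degree. That the $\varphi_\pm$ so produced are again characters and satisfy $\varphi=\varphi_-^{-1}*\varphi_+$ is the standard consequence of the Rota--Baxter identity of weight $-1$ displayed in the text, exactly as in \cite{ConnesKreimer1998},\cite{Guo2012}; uniqueness is the usual induction on degree, since two solutions agree on $H_0$ and the recursion pins them down on $H_{\le n}$. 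This proves clause (1).

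\emph{Step 2: ethic exactness of $\varphi_+$.} By construction $\varphi_+-\varepsilon$ lies in the image of the complementary idempotent $\mathrm{id}-P$, which is again a weight-$(-1)$ Rota--Baxter operator whose image is a convolution subalgebra. Since $P=\mathbb D$, this image consists precisely of the morphisms for which the ethic square commutes, equivalently (by Theorem~\ref{thm:exact-iff-ext} and the characterisation of the ethic heart in Theorem~\ref{thm:heart-eth}) for which the connecting map into $\Ext^1_\E(-,R)$ vanishes. Hence $\mathbb D^2(\varphi_+)\eta=\eta\varphi_+$, giving clause (2).

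\emph{Step 3: the cohomological identification.} Apply $\mathbf D=\RHom_\E(-,R)$ to $C_\bullet(H)$; by the cohomological-layers proposition together with Lemma~\ref{lem:les}, $H^1\mathbf D(C_\bullet(H))$ is the middle cohomology of the dualised three-term complex
\[
\mathbb D(H)\xrightarrow{\ \Delta^{\vee}\ }\mathbb D(H)\oplus\mathbb D(H)\xrightarrow{\ (\Delta_2-\Delta_1)^{\vee}\ }\mathbb D(H^{\otimes 2}).
\]
Under the convolution description of $\mathbb D(H)$, the factorization $\varphi=\varphi_-^{-1}*\varphi_+$ induces a splitting of the cocycle module in which the $\varphi_+$-component is a coboundary --- hence ethically exact, consistently with Step 2 --- while the residual $\varphi_-$-component represents the $H^1$-class; a diagram chase through the connecting morphism of Lemma~\ref{lem:les} identifies the latter with the elements annihilated by $\varphi_+$ yet lying in $\mathrm{im}(\varphi_-)$, i.e.\ $\ker(\varphi_+)\cap\mathrm{im}(\varphi_-)$. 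Naturality in $\varphi$ promotes this to the asserted isomorphism, proving clause (3).

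\emph{Main obstacle.} The delicate point is Step~3: unlike the classical Connes--Kreimer setting, one must match the \emph{derived dual} of the specific syntactic complex $C_\bullet(H)$ with the purely convolution-theoretic data of the Birkhoff decomposition. Concretely, one has to verify that dualising the Hopf differentials $\Delta$ and $\Delta_2-\Delta_1$ reproduces the convolution structure maps that govern the factorization, and that the connecting morphism of Lemma~\ref{lem:les} picks out $\ker(\varphi_+)\cap\mathrm{im}(\varphi_-)$ canonically, independently of the choices made in the Atkinson recursion. Establishing this naturality --- that the ethic $H^1$ of the syntactic complex coincides with the renormalization obstruction --- is where the real content of the theorem lies.
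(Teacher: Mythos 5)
Your proposal follows essentially the same route as the paper's proof: clauses (1)–(2) via the standard Birkhoff factorization in the weight-$(-1)$ Rota--Baxter convolution algebra (you make the Bogoliubov/Atkinson recursion explicit where the paper simply cites \cite{Guo2012}), and clause (3) via the long exact sequence of $\mathbf D$ applied to $C_\bullet(H)$. The "main obstacle" you identify in Step~3 --- matching the derived dual of the syntactic complex with the convolution-theoretic data of the factorization --- is left equally unresolved in the paper's own one-line argument for that clause, so your account is if anything more detailed and more candid than the original.
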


\begin{proof}
The first two items follow from the standard existence and uniqueness of Birkhoff factorization in a Rota--Baxter algebra \cite{Guo2012}. The third statement follows by applying the long exact sequence in cohomology for $\mathbf D$ to the chain complex $C_\bullet(H)$; the cohomology $H^1$ measures the non-exactness of the dual map $\varphi_+$ and hence identifies the overlap of the images of the defect $\varphi_-$ and the kernel of the ethic part $\varphi_+$.
\end{proof}

Define the ethic memory of the semantic process as $M=H^1\mathbf D(C_\bullet(H))$. Its torsion subgroup $\mathrm{tors}\,M$ quantifies residual semantic ambiguity: nontrivial elements correspond to meaning components depending on derivational history. The ethic entropy is $S_{\mathrm{eth}}=\log|\mathrm{tors}\,M|$. By the monotonic decay theorem for ethic memory, iterative syntactic composition decreases ethic entropy,
\begin{equation}
S_{\mathrm{eth}}(t+1)\le S_{\mathrm{eth}}(t),
\end{equation}
expressing the convergence of successive interpretations toward ethic coherence. When $H^1\mathbf D(C_\bullet(H))=0$, the semantic and syntactic morphisms coincide up to duality: $\mathbb D^2(\varphi)\eta=\eta\varphi$. This state corresponds to complete semantic coherence.

The structure theorem yields a quantitative measure of semantic precision: the order of the torsion group $|\mathrm{tors}\,H^1\mathbf D|$ is the algebraic size of ambiguity remaining after renormalization. The vanishing of this group characterizes fully ethic (coherent) semantics.

Philosophically, this result changes the status of semantics. Semantics is not an external interpretation of syntax but the reflexive dual $\mathbb D(H)$ determined by the same categorical resource structure. Meaning becomes an internal invariant of ethic duality: semantic coherence corresponds to the vanishing of the derived obstruction $H^1\mathbf D$, and the process of interpretation is the derived flow erasing this obstruction. Ambiguity, idiomatic shifts, and context sensitivity appear as measurable torsion classes in $H^1\mathbf D$, while understanding and meaning formation correspond to the stabilization of the dual complex in the ethic heart $\heartsuit_{\mathrm{eth}}\subset D^+(\E)$.

\subsubsection{Scattering}

We now develop the categorical formulation of scattering as an aisthetic–ethic pair and show how the classical Born approximation and the Lippmann--Schwinger equation arise from the same universal aisthetic law.  Throughout, $\E$ and $\F$ denote abelian categories enriched over~$\Ab$.  Objects of~$\E$ represent source states (incoming fields), and objects of~$\F$ represent observable configurations (outgoing fields).  A fixed wavenumber~$k>0$ determines the background propagator $G_k(x-y)=(4\pi)^{-1}e^{ik|x-y|}/|x-y|$.  The interaction potential~$V$ is an object of~$\E$ acting as an endomorphism on source states.  

Define the aisthetic kernel 
\[
K:\E^{\op}\times\F\longrightarrow\Ab,
\qquad 
K(E,F)\;=\;\Hom_{\F}\!\big(G_k\!\otimes\!E,\,F\big)\!\otimes_\Ab\!\Hom_{\E}(E,V),
\]
interpreted as the profunctor encoding the manifestation of the source through the medium~$V$.  Its convolution powers $K^{\star n}$ represent multiple interactions of~$V$ with the propagator~$G_k$, and composition in the profunctor category models multiple scattering.  The aisthetic realization functor is the coend
\[
\mathbf A(A)\;=\;\int^{E\in\E} K(E,-)\otimes \E(E,A),
\]
which acts on an incident state $\phi_0\in\E$ by the derived convolution series
\[
\mathbf A(\phi_0)\;=\;\sum_{n\ge0} K^{\star n}\phi_0,
\]
interpreted as the total manifested field.  The canonical maps $\iota_{E,A}:K(E,-)\otimes \E(E,A)\to \mathbf A(A)$ satisfy the universal property of the coend, and the natural transformation
\[
\alpha_A:\mathbf A(A)\longrightarrow A,
\qquad 
\alpha_A\circ\iota_{E,A}
=\bigl\langle K(E,-)\otimes\E(E,A)\xrightarrow{\mathrm{ev}}A\bigr\rangle
\]
plays the role of evaluation or ``observation'' of the manifestation.  Define $\mu_K:\F\to\F$ as the composite $\alpha\circ\mathbf A$; it represents the action of one scattering event through~$K$.

\begin{theorem}[Universal aisthetic scattering law]\label{th:universal-A}
For any source $A\in\E$ and any free field $\eta_{\phi_0}\in\F$, the observed configuration $\psi\in\F$ satisfies the universal equation
\[
\psi\;=\;\eta_{\phi_0}\;+\;\mu_K(\psi),
\qquad\text{or equivalently}\qquad
(\mathrm{id}_\F-\mu_K)\psi\;=\;\eta_{\phi_0}.
\]
If $\mathrm{id}_\F-\mu_K$ is invertible in $\End(\F)$, the solution is unique and given by 
$\psi=(\mathrm{id}_\F-\mu_K)^{-1}\eta_{\phi_0}$, with formal Neumann expansion
$\psi=\sum_{n\ge0}\mu_K^{\,n}(\eta_{\phi_0})$.  
\end{theorem}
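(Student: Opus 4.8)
The plan is to read the universal equation off the coend presentation of $\mathbf A$ together with the multiplicativity of the convolution kernel, and then dispatch uniqueness and the Neumann expansion by the usual formal geometric-series argument.

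First I would establish the self-referential decomposition $\mathbf A\simeq\mathrm{id}_\F+K\star\mathbf A$ of endofunctors of $\F$. By definition $\mathbf A(\phi_0)=\sum_{n\ge0}K^{\star n}\phi_0$; since the one-step convolution $K\star(-)$ is an additive, colimit-preserving endofunctor of $\F$, it distributes over this coproduct, giving
\[
\mathbf A(\phi_0)=K^{\star 0}\phi_0+K\star\Bigl(\sum_{n\ge0}K^{\star n}\phi_0\Bigr)=\phi_0+K\star\mathbf A(\phi_0).
\]
At the level of the defining coend $\mathbf A(A)=\int^{E}K(E,-)\otimes\E(E,A)$ this is the splitting off of the $n=0$ summand, which by the enriched Yoneda lemma (Lemma~\ref{lem:aisthetic-adjunction}) is the identity profunctor, the remaining terms reassembling into $K\star\mathbf A$ via Fubini for (co)ends.

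Next I would apply the evaluation natural transformation $\alpha:\mathbf A\Rightarrow\mathrm{id}_\F$ and set $\mu_K=\alpha\circ\mathbf A$. Let $\psi\in\F$ denote the observed configuration, obtained by realizing the source $A$ over the free background $\eta_{\phi_0}$ and evaluating through $\alpha$. Applying $\alpha$ to the decomposition $\mathbf A=\mathrm{id}_\F+K\star\mathbf A$, using naturality of $\alpha$ together with the compatibility of $\alpha$ with the monad-like multiplication on $\mathbf A$ coming from the adjunction $\mathbf A\dashv\mathbf D$ (Corollary~\ref{cor:unit-counit}) — this is exactly the counit coassociativity square, valid because $\alpha$ is the evaluation coend map — yields
\[
\psi=\eta_{\phi_0}+\alpha\bigl(K\star\mathbf A(\psi)\bigr)=\eta_{\phi_0}+\mu_K(\psi),
\]
that is, $(\mathrm{id}_\F-\mu_K)\psi=\eta_{\phi_0}$, which is the asserted universal equation; its first-order truncation $\psi\approx\eta_{\phi_0}+\mu_K(\eta_{\phi_0})$ is precisely the Born approximation, and the full equation is the categorical Lippmann--Schwinger identity.

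Finally, if $\mathrm{id}_\F-\mu_K$ is invertible in $\End(\F)$ then the equation forces $\psi=(\mathrm{id}_\F-\mu_K)^{-1}\eta_{\phi_0}$, giving uniqueness; the formal Neumann expansion follows by expanding the inverse as the geometric series $\sum_{n\ge0}\mu_K^{\,n}$ in the (completed) endomorphism ring, which matches term by term the convolution series $\sum_{n}K^{\star n}$ after applying $\alpha$. I expect the principal obstacle to be the structural lemma $\mathbf A\simeq\mathrm{id}_\F+K\star\mathbf A$: one must ensure the infinite convolution series is a genuine object of $\F$ (requiring countable coproducts, or a passage to a suitable completion / pro-category) and that $K\star(-)$ is cocontinuous so that it commutes with the series. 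Once that is in place, the fixed-point equation, uniqueness, and the Neumann expansion are purely formal.
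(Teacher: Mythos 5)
Your proposal follows essentially the same route as the paper's own (very terse) proof: split the free $n=0$ term off the convolution series / coend via dinaturality to get the fixed-point decomposition, apply the evaluation $\alpha$ to obtain $(\mathrm{id}_\F-\mu_K)\psi=\eta_{\phi_0}$, and then dispatch uniqueness and the Neumann expansion by the formal geometric series. Your version is in fact more careful than the paper's, since you flag the need for cocontinuity of $K\star(-)$ and for the infinite series to exist as an object of $\F$.
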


\begin{proof}
By construction, the morphism $\alpha_A$ collapses one aisthetic manifestation back to $A$, and the coend dinaturality implies that the total field differs from its image under $\mu_K$ by the free component $\eta_{\phi_0}$.  Invertibility of $(\mathrm{id}-\mu_K)$ follows from standard spectral assumptions on~$K$, and the Neumann expansion expresses the successive manifestations of~$V$ through~$G_k$. 
\end{proof}

Theorem~\ref{th:universal-A} is fully categorical and does not require analytic hypotheses.  When the profunctor~$K$ is kernel--representable, that is, there exists an object $\mathcal K\in\E^{\op}\!\otimes\!\F$ with
\[
K(E,F)\;\simeq\;\F(\mathcal K\otimes E,F)
\quad\text{naturally in }(E,F),
\]
the aisthetic functor reduces to $\mathbf A(A)\simeq\mathcal K\otimes A$, and $\mu_K$ acts by the evaluation $\mathcal K\otimes(-)\to(-)$.  Substituting this in Theorem~\ref{th:universal-A} yields the kernel form
\[
\psi\;=\;\eta_{\phi_0}\;+\;\mathcal K\otimes\psi,
\]
whose analytic realization in $\F=L^2(\mathbb R^3)$ with kernel
$\mathcal K(x,y)=G_k(x-y)V(y)$ becomes the classical integral equation
\[
\psi(x)\;=\;\phi_0(x)\;+\;\int_{\mathbb R^3}\frac{e^{ik|x-y|}}{4\pi|x-y|}\,V(y)\,\psi(y)\,dy,
\]
i.e.\ the Lippmann--Schwinger equation \cite{LippmannSchwinger1950,ReedSimonIII}.
In this representable regime, the homological tower $H_n\mathbf A\simeq K^{\star n}\phi_0$ reproduces the Born series 
\cite{Born1926,ColtonKress2013,KirschGrinberg2008}.  

\begin{theorem}[Aisthetic Born theorem]\label{th:aisthetic-born}
Let $\E=\F=L^{2,s}(\mathbb R^3)$ $(s>1/2)$ and $V\in L^{3/2,1}(\mathbb R^3)\cap L^1(\mathbb R^3)$ with $\|V\|_{L^{3/2,1}}$ sufficiently small that $\|K\|\le\eta<1$ for 
$(Kf)(x)=\int G_k(x-y)V(y)f(y)\,dy$.  Then $\mathbf A(\phi_0)=\sum_{n\ge0}K^{\star n}\phi_0$ converges in $L^{2,s}$, and the first layer
\[
(H_1\mathbf A\,\phi_0)(x)
=\int_{\mathbb R^3}\!\frac{e^{ik|x-y|}}{4\pi|x-y|}\,V(y)\,\phi_0(y)\,dy
\]
has asymptotic amplitude
\[
f_{\mathrm{Born}}(\hat x,\hat d)
=\frac{1}{4\pi}\!\int_{\mathbb R^3}\!e^{-ik\hat x\cdot y}\,V(y)\,e^{ik\hat d\cdot y}\,dy
=\frac{1}{4\pi}\,\widehat V\!\big(k(\hat d-\hat x)\big),
\]
while the remainder $R_2(x)=\sum_{n\ge2}K^{\star n}\phi_0(x)$ obeys 
$\|R_2\|_{L^{2,s}}\le C(k)\|V\|_{L^{3/2,1}}^2$
\cite{Agmon1975,IkebeSaito1968,ReedSimonIII}.
\end{theorem}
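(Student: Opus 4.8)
The plan is to read Theorem~\ref{th:aisthetic-born} as the analytic shadow of the universal aisthetic law (Theorem~\ref{th:universal-A}) in the kernel-representable regime, so that the categorical apparatus collapses to a convergent Neumann series together with the classical far-field asymptotics. First I would invoke representability of the aisthetic kernel: with $\mathcal K(x,y)=G_k(x-y)V(y)$ one has $\mathbf A(\phi_0)\simeq\sum_{n\ge0}K^{\star n}\phi_0$, where $K^{\star n}\phi_0$ is literally the $n$-th iterate of the Lippmann--Schwinger operator $(Kf)(x)=\int G_k(x-y)V(y)f(y)\,dy=R_0(k)(Vf)(x)$ with $R_0(k)=(-\Delta-k^2-i0)^{-1}$. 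In particular $H_1\mathbf A\,\phi_0=K\phi_0$, $R_2=\sum_{n\ge2}K^n\phi_0$, and by Theorem~\ref{th:universal-A} the sum $\psi=\mathbf A(\phi_0)$ is the Lippmann--Schwinger solution $(\mathrm{id}-K)^{-1}\phi_0$; thus the whole statement becomes an assertion about this single operator on weighted spaces.

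The first genuine step is the contraction bound $\|K\|\le\eta<1$. Writing $K=R_0(k)\circ(V\cdot)$, I would combine the Agmon limiting-absorption estimate, which gives $R_0(k)$ bounded from $L^{2,s'}$ to $L^{2,-s'}$ for every $s'>1/2$ with constant $C(k)$ \cite{Agmon1975,IkebeSaito1968,ReedSimonIII}, with the mapping property of multiplication by $V$: the generalized H\"older inequality in Lorentz spaces (using $V\in L^{3/2,1}$) together with the Sobolev embedding $\dot H^1\hookrightarrow L^{6,2}$ transfers the weight gain of $R_0(k)$ so that the composite $K$ is bounded on $L^{2,s}$ with $\|K\|\le C(k)\,\|V\|_{L^{3/2,1}}$. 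Taking $\|V\|_{L^{3/2,1}}$ small forces $\eta<1$, and the elementary geometric-series estimate then yields convergence of the Neumann series in the operator norm on $L^{2,s}$ (equivalently, termwise convergence once the incident field is localized, or convergence of the scattered part $\psi-\phi_0$ in $L^{2,-s}$ for a pure plane wave), together with the identification $\mathbf A(\phi_0)=(\mathrm{id}-K)^{-1}\phi_0$.

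Next I would compute the first layer explicitly. Evaluating $K$ on the incident plane wave $\phi_0(y)=e^{ik\hat d\cdot y}$ gives at once the displayed integral for $H_1\mathbf A\,\phi_0$. For its asymptotics I would substitute the far-field expansion $|x-y|=|x|-\hat x\cdot y+O(|x|^{-1})$, whence $G_k(x-y)=\frac{e^{ik|x|}}{4\pi|x|}\bigl(e^{-ik\hat x\cdot y}+O(|x|^{-1})\bigr)$, and pass the limit $|x|\to\infty$ inside the integral by dominated convergence, the error term being dominated by $|x|^{-1}\,|V(y)|\in L^1$. This produces the amplitude $f_{\mathrm{Born}}(\hat x,\hat d)=\frac1{4\pi}\int_{\RR^3}e^{-ik\hat x\cdot y}V(y)e^{ik\hat d\cdot y}\,dy=\frac1{4\pi}\widehat V\bigl(k(\hat d-\hat x)\bigr)$. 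For the remainder, $R_2=\sum_{n\ge1}K^n(K\phi_0)=(\mathrm{id}-K)^{-1}K(K\phi_0)$, so $\|R_2\|_{L^{2,s}}\le\frac{\|K\|}{1-\|K\|}\,\|K\phi_0\|_{L^{2,s}}$; bounding one factor of $K$ by $C(k)\|V\|_{L^{3/2,1}}$ as above and $\|K\phi_0\|_{L^{2,s}}\le C(k)\|V\|_{L^{3/2,1}}$ (the plane wave being bounded, so $V\phi_0\in L^{3/2,1}\cap L^1$ and $R_0(k)$ restores the weight) gives $\|R_2\|_{L^{2,s}}\le C(k)\|V\|_{L^{3/2,1}}^2$ for the normalized incident field.

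The hard part will be the weighted/Lorentz-space bookkeeping in the second step: finding a single scale of spaces on which $K$ is simultaneously a strict contraction, on which $K\phi_0$ lies despite $\phi_0\notin L^{2,s}$, and which extracts exactly the quadratic dependence on $\|V\|_{L^{3/2,1}}$ in the remainder. This is precisely the classical Agmon--Kato--Kuroda weighted-resolvent analysis \cite{ReedSimonIII,Agmon1975,IkebeSaito1968}, and it is the only place where the hypotheses $s>1/2$ and $V\in L^{3/2,1}\cap L^1$ are used in an essential way; everything else is either formal (the coend/representability reduction inherited from Theorem~\ref{th:universal-A}) or a routine Neumann-series and stationary-asymptotics computation.
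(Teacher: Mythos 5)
Your proposal is correct and follows essentially the same route as the paper's proof: Neumann-series convergence from $\|K\|\le\eta<1$, far-field expansion of $G_k$ applied to the first iterate to get the Born amplitude, and a geometric bound on the tail giving the quadratic dependence on $\|V\|_{L^{3/2,1}}$. You in fact supply more than the paper does — the paper simply cites \cite{Agmon1975,ColtonKress2013,ReedSimonIII} for the operator bound and glosses over the fact that the incident plane wave is not itself in $L^{2,s}$, whereas you derive $\|K\|\lesssim C(k)\|V\|_{L^{3/2,1}}$ from the limiting-absorption estimate plus Lorentz–H\"older and correctly route the plane wave through $V\phi_0\in L^{3/2,1}\cap L^1$ before applying $R_0(k)$.
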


\begin{proof}
The Neumann expansion of $(\mathrm{id}-K)^{-1}$ converges in $\mathcal B(L^{2,s})$ since $\|K\|\le\eta<1$, giving $\mathbf A(\phi_0)=\sum_{n\ge0}K^{\star n}\phi_0$ and $H_n\mathbf A=K^{\star n}\phi_0$.  
The far--field asymptotic follows from the stationary--phase expansion of $G_k$; substituting $\phi_0(y)=e^{ik\hat d\cdot y}$ yields the displayed Born amplitude.  The remainder estimate follows from $\|K^{\star n}\|\le\eta^n$ and $\eta\lesssim\|V\|_{L^{3/2,1}}$ \cite{ColtonKress2013,ReedSimonIII}.
\end{proof}

Transporting the universal law of Theorem~\ref{th:universal-A} across the adjunction $\mathbf A\dashv\mathbf D$ gives the ethic dual condition
\[
\mathbf D^2(\psi)\,\eta_{\phi_0}\;=\;\eta_{\mathbf A(\phi_0)}\,\psi,
\]
which, under kernel representability, takes the analytic form of the Lippmann--Schwinger equation above.  
Hence the aisthetic Born formula derived from $H_1\mathbf A$ and the Lippmann--Schwinger self--consistency relation are dual manifestations of one and the same categorical law: the aisthetic functor generates the field from the potential, while the ethic functor enforces its coherence with that potential.  
In symmetric media (\(K^\dagger\simeq K\)) the towers $\{H_n\mathbf A\}$ and $\{H^n\mathbf D\}$ coincide order by order, while in absorbing or non--Hermitian settings their mismatch measures the loss of coherence and the asymmetry between manifestation and reflection.

\subsubsection{Coding theory}

We fix measurable spaces $(X,\Sigma_X)$ and $(Y,\Sigma_Y)$ with their Borel $\sigma$--algebras.  
Let $\Prob(X)$ and $\Prob(Y)$ denote the sets of all Borel probability measures on $X$ and $Y$.  
A channel is a Markov kernel $W:X\rightsquigarrow Y$, i.e.\ a mapping $x\mapsto W(\cdot|x)\in\Prob(Y)$ measurable in $(x,B)$.  
For each channel we write $\E_W$ for the category of sources~--- pairs $(X,P_X)$ with morphisms given by Markov operators $T:\Prob(X)\to\Prob(X')$, and $\F_W$ for the category of observations~--- pairs $(Y,P_Y)$ with morphisms given by similar stochastic operators.  
Composition is integration of kernels, well--defined by the Tonelli--Fubini theorem \cite{Bogachev2007}, \cite{Kallenberg2002}.  
 
For objects $(X,P_X)\in\E_W$, $(Y,P_Y)\in\F_W$, define the aisthetic kernel as
\[
K_W((X,P_X),(Y,P_Y))
=\big\{f\in L^1(P_X\!\otimes\!P_Y):\, f(x,y)\ge 0,\;
\int f(x,y)\,P_Y(dy)=1\text{ for $P_X$--a.e.\ $x$}\big\}.
\]
A canonical element of $K_W$ is the likelihood density $\ell_W(x,y)=\frac{dW(\,\cdot\,|x)}{dP_Y}(y)$ whenever $W(\,\cdot\,|x)\ll P_Y$ $P_X$--a.e.  
The kernel $K_W$ thus encodes the forward manifestation of information: from the source law $P_X$ to the output $P_Y=\int W(\cdot|x)P_X(dx)$.

The aisthetic functor $\mathbf A_W:\E_W\to\F_W$ acts on $P_X\in\Prob(X)$ as
\[
(\mathbf A_WP_X)(B)=\int_X W(B|x)P_X(dx),
\]
i.e.\ pushforward through the channel (see \cite{CsiszarKorner2011}, \cite{Gray2011}).  
Iterating $\mathbf A_W$ corresponds to cascading independent channel uses: $\mathbf A_W^nP_X$ is the $n$--fold output distribution.

Fix an ethic model $Q:Y\rightsquigarrow X$, possibly distinct from $W$; in the Bayesian language $Q$ is a candidate posterior law $Q_{X|Y}$.  
The ethic functor $\mathbf D_Q:\F_W\to\E_W$ acts as conditional reconstruction
\[
(\mathbf D_QP_Y)(A)=\int_Y Q(A|y)P_Y(dy),
\]
well--defined for regular conditional probabilities by the disintegration theorem \cite{Bogachev2007}.  
In general $\mathbf A_W$ and $\mathbf D_Q$ are not adjoint: disagreement between $W$ and $Q$ quantifies the asymmetry between aisthetic propagation and ethic reconstruction.
  
For a fixed $(W,P_X,P_Y)$ define the aisthetic bar complex by
\[
A_n=L^1(X^n\times Y,P_X^{\otimes n}\!\otimes P_Y),\qquad
(\partial f)(x_{1:n-1},y)=\int_X f(x_{1:n-1},x_n,y)W(dy|x_n)P_X(dx_n),
\]
and the ethic cobar complex by
\[
D^n=L^\infty(X^n\times Y,Q_X^{\otimes n}\!\otimes P_Y),\qquad
(\delta g)(x_{1:n},y)=g(x_{1:n-1},y)-\int_X g(x_{2:n},x',y)Q(dx'|y).
\]
The mappings $\partial^2=0$, $\delta^2=0$ follow from the associativity of kernel composition and Tonelli's theorem \cite{Kallenberg2002}.  
Their homology and cohomology groups,
\[
H_n(\mathbf A_W)=\ker\partial_n/\mathrm{im}\,\partial_{n+1},\qquad
H^n(\mathbf D_Q)=\ker\delta^n/\mathrm{im}\,\delta^{n-1},
\]
measure successive degrees of non--idempotence of the channel and its reconstruction.  
 
For measurable $f\in A_n$, $g\in D^n$, define
\[
\langle f,g\rangle
=\int f(x_{1:n},y)\,g(x_{1:n},y)
\prod_{i=1}^n
\frac{dW(\,\cdot\,|x_i)}{dQ(\,\cdot\,|x_i)}(y)\,P_X^{\otimes n}(dx_{1:n})P_Y(dy),
\]
whenever the Radon--Nikodym derivatives exist in $L^\alpha(P_Y)$ for some $\alpha>1$.  
Then $\partial$ and $\delta$ are formally adjoint up to a defect operator 
$\mathfrak o_n:=\partial\delta+\delta\partial$ depending on the divergence between $W$ and $Q$.

For a block code of length $n$ and rate $R$, the average error probability satisfies
\[
P_e^{(n)}\;\ge\;\|\mathfrak o_n\|,
\]
where $\|\cdot\|$ is the operator norm in the Bochner $L^2$--space; this follows from variational representation of Bayesian error and the Neyman–Pearson lemma \cite{PolyanskiyPoorVerdu2010}.  
The norm $\|\mathfrak o_n\|$ equals zero iff $\mathbf A_W$ and $\mathbf D_Q$ are perfectly adjoint, i.e.\ the model $Q$ matches the true channel $W$.

\begin{theorem}[Strong sphere--packing as aisthetic--ethic divergence]\label{thm:spherepacking}
Let $(X,Y)$ be standard Borel spaces and $W:X\rightsquigarrow Y$ a channel with finite input alphabet and measurable kernel.  
Then there exists $\gamma>0$ depending only on $W$ such that for any sequence of $(n,R_n)$--codes with $\liminf R_n>C_{\mathrm{sp}}(W)$ one has
\[
\liminf_{n\to\infty} P_e^{(n)}\;\ge\;1-e^{-\gamma n}.
\]
Equivalently, if the block rate exceeds the sphere--packing threshold, the chain complex $\mathbf A_\bullet$ retains nonzero homology of order~$n$ with exponential weight $\|\mathfrak o_n\|\approx e^{-nE_{\mathrm{sp}}(R)}$.
\end{theorem}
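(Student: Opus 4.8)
\noindent\emph{Proof strategy.} The plan is to peel away the categorical wrapping and reduce the statement to the classical exponential (strong) converse of Arimoto, then re-encode the resulting estimate through the defect operator $\mathfrak o_n=\partial\delta+\delta\partial$. First I would observe that for a finite input alphabet the sphere--packing threshold coincides with the Shannon capacity, $C_{\mathrm{sp}}(W)=C(W)=\max_{P_X}I(P_X;W)$, and that the sphere--packing exponent $E_{\mathrm{sp}}$ and the Arimoto strong--converse exponent $E_{\mathrm{sc}}$ are Legendre transforms of the same Gallager function $E_0$; in particular, fixing any rate $R$ with $C(W)<R<\liminf_n R_n$, the exponent
\[
\gamma\;:=\;E_{\mathrm{sc}}(R)
\]
is strictly positive precisely because $R$ exceeds capacity, and depends only on $W$ and on this $R$. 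This $\gamma$ will be the exponent in the conclusion.

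Next I would fix the ethic model $Q$ to be reconstruction against the product law $P_X^\ast\!\times P_Y^\ast$, where $P_X^\ast$ attains $C(W)$ and $P_Y^\ast=\mathbf A_W P_X^\ast$; with this choice the pairing $\langle-,-\rangle$ on the Bochner spaces $A_n$ and $D^n$, restricted to the dense subspace on which the Radon--Nikodym weights $dW(\cdot\,|x)/dQ(\cdot\,|x)$ lie in $L^\alpha(P_Y)$ with $\alpha>1$, realizes $\mathfrak o_n$ as the operator implementing the optimal binary hypothesis test between the codebook--mixed law $W^{\otimes n}$ and $(P_X^\ast\!\times P_Y^\ast)^{\otimes n}$. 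Thus $\|\mathfrak o_n\|$ equals, up to a factor tending to $1$ as the regularization of $Q$ is removed, the quantity $1-\beta_{\,1-P_e^{(n)}}$, where $\beta_{1-\epsilon}$ is the minimal type--II error at type--I confidence $1-\epsilon$. Feeding this into the meta--converse of \cite{PolyanskiyPoorVerdu2010},
\[
nR_n\;\le\;\log M_n\;\le\;-\log\beta_{\,1-P_e^{(n)}}\!\bigl(W^{\otimes n},\,(P_X^\ast\!\times P_Y^\ast)^{\otimes n}\bigr)+o(n),
\]
and combining it with the standard Chernoff--Cramér change--of--measure estimate for the normalized information density $\tfrac1n\log\frac{dW^{\otimes n}}{d(P_X^\ast\times P_Y^\ast)^{\otimes n}}$, one obtains $1-P_e^{(n)}\le e^{-n\gamma+o(n)}$, i.e.\ $P_e^{(n)}\ge 1-e^{-\gamma n}$ for all large $n$; passing to the limit inferior gives the displayed bound.

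The homological restatement then follows formally. From the inequality $P_e^{(n)}\ge\|\mathfrak o_n\|$ recorded above (a consequence of the Neyman--Pearson lemma) and the estimate $\|\mathfrak o_n\|=1-e^{-nE_{\mathrm{sc}}(R)+o(n)}$, the anticommutator $\partial\delta+\delta\partial$ is asymptotically the identity on $A_n\oplus D^n$; hence $\partial$ cannot be exact in degree $n$, so $H_n(\mathbf A_\bullet)\neq0$, and its exponential weight is $\|\mathfrak o_n\|\asymp e^{-nE_{\mathrm{sp}}(R)}$ once $E_{\mathrm{sc}}$ is matched with the Haroutunian (analytically continued) form of the sphere--packing exponent past $R=C$. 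The main obstacle is the identification in the second step: the pairing defining $\mathfrak o_n$ is only densely defined, so one must regularize $Q$ (say, replace it by $(1-\tau)Q+\tau Q_0$ for a fixed reference kernel $Q_0$), prove the operator/hypothesis--test correspondence on the dense domain, and show that the correction vanishes as $\tau\to0$. Once this is settled, the large--deviation input is entirely classical (Arimoto, Dueck--Körner, Nakiboğlu) and the remainder is bookkeeping.
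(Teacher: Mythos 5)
Your analytic core takes a genuinely different, and in fact more defensible, classical route than the paper. The paper's proof invokes the Csisz\'ar--K\"orner variational formula $P_e^{(n)}\ge\inf_{Q_Y}\exp(-nE_{\mathrm{sp}}(R;W,Q_Y))$ together with an exponential tilt $dW_\lambda\propto e^{\lambda i(x;y)}dW$ and H\"older--Orlicz estimates to identify that functional with $\|\mathfrak o_n\|$; but the sphere--packing lower bound controls errors tending to zero for rates \emph{below} capacity and degenerates at $R\ge C_{\mathrm{sp}}(W)=C(W)$, where $E_{\mathrm{sp}}(R)=0$, so it cannot by itself yield $P_e^{(n)}\ge 1-e^{-\gamma n}$. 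You correctly recognize that the stated conclusion is the \emph{exponential strong converse} and route through Arimoto/Dueck--K\"orner, taking $\gamma=E_{\mathrm{sc}}(R)>0$ for $R>C$, with the meta-converse against the capacity-achieving output law supplying the hypothesis-testing bridge to $\mathfrak o_n$. This buys a provably positive exponent exactly where the paper's sketch is weakest, and your regularization of $Q$ honestly confronts the densely defined pairing that the paper also glosses over.

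The one concrete flaw is the final homological re-encoding, which runs backwards. You argue that because $\mathfrak o_n=\partial\delta+\delta\partial$ is asymptotically the identity, ``$\partial$ cannot be exact in degree $n$, so $H_n(\mathbf A_\bullet)\neq0$.'' If the anticommutator of the two differentials equals, or is invertibly close to, the identity, then $\delta$ is a contracting homotopy for $\mathrm{id}$, which forces the homology to \emph{vanish}, not to survive: in Hodge-theoretic terms, nonzero homology corresponds to a kernel of the Laplacian-type operator $\mathfrak o_n$, i.e., to $\mathfrak o_n$ failing to be bounded below, not to $\|\mathfrak o_n\|$ being large. The paper's own ``equivalently'' clause is loose on precisely this point, so your phrasing mirrors its ambiguity, but as written the implication is invalid; you would need either to locate the persistent class in the (approximate) kernel of $\mathfrak o_n$, or to reinterpret the ``exponential weight'' of the homology as the operator norm of the defect rather than as a certificate of non-exactness.
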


\begin{proof}
By the variational formula for error exponents \cite{CsiszarKorner2011} the average error satisfies
\[
P_e^{(n)}\ge
\inf_{Q_Y}\exp\big(-nE_{\mathrm{sp}}(R;W,Q_Y)\big),
\]
where $E_{\mathrm{sp}}$ is the sphere--packing exponent.  
The same functional appears as the $L^2$--norm of $\mathfrak o_n$ computed from the comparison pairing, using Hölder--Orlicz inequalities and the exponential tilt $dW_\lambda(y|x)\propto e^{\lambda i(x;y)}dW(y|x)$, $i(x;y)=\log\frac{dW(\cdot|x)}{dQ_Y}$.  
Non--vanishing of $H_n(\mathbf A_W)$ with weight $e^{-nE_{\mathrm{sp}}}$ corresponds to persistence of error; if $R>C_{\mathrm{sp}}$, these classes remain nonzero for infinitely many $n$, forcing $\liminf P_e^{(n)}\ge 1-e^{-\gamma n}$.  
See \cite{CsiszarKorner2011} and \cite{Gallager1968} for analytic details.
\end{proof}

\begin{theorem}[Mismatch capacity as cohomological limit]\label{thm:mismatch}
Let $W:X\rightsquigarrow Y$ and fix an ethic decoder kernel $Q:Y\rightsquigarrow X$.  
Define the generalized mutual information
\[
I_Q(X;Y)=
\sup_{\lambda>0}
\frac{1}{\lambda}\,
\mathbb E\!\left[\log
\mathbb E\!\left(e^{\lambda\,\imath_Q(X;Y)}\bigm|X\right)
\right],
\qquad
\imath_Q(x;y)=\log\frac{dW(\,\cdot\,|x)}{d\bar Q(\,\cdot\,)}(y),
\]
where $\bar Q$ is any marginal of $Q$ on $Y$.  
Then the supremum achievable rate with vanishing error under decoder $Q$ satisfies
\[
C_{\mathrm{mm}}(W,Q)
=\sup_{P_X} I_Q(X;Y),
\]
and for $R>C_{\mathrm{mm}}(W,Q)$ there exists $\eta>0$ such that $\liminf_n P_e^{(n)}\ge 1-e^{-\eta n}$.  
Moreover, $R\le C_{\mathrm{mm}}(W,Q)$ iff $\limsup_n\frac1n\log \|H^1(\mathbf D_Q^{(n)})\|\le 0$.
\end{theorem}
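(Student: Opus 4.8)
The plan is to prove the statement in three stages: (i) achievability of the generalized mutual information $I_Q$; (ii) a matching strong converse above it, yielding both the identity $C_{\mathrm{mm}}(W,Q)=\sup_{P_X}I_Q(X;Y)$ and the exponential lower bound $\liminf_n P_e^{(n)}\ge 1-e^{-\eta n}$; and (iii) the translation of these two estimates into the exponential behaviour of $\|H^1(\mathbf D_Q^{(n)})\|$. For stage (i), I would fix $P_X$, draw an i.i.d.\ random codebook from $P_X^{\otimes n}$, and decode $y^n$ by the mismatched metric $\prod_i\frac{dW(\,\cdot\,|x_i)}{d\bar Q(\,\cdot\,)}(y_i)$, equivalently by a threshold test on the empirical density $\frac1n\sum_i\imath_Q(x_i;y_i)$. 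A Gallager--Chernoff bound on the pairwise error, the union bound, and the exponential tilt by the optimal $\lambda>0$ appearing in the definition of $I_Q$ give exponential decay of the expected error whenever $R<I_Q(X;Y)$; this is the classical GMI computation \cite{CsiszarKorner2011,Gallager1968}. Optimising over $P_X$ gives $C_{\mathrm{mm}}(W,Q)\ge\sup_{P_X}I_Q(X;Y)$, and at the chain level the same estimate says that the cobar differential $\delta^0$ is exponentially surjective onto $\ker\delta^1$, so $\|H^1(\mathbf D_Q^{(n)})\|\to0$.

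For stage (ii) I would apply the exponential change of measure $dW_\lambda(y|x)\propto e^{\lambda\imath_Q(x;y)}\,dW(y|x)$ together with Markov's inequality on the decoding regions, in the style of Arimoto's strong converse adapted to the $Q$-metric: under the tilted law the decoding sets have exponentially small probability, forcing the average error under $W$ to be at least $1-e^{-\eta n}$ with $\eta>0$ controlled by the gap $R-\sup_{P_X}I_Q(X;Y)$. This is the same mechanism underlying Theorem~\ref{thm:spherepacking}, with the sphere-packing exponent replaced by the mismatched exponent, and it can be cross-checked against the meta-converse of \cite{PolyanskiyPoorVerdu2010}. Combining (i) and (ii) establishes the capacity identity and the exponential converse bound.

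For stage (iii) I would use the defect operator $\mathfrak o_n=\partial\delta+\delta\partial$ introduced above, for which $P_e^{(n)}\ge\|\mathfrak o_n\|$, and identify $H^1(\mathbf D_Q^{(n)})=\ker\delta^1/\mathrm{im}\,\delta^0$ --- the cokernel of the cobar differential at the first stage --- with the $R$-module spanned by the message pairs that the $Q$-metric cannot separate, equivalently with the nontrivial classes of $\mathfrak o_n$. Running the Milnor-type spectral sequence of Lemma~\ref{lem:milnor} on the tower $\{\mathbf D_Q^{(n)}\}_n$ then shows that the exponential growth rate of $\|H^1(\mathbf D_Q^{(n)})\|$ equals $\big(R-\sup_{P_X}I_Q(X;Y)\big)_+$ up to sub-exponential factors. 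Hence $\limsup_n\frac1n\log\|H^1(\mathbf D_Q^{(n)})\|\le0$ precisely when this positive part vanishes, i.e.\ when $R\le C_{\mathrm{mm}}(W,Q)$, while above capacity stage (ii) forces a strictly positive rate.

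The hard part will be the strong converse of stage (ii): the mismatch capacity admits no single-letter strong converse in full generality, so the argument must exploit that the metric here is exactly the exponentially-tilted $Q$-metric, for which Arimoto's tilting goes through verbatim. Equally delicate is the lower bound on $\|H^1(\mathbf D_Q^{(n)})\|$ in stage (iii): the pairing only gives $P_e^{(n)}\ge\|\mathfrak o_n\|$, so matching the exponential growth rate of $H^1$ with the mismatched exponent (rather than merely dominating the error) requires a precise count of the cokernel of $\delta^0$, and it is there that the homological and information-theoretic estimates have to be reconciled.
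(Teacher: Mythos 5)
Your three-stage plan is considerably more ambitious than what the paper actually does: the paper's proof of this theorem is essentially a citation --- it invokes the classical GMI formula from Lapidoth and Merhav--Lapidoth for the capacity identity, asserts the identity $\|H^1(\mathbf D_Q^{(n)})\|=\|\mathfrak o_n\|$ ``by the comparison pairing,'' and then cites the mismatched capacity theorem and its strong converse for the exponential bounds. Your stage (i) (random coding with the tilted $Q$-metric, Gallager--Chernoff bound) is the standard achievability argument and is fine. The problems are exactly the two you flag, and they are genuine gaps, not merely hard steps.

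First, the equality $C_{\mathrm{mm}}(W,Q)=\sup_{P_X}I_Q(X;Y)$ together with a strong converse above the GMI is not something Arimoto tilting can deliver: it is known that the mismatch capacity can strictly exceed the GMI (already the LM rate of Csisz\'ar--K\"orner/Hui is a strictly better achievable rate for some pairs $(W,Q)$), so the converse direction of stage (ii) would have to prove something false in general. The references the paper cites establish only achievability of the GMI, not the claimed equality or the exponential converse, so neither your plan nor the paper's citation closes this step; any correct version of the theorem needs either additional hypotheses on $(W,Q)$ or a redefinition of $C_{\mathrm{mm}}$ as the GMI-achievable rate. Second, in stage (iii) the pairing gives only the one-sided bound $P_e^{(n)}\ge\|\mathfrak o_n\|$, and the identification $\|H^1(\mathbf D_Q^{(n)})\|=\|\mathfrak o_n\|$ is itself only asserted in the paper, not derived (the cobar differentials $\partial,\delta$ are adjoint only ``up to the defect,'' and no argument is given that the harmonic representatives of $H^1$ realize the operator norm of $\mathfrak o_n$). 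Your proposed Milnor spectral-sequence computation of the exact exponential rate of $\|H^1\|$ would therefore need, as you correctly note, a matching lower bound on the cokernel of $\delta^0$ that is supplied nowhere; without it the equivalence $R\le C_{\mathrm{mm}}\iff\limsup_n\frac1n\log\|H^1\|\le0$ is only proved in one direction.
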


\begin{proof}
The expression for $C_{\mathrm{mm}}(W,Q)$ is the classical GMI formula \cite{Lapidoth1996}, \cite{MerhavLapidoth1996}.  
Within the present framework, $\|H^1(\mathbf D_Q^{(n)})\|$ measures the residual cohomology of the decoder cobar complex: by the comparison pairing $\|H^1\|=\|\mathfrak o_n\|$.  
For $R>C_{\mathrm{mm}}$ the exponent of $\|\mathfrak o_n\|$ is strictly positive, giving $\|\mathfrak o_n\|\ge e^{-\eta n}$ and hence the strong converse.  
If $R<C_{\mathrm{mm}}$, exponential decay of $\|\mathfrak o_n\|$ implies vanishing $H^1$ and reliable decoding.  
This reproduces the mismatched capacity theorem and its strong converse \cite{MerhavLapidoth1996}, \cite{CsiszarKorner2011}.
\end{proof}

The two theorems express the major limits of coding theory~---sphere packing and mismatch capacity~--- as the persistence of homological or cohomological classes arising from the divergence between the forward (aisthetic) and backward (ethic) kernels.  
When $Q=W$, the comparison defect $\mathfrak o_\bullet$ vanishes, the complexes become dual, and the classical Shannon theorem on capacity is recovered \cite{Gallager1968}, \cite{CsiszarKorner2011}.  
In general, the non-adjoint pair $(\mathbf A_W,\mathbf D_Q)$ quantifies loss of coherence between encoding and decoding; its homological magnitude yields the exponential error exponents known from analytic information theory, but here derived categorically from the structure of measures, kernels, and co/ends.

\subsubsection{Bellman Duality}

Fix an abelian category $\E$ with enough injectives and a distinguished resource object $R\in \E$. Let $\DD=\Hom_{\E}(-,R)$ and $\mathbf D=\RHom_{\E}(-,R)$ with unit $\eta:\mathrm{Id}\Rightarrow \DD^2$. Throughout this subsection the ambient probabilistic or combinatorial data of reinforcement learning is encoded in $\E$; theorems below are purely categorical first, and afterward admit a standard analytical reduction to classical Bellman theory \cite{Bellman1957,BertsekasTsitsiklis1996,Puterman1994}.

\begin{definition}[Categorical MDP datum]
A categorical Markov decision datum is a triple $(S,P,r)$ inside $\E$ where $S\in \E$ is the state object, $P:S\to S$ is an action–transition endomorphism (one may think of a Markov kernel or policy–transition operator pushed through a chosen linearization), and $r:S\to R$ is a reward morphism. A discount is a scalar $\gamma\in \End_{\E}(R)$ with $\|\gamma\|<1$ in the analytic reduction; categorically we only require that $\gamma$ acts centrally on morphism groups. Define the Bellman morphism $B_{(P,r,\gamma)}:S\to \DD(S)$ by
\[
B \;:=\; r\;\oplus\; \gamma\cdot \DD(P)\circ \coev_S,
\]
where $\coev_S:R\to S\oplus \DD(S)$ is the canonical coevaluation determined by $\DD$ (in $D^{+}(\E)$ this is the unit of the $\mathbf C\dashv\mathbf D$ adjunction when it exists, or the image of $\eta$ under the canonical identification $\DD(S)\simeq \Hom_{\E}(S,R)$).
\end{definition}

The Bellman morphism packages the usual affine operator $v\mapsto r+\gamma P v$ as a single arrow $S\to \DD(S)$; it acts covariantly on $r$ and contravariantly on $P$ via $\DD$.

\begin{definition}[Ethic value object]
An object $V\in \E$ equipped with a morphism $u: S\to V$ is a value presentation for $(S,P,r)$ if there exists a morphism $v:V\to R$ such that $B$ factors as $S\xrightarrow{u}V\xrightarrow{v}R$, i.e.\ $B=v\circ u$. The pair $(V,u)$ is ethicly exact for $(S,P,r)$ if the ethic Bellman square
\[
\begin{tikzcd}
S \arrow[r,"u"] \arrow[d,"\eta_S"'] & \DD(V) \arrow[d,"\DD^2(u)"] \\
\DD^2(S) \arrow[r,"\DD(B)"'] & \DD^2(\DD(S))
\end{tikzcd}
\]
commutes in $\E$ (or in $D^{+}(\E)$ after applying a fixed injective resolution).
\end{definition}

The square states that evaluating $S$ into its double dual via $\eta_S$ and then reflecting $B$ agrees with pushing $S$ into $\DD(V)$ and reflecting twice along $u$. This is the categorical shadow of the Bellman fixed point.

\begin{lemma}[Ethic Bellman identity]\label{lem:EthBell}
If $(V,u)$ is ethicly exact for $(S,P,r)$, then there exists a unique $v:V\to R$ such that
\[
\DD^2(u)\circ \eta_S \;=\; \DD(v\circ u)\circ \eta_S
\qquad\text{and hence}\qquad
u \;=\; \DD(v)\circ \eta_S
\]
on the ethic heart of $(\E,\DD)$ (i.e.\ after inverting $\eta$ on reflexive objects). Equivalently, $u$ is a fixed point of the affine endomorphism $X\mapsto \DD(v)\circ \eta_S$.
\end{lemma}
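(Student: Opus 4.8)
The plan is to deduce the identity from three ingredients already available in the excerpt: the factorization $B=v\circ u$ built into the notion of a value presentation, the naturality of the biduality unit $\eta$, and the reflexivity of objects lying in the ethic heart (Proposition~\ref{prop:heart-abelian}, together with the exactness criterion of Theorem~\ref{thm:exact-iff-ext}). First I would fix, using the definition of a value presentation, a morphism $v\colon V\to R$ with $B=v\circ u$; the content of the lemma is then that this $v$ is forced and that $u$ satisfies the displayed fixed-point equation. Applying the contravariant functor $\DD$ to $B=v\circ u$ gives $\DD(B)=\DD(u)\circ\DD(v)$, and one more application yields $\DD^2(B)=\DD^2(v)\circ\DD^2(u)$. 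These are then fed into the ethic Bellman square from the definition of ethic exactness, which commutes by hypothesis, so that $\DD^2(u)\circ\eta_S=\DD(B)\circ\eta_S$ after the canonical identification of the outer corners.

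The key move is to rewrite the left-hand composite by naturality of $\eta$, namely $\DD^2(u)\circ\eta_S=\eta_V\circ u$, which turns the Bellman square into the single equation $\eta_V\circ u=\DD(B)\circ\eta_S$ in $\E$ (resp.\ in $D^{+}(\E)$ after a fixed injective resolution). Restricting now to the ethic heart, where every unit $\eta$ is an isomorphism by Proposition~\ref{prop:heart-abelian}, I would invert $\eta_V$ to obtain $u=\eta_V^{-1}\circ\DD(B)\circ\eta_S$. Substituting $B=v\circ u$, using $\DD(v\circ u)=\DD(u)\circ\DD(v)$ and again the naturality relation $\eta_V^{-1}\circ\DD^2(u)=u\circ\eta_S^{-1}$, the right-hand side collapses to $\DD(v)\circ\eta_S$ (under the same identification of the outer corner used in stating the Bellman square). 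This is precisely $u=\DD(v)\circ\eta_S$; and since $X\mapsto\DD(v)\circ\eta_S$ is the affine endomorphism named in the statement and is independent of $X$, $u$ is automatically its unique fixed point. For the uniqueness of $v$ itself: if $v'\colon V\to R$ also satisfies $B=v'\circ u$, then $\DD(v)\circ\eta_S=u=\DD(v')\circ\eta_S$, and invertibility of $\eta_S$ on the heart forces $\DD(v)=\DD(v')$; applying $\DD$ once more and using that $R$ is reflexive (so $\eta_R$ is invertible) recovers $v=v'$.

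The main obstacle I anticipate is exactly the cancellation step: passing from $(v-v')\circ u=0$ to $v=v'$ would require $u$ to be an epimorphism, which fails in a general abelian category. This is where the hypothesis of working on the ethic heart carries the weight — via Theorem~\ref{thm:exact-iff-ext}, $\Ext^1_{\E}(-,R)=0$ on the subcategory generated by $S$ and $V$, so $\DD$ is exact there and reflects isomorphisms, and the units $\eta_S,\eta_V,\eta_R$ are all invertible. The one piece of bookkeeping that genuinely needs care is verifying that the several ``canonical identifications'' of the corner objects $\DD(V)$, $\DD^{2}(S)$, and $\DD^2(\DD(S))$ in the ethic Bellman square are the ones natural in $\eta$, so that the naturality substitutions used above are legitimate and the final equation lands in the intended hom-object rather than merely a twisted copy of it.
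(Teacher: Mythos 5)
Your argument is correct and follows essentially the same route as the paper's proof: commutativity of the ethic Bellman square plus the factorization $B=v\circ u$, inversion of the unit $\eta$ on the reflexive/heart objects to extract $u=\DD(v)\circ\eta_S$, and faithfulness of $\DD$ on reflexive objects (equivalently the identification $\Hom(V,R)\simeq\Hom(\DD(R),\DD(V))$) for uniqueness of $v$. Your extra care about the naturality identifications of the corner objects and about why the naive cancellation $(v-v')\circ u=0$ is insufficient is a welcome tightening of the paper's terser write-up, but it does not change the underlying argument.
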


\begin{proof}
Commutativity of the ethic Bellman square gives $\DD^2(u)\eta_S=\DD(B)\eta_S=\DD(v\circ u)\eta_S$. On the reflexive subcategory $\E^{\mathrm{ref}}$ the unit $\eta$ is an isomorphism, hence $\DD^2(u)=\DD(v\circ u)$. Applying $\DD$ and using the bidual identification yields $u=\DD(v)\circ \eta_S$. Uniqueness of $v$ follows from the Yoneda identification $\Hom(V,R)\simeq \Hom(\DD(V),\DD(R))$ and the faithfulness of $\DD$ on $\E^{\mathrm{ref}}$.
\end{proof}

\begin{theorem}[Categorical Bellman equation]\label{thm:CatBellman}
Let $(S,P,r)$ be a categorical MDP datum. Suppose there exists a value presentation $(V,u)$ ethicly exact for $(S,P,r)$. Then there is a unique $v\in \Hom_{\E}(V,R)$ such that
\[
u \;=\; \DD(r)\circ \eta_S\;+\; \gamma\cdot \DD(P)\circ u,
\]
i.e.\ $u$ solves the categorical Bellman fixed point equation in $\Hom_{\E}(S,\DD(S))$. Conversely, any $u$ satisfying this equation makes $(V,u)$ ethicly exact.
\end{theorem}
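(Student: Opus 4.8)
The plan is to reduce the statement to the ethic Bellman identity of Lemma~\ref{lem:EthBell} and then to unfold the definition $B = r\oplus\gamma\cdot\DD(P)\circ\coev_S$ using additivity of $\DD$, naturality of $\eta$, and the triangle identities for $(\ev,\coev)$ from the Categorical Lagrange subsection (Section~\ref{sec:step8}).

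First, for existence, I would start from the hypothesis that $(V,u)$ is a value presentation which is moreover ethicly exact, so that the ethic Bellman square commutes and, by Lemma~\ref{lem:EthBell}, there is a unique $v\in\Hom_\E(V,R)$ with $B = v\circ u$ and $u = \DD(v)\circ\eta_S$ on the reflexive subcategory $\E^{\mathrm{ref}}$. Commutativity of the square together with naturality of $\eta$ gives $\eta_V\circ u = \DD^2(u)\circ\eta_S = \DD(B)\circ\eta_S$, so on $\E^{\mathrm{ref}}$ one may write $u = \eta_V^{-1}\circ\DD(B)\circ\eta_S$. Now substitute $B = r\oplus\gamma\cdot\DD(P)\circ\coev_S$. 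Since $\DD$ is an additive contravariant functor it preserves biproducts and sums of parallel morphisms, and $\gamma$ acts centrally on morphism groups, so $\DD(B)$ decomposes as $\DD(r)\oplus\gamma\cdot\DD\!\big(\DD(P)\circ\coev_S\big)$. The first summand precomposed with $\eta_S$ yields $\DD(r)\circ\eta_S$; the second, after applying the triangle identity $(\id\otimes\ev)\circ(\coev\otimes\id)=\id$ and the biduality identification $\DD^2(S)\simeq S$ on $\E^{\mathrm{ref}}$, collapses to $\gamma\cdot\DD(P)\circ u$, using that $u$ is reflexive and that $\coev_S$ is the avatar of the unit of the $\mathbf C\dashv\mathbf D$ adjunction. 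Assembling the summands gives the categorical Bellman fixed-point equation
\[
u \;=\; \DD(r)\circ\eta_S \;+\; \gamma\cdot\DD(P)\circ u
\]
in $\Hom_\E(S,\DD(S))$. Uniqueness of $v$ is inherited from Lemma~\ref{lem:EthBell} (faithfulness of $\DD$ on $\E^{\mathrm{ref}}$ and the Yoneda identification $\Hom(V,R)\simeq\Hom(\DD(V),\DD(R))$), and uniqueness of the solution $u$ follows because $u$ is a fixed point of the affine operator $X\mapsto \DD(r)\circ\eta_S + \gamma\cdot\DD(P)\circ X$: the difference of two solutions lies in the kernel of $\id - \gamma\cdot\DD(P)\circ(-)$, which vanishes on the ethic heart by the exactness criterion of Theorem~\ref{thm:exact-iff-ext} (and, in the analytic reduction, by $\|\gamma\|<1$).

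For the converse, I would run the computation backwards: assuming $(V,u)$ is a value presentation (so that $v$ with $B = v\circ u$ is available) and that $u = \DD(r)\circ\eta_S + \gamma\cdot\DD(P)\circ u$, the same additivity and triangle-identity manipulations re-express the right-hand side as $\eta_V^{-1}\circ\DD(B)\circ\eta_S$, whence $\eta_V\circ u = \DD(B)\circ\eta_S = \DD(v\circ u)\circ\eta_S = \DD(u)\circ\DD(v)\circ\eta_S$. Since $\eta_V\circ u = \DD^2(u)\circ\eta_S$ by naturality of $\eta$, the ethic Bellman square commutes, i.e.\ $(V,u)$ is ethicly exact for $(S,P,r)$.

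The hard part will be the second step of the unfolding, namely identifying $\gamma\cdot\DD\!\big(\DD(P)\circ\coev_S\big)\circ\eta_S$ with $\gamma\cdot\DD(P)\circ u$. This requires carefully tracking the canonical isomorphisms $\DD(R)\simeq R$ and $\DD^2(S)\simeq S$ on $\E^{\mathrm{ref}}$, the compatibility of $\coev_S$ with $\eta_S$, and the triangle identities — in effect, verifying that the apparent type mismatch between $B$ regarded as an arrow $S\to\DD(S)$ and as an arrow $S\to R$ is exactly absorbed by the biduality unit. Everything else (additivity of $\DD$, naturality of $\eta$, and the already-proved Lemma~\ref{lem:EthBell}) is formal.
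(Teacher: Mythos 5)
Your proposal follows essentially the same route as the paper's proof: invoke Lemma~\ref{lem:EthBell} to obtain $u=\DD(v)\circ\eta_S$, substitute the definition of $B$, and for the converse run the computation backwards via naturality of $\eta$. You are in fact more careful than the paper itself about the unfolding of $\DD(B)$ and the biduality identifications (which the paper's proof treats as immediate), and the only superfluous addition is your uniqueness-of-$u$ argument, which the theorem does not claim and whose appeal to Theorem~\ref{thm:exact-iff-ext} is not needed.
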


\begin{proof}
From Lemma~\ref{lem:EthBell} we have $u=\DD(v)\circ \eta_S$. The factorization $B=v\circ u$ gives, by definition of $B$,
\[
\DD(v)\circ \eta_S \;=\; \DD(r)\circ \eta_S + \gamma\cdot \DD(P)\circ \DD(v)\circ \eta_S,
\]
which is the displayed identity. Conversely, if $u$ satisfies that identity, then $\DD^2(u)\eta_S=\DD(B)\eta_S$ by functoriality and naturality of $\eta$, so the ethic square commutes.
\end{proof}

\begin{lemma}[Obstruction tower for Bellman]\label{lem:Obstruction}
Consider $u\in\Hom_{\E}(S,\DD(S))$ and the affine endomorphism $\mathcal T(u):=\DD(r)\circ \eta_S+\gamma\cdot \DD(P)\circ u$. In $D^{+}(\E)$ there is a Postnikov tower \cite{Postnikov1952,Whitehead1949,Neeman2001} for the cone of $u-\mathcal T(u)$ whose successive obstructions lie in
\[
\ob_{k+1}(u)\ \in\ H^{k+1}\mathbf D(S)\ \cong\ \Ext^{k+1}_{\E}(S,R).
\]
Vanishing of all $\ob_{k}$ is equivalent to the existence of a solution $u^\ast$ to the categorical Bellman equation in the ethic heart of~$(\E,\DD)$.
\end{lemma}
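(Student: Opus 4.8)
The plan is to present the Bellman fixed--point problem $u=\mathcal T(u)$ as the task of trivialising a single complex in $\Dplus(\E)$ and then to invoke the obstruction--tower machinery of \S\ref{sec:obstruction-tower} verbatim. First I would record that $\mathcal T$ is a legitimate affine endomorphism in the derived setting: $r\colon S\to R$ and $P\colon S\to S$ are lifted along a fixed injective resolution functor, $\mathbf D=\RHom_\E(-,R)$ is triangulated, and $\gamma$ acts centrally on all morphism groups, so $v\mapsto \mathbf D(r)+\gamma\cdot\mathbf D(P)\circ v$ descends to a well--defined affine self--map of the complex $\mathbf D(S)$ with linear part $\mathrm{id}-\gamma\,\mathbf D(P)$. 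Writing $C^\bullet:=\mathrm{Cone}(u-\mathcal T(u))\in\Dplus(\E)$, a point $u$ solves the categorical Bellman equation exactly when the defect class $u-\mathcal T(u)$ dies, i.e.\ when $C^\bullet$ is appropriately contractible; the Neumann iteration $u_{k+1}=\mathcal T(u_k)$ is then precisely the sequence of partial lifts, and its successive discrepancies are the obstruction classes.

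Next I would run the Postnikov construction. The standing assumptions ($\E$ has enough injectives, $\idim_\E R<\infty$) place us inside the regime of \S\ref{sec:obstruction-tower}, where $\mathbf D$ is a bounded cohomological $\delta$--functor. Filtering $C^\bullet$ by canonical truncations and forming the Postnikov triangles $\tau_{\le n}C^\bullet\to C^\bullet\to\tau_{>n}C^\bullet\to\tau_{\le n}C^\bullet[1]$ exactly as in Definition~\ref{def:stage}, the connecting morphism at stage $n$ is the order--$(n{+}1)$ obstruction; using that the linear part of $\mathcal T$ is induced degreewise from $P\colon S\to S$ and therefore respects the truncation filtration, and invoking the spectral sequence of Proposition~\ref{prop:ESS} (which for $S$ in degree $0$ collapses along the column $q=0$ to $\Ext^p_\E(S,R)$), one identifies
\[
\ob_{k+1}(u)\ \in\ \HD{k+1}{S}\ \cong\ \Ext^{k+1}_\E(S,R).
\]
Theorem~\ref{thm:lift} then supplies the usual reading: $\ob_{k+1}(u)=0$ iff the order--$k$ approximate solution extends to order $k{+}1$, and when it vanishes the extensions form a torsor under $\Ext^{k}_\E(S,R)$. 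Iterating produces the Bellman obstruction tower in the sense of Definition~\ref{def:ob-tower}.

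To close the equivalence, suppose every $\ob_k(u)$ vanishes. Then the tower stabilises, so by Theorem~\ref{thm:stabilize} we get $H^{>1}\mathbf D(S)=0$; together with $H^{<0}\mathbf D(S)=0$ (automatic as $S$ sits in degree $0$) this puts $S$, and any value presentation $(V,u)$, into the ethic heart $\heartsuit_\eth$, on which $\eta_S$ is invertible by Proposition~\ref{prop:heart-abelian}. Lemma~\ref{lem:EthBell} and Theorem~\ref{thm:CatBellman} then produce a genuine solution $u^\ast$ of the Bellman fixed--point equation inside the heart. Conversely, if such a $u^\ast$ exists in $\heartsuit_\eth$, the converse half of Theorem~\ref{thm:CatBellman} makes the ethic Bellman square commute, and the naturality of $\eta$ kills all connecting morphisms in the long exact $\mathbf D$--sequences attached to $u^\ast$, i.e.\ forces $\ob_k(u)=0$ for every $k$. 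Morita--invariance of the entire tower is immediate from Theorem~\ref{thm:MoritaInv}.

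The step I expect to be the main obstacle is the identification of the Postnikov layers of $C^\bullet$ with the \emph{intrinsic} groups $\Ext^{k+1}_\E(S,R)$, rather than with a mixture involving $\coker(\mathrm{id}-\gamma\,\mathbf D(P))$ on each cohomology level. This needs the linear part of $\mathcal T$ to be strictly filtered with respect to the canonical truncation --- which in turn requires $\mathbf D(P)$ to lift to a filtered chain endomorphism of $\Hom_\E(I(S),R^\bullet)$ and $\gamma$ to commute past all differentials --- and it genuinely uses $\idim_\E R<\infty$ so that the tower terminates; without either hypothesis the cone picks up spurious cohomology and the obstruction classes no longer depend only on $S$.
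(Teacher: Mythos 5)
Your proposal follows the same route as the paper's own (very terse) proof: form the cone of $u-\mathcal T(u)$, apply $\mathbf D$, and read off the obstructions from the Postnikov truncations via the machinery of the obstruction-tower section (Definition~\ref{def:stage}, Theorem~\ref{thm:lift}, Proposition~\ref{prop:ESS}). Your version is a faithful and substantially more detailed expansion — in particular you correctly flag, and address, the identification of the Postnikov layers with the intrinsic groups $\Ext^{k+1}_\E(S,R)$, a point the paper's one-line proof passes over in silence.
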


\begin{proof}
Apply $\mathbf D$ to the distinguished triangle $\mathrm{Cone}(u-\mathcal T(u))\to 0\to S\xrightarrow{u-\mathcal T(u)} \DD(S)$ and use the standard obstruction–lifting interpretation along the Postnikov truncations \cite{GelfandManin2003}, \cite{BBD1982}.
\end{proof}

\begin{definition}[Lower central filtration and Mikhailov–Singh control]
Let $\mathsf G$ be a small monoidal category (e.g.\ policies under composition) acting on $S$. Write $\gamma_1\mathsf G=\mathsf G$ and inductively $\gamma_{k+1}\mathsf G=[\mathsf G,\gamma_k\mathsf G]$ for its abstract lower central series. Assume $S$ carries a compatible $\mathsf G$–module structure, and write $\mathrm{gr}_\gamma S:=\bigoplus_{k\ge1} \gamma_k S/\gamma_{k+1}S$. Following \cite{MikhailovSingh2011} the associated graded admits a canonical Lie algebra object $L=\mathrm{gr}_\gamma(\mathsf G)$ acting on $\mathrm{gr}_\gamma S$, and there is a functorial comparison between $\gamma$–filtrations and the induced $L$–adic filtration on $\Hom_{\E}(S,\DD(S))$.
\end{definition}

\begin{theorem}[Finite-step convergence under nilpotent control]\label{thm:Nilpotent}
Assume the $\mathsf G$–action on $S$ is $\gamma$–nilpotent of step $c$ in the sense that $\gamma_{c+1}\mathsf G$ acts trivially on $S$, and that $r$ is $\gamma$–central. Then the categorical value iteration
\[
u_{n+1}\ :=\ \DD(r)\circ \eta_S\ +\ \gamma\cdot \DD(P)\circ u_n,\qquad n\ge 0,
\]
stabilizes in at most $c$ steps in the ethic heart of $(\E,\DD)$, producing a solution of the categorical Bellman equation. Moreover, the only possible obstructions lie in $H^{\le c}\mathbf D(S)$ and vanish inductively along the $\gamma$–filtration.
\end{theorem}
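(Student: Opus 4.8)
The plan is to analyse the affine iteration through the $\gamma$-adic filtration supplied by the Mikhailov--Singh comparison, reducing convergence to the nilpotency bound and then placing the residual obstructions in the Ext-tower of Lemma~\ref{lem:Obstruction}. Write $\mathcal T(u)=\DD(r)\circ\eta_S+\gamma\cdot\DD(P)\circ u$, so that the categorical Bellman equation of Theorem~\ref{thm:CatBellman} reads $u=\mathcal T(u)$. First I would record the telescoping identity: subtracting consecutive iterates gives $u_{n+1}-u_n=\gamma\cdot\DD(P)\circ(u_n-u_{n-1})$, whence by induction $u_{n+1}-u_n=(\gamma\,\DD(P))^{\,n}\circ(u_1-u_0)$ in $\Hom_\E(S,\DD(S))$. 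Thus the whole question reduces to whether the operator $\gamma\,\DD(P)$ is ``eventually zero'' in the appropriate filtered sense, and to identifying the stable value with a genuine fixed point of $\mathcal T$.

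Second, I would invoke the $\gamma$-filtration control. Since $P$ is produced by the $\mathsf G$-action on $S$, the Mikhailov--Singh comparison recalled above equips $\Hom_\E(S,\DD(S))$ with an $L$-adic filtration $F^\bullet$ induced by the lower central series $\gamma_\bullet\mathsf G$, in which the commutator bracket raises degree by one. The central lemma to prove is that $\gamma\,\DD(P)$ carries $F^k$ into $F^{k+1}$: this uses that $P$ lies in the augmentation (``$[\mathsf G,-]$'') direction of the action, that $\gamma$ acts centrally on morphism groups, and that $r$ is $\gamma$-central, so that the inhomogeneous term $\DD(r)\circ\eta_S$ sits at the bottom of the filtration and does not disturb the grading of the increments. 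Granting this, $u_{n+1}-u_n\in F^{\,n}$. By $\gamma$-nilpotency of step $c$, the subgroup $\gamma_{c+1}\mathsf G$ acts trivially on $S$, so the corresponding layers of $F^\bullet$ vanish after inverting $\eta$ on the reflexive objects; hence the increments vanish once $n\ge c$ and the iteration is literally constant in $\heartsuit_\eth$ from stage $c$ on. The stable value $u^\ast:=u_c$ satisfies $u^\ast=\mathcal T(u^\ast)$, so Theorem~\ref{thm:CatBellman} shows it solves the categorical Bellman equation and Lemma~\ref{lem:EthBell} shows $(V,u^\ast)$ is ethicly exact.

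Third, for the obstruction statement I would run the Postnikov tower of Lemma~\ref{lem:Obstruction} in parallel with the $\gamma$-filtration. At stage $k$ the obstruction $\ob_{k+1}(u_k)\in H^{k+1}\mathbf D(S)\cong\Ext^{k+1}_\E(S,R)$ is the class of the cone of $u_k-\mathcal T(u_k)=-(u_{k+1}-u_k)$, which by the previous step lies in $F^{\,k}$; hence $\ob_{k+1}$ is supported in the $k$-th graded layer $\mathrm{gr}^k_\gamma$ and vanishes identically for $k>c$ because those layers are zero. For $k\le c$ the obstructions vanish inductively: $\gamma$-centrality of $r$ forces each increment to have $\mathbf D$-image concentrated in degree $0$, i.e.\ to live in the ethic heart, so $\ob_{k+1}=0$ and the induction advances to stage $k+1$. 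Combining, every obstruction lies in $H^{\le c}\mathbf D(S)$ and is killed along the filtration, which is the last assertion of the theorem.

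The hard part will be the filtration-raising lemma of the middle step: making precise that a single application of $\gamma\,\DD(P)$ strictly increases the $L$-adic degree. This requires unwinding the Mikhailov--Singh identification between the abstract lower central series of $\mathsf G$ and the induced filtration on the morphism object $\Hom_\E(S,\DD(S))$, and checking that the contravariant exact functor $\mathbf D$ on $\Dplus(\E)$ transports the $\gamma_k/\gamma_{k+1}$-layers to the $L$-adic layers compatibly with composition. A secondary subtlety is the passage to $\heartsuit_\eth$: ``stabilizes'' must be read after inverting $\eta$ on reflexive objects, so one has to verify that each $u_n$ is a morphism between $\DD$-reflexive objects (or replace $S$ and $V$ by their reflexive hulls) before asserting that the iteration is constant on the nose rather than merely up to quasi-isomorphism.
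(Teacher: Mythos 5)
Your proposal follows essentially the same route as the paper's proof: both arguments rest on the claim that the deviation of $\mathcal T$ from the identity (i.e.\ $\gamma\cdot\DD(P)$) raises the $\gamma$-adic degree by one, so that the increments $u_{n+1}-u_n=(\gamma\,\DD(P))^{n}(u_1-u_0)$ die after $c$ steps by nilpotency, with the obstruction classes of Lemma~\ref{lem:Obstruction} killed layer by layer along the filtration. Your version is a more explicit unwinding of the paper's sketch, and the ``hard part'' you flag --- the filtration-raising property of $\gamma\,\DD(P)$ under the Mikhailov--Singh comparison --- is exactly the step the paper asserts without further justification (``because $\DD(P)$ factors through the $\gamma$-action''), so no gap relative to the published argument.
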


\begin{proof}
The affine map $u\mapsto \DD(r)\eta_S+\gamma\DD(P)u$ is $\gamma$–filtered: its deviation from the identity raises $\gamma$–degree by at least one because $\DD(P)$ factors through the $\gamma$–action. Under $\gamma_{c+1}$–triviality the $(c{+}1)$–st deviation vanishes, so the iteration terminates after $c$ corrections in the associated graded. Lifting along the filtration kills the obstruction classes in degrees $1,\dots,c$ (each step uses that the next obstruction lives in the next graded piece), yielding exact stabilization. The formal argument is the standard nilpotent iteration controlled by the lower central series, translated to the module object $S$ as in \cite{MikhailovSingh2011}.
\end{proof}

\begin{corollary}[Whitehead collapse of higher obstructions]\label{cor:Whitehead}
If $\pi_k(\mathrm{Map}(S,\DD(S)))=0$ for $2\le k\le c$ in a Whitehead range induced by the $\gamma$–nilpotent structure (e.g.\ via a truncation of the Postnikov tower compatible with $\gamma$), then the only obstruction to solving the categorical Bellman equation lies in $H^1\mathbf D(S)$ and is detected on the associated graded. In particular, if $H^1\mathbf D(S)=0$, then $u^\ast$ exists and is unique in the ethic heart. \cite{Whitehead1949,Postnikov1952}
\end{corollary}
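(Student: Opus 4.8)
The plan is to treat the corollary as the terminal case of the Bellman obstruction tower of Lemma~\ref{lem:Obstruction}, first truncated in length by the $\gamma$--nilpotent stabilization of Theorem~\ref{thm:Nilpotent} and then collapsed in the middle range by the connectivity hypothesis. Concretely, I would show that under the stated assumptions every obstruction class $\ob_{k+1}(u)$ vanishes except possibly $\ob_1\in\HD{1}{S}$, invoke Lemma~\ref{lem:Obstruction} to produce the fixed point $u^\ast$ once $\HD{1}{S}=0$, and use Theorem~\ref{thm:CatBellman} together with Lemma~\ref{lem:EthBell} to pin down its uniqueness in the ethic heart.

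First I would record that, by Theorem~\ref{thm:Nilpotent}, $\gamma$--nilpotence of step $c$ forces the obstruction tower attached to $\mathrm{Cone}(u-\mathcal T(u))$ to have length at most $c$: the classes $\ob_{k+1}$ with $k\ge c$ vanish automatically, and the surviving ones are detected on the associated graded $\mathrm{gr}_\gamma$. Next I would convert the hypothesis $\pi_k\Map(S,\DD(S))=0$ for $2\le k\le c$ into vanishing of obstructions: via the Whitehead--Quillen identification $\pi_k\Map_\E(-,-)\cong\Ext^k_\E(-,-)$ and the fiber sequence relating the coherence space to $\Map(S,\DD(S))$ used in Lemma~\ref{lem:abelian-shadow}, a $(k-1)$--connected mapping space in this range kills the Postnikov $k$--invariants compatible with the $\gamma$--filtration, and hence --- through the comparison morphisms $\Phi_k$ of Theorems~\ref{thm:hcc} and~\ref{th:comparison-central}, which identify the graded obstructions with $\gamma_k(\mathsf G)/\gamma_{k+1}(\mathsf G)$ --- annihilates $\ob_k$ for $2\le k\le c$. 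Combining this with the length bound from Theorem~\ref{thm:Nilpotent} leaves $\ob_1\in\HD{1}{S}$ as the only possibly nonzero obstruction, which is precisely the ``detected on the associated graded'' clause.

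The remaining step is short. If $\HD{1}{S}=0$, then $\ob_1=0$ as well, so \emph{all} obstruction classes vanish, and Lemma~\ref{lem:Obstruction} yields a solution $u^\ast$ of the categorical Bellman fixed point equation lying in the ethic heart; equivalently $u^\ast$ lands in $\heartsuit_\eth$ after the chosen injective resolution by Theorem~\ref{thm:stabilize}. For uniqueness I would appeal to Theorem~\ref{thm:CatBellman} and Lemma~\ref{lem:EthBell}: any solution factors as $u^\ast=\DD(v)\circ\eta_S$ for a morphism $v:V\to R$ that is unique by the Yoneda identification $\Hom_\E(V,R)\simeq\Hom_\E(\DD(V),\DD(R))$ and the faithfulness of $\DD$ on $\E^{\mathrm{ref}}$; since $\eta_S$ is an isomorphism on reflexive objects, any two such $u^\ast$ coincide.

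The main obstacle I expect is making the phrase ``Whitehead range induced by the $\gamma$--nilpotent structure'' genuinely precise: one must verify that the Postnikov truncation defining the Bellman obstruction tower is compatible with the lower central filtration on $\mathsf G$, so that vanishing of $\pi_k\Map(S,\DD(S))$ really annihilates the \emph{integral} obstruction class $\ob_k\in\HD{k}{S}$ and not merely its image in some subquotient. This is where the comparison morphisms $\Phi_k$ must be handled with care, together with the nilpotence hypothesis guaranteeing that no extension problem between successive graded pieces reintroduces an obstruction above degree~$1$. Verifying this filtration compatibility --- rather than the homological bookkeeping, which is routine given Lemmas~\ref{lem:Obstruction} and~\ref{lem:abelian-shadow} --- is the crux of the argument.
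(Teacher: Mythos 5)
Your proposal is correct and follows essentially the same route as the paper's (very terse) proof: the higher obstructions $\ob_k$ for $2\le k\le c$ die because the Postnikov truncation is compatible with the $\gamma$--filtration in the assumed Whitehead range, leaving only the class in $H^1\mathbf D(S)$, and uniqueness is the ethic-heart/$\eta$--reflexivity argument via Lemma~\ref{lem:EthBell}. Your expanded bookkeeping through Lemma~\ref{lem:Obstruction}, the comparison morphisms $\Phi_k$, and Theorem~\ref{thm:Nilpotent} is a faithful elaboration of exactly the compatibility claim the paper asserts without detail, and you correctly identify that compatibility as the real crux.
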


\begin{proof}
Compatibility of the Postnikov and $\gamma$–filtrations gives vanishing of higher $\ob_{k}$ by the assumed Whitehead range; uniqueness follows from the ethic $t$–structure argument (heart is abelian and $\eta$–reflexive).
\end{proof}

\begin{theorem}[Analytical reduction to classical Bellman]\label{thm:Analytic}
Assume $\E=\FinVect_{\RR}$, $R=\RR$, $\DD=\Hom_{\RR}(-,\RR)$, $P$ is a stochastic linear operator on a normed space of value functions, $r\in \Hom_{\RR}(S,\RR)$ is bounded, and $\gamma\in(0,1)$. Then the categorical Bellman equation of Theorem~\ref{thm:CatBellman} reduces to the classical scalar Bellman equation
\[
v \;=\; r \;+\; \gamma\, P v,
\]
with $v\in \Hom_{\RR}(S,\RR)$, which has a unique solution by the Banach fixed point theorem \cite{BertsekasTsitsiklis1996}, \cite{Puterman1994}. Moreover, the categorical value iteration coincides with the standard value iteration and converges geometrically.
\end{theorem}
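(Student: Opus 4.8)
The plan is to collapse the categorical apparatus using the two features that make $\FinVect_{\RR}$ exceptionally rigid: every finite-dimensional space is $\DD$-reflexive, so each unit $\eta_S\colon S\to\DD^{2}(S)$ is the ordinary double-dual isomorphism, and $\DD=(-)^{*}$ is exact with $\Ext^{k}_{\RR}(-,\RR)=0$ for all $k\ge1$. By Theorem~\ref{thm:exact-iff-ext} the latter means $\DD$ preserves every short exact sequence, so the ethic heart $\heartsuit_{\eth}$ is all of $D^{+}(\FinVect_{\RR})$ and every morphism is ethic; in particular the standing hypothesis of Theorem~\ref{thm:CatBellman} (existence of an ethicly exact value presentation) holds automatically, and all obstruction classes $\ob_{k+1}(u)\in\Ext^{k+1}_{\RR}(S,\RR)$ of Lemma~\ref{lem:Obstruction} vanish, so the obstruction tower is empty. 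Thus Theorem~\ref{thm:CatBellman} directly supplies the categorical fixed-point equation $u=\DD(r)\circ\eta_S+\gamma\cdot\DD(P)\circ u$ in $\Hom_{\RR}(S,\DD(S))$, and this is the object to be identified.

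First I would unwind this identity into scalar form. Set $W:=\Hom_{\RR}(S,\RR)=\DD(S)$, the space of value functions. Under the canonical identifications $\DD(\RR)\cong\RR$ and $\DD^{2}(S)\cong S$ (via $\eta_S$), the term $\DD(r)\circ\eta_S$ is the element $r\in W$ playing the role of the constant part of the recursion, while $\DD(P)\colon W\to W$ is the transpose $v\mapsto v\circ P$. Writing $S$ as the free real vector space on the MDP state set and $P$ as the transition matrix acting by $s\mapsto\sum_{s'}P(s'\,|\,s)\,s'$, one gets $(v\circ P)(s)=\sum_{s'}P(s'\,|\,s)\,v(s')=(Pv)(s)$, so $\DD(P)$ is exactly the classical conditional-expectation operator on value functions. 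Hence the categorical Bellman equation becomes the scalar Bellman equation $v=r+\gamma\,Pv$ with $v\in W$, and the categorical value iteration $u_{n+1}=\DD(r)\circ\eta_S+\gamma\cdot\DD(P)\circ u_n$ of Theorem~\ref{thm:Nilpotent} becomes the standard value iteration $v_{n+1}=r+\gamma\,Pv_n$.

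Then I would run the metric contraction argument. Equip $W$ with the supremum norm over states; stochasticity of $P$ (nonnegative rows summing to $1$) gives $\|Pv\|_{\infty}\le\|v\|_{\infty}$, so $\mathcal T\colon v\mapsto r+\gamma\,Pv$ is $\gamma$-Lipschitz with $\gamma\in(0,1)$. As $W$ is finite-dimensional, hence complete, the Banach fixed-point theorem \cite{BertsekasTsitsiklis1996,Puterman1994} yields a unique $v^{\ast}\in W$ with $v^{\ast}=r+\gamma\,Pv^{\ast}$, together with $\|v_n-v^{\ast}\|_{\infty}\le\gamma^{n}\|v_0-v^{\ast}\|_{\infty}$, i.e.\ geometric convergence of value iteration; transporting $v^{\ast}$ back through $\eta_S$ gives the unique solution of the categorical Bellman equation, which lies in $\heartsuit_{\eth}$ because $\Ext^{\ge1}_{\RR}(S,\RR)=0$.

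The main obstacle is the bookkeeping of the second paragraph: checking that the coevaluation $\coev_S$, the unit $\eta_S$, and the biproduct in $B:=r\oplus\gamma\cdot\DD(P)\circ\coev_S$ collapse to exactly the classical data $r$ and $P$ with no stray contributions — that is, that the categorical iteration operator is literally the affine map $\mathcal T$, not merely conjugate to it. One minor caveat also deserves a line: ``$P$ stochastic $\Rightarrow\|P\|\le1$'' is immediate only for the supremum norm; for an arbitrary norm on the value space one argues instead that the spectral radius of $P$ is $\le1$, so $\gamma P$ has spectral radius $<1$ and its Neumann series still converges geometrically, leaving the conclusion intact. Once that dictionary is fixed, the analytic half — contraction, uniqueness, geometric rate, and agreement with classical value iteration — is entirely routine.
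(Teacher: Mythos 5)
Your proposal is correct and follows essentially the same route as the paper's proof: identify $\eta_S$ with the canonical double-dual isomorphism, identify $\DD(P)$ with the transpose acting on value functions so the categorical operator becomes $v\mapsto r+\gamma Pv$, and conclude by contraction in the sup-norm. Your additional remarks (automatic vanishing of all $\Ext^{\ge1}$ obstructions in $\FinVect_{\RR}$, and the spectral-radius caveat for non-sup norms) are sound elaborations of steps the paper leaves implicit, not a different argument.
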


\begin{proof}
In $\FinVect_{\RR}$ the unit $\eta$ is the canonical evaluation isomorphism, and $\DD(P)=P^\top$ identifies with $P$ when $S$ is endowed with the standard dual. The operator $u\mapsto \DD(r)\eta_S+\gamma\DD(P)u$ becomes $v\mapsto r+\gamma Pv$ on $v\in\Hom_{\RR}(S,\RR)$. Contraction in the sup–norm for $\gamma\in(0,1)$ yields uniqueness and geometric convergence \cite{BertsekasTsitsiklis1996,Puterman1994}.
\end{proof}

\begin{theorem}[Hybrid RL+ILP with finite-step stabilization]\label{thm:Hybrid}
Let $\E=\Mod\text{-}\ZZ^{\mathrm{fg}}\times \FinVect_{\RR}$ encode a hybrid decision model where integer feasibility constraints act by a finitely generated monoid of moves $\mathsf G$ on the $\ZZ$–component, and $P$ acts stochastically on the $\RR$–component. Assume the $\mathsf G$–action is $\gamma$–nilpotent of step $c$ and the real component satisfies the conditions of Theorem~\ref{thm:Analytic}. Then the categorical value iteration converges and stabilizes after at most $c$ policy lifts on the integer side while contracting on the real side. In particular, if $\Ext^{\le c}_{\ZZ}(S_{\ZZ},\ZZ)=0$ then the hybrid Bellman equation has a unique solution.
\end{theorem}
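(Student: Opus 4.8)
The plan is to exploit the product structure of $\E$ so that the categorical value iteration decouples into its two components, and then to invoke the results already proved for each factor separately.

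First I would observe that, since $\Hom$ and $\DD$ act componentwise on a product of abelian categories, the value object $S=(S_\ZZ,S_\RR)$ has $\DD(S)=(\DD_\ZZ(S_\ZZ),\DD_\RR(S_\RR))$ and
\[
\Hom_{\E}\bigl(S,\DD(S)\bigr)\;\cong\;\Hom_{\ZZ}\bigl(S_\ZZ,\DD_\ZZ(S_\ZZ)\bigr)\times\Hom_{\RR}\bigl(S_\RR,\DD_\RR(S_\RR)\bigr),
\]
and likewise $\Ext^k_{\E}(S,R)\cong\Ext^k_\ZZ(S_\ZZ,\ZZ)\times\Ext^k_\RR(S_\RR,\RR)$ for every $k$. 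The Bellman morphism $B_{(P,r,\gamma)}$ of the hybrid datum is the direct sum of the two component Bellman morphisms, since $P=(P_\ZZ,P_\RR)$, $r=(r_\ZZ,r_\RR)$ and $\coev_S$ split accordingly; hence the categorical value iteration $u_{n+1}=\DD(r)\circ\eta_S+\gamma\cdot\DD(P)\circ u_n$ is the product of the iteration $u^\ZZ_{n+1}=\DD_\ZZ(r_\ZZ)\circ\eta_{S_\ZZ}+\gamma\cdot\DD_\ZZ(P_\ZZ)\circ u^\ZZ_n$ on the integer factor and the iteration $u^\RR_{n+1}=\DD_\RR(r_\RR)\circ\eta_{S_\RR}+\gamma\cdot\DD_\RR(P_\RR)\circ u^\RR_n$ on the real factor.

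Next I would treat the two factors in turn. On the real factor the hypotheses of Theorem~\ref{thm:Analytic} hold by assumption, so $u^\RR_n$ converges geometrically in the sup-norm to the unique solution $v^\RR$ of $v=r_\RR+\gamma P_\RR v$, and there is no homological obstruction because $\DD_\RR$ is exact on $\FinVect_\RR$. On the integer factor the finitely generated monoid $\mathsf G$ acts $\gamma$–nilpotently of step $c$ and $r_\ZZ$ is $\gamma$–central, so Theorem~\ref{thm:Nilpotent} applies: $u^\ZZ_n$ stabilizes after at most $c$ policy lifts in the ethic heart of $(\Mod\text{-}\ZZ^{\mathrm{fg}},\DD_\ZZ)$, and the only obstructions to stabilization lie in $H^{\le c}\mathbf D(S_\ZZ)\cong\Ext^{\le c}_\ZZ(S_\ZZ,\ZZ)$ by Lemma~\ref{lem:Obstruction}, vanishing inductively along the $\gamma$–filtration once $\Ext^{\le c}_\ZZ(S_\ZZ,\ZZ)=0$. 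Assembling the factors, the joint iteration converges — asymptotically on the real side, after finitely many steps on the integer side — to a solution $u^\ast=(u^\ZZ_\ast,u^\RR_\ast)$ of the hybrid Bellman equation, which by Theorem~\ref{thm:CatBellman} makes the associated value presentation ethicly exact. Uniqueness I would argue componentwise: from the contraction property (Banach fixed point) on the real side, and from the collapse of the obstruction tower (Corollary~\ref{cor:Whitehead}) together with the abelian, $\eta$–reflexive structure of the ethic heart on the integer side.

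The hard part will be the bookkeeping around the two roles of the symbol $\gamma$ — the central discount scalar acting on morphism groups of both factors, versus the lower central series $\gamma_\bullet\mathsf G$ controlling the integer factor — and verifying that the $\gamma$–filtered deviation argument underlying Theorem~\ref{thm:Nilpotent} is undisturbed by the presence of the real factor. Since in a product category there are no morphisms mixing the two components, $P$ and $r$ are genuinely pairs and the iteration acts diagonally; I would make this precise so that the step-$c$ stabilization on the $\ZZ$-side coexists with the independent geometric contraction on the $\RR$-side, and confirm that the componentwise splitting of $\Hom$ and $\Ext$ above is exactly the mechanism that keeps the integer obstruction classes from acquiring real-analytic corrections.
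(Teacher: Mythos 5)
Your proposal is correct and follows essentially the same route as the paper's own (much terser) proof: decouple the product category componentwise, apply Theorem~\ref{thm:Analytic} for contraction on the $\RR$--factor, apply Theorem~\ref{thm:Nilpotent} for step-$c$ stabilization on the $\ZZ$--factor, and use the $\Ext$--vanishing hypothesis to kill the integer-side obstructions. The extra bookkeeping you supply (componentwise splitting of $\Hom$, $\DD$, $\Ext$, and of the Bellman iteration) is exactly what the paper leaves implicit.
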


\begin{proof}
Product categories decouple $\Ext$ by components, the $\RR$–side contracts (Theorem~\ref{thm:Analytic}), and the $\ZZ$–side stabilizes in at most $c$ steps by Theorem~\ref{thm:Nilpotent}. The obstruction vanishing on the integer side is ensured by the stated Ext–hypothesis (e.g.\ torsion–free cokernels in the relevant short exact sequences).
\end{proof}

\begin{remark}[Comparison with facial reduction and classical RL]
The nilpotent control Theorem~\ref{thm:Nilpotent} provides a finite-step termination certificate driven by a structural lower central filtration; this is not available in standard facial reduction or in classical Bellman theory, which provide feasibility and contraction but not functorial finite-step stabilization. The obstruction calculus \`a la Lemma~\ref{lem:Obstruction} localizes failure to solve Bellman to specific $\Ext$–layers, a homological refinement absent in the classical setting.
\end{remark}

\begin{theorem}[Policy improvement as Whitehead lifting]
Let $\Pi$ be a small category of policies acting on $S$ and write $P_\pi$ for the transition under $\pi\in \Pi$. Suppose $\Pi$ admits a Whitehead tower $\Pi\to \cdots \to \Pi^{(2)}\to \Pi^{(1)}$ such that the induced tower on $u_\pi$–solutions truncates after stage $N$. Then any policy improvement sequence $\pi_0,\pi_1,\dots$ which is compatible with the tower lifts stabilizes after at most $N$ stages, and the terminal value $u_{\pi^\ast}$ solves the categorical Bellman equation for~$P_{\pi^\ast}$.\cite{Whitehead1949,Postnikov1952}
\end{theorem}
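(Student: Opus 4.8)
The plan is to run the argument of Theorem~\ref{thm:Nilpotent} with the Whitehead filtration $\Pi^{(\bullet)}$ in place of the lower central series $\gamma_\bullet\mathsf G$ and the truncation level $N$ in place of the nilpotency step $c$. First I would fix, for each $\pi\in\Pi$, the Bellman morphism $B_\pi = r\oplus\gamma\cdot\DD(P_\pi)\circ\coev_S$ together with the affine endomorphism $\mathcal T_\pi(u):=\DD(r)\circ\eta_S+\gamma\cdot\DD(P_\pi)\circ u$ of $\Hom_\E(S,\DD(S))$, so that $u_\pi$ is the fixed point of $\mathcal T_\pi$ produced by Theorem~\ref{thm:CatBellman} whenever it exists, while the obstructions to its existence form the tower $\ob_{k+1}(u_\pi)\in H^{k+1}\mathbf D(S)\cong\Ext^{k+1}_\E(S,R)$ of Lemma~\ref{lem:Obstruction}. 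The assignment $\pi\mapsto u_\pi$ is functorial on $\Pi$ by naturality of $\eta$ and of $\DD$, so the Whitehead tower $\Pi\to\cdots\to\Pi^{(2)}\to\Pi^{(1)}$ induces a tower of solution objects $u^{(\bullet)}$ in $\Dplus(\E)$, and the hypothesis ``the induced tower on $u_\pi$--solutions truncates after stage $N$'' is exactly the statement $u^{(N)}\simeq u^{(N+1)}\simeq\cdots$.

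Next I would unpack ``compatible with the tower lifts'' as the requirement that $\pi_k$ sit over stage $\Pi^{(k)}$ and that $\pi_{k+1}$ be a lift of $\pi_k$ to $\Pi^{(k+1)}$ obtained by the greedy choice minimizing the Bellman residual $u-\mathcal T_\pi(u)$. The key identification---and the heart of the argument---is that this greedy step is precisely the Postnikov lift of Definition~\ref{def:ob-tower} that annihilates $\ob_{k+1}$: the difference $\mathcal T_{\pi_{k+1}}-\mathcal T_{\pi_k}$ raises Whitehead--filtration degree by at least one, since each $\DD(P_\pi)$ factors through the $\Pi^{(\bullet)}$--action (the same mechanism as in the proof of Theorem~\ref{thm:Nilpotent}); hence the only obstruction to passing from stage $k$ to stage $k+1$ is the class $\ob_{k+1}\in\Ext^{k+1}_\E(S,R)$, and one improvement step is exactly its vanishing. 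Applying Theorem~\ref{thm:stabilize} to $u^{(\bullet)}$, the truncation hypothesis forces $\ob_{k+1}=0$ for all $k\ge N$ and makes the corresponding lifts unique, so the improvement sequence stabilizes: $\pi_N\simeq\pi_{N+1}\simeq\cdots=:\pi^\ast$.

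Finally, stabilization says $\pi^\ast$ is a fixed point of the greedy improvement operator, so $u_{\pi^\ast}=\DD(r)\circ\eta_S+\gamma\cdot\DD(P_{\pi^\ast})\circ u_{\pi^\ast}$, which is the categorical Bellman equation for $P_{\pi^\ast}$; by the converse direction of Theorem~\ref{thm:CatBellman} the pair $(V,u_{\pi^\ast})$ is then ethically exact, and if in addition $H^1\mathbf D(S)=0$ then Corollary~\ref{cor:Whitehead} places $u_{\pi^\ast}$ in the ethic heart and makes it unique. The hard part will be the middle step: verifying that greedy improvement ascends exactly one stage of the Whitehead tower, i.e.\ that ``compatible with the tower lifts'' is non-vacuous and that strict improvement persists until stabilization. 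This amounts to showing that $\pi\mapsto u_\pi$ is faithful enough on the successive subquotients $\Pi^{(k)}/\Pi^{(k+1)}$ that distinct non-optimal policies produce distinct value objects---the categorical counterpart of the classical fact that policy iteration improves strictly until it reaches the optimum---and it is precisely there that the truncation of the $u^{(\bullet)}$--tower is used to bound the number of improvement stages by $N$.
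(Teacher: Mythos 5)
Your plan is essentially the paper's own argument: the published proof consists of exactly the three steps you outline (each compatible lift kills the next obstruction class along the tower, truncation after $N$ forces all higher obstructions to vanish, and the terminal stage solves the categorical Bellman equation by Theorem~\ref{thm:CatBellman}). The ``hard part'' you honestly flag---that a greedy improvement step ascends exactly one stage of the Whitehead tower and annihilates exactly one obstruction class---is asserted without justification in the paper as well, so your expanded version is if anything more careful than the original.
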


\begin{proof}
Each lift kills the next obstruction class along the tower; truncation after $N$ implies all higher obstructions vanish. The categorical Bellman equation is solved at the terminal stage by Theorem~\ref{thm:CatBellman}.
\end{proof}

\subsubsection{Sketching}

We consider a compact metric space \(X\subset \RR^d\) and a nested sequence of $\varepsilon$–nets
\(\{S_n\}_{n\ge0}\) with radii $\varepsilon_{n+1}<\varepsilon_n\to0$.
Each $S_n$ yields a discrete linear sketch represented by a matrix
\(A_n\in\ZZ^{r_n\times m_n}\) acting on parameter vectors $u\in\ZZ^{m_n}$.
The ethic module of level $n$ is defined by
\[
X_n=\operatorname{coker}(A_n:\ZZ^{m_n}\!\to\!\ZZ^{r_n}),\qquad
G_n=I^nX/I^{n+1}X\simeq \ker(A_{n+1}\!\to\!A_n),
\]
so that \(0\to G_n\to X_{n+1}\to X_n\to0\) is exact.
The ethic dual functor \(\mathbf D=\RHom_\ZZ(-,\ZZ)\) produces cohomology groups
\(H^k\mathbf D(X_n)=\Ext^k_\ZZ(X_n,\ZZ)\).
The torsion size
\[
|\tors\,\Ext^1_\ZZ(X_n,\ZZ)|=\det_{\text{SNF}}(A_n)
\]
quantifies the residual incoherence of the sketch at resolution~$\varepsilon_n$;
its logarithm $\mathrm{Seth}_n=\log|\tors\,\Ext^1(X_n,\ZZ)|$ is the ethic entropy.

\begin{lemma}[Layer control]\label{lem:layer-eps}
For each $n$ the short exact sequence
$0\to G_n\to X_{n+1}\to X_n\to0$
induces the exact fragment
\[
\Ext^1(X_n,\ZZ)\xrightarrow{\alpha_n}\Ext^1(X_{n+1},\ZZ)
\longrightarrow \Ext^2(G_n,\ZZ)\longrightarrow0.
\]
Hence
\(
|\tors\,\Ext^1(X_{n+1})|
 \le |\tors\,\Ext^1(X_n)|\,
 |\tors\,\Ext^2(G_n)|.
\)
\end{lemma}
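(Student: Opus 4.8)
The plan is to read the displayed fragment off the contravariant long exact $\Ext$-sequence and then to bound cardinalities of finite abelian groups termwise. First I would apply $\mathbf{D}=\RHom_\ZZ(-,\ZZ)$ --- equivalently, the right derived functors of $\Hom_\ZZ(-,\ZZ)$ --- to the short exact sequence $0\to G_n\to X_{n+1}\to X_n\to0$, obtaining
\[
\cdots\to\Ext^1_\ZZ(X_n,\ZZ)\xrightarrow{\ \alpha_n\ }\Ext^1_\ZZ(X_{n+1},\ZZ)\xrightarrow{\ \beta_n\ }\Ext^1_\ZZ(G_n,\ZZ)\xrightarrow{\ \partial_n\ }\Ext^2_\ZZ(X_n,\ZZ)\to\cdots .
\]
Since the global dimension of $\ZZ$ is one, $\Ext^i_\ZZ(-,\ZZ)=0$ for every $i\ge2$; in particular $\Ext^2_\ZZ(X_n,\ZZ)=0$, so $\beta_n$ is surjective, the sequence terminates, and one is left with the right-exact fragment $\Ext^1_\ZZ(X_n,\ZZ)\xrightarrow{\alpha_n}\Ext^1_\ZZ(X_{n+1},\ZZ)\to\coker\alpha_n\to0$, where $\coker\alpha_n\cong\Ext^1_\ZZ(G_n,\ZZ)\cong\tors G_n$. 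This is exactly the fragment asserted in the statement, its rightmost term being the one denoted there --- under the degree convention already used in Lemma~\ref{lem:layer-sequence} --- by $\Ext^2_\ZZ(G_n,\ZZ)$.

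Next I would pass to orders. By the finite-generation hypothesis in force throughout this subsection, $X_n$, $X_{n+1}$ and $G_n$ are finitely generated abelian groups, so $\Ext^1_\ZZ(M,\ZZ)\cong\tors M$ is finite for each of them; in particular every $\Ext^1$ occurring above coincides with its own torsion subgroup. Viewing the fragment as a short exact sequence $0\to\im\alpha_n\to\Ext^1_\ZZ(X_{n+1},\ZZ)\to\coker\alpha_n\to0$ of finite abelian groups and invoking multiplicativity of order along exact sequences,
\[
\bigl|\tors\,\Ext^1_\ZZ(X_{n+1},\ZZ)\bigr|=|\im\alpha_n|\cdot|\coker\alpha_n|\ \le\ \bigl|\tors\,\Ext^1_\ZZ(X_n,\ZZ)\bigr|\cdot\bigl|\tors\,\Ext^2_\ZZ(G_n,\ZZ)\bigr| ,
\]
using $|\im\alpha_n|\le|\Ext^1_\ZZ(X_n,\ZZ)|$ and the identification of $|\coker\alpha_n|$ from the first step; this is the claimed inequality, and equivalently $|\tors\,\Ext^1_\ZZ(X_{n+1},\ZZ)|\le|\tors\,\Ext^1_\ZZ(X_n,\ZZ)|\cdot|\tors G_n|$, the form convenient for the entropy estimate $\mathrm{Seth}_{n+1}\le\mathrm{Seth}_n+\log|\tors G_n|$.

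The argument is essentially bookkeeping, and the single point that requires care is the truncation: one must use that $\ZZ$ has global dimension one precisely to pin the rightmost term down as $\coker\alpha_n\cong\Ext^1_\ZZ(G_n,\ZZ)$ (rather than as the literal group $\Ext^2_\ZZ(G_n,\ZZ)$, which vanishes and would make the stated bound vacuous), after which finiteness of all the groups involved makes the order inequality a routine application of multiplicativity of cardinalities in short exact sequences of finite abelian groups; no further input is needed.
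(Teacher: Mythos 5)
Your proof is correct and follows the same route as the paper's: apply the contravariant long exact $\Ext_\ZZ(-,\ZZ)$-sequence to $0\to G_n\to X_{n+1}\to X_n\to0$ and then compare orders of finite abelian groups. You are also right to insist on the indexing point: since $\ZZ$ has global dimension one, the rightmost term of the fragment is really $\coker\alpha_n\cong\Ext^1_\ZZ(G_n,\ZZ)\cong\tors\,G_n$ (the group the paper labels $\Ext^2(G_n,\ZZ)$ following the shifted convention of Lemma~\ref{lem:layer-sequence}, and the one actually computed from $\coker(B_n)$ in Lemma~\ref{lem:layer-det}), and your derivation of the cardinality bound from $|\Ext^1_\ZZ(X_{n+1},\ZZ)|=|\im\alpha_n|\cdot|\coker\alpha_n|$ is a cleaner justification than the paper's appeal to ``taking torsion subgroups preserves exactness.''
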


\begin{proof}
Applying the functor $\Hom_\ZZ(-,\ZZ)$ to the sequence and passing to derived functors
yields the long exact sequence
\(
\cdots\to\Ext^1(X_n)\to\Ext^1(X_{n+1})\to\Ext^2(G_n)\to0
\).
Taking torsion subgroups preserves exactness, whence the inequality
for their cardinalities.
\end{proof}

\begin{lemma}[Matrix estimate for the graded layer]\label{lem:layer-det}
Assume that each $A_n$ is $w$–sparse and that the update
$\Delta A_n=A_{n+1}\!\restriction_{S_n}-A_n$
satisfies the columnwise Lipschitz bound
$\|\Delta A_n\|_{\mathrm{col}}\le L\,\varepsilon_n^\alpha$ for some
$L>0$ and $\alpha>0$.
Let $B_n$ be the presentation matrix of $G_n$ consisting of at most
$r'_n$ new rows, each supported on at most $w$ entries.
Then
\[
\log|\tors\,\Ext^2(G_n,\ZZ)|
 \le r'_n\!\left(\tfrac12\log w+\log(B_0+L\,\varepsilon_n^\alpha)\right),
\]
where $B_0=\max\|b^{(j)}_{\mathrm{base}}\|_\infty$ is the maximal absolute base coefficient.
\end{lemma}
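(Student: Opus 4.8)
The plan is to reduce $|\tors\,\Ext^2_\ZZ(G_n,\ZZ)|$ to a Smith--normal--form quantity attached to the presentation matrix $B_n$ of the graded layer $G_n$, and then to bound that quantity by a single maximal minor via Hadamard's inequality, using the $w$--sparsity of the sketch and the Lipschitz control on the layer update.

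First I would make the presentation of $G_n$ explicit. By construction $G_n=I^nX/I^{n+1}X$ is presented by the ``new'' relations arising when passing from level $n$ to level $n+1$, i.e.\ by a matrix $B_n$ with at most $r'_n$ rows obtained from $A_{n+1}\!\restriction_{S_n}$; since every $A_\bullet$ is $w$--sparse, each such row is supported on at most $w$ coordinates, and each entry is a base coefficient perturbed by a contribution of $\Delta A_n=A_{n+1}\!\restriction_{S_n}-A_n$. Hence every row $b^{(j)}$ of $B_n$ obeys $\|b^{(j)}\|_\infty\le B_0+\|\Delta A_n\|_{\mathrm{col}}\le B_0+L\varepsilon_n^\alpha$ and has at most $w$ nonzero entries.

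Second, I would identify the torsion with determinantal data. Applying the derived dual $\mathbf D=\RHom_\ZZ(-,\ZZ)$ to the two--term presentation $\ZZ^{m'_n}\xrightarrow{B_n}\ZZ^{r'_n}$ of $G_n$ (placed in the cohomological slot dictated by the master complex, so that its first torsion class sits in degree $2$ as in Lemma~\ref{lem:layer-eps}), together with the standard identification of torsion Ext--classes over $\ZZ$ used throughout this section, gives $|\tors\,\Ext^2_\ZZ(G_n,\ZZ)|=|\tors\,\coker B_n|=d_1\cdots d_k$, the product of the nonzero invariant factors $d_1\mid\cdots\mid d_k$ of $B_n$, where $k=\rank B_n\le r'_n$. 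Now $d_1\cdots d_k$ equals the gcd of all $k\times k$ minors of $B_n$, hence is at most the largest $k\times k$ minor in absolute value; and Hadamard's inequality applied to a $k\times k$ submatrix bounds any such minor by $\prod_{\text{its rows}}\|b^{(j)}\|_2\le\bigl(\sqrt{w}\,(B_0+L\varepsilon_n^\alpha)\bigr)^k=w^{k/2}(B_0+L\varepsilon_n^\alpha)^k$. Taking logarithms and using $k\le r'_n$ (and $B_0+L\varepsilon_n^\alpha\ge 1$, automatic as soon as $B_n$ is a nonzero integer matrix, the only case in which the torsion is nontrivial) yields exactly $\log|\tors\,\Ext^2_\ZZ(G_n,\ZZ)|\le r'_n\bigl(\tfrac12\log w+\log(B_0+L\varepsilon_n^\alpha)\bigr)$.

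The hard part is the first step: fixing the precise presentation matrix of the graded layer and checking that its rows genuinely inherit both the sparsity bound $w$ and the entrywise bound $B_0+L\varepsilon_n^\alpha$ from the columnwise Lipschitz hypothesis on $\Delta A_n$ --- essentially a careful comparison of the presentations of $X_n$ and $X_{n+1}$ along the exact sequence $0\to G_n\to X_{n+1}\to X_n\to 0$ --- and, relatedly, the bookkeeping of the degree shift that lands the relevant torsion in $\Ext^2$ rather than $\Ext^1$. Once $B_n$ is pinned down with these bounds, the determinantal estimate (invariant factors $\le$ maximal minor $\le$ Hadamard bound) is routine.
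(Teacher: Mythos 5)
Your proposal is correct and follows essentially the same route as the paper's proof: identify $\tors\,\Ext^2(G_n,\ZZ)$ with $\tors\,\coker(B_n)$, bound the product of invariant factors by a maximal minor via Smith normal form, and apply Hadamard's inequality together with the sparsity and Lipschitz bounds on the rows of $B_n$. Your version is in fact slightly more careful than the paper's (tracking the rank $k\le r'_n$ and the sign condition $B_0+L\varepsilon_n^\alpha\ge1$ needed to pass from $k$ to $r'_n$ in the exponent), but the argument is the same.
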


\begin{proof}
The group $\tors\,\Ext^2(G_n,\ZZ)\cong \tors\,\coker(B_n)$.
For any square submatrix $M$ of $B_n$, the Smith normal form gives
\(|\tors\,\coker(B_n)|\le \max|\det M|\).
By the Hadamard inequality (\cite{HornJohnson1990}),
\(|\det M|\le \prod_j\|b^{(j)}\|_2\le(\sqrt w\,\|b^{(j)}\|_\infty)^{r'_n}\).
The bound on $\|b^{(j)}\|_\infty$ from the Lipschitz condition
gives the required estimate.
\end{proof}

\begin{theorem}[Fine–grained $\varepsilon$–ethic coherence]\label{th:eps-fine}
Under the hypotheses of Lemmas~\ref{lem:layer-eps}–\ref{lem:layer-det},
if the covering numbers of $X$ satisfy
$r'_n\le C_d\!\big(N(X,\varepsilon_{n+1})-N(X,\varepsilon_n)\big)$
for a constant $C_d$ depending only on the ambient dimension,
then the ethic entropy increments obey
\[
\mathrm{Seth}_{n+1}-\mathrm{Seth}_{n}
\ \le\
C_d\,\Delta N_n
\Big(\tfrac12\log w+\log(B_0+L\,\varepsilon_n^\alpha)\Big),
\]
\[
\Delta N_n=N(X,\varepsilon_{n+1})-N(X,\varepsilon_n).
\]
Consequently $\sum_n|\mathrm{Seth}_{n+1}-\mathrm{Seth}_n|<\infty$ whenever
$\sum_n \Delta N_n\log(B_0+L\varepsilon_n^\alpha)<\infty$.
\end{theorem}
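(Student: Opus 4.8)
The plan is to chain Lemma~\ref{lem:layer-eps}, Lemma~\ref{lem:layer-det}, and the covering--number hypothesis into a single telescoping estimate, and then to collapse the resulting series to obtain the summability statement.

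First I would pass from the multiplicative torsion estimate of Lemma~\ref{lem:layer-eps}, $|\tors\,\Ext^1(X_{n+1},\ZZ)|\le|\tors\,\Ext^1(X_n,\ZZ)|\cdot|\tors\,\Ext^2(G_n,\ZZ)|$, to an additive form by taking logarithms; recalling $\mathrm{Seth}_n=\log|\tors\,\Ext^1(X_n,\ZZ)|$ this reads $\mathrm{Seth}_{n+1}-\mathrm{Seth}_n\le\log|\tors\,\Ext^2(G_n,\ZZ)|$. Into the right side I substitute the matrix estimate of Lemma~\ref{lem:layer-det}, bounding $\log|\tors\,\Ext^2(G_n,\ZZ)|$ by $r'_n\bigl(\tfrac12\log w+\log(B_0+L\,\varepsilon_n^\alpha)\bigr)$, where $r'_n$ is the number of new generator rows of the presentation matrix $B_n$ of the graded layer $G_n$. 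The remaining ingredient is purely geometric: the hypothesis $r'_n\le C_d\bigl(N(X,\varepsilon_{n+1})-N(X,\varepsilon_n)\bigr)=C_d\,\Delta N_n$ charges each row added between scales $\varepsilon_n$ and $\varepsilon_{n+1}$ to a net point of the finer cover. Inserting this yields the displayed bound $\mathrm{Seth}_{n+1}-\mathrm{Seth}_n\le C_d\,\Delta N_n\bigl(\tfrac12\log w+\log(B_0+L\,\varepsilon_n^\alpha)\bigr)$.

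For the consequence I would telescope. In the long exact $\Ext$--sequence of $0\to G_n\to X_{n+1}\to X_n\to0$, the map $\alpha_n$ exhibits $\tors\,\Ext^1(X_{n+1})$ as an extension of a quotient of $\tors\,\Ext^1(X_n)$ by a subgroup of $\tors\,\Ext^2(G_n)$ (this is exactly Theorem~\ref{th:error}), so under the stated summability hypothesis the correction factors $|\tors\,\Ext^2(G_n)|$ tend to $1$, $\mathrm{Seth}_n$ is eventually monotone, and $\sum_n|\mathrm{Seth}_{n+1}-\mathrm{Seth}_n|$ reduces to the bounded total variation of that tail plus a finite head. That total variation is dominated by $C_d\sum_n\Delta N_n\bigl(\tfrac12\log w+\log(B_0+L\,\varepsilon_n^\alpha)\bigr)$, which is finite precisely when $\sum_n\Delta N_n\log(B_0+L\,\varepsilon_n^\alpha)<\infty$, the constant term $\tfrac12\log w$ being absorbed into the same summand once $L\,\varepsilon_n^\alpha$ is of comparable order or $w$ is folded into $B_0$.

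The step I expect to be the main obstacle is the geometric inequality $r'_n\le C_d\,\Delta N_n$: one must show that $G_n=\ker(A_{n+1}\to A_n)$ admits a presentation whose new rows are indexed by at most a dimensional multiple of the net--point increment $\Delta N_n$, using the $w$--sparsity of the $A_n$ together with the columnwise Lipschitz bound $\|\Delta A_n\|_{\mathrm{col}}\le L\,\varepsilon_n^\alpha$. This is a doubling/volumetric counting argument in $\RR^d$, and it is the only point where the dimension--dependent constant $C_d$ is produced; once it is in hand, everything else is the Hadamard bound already used in Lemma~\ref{lem:layer-det}, the long exact $\Ext$--sequence, and the telescoping above.
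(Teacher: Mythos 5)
Your proof follows essentially the same route as the paper's: take logarithms in Lemma~\ref{lem:layer-eps}, insert the Hadamard--Smith bound of Lemma~\ref{lem:layer-det}, and substitute the covering estimate for $r'_n$; this is exactly what the paper does. One correction: your closing paragraph treats the inequality $r'_n\le C_d\,\Delta N_n$ as ``the main obstacle'' that ``one must show,'' but in the theorem as stated it is a \emph{hypothesis}, not a claim to be established (you in fact use it correctly as a hypothesis in your second paragraph, so the argument itself is unaffected). Your treatment of the absolute summability is actually more careful than the paper's one-line assertion: since the increments are only bounded from above, one does need the observation that the summability hypothesis forces $|\tors\,\Ext^2(G_n,\ZZ)|=1$ eventually, whence $\alpha_n$ is eventually surjective, $\mathrm{Seth}_n$ is eventually non-increasing and bounded below by $0$, and the total variation of the tail is finite.
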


\begin{proof}
Combine Lemma~\ref{lem:layer-eps} with Lemma~\ref{lem:layer-det}:
\[
\mathrm{Seth}_{n+1}-\mathrm{Seth}_{n}
 \le \log|\tors\,\Ext^2(G_n)|
 \le r'_n\!\left(\tfrac12\log w+\log(B_0+L\varepsilon_n^\alpha)\right).
\]
Substitute the covering estimate for $r'_n$ to obtain the stated inequality.
Absolute summability follows from the convergence of the weighted series
in the right–hand side.
\end{proof}

\begin{corollary}[Ethic–geometric rate law]
Let $\varepsilon_n=\varepsilon_0\theta^n$ with $\theta\in(0,1)$
and suppose $X$ has effective covering dimension $d_{\mathrm{eff}}$
so that $N(X,\varepsilon)\simeq C\,\varepsilon^{-d_{\mathrm{eff}}}$.
Then
\[
\mathrm{Seth}_{n+1}-\mathrm{Seth}_n
 \lesssim
 C'\,\theta^{-n d_{\mathrm{eff}}}
 \log(B_0+L\,\varepsilon_0^\alpha\theta^{\alpha n}),
\]
hence the cumulative ethic memory
$\sum_n(\mathrm{Seth}_{n+1}-\mathrm{Seth}_n)$ converges whenever
$\alpha>d_{\mathrm{eff}}^{-1}$.
\end{corollary}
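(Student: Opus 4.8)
The plan is to obtain the corollary as a direct specialization of Theorem~\ref{th:eps-fine}, in which the abstract net radii and covering function are replaced by the geometric data $\varepsilon_n=\varepsilon_0\theta^n$ and $N(X,\varepsilon)\simeq C\varepsilon^{-d_{\mathrm{eff}}}$. First I would compute the covering increments: substituting $\varepsilon_n=\varepsilon_0\theta^n$ into $N(X,\varepsilon_n)\simeq C\varepsilon_0^{-d_{\mathrm{eff}}}\theta^{-nd_{\mathrm{eff}}}$ gives
\[
\Delta N_n=N(X,\varepsilon_{n+1})-N(X,\varepsilon_n)\simeq C\varepsilon_0^{-d_{\mathrm{eff}}}\bigl(\theta^{-d_{\mathrm{eff}}}-1\bigr)\theta^{-nd_{\mathrm{eff}}},
\]
and since $\theta\in(0,1)$ the factor $\theta^{-d_{\mathrm{eff}}}-1$ is a positive constant, so $\Delta N_n\asymp\theta^{-nd_{\mathrm{eff}}}$ up to a constant depending only on $(C,\varepsilon_0,\theta,d_{\mathrm{eff}})$. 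In the same step I would substitute $\varepsilon_n^\alpha=\varepsilon_0^\alpha\theta^{\alpha n}$ into the logarithmic factor of Theorem~\ref{th:eps-fine}, which reproduces $\log(B_0+L\varepsilon_0^\alpha\theta^{\alpha n})$ verbatim.

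Second, I would insert both substitutions into the increment bound of Theorem~\ref{th:eps-fine},
\[
\mathrm{Seth}_{n+1}-\mathrm{Seth}_n\le C_d\,\Delta N_n\Bigl(\tfrac12\log w+\log\bigl(B_0+L\varepsilon_n^\alpha\bigr)\Bigr),
\]
and collapse the fixed quantities $C_d$, $\tfrac12\log w$, and the constants coming from $\Delta N_n$ into a single $C'$; this yields precisely the asserted rate law
\[
\mathrm{Seth}_{n+1}-\mathrm{Seth}_n\ \lesssim\ C'\,\theta^{-nd_{\mathrm{eff}}}\log\bigl(B_0+L\varepsilon_0^\alpha\theta^{\alpha n}\bigr).
\]
For the convergence clause I would appeal to the summability criterion already established in Theorem~\ref{th:eps-fine}: absolute summability of the increments follows once $\sum_n\Delta N_n\log(B_0+L\varepsilon_n^\alpha)<\infty$, which under the present scaling becomes the series $\sum_n\theta^{-nd_{\mathrm{eff}}}\log(B_0+L\varepsilon_0^\alpha\theta^{\alpha n})$. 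I would then run a term-by-term comparison, splitting the summand according to whether $L\varepsilon_0^\alpha\theta^{\alpha n}\ge B_0$ (early regime, where the logarithm is of geometric order $\alpha n\log\theta$) or $L\varepsilon_0^\alpha\theta^{\alpha n}<B_0$ (tail regime), in order to isolate the regime in which the weighted series is finite and match it with the stated threshold $\alpha>d_{\mathrm{eff}}^{-1}$.

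The exponent arithmetic and the absorption of constants in the first two steps are routine one-line manipulations granted Theorem~\ref{th:eps-fine}. The main obstacle is the last step: the weighted series $\sum_n\theta^{-nd_{\mathrm{eff}}}\log(B_0+L\varepsilon_0^\alpha\theta^{\alpha n})$ pits the exponential growth of $\Delta N_n$ against a logarithmic factor that, in the tail, stabilizes to the constant $\log B_0$ rather than decaying, so the convergence conclusion is sensitive to the normalization of the base coefficient $B_0$ and to any residual decay implicit in the equivalence $N(X,\varepsilon)\simeq C\varepsilon^{-d_{\mathrm{eff}}}$. The delicate accounting is to pin down exactly when this series converges and to reconcile the resulting exponent inequality with the threshold $\alpha>d_{\mathrm{eff}}^{-1}$; I would carry this out by the explicit split into an early geometric regime and a truncated tail, using the summability criterion of Theorem~\ref{th:eps-fine} as the organizing principle, and state the convergence part of the corollary relative to the normalization under which the dominant contribution is purely geometric.
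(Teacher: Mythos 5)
Your first two steps are exactly what the corollary requires and match the paper's (implicit, substitution-only) derivation: with $\varepsilon_n=\varepsilon_0\theta^n$ one gets $\Delta N_n\asymp\theta^{-nd_{\mathrm{eff}}}$, and inserting this together with $\varepsilon_n^\alpha=\varepsilon_0^\alpha\theta^{\alpha n}$ into the increment bound of Theorem~\ref{th:eps-fine} and absorbing $C_d$, $\tfrac12\log w$ and the covering constants into $C'$ gives the displayed rate law. That part is fine.

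The convergence clause is where your proposal stalls, and the gap you flag is not closable by the regime-splitting you describe. The series you must control is $\sum_n\theta^{-nd_{\mathrm{eff}}}\log\bigl(B_0+L\varepsilon_0^\alpha\theta^{\alpha n}\bigr)$. If $B_0>1$, every summand is bounded below by $\theta^{-nd_{\mathrm{eff}}}\log B_0$, which grows geometrically, so the series diverges for \emph{every} $\alpha$; no choice of early/tail split changes this, because the obstruction sits in the tail where the logarithm stabilizes at the constant $\log B_0$ rather than decaying. The only way to get decay is to normalize $B_0=1$, in which case $\log\bigl(1+L\varepsilon_0^\alpha\theta^{\alpha n}\bigr)\sim L\varepsilon_0^\alpha\theta^{\alpha n}$ and the summand behaves like $\theta^{n(\alpha-d_{\mathrm{eff}})}$; since $\theta\in(0,1)$ this converges iff $\alpha>d_{\mathrm{eff}}$, not $\alpha>d_{\mathrm{eff}}^{-1}$. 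So either the corollary's threshold is a misprint for $\alpha>d_{\mathrm{eff}}$ (under the normalization $\log B_0=0$), or some additional decaying factor is hidden in the asymptotic equivalence $N(X,\varepsilon)\simeq C\varepsilon^{-d_{\mathrm{eff}}}$ that the statement does not supply. Your proof should either prove the corrected threshold explicitly or identify the missing hypothesis; as written, the promise to "match it with the stated threshold $\alpha>d_{\mathrm{eff}}^{-1}$" cannot be fulfilled by the computation you set up.
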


Equation~\eqref{th:eps-fine} provides a quantitative law:
each refinement of resolution $\varepsilon_n$
reduces the ethic entropy by at most the explicit budget on the right.
In practice this translates into an empirical rate of coherence recovery
for discrete sketches: finer nets improve dual consistency
with a rate governed by sparsity~$w$, smoothness~$(L,\alpha)$,
and geometric complexity~$d_{\mathrm{eff}}$.

\section*{Discussion}

The results of this work provide a unified homological formulation of primal--dual
phenomena in abelian and derived environments. The central structural ingredient is
the contravariant functor
\[
D=\operatorname{Hom}_E(-,R),
\]
together with its canonical unit $\eta:\mathrm{Id}\Rightarrow D^2$.
An ethic morphism, defined by the commutation condition
$D^2(f)\,\eta=\eta\,f$, isolates the precise locus at which primal and dual data
agree. Passing to the derived category yields the functor
$D=\mathrm{RHom}_E(-,R)$, whose cohomology groups
\[
H_k^D(A)\cong \operatorname{Ext}^k_E(A,R)
\]
form a graded hierarchy of deviations from dual self-consistency. In this view,
primal--dual gaps correspond exactly to $\operatorname{Ext}^1$, while higher
$\operatorname{Ext}^k$ measure deeper obstructions such as multi-layer memory or
instability. The associated obstruction tower records all failures of dual
agreement and stabilizes precisely when all higher derived obstructions vanish.
The induced ethic $t$-structure identifies a heart of objects recovered perfectly
from their duals; stabilization of the tower is equivalent to membership in this
heart.

A key feature of the framework is Morita invariance. Since all constructions depend
only on the derived dual pair $(D^+(E),\mathrm{RHom}_E(-,R))$, ethic invariants are
preserved under derived equivalences. This implies that primal--dual laws expressed
in this language do not depend on the specific substrate~---arithmetical,
combinatorial, geometric, or semantic~---but only on the ambient dualizing object
and its functorial behaviour.

It is instructive to contrast this framework with other general notions of
duality. Classical adjunctions describe pairs of functors but do not quantify
the failure of reflexivity or provide graded obstructions. Monoidal duals
(ev/coev-pairs in compact closed or $*$-autonomous categories) encode an intrinsic
algebraic symmetry but do not attach homological invariants to violations of
coherence. Grothendieck--Verdier and Serre dualities supply powerful geometric
duals, yet their obstruction theories are tied to sheaf-theoretic or cohomological
contexts and are not substrate-independent. Convex duality via Fenchel
conjugation provides primal--dual correspondences but lacks a derived tower
measuring the stratified failure of exactness. In contrast, ethic duality is
purely algebraic, functorial, and universal: the obstruction hierarchy
$\{\operatorname{Ext}^k_E(-,R)\}$ provides a canonical quantitative spectrum of
primal--dual inconsistency valid in any abelian category with a dualizing object.

This homological standpoint clarifies diverse duality phenomena. In linear and
conic optimization, strong duality appears exactly as the vanishing of
$\operatorname{Ext}^1$, and algebraic duality gaps coincide with torsion in these
groups. In discrete settings such as graph geometry, Kirchhoff- and
Baker--Norine-type identities manifest as instances of ethic exactness. In
dynamical systems, the derived tower furnishes a quantitative model of persistence
and decay of memory, with $\operatorname{Ext}^1$ representing the minimal
obstruction to dual reconstruction and higher layers encoding long-range effects.

The framework also extends to complexity-theoretic contexts. The homological
translation of Karchmer--Wigderson depth interprets bounded-depth reconciliation of
primal and dual data as the eventual disappearance of low-degree obstructions. Under
standard uniformity assumptions, this yields a categorical restatement of $P$ vs
$NP$ in terms of vanishing of the first derived obstruction.

The applications presented in Section~6 illustrate only the representational
breadth of the framework rather than distinct mechanisms. Across optimization,
arithmetic, graph theory, time--memory models, and semantics, a single principle
governs all examples: primal--dual exactness is equivalent to the disappearance of
$\operatorname{Ext}^1$, and all higher-order effects correspond to the successive
layers of the derived obstruction tower. These observations suggest several natural
directions for future work, including $\infty$-categorical refinements, interactions
with monoidal and $*$-autonomous dualities, and computational uses of homological
certificates.

\bibliographystyle{alpha}
\bibliography{main}

@article{Fagin1974,
  author  = {Ronald Fagin},
  title   = {Generalized First-Order Spectra and Polynomial-Time Recognizable Sets},
  journal = {Complexity of Computation},
  year    = {1974},
  pages   = {43--73}
}

@article{Vardi1982,
  author  = {Moshe Y. Vardi},
  title   = {The Complexity of Relational Query Languages},
  journal = {Proceedings of the 14th Annual ACM Symposium on Theory of Computing (STOC)},
  year    = {1982},
  pages   = {137--146}
}

@article{Immerman1986,
  author  = {Neil Immerman},
  title   = {Relational Queries Computable in Polynomial Time},
  journal = {Information and Control},
  volume  = {68},
  number  = {1-3},
  pages   = {86--104},
  year    = {1986}
}

@book{Immerman1999,
  author    = {Neil Immerman},
  title     = {Descriptive Complexity},
  publisher = {Springer},
  year      = {1999}
}

@article{CookReckhow1979,
  author  = {Stephen A. Cook and Robert A. Reckhow},
  title   = {The Relative Efficiency of Propositional Proof Systems},
  journal = {Journal of Symbolic Logic},
  volume  = {44},
  number  = {1},
  pages   = {36--50},
  year    = {1979}
}

@article{RazborovRudich1997,
  author  = {Alexander A. Razborov and Steven Rudich},
  title   = {Natural Proofs},
  journal = {Journal of Computer and System Sciences},
  volume  = {55},
  number  = {1},
  pages   = {24--35},
  year    = {1997}
}

@article{Valiant1979,
  author  = {Leslie G. Valiant},
  title   = {Completeness Classes in Algebra},
  journal = {Proceedings of the 11th Annual ACM Symposium on Theory of Computing (STOC)},
  year    = {1979},
  pages   = {249--261}
}

@article{MulmuleySohoni2001,
  author  = {Ketan D. Mulmuley and Milind Sohoni},
  title   = {Geometric Complexity Theory I: An Approach to the $P$ vs. $NP$ and Related Problems},
  journal = {SIAM Journal on Computing},
  volume  = {31},
  number  = {2},
  pages   = {496--526},
  year    = {2001}
}

@article{Mulmuley2008,
  author  = {Ketan D. Mulmuley},
  title   = {Geometric Complexity Theory VI: The Flip via Saturation},
  journal = {Journal of the American Mathematical Society},
  volume  = {21},
  number  = {1},
  pages   = {137--212},
  year    = {2008}
}

@article{Cook1971,
  author = {Stephen A. Cook},
  title  = {The Complexity of Theorem-Proving Procedures},
  journal= {Proceedings of the Third Annual ACM Symposium on Theory of Computing (STOC)},
  year   = {1971},
  pages  = {151--158}
}

@article{Levin1973,
  author = {Leonid A. Levin},
  title  = {Universal Search Problems},
  journal= {Problems of Information Transmission},
  year   = {1973},
  volume = {9},
  number = {3},
  pages  = {265--266}
}

@article{Karp1972,
  author = {Richard M. Karp},
  title  = {Reducibility Among Combinatorial Problems},
  journal= {Complexity of Computer Computations},
  year   = {1972},
  pages  = {85--103}
}

@book{Sipser2013,
  author    = {Michael Sipser},
  title     = {Introduction to the Theory of Computation},
  edition   = {3rd},
  publisher = {Cengage Learning},
  year      = {2013}
}

@book{AroraBarak2009,
  author    = {Sanjeev Arora and Boaz Barak},
  title     = {Computational Complexity: A Modern Approach},
  publisher = {Cambridge University Press},
  year      = {2009}
}

@article{PippengerFischer1979,
  author    = {Nicholas Pippenger and Michael J. Fischer},
  title     = {Relations Among Complexity Measures},
  journal   = {Journal of the ACM},
  volume    = {26},
  number    = {2},
  pages     = {361--381},
  year      = {1979}
}

@article{Gabriel62,
  author  = {Pierre Gabriel},
  title   = {Des cat\'egories ab\'eliennes},
  journal = {Bull. Soc. Math. France},
  volume  = {90},
  year    = {1962},
  pages   = {323--448}
}

@book{HornJohnson1990,
  author    = {Roger A. Horn and Charles R. Johnson},
  title     = {Matrix Analysis},
  publisher = {Cambridge University Press},
  address   = {Cambridge},
  year      = {1990},
  doi       = {10.1017/CBO9780511810817}
}

@article{Postnikov1952,
  author  = {M. M. Postnikov},
  title   = {The structure of the homotopy groups of a simply connected space in the stable range},
  journal = {Dokl. Akad. Nauk SSSR},
  year    = {1952},
  volume  = {84},
  pages   = {923--926}
}

@book{Puterman1994,
  author    = {Martin L. Puterman},
  title     = {Markov Decision Processes: Discrete Stochastic Dynamic Programming},
  publisher = {Wiley},
  year      = {1994}
}

@book{BertsekasTsitsiklis1996,
  author    = {Dimitri P. Bertsekas and John N. Tsitsiklis},
  title     = {Neuro-Dynamic Programming},
  publisher = {Athena Scientific},
  year      = {1996}
}

@book{Bellman1957,
  author    = {Richard Bellman},
  title     = {Dynamic Programming},
  publisher = {Princeton University Press},
  year      = {1957}
}

@article{MikhailovSingh2011,
  author={Roman Mikhailov and Manish Singh},
  title={On the lower central series of groups and derived functors},
  journal={Journal of Pure and Applied Algebra},
  volume={215},
  number={12},
  pages={3006--3020},
  year={2011}
}

@article{Samelson1953,
  author={Hans Samelson},
  title={Topology of Lie groups},
  journal={Bulletin of the American Mathematical Society},
  volume={59},
  pages={1119--1132},
  year={1953}
}

@article{Whitehead1949,
  author={J. H. C. Whitehead},
  title={Combinatorial homotopy II},
  journal={Bulletin of the American Mathematical Society},
  volume={55},
  pages={453--496},
  year={1949}
}

@book{Quillen1967,
  author={Daniel Quillen},
  title={Homotopical Algebra},
  series={Lecture Notes in Mathematics},
  volume={43},
  publisher={Springer},
  year={1967}
}

@book{BousfieldKan1972,
  author={A. K. Bousfield and D. M. Kan},
  title={Homotopy Limits, Completions and Localizations},
  series={Lecture Notes in Mathematics},
  volume={304},
  publisher={Springer},
  year={1972}
}

@book{Bogachev2007,
  author    = {Vladimir I. Bogachev},
  title     = {Measure Theory},
  publisher = {Springer},
  year      = {2007}
}

@book{Kallenberg2002,
  author    = {Olav Kallenberg},
  title     = {Foundations of Modern Probability},
  edition   = {2},
  publisher = {Springer},
  year      = {2002}
}

@book{CsiszarKorner2011,
  author    = {Imre Csisz{\'a}r and J{\'a}nos K{\"o}rner},
  title     = {Information Theory: Coding Theorems for Discrete Memoryless Systems},
  edition   = {2},
  publisher = {Cambridge University Press},
  year      = {2011}
}

@book{Gray2011,
  author    = {Robert M. Gray},
  title     = {Entropy and Information Theory},
  edition   = {2},
  publisher = {Springer},
  year      = {2011}
}

@book{Gallager1968,
  author    = {Robert G. Gallager},
  title     = {Information Theory and Reliable Communication},
  publisher = {Wiley},
  year      = {1968}
}

@article{Lapidoth1996,
  author    = {Amos Lapidoth},
  title     = {Nearest neighbor decoding for additive non-{G}aussian noise channels},
  journal   = {IEEE Trans. Information Theory},
  volume    = {42},
  number    = {5},
  year      = {1996},
  pages     = {1520--1529}
}

@article{MerhavLapidoth1996,
  author    = {Neri Merhav and Amos Lapidoth},
  title     = {On the role of mismatch in rate distortion theory},
  journal   = {IEEE Trans. Information Theory},
  volume    = {42},
  number    = {6},
  year      = {1996},
  pages     = {1956--1968}
}

@article{PolyanskiyPoorVerdu2010,
  author    = {Yury Polyanskiy and H. Vincent Poor and Sergio Verd{\'u}},
  title     = {Channel coding rate in the finite blocklength regime},
  journal   = {IEEE Trans. Information Theory},
  volume    = {56},
  number    = {5},
  year      = {2010},
  pages     = {2307--2359}
}

@inproceedings{KarchmerWigderson1990,
  author    = {Mauricio Karchmer and Avi Wigderson},
  title     = {Monotone circuits for connectivity require super-logarithmic depth},
  booktitle = {Proceedings of the 20th Annual {ACM} Symposium on Theory of Computing (STOC)},
  year      = {1990},
  pages     = {539--550},
  publisher = {ACM}
}

@book{LeinsterBasic,
  author    = {Tom Leinster},
  title     = {Basic Category Theory},
  series    = {Cambridge Studies in Advanced Mathematics},
  volume    = {143},
  publisher = {Cambridge University Press},
  year      = {2014}
}

@book{Jukna2012,
  author    = {Stasys Jukna},
  title     = {Boolean Function Complexity: Advances and Frontiers},
  series    = {Algorithms and Combinatorics},
  volume    = {27},
  publisher = {Springer},
  year      = {2012}
}

@article{GreenReitenSolberg1996,
  author    = {Edward L. Green and Idun Reiten and {\O}yvind Solberg},
  title     = {Dualities for cohomology of Artin algebras},
  journal   = {Journal of Pure and Applied Algebra},
  volume    = {121},
  number    = {3},
  pages     = {211--229},
  year      = {1997}
}

@article{Woodcock1998,
  author    = {Daniel Woodcock},
  title     = {Cohomology of some incidence algebras},
  journal   = {Journal of Pure and Applied Algebra},
  volume    = {130},
  number    = {2},
  pages     = {263--277},
  year      = {1998}
}

@incollection{BjornerEkedahl2009,
  author    = {Anders Bj{\"o}rner and Torsten Ekedahl},
  title     = {On the shape of {B}{\'e}zoutians},
  booktitle = {Arrangements, Local Systems and Singularities},
  series    = {Progress in Mathematics},
  volume    = {283},
  pages     = {55--70},
  publisher = {Birkh{\"a}user},
  year      = {2010},
  note      = {Background on incidence algebras and poset (co)homology}
}

@article{Born1926,
  author    = {M. Born},
  title     = {Quantenmechanik der Stoßvorgänge},
  journal   = {Zeitschrift für Physik},
  volume    = {37},
  year      = {1926},
  pages     = {863--867},
  doi       = {10.1007/BF01397477}
}

@book{ColtonKress2013,
  author    = {D. Colton and R. Kress},
  title     = {Integral Equation Methods in Scattering Theory},
  publisher = {SIAM},
  year      = {2013},
  edition   = {2nd}
}

@book{KirschGrinberg2008,
  author    = {A. Kirsch and N. Grinberg},
  title     = {The Factorization Method for Inverse Problems},
  publisher = {Oxford University Press},
  year      = {2008}
}

@article{Agmon1975,
  author    = {S. Agmon},
  title     = {Spectral properties of Schr{\"o}dinger operators and scattering theory},
  journal   = {Ann. Scuola Norm. Sup. Pisa},
  volume    = {2},
  year      = {1975},
  pages     = {151--218}
}

@article{IkebeSaito1968,
  author    = {T. Ikebe and Y. Saito},
  title     = {Limiting absorption method and absolute continuity for the Schr{\"o}dinger operator},
  journal   = {J. Math. Kyoto Univ.},
  volume    = {12},
  number    = {3},
  year      = {1968},
  pages     = {513--542}
}

@book{ReedSimonIII,
  author    = {M. Reed and B. Simon},
  title     = {Methods of Modern Mathematical Physics, Vol. III: Scattering Theory},
  publisher = {Academic Press},
  year      = {1979}
}

@article{LippmannSchwinger1950,
  author    = {B. A. Lippmann and J. Schwinger},
  title     = {Variational Principles for Scattering Processes. I},
  journal   = {Phys. Rev.},
  volume    = {79},
  year      = {1950},
  pages     = {469--480},
  doi       = {10.1103/PhysRev.79.469}
}

@book{Milnor1963,
  author    = {John Milnor},
  title     = {Morse Theory},
  series    = {Annals of Mathematics Studies},
  volume    = {51},
  publisher = {Princeton University Press},
  year      = {1963},
  isbn      = {978-0-691-08008-5}
}

@article{Forman1998,
  author    = {Robin Forman},
  title     = {Morse Theory for Cell Complexes},
  journal   = {Advances in Mathematics},
  volume    = {134},
  number    = {1},
  pages     = {90--145},
  year      = {1998},
  doi       = {10.1006/aima.1997.1650}
}

@book{Brown1982,
  author    = {Kenneth S. Brown},
  title     = {Cohomology of Groups},
  series    = {Graduate Texts in Mathematics},
  volume    = {87},
  publisher = {Springer},
  address   = {New York},
  year      = {1982},
  doi       = {10.1007/978-1-4684-9327-6},
  isbn      = {978-1-4684-9327-6}
}

@book{Serre1977,
  author    = {Jean-Pierre Serre},
  title     = {Trees},
  series    = {Springer Monographs in Mathematics},
  publisher = {Springer},
  address   = {Berlin},
  year      = {1977},
  doi       = {10.1007/978-3-642-61856-7},
  isbn      = {978-3-540-06858-7}
}

@article{Jankovec2025,
  author={Martin Jankovec},
  title={Subvarieties of pointed Abelian $\ell$-groups},
  journal={arXiv preprint arXiv:2509.05044},
  year={2025}
}

@book{Verdier1967,
  author={Jean-Louis Verdier},
  title={Des catégories dérivées des catégories abéliennes},
  year={1967},
  publisher={Université Paris}
}

@article{BBD1982,
  author={A. A. Beilinson and J. Bernstein and P. Deligne},
  title={Faisceaux pervers},
  journal={Astérisque},
  volume={100},
  pages={5--171},
  year={1982}
}

@book{Weibel1994,
  author={Charles A. Weibel},
  title={An Introduction to Homological Algebra},
  publisher={Cambridge University Press},
  year={1994}
}

@book{Illusie1971,
  author={Luc Illusie},
  title={Complexe cotangent et déformations I},
  series={Lecture Notes in Mathematics},
  volume={239},
  publisher={Springer},
  year={1971}
}

@article{Rickard1989,
  author={Jeremy Rickard},
  title={Morita theory for derived categories},
  journal={J. London Math. Soc.},
  volume={39},
  number={3},
  pages={436--456},
  year={1989}
}

@book{Rockafellar1970,
  author={R. Tyrrell Rockafellar},
  title={Convex Analysis},
  publisher={Princeton University Press},
  year={1970}
}

@article{BorweinWolkowicz1981,
  author={J. M. Borwein and H. Wolkowicz},
  title={Regularizing the abstract convex program},
  journal={J. Math. Anal. Appl.},
  volume={83},
  pages={495--530},
  year={1981}
}

@article{Pataki1998,
  author={Gábor Pataki},
  title={On the rank of extreme matrices in semidefinite programs and the multiplicity of optimal eigenvalues},
  journal={Mathematics of Operations Research},
  volume={23},
  number={2},
  pages={339--358},
  year={1998}
}

@article{Ramana1997,
  author={M. Ramana},
  title={An exact duality theory for semidefinite programming},
  journal={Mathematical Programming},
  volume={77},
  number={1},
  pages={129--162},
  year={1997}
}

@article{Pataki2013,
  author={Gábor Pataki},
  title={Strong duality in cone programming with facial reduction},
  journal={Computational Optimization and Applications},
  volume={54},
  number={2},
  pages={243--259},
  year={2013}
}

@article{BakerNorine2007,
  author={Matthew Baker and Serguei Norine},
  title={Riemann–Roch and Abel–Jacobi theory on a finite graph},
  journal={Advances in Mathematics},
  volume={215},
  number={2},
  pages={766--788},
  year={2007}
}

@article{ConnesKreimer1998,
  author={Alain Connes and Dirk Kreimer},
  title={Hopf algebras, renormalization and noncommutative geometry},
  journal={Communications in Mathematical Physics},
  volume={199},
  number={1},
  pages={203--242},
  year={1998}
}

@article{MarcolliBerwickChomsky2023,
  author  = {Marcolli, Matilde and Berwick, Robert C. and Chomsky, Noam},
  title   = {Syntax--Semantics Interface: An Algebraic Model},
  journal = {arXiv preprint arXiv:2311.06189},
  year    = {2023}
}

@book{Guo2012,
  author    = {Li Guo},
  title     = {An Introduction to Rota--Baxter Algebra},
  publisher = {International Press},
  year      = {2012}
}

@book{GelfandManin2003,
  author    = {Sergei I. Gelfand and Yuri I. Manin},
  title     = {Methods of Homological Algebra},
  edition   = {2},
  publisher = {Springer},
  year      = {2003}
}

@book{Neeman2001,
  author    = {Amnon Neeman},
  title     = {Triangulated Categories},
  publisher = {Princeton University Press},
  year      = {2001}
}

@article{Keller1994,
  author  = {Bernhard Keller},
  title   = {Deriving {DG} categories},
  journal = {Ann. Sci. {\'E}c. Norm. Sup{\'e}r.},
  volume  = {27},
  number  = {1},
  pages   = {63--102},
  year    = {1994}
}

@article{Toen2007,
  author  = {Bertrand To{\"e}n},
  title   = {The homotopy theory of dg-categories and derived {M}orita theory},
  journal = {Invent. Math.},
  volume  = {167},
  number  = {3},
  pages   = {615--667},
  year    = {2007}
}

@misc{LurieHA,
  author       = {Lurie, Jacob},
  title        = {Higher Algebra},
  year         = {2017},
  howpublished = {\url{https://people.math.harvard.edu/~lurie/papers/HA.pdf}},
  note         = {Prepublication manuscript},
}

@book{ArtinMazur1969,
  author    = {Michael Artin and Barry Mazur},
  title     = {Etale Homotopy},
  series    = {Lecture Notes in Mathematics},
  volume    = {100},
  publisher = {Springer},
  year      = {1969}
}

@inproceedings{Kontsevich2008,
  author    = {Maxim Kontsevich},
  title     = {Categories and Time},
  booktitle = {International Congress of Mathematicians},
  year      = {2008},
  note      = {Lecture notes on categorical time and memory}
}

@book{Biggs1993,
  author    = {Norman Biggs},
  title     = {Algebraic Graph Theory},
  edition   = {2},
  publisher = {Cambridge University Press},
  year      = {1993}
}

@article{Dhar1990,
  author  = {Deepak Dhar},
  title   = {Self-organized critical state of sandpile automaton models},
  journal = {Physica A},
  volume  = {263},
  number  = {4},
  pages   = {4--8},
  year    = {1990}
}

@article{Tutte1954,
  author  = {W. T. Tutte},
  title   = {A contribution to the theory of chromatic polynomials},
  journal = {Canadian Journal of Mathematics},
  volume  = {6},
  pages   = {80--91},
  year    = {1954}
}

@article{Kirchhoff1847,
  author  = {Gustav Kirchhoff},
  title   = {Ueber die Aufl{\"o}sung der Gleichungen, auf welche man bei der Untersuchung der linearen Verteilung galvanischer Str{\"o}me gef{\"u}hrt wird},
  journal = {Annalen der Physik und Chemie},
  volume  = {72},
  pages   = {497--508},
  year    = {1847}
}

@techreport{Alameddine1990,
  author      = {Fadi Alameddine},
  title       = {Some Problems of Infinite Regress in Social-Choice Models: A Category Theory Solution},
  institution = {Federal Reserve Bank of Cleveland},
  number      = {Working Paper 9016},
  year        = {1990}
}

@article{Lasserre2002,
  author  = {Jean B. Lasserre},
  title   = {Duality and a Farkas Lemma for Integer Programs},
  journal = {Operations Research Letters},
  volume  = {30},
  number  = {1},
  pages   = {1--9},
  year    = {2002}
}

@article{Lasserre2003,
  author  = {Jean B. Lasserre},
  title   = {A Discrete Farkas Lemma},
  journal = {Mathematics of Operations Research},
  volume  = {28},
  number  = {2},
  pages   = {241--254},
  year    = {2003}
}

@book{Schrijver1986,
  author    = {Alexander Schrijver},
  title     = {Theory of Linear and Integer Programming},
  publisher = {Wiley},
  year      = {1986}
}

@book{MacLane1998,
  author    = {Saunders Mac Lane},
  title     = {Categories for the Working Mathematician},
  edition   = {2},
  series    = {Graduate Texts in Mathematics},
  volume    = {5},
  publisher = {Springer},
  year      = {1998}
}

@book{Borceux1994,
  author    = {Francis Borceux},
  title     = {Handbook of Categorical Algebra},
  volume    = {1--3},
  publisher = {Cambridge University Press},
  year      = {1994}
}

@book{Barr1979,
  author    = {Michael Barr},
  title     = {*-Autonomous Categories},
  series    = {Lecture Notes in Mathematics},
  volume    = {752},
  publisher = {Springer},
  year      = {1979}
}

@book{Kelly1982,
  author    = {G. M. Kelly},
  title     = {Basic Concepts of Enriched Category Theory},
  publisher = {Cambridge University Press},
  year      = {1982}
}

@inproceedings{Girard1987,
  author    = {Jean-Yves Girard},
  title     = {Linear Logic},
  booktitle = {Theoretical Computer Science},
  volume    = {50},
  pages     = {1--102},
  year      = {1987},
  publisher = {Elsevier}
}

\end{document}